\newtheorem{theorem}{Theorem}[section]
\newtheorem{definition}[theorem]{Definition}
\newtheorem{proposition}[theorem]{Proposition}
\newtheorem{lemma}[theorem]{Lemma}
\newtheorem{remark}[theorem]{Remark}
\newtheorem{example}[theorem]{Example}
\newtheorem{corollary}[theorem]{Corollary}
\newcommand{\g}{\mathfrak{g}}
\newcommand{\hh}{\mathfrak{h}}
\newcommand{\CC}{\mathbb{C}}
\newcommand{\QQ}{\mathbb{Q}}
\newcommand{\ZZ}{\mathbb{Z}}
\newcommand{\TT}{\mathbb{T}}
\newcommand{\Rep}{\mathcal{M}_{\textup{fd}}}
\newcommand{\Tr}{\operatorname{Tr}}
\newcommand{\End}{\operatorname{End}}
\newcommand{\id}{\mathrm{id}}
\newcommand{\mr}{\mathrm}
\newcommand{\ol}{\overline}
\newcommand{\ul}{\underline}
\newcommand{\wts}{\mathrm{wts}}
\newcommand{\str}{\mathrm{str}}
\newcommand{\cF}{\mathcal{F}}
\newcommand{\cG}{\mathcal{G}}
\newcommand{\cO}{\mathcal O}
\newcommand{\cN}{\mathcal N}
\newcommand{\cC}{\mathcal C}
\newcommand{\cD}{\mathcal D}
\newcommand{\cR}{\mathcal R}
\newcommand{\cM}{\mathcal M}
\newcommand{\tens}{\boxtimes}
\newcommand{\Hom}{\mathrm{Hom}}
\newcommand{\mh}{\mathrm{h}}
\newcommand{\bs}{\boldsymbol}
\newcommand{\Mfd}{\mathcal{M}_{\mathrm{fd}}}
\newcommand{\Nadm}{\mathcal{N}_{\mathrm{adm}}}
\newcommand{\Ndyn}{\overline{\mathcal{N}}_{\mathrm{fd}}}
\newcommand{\Rdyn}{R}
\newcommand{\til}{\widetilde}
\numberwithin{equation}{section}
\title[Graphical calculus for quantum vertex operators, II]{Graphical calculus for quantum vertex operators, II:\\ \(q\)-KZB and coordinate Macdonald-Ruijsenaars equations}
\author{Hadewijch De Clercq}
\address{H.D.C.: Department of Electronics and Information Systems, Mathematical Analysis Research Group, Ghent University,
	Belgium.}
\email{hadewijch.declercq@hotmail.com}
\author{Nicolai Reshetikhin}
\address{N.R.: YMSC, Tsinghua University, Beijing, Chine \& BIMSA, Beijing, China \& St. Petersburg University,
Russian Federation \& Department of Mathematics, University of California, Berkeley,
	CA 94720, USA.}
\email{reshetik@math.berkeley.edu}
\author{Jasper Stokman}
\address{J.S.: KdV Institute for Mathematics, University of Amsterdam,
	Science Park 904, 1098 XH Amsterdam, The Netherlands.}
\email{j.v.stokman@uva.nl }
\begin{document}

\begin{abstract}
We extend the graphical calculus developed in the first part of this paper to the parametrising spaces of quantum vertex operators. This involves a graphical implementation of the dynamical twist functor, which is a strict monoidal functor that describes how a morphism acting on the spin space of a quantum vertex operator $\Phi$ is transported to a morphism on the parametrising space of $\Phi$. The monoidal structure of the underlying nonstrict monoidal functor, considered before by Etingof and Varchenko in 1999, is given in terms of dynamical fusion operators, which are operators that describe the fusion of quantum vertex operators on the level of parametrising spaces. 

In the second part of the paper we use the extended graphical calculus to give intuitive, graphical derivations of various systems of difference equations for universal multipoint weighted trace functions. This includes the dual $q$-Knizhnik-Zamolodchikov-Bernard (KZB) and the dual Macdonald-Ruijsenaars (MR) equations, earlier obtained by Etingof and Varchenko in 2000, as well as an extension of the dual MR equations called dual coordinate MR equations. We use a known symmetry property of the universal weighted trace function, involving the exchange of its geometric and spectral parameter, to derive non-dual versions of these equations.
\end{abstract}
\maketitle
\tableofcontents

\section{Introduction}
Let $\mathfrak{g}$ be a semisimple Lie algebra with Cartan subalgebra $\mathfrak{h}$. The Reshetikhin-Turaev theory \cite{Reshetikhin&Turaev-1990} provides a graphical calculus for morphisms in a ribbon category $\mathcal{C}$ based on topological manipulations of $\textup{Ob}(\mathcal{C})$-colored ribbon graphs in $\mathbb{R}^3$. For the ribbon category $\Rep$ of type $1$ finite dimensional  $U_q(\mathfrak{g})$-modules it provides quantum invariants of framed links, such as the colored Jones polynomials.  In the prequel \cite{DeClercq&Reshetikhin&Stokman-2022} to this paper we incorporated quantum vertex operators into this quantum topological framework, in which case colorings by $U_q(\mathfrak{g})$-modules from the BGG category $\mathcal{O}$ are required. Category $\mathcal{O}$ can be embedded into a braided monoidal category $\mathcal{M}_{\textup{adm}}$ with twist, but rigidity is lost. As a consequence, the corresponding graphical calculus is based on braids with full twists.
With this modified graphical calculus we obtained a graphical derivation of $q$-KZ type eigenvalue equations for the dynamical fusion operators, which are the highest weight to highest weight components of quantum vertex operators.
In this paper we extend the graphical calculus to the parametrisation spaces of the quantum vertex operators, and use it to graphically derive known and new properties of the Etingof-Varchenko  \cite{Etingof&Varchenko-2000} weighted traces of quantum vertex operators.

We now give a more detailed description of the results in this paper. 
A $k$-point {\it quantum vertex operator} is a $U_q(\mathfrak{g})$-linear map of the form
\begin{equation}\label{qvoINTRO}
\bigl(\Phi_1\otimes\textup{id}_{V_2\otimes\cdots\otimes V_k}\bigr)\circ\cdots\circ\bigl(\Phi_{k-1}\otimes\textup{id}_{V_k})\circ\Phi_k: M_{\lambda_k}\rightarrow M_{\lambda_0}\otimes
\bigl(V_1\otimes\cdots\otimes V_k\bigr)
\end{equation}
with $V_i\in\textup{Ob}\bigl(\Rep\bigr)$, 
$M_\lambda$ the Verma module of highest weight $\lambda\in\mathfrak{h}^*$, 
and $U_q(\mathfrak{g})$-intertwiners $\Phi_i: M_{\lambda_i}\rightarrow M_{\lambda_{i-1}}\otimes V_i$.
For $\lambda_i$ in a suitable dense subset $\mathfrak{h}_{\textup{reg}}^*\subset\mathfrak{h}^*$ of regular highest weights, the space of $U_q(\mathfrak{g})$-inter\-twi\-ners $\Phi_i: M_{\lambda_i}\rightarrow M_{\lambda_{i-1}}\otimes V_i$ is parametrised by the weight space $V_i[\lambda_{i}-\lambda_{i-1}]$ of $V_i$ of weight $\lambda_{i}-\lambda_{i-1}$, see \cite{Etingof&Varchenko-1999}.
The isomorphism is given by the expectation value map, which picks out the highest weight to highest weight component of $\Phi_i$ in its Verma components. The $k$-point quantum vertex operator \eqref{qvoINTRO} will thus be denoted by $\Phi_{\lambda_k}^{v_1,\ldots,v_k}$, with $v_i$ the expectation value of $\Phi_i$ (in particular, $\Phi_i=\Phi_{\lambda_i}^{v_i}$). The quantum topological framework in \cite{DeClercq&Reshetikhin&Stokman-2022} provides the graphical representation
\begin{figure}[H]
		\centering
		\includegraphics[scale=0.65]{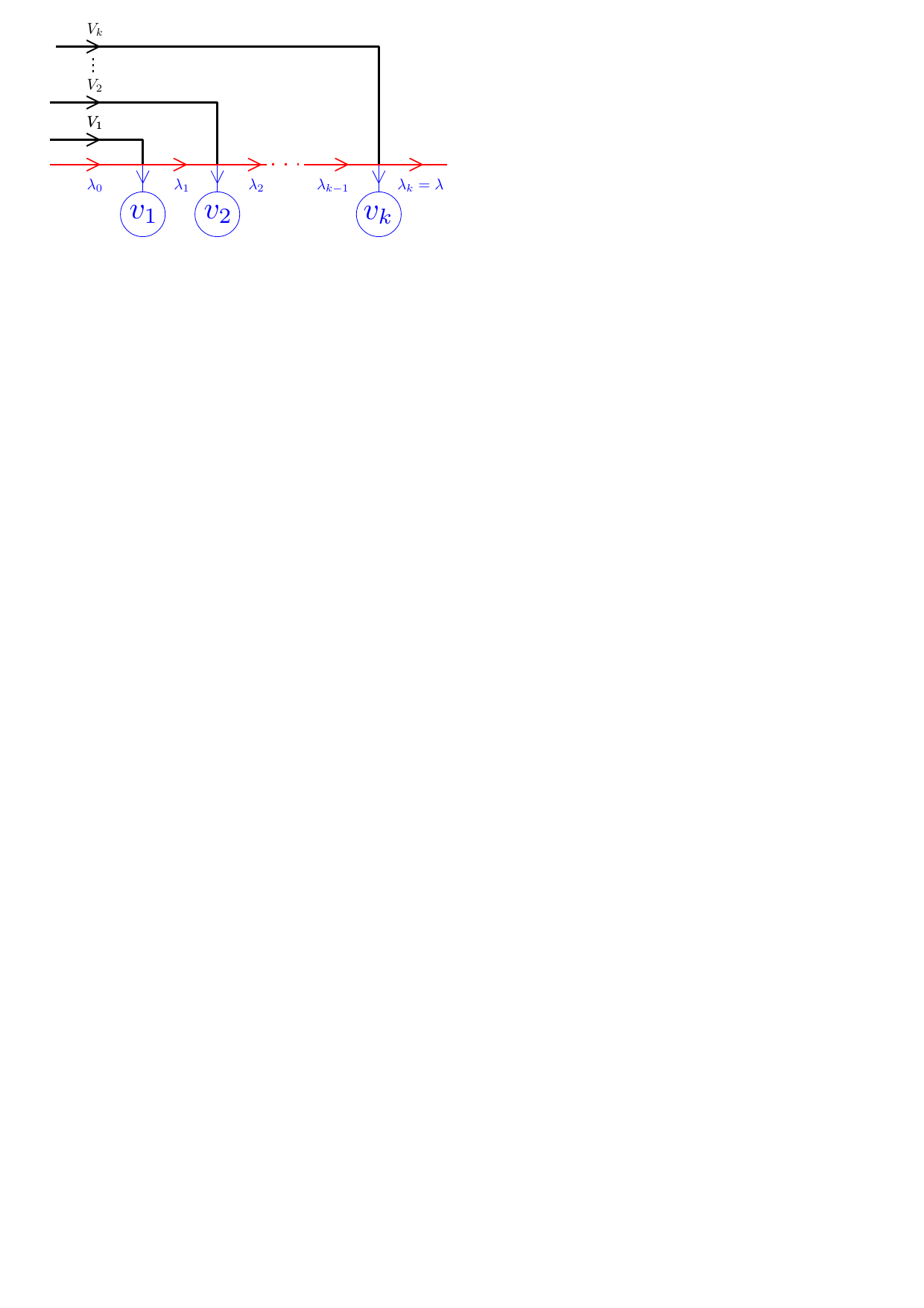}
		\captionof{figure}{}
		\label{vertex operator0}
\end{figure}
\noindent
of $\Phi_\lambda^{v_1,\ldots,v_k}$, with the blue label $\lambda_i$ indicating that the red strand is colored by the Verma module $M_{\lambda_i}$. The Reshetikhin-Turaev graphical calculus for $\Rep$
then applies to the black strands colored by $V_i$, while the restricted graphical calculus for $\mathcal{M}_{\textup{adm}}$
applies to the combined black and red strands. The $k$-point dynamical fusion operator $j_{(V_1,\ldots,V_k)}(\lambda)$ maps $v_1\otimes\cdots\otimes v_k$ to the highest-weight to highest-weight component of $\Phi_\lambda^{v_1,\ldots,v_k}$. It encodes how a $k$-point quantum vertex operator fuses to a $1$-point quantum vertex operator,
\[
 \Phi_\lambda^{v_1,\ldots,v_k}=\Phi_\lambda^{j_{(V_1,\ldots,V_k)}(\lambda)(v_1\otimes\cdots\otimes v_k)}.
 \]
 The highest-weight to highest-weight component of $\Phi_\mu^{v_1,\ldots,v_k}$ is depicted graphically by Fig. \ref{vertex operator0} with added open fat dots on both sides of the Verma strand.

The first part of this paper is devoted to 
extending the graphical calculus to the parametrising spaces of the multipoint quantum vertex operators (the blue strands in Figure \ref{vertex operator0}). For this 
we view the colors of the blue strands as the finite dimensional $\mathfrak{h}^*$-graded vector spaces $\underline{V_i}$ underlying $V_i\in\Rep$. They lie in the category $\overline{\mathcal{N}}_{\textup{fd}}$ of finite dimensional $\mathfrak{h}^*$-graded vector spaces with weights in the weight lattice $\Lambda$, with morphisms $\mathfrak{h}^*$-dependent grade-preserving maps. This category is monoidal 
with respect to the ``dynamical'' tensor product from \cite[\S 2.6]{Etingof&Varchenko-1999}, which involves weight shifts in the arguments (see also Lemma \ref{lemlow}). This monoidal structure gives a (rather limited) graphical calculus on the blue strands based on $\overline{\mathcal{N}}_{\textup{fd}}$-colored forest graphs. We enrich this graphical calculus as follows.

Etingof and Varchenko \cite[Lem. 17]{Etingof&Varchenko-1999} showed that the natural functor 
\[
\mathcal{F}^{\textup{EV}}: \Rep\rightarrow\overline{\mathcal{N}}_{\textup{fd}},
\]
with $\underline{V}:=\mathcal{F}^{\textup{EV}}(V)$ 
 the vector space $V$ endowed with the $\mathfrak{h}^*$-grading induced from its weight decomposition,
is monoidal with monoidal tensor $j=(j_{U,V})_{U,V\in\Rep}$ the $2$-point dynamical fusion operator. 
In Section \ref{Subsection dynamical module category category def} we give the upgrade of $\mathcal{F}^{\textup{EV}}$ to a {\it strict} monoidal functor
\[
\mathcal{F}^{\textup{dt}}: \Rep^\str\rightarrow\overline{\mathcal{N}}_{\textup{fd}}^{\,\str}
\]
called the {\it dynamical twist functor}, where $\mathcal{C}^\str$ is the strictification of a monoidal category $\mathcal{C}$, with objects the $k$-tuples of objects from $\mathcal{C}$ ($k\in\mathbb{Z}_{\geq 0}$). 
The dynamical twist functor $\mathcal{F}^{\textup{dt}}$ provides the functorial description how the action of a $\Rep^\str$-morphism on the spin spaces $(V_1,\ldots,V_k)$ of the $k$-point 
quantum vertex operator $\Phi_\lambda^{v_1,\ldots,v_k}$ can be transported to the local parametrising spaces $(\underline{V_1},\ldots,\underline{V_k})$ of $\Phi_\lambda^{v_1,\ldots,v_k}$. This is graphically depicted by Figure \ref{D2}. In this process the 
underlying morphism in $\Rep$ is twisted by dynamical fusion operators 
(see Theorem \ref{theorem dynamical twist}). For example, the braiding morphisms in $\Rep$, given by the action of the universal $R$-matrix of $U_q(\mathfrak{g})$ on two-fold tensor products $U\otimes V$ left-composed with the flip operator,
turn into dynamical braid morphisms in $\overline{\mathcal{N}}_{\textup{fd}}$, now given by the action of a universal solution $R(\lambda)$ of the {\it quantum dynamical quantum Yang-Baxter equation}, left-composed with the flip operator. The universal $R$-matrix $R(\lambda)$, obtained from the universal $R$-matrix of $U_q(\mathfrak{g})$ by twisting by the fusion operator, is the {\it universal exchange matrix} from
 \cite{Etingof&Varchenko-2000} with the tensor components flipped. 

Let $\textup{Rib}$ be the strict ribbon category of $\Rep^\str$-colored ribbon graphs and $\mathcal{F}^{\textup{RT}}: \textup{Rib}_{\Rep^\str}\rightarrow\Rep^\str$ the Reshetikhin-Turaev functor, which is the functorial encoding of the graphical calculus for $\Rep^\str$ \cite{Reshetikhin&Turaev-1990, DeClercq&Reshetikhin&Stokman-2022}. The composition
\[
\mathcal{F}^{\textup{dt}}\circ\mathcal{F}^{\textup{RT}}: \textup{Rib}_{\Rep^\str}\rightarrow\overline{\mathcal{N}}_{\textup{fd}}^{\,\str}
\]
is a strict monoidal functor that describes how morphisms in $\textup{Rib}_{\Rep^\str}$, attached to the black strands in Figure \ref{vertex operator0}, can be moved to coupons on the blue strands.
This in particular applies to the {\it elementary morphisms} in $\textup{Rib}$, i.e., the colored crossings, cups and caps. 
 We depict the resulting coupons in the graphical calculus of $\overline{\mathcal{N}}_{\textup{fd}}$ again as crossings, cups and caps, but
 decorated with a red dot
 (see Fig. \ref{dynamical R-matrix}--\ref{co-injection dynamical}). The morphism in $\overline{\mathcal{N}}_{\textup{fd}}$ underlying the decorated crossing is the universal $R$-matrix $R(\lambda)$ left-composed with the flip operator.
The morphism in $\overline{\mathcal{N}}_{\textup{fd}}$ underlying the counterclockwise-oriented decorated cap is the standard evaluation morphism for vector spaces, modified by the action of an automorphism $\mathbb{Q}(\lambda)$ in $\overline{\mathcal{N}}_{\textup{fd}}$ (see Subsection \ref{Subsection Q(lambda)}). This automorphism reappears in the self-dual normalisation of the weighted trace function.

Adding these decorated elementary morphisms to the graphical calculus of $\overline{\mathcal{N}}_{\textup{fd}}$ 
enriches the $\overline{\mathcal{N}}_{\textup{fd}}$-colored graphical calculus for the blue strands in Figure \ref{vertex operator0} to a decorated version of the Reshetikhin-Turaev graphical calculus, hence enabling computations based on topological moves on ribbon graphs. In the second part of the paper we show how this expanded graphical calculus can be used to derive various fundamental equations  for the Etingof-Varchenko \cite{Etingof&Varchenko-2000} weighted trace functions of quantum vertex operators by intuitive, graphical computations. Amongst these equations are the dual quantum Knizhnik-Zamolodchikov-Bernard ($q$-KZB) equations \cite[Thm. 1.4]{Etingof&Varchenko-2000}, and an extension of the dual Macdonald-Ruijsenaars (MR) equations \cite[Thm. 1.2]{Etingof&Varchenko-2000} which we call {\it the dual coordinate MR equations}. 

To explain the graphical approach in deriving such equations, we first need to introduce the weighted trace functions and their graphical representations.
Write $S=(V_1,\ldots,V_k)$ and let $\Phi: M_\mu\rightarrow M_\mu\otimes (V_1\otimes\cdots\otimes V_k)$ be a $U_q(\mathfrak{g})$-intertwiner.
The weighted trace function \cite{Etingof&Varchenko-2000} is the partial trace
\[
\xi\mapsto \textup{Tr}_{M_\mu}\bigl(\Phi\,q^\xi\bigr)
\]
for $\xi\in\mathfrak{h}^*$, where we tacitly assume that the real part of $\xi$ lies sufficiently deep in the negative Weyl chamber to ensure convergence. Here $q^\xi$ is viewed as element in the Cartan part of $U_q(\mathfrak{g})$, acting on $M_\mu$. The weighted trace function takes values in $(V_1\otimes\cdots\otimes V_k)[0]$.
We abuse graphical notations and graphically represent the weighted trace function as
\begin{figure}[H]
	\centering
	\includegraphics[scale=0.8]{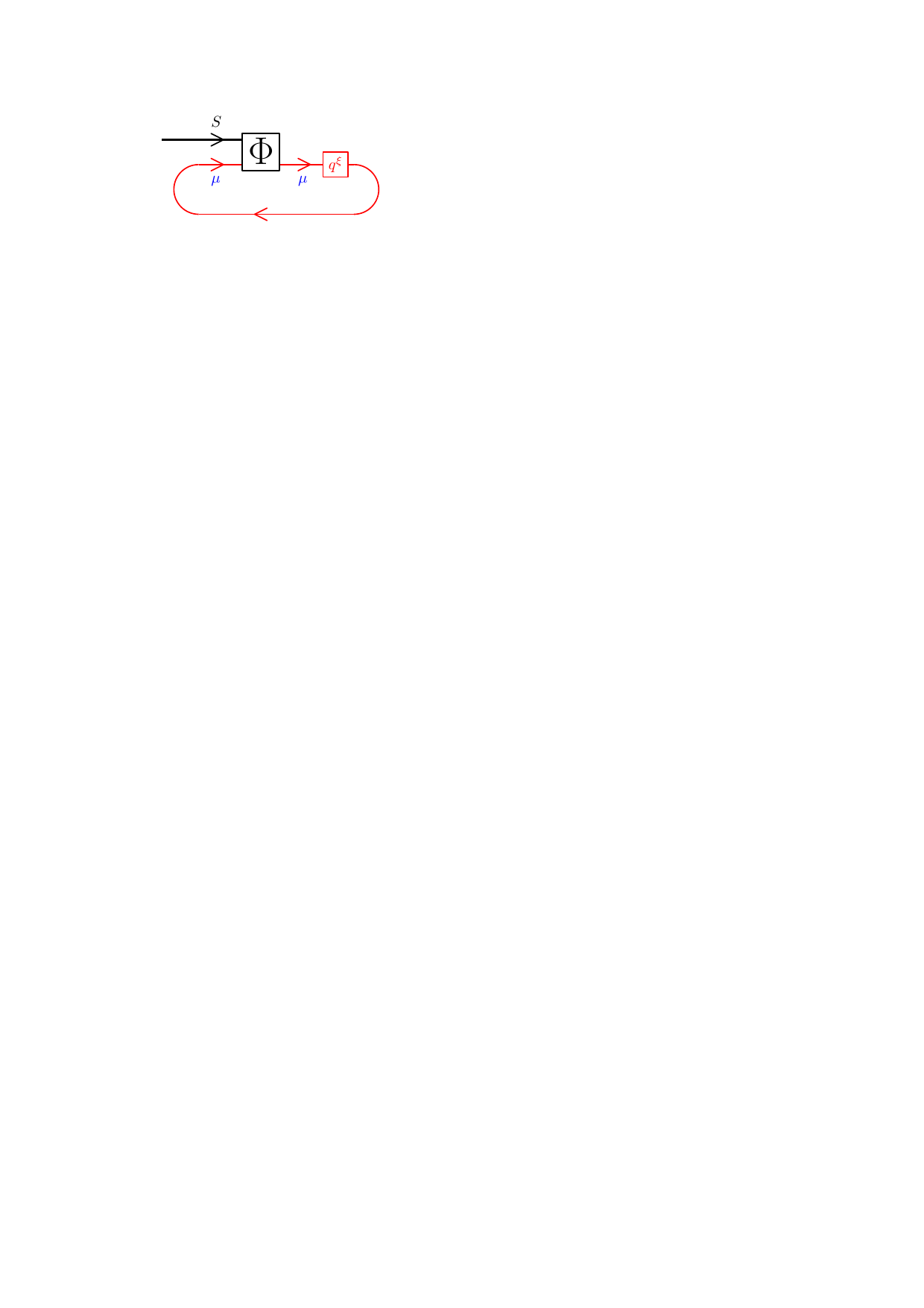}
\end{figure}
\noindent
(care is needed here because the cups and caps in the Verma strand are not part of the graphical calculus of $\mathcal{M}_{\textup{adm}}$ and the action of $q^\xi$ is not a morphism in $\mathcal{M}_{\textup{adm}}$. So $\mathcal{M}_{\textup{adm}}$-graphical calculus can only be applied to the part of the diagram describing $\Phi$). Spin components of the weighted trace function are then obtained by pairing with dual vectors from $(V_k^*\otimes\cdots\otimes V_1^*)[0]$. The space of dual vectors can be parametrised by the space of dual quantum vertex operators
$\Psi: M_\lambda\rightarrow (V_k^*\otimes\cdots\otimes V_1^*)\otimes M_\lambda$ with fixed $\lambda\in\mathfrak{h}^*_{\textup{reg}}$. The identification is again given by taking highest-weight to highest-weight components. The graphical representation of the resulting spin component $\mathfrak{t}_{\lambda,\mu,\xi}^{\Phi,\Psi}$ of the weighted trace function then becomes
\begin{figure}[H]
\centering
	\includegraphics[scale=.8]{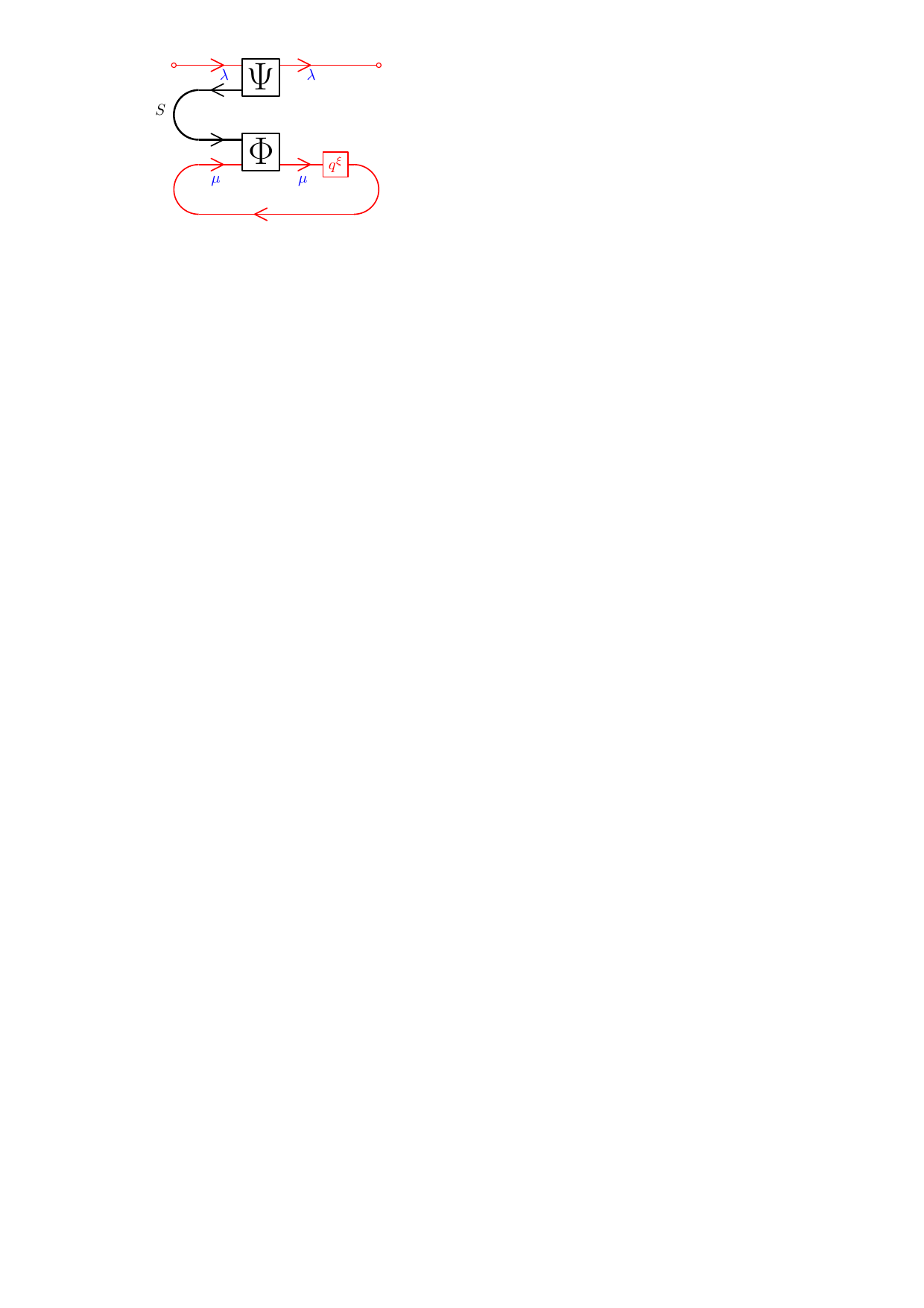}
	\caption{}
	\label{sctrace}
\end{figure}
\noindent

The graphical derivation of the dual $q$-KZB equations is by the following steps. We take $\Phi=\Phi_\mu^{v_1,\ldots,v_k}$ with $v_i\in V_i$ weight vectors whose weights sum up to zero, and substitute its graphical representation (Fig. \ref{vertex operator0}) in the graphical representation (Fig. \ref{sctrace}) of the associated spin-component $\mathfrak{t}_{\lambda,\mu,\xi}^{\Phi,\Psi}$ of the weighted trace function. This will allow us to apply the extended graphical calculus to push diagrams through the lower Verma strand labeled by $\mu$ to the blue strands.
In addition we take $\Psi$ to be a $3$-point dual quantum vertex operator. Acting by the Drinfeld-Lusztig \cite{Lusztig-1994} quantum Casimir and its inverse on the first and second intermediate Verma strand within the $3$-fold composition of $\Psi$ then gives an identity for $\Psi$, which graphically can be represented as
\begin{center}
	\includegraphics[scale = 0.73]{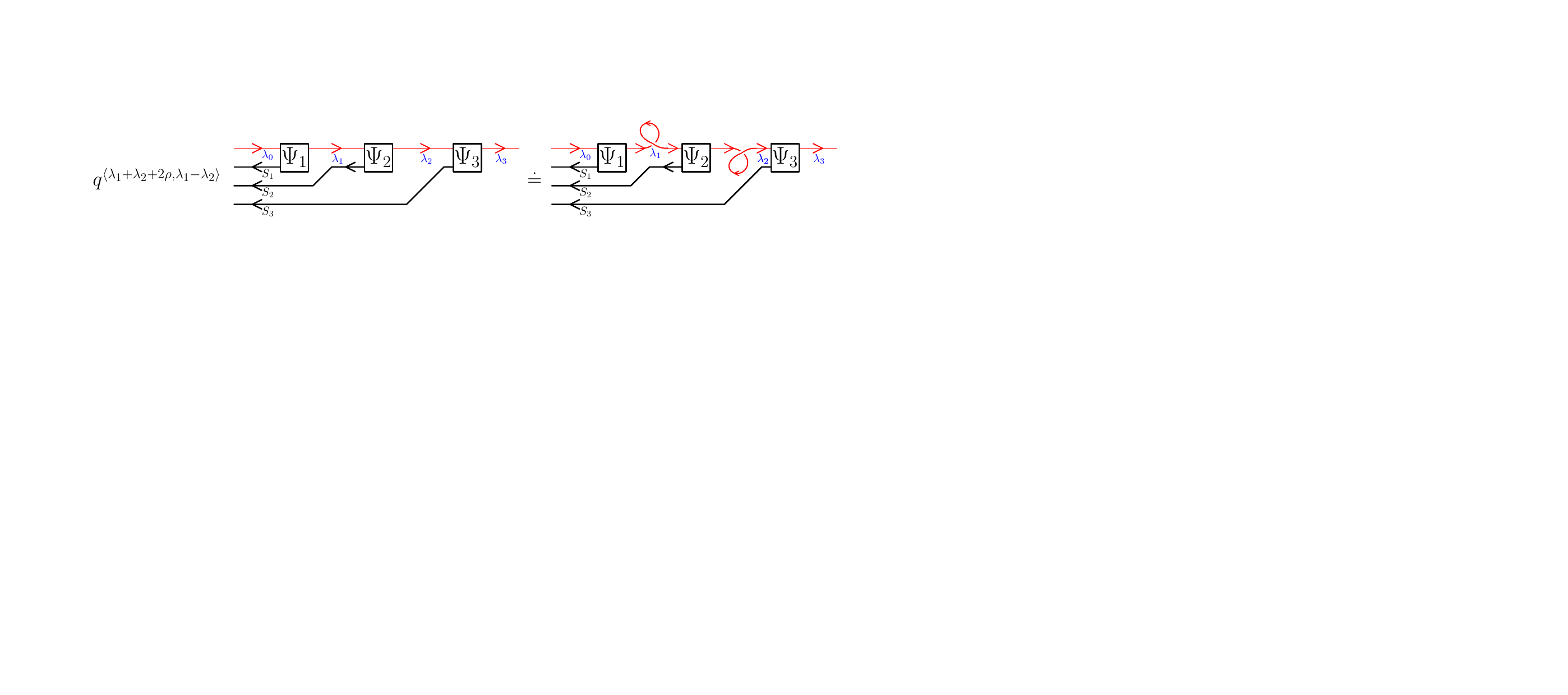}
\end{center}
with $\lambda_0=\lambda=\lambda_3$.
This gives an identity for the $\Psi$-spin-component $\mathfrak{t}_{\lambda,\mu,\xi}^{\Phi,\Psi}$ of the weighted trace function involving Figure \ref{sctrace}
with two full twists inserted in the upper Verma strand. We move these twists towards the parametrising spaces of $\Phi$ using the extended graphical calculus. The highest-weight to highest-weight component in the upper Verma strand is used to resolve crossings between black spin strands and the upper Verma strand. The weighted trace in the lower Verma strand allows to cyclicly reorder the constituents  $\Phi_{\mu_i}^{v_i}$ in $\Phi$, which comes with the cost of shifting $\mu$. We end up with a graphical expression that involves a single unwanted weighted cup in the spin strands attached to the lower Verma strand, which can only be moved to the parametrising space using the dynamical twist functor if the weight is
$\xi=2\lambda+2\rho$, where $\rho$ is the half sum of positive roots. In doing so, the end-result is a coupled system of difference equations in $\mu$ for 
the $\Psi$-spin-components 
$\mathfrak{t}_{\lambda,\mu,2\lambda+2\rho}^{\Phi,\Psi}$ of $k$-point weighted trace functions of the form $\textup{Tr}_{M_\mu}(\Phi\,q^{2\lambda+2\rho})$, where $\Psi$ and $\lambda$ are fixed.
This graphical derivation is given in Subsection \ref{Subsection Twisted trace functions}. In Subsection \ref{Subsection Etingof-Varchenko normalization} we show that these equations are equivalent to 
the dual $q$-KZB equations for the Etingof-Varchenko \cite{Etingof&Varchenko-2000} universal weighted trace function.

For the graphical derivation of dual coordinate MR equations we act by the Drinfeld-Res\-he\-ti\-khin \cite{Drinfeld-1990, Reshetikhin-1990} additive central elements \(\widetilde{\Omega}_W\in Z(U_q(\mathfrak{g}))$ ($W\in\textup{Ob}(\Rep)$) on the intermediate Verma strands of the multipoint dual quantum vertex operator $\Psi$ in the spin-component $\mathfrak{t}_{\lambda,\mu,\xi}^{\Phi,\Psi}$ of the weighted trace function.
The graphical representation of the action of $\widetilde{\Omega}_W$ on $M_\lambda$ is
by the diagram
\begin{figure}[H]
		\centering
		\includegraphics[scale=0.75]{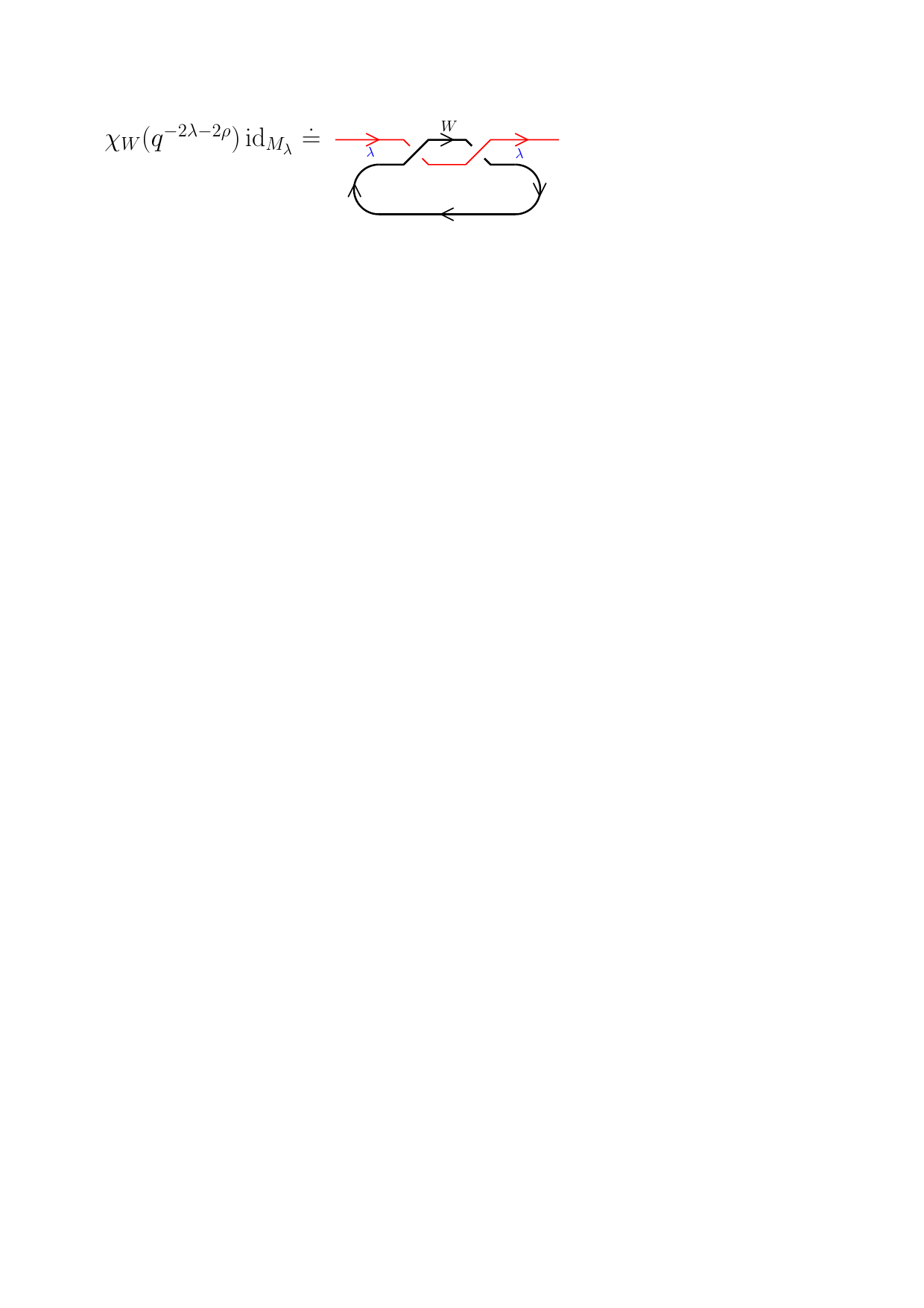}
\end{figure}
\noindent
On the other hand, it acts by scalar multiplication by $\chi_W(q^{-2\lambda-2\rho})$, with $\chi_W$ the character of $W$.
Inserting the resulting graphical identity into the graphical representation of $\mathfrak{t}_{\lambda,\mu,\xi}^{\Phi,\Psi}$ (Fig. \ref{sctrace}) and following a similar approach as for the graphical derivation of the dual $q$-KZB equations, one obtains another coupled system of difference equations in $\mu$ for the $\Psi$-spin-components 
$\mathfrak{t}_{\lambda,\mu,2\lambda+2\rho}^{\Phi,\Psi}$ of the $k$-point weighted trace functions $\textup{Tr}_{M_\mu}(\Phi\,q^{2\lambda+2\rho})$ with fixed $\Psi$ and $\lambda$,
see Subsections \ref{Subsection Topological operator dual MR} \& \ref{Subsection dual MR proof}. In Subsection \ref{Subsection Etingof-Varchenko normalization for MR} we reformulate these equations to a consistent system of difference equations for the Etingof-Varchenko universal trace function, which we call {\it the dual coordinate MR equations}. Here ``coordinate'' refers to the fact that these equations arise from the action of the additive Casimir element on intermediate Verma strands of the multipoint dual quantum vertex operator $\Psi$. The Etingof-Varchenko \cite[Thm. 1.2]{Etingof&Varchenko-2000} dual MR equations correspond to the special case that the additive Casimir elements act on the Verma module $M_\mu$ in the co-domain of $\Psi$. 

We end this introduction by explicitly formulating the new (dual) coordinate MR equations for the universal weighted trace functions.
 For generic $\mu\in\mathfrak{h}^*$ the partially normalised Etingof-Varchenko \cite{Etingof&Varchenko-2000} $k$-point weighted trace function is the $(V_1\otimes\cdots\otimes V_k)[0]$-valued function
\begin{equation}\label{normEVintro}
\mathbb{T}_S^{v_1,\ldots,v_k}(\lambda,\mu):=\delta(q^{2\lambda+2\rho})j_S(-\lambda-2\rho)^{-1}\textup{Tr}_{M_\mu}(\Phi_\mu^{v_1,\ldots,v_k}\,q^{2\lambda+2\rho})
\end{equation}
in $\lambda\in\mathfrak{h}^*$ with
$\delta$ the Weyl denominator, see \eqref{Wd}.

For weight vectors $f_i\in V_i^*$ with the sum of their weights $\textup{wt}(f_i)$ equal to zero we write
\[
\mathbb{T}_S^{v_1,\ldots,v_k\vert f_k,\ldots,f_1}(\lambda,\mu)=(f_1\otimes\cdots\otimes f_k)\bigl(\mathbb{T}_S^{v_1,\ldots,v_k}(\lambda,\mu)\bigr),
\]
so that
\[
\mathbb{T}_S^{v_1,\ldots,v_k}(\lambda,\mu)=\sum_{b_i\in\mathcal{B}_i: \sum_i\textup{wt}(b_i)=0}
\mathbb{T}_S^{v_1,\ldots,v_k\vert b_k^*,\ldots,b_1^*}(\lambda,\mu)b_k^*\otimes\cdots\otimes b_1^*,
\]
with $\mathcal{B}_i=\{b_i\}$ a basis of $V_i$ consisting of weight vectors and corresponding dual basis $\{b_i^*\}$. The {\it normalised universal weighted trace function} is
\[
\mathbb{F}_S(\lambda,\mu):=\sum_{b_i\in\mathcal{B}_i: \sum\textup{wt}(b_i)=0}\mathbb{T}_S^{b_1,\ldots,b_k}(\lambda,\mu)\otimes \mathbb{X}_{\mu,S^*}(b_k^*\otimes\cdots\otimes b_1^*)
\]
where $S^*=(V_k^*,\ldots,V_1^*)$ 
and
\[
\mathbb{X}_{\mu,S^*}:=\mathbb{Q}_{V_k^*}(\mu)^{-1}\otimes\mathbb{Q}_{V_{k-1}^*}(\mu+h_{V_k^*})^{-1}\otimes\cdots\otimes\mathbb{Q}_{V_1^*}(\mu+h_{(V_k^*,\ldots,V_1^*)})^{-1}
\]
(here we use the standard dynamical weight shift notations from, e.g., \cite{Etingof&Varchenko-2000}). The Etingof-Varchenko \cite{Etingof&Varchenko-2000} renormalised universal weighted trace function $F_S(\lambda,\mu)$ defined in \cite[(1.9)]{Etingof&Varchenko-2000} then equals $\mathbb{F}_S(\lambda-\rho,-\mu-\rho)$.

For $i=0,\ldots,k$ and $W\in\textup{Ob}(\Rep)$ the {\it dual coordinate MR operator} $\mathbb{L}_{W,i}^{\vee,S}$ is the difference operator defined by 
\begin{equation*}
\begin{split}
\bigl(\mathbb{L}_{W,i}^{\vee,S}f\bigr)(\mu):=&\sum_{\sigma\in\textup{wts}(W)}\textup{Tr}_{W^*[-\sigma]}\Bigl(
R_{W^*V_1^*}(\mu+h_{(W^*,V_k^*,\ldots,V_1^*)})\cdots R_{W^*V_i^*}(\mu+h_{(W^*,V_k^*,\ldots,V_i^*)})\\
&\qquad\qquad\quad R_{W^*V_{i+1}^*}^{21}(\mu+h_{(W^*,V_k^*,\ldots,V_{i+1}^*)})^{-1}\cdots
R_{W^*V_k^*}^{21}(\mu+h_{(W^*,V_k^*)})^{-1}\Bigr)f(\mu-\sigma)
\end{split}
\end{equation*}
for functions $f: \mathfrak{h}_{\textup{reg}}^*\rightarrow (V_k^*\otimes\cdots\otimes V_1^*)[0]$, where $\textup{wts}(W)$ is the set of weights of $W$. Here $R_{WV}(\lambda)$ stands for the action of the universal dynamical $R$-matrix $R(\lambda)$ on $W\otimes V$, and $R_{WV}^{21}(\lambda)=P_{VW}R_{VW}(\lambda)P_{WV}$ with flip operator $P_{WV}: W\otimes V\rightarrow V\otimes W$. The dynamical $R$-matrices act on the appropriate tensor components of $W^*\otimes V_k^*\otimes\cdots\otimes V_1^*$, and the trace is the partial trace over the weight space $W^*[-\sigma]$ in its first tensor component.
\begin{theorem}[Dual coordinate MR equations]
For $i=0,\ldots,k$ and $W\in\textup{Ob}(\Rep)$ we have
\begin{equation}\label{dualMReqnuniversalintro}
\bigl(\textup{id}_{(V_1\otimes\cdots\otimes V_k)[0]}\otimes\mathbb{L}_{W,i}^{\vee,S}\bigr)\mathbb{F}_S(\lambda,\cdot)=
\bigl(\mathbb{D}_{\lambda,W,i}^{\vee,S}\otimes\textup{id}_{(V_k^*\otimes\cdots\otimes V_1^*)[0]}\bigr)\mathbb{F}_S(\lambda,\cdot)
\end{equation}
where $\mathbb{D}_{\lambda,W,i}^{\vee,S}$ is the linear operator on $V_1\otimes\cdots\otimes V_k$ acting on $V_1[\xi_1]\otimes\cdots\otimes V_k[\xi_k]$ as scalar multiplication by
$\chi_W(q^{-2(\lambda+\xi_{i+1}+\cdots+\xi_k+\rho)})$.
\end{theorem}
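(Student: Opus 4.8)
The plan is to run the same graphical machinery that yields the dual \(q\)-KZB equations, but with the Drinfeld--Reshetikhin additive central element \(\widetilde{\Omega}_W\) in place of the quantum Casimir, inserted on the \(i\)-th intermediate Verma strand of the dual \(k\)-point quantum vertex operator \(\Psi\). I would first fix \(\Phi=\Phi_\mu^{v_1,\dots,v_k}\) with weight vectors \(v_j\in V_j[\xi_j]\) satisfying \(\sum_j\xi_j=0\), take \(\xi=2\lambda+2\rho\), and work with the spin component \(\mathfrak{t}_{\lambda,\mu,2\lambda+2\rho}^{\Phi,\Psi}\) whose graphical representation is Fig.~\ref{sctrace}. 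The whole argument rests on a single local identity: the operator \(\widetilde{\Omega}_W\), acting on the \(i\)-th intermediate Verma strand of \(\Psi\), can be read in two ways --- algebraically, as scalar multiplication by a character value, and topologically, as the \(W^*\)-colored loop encircling that strand depicted in the introduction. Equating the two readings inside \(\mathfrak{t}\) produces the desired equation; the bulk of the work is identifying each side with the stated operators \(\mathbb{D}_{\lambda,W,i}^{\vee,S}\) and \(\mathbb{L}_{W,i}^{\vee,S}\) after passing to the normalised universal trace function \(\mathbb{F}_S\).

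For the algebraic (right-hand) side I would use that the \(i\)-th intermediate Verma module in \(\Psi\) has highest weight \(\lambda\) shifted by the weights carried through the dual spin strands \(V_{i+1}^*,\dots,V_k^*\), so that on the summand of \(\mathbb{F}_S\) indexed by weight vectors \(b_j\in V_j[\xi_j]\) the element \(\widetilde{\Omega}_W\) acts by the scalar \(\chi_W(q^{-2(\lambda+\xi_{i+1}+\cdots+\xi_k+\rho)})\). Summing over the basis, this is precisely the diagonal operator \(\mathbb{D}_{\lambda,W,i}^{\vee,S}\) applied to the \(V_1\otimes\cdots\otimes V_k\) factor, giving the term \(\mathbb{D}_{\lambda,W,i}^{\vee,S}\otimes\textup{id}\); the constraint \(\sum_j\xi_j=0\) and the choice \(\xi=2\lambda+2\rho\) are what make this scalar well defined on the zero weight space and compatible with the trace over \(M_\mu\).

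For the topological (left-hand) side I would push the \(W^*\)-colored loop off the \(i\)-th intermediate Verma strand and out onto the dual parametrising (blue) strands using the dynamical twist functor \(\mathcal{F}^{\textup{dt}}\). Each time the loop passes a dual spin strand \(V_j^*\) it produces a decorated crossing, i.e.\ the action of the universal dynamical \(R\)-matrix on \(W^*\otimes V_j^*\); the strands crossed on the two sides of the \(i\)-th strand are traversed with opposite orientation, which is exactly what converts the factors with \(j\le i\) into \(R_{W^*V_j^*}\) and those with \(j>i\) into the flipped inverses \(R^{21}_{W^*V_j^*}(\cdots)^{-1}\), with the dynamical arguments accumulating the weight shifts \(h_{(W^*,V_k^*,\dots)}\) recorded in the definition of \(\mathbb{L}_{W,i}^{\vee,S}\). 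Closing the loop replaces the \(W^*\)-colored circle by the partial quantum trace over \(W^*\), which decomposes over the weight spaces \(W^*[-\sigma]\); the weight \(\sigma\) that flows along the loop shifts the spectral variable to \(\mu-\sigma\). This reproduces exactly the difference operator \(\textup{id}\otimes\mathbb{L}_{W,i}^{\vee,S}\).

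The main obstacle, as in the \(q\)-KZB derivation, is the precise bookkeeping rather than any single conceptual step. Concretely: (i) tracking every dynamical weight shift in the arguments of the \(R\)-matrices as the loop is dragged past successive strands and through the caps and cups of \(\Psi\), so that the accumulated shifts match the \(h_{(\cdots)}\) notation exactly; (ii) verifying that the decorated caps closing the loop produce the self-dual normalisation encoded by the automorphisms \(\mathbb{Q}_{V_j^*}\), which is what forces the passage from \(\mathfrak{t}\) to \(\mathbb{F}_S\) via \(\mathbb{X}_{\mu,S^*}\) and absorbs the Weyl-denominator factor \(\delta(q^{2\lambda+2\rho})\) and the fusion operator \(j_S(-\lambda-2\rho)^{-1}\) appearing in \eqref{normEVintro}; and (iii) confirming that the cyclic reordering permitted by the trace over \(M_\mu\) is consistent with moving the loop all the way around the diagram, which again only closes up when \(\xi=2\lambda+2\rho\). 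Once these normalisation factors are matched, the two readings of \(\widetilde{\Omega}_W\) on the \(i\)-th intermediate strand yield \eqref{dualMReqnuniversalintro} directly.
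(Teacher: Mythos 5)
Your graphical core coincides with the paper's (Section \ref{Section dual MR}): insert $\widetilde{\Omega}_W$ on the $i$-th intermediate Verma strand of $\Psi$, read it once algebraically as the scalar $\chi_W(q^{-2\lambda_1-2\rho})$ (Lemma \ref{lemma Omega tilde}) and once topologically as a loop, drag the loop through the caps and the lower Verma strand onto the blue strands (Lemma \ref{lemma before topological MR}, Proposition \ref{prop bulk MR}), and close up with weighted cyclic and highest-weight-to-highest-weight boundary conditions at $\xi=2\lambda+2\rho$ (Corollary \ref{thm MR for Y}). The genuine gap is in your final claim that "once these normalisation factors are matched" the theorem for $\mathbb{F}_S$ follows "directly." The graphical argument only yields equation \eqref{equation MR with general xi} for spin components paired against the specific, $i$-\emph{dependent} covectors $\langle\Psi\rangle=\mathbb{A}_{\lambda,i}(f_k\otimes\cdots\otimes f_1)$ arising as expectation values of dual quantum vertex operators, whereas $\mathbb{F}_S$ is built from the $i$-\emph{independent} parametrisation $(j_S(-\lambda-2\rho)^{-1})^T$ followed by $\mathbb{X}_{\mu,S^*}$ (see \eqref{normEVintro} and \eqref{Fnorm}). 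Transferring the equation is not bookkeeping: one must prove that $(j_S(-\lambda-2\rho)^{-1})^T$ satisfies, for every $i$ and $W$, the intertwining system \eqref{equations for L} that the graphical derivation requires of the parametrising covector (Lemma \ref{lemma equation for Psi}, Corollary \ref{generalcaseMR}). This needs a separate input, namely dual MR-type equations for the dynamical fusion operator $j_S$ itself (Lemma \ref{lemma before inverse MR}, Corollary \ref{lemma inverse MR for j}, Proposition \ref{propdMRj}), obtained by running the loop argument on the intermediate Verma strands of $\Phi$ rather than $\Psi$ and taking expectation values. Nothing in your proposal supplies this step; it is exactly the "serious algebraic manipulation" the paper flags in its Discussion as the part that is not achieved graphically.

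A second, related inaccuracy: you assert that the dragged loop directly produces the factors $R_{W^*V_j^*}$ of $\mathbb{L}_{W,i}^{\vee,S}$, with $j\le i$ versus $j>i$ distinguished by orientation. In fact Proposition \ref{pushdiagram} pushes the crossings through the \emph{lower} Verma strand onto the parametrising spaces $\ul{V_j}$ of $\Phi$, so the graphically derived difference operator has coefficients $\mathcal{K}_{\mu,\sigma,W}^{\vee,(i)}$ as in \eqref{K_mu,sigma,i vee def}, built from $R_{W,V_j}$ acting on the $(v_1,\ldots,v_k)$-side with a trace over $W[\sigma]$. Converting this into the stated dual-side operator $\mathbb{L}_{W,i}^{\vee,S}$ (trace over $W^*[-\sigma]$, factors $R_{W^*V_j^*}$ with shifts $h_{(W^*,V_k^*,\ldots)}$) requires transposition plus conjugation by $\mathbb{X}_{\mu,S^*}$, carried out in \eqref{todddo} via the $\mathbb{Q}$-transpose identity \eqref{transposeT} for dynamical $R$-matrices; your point (ii) gestures at the $\mathbb{Q}_{V_j^*}$'s but attributes them to "decorated caps closing the loop," which is not where they enter. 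By contrast, the algebraic (right-hand) side of your argument, including the scalar $\chi_W(q^{-2(\lambda+\xi_{i+1}+\cdots+\xi_k+\rho)})$ read off from the shifted highest weight $\lambda_1$, is correct.
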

This is Corollary \ref{dualqMRthm} in the main text. For $i=0$ the equations \eqref{dualMReqnuniversalintro} are the dual MR equations from \cite[Thm. 1.4]{Etingof&Varchenko-2000}
(note that $\mathbb{D}_{\lambda,W,0}^{\vee,S}$ acts on $(V_1\otimes\cdots\otimes V_k)[0]$ as scalar multiplication by $\chi_W(q^{-2(\lambda+\rho)})$, so the right hand side of \eqref{dualMReqnuniversalintro} simplifies to $\chi_W(q^{-2(\lambda+\rho)})\mathbb{F}_S(\lambda,\cdot)$ for $i=0$). 

The universal weighted trace function satisfies the symmetry property 
\begin{equation}\label{sEV}
\mathbb{F}_S(\lambda,\mu)=P\Bigl(\mathbb{F}_{S^*}(-\mu-2\rho,-\lambda-2\rho)\Bigr)
\end{equation}
(see \cite[Thm. 1.5]{Etingof&Varchenko-2000}) involving the exchange of the geometric and spectral parameter, where $P$ flips the two tensor components $(V_k^*\otimes\cdots\otimes V_1^*)[0]$ and
$(V_1\otimes\cdots\otimes V_k)[0]$. It turns the dual coordinate MR equations into the following system of difference equations for $\mathbb{F}_S(\lambda,\mu)$ in $\lambda$.

For $i=0,\ldots,k$ and $W\in\textup{Ob}(\Rep)$ define the {\it coordinate MR operators} by
\begin{equation*}
\begin{split}
\bigl(\mathbb{L}_{W,i}^Sf\bigr)(\lambda):=&\sum_{\sigma\in\textup{wts}(W)}\textup{Tr}_{W^*[-\sigma]}\Bigl(R_{W^*V_k}(-\lambda-2\rho+h_{(W^*,V_1,\ldots,V_k)})
\cdots\\
\cdots&R_{W^*V_{i+1}}(-\lambda-2\rho+h_{(W^*,V_1,\ldots,V_{i+1})})R_{W^*V_{i}}^{21}(-\lambda-2\rho+h_{(W^*,V_1,\ldots,V_{i})})^{-1}\cdots\\
&\qquad\qquad\qquad\qquad\cdots R_{W^*V_1}^{21}(-\lambda-2\rho+h_{(W^*,V_1)})^{-1}\Bigr)f(\lambda+\sigma)
\end{split}
\end{equation*}
for functions $f\in\mathfrak{h}_{\textup{reg}}^*\rightarrow (V_1\otimes\cdots\otimes V_k)[0]$.

\begin{theorem}[Coordinate MR equations]\label{coordMRthmintro}
For $i=0,\ldots,k$ and $W\in\textup{Ob}(\Rep)$ we have
\begin{equation}\label{MRiintro}
\bigl(\mathbb{L}_{W,i}^S\otimes\textup{id}_{\cF^\str(\ul{S^*})[0]}\bigr)\mathbb{F}_S(\cdot,\mu)=
\bigl(\textup{id}_{\cF^\str(\ul{S})[0]}\otimes \mathbb{D}_{\mu,W,i}^{S}\bigr)\mathbb{F}_S(\cdot,\mu)
\end{equation}
where $\mathbb{D}_{\mu,W,i}^{S}$ is the linear operator on $V_k^*\otimes\cdots\otimes V_1^*$ acting on $V_k^*[\xi_k]\otimes\cdots\otimes V_1^*[\xi_1]$ as scalar multiplication by
$\chi_W(q^{2(\mu-\xi_{1}-\cdots-\xi_i+\rho)})$.
\end{theorem}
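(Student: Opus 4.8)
The plan is to obtain the coordinate MR equations \eqref{MRiintro} directly from the dual coordinate MR equations \eqref{dualMReqnuniversalintro} by transporting them through the symmetry relation \eqref{sEV}; no further graphical computation is needed, only a change of spectral variables and a relabeling of tensor slots. Concretely, I would apply the dual coordinate MR equations not to $S$ but to the reversed-dual sequence $S^*=(V_k^*,\ldots,V_1^*)$, whose $l$-th component I denote $U_l=V_{k+1-l}^*$. Since $(S^*)^*\cong S$ under the canonical identification $V^{**}\cong V$, they read, for $j=0,\ldots,k$,
\[
\bigl(\textup{id}_{\cF^\str(\ul{S^*})[0]}\otimes\mathbb{L}_{W,j}^{\vee,S^*}\bigr)\mathbb{F}_{S^*}(\alpha,\cdot)=\bigl(\mathbb{D}_{\alpha,W,j}^{\vee,S^*}\otimes\textup{id}_{\cF^\str(\ul S)[0]}\bigr)\mathbb{F}_{S^*}(\alpha,\cdot),
\]
where $\mathbb{L}^{\vee,S^*}_{W,j}$ is a difference operator in the second spectral variable acting on the second slot $\cF^\str(\ul S)[0]$, and $\mathbb{D}^{\vee,S^*}_{\alpha,W,j}$ acts on the first slot $\cF^\str(\ul{S^*})[0]$.

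Next I would invert \eqref{sEV}: since $P$ is an involution, \eqref{sEV} gives $\mathbb{F}_{S^*}(\alpha,\beta)=P\bigl(\mathbb{F}_S(-\beta-2\rho,-\alpha-2\rho)\bigr)$. Applying $P$ to both sides of the displayed identity, using $P(A\otimes B)=(B\otimes A)P$ and the fact that $P$ commutes with the spectral shifts, interchanges the two tensor slots. Setting $\lambda:=-\beta-2\rho$ and $\mu:=-\alpha-2\rho$, this produces an identity for $\mathbb{F}_S(\lambda,\mu)$ in which $\mathbb{L}^{\vee,S^*}_{W,j}$ now acts on the slot $\cF^\str(\ul S)[0]$ and on the variable $\beta=-\lambda-2\rho$, whereas $\mathbb{D}^{\vee,S^*}_{\alpha,W,j}$ now acts on the slot $\cF^\str(\ul{S^*})[0]$.

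The substantive step is to identify the transported operators with those in the statement. For the difference operator I would substitute $\beta=-\lambda-2\rho$ into the definition of $\mathbb{L}^{\vee,S^*}_{W,j}$ and use $V^{**}\cong V$: the sequence $U_k^*,\ldots,U_1^*$ of dual components entering the weight shifts is exactly $V_1,\ldots,V_k$, so every argument $\beta+h_{(\ldots)}$ becomes $-\lambda-2\rho+h_{(\ldots)}$, the shift $\beta-\sigma$ becomes $\lambda+\sigma$, and the product of $R$- and inverse-$R^{21}$-factors, whose split between forward and backward factors sits at position $j$ in the reversed order, turns into exactly the product defining $\mathbb{L}^S_{W,k-j}$. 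For the scalar operator I would use that $\mathbb{D}^{\vee,S^*}_{\alpha,W,j}$ and $\mathbb{D}^S_{\mu,W,i}$ act on the same space $V_k^*\otimes\cdots\otimes V_1^*$: on the component where $V_m^*$ has weight $\xi_m$ the former multiplies by $\chi_W(q^{-2(\alpha+\xi_1+\cdots+\xi_{k-j}+\rho)})$, and substituting $\alpha=-\mu-2\rho$ collapses the exponent to $2(\mu-\xi_1-\cdots-\xi_{k-j}+\rho)$, the eigenvalue of $\mathbb{D}^S_{\mu,W,k-j}$. Reindexing $i:=k-j$, which again runs over $0,\ldots,k$, yields \eqref{MRiintro}.

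I expect the only genuine obstacle to be careful bookkeeping rather than conceptual difficulty: one must track the order-reversal $U_l=V_{k+1-l}^*$ and the double-dual identifications so that the split index transforms as $j\mapsto k-j$, ensure that the flip operators $R^{21}$, the flip $P$ and the weight-shift notation $h_{(\ldots)}$ interact consistently, and verify that every spectral argument lands on $-\lambda-2\rho+h_{(\ldots)}$. Checking the endpoints $i=0$ and $i=k$ against the known (dual) MR equations offers a reassuring consistency test.
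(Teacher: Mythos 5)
Your proposal is correct and takes essentially the same route as the paper's own proof of Theorem \ref{coordMRthm}: there the coordinate MR equations are likewise obtained by applying the symmetry \eqref{symmF} to the dual coordinate MR equations \eqref{dualMReqnuniversal} with $S$ replaced by $S^*$ and the split index replaced by $k-i$, followed by the change of variables $(\lambda,\mu)\mapsto(-\mu-2\rho,-\lambda-2\rho)$ and identification of the transported operators from the explicit formula in Definition \ref{idualMRoper}. Your bookkeeping (the order reversal $U_l=V_{k+1-l}^*$, the index swap $j\mapsto k-j$, and the eigenvalue computation collapsing to $\chi_W(q^{2(\mu-\xi_1-\cdots-\xi_i+\rho)})$) matches the paper's computation.
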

This is part of Theorem \ref{coordMRthm} in the main text. For $i=k$ the equations \eqref{MRiintro} are the MR equations from \cite[Thm. 1.1]{Etingof&Varchenko-2000} (note that $\mathbb{D}_{\mu,W,k}^S$ acts on $(V_k^*\otimes\cdots\otimes V_1^*)[0]$ as scalar multiplication by $\chi_W(q^{2(\mu+\rho)})$, so the right hand side of \eqref{MRiintro} reduces to $\chi_W(q^{2(\mu+\rho)})\mathbb{F}_S(\cdot,\mu)$ for $i=k$).

The coordinate MR equations\eqref{MRiintro} imply the following system of difference equations for the partially normalised $k$-point weighted trace function
$\mathbb{T}_S^{v_1,\ldots,v_k}(\lambda,\mu)$ defined by \eqref{normEVintro}.

\begin{corollary}
Fix weight vectors $v_j\in V_j[\nu_j]$ \textup{(}$1\leq j\leq k$\textup{)} such that $\sum_{j=1}^k\nu_j=0$. The corresponding
weighted trace function $\mathbb{T}_S^{v_1,\ldots,v_k}(\cdot,\mu)$ satisfies the system
\begin{equation}\label{coordMRiintro}
\mathbb{L}_{W,i}^S\mathbb{T}_S^{v_1,\ldots,v_k}(\cdot,\mu)=\chi_W\bigl(q^{2(\mu+\nu_1+\cdots+\nu_{i})+2\rho}\bigr)\mathbb{T}_S^{v_1,\ldots,v_k}(\cdot,\mu)
\end{equation}
of coordinate MR equations for $W\in\textup{Ob}(\Rep)$ and $i=0,\ldots,k$.
\end{corollary}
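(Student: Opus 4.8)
The plan is to derive this system directly from the Coordinate MR equations of Theorem~\ref{coordMRthmintro} by substituting the explicit definition of $\mathbb{F}_S(\lambda,\mu)$ and extracting the appropriate tensor component. Since $\mathbb{T}_S^{v_1,\ldots,v_k}(\cdot,\mu)$ depends multilinearly on $(v_1,\ldots,v_k)$, the operator $\mathbb{L}_{W,i}^S$ is linear, and the proposed eigenvalue $\chi_W(q^{2(\mu+\nu_1+\cdots+\nu_i)+2\rho})$ depends on each $v_j$ only through its weight $\nu_j$, it suffices to treat the case where every $v_j$ is a basis vector $b_j\in\mathcal{B}_j$; the general weight-vector case then follows by expanding inside each fixed weight space $V_j[\nu_j]$ and using that all basis vectors occurring carry the same weight. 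So fix $\vec b=(b_1,\ldots,b_k)$ with $b_j\in\mathcal{B}_j\cap V_j[\nu_j]$ and $\sum_j\nu_j=0$.

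Inserting
\[
\mathbb{F}_S(\lambda,\mu)=\sum_{\vec c:\,\sum_j\wt(c_j)=0}\mathbb{T}_S^{c_1,\ldots,c_k}(\lambda,\mu)\otimes\mathbb{X}_{\mu,S^*}(c_k^*\otimes\cdots\otimes c_1^*)
\]
into \eqref{MRiintro} and using that $\mathbb{L}_{W,i}^S$ acts on the first leg $\cF^\str(\ul{S})[0]$ (a difference operator in $\lambda$, leaving the $\mu$-dependent second leg untouched) while $\mathbb{D}_{\mu,W,i}^S$ acts on the second leg $\cF^\str(\ul{S^*})[0]$, equation \eqref{MRiintro} becomes
\[
\sum_{\vec c}\bigl(\mathbb{L}_{W,i}^S\mathbb{T}_S^{c_1,\ldots,c_k}(\cdot,\mu)\bigr)\otimes\mathbb{X}_{\mu,S^*}(c_k^*\otimes\cdots\otimes c_1^*)=\sum_{\vec c}\mathbb{T}_S^{c_1,\ldots,c_k}(\cdot,\mu)\otimes\mathbb{D}_{\mu,W,i}^S\bigl(\mathbb{X}_{\mu,S^*}(c_k^*\otimes\cdots\otimes c_1^*)\bigr).
\]
The key step is to identify the scalar by which $\mathbb{D}_{\mu,W,i}^S$ acts on $\mathbb{X}_{\mu,S^*}(c_k^*\otimes\cdots\otimes c_1^*)$. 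The operator $\mathbb{X}_{\mu,S^*}$ is a tensor product of the automorphisms $\mathbb{Q}_{V_j^*}(\cdots)^{-1}$, which are morphisms in $\overline{\mathcal{N}}_{\textup{fd}}$ and hence grade-preserving, with the $h$-shifts in their spectral arguments depending only on the (preserved) weights of the remaining legs. Writing $\gamma_j=\wt(c_j)$, the vector $c_j^*$ has weight $-\gamma_j$ in $V_j^*$, so $\mathbb{X}_{\mu,S^*}(c_k^*\otimes\cdots\otimes c_1^*)$ lies in the single multi-weight space $V_k^*[-\gamma_k]\otimes\cdots\otimes V_1^*[-\gamma_1]$. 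Applying the definition of $\mathbb{D}_{\mu,W,i}^S$ with $\xi_j=-\gamma_j$ then gives the scalar $\chi_W(q^{2(\mu-\xi_1-\cdots-\xi_i+\rho)})=\chi_W(q^{2(\mu+\gamma_1+\cdots+\gamma_i)+2\rho})$.

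Because $\mathbb{X}_{\mu,S^*}$ is an automorphism and $\{c_k^*\otimes\cdots\otimes c_1^*\}$, ranging over all $\vec c$ with $\sum_j\wt(c_j)=0$, is a basis of $\cF^\str(\ul{S^*})[0]$, the vectors $\{\mathbb{X}_{\mu,S^*}(c_k^*\otimes\cdots\otimes c_1^*)\}$ form a basis of that space as well. Comparing the coefficient of the basis vector $\mathbb{X}_{\mu,S^*}(b_k^*\otimes\cdots\otimes b_1^*)$ on the two sides of the displayed identity therefore yields
\[
\mathbb{L}_{W,i}^S\mathbb{T}_S^{b_1,\ldots,b_k}(\cdot,\mu)=\chi_W\bigl(q^{2(\mu+\nu_1+\cdots+\nu_i)+2\rho}\bigr)\mathbb{T}_S^{b_1,\ldots,b_k}(\cdot,\mu),
\]
which is \eqref{coordMRiintro} for basis vectors, and hence for all weight vectors by the reduction above. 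The only genuine obstacle is the bookkeeping in the second step: one must confirm that $\mathbb{X}_{\mu,S^*}$ is grade-preserving (so that no hidden weight shift is introduced) and correctly track the dual weights $\xi_j=-\wt(c_j)$ together with the reversed ordering of the factors in $S^*$, since these are precisely what convert the eigenvalue $\chi_W(q^{2(\mu-\xi_1-\cdots-\xi_i+\rho)})$ of $\mathbb{D}_{\mu,W,i}^S$ into the asserted $\chi_W(q^{2(\mu+\nu_1+\cdots+\nu_i)+2\rho})$; everything else is the routine linear-algebraic extraction of a tensor component.
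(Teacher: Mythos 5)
Your proposal is correct and takes essentially the same route as the paper: the paper likewise deduces this corollary from the coordinate MR equations for $\mathbb{F}_S(\cdot,\mu)$ by expanding $\mathbb{F}_S$ over a homogeneous basis and exploiting that $\mathbb{D}_{\mu,W,i}^{S}$ acts by the weight-determined scalar while $\mathbb{X}_{\mu,S^*}$ is an $\mathfrak{h}$-linear, weight-preserving automorphism. The only difference is bookkeeping: the paper conjugates $\mathbb{X}_{\mu,S^*}$ into the first tensor leg via its opposite transpose $\mathbb{X}_{\mu,S^*}^{T*}$, first obtaining the equation \eqref{coordMRiNORM} for $\mathbb{T}_S^{\mathbb{X}_{\mu,S^*}^{T*}(v_1,\ldots,v_k)}$ and then invoking invertibility of $\mathbb{X}_{\mu,S^*}^{T*}$ on each multi-weight space, whereas you keep $\mathbb{X}_{\mu,S^*}$ in the second leg and compare coefficients with respect to the transformed basis $\{\mathbb{X}_{\mu,S^*}(c_k^*\otimes\cdots\otimes c_1^*)\}$ before extending by multilinearity within fixed weight spaces --- the two arguments are equivalent.
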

\noindent
{\bf Contents:} In Section \ref{Section preliminaries} we introduce notations and we recall from the first part \cite{DeClercq&Reshetikhin&Stokman-2022} of the paper the graphical calculus for the braided monoidal category $\mathcal{M}_{\textup{adm}}$ of admissible $U_q(\mathfrak{g})$-modules. In Section \ref{Section dynamical module category} we introduce the dynamical twist functor $\mathcal{F}^{\textup{dt}}$ and we show that it is strict monoidal. In Section \ref{Section dual q-KZB} and Section \ref{Section dual MR} we graphically derive dual $q$-KZB type equations and dual coordinate MR type equations for the spin-components $\mathfrak{t}_{\lambda,\mu,2\lambda+2\rho}^{\Phi,\Psi}$ of the weighted trace functions, respectively. In Section
\ref{Section Etingof-Varchenko normalization} we derive from these equations dual $q$-KZB equations and dual coordinate MR equations for the Etingof-Varchenko \cite{Etingof&Varchenko-2000} universal weighted trace function $\mathbb{F}_S(\lambda,\mu)$, and we relate them to the dual $q$-KZB and dual MR equations from \cite[Thm. 1.4 \& Thm. 1.2]{Etingof&Varchenko-2000}. We furthermore use the symmetry property of $\mathbb{F}_S(\lambda,\mu)$ to re-obtain the $q$-KZB equations \cite[Thm. 1.3]{Etingof&Varchenko-2000} for $\mathbb{F}_S(\lambda,\mu)$ and to obtain coordinate MR equations for $\mathbb{F}_S(\lambda,\mu)$. Finally, in Section \ref{App} we include some useful identities for $R(\lambda)$ and $\mathbb{Q}(\lambda)$, derived using graphical calculus.

\vspace{.3cm}
\noindent
{\bf Discussion and outlook:}  The upshot of our graphical derivations of the dual $q$-KZB and dual MR equations for the spin-components $\mathfrak{t}_{\lambda,\mu,2\lambda+2\rho}^{\Phi,\Psi}$ of the weighted trace functions compared to \cite{Etingof&Varchenko-2000} is that it is intuitive and that it requires no intricate algebraic computations. But unfortunately it requires serious algebraic manipulations to obtain from those the dual $q$-KZB equations and dual coordinate MR equations for the Etingof-Varchenko \cite{Etingof&Varchenko-2000} universal trace function $\mathbb{F}_S(\lambda,\mu)$.
 It would be interesting to graphically derive the dual $q$-KZB and coordinate MR equations directly for the universal trace function $\mathbb{F}_S(\lambda,\mu)$, starting from the appropriate graphical realization of $\mathbb{F}_S(\lambda,\mu)$. In addition, it would be interesting to derive the $q$-KZB equations and coordinate MR equations graphically, without resorting to the symmetry \eqref{sEV} of 
 $\mathbb{F}_S(\lambda,\mu)$.

{}From the harmonic analytic viewpoint, $k$-point weighted trace functions may be viewed as the quantum group analogs of elementary vector-valued 
spherical functions on $G\times G/\textup{diag}(G)$ with values in the $k$-fold tensor product of finite dimensional $G$-representations. These in turn may be viewed as elementary spin graph functions on the moduli space of $G$-connections on the cyclic graph with $k$ vertices (see \cite{Reshetikhin&Stokman-2023}). The action of biinvariant differential operators on their edges then produces the classical analogs of Macdonald-Ruijsenaars equations, while the difference of the coordinate actions of the quadratic Casimir on neighboring edges gives KZB equations (see \cite{Stokman&Reshetikhin-2020-A,Reshetikhin&Stokman-2020-B}). The results in these papers are for Riemannian symmetric spaces $G/K$ (split in case of \cite{Stokman&Reshetikhin-2020-A}), in which case the role of $k$-point weighted trace functions is taken over by the so-called {\it $k$-point spherical functions} from \cite{Stokman&Reshetikhin-2020-A}. It will be interesting to generalise the results in this paper to the quantum group analogs of multipoint spherical functions, which will require the theory of quantum symmetric pairs \cite{L,K2} and the extended graphical calculus for braided module categories, which already has partially been developed in \cite{B} and in \cite[Chpt. 6]{dC}. For further discussions how multipoint vertex operators and multipoint weighted trace and spherical functions are rooted in quantum field theory and harmonic analysis, we refer to \cite{ES, DeClercq&Reshetikhin&Stokman-2022, Reshetikhin&Stokman-2020-B, Stokman&Reshetikhin-2020-A, Reshetikhin&Stokman-2023} and references therein.

\vspace{.3cm}
\noindent \textbf{Conventions:} We write $M\in\mathcal{C}$ to denote that $M$ is an object in the category $\mathcal{C}$. For a category $\mathcal{C}$ and $M,N\in\mathcal{C}$ we write 
$\textup{Hom}_{\mathcal{C}}(M,N)$ for the class of morphisms $M\rightarrow N$ in $\mathcal{C}$. We will simply denote this class by $\textup{Hom}(M,N)$ if the category is clear from the context.\newline

\noindent \textbf{Acknowledgments:} H.D.C. is a former PhD fellow of the Research Foundation Flanders (FWO). This paper is based on Chapter 3 of her PhD thesis \cite{dC}. The work of J.S. and N.R. was supported by the Dutch Research Council (NWO), project number 613.009.1260. The work of N.R. was also supported by the Changjiang fund, the international collaboration grant BMSTC and ACZSP, no. Z221100002722017, by grant No. 075-15-2024-631 from the Ministry of Science and Higher Education of the Russian Federation and by the Simons Foundation.

\section{Graphical calculus for quantum group modules and quantum vertex operators}
\label{Section preliminaries}
In this section, we recall from \cite{DeClercq&Reshetikhin&Stokman-2022} the algebraic, diagrammatic and categorical prerequisites for this paper, and we recall the definition of quantum vertex operators.
\subsection{The quantum group \(U_q(\g)\) and the universal R-matrix}
\label{Section U_q}
Let \(\g\) be a semisimple finite-dimensional Lie algebra over \(\CC\), with symmetrizable Cartan matrix \(A = (a_{ij})_{i,j=1,\dots,r}\), and let \(D = \mathrm{diag}(d_i)_{i=1,\dots,r}\) be a diagonal matrix with mutually coprime, positive integer entries such that \(DA\) is symmetric. Choose a Cartan subalgebra \(\mathfrak{h}\)  of \(\g\) and a 
 Borel subalgebra \(\mathfrak{b}\) containing \(\mathfrak{h}\), and write \(\mathfrak{n}^+:=[\mathfrak{b},\mathfrak{b}]\). Let \(\Phi\subseteq \hh^*\) be the associated root system of \(\g\), and \(\{\alpha_i: i= 1,\dots,r\}\) a set of simple roots. Write \(\Lambda\) for the lattice of integral weights and \(Q^+\) for the cone \(\bigoplus_{i=1}^r\ZZ_+\alpha_i \). Write \(\langle\cdot,\cdot\rangle\) for the non-degenerate symmetric bilinear form on \(\hh^\ast\) defined by \(\langle\alpha_i,\alpha_j\rangle = d_ia_{ij} \). The form $\langle\cdot,\cdot\rangle$ is the complexification of a scalar product on the real form $\bigoplus_{i=1}^r\mathbb{R}\alpha_i$
 of $\mathfrak{h}^*$. In particular there exists a basis \(\{x_i: i=1,\dots,r\}\) of \(\hh\) such that \(\langle\mu,\nu\rangle = \sum_{i=1}^r\mu(x_i)\nu(x_i)\) for any \(\mu,\nu\in\hh^\ast\). We will identify $\mathfrak{h}^*\overset{\sim}{\longrightarrow}\mathfrak{h}$ via the resulting isomorphism $\mu\mapsto\sum_{i=1}^r\mu(x_i)x_i$.

For \(0<q<1\), we denote by \(U_q:=U_q(\g)\) the quantized universal enveloping algebra associated to \(\g\) over \(\CC\). We follow here the conventions of this paper's prequel \cite[\S 1.1]{DeClercq&Reshetikhin&Stokman-2022}. In particular, we will write \(E_i, F_i, q^h\), with \(i = 1,\dots,r\) and \(h\in\hh\) for the algebraic generators of \(U_q\), \((\Delta,\epsilon, S)\) for its Hopf algebra structure, and \(U^+:=U_q(\mathfrak{n}^+)\) for the subalgebra of \(U_q\) generated by the elements \(E_i\), \(i = 1,\dots,r\). The square of the antipode is an inner automorphism of $U_q$, in the sense that
\begin{equation}
\label{action of q^2rho}
S^2(X)=q^{2\rho}Xq^{-2\rho}\qquad\quad\forall\, X\in U_q,
\end{equation}
where \(\rho\in\hh^\ast\) is half the sum of the positive roots. 
The quantum universal enveloping algebra \(U_q\) is quasi-triangular. The corresponding 
universal $R$-matrix is of the form
\(\cR = \kappa\ol{\cR}\) with \(\ol{\cR}\) Lusztig's quasi R-matrix and \(\kappa:=q^{\sum_{i=1}^rx_i\otimes x_i}\), interpreted as elements in an appropriate completion of $U_q^{\otimes 2}$. For the precise conventions we refer to \cite[Section 1.4]{DeClercq&Reshetikhin&Stokman-2022}.
\subsection{Categories of \(U_q\)-and \(\hh\)-modules}
\label{Section categories of Uq and h-modules}
In the prequel to this paper \cite[\S 1.3]{DeClercq&Reshetikhin&Stokman-2022}, we considered several full subcategories of the category $\textup{Mod}_{U_q}$ of left \(U_q\)-modules. We recall here only the ones that will be of importance in the present paper. First, let us recall the notation 
\[M[\mu]=\{m\in M \,\, : \,\, q^h\cdot m=q^{\mu(h)}m\quad\,\forall\, h\in\mathfrak{h}\}
\]
for the weight space of weight \(\mu\in\hh^\ast\) in a left \(U_q\)-module \(M\). We denote by
\[
\wts(M) = \{\mu\in\hh^\ast: M[\mu]\neq \{0\}\}
\]
the set of weights of \(M\). The $U_q$-submodule $M^\prime:=\bigoplus_{\mu\in\mathfrak{h}^*}M[\mu]$ inherits an $\mathfrak{h}$-module structure by the formula $h\vert_{M[\mu]}:=\mu(h)\textup{id}_{M[\mu]}$ for $h\in\mathfrak{h}$. 
We say that \(M\) is \(\mathfrak{h}\)-semisimple (sometimes also called a \emph{type $1$ module}) if \(M=M^\prime\). For an $\mathfrak{h}$-semisimple $U_q$-module $M$ we denote by $\mathcal{B}_M$ a homogeneous basis of \(M\), i.e.\ a basis consisting solely of weight vectors.

\begin{definition}
	\label{M_adm def}
	The category \(\cM_{\mr{adm}}\) of admissible $U_q$-modules is the full subcategory of $\textup{Mod}_{U_q}$ consisting of $\mathfrak{h}$-semisimple, locally $U^+$-finite  $U_q$-modules $M$ such that \(\textup{dim}(M[\mu])<\infty\)
for all \(\mu\in\mathfrak{h}^*\) and 
	\[
	\textup{wts}(M)\subseteq \bigcup_{i=1}^k\{\lambda_i-Q^+\}
	\]
	for some finite subset \(\{\lambda_1,\ldots,\lambda_k\}\) of \(\mathfrak{h}^*\).
\end{definition}
We write \(\pi_M\) for the representation map of \(M\in\cM_{\mr{adm}}\). The category \(\cM_{\mr{adm}}\) is braided monoidal.
The monoidal structure is obtained from the coalgebra structure of $U_q$ in the usual way (see e.g.\ \cite[\S 1.3]{DeClercq&Reshetikhin&Stokman-2022}). In particular,
the unit object $\mathbb{1}$ is $\mathbb{C}$, viewed as $U_q$-module with action $X\cdot\lambda:=\epsilon(X)\lambda$ for $X\in U_q$ and $\lambda\in\mathbb{C}$. The 
commutativity constraints are
\[
c_{M,N} = P_{M,N}\cR_{M,N},
\]
where
\begin{equation}
\label{transposition}
P_{M,N}: M\otimes N \to N\otimes M: m\otimes n\mapsto n\otimes m
\end{equation}
and \(\cR_{M,N}\) is the action of the universal R-matrix \(\cR\) on \(M\otimes N\), see \cite[\S 1.4]{DeClercq&Reshetikhin&Stokman-2022}.
In addition, \(\cM_{\mr{adm}}\) is equipped with a twist, given by the action of the ribbon element
\begin{equation}\label{qC}
\vartheta := q^{2\rho}\,m^{\mr{op}}((\id\otimes S^{-1})\cR^{-1}),
\end{equation}
where \(m^{\mr{op}}:U_q\otimes U_q\to U_q: a\otimes b\mapsto ba\) is the opposite multiplication, see \cite[\S 1.5]{DeClercq&Reshetikhin&Stokman-2022}. The resulting $U_q$-linear operators $\vartheta_M$ on $M\in\mathcal{M}_{\textup{adm}}$ are the {\it quantum Casimir operators}.
However, \(\cM_{\mr{adm}}\) is not a ribbon category by the lack of a duality structure.

For any \(\lambda\in\hh^\ast\), we denote by \(M_\lambda\in\cM_{\mr{adm}}\) the Verma module with highest weight \(\lambda\) relative to the Borel subalgebra $\mathfrak{b}$
(see \cite[\S 1.2]{DeClercq&Reshetikhin&Stokman-2022}), and we write \(\pi_\lambda:=\pi_{M_\lambda}\). We fix a highest weight vector $0\not=\mathbf{m}_\lambda\in M_\lambda[\lambda]$ once and for all, and denote by $\mathbf{m}_\lambda^*\in M_\lambda^*$ the unique linear functional such that $\mathbf{m}_\lambda^*(\mathbf{m}_\lambda)=1$
and $\mathbf{m}_\lambda^*\vert_{M_\lambda[\mu]}=0$ for $\mu\not=\lambda$.

Set
\[
\hh_{\mathrm{reg}}^\ast:=\{\lambda\in\hh^\ast: \langle\lambda,\alpha^\vee\rangle\notin\ZZ, \forall\alpha\in\Phi\},
\]
where \(\alpha^\vee = \frac{2\alpha}{\langle \alpha,\alpha\rangle}\). Then \(M_{\lambda+\nu}\) is irreducible for all \(\lambda\in\hh_{\mathrm{reg}}^\ast\) and \(\nu\in\Lambda\). We call \(\hh_{\mathrm{reg}}^\ast\) the set of generic weights. 

\begin{definition}
	We denote by \(\Rep\) the full subcategory of \(\cM_{\mr{adm}}\) consisting of the finite-dimensional \(U_q\)-modules.
\end{definition}
The category \(\Rep\) is a semisimple ribbon category, with braiding and twist inherited from \(\cM_{\mr{adm}}\), and with left and right duality given by \((V^\ast,e_V,\iota_V)\) and \((V^\ast,\widetilde{e}_V,\widetilde{\iota}_V)\), where
\begin{align}
\label{dualities_A}
&e_V: V^\ast\otimes V\to \CC: f\otimes v \mapsto f(v),& \qquad
&\widetilde{e}_V: V\otimes V^\ast \to \CC: v\otimes f\mapsto f\left(q^{2\rho}\cdot v\right),& \\
\label{dualities_B}
&\iota_V: \CC\to V\otimes V^\ast: 1\mapsto \sum_{b\in \mathcal{B}_V} b\otimes b^\ast,&\qquad
&\widetilde{\iota}_V: \CC\to V^\ast\otimes V: 1\mapsto \sum_{b\in\mathcal{B}_V} b^\ast\otimes q^{-2\rho}\cdot b.&
\end{align}

In this paper we consider spaces of intertwiners of $U_q$-modules that are parametrized by $\mathfrak{h}^*$-graded vector spaces. We collect here some convenient notations and terminology.
\begin{definition}
	We denote by \(\cN_{\mr{adm}}\) the category of \(\hh^\ast\)-graded vector spaces with finite-dimensional graded components. 
\end{definition}
The morphisms of \(\cN_{\mr{adm}}\) are the linear maps preserving the grading.
The category \(\cN_{\mr{adm}}\) is naturally isomorphic to the category of semisimple \(\mathfrak{h}\)-modules with finite-dimensional joint eigenspaces for the action of \(\mathfrak{h}\), from which it inherits the structure of a symmetric monoidal category. In particular, the unit object $\mathbb{C}_0$ in $\cN_{\mr{adm}}$ is the trivial one-dimensional $\mathfrak{h}$-module.

The graded components of \(M\in\cN_{\mr{adm}}\) are denoted by \(M[\mu]\) (\(\mu\in\mathfrak{h}^*\)). For the unit object we thus have $\mathbb{C}_0[0]=\mathbb{C}_0$.
We denote by $\mathcal{B}_M$ a homogeneous basis of $M$, i.e. a basis
\(\{b_i\}_i\) of $M$ such that $b_i\in M[\mu_i]$ for some $\mu_i\in\mathfrak{h}^*$. 

The forgetful functor 
\[
\cF^{\mr{frgt}}: \cM_{\mr{adm}}\to\cN_{\mr{adm}}
\]
maps objects \(M\in\cM_{\mr{adm}}\) and morphisms \(A\in\Hom(M,N)\) 
to \(\ul{M}\) and \(\ul{A}\) respectively, where \(\ul{M}\) is the semisimple $\mathfrak{h}$-module $M$ viewed as 
$\mathfrak{h}^*$-graded vector space, 
and \(\ul{A}\) is the linear map \(A\), viewed as morphism of \(\mathfrak{h}^*\)-graded vector spaces.

\begin{definition}
	Let \(\cN_{\mr{fd}}\) be the full subcategory of \(\cN_{\mr{adm}}\) consisting of finite-dimensional \(\hh^\ast\)-graded vector spaces with weights contained in \(\Lambda\). 
\end{definition}
The category \(\cN_{\mr{fd}}\) is a symmetric tensor category, with commutativity constraints inherited from \(\cN_{\mr{adm}}\), left duality \((V^\ast,e_V,\iota_V)\) defined as in (\ref{dualities_A}), and right duality \((V^\ast,\widehat{e}_V,\widehat{\iota}_V)\) given by
\begin{align}
\label{dualities_C}
&\widehat{e}_V:\ V\otimes V^{\ast} \to \CC: v\otimes f \mapsto f(v),& \qquad
&\widehat{\iota}_V:\ \CC\to V^{\ast}\otimes V: 1\mapsto \sum_{b\in\mathcal{B}_V} b^\ast\otimes b.&
\end{align}

\subsection{Strict monoidal categories}\label{SmcSection}

Through a standard construction due to Mac Lane \cite{MacLane-1971}, a monoidal category \(\cC = (\cC, \otimes_{\cC}, \mathbb{1}_{\cC},a^{\cC},\ell^{\cC},r^{\cC})\) with tensor product \(\otimes_{\cC}\), unit object \(\mathbb{1}_{\cC}\), associativity constraint \(a^{\cC}\), and left and right unit constraints \(\ell^{\cC}\) and \(r^{\cC}\), is monoidal equivalent to
a strict monoidal category \((\cC^\str,\tens_{\cC},\emptyset_{\cC})\). We recall the construction of \(\cC^\str\) and the definition of the monoidal equivalences, as these will play an important role in this paper.

The objects of \(\cC^\str\) are \(\{\emptyset_{\cC}\}\cup\{(V_1,\ldots,V_k)\,\,:\,\, k\in\mathbb{Z}_+,\, V_i\in\cC\}\). We say that an object of the form $(V_1,\ldots,V_k)$ has length $k$,
and we declare $\emptyset_{\cC}$ to be the unique object of length zero.
To an object \(S\in\cC^\str\) we associate an object \(\cF^\str_\cC(S)\in\cC\) by
\[
\cF^\str_\cC(\emptyset_\cC):=\mathbb{1}_\cC,\qquad \cF^\str_\cC((V_1,\ldots,V_k)):=V_1\otimes_{\cC}(V_2\otimes_{\cC}(\cdots\otimes_{\cC} (V_{k-1}\otimes_{\cC}V_k)\cdots)).
\]
For \(S,T\in\cC^\str\), the class of morphisms \(\Hom_{\cC^\str}(S,T)\) is defined as \(\Hom_{\cC}(\cF^\str_\cC(S),\cF^\str_\cC(T))\). The composition of $\mathcal{C}$ turns $\mathcal{C}^\str$ into a category. Furthermore, \(\cF^\str_\cC\) canonically extends to a functor
\[
\cF^\str_\cC: \cC^\str\rightarrow\cC
\]
mapping \(A\in\Hom_{\cC^\str}(S,T)\) to $A$, viewed as morphism in \(\textup{Hom}_{\cC}(\cF^\str_\cC(S),\cF^\str_\cC(T))\). It is an equivalence of categories. 

Note that the map \(\cF^\str_\cC: \textup{Ob}(\cC^\str)\rightarrow\textup{Ob}(\cC)\) is surjective but not injective. So a morphism $A$ in $\mathcal{C}^\str$ is only determined
by its underlying morphism $\cF^\str_\cC(A)$ if one also specifies the domain and codomain of $A$. 
\begin{definition}
Let \(B\in\textup{Hom}_{\cC}(V,W)\). We say that the morphism \(A\in\textup{Hom}_{\cC^\str}(S,T)\) is represented by \(B\)
if 
\(\cF^\str_\cC(A)=B\).
\end{definition}

A quasi-inverse of \(\cF^\str_\cC\) is the functor \(\cG^\str_\cC: \cC\rightarrow\cC^\str\) mapping \(V\in\cC\) 
to the sequence \((V)\in\mathcal{C}^\str\) of length one, and mapping the morphism \(A\in\textup{Hom}_{\cC}(V,W)\) to the unique morphism in \(\textup{Hom}_{\cC^\str}((V),(W))\) represented by \(A\). Then
\[
\cF_\cC^\str\circ\cG_\cC^\str=\textup{id}_{\cC},\qquad\qquad\cG_\cC^\str\circ\cF_\cC^\str\simeq\textup{id}_{\cC^\str}.
\]
The natural isomorphism \(J^{\cC}=(J_S^{\cC})_{S\in\cC^\str}: \textup{id}_{\cC^\str}\overset{\sim}{\longrightarrow} \cG^\str_\cC\circ\cF^\str_\cC\) consists of the {\it fusion morphisms} \(J_S^\cC\) in $\cC^\str$, which are the isomorphisms \(J_S^\cC\in\textup{Hom}_{\cC^\str}(S,(\cF^\str_\cC(S)))\) representing \(\id_{\cF^\str_\cC(S)}\in\textup{End}_{\cC}(\cF_\cC^\str(S))\).

The category \(\cC^\str\) can be promoted to a strict monoidal category \((\cC^\str,\tens_{\cC},\emptyset_{\cC})\) with tensor product \(\tens_\cC\) given on objects by concatenation of sequences. For morphisms \(A\in\textup{Hom}_{\cC^\str}(S,T)\) and \(B\in\textup{Hom}_{\cC^\str}(S^\prime,T^\prime)\), the morphism
\(A\tens_\cC B\in\Hom_{\cC^\str}(S\tens_\cC S', T\tens_\cC T')\) is represented by
\[
\sigma_{T,T^\prime}^{\cC}\bigl(\cF_\cC^\str(A)\otimes_{\cC}\cF_\cC^\str(B)\bigr)(\sigma_{S,S^\prime}^{\cC})^{-1}
\]
with 
\begin{equation}
\label{sigma def}
\sigma_{S,S^\prime}^{\cC}: \cF_\cC^\str(S)\otimes_\cC\cF_\cC^\str(S^\prime)\overset{\sim}{\longrightarrow}\cF_\cC^\str(S\tens_\cC S^\prime)
\end{equation}
the isomorphism that moves the closing brackets in \(\cF^\str_\cC(S)\) one by one across \(\cF_\cC^\str(S^\prime)\) using the associativity constraints (in the case that \(S=\emptyset_\cC\) (resp. \(S^\prime=\emptyset_\cC\)), the isomorphism \(\sigma_{S,S^\prime}^\cC\) is \(\ell_{\cF_\cC^\str(S')}^\cC\) (resp. \(r_{\cF_\cC^\str(S)}^\cC\))).
See e.g.\ \cite[\S XI.5]{Kassel-1995} for details. 

The functors $\cF_{\cC}^\str$ extend to an equivalence of monoidal categories,
\[
(\cF_{\cC}^\str,\textup{id}_{\mathbb{1}_\cC},\sigma^\cC): \cC^\str\rightarrow\cC
\]
with monoidal structure $\sigma^{\cC}=(\sigma^{\cC}_{S,T})_{S,T\in\cC^\str}$ (see e.g.\ \cite[\S 2.4]{Etingof&al-2015} or \cite[\S XI.4]{Kassel-1995}
for the definition and basic properties of monoidal functors). The corresponding monoidal extension of the quasi-inverse $\cG_{\cC}^\str$ is
\[
(\cG_{\cC}^\str,J_{\emptyset_\cC}^\cC,\tau^\cC): \cC\rightarrow\cC^\str
\]
with 
\(\tau^\cC=(\tau_{U,V}^{\cC})_{U,V\in\cC}\), where \(\tau_{U,V}^{\cC}: (U)\tens_\cC (V)\overset{\sim}{\longrightarrow}(U\otimes_\cC V)\) is defined by
\[
\tau_{U,V}^{\cC}:=J_{(U,V)}^{\cC}.
\]
Braiding, twists and duality structures naturally extend from \(\cC\) to \(\cC^\str\), cf.\ e.g.\ \cite[\S 2.3]{DeClercq&Reshetikhin&Stokman-2022}. For instance, if \(\cC\) is braided with braiding \((c_{U,V})_{U,V\in\cC}\), then the braiding \((c_{S,T})_{S,T\in\cC^\str}\) on \(\cC^\str\) consists of the isomorphisms
\(c_{S,T}\in\textup{Hom}_{\cC^\str}(S\tens_\cC T,T\tens_\cC S)\) represented by 
\[\sigma_{T,S}^\cC\circ c_{\cF^\str(S),\cF^\str(T)}\circ(\sigma_{S,T}^\cC)^{-1}.
\]

If $\mathcal{C}$ and $\mathcal{D}$ are monoidal categories and $F: \mathcal{C}\rightarrow\mathcal{D}$ is a functor, then we say that $F$ is {\it a strict monoidal functor}
if $(F,\textup{id}_{\mathbb{1}_{\mathcal{D}}},\textup{id}): \mathcal{C}\rightarrow\mathcal{D}$ is a monoidal functor, with $\textup{id}=\textup{id}_{F\circ\otimes_\cC}$ the trivial monoidal structure, $\textup{id}_{V,W}:=\textup{id}_{F(V\otimes_\cC W)}$ (viewed as isomorphism $F(V)\otimes_\cD F(W)\overset{\sim}{\longrightarrow} F(V\otimes_\cC W)$).
In \cite[\S 1]{Joyal&Street-1993} Joyal and Street show that a monoidal functor $\mathcal{C}\rightarrow\mathcal{D}$ naturally lifts to a strict monoidal functor $\mathcal{C}^\str\rightarrow\mathcal{D}^\str$. This is based on a general coherence theorem for monoidal functors (see \cite[Thm. 1.7]{Joyal&Street-1993}). In case one considers a strict monoidal functor $\mathcal{C}\rightarrow\mathcal{D}$, the induced strict monoidal functor $\mathcal{C}^\str\rightarrow\mathcal{D}^\str$ can be explicitly described as follows.
\begin{lemma}\label{stricttensorfunctorlemma}
Let $\cC$ and $\cD$ be two monoidal categories and let
\[
F=(F,\textup{id}_{\mathbb{1}_{\cD}},\textup{id}): \cC\rightarrow\cD
\]
be a strict monoidal functor. Then there exists a unique strict monoidal functor \(\widetilde{F}=(\widetilde{F},\textup{id}_{\emptyset_\cD},\textup{id}): \cC^\str\rightarrow\cD^\str\) such that
\begin{equation}
\label{comm2}
\cG_\cD^\str\circ F=\widetilde{F}\circ\cG_\cC^\str\qquad\mr{and}\qquad F\circ\cF_\cC^\str=\cF_\cD^\str\circ\widetilde{F}.
\end{equation}
\end{lemma}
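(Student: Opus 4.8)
The plan is to write down the only possible formula for $\widetilde{F}$ and then verify, in turn, functoriality, strict monoidality, the two relations in \eqref{comm2}, and uniqueness. On objects I would set $\widetilde{F}(\emptyset_\cC):=\emptyset_\cD$ and $\widetilde{F}((V_1,\ldots,V_k)):=(F(V_1),\ldots,F(V_k))$. The crucial observation is that, since $F$ is strict monoidal, $F(\cF_\cC^\str(S))=\cF_\cD^\str(\widetilde{F}(S))$ for every $S\in\cC^\str$: both sides equal the right-nested product $F(V_1)\otimes_\cD(\cdots\otimes_\cD F(V_k))$ (and $\mathbb{1}_\cD$ when $S=\emptyset_\cC$). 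Because $\Hom_{\cC^\str}(S,T)=\Hom_\cC(\cF_\cC^\str(S),\cF_\cC^\str(T))$ and likewise for $\cD$, this identity lets me define, for $A\in\Hom_{\cC^\str}(S,T)$, the morphism $\widetilde{F}(A)$ to be the one represented by $F(A)$. Functoriality is then immediate, since composition and identities in $\cC^\str$ and $\cD^\str$ are those of the underlying morphisms and $F$ is a functor.

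For strict monoidality, the object part is clear because $\tens$ is concatenation, which $\widetilde{F}$ manifestly respects, and $\widetilde{F}(\emptyset_\cC)=\emptyset_\cD$. For morphisms I would start from the fact that $A\tens_\cC B$ is represented by $\sigma_{T,T'}^\cC\bigl(\cF_\cC^\str(A)\otimes_\cC\cF_\cC^\str(B)\bigr)(\sigma_{S,S'}^\cC)^{-1}$ and apply $F$. The key technical input is that $F$ carries the rebracketing isomorphism $\sigma^\cC$ of \eqref{sigma def} to the corresponding one for $\cD$, that is $F(\sigma_{S,S'}^\cC)=\sigma_{\widetilde{F}(S),\widetilde{F}(S')}^\cD$. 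This holds because $\sigma^\cC$ is a composite of associativity constraints $a^\cC$ (and, in the degenerate cases, unit constraints $\ell^\cC,r^\cC$), while the coherence axioms for a monoidal functor, specialised to the trivial monoidal structure of $F$, reduce to $F(a_{U,V,W}^\cC)=a_{F(U),F(V),F(W)}^\cD$ together with $F(\ell_V^\cC)=\ell_{F(V)}^\cD$ and $F(r_V^\cC)=r_{F(V)}^\cD$. Combining this with strictness of $F$ on tensor products of morphisms, $F(f\otimes_\cC g)=F(f)\otimes_\cD F(g)$, shows that $F(A\tens_\cC B)$ is represented precisely by $\sigma_{\widetilde{F}(T),\widetilde{F}(T')}^\cD(\cdots)(\sigma_{\widetilde{F}(S),\widetilde{F}(S')}^\cD)^{-1}$, i.e.\ $\widetilde{F}(A\tens_\cC B)=\widetilde{F}(A)\tens_\cD\widetilde{F}(B)$.

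The two relations in \eqref{comm2} are then read off directly. The second holds on objects by the displayed equality $F(\cF_\cC^\str(S))=\cF_\cD^\str(\widetilde{F}(S))$ and on morphisms because both $F\circ\cF_\cC^\str$ and $\cF_\cD^\str\circ\widetilde{F}$ send $A$ to the underlying morphism $F(A)$; the first holds because $\cG_\cC^\str(V)=(V)$ gives $\widetilde{F}(\cG_\cC^\str(V))=(F(V))=\cG_\cD^\str(F(V))$, with the morphism statement again reducing to $F(A)$. For uniqueness I would argue that any strict monoidal $\widetilde{F}'$ satisfying \eqref{comm2} is forced: the relation $\cG_\cD^\str\circ F=\widetilde{F}'\circ\cG_\cC^\str$ forces $\widetilde{F}'((V))=(F(V))$ for all $V\in\cC$, and since $(V_1,\ldots,V_k)=(V_1)\tens_\cC\cdots\tens_\cC(V_k)$, strict monoidality forces $\widetilde{F}'((V_1,\ldots,V_k))=(F(V_1),\ldots,F(V_k))$. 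Once the objects are fixed, each $\widetilde{F}'(A)$ is determined because $\cF_\cD^\str$ acts as the identity on Hom-classes and $\cF_\cD^\str(\widetilde{F}'(A))=F(\cF_\cC^\str(A))$ by the second relation.

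I expect the only genuinely delicate point to be uniqueness on objects: since $\cF_\cD^\str$ is surjective but not injective on objects, the second relation of \eqref{comm2} by itself does not pin $\widetilde{F}$ down on objects, and one must invoke the first relation together with strict monoidality to rigidify the assignment. The other place where care is needed is the identity $F(\sigma^\cC)=\sigma^\cD$, which is exactly where strictness of $F$ (rather than mere monoidality) is used.
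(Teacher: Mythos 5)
Your proposal is correct and takes essentially the same approach as the paper: the paper also defines $\widetilde{F}$ by the forced formulas on objects and declares $\widetilde{F}(A)$ to be the morphism represented by $F(\cF_\cC^\str(A))$, then dismisses the remaining verification as ``a straightforward check.'' Your write-up simply fills in that check — notably the identity $F(\sigma^\cC)=\sigma^\cD$ coming from the coherence axioms with trivial monoidal structure, and the careful uniqueness argument combining the first relation of \eqref{comm2} with strict monoidality — all of which is consistent with the paper's argument.
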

\begin{proof}
The requirement that the functor \(\widetilde{F}\) extends to a strict monoidal functor satisfying \eqref{comm2} forces \(\widetilde{F}\) to be defined by 
\[
\widetilde{F}(\emptyset_\cC)=\emptyset_\cD,\qquad \widetilde{F}((V_1,\ldots,V_k))=(F(V_1),\ldots,F(V_k))
\]
on objects, while for a morphism \(A\in\textup{Hom}_{\cC^\str}(S,T)\), \(\widetilde{F}(A)\) has to be the unique morphism in \(\textup{Hom}_{\cD^\str}(\widetilde{F}(S),\widetilde{F}(T))\)
represented by
\[
F(\cF_\cC^\str(A))\in  \textup{Hom}_{\cD}(F(\cF_\cC^\str(S)),F(\cF_\cC^\str(T))=
\textup{Hom}_{\cD}(\cF_\cD^\str(\widetilde{F}(S)),\cF_\cD^\str(\widetilde{F}(T))).
\]
It is a straightforward check that these formulas indeed give rise to a strict monoidal functor \(\widetilde{F}: \cC^\str\rightarrow\cD^\str\) satisfying \eqref{comm2}.
\end{proof}
\begin{remark}
The equalities \eqref{comm2} are in fact equalities of monoidal functors, i.e.,
\begin{equation}\label{comm3}
\begin{split}
(\cG_\cD^\str,J_{\emptyset_\cD}^{\cD},\tau^\cD)\circ (F,\textup{id}_{\mathbb{1}_{\cD}},\textup{id})&=(\widetilde{F},\textup{id}_{\emptyset_\cD},\textup{id})\circ (\cG_\cC^\str,J_{\emptyset_\cC}^{\cC},\tau^\cC),\\
(F,\textup{id}_{\cD},\textup{id})\circ(\cF_\cC^\str,\textup{id}_{\mathbb{1}_\cC},\sigma^\cC)&=(\cF_\cD^\str,\textup{id}_{\mathbb{1}_\cD},\sigma^\cD)\circ (\widetilde{F},\textup{id}_{\emptyset_{\cD}},\textup{id}).
\end{split}
\end{equation}
\end{remark}
In the next section we will consider lifts of particular (non-strict) monoidal functors naturally appearing in the theory of multipoint weighted quantum group trace functions.

From now on we omit the super-- or sub-indices \(\cC\) in \(\cF_\cC^\str, \cG_\cC^\str,\otimes_\cC,\tens_\cC,\emptyset_\cC,\sigma^\cC, J^\cC...\) and we write objects \((V)\) of length 1 in \(\cC^\str\) simply as \(V\), when no confusion can arise.

\subsection{Graphical calculus for braided monoidal categories with twist}\label{gcfirst}
In the prequel to this paper \cite{DeClercq&Reshetikhin&Stokman-2022}, we have defined a graphical calculus for braided monoidal categories with twist, as an extension of the well-known graphical calculus for ribbon categories, and established the compatibility between the former and the latter. We briefly recall some of the key definitions here.

Let $\mathcal{D}$ be a braided monoidal category with twist. Let \(\mathbb{B}_{\cD^\str}\) be the strict braided monoidal category with twist whose objects are the tuples \((S_1,\dots,S_k)\) with \(S_i\in\cD^\str\), including the empty tuple \(\emptyset\), and whose morphisms are the isotopy classes of \(\cD^\str\)-colored ribbon-braid graphs, as defined in \cite[\S 2.2 \& \S2.4]{DeClercq&Reshetikhin&Stokman-2022}. Composition of morphisms is tantamount to vertical stacking of the associated diagrams, whereas tensor product of morphisms corresponds to placing the diagrams next to each other. We orient the strands of a ribbon-braid graph by requiring that the orientation is outgoing (resp. incoming) at the top (resp. bottom) endpoints of the strands.

Figure \ref{morphism} depicts a coupon in \(\mathbb{B}_{\cD^\str}\) with strands labeled by objects \(S_i\), \(T_j\in\cD^\str\), colored by
\(A\in\textup{Hom}_{\cD^\str}(S_1\tens\cdots\tens S_k, T_1\tens\cdots\tens T_\ell)\). 
\begin{figure}[H]
		\centering
		\includegraphics[scale=0.8]{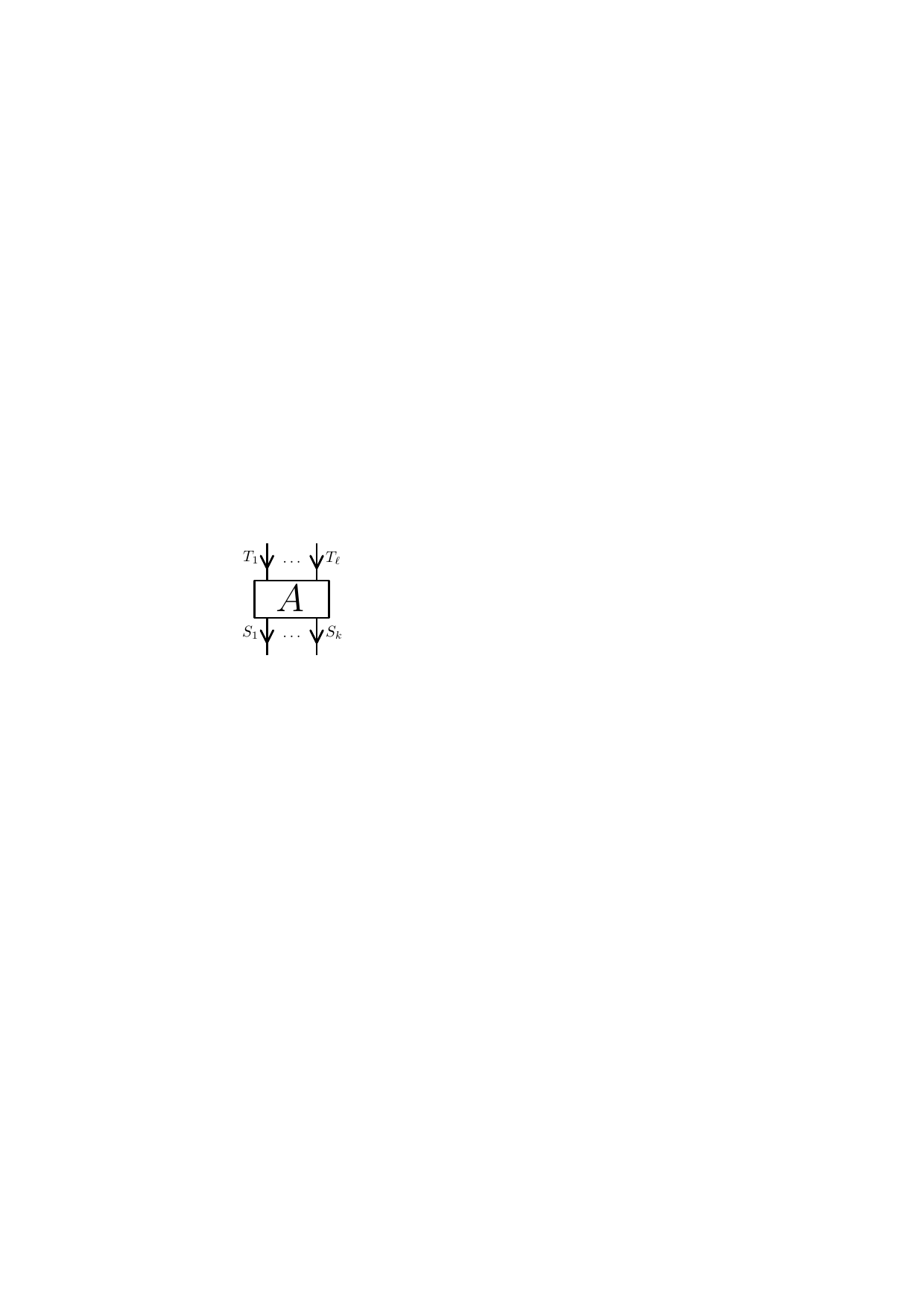}
	\captionof{figure}{}
		\label{morphism}
\end{figure}
\noindent
Figures \ref{braiding} -- \ref{twist} are the \(\cD^\str\)-colored ribbon-braid graphs providing the braiding and twist in \(\mathbb{B}_{\cD^\str}\) in case of single strands.
\begin{figure}[H]
	\begin{minipage}{0.32\textwidth}
		\centering
		\includegraphics[scale=0.8]{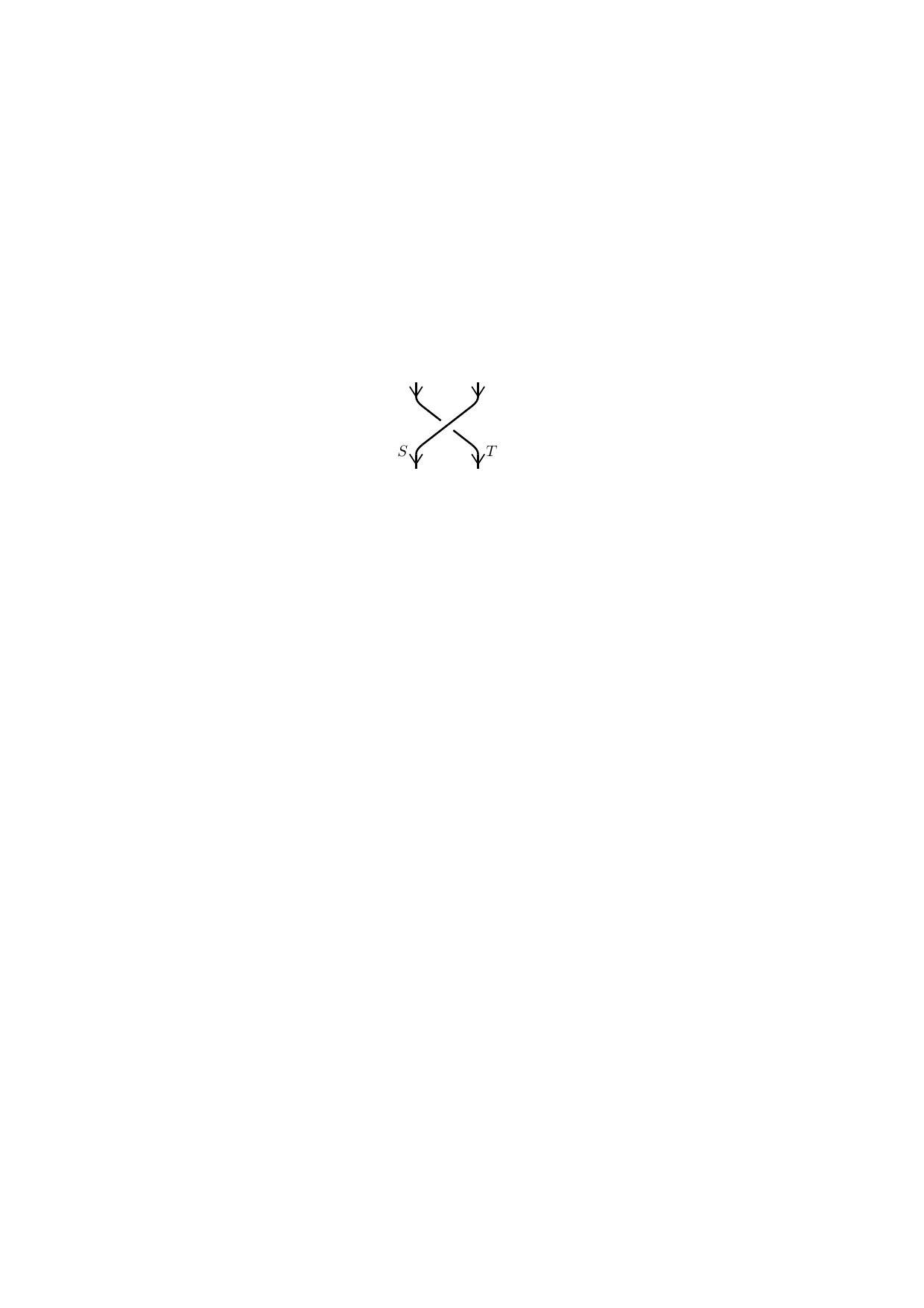}
		\captionof{figure}{\\\(c_{S,T}
		\)}
		\label{braiding}
	\end{minipage}
	\begin{minipage}{0.32\textwidth}
		\centering
		\includegraphics[scale=0.8]{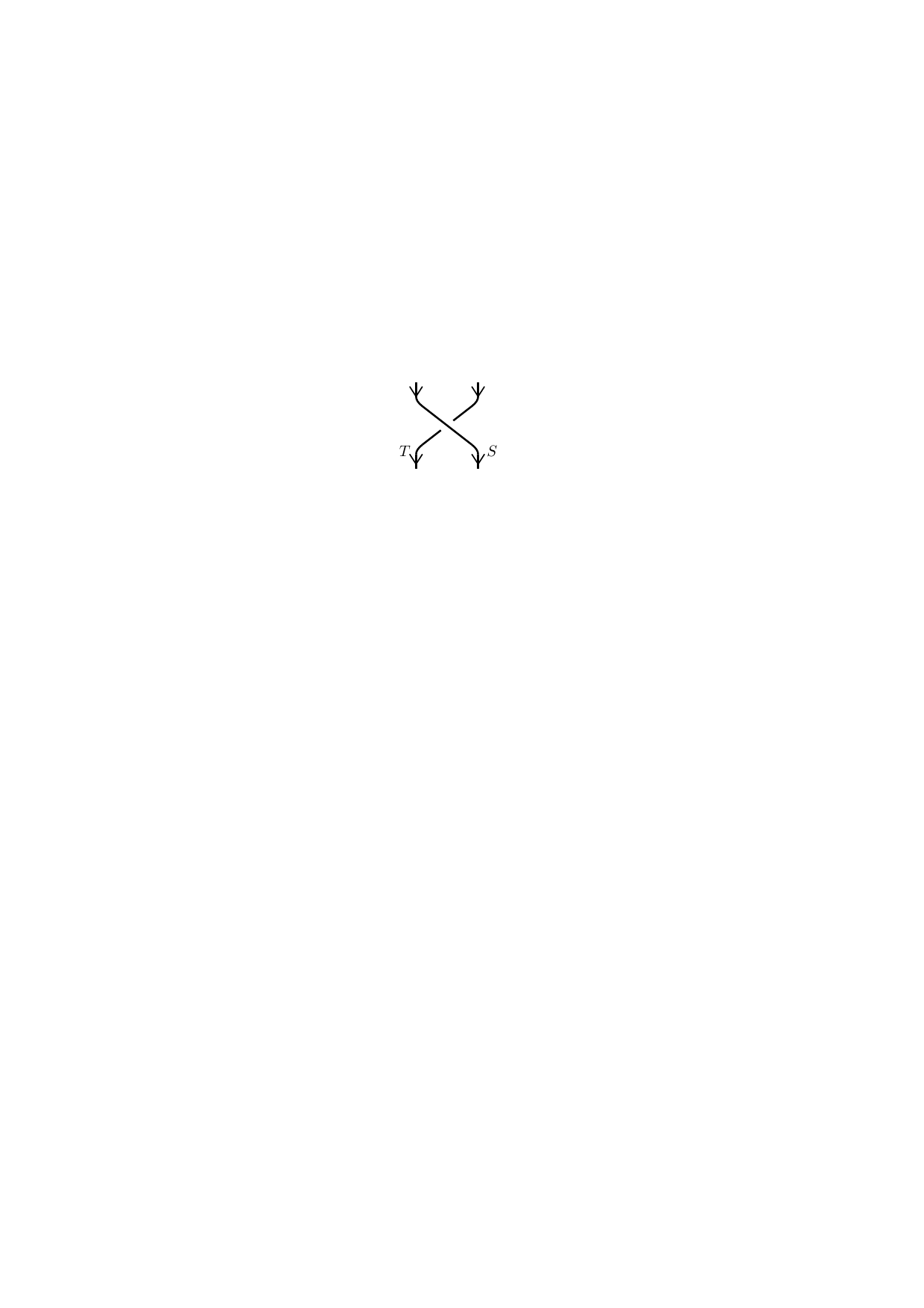}
		\captionof{figure}{\\\(c_{S,T}
		^{-1}\)}
		\label{inverse braiding}
	\end{minipage}
	\begin{minipage}{0.32\textwidth}
		\centering
		\includegraphics[scale=0.8]{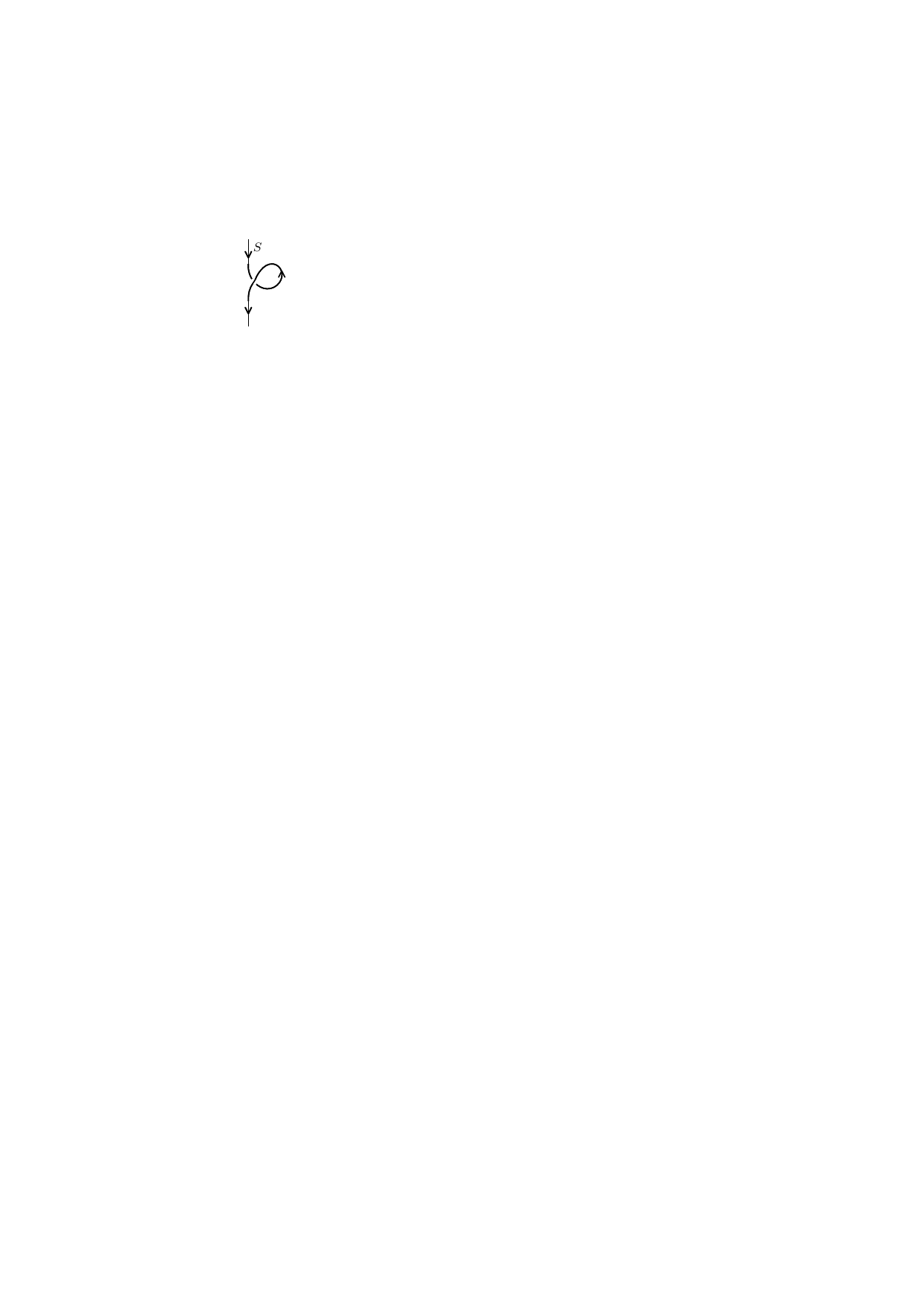}
		\captionof{figure}{\\\(\theta_{S}
		\)}
		\label{twist}
	\end{minipage}
\end{figure}

There exists a strict, twist preserving, braided monoidal functor
\begin{equation}
\label{RT functor extended}
\cF_{\cD^\str}^{\mr{br}}: \mathbb{B}_{\cD^\str}\to \cD^\str
\end{equation}
mapping an object \((S)\) of length one in \(\mathbb{B}_{\cD^\str}\) to \(S\in\cD^\str\) and mapping the coupon in Figure \ref{morphism} to its color \(A\)
(see \cite[Thm. 2.2]{DeClercq&Reshetikhin&Stokman-2022}). Given \(L\) and \(L'\) two (isotopy classes of) \(\cD^\str\)-colored ribbon-braid graphs in \(\mathbb{B}_{\cD^\str}\), we write \(L\dot{=} L'\) to indicate that \(\cF_{\cD^\str}^{\mr{br}}(L) = \cF_{\cD^\str}^{\mr{br}}(L')\).

Note that  Figure \ref{bundling} holds true for \(S = (V_1,\dots,V_k)\in\cD^\str\).  The \((S)\)-colored strand can be related to \(k\) parallel \((V_i)\)-colored strands in the category \(\mathbb{B}_{\cD^\str}\) itself using the coupon colored by the fusion morphism \(J_S\in\Hom_{\cD^\str}(S,\cF^\str(S))\) and the coupon colored by its inverse \(J_S^{-1}\) (see \cite[\S 2.8]{DeClercq&Reshetikhin&Stokman-2022}). We will denote these coupons by 
Figure \ref{fusion} and 
Figure \ref{inverse fusion}, respectively.

\begin{figure}[H]
	\begin{minipage}{0.32\textwidth}
		\centering
		\includegraphics[scale=0.7]{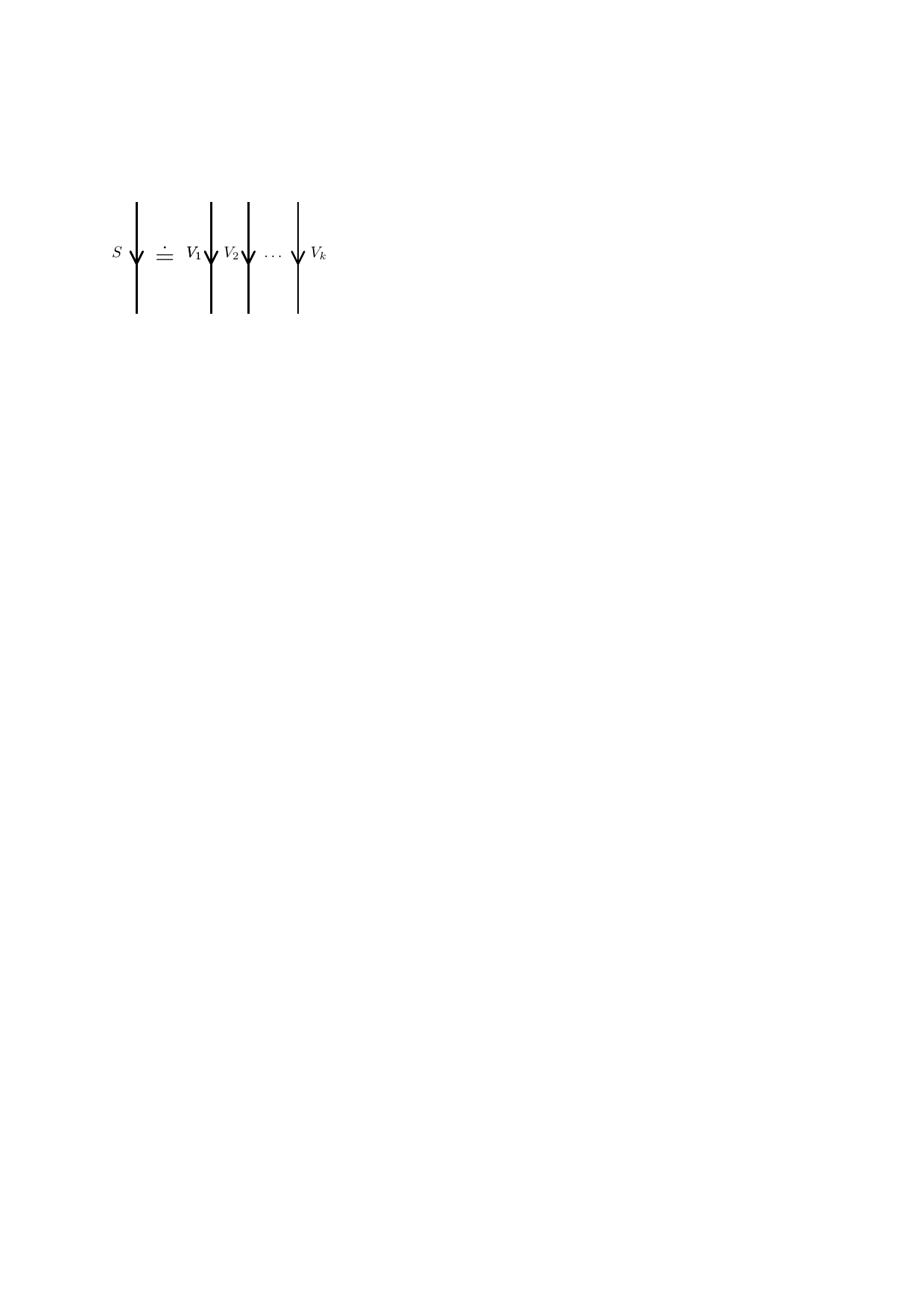}
		\captionof{figure}{}
		\label{bundling}
	\end{minipage}
	\begin{minipage}{0.32\textwidth}
		\centering
		\includegraphics[scale=0.5]{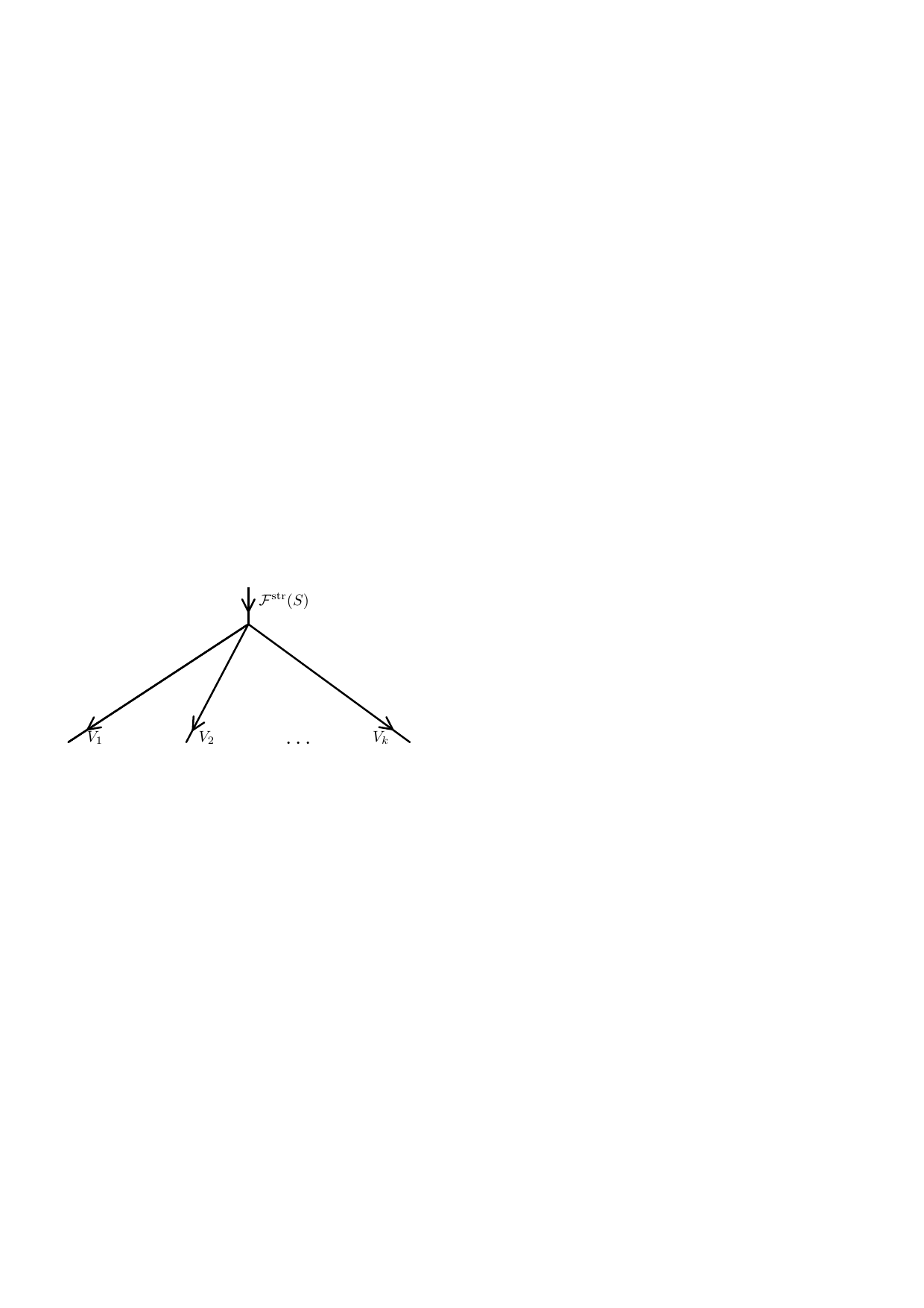}
		\captionof{figure}{}
		\label{fusion}
	\end{minipage}\quad
	\begin{minipage}{0.32\textwidth}
		\centering
		\includegraphics[scale=0.5]{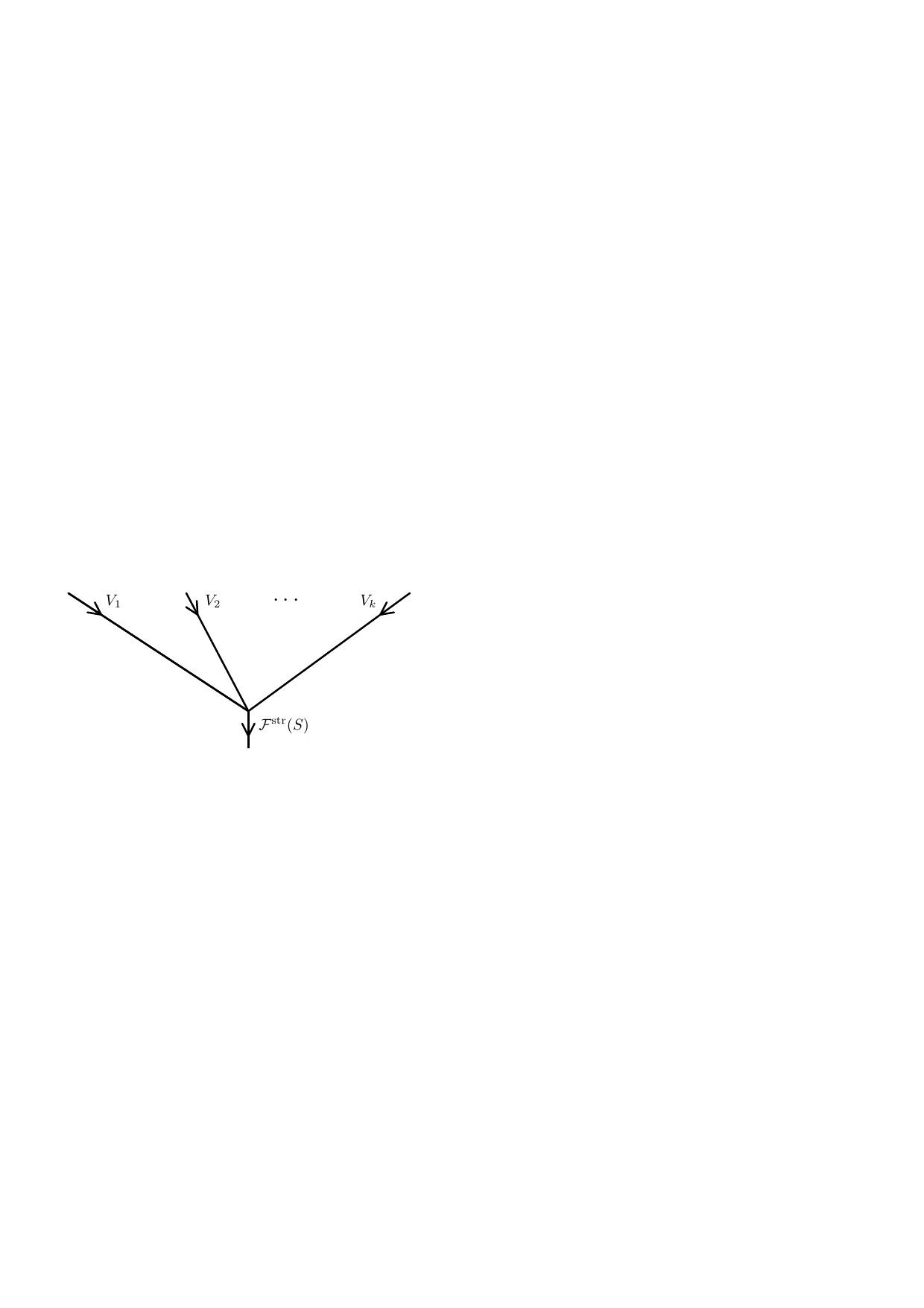}
		\captionof{figure}{}
		\label{inverse fusion}
	\end{minipage}
\end{figure}

If \(\cD\) is a ribbon category, then \(\cF_{\cD^\str}^{\mr{br}}\) factorizes through the Reshetikhin-Turaev functor \(\cF^{\mr{RT}}_{\cD}: \textup{Rib}_{\cD^\str}\rightarrow\cD^\str\),
where \(\textup{Rib}_{\cD^\str}\) is the strict ribbon category of isotopy classes of \(\cD^\str\)-colored ribbon graphs 
(see \cite[\S 2.5]{DeClercq&Reshetikhin&Stokman-2022}). 
Figures \ref{cap A}--\ref{cup B} are the \(\cD^\str\)-colored ribbon graphs providing the 
(co-)evaluation morphisms in \(\textup{Rib}_{\cD^\str}\). 

\begin{figure}[H]
	\begin{minipage}{0.28\textwidth}
		\centering
		\includegraphics[scale=0.8]{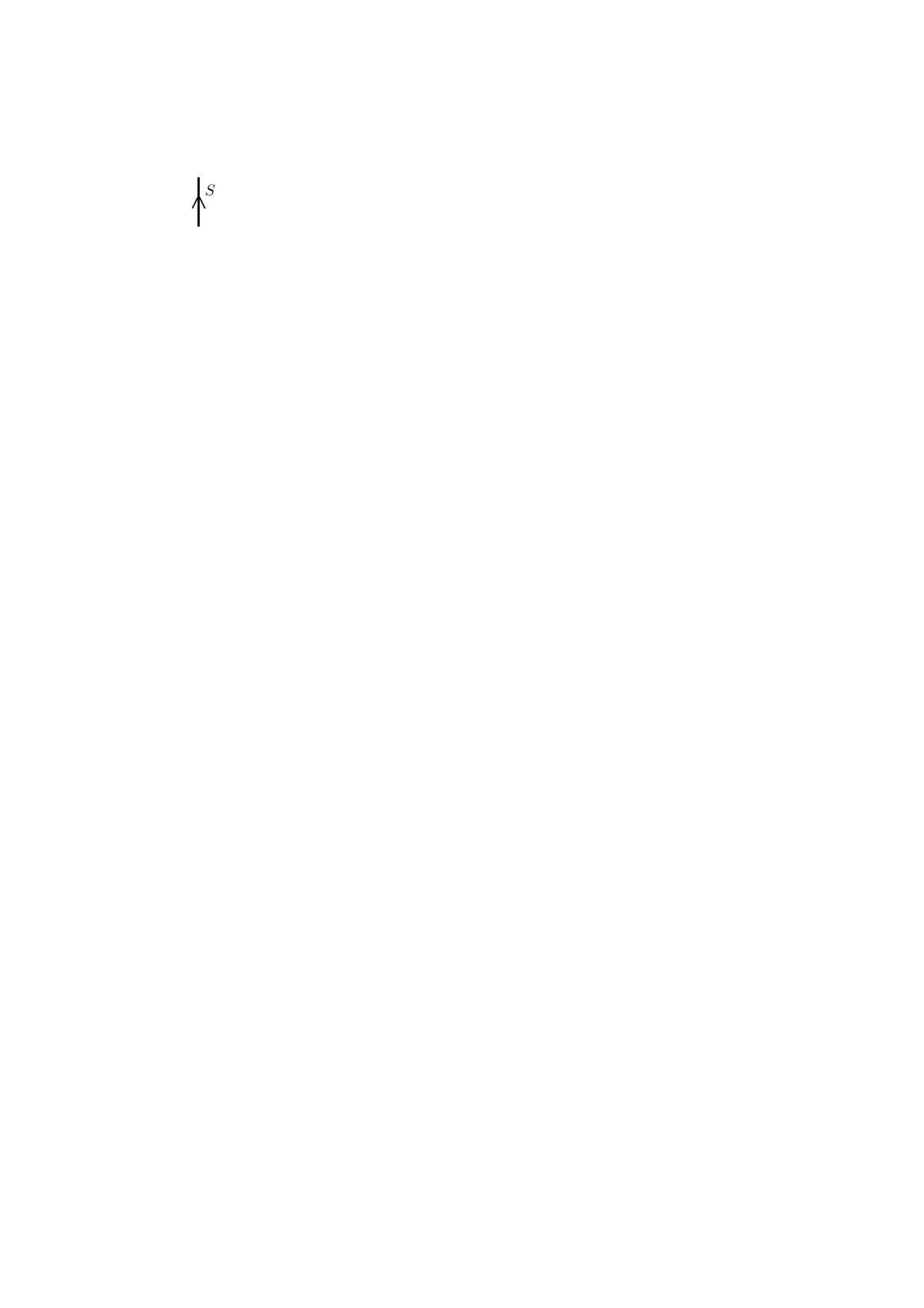}
		\captionof{figure}{\\\(\id_{S^\ast}
		\)}
		\label{dual}
	\end{minipage}\kern-3em
	\begin{minipage}{0.28\textwidth}
		\centering
		\includegraphics[scale=0.8]{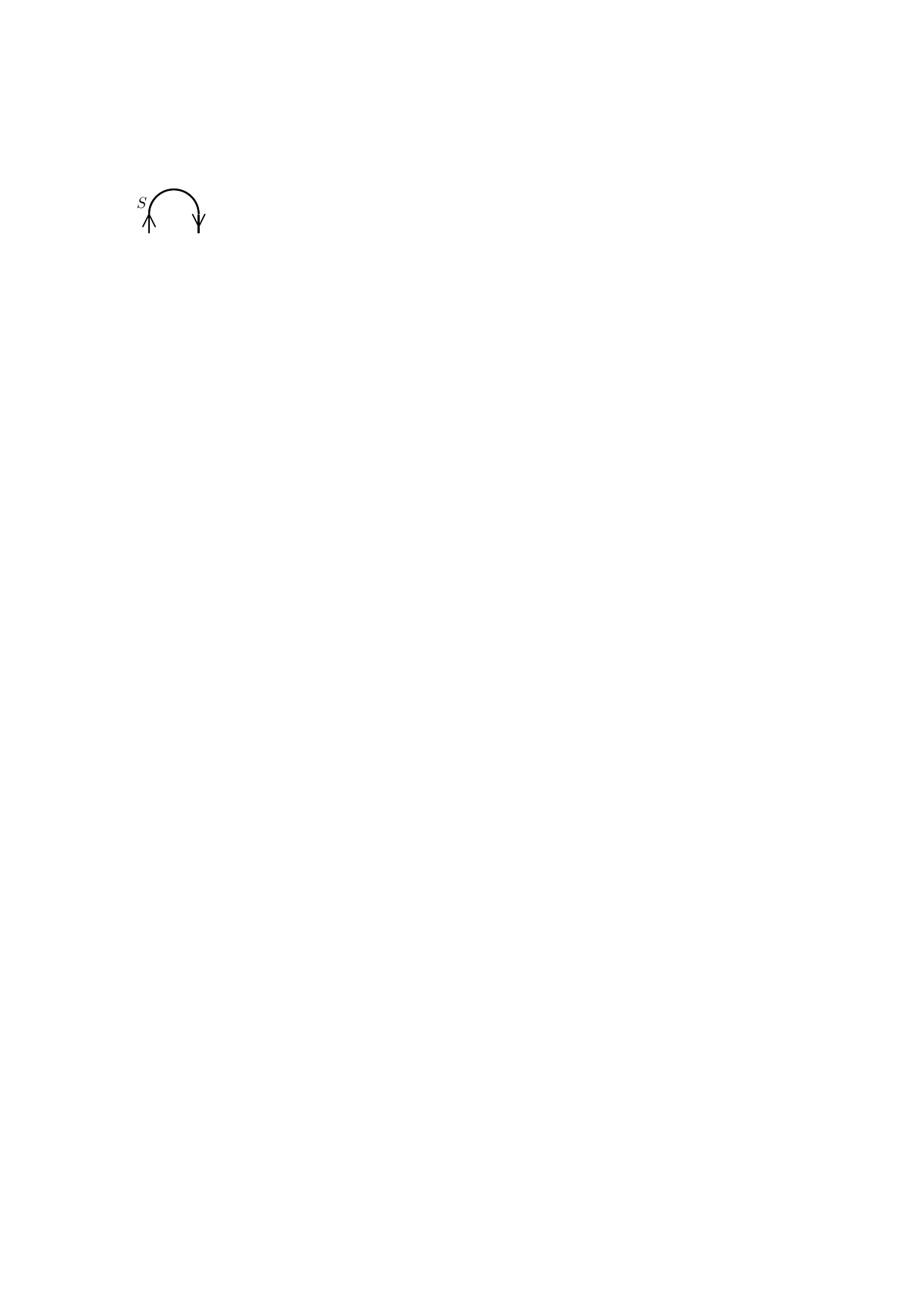}
		\captionof{figure}{\\\(e_{S}
		\)}
		\label{cap A}
	\end{minipage}\kern-3em
	\begin{minipage}{0.28\textwidth}
		\centering
		\includegraphics[scale=0.8]{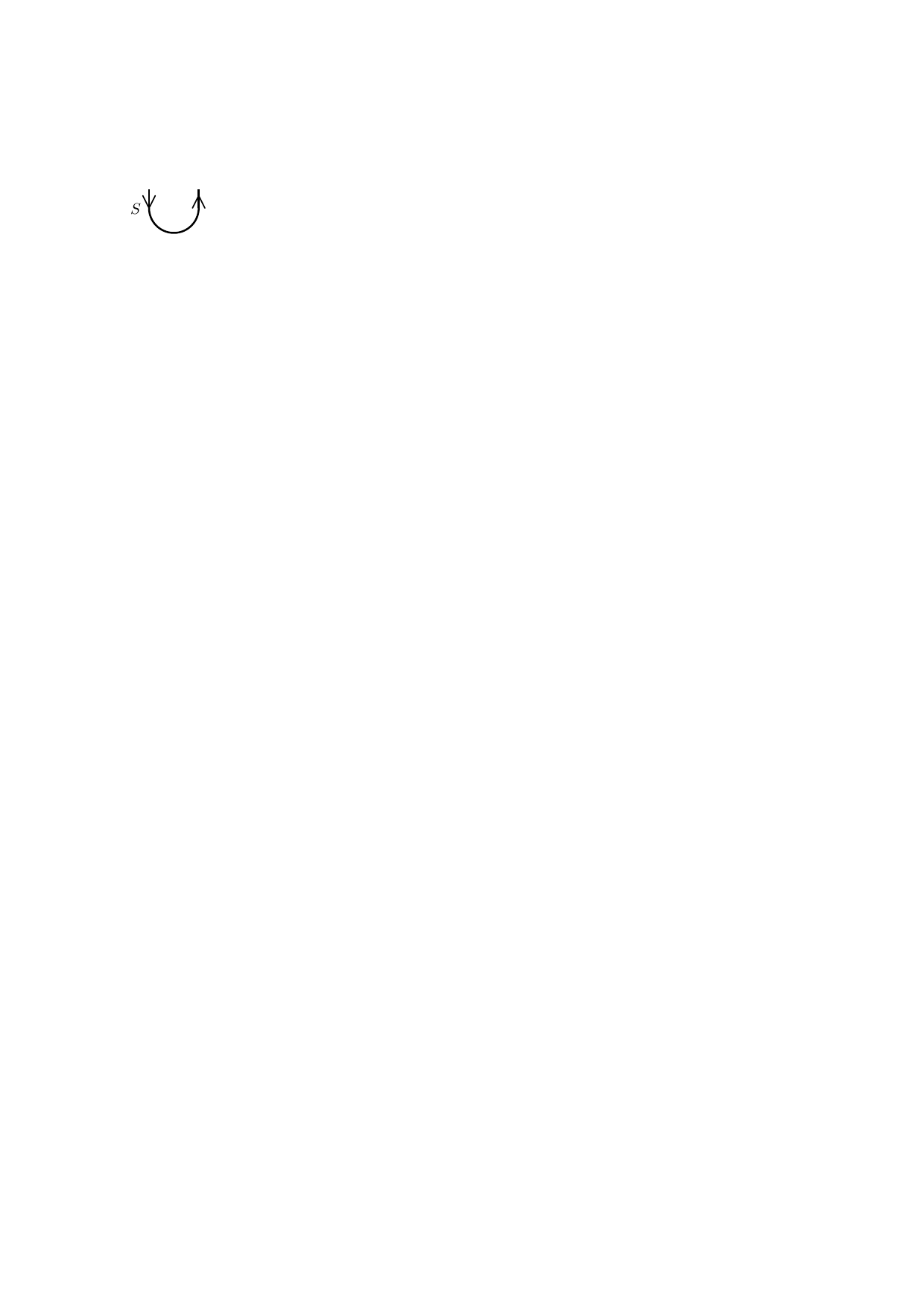}
		\captionof{figure}{\\\(\iota_{S}
		\)}
		\label{cup A}
	\end{minipage}\kern-3em
	\begin{minipage}{0.28\textwidth}
		\centering
		\includegraphics[scale=0.8]{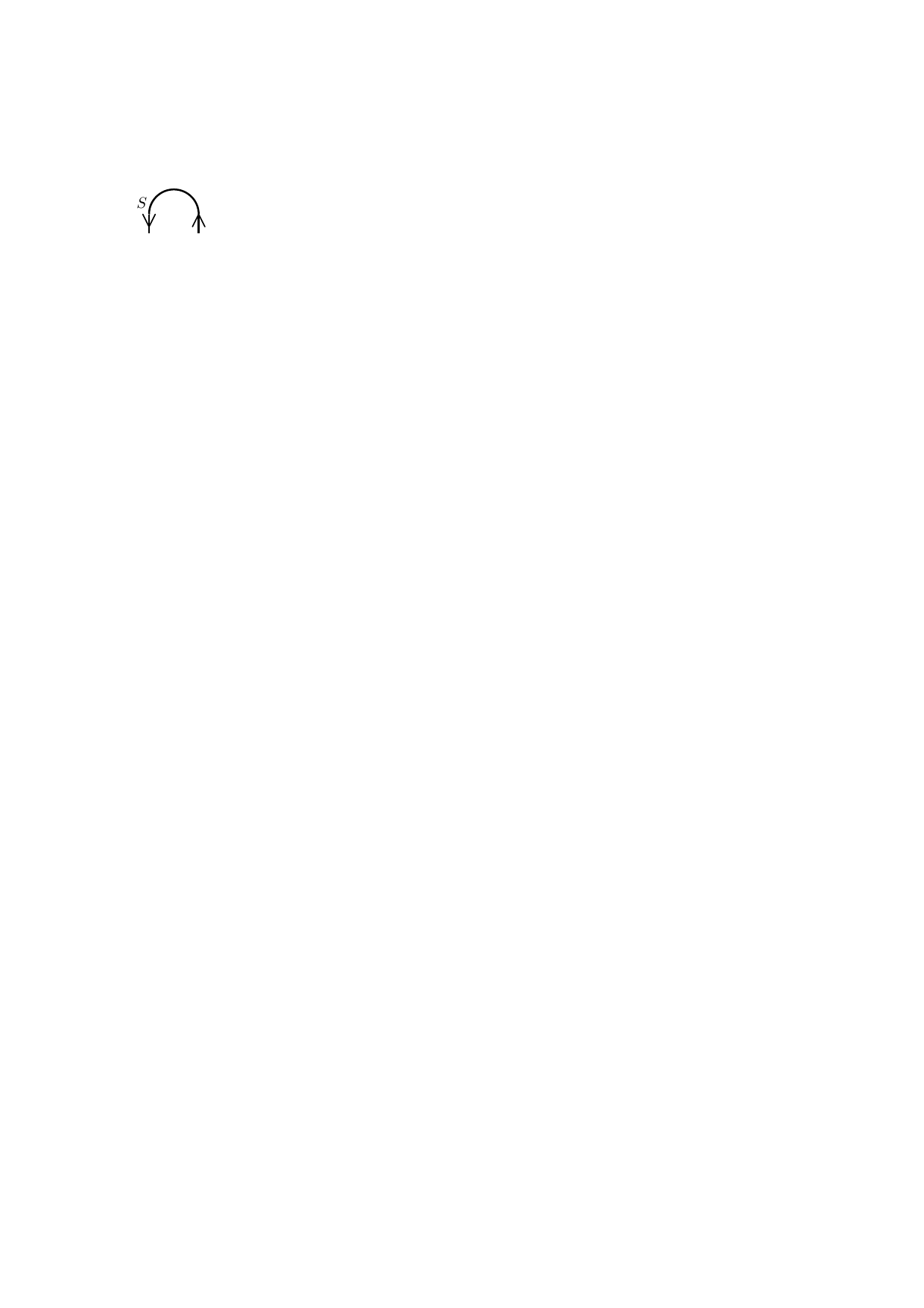}
		\captionof{figure}{\\\(\widetilde{e}_{S}
		\)}
		\label{cap B}
	\end{minipage}\kern-3em
	\begin{minipage}{0.28\textwidth}
		\centering
		\includegraphics[scale=0.8]{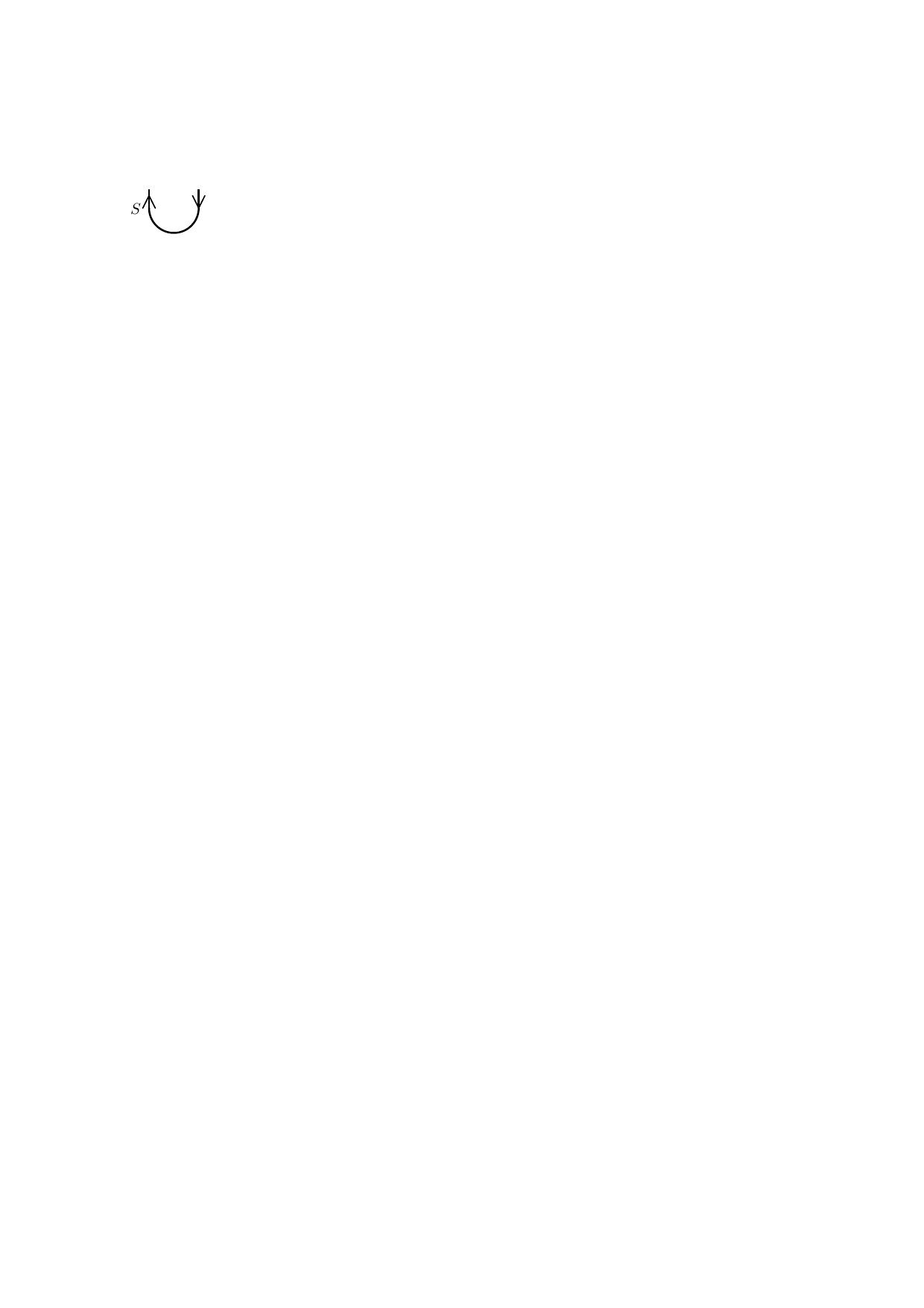}
		\captionof{figure}{\\\(\widetilde{\iota}_{S}
		\)}
		\label{cup B}
	\end{minipage}
\end{figure}

\subsection{Quantum vertex operators}
\label{Subsection Quantum vertex operators}
Throughout this section, let \(\lambda\in\hh_{\mathrm{reg}}^\ast\) and \(V\in\Rep\). Set
\begin{equation}\label{IntV}
\mr{Int}_{\lambda,V}:=\bigoplus_{\mu\in\hh^\ast}\Hom_{\cM_{\mr{adm}}}(M_\lambda,M_{\lambda-\mu}\otimes V).
\end{equation}
We view \(\mr{Int}_{\lambda,V}\) as \(\hh^\ast\)-graded vector space with \(\mu\)-graded component \(\textup{Int}_{\lambda,V}[\mu]\)
the space of intertwiners \(\Hom_{\cM_{\mr{adm}}}(M_\lambda,M_{\lambda-\mu}\otimes V)\). 

Recall the following well-known result \cite{Etingof&Latour-2005, Etingof&Varchenko-1999}.
\begin{proposition}
	\label{prop parametrizing spaces}
	The expectation value map
	\[
	\langle\cdot\rangle_{\lambda,V}: \mr{Int}_{\lambda,V}\rightarrow \underline{V}: (\phi^{(\mu)})_{\mu\in\hh^\ast} \mapsto \sum_{\mu\in\mathfrak{h}^*} (\mathbf{m}_{\lambda-\mu}^{\ast}\otimes\id_V)(\phi^{(\mu)}(\mathbf{m}_{\lambda}))
	\]
	is an isomorphism of \(\hh^\ast\)-graded vector spaces.
\end{proposition}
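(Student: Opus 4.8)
The plan is to reduce the statement to a question about singular (highest-weight) vectors. Since $M_\lambda$ is a Verma module it is generated by $\mathbf{m}_\lambda$ and is free over the subalgebra generated by the $F_i$; hence, by the universal property of Verma modules, for any $N\in\cM_{\mathrm{adm}}$ evaluation at $\mathbf{m}_\lambda$ gives a linear isomorphism $\Hom_{\cM_{\mathrm{adm}}}(M_\lambda,N)\xrightarrow{\sim}\{x\in N[\lambda]: E_i\cdot x=0,\ \forall i\}$ onto the space of weight-$\lambda$ singular vectors of $N$ (here $\Hom_{\cM_{\mathrm{adm}}}=\Hom_{U_q}$ since the tensor product $M_{\lambda-\mu}\otimes V$ lies in $\cM_{\mathrm{adm}}$). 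Applying this with $N=M_{\lambda-\mu}\otimes V$ for each $\mu$, I would identify $\mathrm{Int}_{\lambda,V}[\mu]$ with the space $\mathrm{Sing}_\mu$ of weight-$\lambda$ singular vectors in $M_{\lambda-\mu}\otimes V$, and observe that under this identification the expectation value map becomes the leading-coefficient map $p_\mu:=(\mathbf{m}_{\lambda-\mu}^\ast\otimes\id_V)|_{\mathrm{Sing}_\mu}$. As $\mathbf{m}_{\lambda-\mu}$ has weight $\lambda-\mu$ and $x\in\mathrm{Sing}_\mu$ has weight $\lambda$, the image of $p_\mu$ lands in $V[\mu]=\underline{V}[\mu]$, so $\langle\cdot\rangle_{\lambda,V}$ is automatically a morphism of $\hh^\ast$-graded vector spaces, and it remains to prove that each $p_\mu$ is a bijection.

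Next I would pin down $\dim\mathrm{Sing}_\mu$ by a dimension count. For $\lambda\in\hh_{\mathrm{reg}}^\ast$ and $\beta\in\wts(V)\subseteq\Lambda$ every weight $\lambda-\mu+\beta$ is regular, so each $M_{\lambda-\mu+\beta}$ is irreducible as recalled above, and the standard Verma flag of $M_{\lambda-\mu}\otimes V$, whose sections are the $M_{\lambda-\mu+\beta}$ with multiplicity $\dim V[\beta]$, splits into a direct sum $M_{\lambda-\mu}\otimes V\cong\bigoplus_{\beta\in\wts(V)}M_{\lambda-\mu+\beta}^{\oplus\dim V[\beta]}$ (the splitting holds because at regular weights the irreducible Vermas $M_{\lambda-\mu+\beta}$ are pairwise unlinked, so the relevant block is semisimple). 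A weight-$\lambda$ singular vector inside an irreducible $M_{\lambda-\mu+\beta}$ can only be its highest-weight vector, which has weight $\lambda$ precisely when $\beta=\mu$; hence $\mathrm{Sing}_\mu$ is spanned by the highest-weight vectors of the $\dim V[\mu]$ copies of $M_\lambda$ in the decomposition and $\dim\mathrm{Sing}_\mu=\dim V[\mu]$. Thus $p_\mu$ is a linear map between finite-dimensional spaces of equal dimension, and it suffices to establish injectivity.

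The \emph{main obstacle} is exactly this injectivity: I must show that a weight-$\lambda$ singular vector $x\in M_{\lambda-\mu}\otimes V$ whose $\mathbf{m}_{\lambda-\mu}$-coefficient vanishes is already zero, and this is where the genericity $\lambda\in\hh_{\mathrm{reg}}^\ast$ is indispensable. I would expand $x=\sum_{\gamma\in Q^+}x_\gamma$ with $x_\gamma\in M_{\lambda-\mu}[\lambda-\mu-\gamma]\otimes V[\mu+\gamma]$, so that $p_\mu(x)=x_0$, and read off from the conditions $E_i\cdot x=0$ (with $E_i$ acting through the coproduct) the recursion that expresses, for each $i$, the action of $E_i$ on the $M_{\lambda-\mu}$-component of $x_{\gamma+\alpha_i}$ in terms of the lower-order term $x_\gamma$. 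For $\lambda$ regular the contravariant Shapovalov form on the irreducible Verma $M_{\lambda-\mu}$ is non-degenerate, which makes this triangular system uniquely solvable level by level in $\mathrm{ht}(\gamma)$; starting from $x_0=0$ it forces every $x_\gamma=0$, giving injectivity of $p_\mu$. Equivalently, one may phrase this as the linear independence in $V[\mu]$ of the leading coefficients of the highest-weight vectors of the $\dim V[\mu]$ Verma summands $M_\lambda$, which is the same non-degeneracy statement. Combined with the equality of dimensions from the previous step, injectivity upgrades to bijectivity of each $p_\mu$, and therefore $\langle\cdot\rangle_{\lambda,V}$ is an isomorphism of $\hh^\ast$-graded vector spaces. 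I expect the genericity-driven non-degeneracy underlying this last step to be the only genuinely nontrivial point, the remainder being bookkeeping with weights and the universal property of Verma modules.
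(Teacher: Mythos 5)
The paper offers no proof of Proposition \ref{prop parametrizing spaces}: it is quoted as a well-known result with a citation to \cite{Etingof&Latour-2005, Etingof&Varchenko-1999}, so there is no in-paper argument to compare against. Your proof is correct and is, in substance, the standard argument from those references: you identify $\mathrm{Int}_{\lambda,V}[\mu]=\mathrm{Hom}_{U_q}(M_\lambda,M_{\lambda-\mu}\otimes V)$ with weight-$\lambda$ singular vectors via the universal property of the Verma module, compute $\dim$ of that space as $\dim V[\mu]$ using the splitting of the Verma flag of $M_{\lambda-\mu}\otimes V$ into irreducible Vermas at generic weights (here you correctly use that $\mathfrak{h}^*_{\mathrm{reg}}$ is stable under shifts by $\Lambda$, so $\lambda-\mu+\beta$ stays regular), and then prove injectivity of the leading-coefficient map by the level-by-level recursion coming from $E_i\cdot x=0$, with the genericity input being nondegeneracy of the contravariant form on $M_{\lambda-\mu}$, equivalently the absence of singular vectors below the highest weight. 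Two points you state tersely but which are standard and fine: the splitting of the Verma flag needs not only pairwise non-isomorphism but vanishing of (self-)extensions between the unlinked irreducible Vermas (generic blocks are semisimple), and surjectivity of $p_\mu$ is then obtained for free from the equality of dimensions, so no explicit construction of the intertwiner $\phi_\lambda^v$ is required — this is a legitimate shortcut compared with proofs that build the singular vector directly by inverting the Shapovalov form level by level.
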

In particular, \(\textup{Int}_{\lambda,V}\) lies in \(\cN_{\rm{fd}}\) and \(\textup{Int}_{\lambda,V}[\mu]\neq \{0\}\) if and only if \(\mu\in\textup{wts}(V)\). The inverse of the isomorphism \(\langle\cdot\rangle_{\lambda,V}\) is denoted by 
\[
\ul{V}\to \mr{Int}_{\lambda,V}:\quad v\mapsto \phi_\lambda^v.
\] 
Concretely, if \(v\in V[\mu]\) is homogeneous, then 
\[
\phi_\lambda^v:\, M_\lambda\to M_{\lambda-\mu}\otimes V
\]
is the unique \(U_q\)-linear map such that \(\phi_\lambda^v(\mathbf{m}_\lambda)\in\mathbf{m}_{\lambda-\mu}\otimes v+\bigoplus_{\nu<\lambda-\mu}M_{\lambda-\mu}[\nu]\otimes V\), where \(\leq\) is the dominance order on \(\hh^*\). If no confusion can arise, then we omit the subscripts in the notation \(\langle\cdot\rangle_{\lambda,V}\) for the expectation
value.

Throughout this subsection, let \(S = (V_1,\dots,V_k)\in\Rep^\str\) be an object of length $k\in\mathbb{Z}_{>0}$ and let \(v_i\in V_i[\nu_i]\) be homogeneous vectors.
We define \(\phi_\lambda^{v_1,\ldots,v_k}\in\textup{Int}_{\lambda,\cF^\str(S)}[\sum_i\nu_i]\)
	by
	\begin{equation}\label{kEV}
	\phi_\lambda^{v_1,\ldots,v_k}:=
	(\phi_{\lambda_1}^{v_1}\otimes\textup{id}_{\cF^\str(S_2)})\cdots (\phi_{\lambda_{k-1}}^{v_{k-1}}\otimes\textup{id}_{\cF^\str(S_k)})\phi_{\lambda_k}^{v_k},
	\end{equation}
	with \(\lambda_j:=\lambda-\nu_{j+1}-\cdots-\nu_k\) and \(S_j:=(V_j,\dots,V_k)\),  for \(j = 1,\dots,k\) (here $\lambda_k$ should be read as $\lambda$).
	We write \(\Phi_\lambda^{v_1,\dots,v_k}\) for the unique morphism in \(\Hom_{\cM_{\mr{adm}}^\str}(M_\lambda,M_{\lambda-\sum_i\nu_i}\tens S)\) 
	represented by \(\phi_\lambda^{v_1,\ldots,v_k}\).
\begin{definition}
	\label{k-point quantum vertex operator def}
	We call morphisms of the form $\phi_\lambda^{v_1,\ldots,v_k}$ and  \(\Phi_\lambda^{v_1,\dots,v_k}\) \textup{(}$v_i\in V_i[\nu_i]$\textup{)}
	\(k\)-point quantum vertex operators of weight \((\nu_1,\ldots,\nu_k)\) and degree $\lambda$.
\end{definition}
The spaces \(V_i\in\Rep\) will be referred to as the local spin spaces of \(\Phi_\lambda^{v_1,\ldots,v_k}\). 

In the notation of Lemma \ref{stricttensorfunctorlemma}, let us write 
\[
\underline{S}:=\widetilde{\cF^{\rm{frgt}}}(S)\in
\cN_{\rm{fd}}^\str,
\] 
so that \(\underline{S}:=(\underline{V_1},\cdots,\underline{V_k})\) for $S=(V_1,\ldots,V_k)\in\Rep^\str$. Similarly,
we write 
\[
\underline{A}:=\widetilde{\cF^{\rm{frgt}}}(A)\in\textup{Hom}_{\cN_{\mr{fd}}^{\str}}(\underline{S},\underline{T})
\] 
for a morphism \(A\in\textup{Hom}_{\Rep^{\str}}(S,T)\).

For \(v_i\in V_i\) we use the standard multi-tensor notation \(v_1\otimes\cdots\otimes v_k\) to denote the iterated tensor product \(v_1\otimes (v_2\otimes(\cdots\otimes (v_{k-1}\otimes v_k)\cdots))\in\cF^\str(S)\). Similarly, 
for \(A_i\in\Hom_{\Rep}(V_i,W_i)\) we write \(A_1\otimes\cdots\otimes A_k\) to denote \(A_1\otimes(A_2\otimes(\cdots\otimes (A_{k-1}\otimes A_k)\cdots))\). We employ the same notations for multi-tensors in \(\cN_{\mr{fd}}\). The following result is from \cite{Etingof&Varchenko-1999}. 

\begin{proposition}
	The formula
	\[
	j_S(\lambda)(v_1\otimes\cdots\otimes v_k):=\langle\phi_\lambda^{v_1,\ldots,v_k}\rangle_{\lambda,\cF^\str(S)}
	\]
	for homogeneous vectors \(v_i\) in \(V_i\) defines an automorphism \(j_S(\lambda)\in\mr{Aut}_{\cN_{\mr{fd}}}(\cF^\str(\ul{S}))\).
\end{proposition}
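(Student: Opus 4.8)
The plan is to establish three things: that the formula yields a well-defined, grade-preserving linear endomorphism of $\cF^\str(\ul S)$, and that this endomorphism is invertible in $\cN_{\mr{fd}}$. The first two are essentially formal, and invertibility will follow from a unitriangularity argument. For the well-definedness and grading, note that for homogeneous $v_i\in V_i[\nu_i]$ the operator $\phi_\lambda^{v_1,\ldots,v_k}$ of \eqref{kEV} lies in $\mr{Int}_{\lambda,\cF^\str(S)}[\sum_i\nu_i]$, so by Proposition \ref{prop parametrizing spaces} its expectation value lands in $\cF^\str(S)[\sum_i\nu_i]$; thus $j_S(\lambda)$ preserves the $\hh^\ast$-grading. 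Since $v_i\mapsto\phi_{\lambda_i}^{v_i}$ is linear and, for fixed weights $(\nu_1,\ldots,\nu_k)$, the intermediate degrees $\lambda_j=\lambda-\nu_{j+1}-\cdots-\nu_k$ are fixed, the assignment $(v_1,\ldots,v_k)\mapsto\phi_\lambda^{v_1,\ldots,v_k}$ is multilinear on $V_1[\nu_1]\times\cdots\times V_k[\nu_k]$. Post-composing with the linear expectation value and using that $\cF^\str(S)$ is the direct sum of the weight-homogeneous summands $V_1[\nu_1]\otimes\cdots\otimes V_k[\nu_k]$ assembles these into a well-defined grade-preserving linear endomorphism of $\cF^\str(\ul S)$.

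The heart of the argument is an inductive unitriangularity statement. Set $\eta:=\sum_i\nu_i$, and on the finite set of weight sequences $(\mu_1,\ldots,\mu_k)$ with $\mu_i\in\wts(V_i)$ and $\sum_i\mu_i=\eta$ introduce the partial order $\lhd$ in which $(\mu_1,\ldots,\mu_k)\lhd(\mu_1',\ldots,\mu_k')$ when, at the first index $i$ where the sequences differ, one has $\mu_i'-\mu_i\in Q^+\setminus\{0\}$. I claim that for a homogeneous basis tensor $b_1\otimes\cdots\otimes b_k$ with $b_i\in V_i[\nu_i]$,
\[
j_S(\lambda)(b_1\otimes\cdots\otimes b_k)=b_1\otimes\cdots\otimes b_k+(\text{a sum of basis tensors whose weight sequence is }\lhd(\nu_1,\ldots,\nu_k)).
\]
I would prove this by induction on $k$, using the factorisation $\phi_\lambda^{v_1,\ldots,v_k}=(\phi_{\lambda_1}^{v_1}\otimes\id)\circ\phi_\lambda^{v_2,\ldots,v_k}$ with $S'=(V_2,\ldots,V_k)$, applying both sides to $\mathbf{m}_\lambda$, and extracting the $\mathbf{m}_{\lambda_0}^*$-component. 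Writing $\phi_\lambda^{v_2,\ldots,v_k}(\mathbf{m}_\lambda)=\mathbf{m}_{\lambda_1}\otimes j_{S'}(\lambda)(v_2\otimes\cdots\otimes v_k)+\sum_{\nu<\lambda_1}(M_{\lambda_1}[\nu]\otimes\cF^\str(S'))$, the highest-weight term reproduces $v_1\otimes j_{S'}(\lambda)(v_2\otimes\cdots\otimes v_k)$, which by the inductive hypothesis equals $v_1\otimes\cdots\otimes v_k$ plus tail-corrections that keep the first weight equal to $\nu_1$ and strictly lower the first differing later weight, hence are $\lhd$-smaller. For a lower Verma term $u_\nu\otimes x_\nu$ I would write $u_\nu=Y\mathbf{m}_{\lambda_1}$ with $Y\in U_q(\nf^-)$ of weight $\nu-\lambda_1\in-Q^+\setminus\{0\}$, and use $U_q$-linearity together with the fact that the coproduct sends $U_q(\nf^-)$ into a sum of terms whose first tensor leg has non-positive weight: extracting $\mathbf{m}_{\lambda_0}^*$ forces that leg to have weight $0$, so the surviving contributions have first-component weight $\nu_1-(\lambda_1-\nu)<\nu_1$, again giving a weight sequence $\lhd(\nu_1,\ldots,\nu_k)$. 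This proves the claim.

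Invertibility is then immediate. The claim shows that on each finite-dimensional weight space $\cF^\str(S)[\eta]$ the operator $j_S(\lambda)$ is block lower-triangular with respect to $\lhd$ with identity diagonal blocks, since no correction term shares the weight sequence of its input. As $\lhd$ is a partial order on a finite set, $j_S(\lambda)-\id$ is nilpotent on each weight space, so $j_S(\lambda)$ is invertible, and its inverse, being a polynomial in $j_S(\lambda)$ on each weight space, is again grade-preserving. Hence $j_S(\lambda)\in\mr{Aut}_{\cN_{\mr{fd}}}(\cF^\str(\ul S))$. The main obstacle is precisely the bookkeeping for the lower Verma terms in the inductive step: one must verify, via the coproduct of $U_q(\nf^-)$ and weight conservation, that extracting the highest-weight component genuinely forces a strict drop in the first weight. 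This is what singles out $\lhd$ as the correct order and guarantees the unitriangular structure on which the whole argument rests.
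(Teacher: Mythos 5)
Your proof is correct. Note, however, that the paper itself offers no proof of this proposition: it is quoted directly from Etingof--Varchenko (1999), so there is no in-paper argument to compare against. Your argument is essentially the standard one from that reference: the fusion operator is shown to be unitriangular (identity plus strictly lower-triangular terms) with respect to a weight-sequence order, whence invertible on each finite-dimensional weight space. The two substantive points are handled correctly: (i) the lower Verma terms $u_\nu\otimes x_\nu$ contribute only through the weight-zero first leg of $\Delta(Y)$, which forces the strict drop $\nu_1\mapsto\nu_1-(\lambda_1-\nu)$ in the first spin component, and (ii) your relation $\lhd$ is indeed a strict partial order (transitivity in the case of equal first differing indices uses that $Q^+\setminus\{0\}$ is closed under addition, and irreflexivity that $Q^+$ is a pointed cone), so $j_S(\lambda)-\id$ is nilpotent on each weight space and the inverse, being a polynomial in $j_S(\lambda)$, is again grade-preserving. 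One cosmetic remark: you write $U_q(\nf^-)$ for the negative part, which the paper does not introduce as notation (it only defines $U^+=U_q(\mathfrak{n}^+)$); the freeness of $M_{\lambda_1}$ over the negative part that you invoke is of course standard.
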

For the unit object
\(\emptyset\in\cM_{\rm{fd}}^\str\) we set \(j_\emptyset(\lambda):=\id_{\ul{\mathbb{1}}}\). Note that for $S=(V)$ an object of length one, we have $j_{(V)}(\lambda)=\textup{id}_{\underline{V}}$. We write $\overline{J_S}(\lambda)$ for the unique morphism in $\textup{Hom}_{\cN_{\mr{fd}}^\str}(\underline{S},\mathcal{F}^\str(\underline{S}))$ represented by $j_S(\lambda)$. We will show in Section \ref{Subsection dynamical module category category def} that $\overline{J_S}$ is the dynamicalization of the fusion morphism $J_S$ from Section \ref{SmcSection} (this will also justify its notation).
\begin{definition}\label{dfoper}
We call morphisms $j_S(\lambda)$ and $\overline{J_S}(\lambda)$ dynamical fusion operators.
\end{definition}
Note that the dynamical fusion operator \(j_S(\lambda)\) (\(S\in\cM_{\rm{fd}}^\str\)) for an object $S$ of length $k$ relates
\(k\)-point quantum vertex operators to \(1\)-point quantum vertex operators, in the sense that
\begin{equation}
\label{fused}
\phi_\lambda^{v_1,\ldots,v_k}=\phi_\lambda^{j_S(\lambda)(v_1\otimes\cdots\otimes v_k)}.
\end{equation}
In the following lemma we list two additional properties of the dynamical fusion operators (their compatibility with tensor products will be discussed in Section \ref{Subsection dynamical module category category def}).
\begin{lemma}\label{hulplemma}
Let \(\lambda\in\mathfrak{h}_{\mr{reg}}^*\).
\begin{enumerate}
\item\label{hulplemma 1} For any \(A_i\in\textup{Hom}_{\cM_{\mr{fd}}}(V_i,W_i)\) \(\mr{(}i=1,\dots,k\mr{)}\) we have the identity
\[
(\ul{A_1}\otimes\cdots\otimes\ul{A_k})j_S(\lambda)=j_T(\lambda)(\ul{A_1}\otimes\cdots\otimes\ul{A_k})
\]
in \(\textup{Hom}_{\cN_{\mr{fd}}}(\cF^\str(\ul{S}),\cF^\str(\ul{T}))\),
where \(S:=(V_1,\ldots,V_k)\) and \(T:=(W_1,\ldots,W_k)\).
\item\label{hulplemma 2} For any \(V\in\cM_{\mr{fd}}\) we have \(j_{(\mathbb{1},V)}(\lambda)=\textup{id}_{\ul{\mathbb{1}}\otimes\ul{V}}\) and \(j_{(V,\mathbb{1})}(\lambda)=\textup{id}_{\ul{V}\otimes\ul{\mathbb{1}}}\).
\end{enumerate}
\end{lemma}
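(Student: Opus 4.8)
The plan is to prove the two statements of Lemma \ref{hulplemma} by reducing everything to the defining property \eqref{fused} of the dynamical fusion operators together with the functoriality of the expectation value construction $v\mapsto\phi_\lambda^v$. Both parts are essentially naturality/normalization checks, so no deep machinery is required; the main subtlety is keeping careful track of which weight-shifts and which intermediate Verma modules appear when one composes quantum vertex operators.

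For part \eqref{hulplemma 1}, the strategy is to unwind the definition of $j_S(\lambda)$ via the expectation value map and exploit that each single morphism $A_i\in\textup{Hom}_{\cM_{\mr{fd}}}(V_i,W_i)$, being $U_q$-linear, commutes with the intertwiner structure of the one-point vertex operators. Concretely, for a homogeneous $v_i\in V_i[\nu_i]$ the composite $(\id_{M_{\lambda_{i-1}}}\otimes A_i)\circ\phi_{\lambda_i}^{v_i}$ is again a $U_q$-intertwiner $M_{\lambda_i}\to M_{\lambda_{i-1}}\otimes W_i$, and by uniqueness in Proposition \ref{prop parametrizing spaces} it must equal $\phi_{\lambda_i}^{A_iv_i}$ (note $A_iv_i\in W_i[\nu_i]$ since $A_i$ preserves weights). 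First I would establish this single-factor identity $(\id\otimes A_i)\phi_{\lambda_i}^{v_i}=\phi_{\lambda_i}^{A_iv_i}$, then apply it slot by slot inside the $k$-fold product \eqref{kEV}. Because the $\lambda_j$ depend only on the weights $\nu_{j+1},\dots,\nu_k$, which are unchanged under replacing $V_i$ by $W_i$ via weight-preserving $A_i$, the intermediate Verma modules $M_{\lambda_j}$ are literally the same on both sides, so the factor-by-factor substitution goes through with no shift discrepancy. This yields
\[
\phi_\lambda^{A_1v_1,\ldots,A_kv_k}=(\id_{M_\lambda}\otimes(A_1\otimes\cdots\otimes A_k))\,\phi_\lambda^{v_1,\ldots,v_k},
\]
and then applying the expectation value map $\langle\cdot\rangle$ to both sides, using that $\langle\cdot\rangle$ only reads off the $V$-component and so intertwines with $\underline{A_1}\otimes\cdots\otimes\underline{A_k}$, gives $j_T(\lambda)(\underline{A_1}\otimes\cdots\otimes\underline{A_k})=(\underline{A_1}\otimes\cdots\otimes\underline{A_k})j_S(\lambda)$ on homogeneous tensors; linearity extends it to all of $\cF^\str(\underline{S})$.

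For part \eqref{hulplemma 2}, the key observation is that a one-point quantum vertex operator with spin space the unit object $\mathbb{1}$ is trivial. If $v\in\mathbb{1}=\mathbb{C}$ then $\mathbb{1}[0]=\mathbb{1}$ and $\phi_\lambda^v:M_\lambda\to M_\lambda\otimes\mathbb{1}\cong M_\lambda$ is just the canonical (unit-constraint) identification scaled by $v$, since $v\mathbf{m}_\lambda+(\text{lower})$ with no room for lower terms forces $\phi_\lambda^{1}$ to be the identity under $M_\lambda\otimes\mathbb{1}\cong M_\lambda$. Consequently, inserting a trivial factor into the two-point product \eqref{kEV} does not change the underlying composite beyond a unit isomorphism; carrying this through the definition $j_{(\mathbb{1},V)}(\lambda)(1\otimes v)=\langle\phi_\lambda^{1,v}\rangle$ and $j_{(V,\mathbb{1})}(\lambda)(v\otimes 1)=\langle\phi_\lambda^{v,1}\rangle$ shows each equals $v$ placed in the appropriate slot, i.e.\ the identity. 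I would organize this by first recording the lemma that $\phi_\lambda^{1}$ is (up to the canonical unit constraint) $\id_{M_\lambda}$, then computing the two two-point composites directly.

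The main obstacle I anticipate is purely bookkeeping rather than conceptual: verifying that the intermediate degrees $\lambda_j$ genuinely coincide on the two sides of part \eqref{hulplemma 1}, and handling the unit constraints in part \eqref{hulplemma 2} carefully enough that the claimed equalities $j_{(\mathbb{1},V)}(\lambda)=\id_{\underline{\mathbb{1}}\otimes\underline{V}}$ hold on the nose and not merely up to a canonical isomorphism. Both are settled once one pins down the normalization of $\phi_\lambda^v$ from its leading term $\mathbf{m}_{\lambda-\nu}\otimes v$ in Proposition \ref{prop parametrizing spaces} and uses the uniqueness statement there to identify composites. There is no genuine difficulty, only the need to be scrupulous about weight-shift conventions.
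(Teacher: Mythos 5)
Your proposal is correct and takes essentially the same route as the paper: the paper also proves part (1) by establishing the single-factor identity \(\phi_{\lambda_i}^{\ul{A_i}(v_i)}=(\id_{M_{\lambda_{i-1}}}\otimes A_i)\phi_{\lambda_i}^{v_i}\) via comparison of expectation values (your uniqueness argument), inserting it slot by slot into the \(k\)-fold composition, and then applying \(\langle\cdot\rangle\). For part (2) the paper likewise uses the explicit formula \(\phi_\lambda^c(m)=m\otimes c\) for \(c\in\mathbb{1}\), which is exactly your observation that the one-point vertex operator with spin space \(\mathbb{1}\) is the canonical unit identification.
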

\begin{proof}
(\ref{hulplemma 1}) We use the notations as in Definition \ref{k-point quantum vertex operator def}. For \(v_i\in V_i[\nu_i]\) we have
\[
j_T(\lambda)\bigl((\ul{A_1}\otimes\cdots\otimes\ul{A_k})(v_1\otimes\cdots\otimes v_k)\bigr)=\langle\phi_\lambda^{\ul{A_1}(v_1),\ldots,\ul{A_k}(v_k)}\rangle,
\]
hence it suffices to note that
\[
\phi_{\lambda_i}^{\ul{A_i}(v_i)}=(\textup{id}_{M_{\lambda_{i-1}}}\otimes A_i)\phi_{\lambda_i}^{v_i},
\]
which in turn follows by computing the expectation value of both sides.\\
(\ref{hulplemma 2}) The formula \(j_{(V,\mathbb{1})}(\lambda)=\textup{id}_{\ul{V}\otimes\ul{\mathbb{1}}}\) follows from the fact that for \(c\in\mathbb{1}\) the intertwiner $\phi_\lambda^c\in\textup{Hom}_{\cM_{\mr{fd}}}(M_\lambda,M_\lambda\otimes\mathbb{1})$ is explicitly given by \(\phi_\lambda^c(m)=m\otimes c\) for $m\in M_\lambda$.
The second formula follows similarly.
\end{proof}

Recall from \cite[\S 3.1]{DeClercq&Reshetikhin&Stokman-2022} the convention to rotate all graphical calculus diagrams counterclockwise over 90 degrees when working with quantum vertex operators, and to color strands labeled by Verma modules \(M_\lambda\), with \(\lambda\in\hh_{\mathrm{reg}}^\ast\), in red, decorated with a blue label indicating the highest weight \(\lambda\). Consequently, the coupon in \(\mathbb{B}_{\cM_{\mr{adm}}}\) colored by \(\Phi\in\Hom_{\cM_{\mr{adm}}^\str}(M_\lambda,M_{\mu}\tens S)\) will be depicted by Figure \ref{intertwiner}. In the special case of a $k$-point vertex operator \(\Phi = \Phi_\lambda^{v_1,\dots,v_k}\), we will depict the corresponding coupon by Figure \ref{vertex operator}. We will incorporate the dynamical fusion operators $\overline{J_S}$ into this graphical calculus in Section \ref{Section strictified dynamical module category}.

\begin{figure}[H]
	\begin{minipage}{0.32\textwidth}
		\centering
		\includegraphics[scale=0.8]{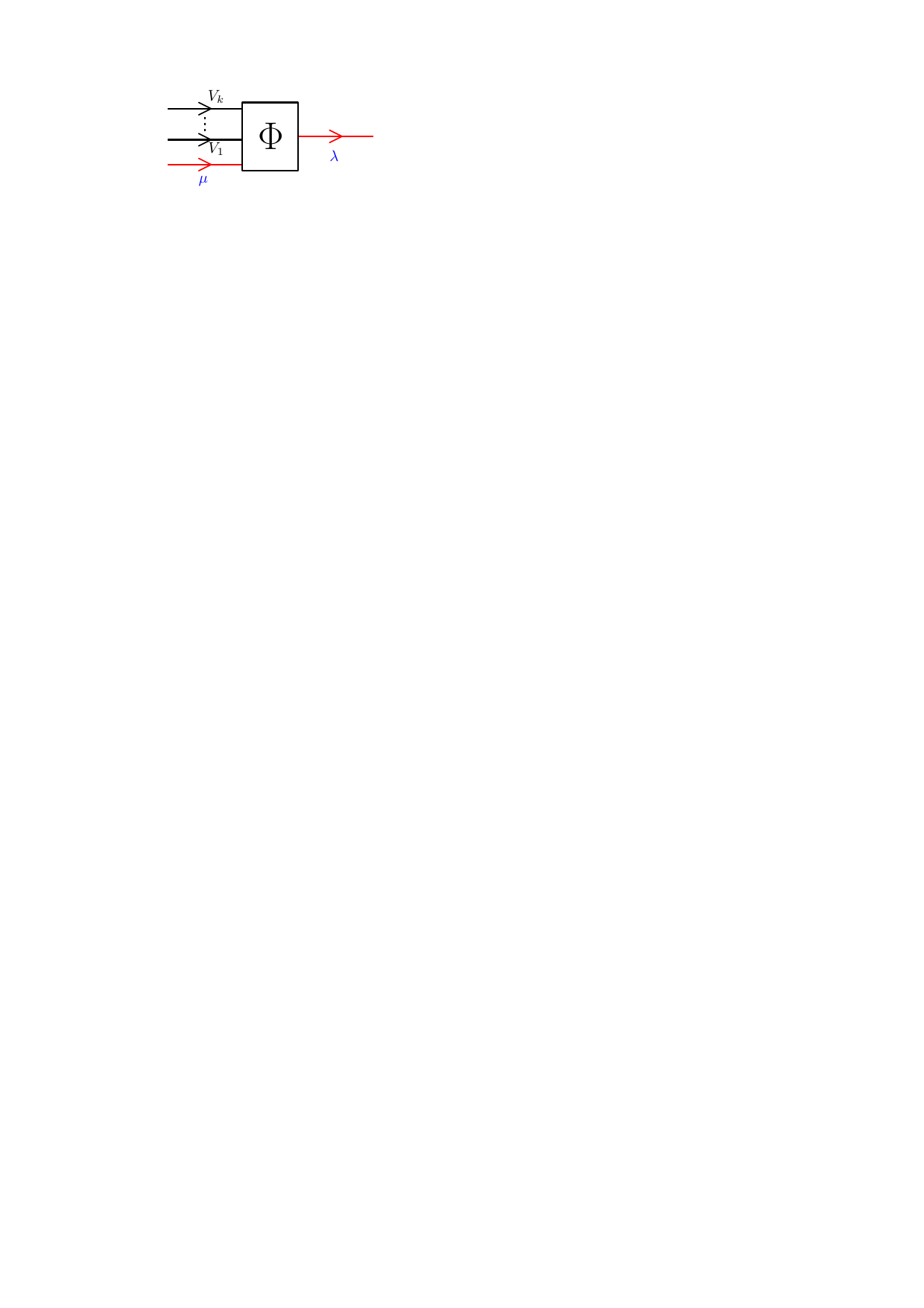}
		\captionof{figure}{}
		\label{intertwiner}
	\end{minipage}\quad
	\begin{minipage}{0.64\textwidth}
		\centering
		\includegraphics[scale=0.65]{diagram_0m_vertex_operator}
		\captionof{figure}{}
		\label{vertex operator}
	\end{minipage}
\end{figure}
\begin{remark}
	\label{remark dual quantum vertex operators}
	We will also make use of dual quantum vertex operators, corresponding to morphisms in \(\Hom_{\cM_{\mr{adm}}}(M_\lambda,V\otimes M_{\lambda-\mu})\) for some \(\mu\in\hh^\ast\) and $V\in\cM_{\mr{fd}}$. In analogy to Proposition \ref{prop parametrizing spaces} one has an isomorphism of \(\mathfrak{h}^*\)-graded spaces
	\[
	 \bigoplus_{\mu\in\hh^\ast}\Hom_{\cM_{\mr{adm}}}(M_\lambda,V\otimes M_{\lambda-\mu})\overset{\sim}{\longrightarrow}\underline{V}:\,\,\, (\phi^{(\mu)})_{\mu\in\hh^\ast} \mapsto \sum_{\mu\in\mathfrak{h}^*} (\id_V\otimes \mathbf{m}_{\lambda-\mu}^{\ast})(\phi^{(\mu)}(\mathbf{m}_{\lambda})),
	\]
with the obvious \(\mathfrak{h}^*\)-grading on the domain of the map. 
	With some abuse of notation, we will denote this isomorphism by \(\langle \cdot \rangle_{V,\lambda}\) and we will omit the subscript \(V,\lambda\) if no confusion can arise.
\end{remark}

\subsection{Graphical calculus for the category \(\cN_{\mr{adm}}\)}\label{GcSection}

The braiding \((P_{S,T})_{S,T\in\cN_{\mr{adm}}^\str}\) in the strictified symmetric monoidal category \(\cN_{\mr{adm}}^\str\) is obtained from the permutation morphisms \((P_{M,N})_{M,N\in\cN_{\mr{adm}}}\) in $\cN_{\mr{adm}}$.
The coupon in \(\mathbb{B}_{\cN_{\mr{adm}}^\str}\) colored by \(P_{S,T}\) will be depicted by Figure \ref{permutation}. If the color of a strand is $\underline{S}\in\cN_{\mr{adm}}^\str$ for some
$S\in\cM_{\mr{adm}}^\str$, then we depict the color graphically by $S$ if no confusion can arise.

For \(\nu\in\mathfrak{h}^*\) denote by \(\CC_\nu=\CC 1_\nu\in\cN_{\mr{fd}}\) the \(1\)-dimensional  \(\hh^\ast\)-graded vector space with \(\CC_\nu[\nu]=\CC_\nu\). 
For any \(M\in\cN_{\mr{adm}}\) and \(m\in M[\nu]\), we have a morphism $\alpha_m\in\Hom_{\cN_{\mr{adm}}}(\CC_\nu,M)$ defined by
\[
\alpha_m(1_\nu):=m.
\]
Similarly, for \(f\in M[\nu]^\ast\) there exists a morphism $\beta_f\in\Hom_{\cN_{\mr{adm}}}(M,\CC_\nu)$ defined by
\begin{equation*}
\beta_f(m):=
\begin{cases}
f(m)1_\nu\qquad &\hbox{ if } m\in M[\nu],\\
0\qquad &\hbox{ otherwise.}
\end{cases}
\end{equation*}
The coupons in \(\mathbb{B}_{\cN_{\rm{adm}}^\str}\) colored by the morphism
$\mathcal{G}^\str(\alpha_m)\in \Hom_{\cN_{\mr{adm}}^\str}((\CC_\nu),(M))$ and the morphism
$\mathcal{G}^\str(\beta_f)\in\Hom_{\cN_{\mr{adm}}^\str}((M),(\CC_\nu))$ will be depicted by Figures \ref{eval_m} and \ref{eval_f} respectively. In the upcoming Section \ref{Section strictified dynamical module category}, we will see that this is in agreement with the notation for the quantum vertex operators $\Phi_\lambda^{v_1,\ldots,v_k}$ as given in Figure \ref{vertex operator}.

\begin{figure}[H]
	\begin{minipage}{0.32\textwidth}
		\centering
		\includegraphics[scale=0.5]{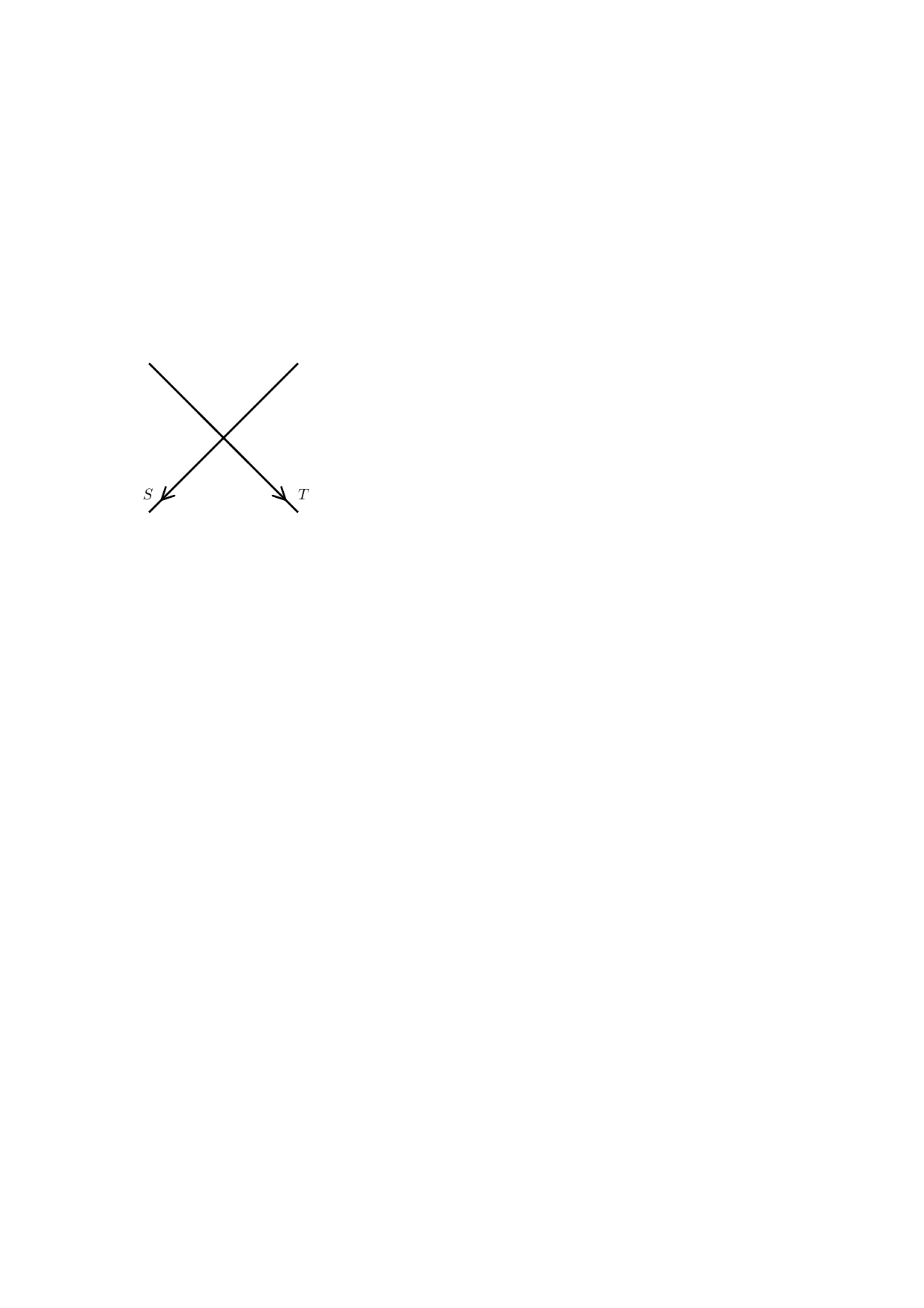}
		\captionof{figure}{}
		\label{permutation}
	\end{minipage}\quad
	\begin{minipage}{0.32\textwidth}
		\centering
		\includegraphics[scale=0.8]{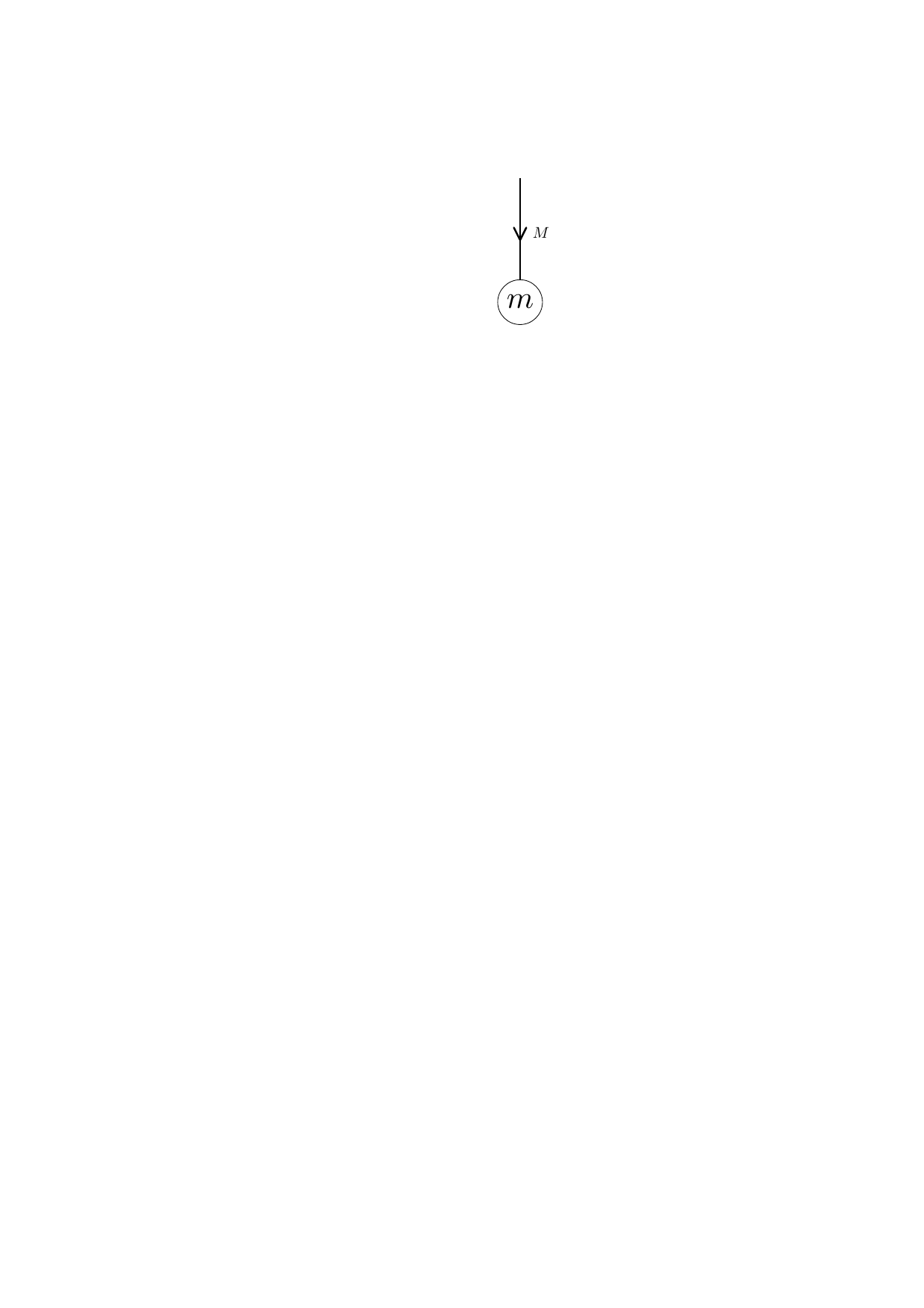}
		\captionof{figure}{}
		\label{eval_m}
	\end{minipage}
	\begin{minipage}{0.32\textwidth}
		\centering
		\includegraphics[scale=0.8]{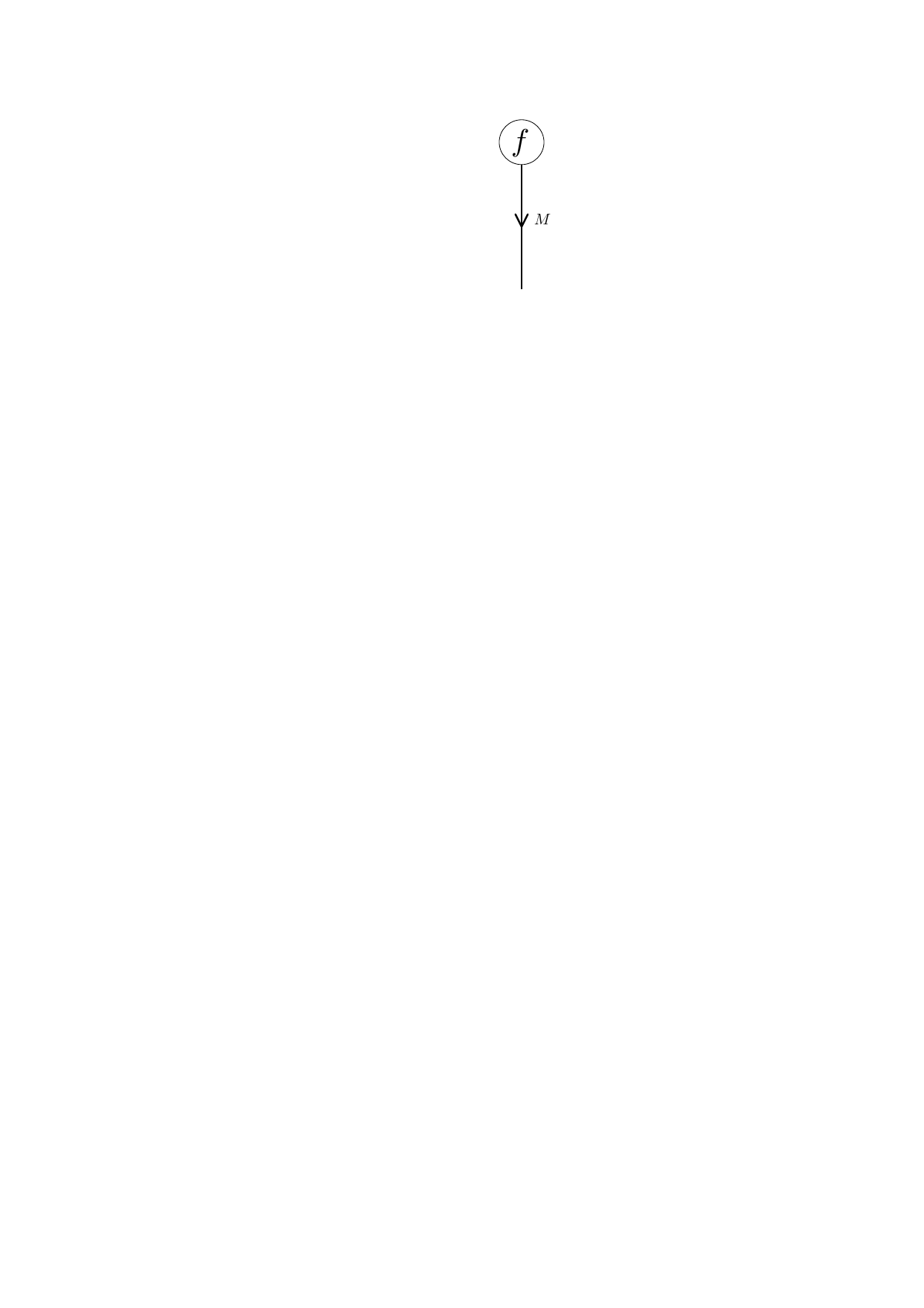}
		\captionof{figure}{}
		\label{eval_f}
	\end{minipage}
\end{figure} 
In case $M$ is of the form \(\ul{M_\lambda}\), the corresponding strand will be colored red, with a blue label \(\lambda\), and the diagram will be rotated over 90 degrees, in agreement with Section \ref{Subsection Quantum vertex operators}. Finally, we will use short-hand notations for the coupons colored by $\mathcal{G}^\str(\alpha_{\mathbf{m}_\lambda})$ and $\mathcal{G}^\str(\beta_{\mathbf{m}_\lambda^*})$, see Figures \ref{eval_lambda} and \ref{eval_lambda_dual}.
\begin{figure}[H]
	\begin{minipage}{0.48\textwidth}
		\centering
		\includegraphics[scale=0.8]{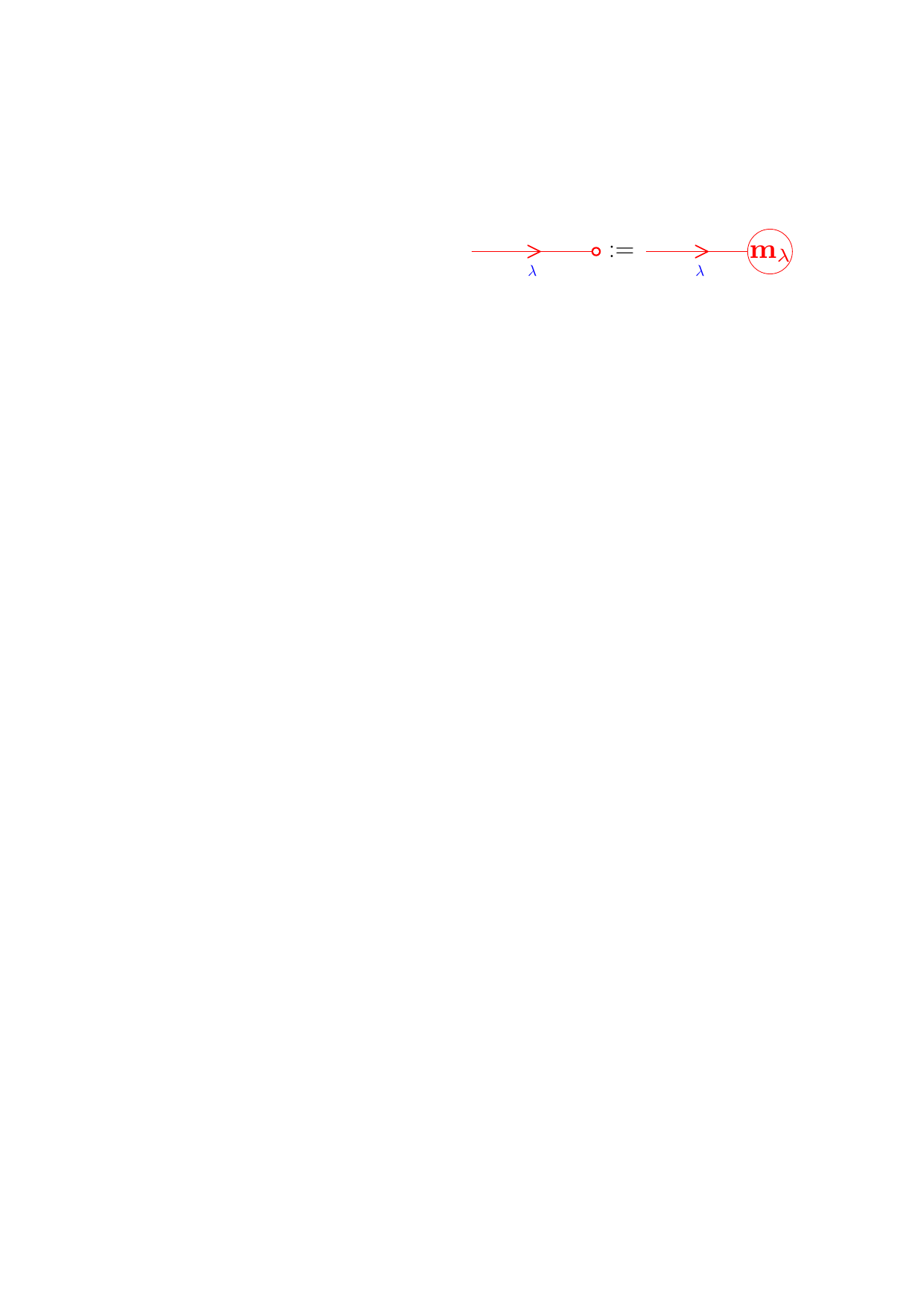}
		\captionof{figure}{}
		\label{eval_lambda}
	\end{minipage}\quad
	\begin{minipage}{0.48\textwidth}
		\centering
		\includegraphics[scale=0.8]{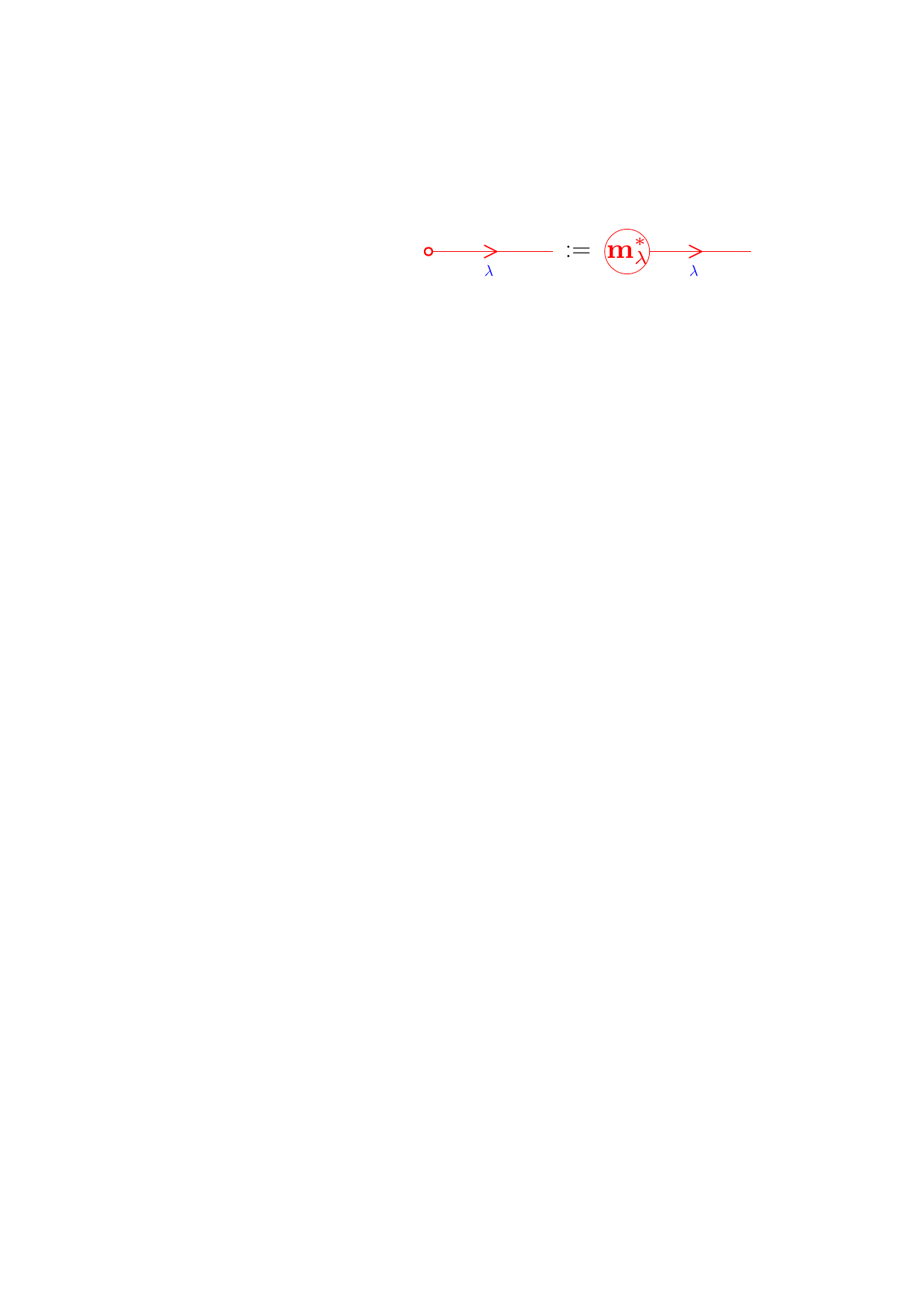}
		\captionof{figure}{}
		\label{eval_lambda_dual}
	\end{minipage}
\end{figure}
For later purposes, we recall the following result from \cite[Lemma 3.10]{DeClercq&Reshetikhin&Stokman-2022}.
\begin{lemma}
	\label{lemma 3.10}
	For any \(\lambda\in\hh_{\mathrm{reg}}^\ast\) and \(S\in\Rep^\str\), we have
	\begin{center}
		\includegraphics[scale = 0.75]{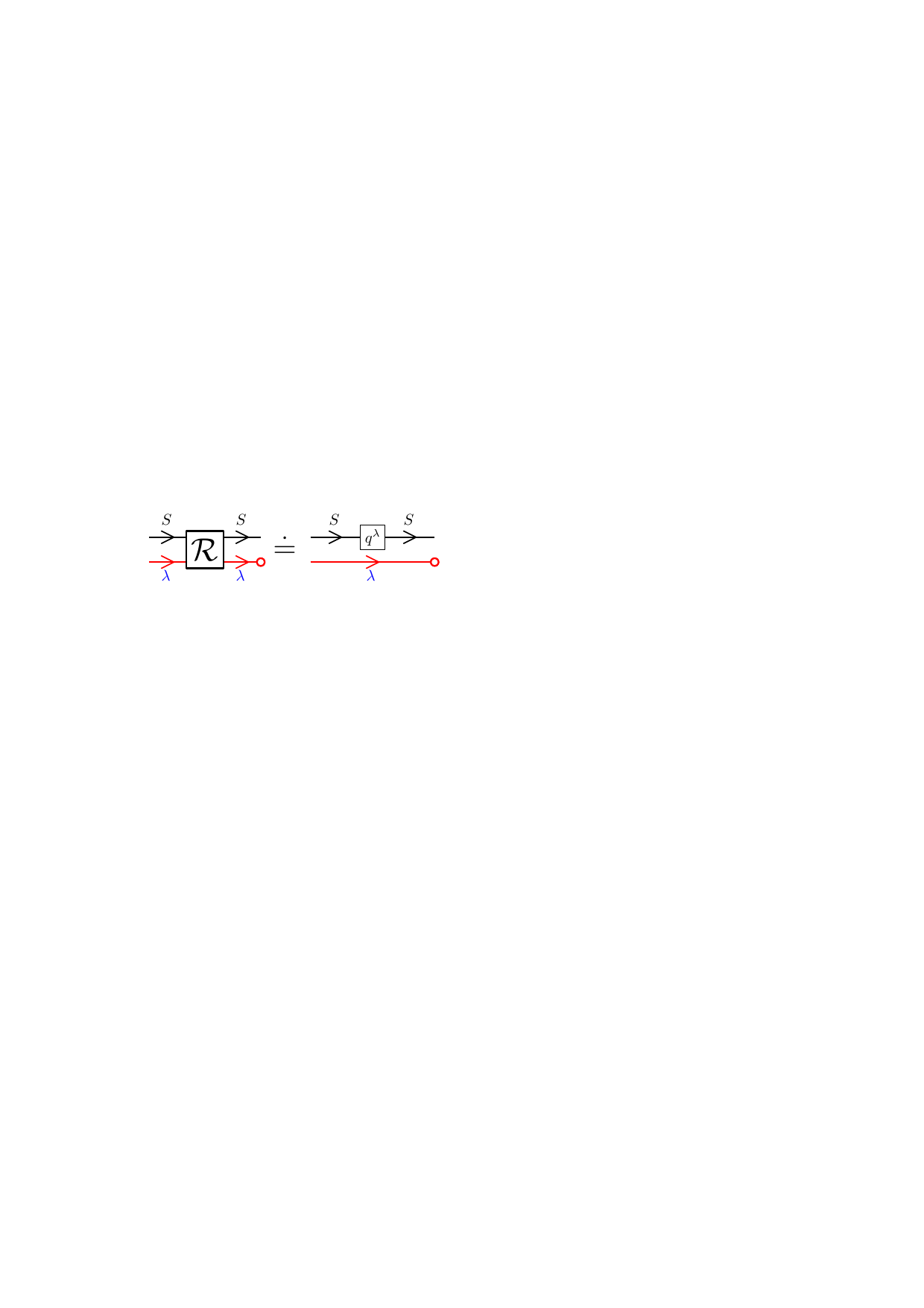} \qquad\qquad\qquad \includegraphics[scale = 0.75]{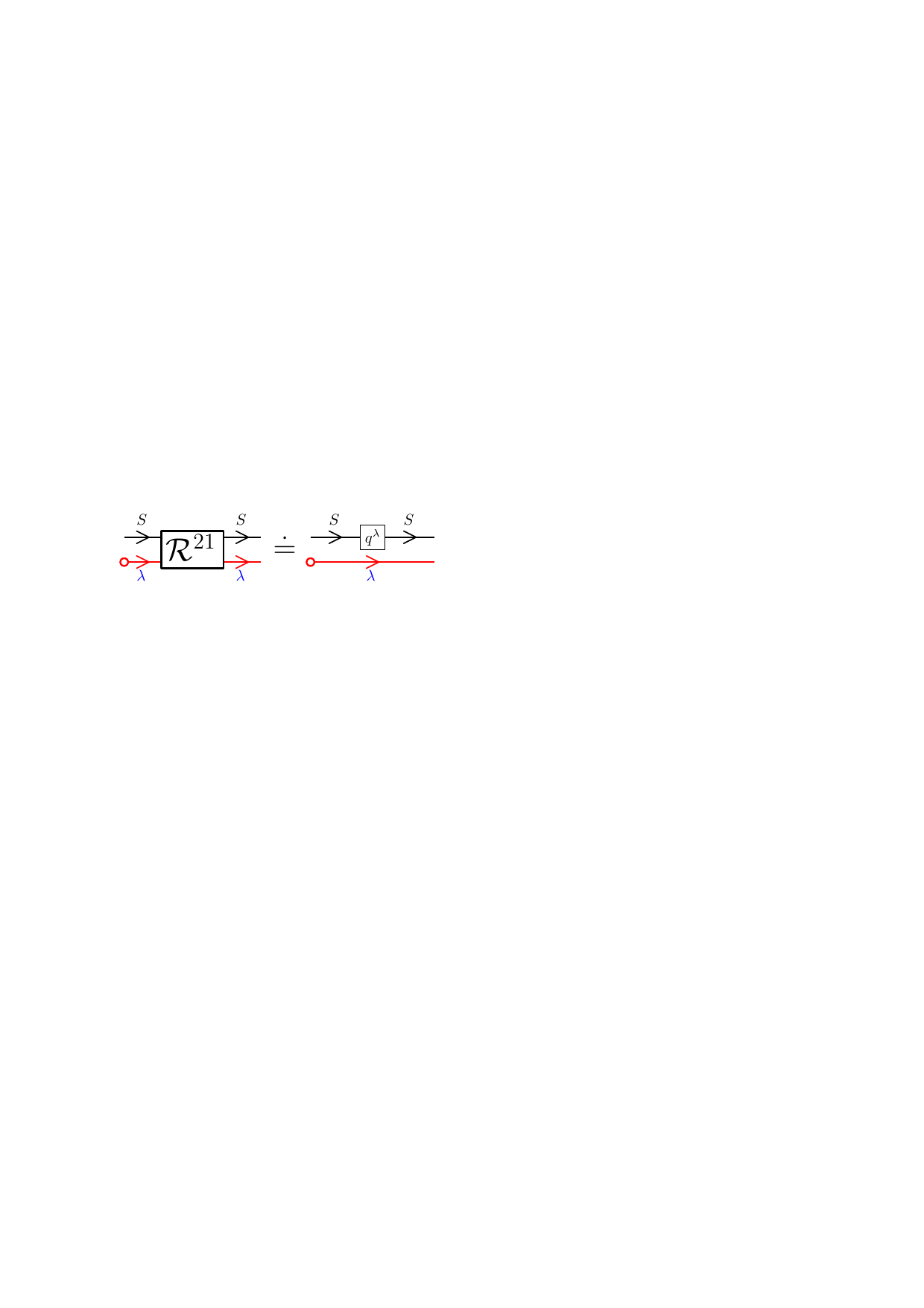}
	\end{center}
in the graphical calculus for \(\mathbb{B}_{\cN_{\mr{adm}}^\str}\).
Here the coupon colors $\mathcal{R}$ and $\mathcal{R}^{21}$ are the endomorphisms of $\underline{M_\lambda}\otimes\underline{S}$ 
representing the $\mathfrak{h}$-linear action of $\mathcal{R}$ and $\mathcal{R}^{21}$ on the two-fold tensor product representation $M_\lambda\otimes\cF^\str(S)$ of $U_q$,
and the color $q^\lambda$ is the endomorphism of $\underline{S}$ representing the $\mathfrak{h}$-linear action of $q^\lambda$ on $\cF^\str(S)$.
\end{lemma}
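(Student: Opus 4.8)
The plan is to apply the functor $\cF^{\mr{br}}_{\cN_{\mr{adm}}^\str}$ to the ribbon--braid graphs on each side of the two asserted identities, reducing the graphical statements to equalities of morphisms in $\cN_{\mr{adm}}^\str$. Reading off the coupon colors (the crossing being the symmetric flip $P$, the fat dots being the highest-weight injection $\mathcal{G}^\str(\alpha_{\mathbf{m}_\lambda})$ and the dual projection $\mathcal{G}^\str(\beta_{\mathbf{m}_\lambda^*})$ from Figures \ref{eval_lambda}--\ref{eval_lambda_dual}), the first identity amounts to
\[
\cR\bigl(\mathbf{m}_\lambda\otimes v\bigr)=\mathbf{m}_\lambda\otimes\bigl(q^\lambda\cdot v\bigr)
\]
for homogeneous $v\in\cF^\str(S)$, while the second amounts to
\[
(\mathbf{m}_\lambda^*\otimes\id)\circ\cR^{21}=q^\lambda\circ(\mathbf{m}_\lambda^*\otimes\id)
\]
as maps $M_\lambda\otimes\cF^\str(S)\to\cF^\str(S)$. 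Throughout I use the identification $\hh^*\cong\hh$, so that $q^\lambda$ scales a weight vector of weight $\xi$ by $q^{\langle\lambda,\xi\rangle}$.

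First I would prove the first identity. Writing $\cR=\kappa\ol{\cR}$ and using the triangular decomposition $\ol{\cR}=\sum_{\mu\in Q^+}\Theta_\mu$ of Lusztig's quasi $R$-matrix, with $\Theta_\mu\in U^+_\mu\otimes U^-_{-\mu}$ and $\Theta_0=1\otimes 1$, the factor lying in $U^+_\mu$ acts on $\mathbf{m}_\lambda$ in the first tensor leg. Since $E_i\mathbf{m}_\lambda=0$, we have $U^+_\mu\mathbf{m}_\lambda=0$ for all $\mu\neq 0$, so only the $\mu=0$ term survives and $\ol{\cR}(\mathbf{m}_\lambda\otimes v)=\mathbf{m}_\lambda\otimes v$. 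The remaining factor $\kappa=q^{\sum_i x_i\otimes x_i}$ acts diagonally, scaling $\mathbf{m}_\lambda\otimes v$ (for $v$ of weight $\xi$) by $q^{\sum_i\lambda(x_i)\xi(x_i)}=q^{\langle\lambda,\xi\rangle}$, which is exactly the scalar by which $q^\lambda$ acts on $v$. This yields the first identity.

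For the second identity I would run the dual argument, the extra ingredient being a weight count. Here $\ol{\cR}^{21}=\sum_\mu\Theta_\mu^{21}$ with $\Theta_\mu^{21}\in U^-_{-\mu}\otimes U^+_\mu$, so the first leg $U^-_{-\mu}$ lowers the weight of the argument $m\in M_\lambda$ by $\mu$. Applying $\mathbf{m}_\lambda^*$ then extracts the weight-$\lambda$ component; since every weight of $M_\lambda$ satisfies $\wt(m)\le\lambda$ while $\mu\in Q^+$, the requirement $\wt(m)-\mu=\lambda$ forces $\mu=0$. Hence only $\Theta_0=1\otimes 1$ contributes, and composing with the weight-preserving, diagonal action of $\kappa$ as above produces precisely the intertwining relation with $q^\lambda$.

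The main obstacle is bookkeeping rather than conceptual: one must pin down the precise conventions of \cite{DeClercq&Reshetikhin&Stokman-2022} (the placement of $U^+$ versus $U^-$ in the legs of $\ol{\cR}$, and which crossing orientation is recorded by $\cR$ as opposed to $\cR^{21}$), so that the injection coupon is correctly matched with $\cR$ in the first identity and the projection coupon with $\cR^{21}$ in the second --- note that the opposite pairings do not simplify, as $U^-$ does not annihilate $\mathbf{m}_\lambda$ and $U^+_\mu m$ can meet the weight-$\lambda$ projection for many $\mu$. Once these are fixed, the weight-boundedness of $M_\lambda$ does all the work, collapsing the sum $\sum_\mu\Theta_\mu$ to its leading term in each case and leaving only the diagonal contribution of $\kappa$ that becomes the $q^\lambda$ coupon. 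Transporting these two morphism identities back through $\cF^{\mr{br}}_{\cN_{\mr{adm}}^\str}$ gives the two graphical identities of the lemma.
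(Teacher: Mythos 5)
Your proposal is correct and takes essentially the same route as the paper's source for this lemma: the statement is quoted from the prequel \cite[Lemma 3.10]{DeClercq&Reshetikhin&Stokman-2022}, whose proof is exactly your computation --- factor \(\cR=\kappa\ol{\cR}\), kill all terms \(\Theta_\beta\) with \(\beta\neq 0\) using \(E_i\cdot\mathbf{m}_\lambda=0\) for the first identity (resp.\ the top-weight projection \(\mathbf{m}_\lambda^*\) together with \(\wts(M_\lambda)\subseteq\lambda-Q^+\) for the second), and read off \(q^\lambda\) from the diagonal action of \(\kappa\); this is the same mechanism the present paper re-runs in the proof of Lemma \ref{lemma Omega tilde}, which explicitly says ``compare with the proof of Lemma \ref{lemma 3.10}''. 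Your closing remark on which pairing (injection with \(\cR\), projection with \(\cR^{21}\)) is forced by the convention that \(\ol{\cR}\) has first leg in \(U^+\) and second leg in \(U^-\) is precisely the right bookkeeping point.
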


\section{Dynamical module categories}
\label{Section dynamical module category}
The aim of this section is to extend the graphical calculus for the braided monoidal category  \(\cM_{\mr{adm}}\) 
in such a way that it allows for graphical computations in the parametrizing spaces \(V\in\cN_{\mr{fd}}\) of the intertwiners \(\phi_\lambda^v\). The interplay between the two graphical calculi is described by a strict monoidal functor, which we will call the {\it dynamical twist functor}. It is a strict lift of a nonstrict monoidal functor
\begin{equation}
\label{the EV-functor intro}
\cF^{\textup{EV}}: \cM_{\mr{fd}}\rightarrow\Ndyn,
\end{equation}
where \(\Ndyn\) is a suitable
dynamical analogue of the monoidal category \(\cN_{\mr{fd}}\) introduced by Etingof and Varchenko in \cite[Section 2.6]{Etingof&Varchenko-1999}. The nontrivial monoidal structure of $\mathcal{F}^{\textup{EV}}$ is described by dynamical fusion operators. 

\subsection{The dynamical twist functor}
\label{Subsection dynamical module category category def}
In this subsection we introduce the dynamical module category $\Ndyn$, the functor $\cF^{\textup{EV}}$, and its lift to a strict monoidal functor.

Recall the dynamical fusion operators \(j_S(\lambda)\in\mr{Aut}_{\cN_{\mr{fd}}}(\cF^\str(\ul{S}))\) defined in Section \ref{Subsection Quantum vertex operators}. Their cocycle properties can most conveniently be described in terms of a dynamical version \(\Ndyn\) of the monoidal category \(\cN_{\mr{fd}}\) from \cite{Etingof&Varchenko-1999}. As a category, it is defined as follows.
\begin{definition}
	\label{def Dynamical module category}
	We denote by \(\Ndyn\) the category whose objects are the objects of $\cN_{\mr{fd}}$,
	and the class of morphisms \(\textup{Hom}_{\Ndyn}\bigl(V,W)\) consists of the maps 
	\[
	\hh^\ast_{\textup{reg}}\rightarrow \textup{Hom}_{\cN_{\mr{fd}}}(V,W).
	\]
	The composition is pointwise, 
	\[
	\bigl(A\circ B)(\lambda):=A(\lambda)\circ B(\lambda)
	\]
	for \(A\in\textup{Hom}_{\Ndyn}(V,W)\), \(B\in\textup{Hom}_{\Ndyn}(U,V)\) and \(\lambda\in\hh_{\textup{reg}}^\ast\). 
\end{definition}
We denote by 
\[
\mathcal{F}^{\textup{cst}}: \cN_{\mr{fd}}\rightarrow \Ndyn
\]
the functor mapping an object \(V\in\cN_{\mr{fd}}\) to itself and a morphism \(A\in\textup{Hom}_{\cN_{\mr{fd}}}(V,W)\) to the constant map \(\hh_{\textup{reg}}^\ast\to \textup{Hom}_{\cN_{\mr{fd}}}(V,W): \lambda\mapsto A\). We will denote \(\mathcal{F}^{\textup{cst}}(A)\) simply by \(A\) if no confusion can arise. 

\begin{example}\label{jS}
Recall the dynamical fusion operators $j_S(\lambda)$ from 
Definition \ref{dfoper}.
The resulting map
\[
j_S: \mathfrak{h}_{\textup{reg}}^*\rightarrow\textup{End}_{\cN_{\mr{fd}}}(\cF^\str(\underline{S})),\qquad
\lambda\mapsto j_S(\lambda),
\]
defines an 
endomorphism 
 of $\cF^\str(\ul{S})$ in $\Ndyn$. 

We have also introduced in Definition \ref{dfoper} the lift $\overline{J_S}(\lambda)$ of $j_S(\lambda)$ to a morphism in $\textup{Hom}_{\cN^{\str}_{\mr{fd}}}(\ul{S},\cF^\str(\ul{S}))$. In the present dynamical setup, it corresponds to the unique morphism in $\textup{Hom}_{\Ndyn^{\,\str}}(\ul{S},\cF^\str(\ul{S}))$ represented by $j_S$, which we will denote by $\ol{J_S}$.
\end{example}

Following \cite{Etingof&Varchenko-1999}, we provide \(\Ndyn\) with a nonstandard ``dynamical'' monoidal structure. The tensor product of morphisms in \(\Ndyn\) can be most conveniently defined in terms of the following weight-shifting morphisms in $\mathcal{N}_{\mr{fd}}$.

For \(A\in\textup{Hom}_{\Ndyn}(V_1,W_1)\), \(B\in\textup{Hom}_{\Ndyn}(V_2,W_2)\) and \(\lambda\in\mathfrak{h}_{\mr{reg}}^*\) we write
\begin{equation}\label{dynnot}
A(\lambda\pm\mh^{(2)})\otimes B(\lambda)
\end{equation}
for the morphisms in $\textup{Hom}_{\mathcal{N}_{\mr{fd}}}(V_1\otimes V_2,W_1\otimes W_2)$ which map \(v_1\otimes v_2\) onto \(A(\lambda\pm\mu_2)v_1\otimes v_2\)
for homogeneous vectors \(v_j\in V_j[\mu_j]\). Note that \eqref{dynnot} equals
\begin{equation}\label{rightdecomp}
(\textup{id}\otimes B(\lambda))(A(\lambda\pm\mh^{(2)})\otimes\textup{id}).
\end{equation}
Moreover, since $B(\lambda)$ preserves the grading, \eqref{dynnot} also equals
\begin{equation}\label{leftdecomp}
(A(\lambda\pm\mh^{(2)})\otimes\textup{id})(\textup{id}\otimes B(\lambda)),
\end{equation}
which thus makes the notation \eqref{dynnot} unambiguous. It is also sometimes convenient to use objects of the considered categories as the sublabels of the weight shift operator,
in which case we write $A(\lambda\pm\mh_{V_2})\otimes B(\lambda)$ if we want to think of \eqref{dynnot} as the composition \eqref{rightdecomp}, and $A(\lambda\pm\mh_{W_2})\otimes B(\lambda)$ if we want to think of \eqref{dynnot} as the composition \eqref{leftdecomp}.
In the natural manner we extend these definitions and notations to the other tensor factor, and to multifold tensor products of morphisms in $\Ndyn$.

The following lemma is from \cite{Etingof&Varchenko-1999}. For completeness we sketch its proof.
\begin{lemma}\label{lemlow}
		\(\Ndyn=(\Ndyn,\overline{\otimes},\mathbb{C}_0,a,\ell,r)\) is a monoidal category with tensor product $\overline{\otimes}$ defined on objects by
		\(V\overline{\otimes}\,V':=V\otimes V'\) and on morphisms by
		\begin{equation}
		\label{dynamical tensor product}
		\bigl(A\overline{\otimes}B\bigr)(\lambda):=A(\lambda-\mh^{(2)})\otimes B(\lambda)
		\end{equation}
		for \(A\in\textup{Hom}_{\Ndyn}(V,W)\),  \(B\in\textup{Hom}_{\Ndyn}(V',W')\) and
		\(\lambda\in\hh_{\textup{reg}}^\ast\). The unit object \(\mathbb{C}_0\) and the associativity and unit constraints \(a,\ell,r\) are inherited from the monoidal category \(\cN_{\mr{fd}}\) via \(\cF^{\mr{cst}}\).
\end{lemma}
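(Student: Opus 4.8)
The plan is to verify directly that the data $(\Ndyn,\overline{\otimes},\mathbb{C}_0,a,\ell,r)$ satisfy the axioms of a monoidal category, reducing each axiom to the already-known monoidal structure of $\cN_{\mr{fd}}$ via the weight-shift bookkeeping introduced in \eqref{dynnot}--\eqref{leftdecomp}. First I would check that $\overline{\otimes}$ is a bifunctor. On objects it agrees with $\otimes$, so the real content is that it respects identities and composition of morphisms in $\Ndyn$. Functoriality on identities is immediate since $\id_V(\lambda-\mh^{(2)})=\id_V$ for all $\lambda$. For composition I would take $A\in\textup{Hom}_{\Ndyn}(V,W)$, $A'\in\textup{Hom}_{\Ndyn}(W,X)$, $B\in\textup{Hom}_{\Ndyn}(V',W')$, $B'\in\textup{Hom}_{\Ndyn}(W',X')$ and evaluate at $\lambda\in\hh_{\textup{reg}}^\ast$; since composition in $\Ndyn$ is pointwise, the interchange law $\bigl((A'\circ A)\overline{\otimes}(B'\circ B)\bigr)(\lambda)=\bigl((A'\overline{\otimes}B')\circ(A\overline{\otimes}B)\bigr)(\lambda)$ must be checked as an identity in $\cN_{\mr{fd}}$. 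The subtlety is that the first tensor factor of the left side involves the single shift $(A'\circ A)(\lambda-\mh^{(2)})$, whereas the right side naively shifts twice; but because $B(\lambda)$ preserves the grading, applying $A'$ at the shifted argument after $A$ does not further alter the weight in the second slot, and this is exactly what the unambiguity of \eqref{dynnot} guarantees. Tracing this through on homogeneous vectors $v\otimes v'$ with $v'\in V'[\mu']$ gives $A'(\lambda-\mu')A(\lambda-\mu')v\otimes B'(\lambda)B(\lambda)v'$ on both sides, establishing the interchange law.

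Next I would transport the associativity and unit constraints $a,\ell,r$ from $\cN_{\mr{fd}}$ to $\Ndyn$ through the functor $\cF^{\textup{cst}}$, i.e.\ declare $a_{U,V,W}:=\cF^{\textup{cst}}(a^{\cN_{\mr{fd}}}_{U,V,W})$ and similarly for $\ell,r$, these being the constant maps at the underlying constraints of $\cN_{\mr{fd}}$. I would then verify that these are natural isomorphisms in $\Ndyn$ and that they satisfy the pentagon and triangle axioms. Since the constraints are inherited from a symmetric monoidal category where pentagon and triangle already hold, evaluating the axioms at any fixed $\lambda\in\hh_{\textup{reg}}^\ast$ reduces them to the corresponding identities in $\cN_{\mr{fd}}$, provided naturality holds. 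The genuinely dynamical point, and the one requiring care, is naturality of $a$ with respect to the nonstandard tensor $\overline{\otimes}$: the naturality square for $A\overline{\otimes}(B\overline{\otimes}C)$ versus $(A\overline{\otimes}B)\overline{\otimes}C$ involves nested weight shifts of the form $A(\lambda-\mh^{(2)}-\mh^{(3)})$, and I must confirm that the associator, being weight-preserving and acting on $U\otimes(V\otimes W)\cong(U\otimes V)\otimes W$, intertwines the two shift patterns. Concretely, on a homogeneous vector $u\otimes(v\otimes w)$ with $v\in V[\mu_2]$, $w\in W[\mu_3]$, both paths produce $A$ evaluated at $\lambda-\mu_2-\mu_3$ in the first slot, $B$ at $\lambda-\mu_3$ in the second, and $C$ at $\lambda$ in the third, so the square commutes once the shifts are matched up correctly.

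The main obstacle I anticipate is the bookkeeping of the iterated weight shifts in the pentagon/naturality verification: one must be meticulous that the total shift applied to each tensor factor equals minus the sum of the weights of all factors to its right, independently of the bracketing, and that the weight-preservation of each morphism $B(\lambda)$ is invoked at exactly the right place to collapse the would-be double shifts. Everything else is a routine transport of structure along $\cF^{\textup{cst}}$. Since the statement asserts this is the construction of Etingof and Varchenko \cite{Etingof&Varchenko-1999}, I would keep the write-up to a sketch: check the interchange law explicitly on homogeneous vectors, note that $a,\ell,r$ are pulled back along $\cF^{\textup{cst}}$ and hence satisfy their coherence axioms pointwise in $\lambda$, and highlight that the only nontrivial compatibility is the matching of nested weight shifts across the associator, which follows from grade-preservation.
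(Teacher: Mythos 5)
Your proposal follows essentially the same route as the paper's proof: check functoriality of $\overline{\otimes}$ by a direct computation on homogeneous vectors (where grade-preservation of $B(\lambda)$ is exactly what collapses the would-be double shifts), and transport $a,\ell,r$ along $\cF^{\mr{cst}}$, observing that the coherence and naturality axioms reduce pointwise in $\lambda$ to the corresponding identities in $\cN_{\mr{fd}}$. Those verifications are correct as you sketch them.

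There is, however, one step you never address, and it is the first thing the paper's proof handles: well-definedness of the formula \eqref{dynamical tensor product} itself. A morphism $A\in\textup{Hom}_{\Ndyn}(V,W)$ is by definition a map on $\hh^\ast_{\textup{reg}}$ only, so the expression $A(\lambda-\mh^{(2)})$, i.e.\ $A(\lambda-\mu')$ for $v'\in V'[\mu']$, is meaningful only if $\lambda-\mu'$ again lies in $\hh^\ast_{\textup{reg}}$. This is where the defining restriction of $\cN_{\mr{fd}}$ enters: objects have $\wts(V')\subset\Lambda$, and $\hh^\ast_{\textup{reg}}$ is stable under shifts by $\Lambda$ (since $\langle\nu,\alpha^\vee\rangle\in\ZZ$ for $\nu\in\Lambda$, integral shifts cannot destroy the condition $\langle\lambda,\alpha^\vee\rangle\notin\ZZ$). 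Your write-up tacitly evaluates $A$ at shifted arguments throughout without noting that this is legitimate; the paper flags this point explicitly, and its remark following the lemma stresses that the weight-lattice condition on objects exists precisely to make these shifts well defined. The gap is easily filled, but as written your proof starts from an expression whose domain of definition has not been justified.
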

\begin{proof}
	\(A\overline{\otimes} B\) is a well-defined morphism in \(\Ndyn\) since \(\textup{wts}(V')\subset\Lambda\) and
	\(\hh_{\textup{reg}}^\ast\) is stable under shifts by \(\Lambda\). A direct computation shows that \(\overline{\otimes}\) is a functor
	\(\Ndyn\times\Ndyn\rightarrow\Ndyn\) (the fact that the morphisms in \(\Ndyn\) respect the \(\hh^\ast\)-grading is needed here). It is easy to show that \((\Ndyn,\ol{\otimes})\) 	inherits a unit object, and associativity and unit constraints from \(\cN_{\mr{fd}}\) via \(\cF^{\mr{cst}}\).
\end{proof}
\begin{remark}
The weight condition for modules in \(\Ndyn\), i.e.\ the fact that \(\wts(V)\) must be contained in \(\Lambda\) for \(V\in\Ndyn\) , is needed in order to make sense of the \(\Lambda\)-shifts in the definition of the dynamical tensor product \(\overline{\otimes}\). This weight condition can be avoided by using the space of meromorphic functions \(\mathfrak{h}^*\rightarrow\textup{Hom}_{\cN_{\mr{fd}}}(V,W)\) as morphism spaces instead, as was done in \cite[Section 2.6]{Etingof&Varchenko-1999}.
\end{remark}
The monoidal category \(\Ndyn\) with the dynamical tensor product $\overline{\otimes}$ from Lemma \ref{lemlow} gives rise to the strict monoidal category \((\Ndyn^\str,\overline{\tens},\emptyset)\), see \S\ref{SmcSection}. On the other hand, $\cN_{\textup{fd}}$ is a symmetric monoidal category having the same objects as $\Ndyn$ and having its own (nondynamical) extension $(\cN_{\textup{fd}}^\str,\tens,\emptyset)$ to a strict monoidal category. Also $\Ndyn^\str$ and $\cN_{\textup{fd}}^\str$ have the same objects,
and the tensor product is the same on objects: $S\overline{\tens}T=S\tens T$.
We will now introduce notations relating morphisms in these two strictified monoidal categories, in such a way that it is compatible with \eqref{dynamical tensor product}.

Let \(S,T\in\Ndyn^\str\), \(A\in\Hom_{\Ndyn^\str}(S,T)\) and $\lambda\in\mathfrak{h}_{\textup{reg}}^*$. We will write 
\[
A(\lambda)\in \Hom_{\cN_{\mr{fd}}^\str}(S,T)
\]
for the unique morphism in \(\Hom_{\cN_{\mr{fd}}^\str}(S,T)\) represented by 
\[
\cF^\str_{\Ndyn}(A)(\lambda)\in\Hom_{\cN_{\mr{fd}}}(\cF^\str(S),\cF^\str(T)).
\]

For \(A\in\Hom_{\Ndyn^\str}(S,T)\) and $B\in\Hom_{\Ndyn^\str}(S',T')$ the morphism $A\overline{\tens}B$ in $\Ndyn^\str$
is represented by $\sigma_{T,T^\prime}(\cF^\str(A)\overline{\otimes}\cF^\str(B))\sigma_{S,S^\prime}^{-1}$, and hence the morphism $(A\overline{\tens}B)(\lambda)$
in $\cN_{\mr{fd}}^\str$ 
is represented by 
\[
\sigma_{T,T^\prime}\circ (\cF^\str(A)(\lambda-h^{(2)})\otimes\cF^\str(B)(\lambda))\circ\sigma_{S,S^\prime}^{-1},
\]
which justifies writing $A(\lambda-\mh^{(2)})\tens B(\lambda)$ for the morphism $(A\overline{\tens}B)(\lambda)$ in $\cN_{\mr{fd}}^\str$.
More generally, we write
\begin{equation}
\label{shifted weights}
A(\lambda\pm\mh^{(2)})\tens B(\lambda)
\end{equation}
for the morphisms in \(\Hom_{\cN_{\mr{fd}}^\str}(S\tens S', T\tens T')\) represented by
\[
\sigma_{T,T^\prime}\circ\bigl(\cF^\str(A)(\lambda\pm\mh^{(2)})\otimes\cF^\str(B)(\lambda)\bigr)\circ\sigma_{S,S^\prime}^{-1},
\]
and the morphisms \eqref{shifted weights} are alternatively denoted by $A(\lambda\pm\mh_{S^\prime})\tens B(\lambda)$ and $A(\lambda\pm\mh_{T^\prime})\tens B(\lambda)$.

With the monoidal structure on $\Ndyn$ from Lemma \ref{lemlow}, we can formulate the compatibility of the dynamical fusion operator $j_S$ (see Example \ref{jS}) with the monoidal structure as follows.
\begin{lemma}\label{coco}
For \(S\not=\emptyset\) and \(T\not=\emptyset\) we have
\begin{equation}\label{compassoc2}
\begin{split}
\sigma_{\ul{S},\ul{T}}\circ j_{(\cF^\str(S))\tens T}\circ(j_S\ol{\otimes}\textup{id}_{\cF^\str(\ul{T})})\circ\sigma_{\ul{S},\ul{T}}^{-1}&=j_{S\tens T}\\
&=j_{S\tens (\cF^\str(T))}\circ\sigma_{\ul{S},\ul{T}}\circ (\textup{id}_{\cF^\str(\ul{S})}\ol{\otimes}j_T)\circ\sigma_{\ul{S},\ul{T}}^{-1}.
\end{split}
\end{equation}
\end{lemma}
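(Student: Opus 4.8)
The plan is to reduce both identities to pointwise statements in $\cN_{\mr{fd}}$ and to verify them on homogeneous vectors, exploiting the fusion relation \eqref{fused} together with the associativity of the iterated composition defining multipoint quantum vertex operators in \eqref{kEV}. Write $S=(U_1,\ldots,U_m)$ and $T=(V_1,\ldots,V_n)$ with $U_i,V_j\in\Rep$, and fix homogeneous vectors $u_i\in U_i[\alpha_i]$ and $v_j\in V_j[\beta_j]$; put $\beta:=\beta_1+\cdots+\beta_n$. Since composition and $\ol{\otimes}$ in $\Ndyn$ are defined pointwise in $\lambda$ with the weight shifts of \eqref{dynamical tensor product}, and the constraints $\sigma$ are inherited from $\cN_{\mr{fd}}$, it suffices to check each of the two identities after evaluation at an arbitrary $\lambda\in\hh_{\textup{reg}}^\ast$, as an equality of morphisms in $\cN_{\mr{fd}}$ applied to $u_1\otimes\cdots\otimes u_m\otimes v_1\otimes\cdots\otimes v_n$.

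The heart of the argument is the \emph{factorization}
\[
\phi_\lambda^{u_1,\ldots,u_m,v_1,\ldots,v_n}=\bigl(\phi_{\lambda-\beta}^{u_1,\ldots,u_m}\otimes\mathrm{id}_{\cF^\str(T)}\bigr)\circ\phi_\lambda^{v_1,\ldots,v_n},
\]
obtained by splitting the $(m+n)$-fold composition \eqref{kEV} after its $m$-th factor. Indeed, the bottom $n$ factors have degrees and identity tails that exactly match the $n$-point operator $\phi_\lambda^{v_1,\ldots,v_n}$, with codomain $M_{\lambda-\beta}\otimes\cF^\str(T)$, while the top $m$ factors, each carrying an additional identity on $\cF^\str(T)$, reassemble into $\phi_{\lambda-\beta}^{u_1,\ldots,u_m}\otimes\mathrm{id}_{\cF^\str(T)}$; the degree parameter of the top block is $\lambda-\beta$ precisely because the intermediate Verma module is $M_{\lambda-\beta}$. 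Applying \eqref{fused} to the top block writes $\phi_{\lambda-\beta}^{u_1,\ldots,u_m}=\phi_{\lambda-\beta}^{u'}$ with $u':=j_S(\lambda-\beta)(u_1\otimes\cdots\otimes u_m)$, which identifies the right-hand side as the $(n+1)$-point quantum vertex operator for the tuple $(\cF^\str(S))\tens T$. Taking expectation values (Proposition \ref{prop parametrizing spaces}) and invoking the definition of $j$ then gives
\[
j_{S\tens T}(\lambda)(u_1\otimes\cdots\otimes u_m\otimes v_1\otimes\cdots\otimes v_n)=j_{(\cF^\str(S))\tens T}(\lambda)\bigl(u'\otimes v_1\otimes\cdots\otimes v_n\bigr).
\]
Since $u'=j_S(\lambda-\beta)(u_1\otimes\cdots\otimes u_m)$ is exactly the effect of $j_S(\lambda-\mh^{(2)})\otimes\mathrm{id}$ — the shift $-\beta=-\mh^{(2)}$ being the total weight on the $\cF^\str(T)$-factor — this is the first equality, the isomorphisms $\sigma_{\ul S,\ul T}$ accounting for the change of bracketing between $\cF^\str(\ul S)\otimes\cF^\str(\ul T)$ and $\cF^\str(\ul{S\tens T})$.

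The second equality is obtained identically, but applying \eqref{fused} to the bottom block instead: writing $\phi_\lambda^{v_1,\ldots,v_n}=\phi_\lambda^{v'}$ with $v':=j_T(\lambda)(v_1\otimes\cdots\otimes v_n)$ exhibits $\phi_\lambda^{u_1,\ldots,u_m,v_1,\ldots,v_n}$ as the $(m+1)$-point quantum vertex operator for the tuple $S\tens(\cF^\str(T))$, whence
\[
j_{S\tens T}(\lambda)(u_1\otimes\cdots\otimes u_m\otimes v_1\otimes\cdots\otimes v_n)=j_{S\tens(\cF^\str(T))}(\lambda)\bigl(u_1\otimes\cdots\otimes u_m\otimes v'\bigr).
\]
Here no weight shift intervenes, since the $T$-block sits at the bottom of the composition and is fused at degree $\lambda$; this matches $\mathrm{id}_{\cF^\str(\ul S)}\,\ol{\otimes}\,j_T$, whose value at $\lambda$ is $\mathrm{id}\otimes j_T(\lambda)$.

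I expect the only genuine subtlety to be the strictification bookkeeping: verifying that the reassociation isomorphisms $\sigma_{\ul S,\ul T}$ correctly intertwine the split bracketing $\cF^\str(\ul S)\otimes\cF^\str(\ul T)$ with the full bracketing $\cF^\str(\ul{S\tens T})$, and that the $-\mh^{(2)}$ shift appearing in $j_S\,\ol{\otimes}\,\mathrm{id}$ is exactly the intermediate weight $\lambda-\beta$ produced by the factorization. The representation-theoretic content reduces to associativity of \eqref{kEV} and the defining property of $j$, so no delicate estimates are required.
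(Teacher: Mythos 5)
Your proof is correct and follows essentially the same route as the paper's: split $\phi_\lambda^{u_1,\ldots,u_m,v_1,\ldots,v_n}$ as $(\phi_{\lambda-\beta}^{u_1,\ldots,u_m}\otimes\mathrm{id}_{\cF^\str(T)})\circ\phi_\lambda^{v_1,\ldots,v_n}$, fuse either the top or the bottom block via \eqref{fused}, and compare expectation values, with the $-\mh^{(2)}$ shift accounting for the intermediate highest weight $\lambda-\beta$. The paper even omits the $\sigma$ bracket bookkeeping that you spell out, so your write-up is, if anything, slightly more careful on that point.
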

\begin{proof}
Write $S=(V_1,\ldots,V_k)$ and $T=(W_1,\ldots,W_\ell)$. For $\lambda\in\mathfrak{h}_{\textup{reg}}^*$, $v_i\in V_i[\mu_i]$ and $w_j\in W_j[\nu_j]$, 
the vector 
\[
u:=j_{S\tens T}(\lambda)(v_1\otimes\cdots\otimes v_k\otimes w_1\otimes\cdots\otimes w_\ell)
\]
is the unique weight vector in $\cF^\str(S\tens T)$ of weight $\sum_i\mu_i+\sum_j\nu_j$ such that 
\[
\phi_\lambda^u=\phi_\lambda^{v_1,\ldots,v_k,w_1,\ldots,w_\ell}.
\]
Upon rewriting the $(k+\ell)$-point quantum vertex operator $\phi_\lambda^{v_1,\ldots,v_k,w_\ell,\cdots,w_\ell}$ as a composition of two quantum vertex operators, i.e.
\[
\phi_\lambda^{v_1,\ldots,v_k,w_1,\ldots,w_\ell}=(\phi_{\lambda-\sum_j\nu_j}^{v_1,\ldots,v_k}\otimes\textup{id}_{\cF^\str(T)})\phi_\lambda^{w_1,\ldots,w_\ell},
\]
we can fuse the $(k+\ell)$-point quantum vertex operators in steps: first fuse $\phi_{\lambda-\sum_j\nu_j}^{v_1,\ldots,v_k}$, and then fuse the resulting $(\ell+1)$-point quantum vertex operator, or first fuse $\phi_{\lambda}^{w_1,\ldots,w_\ell}$, and then the resulting $(k+1)$-point quantum vertex operator. This gives
\[
\phi_\lambda^{j_{\cF^\str(S)\tens T}(\lambda)(j_S(\lambda-\sum_j\nu_j)(v_1\otimes\cdots\otimes v_k)\otimes w_1\otimes\cdots\otimes w_\ell)}
=\phi_\lambda^u=
\phi_\lambda^{j_{S\tens\cF^\str(T)}(\lambda)(v_1\otimes\cdots\otimes v_k\otimes j_T(\lambda)(w_1\otimes\cdots\otimes w_\ell))},
\]
where we have omitted the isomorphisms $\sigma$ that straighten the bracket order. Taking the expectation value of this identity immediately leads to \eqref{compassoc2}.
\end{proof}

Note that both \(\cF^{\mr{frgt}}: \Rep\to \cN_{\mr{fd}}\) and \(\cF^{\mr{cst}}: \cN_{\mr{fd}}\rightarrow\Ndyn\) are strict monoidal functors, and hence so is
\[
\cF^{\mr{EV}}:=\cF^{\mr{cst}}\circ\cF^{\mr{frgt}}: \Rep\rightarrow\Ndyn.
\]
In particular,
\begin{equation}\label{formulaalr}
\cF^{\mr{EV}}(a_{U,V,W})=a_{\ul{U},\ul{V},\ul{W}},\quad \cF^{\mr{EV}}(\ell_{U})=\ell_{\ul{U}},\quad
\cF^{\mr{EV}}(r_{U})=r_{\ul{U}}.
\end{equation}
Etingof and Varchenko observed in \cite[Lemma 17]{Etingof&Varchenko-1999} that the dynamical fusion operator produces another, non-trivial monoidal structure on
\(\cF^{\mr{EV}}: \Rep\rightarrow\Ndyn\). The result is as follows.

\begin{proposition}\label{proplow}
Set \(j:=(j_{(U,V)})_{U,V\in\Rep}\), with \(j_{(U,V)}\in\End_{\Ndyn}(\ul{U\otimes V})\) the dynamical fusion operator, viewed as a morphism \(\ul{U}\otimes\ul{V}\rightarrow\ul{U\otimes V}\). Then
\[
(\cF^{\mr{EV}},\textup{id}_{\mathbb{C}_0},j): \Rep\rightarrow \Ndyn
\] 
is a monoidal functor.
\end{proposition}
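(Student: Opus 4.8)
The plan is to verify directly the three defining conditions of a monoidal functor for the data $(\cF^{\mr{EV}},\textup{id}_{\mathbb{C}_0},j)$: the naturality of the family $j=(j_{(U,V)})_{U,V\in\Rep}$, the associativity coherence, and the left and right unit coherences. The crucial simplifying observation, which I would record at the outset, is that $\cF^{\mr{EV}}=\cF^{\mr{cst}}\circ\cF^{\mr{frgt}}$ is already a \emph{strict} monoidal functor when equipped with the trivial monoidal structure, so by \eqref{formulaalr} it carries the associativity and unit constraints of $\Rep$ to those of $\Ndyn$ on the nose. Consequently, in each coherence square the associators (resp. unitors) appearing on the two sides are literally equal, so they cancel and each coherence condition reduces to a plain identity between dynamical fusion operators.

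For naturality, given $A\in\textup{Hom}_{\Rep}(U,U')$ and $B\in\textup{Hom}_{\Rep}(V,V')$, I would first note that $\cF^{\mr{EV}}(A)=\cF^{\mr{cst}}(\ul A)$ and $\cF^{\mr{EV}}(B)=\cF^{\mr{cst}}(\ul B)$ are constant in $\lambda$, so the dynamical tensor product $\cF^{\mr{EV}}(A)\,\overline{\otimes}\,\cF^{\mr{EV}}(B)$ of \eqref{dynamical tensor product} is insensitive to the weight shift $\lambda-\mh^{(2)}$ and simply equals the constant morphism $\lambda\mapsto\ul A\otimes\ul B$; since $\cF^{\mr{frgt}}$ is strict monoidal, $\cF^{\mr{EV}}(A\otimes B)=\ul A\otimes\ul B$ as well. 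The naturality square then becomes, pointwise in $\lambda$, the identity $(\ul A\otimes\ul B)\,j_{(U,V)}(\lambda)=j_{(U',V')}(\lambda)\,(\ul A\otimes\ul B)$, which is exactly the $k=2$ case of the first assertion of Lemma~\ref{hulplemma}. For the unit coherences, after cancelling the matching unitors the conditions reduce to $j_{(\mathbb{1},V)}=\textup{id}_{\ul{\mathbb{1}}\otimes\ul V}$ and $j_{(V,\mathbb{1})}=\textup{id}_{\ul V\otimes\ul{\mathbb{1}}}$, which is the second assertion of Lemma~\ref{hulplemma}; here one also uses that $\ul{\mathbb{1}}=\mathbb{C}_0$, consistent with the prescribed structure isomorphism $\textup{id}_{\mathbb{C}_0}$.

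The main content is the associativity coherence. After using \eqref{formulaalr} to replace $\cF^{\mr{EV}}(a_{U,V,W})$ by the associator of $\Ndyn$, the condition to prove is
\[
a_{\ul U,\ul V,\ul W}\circ j_{(U\otimes V,W)}\circ(j_{(U,V)}\,\overline{\otimes}\,\textup{id}_{\ul W})
=j_{(U,V\otimes W)}\circ(\textup{id}_{\ul U}\,\overline{\otimes}\,j_{(V,W)})\circ a_{\ul U,\ul V,\ul W}.
\]
This is precisely the cocycle identity of Lemma~\ref{coco}: specialising that lemma to $S=(U,V)$, $T=(W)$, where $\sigma_{\ul S,\ul T}$ is the re-bracketing associator $a_{\ul U,\ul V,\ul W}$, rewrites the left-hand side as the three-point fusion operator $j_{(U,V,W)}$, while specialising it to $S=(U)$, $T=(V,W)$, where $\sigma_{\ul S,\ul T}=\textup{id}$, rewrites the right-hand side as the same $j_{(U,V,W)}$; comparing the two yields the displayed equality. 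Alternatively, I would re-derive this identity from scratch by repeating the argument in the proof of Lemma~\ref{coco}: take homogeneous $u\in U$, $v\in V$, $w\in W$, fuse $\phi_\lambda^{u,v,w}$ in the two possible orders, and take expectation values. I expect the only delicate point to be the bookkeeping: one must check that the weight shift $\lambda-\mh^{(2)}=\lambda-\mh_{\ul W}$ built into $j_{(U,V)}\,\overline{\otimes}\,\textup{id}_{\ul W}$ matches exactly the shift by $\textup{wt}(w)$ that arises when $u,v$ are fused first in $\phi_\lambda^{u,v,w}$, and that the re-bracketing isomorphisms $\sigma$ (equivalently the vector-space associators) are correctly accounted for when translating between the strict formulation of Lemma~\ref{coco} and the non-strict coherence square above.
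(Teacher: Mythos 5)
Your proposal is correct and follows essentially the same route as the paper: dispose of the unit and naturality conditions via \eqref{formulaalr} and Lemma \ref{hulplemma} (the paper subsumes naturality under ``all requirements follow immediately'', which is exactly your Lemma \ref{hulplemma}(1) observation), and then reduce the associativity coherence to the cocycle identity \eqref{compassoc2} of Lemma \ref{coco}, specialised to $S=(U,V)$, $T=(W)$ and $S=(U)$, $T=(V,W)$, so that both sides equal $j_{(U,V,W)}$. The paper's proof is precisely this argument, so no further comparison is needed.
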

\begin{proof}
	By \eqref{formulaalr} and Lemma  \ref{hulplemma}(\ref{hulplemma 2})
	all requirements follow immediately, besides
	\begin{equation}\label{compassoc}
	a_{\ul{U},\ul{V},\ul{W}}\circ j_{(U\otimes V,W)}\circ (j_{(U,V)}\overline{\otimes}\textup{id}_{\underline{W}})=j_{(U,V\otimes W)}\circ (\textup{id}_{\ul{U}}\overline{\otimes} j_{(V,W)})
	\circ a_{\ul{U},\ul{V},\ul{W}}.
	\end{equation}
	But formula \eqref{compassoc} is obtained by observing that both sides of \eqref{compassoc} are equal to $j_{(U,V,W)}$. For the left-hand side of
	\eqref{compassoc} this follows from \eqref{compassoc2} by taking $S=(U,V)$ and $T=(W)$, for the right-hand side from \eqref{compassoc2} with $S=(U)$ and
	$T=(V,W)$.
\end{proof}

By Lemma \ref{stricttensorfunctorlemma}, \(\cF^{\mr{EV}}:\Rep\rightarrow\Ndyn\), viewed as a strict monoidal functor, extends to a strict monoidal functor 
\((\widetilde{\cF^{\mr{EV}}},\textup{id}_\emptyset,\textup{id}): \Rep^\str\rightarrow\Ndyn^{\,\str}\) satisfying
\[
(\cF^\str_{\Ndyn},\textup{id}_{\CC_0},\sigma)
\circ(\widetilde{\cF^{\mr{EV}}},\textup{id}_{\emptyset},\textup{id})\circ(\cG^\str_{\Rep},J_\emptyset,\tau)=(\cF^{\mr{EV}},\textup{id}_{\CC_0},\textup{id}).
\]
Our next aim will be to establish a concrete lift of the non-strict monoidal functor 
\[(\cF^{\textup{EV}},\textup{id}_{\CC_0},j): \Rep\rightarrow \Ndyn
\]
to a strict monoidal functor \((\cF^{\mr{dt}},\textup{id}_\emptyset,\textup{id}): \Rep^{\str}\rightarrow\Ndyn^{\,\str}\). 
It will play an important role in describing the graphical calculus for quantum vertex operators.

For $S\in\Rep^\str$, set
\[
\cF^{\mr{dt}}(S):=\underline{S}\in\Ndyn^{\,\str},
\] 
and for \(A\in\Hom_{\Rep^\str}(S,T)\), denote by $\cF^{\mr{dt}}(A)$ the unique morphism in \(\Hom_{\Ndyn^{\,\str}}(\ul{S},\ul{T})\)
represented by 
\begin{equation}\label{olAdown}
j_T^{-1}\circ \cF^{\mr{EV}}(\cF^{\str}(A))\circ j_S\in\Hom_{\Ndyn}(\cF^\str(\ul{S}),\cF^\str(\ul{T})).
\end{equation}
It is clear that this defines a functor $\cF^{\mr{dt}}: \Rep^\str\rightarrow\Ndyn^{\str}$.
\begin{definition}
	\label{dynamical twist functor def}
	We call $\cF^{\mr{dt}}: \Rep^\str\rightarrow\Ndyn^\str$ the dynamical twist functor. 
\end{definition}
We can now formulate the following main result of this subsection. 
\begin{theorem}
	\label{theorem dynamical twist}
	The dynamical twist functor $\cF^{\mr{dt}}$ is strict monoidal and satisfies
	\begin{equation}\label{descendEV}
	(\cF^\str_{\Ndyn},\textup{id}_{\CC_0},\sigma)\circ(\cF^{\mr{dt}},\textup{id}_{\emptyset},\textup{id})\circ
	(\cG^\str_{\Rep},J_\emptyset,\tau)=(\cF^{\mr{EV}},\textup{id}_{\CC_0},j).
	\end{equation}
\end{theorem}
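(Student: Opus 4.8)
The plan is to treat the two assertions separately: first that $\cF^{\mr{dt}}$ is strict monoidal, and then the descent identity \eqref{descendEV}. Since both $\Rep^\str$ and $\Ndyn^\str$ are strict and the candidate monoidal structure on $\cF^{\mr{dt}}$ is the trivial one, the coherence axioms for a monoidal functor collapse: the associativity and unit constraints are identities on both sides, so the only conditions left to check are that $\cF^{\mr{dt}}$ preserves the unit object and that the trivial structure morphisms are natural. The former is immediate, since $\cF^{\mr{dt}}(\emptyset)=\ul{\emptyset}=\emptyset$. The latter is exactly the identity
\[
\cF^{\mr{dt}}(A\tens B)=\cF^{\mr{dt}}(A)\,\overline{\tens}\,\cF^{\mr{dt}}(B)
\]
for $A\in\Hom_{\Rep^\str}(S,S')$ and $B\in\Hom_{\Rep^\str}(T,T')$, and this is the single nontrivial statement I would have to prove. (On objects both sides equal $\ul{S\tens T}=\ul{S}\,\overline{\tens}\,\ul{T}$, so the trivial structure morphism $\id$ is well defined.)

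To prove this identity I would pass to representing morphisms in $\Ndyn$, so that it becomes an equality of $\lambda$-dependent linear maps $\cF^\str(\ul{S\tens T})\to\cF^\str(\ul{S'\tens T'})$; it then suffices to evaluate both representatives at a fixed $\lambda\in\hh^*_{\mr{reg}}$ on a pure tensor $\mathbf v\otimes\mathbf w$ of weight vectors, with $\mathbf v\in\cF^\str(\ul S)$ and $\mathbf w\in\cF^\str(\ul T)$ of weight $\nu$. The associativity isomorphisms $\sigma$ occurring in $\cF^\str$ of a $\overline{\tens}$-product are the canonical re-bracketings of $\Vect$ and may be suppressed, so on the right-hand side the definitions \eqref{olAdown} and \eqref{dynamical tensor product} give
\[
\bigl(\cF^{\mr{dt}}(A)\,\overline{\tens}\,\cF^{\mr{dt}}(B)\bigr)(\lambda)(\mathbf v\otimes\mathbf w)=\bigl(j_{S'}(\lambda-\nu)^{-1}\,\ol{A}\,j_S(\lambda-\nu)\,\mathbf v\bigr)\otimes\bigl(j_{T'}(\lambda)^{-1}\,\ol{B}\,j_T(\lambda)\,\mathbf w\bigr),
\]
where I write $\ol A:=\cF^{\mr{EV}}(\cF^\str(A))$ and $\ol B:=\cF^{\mr{EV}}(\cF^\str(B))$ for the (constant, grade-preserving) underlying maps.

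For the left-hand side I would first rewrite $\cF^\str(A\tens B)=\sigma(\cF^\str A\otimes\cF^\str B)\sigma^{-1}$ and use strict monoidality of $\cF^{\mr{EV}}$, after which (again suppressing $\sigma$) the representative of $\cF^{\mr{dt}}(A\tens B)(\lambda)$ is $j_{S'\tens T'}(\lambda)^{-1}(\ol A\otimes\ol B)\,j_{S\tens T}(\lambda)$. Setting $U:=\cF^\str S$ and $W:=\cF^\str T$, the fusion factorisation of Lemma \ref{coco} (in the form obtained by fusing the first block and the second block before fusing the resulting two-point operator, as in its proof) reads
\[
j_{S\tens T}(\lambda)(\mathbf v\otimes\mathbf w)=j_{(U,W)}(\lambda)\bigl(j_S(\lambda-\nu)\mathbf v\otimes j_T(\lambda)\mathbf w\bigr),
\]
and likewise on the primed side. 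Applying the naturality relation Lemma \ref{hulplemma}(\ref{hulplemma 1}) for the two-point fusion operator with the pair $(\ol A,\ol B)$, that is $(\ol A\otimes\ol B)\,j_{(U,W)}(\lambda)=j_{(U',W')}(\lambda)(\ol A\otimes\ol B)$, and then inverting the primed factorisation, I would obtain precisely the right-hand side displayed above. Throughout I use that $\ol A,\ol B$ and all $j_{(\cdot)}(\lambda)$ are grade-preserving, which by \eqref{rightdecomp}--\eqref{leftdecomp} makes every weight shift unambiguous. The one genuinely delicate point is this bookkeeping of shifts: one must confirm that the $T$-weight $\nu$ produced by fusing the first block via \eqref{fused} and Lemma \ref{coco} is exactly the shift $\lambda-\mh^{(2)}$ built into the dynamical tensor product $\overline{\otimes}$. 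This matching is the main obstacle; once the two shifts are aligned the computation closes on the nose.

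Finally, for \eqref{descendEV} I would check that the underlying functors and the monoidal structures agree. On objects $\cF^\str_{\Ndyn}\circ\cF^{\mr{dt}}\circ\cG^\str_{\Rep}$ and $\cF^{\mr{EV}}$ both send $V\mapsto\ul V$; on a morphism $A\colon V\to W$ in $\Rep$ one has $\cF^\str(\cG^\str_{\Rep}(A))=A$ and the length-one fusion operators $j_{(V)},j_{(W)}$ are identities, so by \eqref{olAdown} the composite sends $A$ to $\cF^{\mr{EV}}(A)$. For the monoidal structures, the composite structure morphism at a pair $(U,V)$ is the single nontrivial contribution $\cF^\str_{\Ndyn}\bigl(\cF^{\mr{dt}}(\tau_{U,V})\bigr)\circ\sigma_{(\ul U),(\ul V)}$; using $\tau_{U,V}=J_{(U,V)}$, together with $\cF^\str(J_{(U,V)})=\id_{U\otimes V}$ and $j_{(U\otimes V)}=\id$ in \eqref{olAdown}, shows that $\cF^{\mr{dt}}(\tau_{U,V})$ is represented by $j_{(U,V)}$, while $\sigma_{(\ul U),(\ul V)}=\id$ for length-one objects; hence the composite tensor structure is $j_{(U,V)}$, which is exactly the structure $j$ on the right-hand side. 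All unit data collapse to $\id_{\CC_0}$, completing the verification.
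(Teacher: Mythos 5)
Your proposal is correct and follows essentially the same route as the paper's proof: the same reduction of strict monoidality to the single identity $\cF^{\mr{dt}}(A\tens B)=\cF^{\mr{dt}}(A)\,\ol{\tens}\,\cF^{\mr{dt}}(B)$, established by combining the fusion factorisation coming from Lemma \ref{coco} (the paper's identity \eqref{hulp0}) with the naturality statement of Lemma \ref{hulplemma}(\ref{hulplemma 1}), followed by the same direct computation of the composite unit and tensor structure (using $j_{(V)}=\id$, $\cF^\str(J_{(U,V)})=\id_{U\otimes V}$ and $\tau_{U,V}=J_{(U,V)}$) to verify \eqref{descendEV}. The only point worth adding is that Lemma \ref{coco} is stated for nonempty $S,T$, so your weight-vector factorisation needs the separate easy input $j_{(\mathbb{1},V)}=\id=j_{(V,\mathbb{1})}$ of Lemma \ref{hulplemma}(\ref{hulplemma 2}) to cover morphisms with empty domain or codomain, exactly as the paper does when deriving \eqref{hulp0}.
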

\begin{proof}
We first note that 
\begin{equation}
\label{hulp0}
j_{S\tens S^\prime}=\sigma_{\ul{S},\ul{S^\prime}}\circ j_{(\cF^\str(S),\cF^\str(S^\prime))}\circ (j_S\ol{\otimes} j_{S^\prime})\circ \sigma_{\ul{S},\ul{S^\prime}}^{-1}
\end{equation}
for all \(S,S^\prime\in\Rep^\str\). When \(S=\emptyset\) this follows from the fact that \(\sigma_{\emptyset,\ul{S^\prime}}=\ell_{\cF^\str(\ul{S^\prime})}\), \(j_\emptyset=\textup{id}_{\ul{\mathbb{1}}}\) and Lemma \ref{hulplemma}(\ref{hulplemma 2}). In a similar fashion one verifies \eqref{hulp0} for \(S^\prime=\emptyset\). If \(S\not=\emptyset\) and \(S^\prime\not=\emptyset\), then \eqref{hulp0} follows from applying the first equation in \eqref{compassoc2} to \(j_{S\tens S^\prime}\) and subsequently the second equation in \eqref{compassoc2} to \(j_{(\cF^\str(S)),S^\prime}\), and noting that \(\sigma_{(\cF^\str(\ul{S})),\ul{S^\prime}}=\textup{id}_{\cF^\str(\ul{S})\otimes\cF^\str(\ul{S^\prime})}\).

For the first statement of the theorem, the nontrivial check is showing that
\[
\cF^{\mr{dt}}(A\tens B)=\cF^{\mr{dt}}(A)\ol{\tens}\cF^{\mr{dt}}(B)
\]
in \(\Hom_{\Ndyn^{\,\str}}(\ul{S}\ol{\tens}\ul{S^\prime},\ul{T}\ol{\tens}\ul{T^\prime})\)
for morphisms \(A\in\Hom_{\Rep^\str}(S,T)\) and \(B\in\Hom_{\Rep^\str}(S^\prime,T^\prime)\). To this end it suffices to show that
\[
\cF^\str(\cF^{\mr{dt}}(A\tens B))=\cF^\str(\cF^{\mr{dt}}(A)\ol{\tens}\cF^{\mr{dt}}(B))
\] 
in \(\Hom_{\Ndyn}(\cF^\str(\ul{S}\ol{\tens}\ul{S^\prime}),\cF^\str(\ul{T}\ol{\tens}\ul{T^\prime}))\), i.e.\ that
\begin{equation}\label{step1}
\begin{split}
&j_{T\tens T^\prime}^{-1}\circ\sigma_{\ul{T},\ul{T^\prime}}\circ (\cF^{\mr{EV}}(\cF^\str(A))\ol{\otimes}\cF^{\mr{EV}}(\cF^\str(B)))
\circ\sigma_{\ul{S},\ul{S^\prime}}^{-1}\circ j_{S\tens S^\prime}\\
=\ &\sigma_{\ul{T},\ul{T^\prime}}\circ (j_T^{-1}\ol{\otimes}j_{T^\prime}^{-1})\circ (\cF^{\mr{EV}}(\cF^\str(A))\ol{\otimes}\cF^{\mr{EV}}(\cF^\str(B)))
\circ (j_S\ol{\otimes} j_{S^\prime})\circ\sigma_{\ul{S},\ul{S^\prime}}^{-1}.
\end{split}
\end{equation}
Here we used that \(\cF^{\mr{EV}}(\sigma_{S,S^\prime})=\sigma_{\ul{S},\ul{S^\prime}}\) since $\cF^{\textup{EV}}$ is strict monoidal, cf.
\eqref{formulaalr}. 
By Lemma \ref{hulplemma}(\ref{hulplemma 1}) (or Proposition \ref{proplow}), the factor \(\cF^{\mr{EV}}(\cF^\str(A))\ol{\otimes}\cF^{\mr{EV}}(\cF^\str(B))\) in the right-hand side of \eqref{step1} may be replaced by 
\[
j_{(\cF^\str(T),\cF^\str(T^\prime))}^{-1}\circ(\cF^{\mr{EV}}(\cF^\str(A))\ol{\otimes}\cF^{\mr{EV}}(\cF^\str(B)))\circ j_{(\cF^\str(S),\cF^\str(S^\prime))},
\]
and then the desired equation (\ref{step1}) follows by applying \eqref{hulp0} twice.

In order to prove \eqref{descendEV}, observe first that \(\cF^\str(\cF^{\mr{dt}}(\cG^\str(U)))=\ul{U}=\cF^{\mr{EV}}(U)\) for \(U\in\Rep\), while for \(A\in\textup{Hom}_{\Rep}(U,V)\) we have
\[
\cF^\str(\cF^{\mr{dt}}(\cG^\str(A)))=j_{(V)}^{-1}\circ\cF^{\mr{EV}}(A)\circ j_{(U)}=\cF^{\mr{EV}}(A)
\]
since \(\cF^\str\circ\cG^\str=\textup{id}\) and \(j_{(U)}=\textup{id}_{\ul{U}}\). Hence the left-hand side of \eqref{descendEV} yields the monoidal functor
\((\cF^{\mr{EV}},\eta,\xi)\) with 
\[
\eta=\cF^\str(\cF^{\mr{dt}}(J_\emptyset))\circ\cF^\str(\textup{id}_{\emptyset})\circ\textup{id}_{\ul{\mathbb{1}}}=
(j_{(\mathbb{1})}^{-1}\circ\cF^{\mr{EV}}(\cF^\str(J_\emptyset))\circ j_\emptyset)\circ\textup{id}_{\ul{\mathbb{1}}}=\textup{id}_{\mathbb{C}_0}
\]
since \(\ul{\mathbb{1}} = \CC_0\), while for \(U,V\in\Rep\) we have
\begin{equation*}
\begin{split}
\xi_{U,V}&=\cF^\str(\cF^{\mr{dt}}(\tau_{U,V}))\circ\cF^\str(\textup{id}_{(\ul{U},\ul{V})})\circ\sigma_{\cF^{\mr{dt}}(\cG^\str(U)),\cF^{\mr{dt}}(\cG^\str(V))}\\
&=\cF^\str(\cF^{\mr{dt}}(J_{(U,V)}))\circ\sigma_{(\ul{U}),(\ul{V})}\\
&=j_{(U\otimes V)}^{-1}\circ\cF^{\mr{EV}}(\cF^\str(J_{(U,V)}))\circ j_{(U,V)}=j_{(U,V)}.
\end{split}
\end{equation*}
The last equality follows from the fact that \(j_{(W)}=\textup{id}_{\ul{W}}\) for \(W\in\Rep\) and the fact that
\(\cF^\str(J_{(U,V)})=\textup{id}_{U\otimes V}\).
\end{proof}
\begin{remark}
	The concrete construction of the lift $\cF^{\mr{dt}}$ of $\cF^{\textup{EV}}$ is consistent with the general procedure of constructing strict lifts of monoidal functors developed by Joyal and Street in \cite[Section 1]{Joyal&Street-1993}, which is based on a coherence theorem for monoidal functors, see \cite[Theorem 1.7]{Joyal&Street-1993}.
\end{remark}
We have defined the fusion operators $J_S\in\textup{Hom}_{\Rep^\str}(S,\cF^\str(S))$ and the dynamical
fusion operators $\overline{J_S}\in\textup{Hom}_{\Ndyn^{\,\str}}(\ul{S},\cF^\str(\ul{S}))$ for $S\in\Rep^\str$ in Section \ref{SmcSection}, Definition \ref{dfoper} and Example \ref{jS}. They are related by the dynamical twist functor in the following sense.
\begin{lemma}\label{jcomp}
We have $\cF^{\mr{dt}}(J_S)=\overline{J_S}$.
\end{lemma}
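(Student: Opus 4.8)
The plan is to evaluate $\cF^{\mr{dt}}(J_S)$ directly from its defining formula \eqref{olAdown} and to recognise the resulting representing morphism as $j_S$, after which the claim follows from the fact that a morphism in a strictified category is pinned down by its representing morphism together with its domain and codomain.

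First I would record the source and target data. The fusion morphism $J_S$ lies in $\Hom_{\Rep^\str}(S,(\cF^\str(S)))$, so its target is the length-one object $(\cF^\str(S))$; applying $\cF^{\mr{dt}}$, which acts as $S\mapsto\ul{S}$ on objects, produces a morphism with source $\ul{S}$ and target $\ul{(\cF^\str(S))}=(\ul{\cF^\str(S)})$. The codomain of $\overline{J_S}$, on the other hand, is the length-one object $\cF^\str(\ul{S})$ of $\Ndyn^\str$. Here I would note that these two objects coincide: since $\cF^{\mr{frgt}}$ is strict monoidal and $\overline{\otimes}$ agrees with $\otimes$ on objects, one has $\ul{\cF^\str(S)}=\cF^\str(\ul{S})$. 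Both $\cF^{\mr{dt}}(J_S)$ and $\overline{J_S}$ therefore have domain $\ul{S}$ and codomain $\cF^\str(\ul{S})$.

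Next I would compute the representing morphism of $\cF^{\mr{dt}}(J_S)$. By \eqref{olAdown} applied to $A=J_S$ with target $T=(\cF^\str(S))$, this representing morphism is $j_{(\cF^\str(S))}^{-1}\circ\cF^{\mr{EV}}(\cF^\str(J_S))\circ j_S$. The left factor is the identity because $j_{(W)}=\id_{\ul{W}}$ for every length-one object; the middle factor is the identity because $\cF^\str(J_S)=\id_{\cF^\str(S)}$ (as $J_S$ represents $\id_{\cF^\str(S)}$) and $\cF^{\mr{EV}}$, being a functor, preserves identities. Hence the representing morphism collapses to $j_S$, an endomorphism of $\cF^\str(\ul{S})$.

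Finally, since $\overline{J_S}$ is by definition (Example \ref{jS}) the unique morphism in $\Hom_{\Ndyn^\str}(\ul{S},\cF^\str(\ul{S}))$ represented by $j_S$, and $\cF^{\mr{dt}}(J_S)$ is a morphism with the same domain and codomain represented by the same $j_S$, uniqueness forces $\cF^{\mr{dt}}(J_S)=\overline{J_S}$. The only genuine bookkeeping is the identification of the two codomains in the second step; once the strict monoidality of $\cF^{\mr{frgt}}$ is invoked this is immediate, and everything else is the one-line collapse of the representing morphism.
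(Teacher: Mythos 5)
Your proposal is correct and follows essentially the same route as the paper's own proof: evaluate the defining formula \eqref{olAdown} for $\cF^{\mr{dt}}(J_S)$, observe that $j_{(\cF^\str(S))}=\id_{\cF^\str(\ul{S})}$ and $\cF^\str(J_S)=\id_{\cF^\str(S)}$ collapse the representing morphism to $j_S$, and conclude by uniqueness of the morphism with given (co)domain represented by $j_S$. The only difference is that you spell out the identification of the codomains $\ul{(\cF^\str(S))}=(\cF^\str(\ul{S}))$ via strict monoidality of $\cF^{\mr{frgt}}$, which the paper leaves implicit.
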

\begin{proof}
By definition the morphism
\[
\cF^{\mr{dt}}(J_S)\in\textup{Hom}_{\Ndyn^{\,\str}}(\ul{S},\cF^\str(\ul{S}))
\]
is represented by 
\[
j_{(\cF^\str(S))}^{-1}\circ \cF^{\mr{EV}}(\cF^\str(J_S))\circ j_S.
\]
This equals $j_S$, since $j_{(\cF^\str(S))}=\textup{id}_{\cF^\str(\ul{S})}$ and \(\cF^\str(J_S)=\textup{id}_{\cF^\str(S)}\).
Hence 
\[
\cF^\str(\cF^{\mr{dt}}(J_S))=j_S=\cF^\str(\ol{J_S}).
\] 
Since the (co)domains of $\cF^{\mr{dt}}(J_S)$ and $\ol{J_S}$ coincide, it follows that $\cF^{\mr{dt}}(J_S)=\ol{J_S}$.
\end{proof}
Lemma \ref{jcomp} prompts the following notation and terminology.
\begin{definition}\label{olA}
For a morphism \(A\in\Hom_{\Rep^\str}(S,T)\) we write \(\ol{A}\) for the morphism
\[
\ol{A}:=\cF^{\mr{dt}}(A)\in\textup{Hom}_{\Ndyn^{\,\str}}(\ul{S},\ul{T}),
\]
which we call the {\it dynamical twist} of \(A\). Furthermore, for \(\lambda\in\mathfrak{h}_{\mr{reg}}^*\) we write \(\ol{A}(\lambda)\) for the morphism in \(\textup{Hom}_{\mathcal{N}_{\mr{fd}}^\str}(\ul{S},\ul{T})\) represented by 
\[
\cF^\str(\ol{A})(\lambda)=j_T(\lambda)^{-1}\circ\cF^{\mr{frgt}}(\cF^\str(A))\circ j_S(\lambda)\in
\textup{Hom}_{\mathcal{N}_{\mr{fd}}}(\cF^\str(\ul{S}),\cF^\str(\ul{T})).
\]
\end{definition}
Note that \(\ol{A}\) is uniquely determined by the morphisms \(\ol{A}(\lambda)\), \(\lambda\in\hh_{\mathrm{reg}}^\ast\) after specifying the domain and codomain of \(\ol{A}\). 
By Lemma \ref{jcomp}, the notations introduced in Definition \ref{olA} are consistent with the notations for the dynamical fusion operators.

The following proposition provides an explicit formula for the dynamical twist $\ol{A}$ of a morphism $A\in\Hom_{\Rep^\str}(S,T)$ in terms of its trivial dynamical lift
$ \widetilde{\mathcal{F}^{\mr{EV}}}(A)$.
\begin{proposition}\label{olAexplicit}
The dynamical twist $\ol{A}\in\textup{Hom}_{\Ndyn^{\,\str}}(\ul{S},\ul{T})$ of $A\in\Hom_{\Rep^\str}(S,T)$ is explicitly given by
\begin{equation}\label{olAformula}
\ol{A}=\ol{J_T}^{\,-1}\circ J_{\ul{T}}\circ \widetilde{\mathcal{F}^{\mr{EV}}}(A)\circ J_{\ul{S}}^{-1}\circ\ol{J_S},
\end{equation}
where $J_{\ul{S}}=J_{\ul{S}}^{\Ndyn}$.
\end{proposition}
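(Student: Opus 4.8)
The plan is to verify \eqref{olAformula} at the level of representing morphisms under $\cF^\str_{\Ndyn}$. Both sides lie in $\Hom_{\Ndyn^{\,\str}}(\ul{S},\ul{T})$, and a morphism in a strictification is uniquely pinned down by its representing morphism once its domain and codomain are fixed; so it suffices to show that the two sides have the same representing morphism and the same (co)domain. The representing morphism of the left-hand side is immediate from the definition of the dynamical twist functor (see \eqref{olAdown}): $\ol{A}=\cF^{\mr{dt}}(A)$ is represented by
\[
\cF^\str(\ol{A})=j_T^{-1}\circ\cF^{\mr{EV}}(\cF^\str(A))\circ j_S\in\Hom_{\Ndyn}(\cF^\str(\ul{S}),\cF^\str(\ul{T})).
\]

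Next I would compute the representing morphism of the right-hand side. Since $\cF^\str_{\Ndyn}$ is a functor, it sends the five-fold composition to the composition of the representing morphisms of the factors (sending inverses to inverses), so I only need to identify each factor. For the dynamical fusion morphisms I use $\cF^\str(\ol{J_S})=j_S$ and $\cF^\str(\ol{J_T})=j_T$, which is exactly the definition recalled in Example \ref{jS}. For the fusion morphisms of $\Ndyn^{\,\str}$ I use $\cF^\str(J_{\ul{S}})=\id_{\cF^\str(\ul{S})}$ and $\cF^\str(J_{\ul{T}})=\id_{\cF^\str(\ul{T})}$, valid because $J_{\ul{S}},J_{\ul{T}}$ are by construction the fusion morphisms representing the identities. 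For the middle factor I invoke the second commuting identity in \eqref{comm2} of Lemma \ref{stricttensorfunctorlemma}, applied to $F=\cF^{\mr{EV}}$, which yields $\cF^\str(\widetilde{\cF^{\mr{EV}}}(A))=\cF^{\mr{EV}}(\cF^\str(A))$. Combining these, the representing morphism of the right-hand side is
\[
j_T^{-1}\circ\id_{\cF^\str(\ul{T})}\circ\cF^{\mr{EV}}(\cF^\str(A))\circ\id_{\cF^\str(\ul{S})}\circ j_S=j_T^{-1}\circ\cF^{\mr{EV}}(\cF^\str(A))\circ j_S,
\]
which coincides with $\cF^\str(\ol{A})$.

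Finally I would check the (co)domains match. Tracking the factors, $\ol{J_S}\colon\ul{S}\to\cF^\str(\ul{S})$ and $J_{\ul{S}}^{-1}\colon\cF^\str(\ul{S})\to\ul{S}$ compose to an endomorphism of $\ul{S}$, and symmetrically $\ol{J_T}^{-1}\circ J_{\ul{T}}$ is an endomorphism of $\ul{T}$, so the right-hand side is a morphism $\ul{S}\to\ul{T}$ just like $\ol{A}$. Here I would note that the intermediate object $\cF^\str(\ul{S})$ (resp.\ $\cF^\str(\ul{T})$) is unambiguous because $\Ndyn^{\,\str}$ and $\cN_{\mr{fd}}^\str$ share the same objects and the same tensor product on objects. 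With equal representing morphism and equal (co)domain, \eqref{olAformula} follows. The proof is essentially a bookkeeping argument, and the only point requiring genuine care is precisely this tracking of (co)domains: it is what legitimizes reading $J_{\ul{S}}^{-1}\circ\ol{J_S}$ and $\ol{J_T}^{-1}\circ J_{\ul{T}}$ as honest endomorphisms representing $j_S$ and $j_T^{-1}$, so that the identity-representing fusion morphisms $J_{\ul{S}},J_{\ul{T}}$ drop out cleanly.
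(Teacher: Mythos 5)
Your proof is correct and follows essentially the same route as the paper's: both verify \eqref{olAformula} by comparing representing morphisms under $\cF^\str_{\Ndyn}$, using the second identity of \eqref{comm2} for the middle factor and the facts that $\ol{J_S}$ represents $j_S$ while $J_{\ul{S}}$ represents the identity. The paper merely organizes the bookkeeping slightly differently, first identifying $\ol{A}$ with $\ol{J_T}^{\,-1}\circ\cG^\str_{\Ndyn}(\cF^{\mr{EV}}(\cF^\str(A)))\circ\ol{J_S}$ and then rewriting the middle term as $J_{\ul{T}}\circ\widetilde{\cF^{\mr{EV}}}(A)\circ J_{\ul{S}}^{-1}$, but the substance is identical.
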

\begin{proof}
Recall that $\ol{J_S}\in\Hom_{\Ndyn^{\,\str}}(\ul{S},\cF^\str(\ul{S}))$ is represented by 
$j_S\in\textup{End}_{\Ndyn}(\cF^\str(\ul{S}))$, whence $\overline{A}$ and 
\begin{equation}\label{olAalmost}
\ol{J_T}^{-1}\circ\cG_{\Ndyn}^\str(\cF^{\mr{EV}}(\cF^\str_{\Rep}(A)))\circ\ol{J_S}
\end{equation}
are morphism in $\Hom_{\Ndyn^{\,\str}}(\ul{S},\ul{T})$ which are both
represented by \eqref{olAdown}. Consequently, $\ol{A}$ equals \eqref{olAalmost}. Then \eqref{olAformula} follows from
the fact that
\begin{equation*}
\begin{split}
\cG_{\Ndyn}^\str(\cF^{\mr{EV}}(\cF^\str_{\Rep}(A)))&=
\cG_{\Ndyn}^\str(\cF_{\Ndyn}^\str(\widetilde{\cF^{\mr{EV}}}(A)))\\
&=
J_{\ul{T}}\circ\widetilde{\cF^{\mr{EV}}}(A)\circ J_{\ul{S}}^{-1}.
\end{split}
\end{equation*}
\end{proof}

The importance of the dynamical twist functor for the graphical calculus of quantum vertex operators is captured by the following result.
\begin{theorem}
\label{prop pushed under water}
Let \(S=(V_1,\ldots,V_k), T=(W_1,\ldots,W_\ell)\in\Rep^\str\) and fix weight vectors \(v_i\in V_i[\nu_i]\). For \(\lambda\in\mathfrak{h}_{\mr{reg}}^*\) and
\(A\in\Hom_{\Rep^\str}(S,T)\) we have
\begin{equation}\label{dtintertwiner}
(\textup{id}_{M_{\lambda-\sum_i\nu_i}}\tens A)\Phi_\lambda^{v_1,\ldots,v_k}=\sum_n\Phi_\lambda^{w_1^n,\ldots,w_\ell^n},
\end{equation}
with \(w_j^n\in W_j\) homogeneous vectors such that 
\[
\cF^\str(\ol{A})(\lambda)(v_1\otimes\cdots\otimes v_k)=\sum_nw_1^n\otimes\cdots\otimes w_\ell^n.
\]
\end{theorem}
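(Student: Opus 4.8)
The plan is to reduce \eqref{dtintertwiner} to an identity between the representing morphisms in $\cM_{\mr{adm}}$ and then fuse both sides to one-point quantum vertex operators. Write $\mu:=\sum_{i=1}^k\nu_i$. Both sides of \eqref{dtintertwiner} are morphisms in $\Hom_{\cM_{\mr{adm}}^\str}(M_\lambda,M_{\lambda-\mu}\tens T)$, and a morphism in $\cM_{\mr{adm}}^\str$ is determined by its representing morphism in $\cM_{\mr{adm}}$, so it suffices to prove
\begin{equation}\label{planphi}
(\textup{id}_{M_{\lambda-\mu}}\otimes\cF^\str(A))\circ\phi_\lambda^{v_1,\ldots,v_k}=\sum_n\phi_\lambda^{w_1^n,\ldots,w_\ell^n}
\end{equation}
in $\Hom_{\cM_{\mr{adm}}}(M_\lambda,M_{\lambda-\mu}\otimes\cF^\str(T))$. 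Here I use that $\cF^\str(\Phi_\lambda^{v_1,\ldots,v_k})=\phi_\lambda^{v_1,\ldots,v_k}$, that $\cF^\str$ is an additive functor, and that $\textup{id}_{M_{\lambda-\mu}}\tens A$ is represented by $\textup{id}_{M_{\lambda-\mu}}\otimes\cF^\str(A)$, the strictification isomorphisms $\sigma_{(M_{\lambda-\mu}),\,\cdot}$ being identities since their left tensor factor has length one.

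First I would fuse both sides of \eqref{planphi}. By the fusion identity \eqref{fused} the left-hand side equals $(\textup{id}_{M_{\lambda-\mu}}\otimes\cF^\str(A))\circ\phi_\lambda^{w}$ with $w:=j_S(\lambda)(v_1\otimes\cdots\otimes v_k)\in\cF^\str(S)[\mu]$, while, using \eqref{fused} again together with the linearity of $u\mapsto\phi_\lambda^u$ and of $j_T(\lambda)$, the right-hand side equals $\phi_\lambda^{j_T(\lambda)(\sum_n w_1^n\otimes\cdots\otimes w_\ell^n)}$. The defining property of $\cF^\str(\ol{A})(\lambda)$ from Definition \ref{olA}, namely $\cF^\str(\ol{A})(\lambda)=j_T(\lambda)^{-1}\circ\cF^{\mr{frgt}}(\cF^\str(A))\circ j_S(\lambda)$, then gives
\[
j_T(\lambda)\Bigl(\sum_n w_1^n\otimes\cdots\otimes w_\ell^n\Bigr)=j_T(\lambda)\,\cF^\str(\ol{A})(\lambda)(v_1\otimes\cdots\otimes v_k)=\cF^\str(A)(w),
\]
where I use that $\cF^{\mr{frgt}}(\cF^\str(A))$ is the underlying linear map $\cF^\str(A)$ itself. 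Thus \eqref{planphi} is reduced to the one-point identity
\begin{equation}\label{plan1pt}
(\textup{id}_{M_{\lambda-\mu}}\otimes\cF^\str(A))\circ\phi_\lambda^{w}=\phi_\lambda^{\cF^\str(A)(w)}.
\end{equation}

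The last step is to verify \eqref{plan1pt}, which is precisely the one-point observation already used in the proof of Lemma \ref{hulplemma}(\ref{hulplemma 1}). Set $B:=\cF^\str(A)$, a $U_q$-linear, hence weight-preserving, map $\cF^\str(S)\to\cF^\str(T)$, so that $B(w)\in\cF^\str(T)[\mu]$. Both sides of \eqref{plan1pt} are $U_q$-linear maps $M_\lambda\to M_{\lambda-\mu}\otimes\cF^\str(T)$; evaluating the left-hand side at $\mathbf{m}_\lambda$ and using $\phi_\lambda^w(\mathbf{m}_\lambda)\in\mathbf{m}_{\lambda-\mu}\otimes w+\bigoplus_{\nu<\lambda-\mu}M_{\lambda-\mu}[\nu]\otimes\cF^\str(S)$ yields a vector lying in $\mathbf{m}_{\lambda-\mu}\otimes B(w)+\bigoplus_{\nu<\lambda-\mu}M_{\lambda-\mu}[\nu]\otimes\cF^\str(T)$. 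By the uniqueness in Proposition \ref{prop parametrizing spaces} this characterises the left-hand side as $\phi_\lambda^{B(w)}$, proving \eqref{plan1pt} and hence the theorem. The whole argument is carried by the defining relation for $\cF^\str(\ol{A})(\lambda)$ together with the weight-preservation of $B$; I expect no conceptual obstacle, and the only point requiring care is the bookkeeping of weight shifts and the interplay between the strictification isomorphisms $\sigma$ and the multi-tensor conventions that legitimises the passage to representing morphisms in the first paragraph.
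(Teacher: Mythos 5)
Your proof is correct and follows essentially the same route as the paper's: both reduce \eqref{dtintertwiner} to an identity of representing morphisms, fuse via \eqref{fused}, invoke the one-point identity $(\mathrm{id}\otimes B)\phi_\lambda^{w}=\phi_\lambda^{B(w)}$ underlying Lemma \ref{hulplemma}(\ref{hulplemma 1}), and conclude with the defining relation $\cF^\str(\ol{A})(\lambda)=j_T(\lambda)^{-1}\circ\cF^{\mr{frgt}}(\cF^\str(A))\circ j_S(\lambda)$ from Definition \ref{olA}. The only cosmetic difference is that you verify the one-point identity directly via the highest-weight characterisation of Proposition \ref{prop parametrizing spaces}, whereas the paper cites the expectation-value computation in the proof of Lemma \ref{hulplemma}(\ref{hulplemma 1}); these are equivalent.
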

\begin{proof}
Both sides of \eqref{dtintertwiner} are morphisms in \(\Hom_{\Rep^\str}(M_\lambda,M_{\lambda-\sum_i\nu_i}\tens T)\). The left-hand side represents
\begin{equation*}
\begin{split}
(\textup{id}_{M_{\lambda-\sum_i\nu_i}}\otimes\cF^\str(A))\phi_\lambda^{j_S(\lambda)(v_1\otimes\cdots\otimes v_k)}
&=\phi_\lambda^{\cF^{\mr{frgt}}(\cF^\str(A))j_S(\lambda)(v_1\otimes\cdots\otimes v_k)}\\
&=\phi_\lambda^{j_T(\lambda)\cF^\str(\ol{A})(\lambda)(v_1\otimes\cdots\otimes v_k)},
\end{split}
\end{equation*}
where the first equality follows from the proof of Lemma \ref{hulplemma}(\ref{hulplemma 1}), and the second equality from the definition of \(\ol{A}(\lambda)\), see Definition \ref{olA}. The latter expression represents the right-hand side of \eqref{dtintertwiner}.
\end{proof}
In the following corollary we show that $\mathcal{F}^{\mr{dt}}$ reduces to 
$\widetilde{\mathcal{F}^{\textup{EV}}}$ when acting on tensor products of morphisms in the image of $\mathcal{G}^\str_{\Rep}$. 
\begin{corollary}\label{cor dyn twist}\hfill
\begin{enumerate}
\item\label{cor_item2} As an equality of functors, but not of monoidal functors, we have
\[\cF^{\mr{dt}}\circ\cG^\str_{\Rep}=
\widetilde{\cF^{\mr{EV}}}
\circ\cG^\str_{\Rep}.
\]
\item\label{cor_item3} For \(A_i\in\Hom_{\Rep^\str}((U_i),(V_i))\) with $U_i,V_i\in\Rep$ we have
\[
\cF^{\mr{dt}}\bigl(A_1\tens\cdots\tens A_k\bigr)=\widetilde{\cF^{\mr{EV}}}(A_1\tens\cdots\tens A_k).
\]
\end{enumerate}
\end{corollary}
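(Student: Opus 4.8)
The plan is to establish part~(\ref{cor_item2}) by a direct comparison of the two functors on objects and morphisms, and then to obtain part~(\ref{cor_item3}) as a formal consequence of part~(\ref{cor_item2}) together with the strict monoidality of both $\cF^{\mr{dt}}$ (Theorem~\ref{theorem dynamical twist}) and $\widetilde{\cF^{\mr{EV}}}$ (Lemma~\ref{stricttensorfunctorlemma}).

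For part~(\ref{cor_item2}) I would first note that both composites send $V\in\Rep$ to the length-one object $(\ul{V})\in\Ndyn^{\,\str}$, so they agree on objects. For a morphism $B\in\Hom_{\Rep}(U,V)$ I would use $\cF^\str_{\Rep}(\cG^\str_{\Rep}(B))=B$. By the definition of the dynamical twist functor, $\cF^{\mr{dt}}(\cG^\str_{\Rep}(B))$ is the morphism in $\Hom_{\Ndyn^{\,\str}}((\ul{U}),(\ul{V}))$ represented by $j_{(V)}^{-1}\circ\cF^{\mr{EV}}(B)\circ j_{(U)}$, whereas $\widetilde{\cF^{\mr{EV}}}(\cG^\str_{\Rep}(B))$ is the morphism represented by $\cF^{\mr{EV}}(B)$. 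Since $j_{(U)}=\textup{id}_{\ul{U}}$ and $j_{(V)}=\textup{id}_{\ul{V}}$ on length-one objects, both representing morphisms are $\cF^{\mr{EV}}(B)$; as the two morphisms moreover share domain $(\ul{U})$ and codomain $(\ul{V})$, they coincide. (This is exactly the computation appearing inside the proof of Theorem~\ref{theorem dynamical twist}.) I would additionally record that the equality fails at the monoidal level, justifying the caveat in the statement: the monoidal structures of the two composites on $(U,V)$ are $\ol{J_{(U,V)}}$ and the trivial fusion $J^{\Ndyn}_{(\ul{U},\ul{V})}$ respectively, and these differ whenever $j_{(U,V)}\ne\textup{id}$.

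For part~(\ref{cor_item3}) I would begin by rewriting each factor as $A_i=\cG^\str_{\Rep}(\cF^\str_{\Rep}(A_i))$, which holds because a morphism between length-one objects of $\Rep^\str$ is determined by its representing morphism together with its domain and codomain. Part~(\ref{cor_item2}) then yields $\cF^{\mr{dt}}(A_i)=\widetilde{\cF^{\mr{EV}}}(A_i)$ for each $i$. Since both functors are strict monoidal into $\Ndyn^{\,\str}$ with the same tensor product $\ol{\tens}$, I can factor
\[
\cF^{\mr{dt}}(A_1\tens\cdots\tens A_k)=\cF^{\mr{dt}}(A_1)\,\ol{\tens}\cdots\ol{\tens}\,\cF^{\mr{dt}}(A_k),
\]
together with the analogous identity for $\widetilde{\cF^{\mr{EV}}}$, and conclude from the per-factor equality.

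The content here is modest and the main point is conceptual rather than computational: one must carefully distinguish the two different monoidal structures carried by the single underlying functor $\cF^{\mr{EV}}$---the trivial one used to build $\widetilde{\cF^{\mr{EV}}}$ and the nontrivial $j$-structure of Proposition~\ref{proplow} used to build $\cF^{\mr{dt}}$---and observe that, despite this discrepancy, both lifts coincide on the image of $\cG^\str_{\Rep}$ precisely because the dynamical fusion operators $j_{(W)}$ trivialize on objects of length one.
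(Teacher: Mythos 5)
Your proof is correct and follows essentially the same route as the paper's: part (1) reduces to comparing representing morphisms and invoking $j_{(W)}=\id_{\ul{W}}$ on length-one objects, and part (2) follows by factoring through $A_i=\cG^\str_{\Rep}(\cF^\str_{\Rep}(A_i))$ and the strict monoidality of both $\cF^{\mr{dt}}$ and $\widetilde{\cF^{\mr{EV}}}$. Your added observation that the two composites differ as \emph{monoidal} functors, with monoidal structures $\ol{J_{(U,V)}}$ versus the trivial one, is a correct and welcome justification of the caveat in the statement, though the paper does not spell it out.
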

\begin{proof}
	(\ref{cor_item2}) We have to show that \(\ol{A}=\widetilde{\cF^{\mr{EV}}}(A)\) for morphisms \(A\in\Hom_{\Rep^\str}((U),(V))\) with domain and codomain of length one.
	
	Note that both $\ol{A}$ and $\widetilde{\cF^{\mr{EV}}}(A)$ are morphisms
	in $\textup{Hom}_{\Ndyn^{\,\str}}((\ul{U}),(\ul{V}))$. Hence it suffices to show that their images under $\cF^\str$ coincide.
	By the second equality in \eqref{comm2} applied to $F=\mathcal{F}^{\textup{EV}}$,
	this will be the case when
	\begin{equation}\label{lengthonealt}
	\mathcal{F}^\str(\ol{A})=\cF^{\mr{EV}}(\mathcal{F}^\str(A))
	\end{equation}
	in $\textup{Hom}_{\Ndyn}(\ul{U},\ul{V})$.
	This follows from the fact that 
	\(j_{(W)}=\textup{id}_{\ul{W}}\) for
	all \(W\in\Rep\).\\
	(\ref{cor_item3}) This follows immediately from (\ref{cor_item2}) and the fact that both $\cF^{\mr{dt}}$ and $\widetilde{\cF^{\mr{{EV}}}}$ are strict monoidal functors.
\end{proof}

\subsection{Dynamical braiding, (co-)evaluation and twist}\label{sectiondb}
We can use the dynamical twist functor $\cF^{\mr{dt}}: \Rep^\str\rightarrow\Ndyn^{\,\str}$, which strictly preserves the monoidal structures, to push the braiding, (co-)evaluation and twist of the strict ribbon category $\Rep^\str$ to the category $\Ndyn^{\,\str}$. This does not lead to a ribbon subcategory structure on $\Ndyn^{\,\str}$, but it will allow us to develop a ribbon graph calculus for special classes of morphisms in $\Ndyn^{\,\str}$, as will be performed in Section \ref{sectiongi}.

By Proposition \ref{olAexplicit}, the dynamical counterparts of the braiding $(c_{S,T})_{S,T}$, left (co-)evaluations $(e_S)_{S}$, $(\iota_S)_{S}$,
and twist $(\vartheta_S)_{S}$ of $\Rep^\str$ are as follows:

\begin{enumerate}[\hspace{1pt}a.]
	\itemsep0em 
	\item {\bf Dynamical braiding:} the morphisms 
	\[
	\ol{c_{S,T}}=\ol{J_{T\tens S}}^{\,-1}\circ J_{\ul{T\tens S}}\circ\widetilde{\cF^{\mr{EV}}}(c_{S,T})\circ J_{\ul{S\tens T}}^{-1}\circ\ol{J_{S\tens T}}
	\]
	in $\textup{Hom}_{\Ndyn^{\,\str}}(\ul{S\tens T},\ul{T\tens S})$, for $S,T\in\Rep^\str$.
	\item {\bf Dynamical evaluation:} the morphisms 
	\begin{equation}
	\label{dyn eval expr}
	\ol{e_S}=\widetilde{\cF^{\mr{EV}}}(e_S)\circ J_{\ul{S^*\tens S}}^{-1}\circ \ol{J_{S^*\tens S}},
	\end{equation}
	in $\textup{Hom}_{\Ndyn^{\,\str}}(\ul{S^*\tens S},\emptyset)$, 
	for $S\in\Rep^\str$.
	\item {\bf Dynamical co-evaluation:} the morphisms 
	\begin{equation}
	\label{dyn coeval expr}
	\ol{\iota_S}=\ol{J_{S\tens S^*}}^{\,-1}\circ J_{\ul{S\tens S^*}}\circ\widetilde{\cF^{\mr{EV}}}(\iota_S)
	\end{equation}
	in $\textup{Hom}_{\Ndyn^{\,\str}}(\emptyset,\ul{S\tens S^*})$,
	for $S\in\Rep^\str$.
	\item {\bf Dynamical twist:} the morphisms 
	\[
	\ol{\vartheta_S}=\ol{J_S}^{\,-1}\circ J_{\ul{S}}\circ\widetilde{\cF^{\mr{EV}}}(\vartheta_S)\circ J_{\ul{S}}^{-1}\circ \ol{J_S}
	\]
	in $\textup{End}_{\Ndyn^{\,\str}}(\ul{S})$, for $S\in\Rep^\str$.
\end{enumerate}

The properties of $(c_{S,T})_{S,T}$, $((e_S)_{S},(\iota_S)_S)$ and $(\vartheta_S)_S$ turning them into a braiding, left (co-) evaluation and twist for $\Rep^\str$, have direct dynamical counterparts obtained by pushing these properties through the strict monoidal functor $\cF^{\mr{dt}}$. 

For instance, the properties characterizing the braiding are naturality and the hexagon identities. Their
dynamical counterparts are
\begin{equation}\label{dynnatural}
(\ol{B}\,\ol{\tens}\,\ol{A})\circ\ol{c_{S,T}}=\ol{c_{S',T'}}\circ(\ol{A}\,\ol{\tens}\,\ol{B})
\end{equation}
for $A\in\textup{Hom}_{\Rep^\str}(S,S')$ and $B\in\textup{Hom}_{\Rep^\str}(T,T')$, and 
\begin{equation}\label{dynhexagon}
\overline{c_{S,S^\prime\,\tens\,S^{\prime\prime}}}=(\textup{id}_{\ul{S^\prime}}\,\overline{\tens}\,\overline{c_{S,S^{\prime\prime}}})\circ
(\overline{c_{S,S^\prime}}\,\overline{\tens}\,\textup{id}_{\ul{S^{\prime\prime}}}),\qquad
\overline{c_{S\,\tens\,S^\prime,S^{\prime\prime}}}=(\ol{c_{S,S^{\prime\prime}}}\,\overline{\tens}\,\textup{id}_{\ul{S^\prime}})\circ (\textup{id}_{\ul{S}}\,\overline{\tens}\,\overline{c_{S^\prime,S^{\prime\prime}}}).
\end{equation}

The hexagon relations for $(c_{S,T})_{S,T}$ in $\Rep^\str$ lead to the braid form of the quantum Yang-Baxter equation, whose dynamical counterpart is given by
\begin{equation}\label{dynbraidYB}
\begin{split}
&(\textup{id}_{\ul{S^{\prime\prime}}}\,\overline{\tens}\,\overline{c_{S,S^\prime}})\circ(\overline{c_{S,S^{\prime\prime}}}\,\overline{\tens}\,\textup{id}_{\ul{S^\prime}})\circ(\textup{id}_{\ul{S}}\,\overline{\tens}\,\overline{c_{S^\prime,S^{\prime\prime}}})\\
=\ &(\overline{c_{S^\prime,S^{\prime\prime}}}\,\overline{\tens}\,\textup{id}_{\ul{S}})\circ(\textup{id}_{\ul{S^\prime}}\,\overline{\tens}\,\overline{c_{S,S^{\prime\prime}}})\circ(\overline{c_{S,S^\prime}}\,\overline{\tens}\,\textup{id}_{\ul{S^{\prime\prime}}}).
\end{split}
\end{equation}
\begin{definition}\label{Rdyn}
Write $R_{S,T}\in\textup{End}_{\Ndyn}(\cF^\str(\ul{S}\tens\ul{T}))$ for the endomorphism such that 
\[
P_{\cF^\str(\ul{S}),\cF^\str(\ul{T})}\circ R_{S,T}\in\textup{Hom}_{\Ndyn}(\cF^\str(\ul{S}\tens\ul{T}),\cF^\str(\ul{T}\tens\ul{S}))
\]
represents $\overline{c_{S,T}}$ \textup{(}we omitted here the associators\textup{)}.
\end{definition}
We then have the following result.
\begin{proposition}
	\label{prop hexagon for dynamical R-matrix}
\hfill
\begin{enumerate}
\item	The dynamical $R$-matrices $\Rdyn_{S,T}$ satisfy the dynamical hexagon identities
	\begin{equation*}
	\begin{split}
	&\Rdyn_{S, S'\tens S''}(\lambda) = \Rdyn_{S,S''}(\lambda)\Rdyn_{S,S'}(\lambda-\mh_{\ul{S''}}),\\
	&\Rdyn_{S\tens S',S''}(\lambda) = \Rdyn_{S,S''}(\lambda-\mh_{\ul{S'}})\Rdyn_{S',S''}(\lambda)
	\end{split}
	\end{equation*}
	and the dynamical quantum Yang-Baxter equation
	\begin{equation}\label{dynYBeqn}
	\Rdyn_{S',S''}(\lambda-\mh_{\ul{S}})\Rdyn_{S,S''}(\lambda)\Rdyn_{S,S'}(\lambda-\mh_{\ul{S''}})=
	\Rdyn_{S,S'}(\lambda)\Rdyn_{S,S''}(\lambda-\mh_{\ul{S'}})\Rdyn_{S',S''}(\lambda)
	\end{equation}
	in $\textup{End}_{\mathcal{N}_{\mr{fd}}}(\cF^\str(\ul{S})\otimes\cF^\str(\ul{S'})\otimes\cF^\str(\ul{S''}))$, where all the factors are viewed as endomorphisms of 
	$\cF^\str(\ul{S})\otimes\cF^\str(\ul{S'})\otimes\cF^\str(\ul{S''})$ in the natural manner.
\item For objects $(V)$ and $(W)$ of length one we have
\[
\Rdyn_{V,W}(\lambda)=(j^{21}_{(V,W)})^{-1}(\lambda)\circ\mathcal{R}_{V,W}\circ j_{(V,W)}(\lambda),
\]
where \((j^{21}_{(V,W)})^{-1}(\lambda):=P_{\ul{W},\ul{V}}\circ j_{(W,V)}(\lambda)^{-1}\circ P_{\ul{V},\ul{W}}\).
\item For objects $S$ and $T$ of arbitrary length we have
 \begin{equation}
\label{R_S,T dyn def}
\Rdyn_{S,T}(\lambda)=\bigl(j_S(\lambda)^{-1}\otimes j_T(\lambda-h_{\ul{S}})^{-1}\bigr)\circ R_{\cF^\str(S),\cF^\str(T)}(\lambda)\circ
\bigl(j_S(\lambda-h_{\ul{T}})\otimes j_T(\lambda)\bigr).
\end{equation}
\end{enumerate}
\end{proposition}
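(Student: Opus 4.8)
The plan is to treat the three parts in the order (2), (3), (1): part (3) is deduced from the length-one formula in part (2) by fusing, while part (1) is an essentially independent translation of hexagon and Yang--Baxter relations that are already available for the dynamical braiding. Throughout I abbreviate $U:=\cF^\str(S)$ and $V:=\cF^\str(T)$, I suppress the associators $\sigma$ exactly as in Definition \ref{Rdyn}, and I use repeatedly that $\cF^\str(c_{S,T})$ is the $\Rep$-braiding $c_{U,V}=P\circ\cR_{U,V}$.

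For part (2) I would simply unwind definitions. By Definition \ref{olA} the morphism $\cF^\str(\ol{c_{(U),(V)}})$ evaluated at $\lambda$ equals $j_{(V,U)}(\lambda)^{-1}\circ P\circ\cR_{U,V}\circ j_{(U,V)}(\lambda)$, whereas by Definition \ref{Rdyn} the same morphism equals $P\circ R_{(U),(V)}(\lambda)$. Equating the two and moving the leftmost flip to the other side gives $R_{(U),(V)}(\lambda)=P_{\ul V,\ul U}\circ j_{(V,U)}(\lambda)^{-1}\circ P_{\ul U,\ul V}\circ\cR_{U,V}\circ j_{(U,V)}(\lambda)$, and recognizing the conjugated factor as $(j^{21}_{(U,V)})^{-1}(\lambda)$ yields the claim. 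The only content here is the bookkeeping of the two flips, which is purely formal since $j$ is grade preserving.

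For part (3) the idea is to reduce to part (2) by fusing. I would first invoke \eqref{hulp0}, combined with \eqref{dynamical tensor product}, to write $j_{S\tens T}(\lambda)=j_{(U,V)}(\lambda)\circ(j_S(\lambda-\mh_{\ul T})\otimes j_T(\lambda))$ and likewise $j_{T\tens S}(\lambda)=j_{(V,U)}(\lambda)\circ(j_T(\lambda-\mh_{\ul S})\otimes j_S(\lambda))$. Substituting these into the relation $P\circ R_{S,T}(\lambda)=j_{T\tens S}(\lambda)^{-1}\circ P\circ\cR_{U,V}\circ j_{S\tens T}(\lambda)$ coming from Definition \ref{olA} and Definition \ref{Rdyn}, and replacing the resulting middle factor $j_{(V,U)}(\lambda)^{-1}\circ P\circ\cR_{U,V}\circ j_{(U,V)}(\lambda)$ by $P\circ R_{\cF^\str(S),\cF^\str(T)}(\lambda)$ using part (2), leaves $R_{S,T}(\lambda)$ conjugated by the outer fusion factors. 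Commuting the flip $P$ past $(j_T(\lambda-\mh_{\ul S})^{-1}\otimes j_S(\lambda)^{-1})$ — which swaps the two tensor legs — and cancelling it then produces exactly \eqref{R_S,T dyn def}. I expect the tracking of which tensor slot each weight shift refers to, before and after the flip, to be the fiddly point.

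For part (1) the strategy is to push the hexagon relations and the braid Yang--Baxter equation through the strict monoidal functor $\cF^{\mr{dt}}$. By Theorem \ref{theorem dynamical twist} these already hold for the dynamical braiding in the forms \eqref{dynhexagon} and \eqref{dynbraidYB}, so it remains only to rewrite them in $R$-matrix language. Applying $\cF^\str$ and evaluating at $\lambda$, each $\ol{c}$ becomes $P\circ R$ by Definition \ref{Rdyn}, while each dynamical tensor product $\ol{\tens}$ contributes, via \eqref{dynamical tensor product}, a shift $\lambda\mapsto\lambda-\mh$ on the factor sitting to the left of the braided one. Since the $R$-matrices are grade preserving, the flips may be commuted past them without disturbing these shift arguments, and on each side of the identity they assemble into a single permutation that cancels. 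For the first hexagon the two flips collapse as $(\textup{id}\otimes P)\circ(P\otimes\textup{id})$ into the flip carrying the $\ul S$-leg past the other two, leaving $R_{S,S^\prime\tens S^{\prime\prime}}(\lambda)=R_{S,S^{\prime\prime}}(\lambda)R_{S,S^\prime}(\lambda-\mh_{\ul{S^{\prime\prime}}})$; the second hexagon and \eqref{dynbraidYB} follow by the identical mechanism. The main obstacle in the whole proposition is precisely this flip-and-shift bookkeeping: one must check that conjugating the flips relocates each $R$-matrix to the tensor legs prescribed in the statement, and that the accompanying weight shifts land on the correct factor.
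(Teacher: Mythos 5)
Your proposal is correct and takes essentially the same route as the paper's own proof: part (2) by unwinding Definitions \ref{Rdyn} and \ref{olA}, part (3) by substituting the cocycle identity \eqref{hulp0} so as to reduce to the length-one case and then commuting the flip past the grade-preserving fusion factors, and part (1) by reading off the identities in $\mathcal{N}_{\mr{fd}}$ underlying \eqref{dynhexagon} and \eqref{dynbraidYB}. The reordering of the three parts and the extra detail in the flip-and-shift bookkeeping are immaterial differences.
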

\begin{proof}
(1) A direct computation shows that the first two formulas are the identities in $\mathcal{N}_{\mr{fd}}$ underlying the dynamical hexagon identities \eqref{dynhexagon}, while \eqref{dynYBeqn} is the identity in $\mathcal{N}_{\mr{fd}}$ underlying the dynamical braid relation \eqref{dynbraidYB}.\\
(2) The morphism in $\Ndyn$ representing $\ol{c_{V,W}}\in\Hom_{\Ndyn^{\,\str}}((\ul{V},\ul{W}),(\ul{W},\ul{V}))$ is given by
\[
j_{(W,V)}^{-1}\circ\ul{c_{V,W}}\circ j_{(V,W)},
\]
i.e., it is the map $\mathfrak{h}_{\mr{reg}}^*\rightarrow\Hom_{\mathcal{N}_{\mr{fd}}}(\ul{V}\otimes\ul{W},\ul{W}\otimes\ul{V})$ defined by
\[
\lambda\mapsto j_{(W,V)}(\lambda)^{-1}\circ P_{V,W}\circ \mathcal{R}_{V,W}\circ j_{(V,W)}(\lambda).
\]
The result now follows immediately.\\
(3) The morphism in $\textup{Hom}_{\Ndyn}(\cF^\str(\ul{S}\tens\ul{T}),\cF^\str(\ul{T}\tens\ul{S}))$ representing $\ol{c_{S,T}}$ is 
\[
j_{T\tens S}^{-1}\circ P_{\cF^\str(\ul{S}),\cF^\str(\ul{T})}\circ\mathcal{R}_{\cF^\str(S),\cF^\str(T)}\circ j_{S\tens T},
\] 
where we have omitted the associators. In view of \eqref{hulp0}, this can be expressed in terms of the dynamical R-matrix as
\begin{equation}\label{intermediate}
\bigl(j_T^{-1}\ol{\otimes}j_S^{-1}\bigr)\circ P_{\cF^\str(\ul{S}),\cF^\str(\ul{T})}\circ R_{\cF^\str(S),\cF^\str(T)}\circ
\bigl(j_S\ol{\otimes}j_T\bigr).
\end{equation}
Thus composing \eqref{intermediate} on the left with $P_{\cF^\str(\ul{T}),\cF^\str(\ul{S})}$ yields $R_{S,T}$.
Evaluating the resulting expression at $\lambda\in\mathfrak{h}^*_{\textup{reg}}$ yields \eqref{R_S,T dyn def}.
\end{proof}

\begin{remark}\label{smfRemark}
\hfill
\begin{enumerate}
\item $\cF^{\mr{cst}}: \cN_{\mr{fd}}\rightarrow\Ndyn$ is a strict monoidal functor.
\item  
In \cite{Etingof&Varchenko-2000} the {\it exchange matrices} are denoted by $R_{V,W}(\lambda)$. In the present notation it corresponds to 
	\[
	R^{21}_{V,W}(\lambda) := P_{W,V}\Rdyn_{W,V}(\lambda)P_{V,W}.
	\]
We will moreover write for $S,S'\in\Rep^\str$,
\[
R_{S,S^\prime}^{21}(\lambda):=P_{\cF^\str(\ul{S'}),\cF^\str(\ul{T'})}R_{S',S}(\lambda)P_{\cF^\str(\ul{S}),\cF^\str(\ul{S'})}
\]	
\textup{(}the justification for this notation comes from the existence of a universal dynamical $R$-matrix, see \cite{Etingof&Varchenko-2000} and Subsection \ref{Subsection Q(lambda)}\textup{)}.
In \cite[Theorem 14]{Etingof&Varchenko-1999}, direct algebraic computations are used to show that the exchange matrices satisfy dynamical hexagon and dynamical quantum Yang--Baxter equations \textup{(}see also \cite[Chapter 6]{Etingof&Latour-2005}\textup{)}. 
\end{enumerate}
\end{remark}
Applying the dynamical twist functor to the identities 
guaranteeing that $(e_S)_{S\in\Rep^\str}$ and $(\iota_S)_{S\in\Rep^\str}$ define a left duality for $\Rep^\str$, gives
\begin{equation}\label{dynlefteval}
(\textup{id}_{\ul{S}}\ol{\tens}\,\ol{e_S})\circ(\ol{\iota_S}\,\ol{\tens}\textup{id}_{\ul{S}})=\textup{id}_{\ul{S}},\qquad\qquad
(\ol{e_S}\,\ol{\tens}\textup{id}_{\ul{S^*}})\circ(\textup{id}_{\ul{S^*}}\ol{\tens}\,\ol{\iota_S})=\textup{id}_{\ul{S^*}}.
\end{equation}
Similarly, the properties that turn $(\vartheta_S)_{S\in\Rep^\str}$ into a twist for the braided monoidal category $\Rep^\str$ with left duality, have as dynamical analogs
\begin{equation}\label{dynnattwist}
\ol{A}\circ\ol{\vartheta_S}=\ol{\vartheta_T}\circ\ol{A}
\end{equation}
for $A\in\textup{Hom}_{\Rep^\str}(S,T)$, and
\begin{equation}\label{dyntensdualtwist}
\begin{split}
\ol{\vartheta_{S\tens T}}&=(\ol{\vartheta_S}\,\ol{\tens}\,\ol{\vartheta_T})\circ \ol{c_{T,S}}\circ\ol{c_{S,T}},\\
\ol{\vartheta_{S^*}}&=( \ol{e_S}\,\ol{\tens}\textup{id}_{\ul{S^*}})\circ(\textup{id}_{\ul{S^*}}\ol{\tens}\,\ol{\vartheta_S}\,\ol{\tens}\textup{id}_{\ul{S^*}})\circ(\textup{id}_{\ul{S^*}}\ol{\tens}\,\ol{\iota_S}).
\end{split}
\end{equation}
The right (co-)evaluation maps $(\widetilde{e}_S)_{S}$, $(\widetilde{\iota}_S)_{S}$ of the resulting ribbon category $\Rep^\str$ have as dynamical counterparts
\begin{equation}\label{dynrighteval}
\begin{split}
\ol{\widetilde{e}_S}&=\widetilde{\cF^{\mr{EV}}}(\widetilde{e}_S)\circ J_{\ul{S\tens S^*}}^{-1}\circ \ol{J_{S\tens S^*}},\\
\ol{\widetilde{\iota_S}}&=\ol{J_{S^*\tens S}}^{\,-1}\circ J_{\ul{S^*\tens S}}\circ\widetilde{\cF^{\mr{EV}}}(\widetilde{\iota}_S).
\end{split}
\end{equation}
They satisfy the properties
\begin{equation}\label{dynrightstraight}
\bigl(\ol{\widetilde{e_S}}\,\ol{\tens}\textup{id}_{\ul{S}}\bigr)\circ \bigl(\textup{id}_{\ul{S}}\ol{\tens}\,\ol{\widetilde{\iota_S}}\bigr)=\textup{id}_{\ul{S}},\qquad\qquad
\bigl(\textup{id}_{\ul{S^*}}\ol{\tens}\,\ol{\widetilde{e_S}}\bigr)\circ\bigl(\ol{\widetilde{\iota_S}}\,\ol{\tens}\textup{id}_{\ul{S^*}}\bigr)=\textup{id}_{\ul{S^*}}.
\end{equation}
The dynamical counterparts of the identities relating the right (co-)evaluation to the left (co-) evaluation via the twist are given by
\begin{equation}\label{dynevtwist}
\ol{\widetilde{\iota_S}}=(\textup{id}_{\ul{S^*}}\,\ol{\tens}\,\ol{\vartheta_S})\circ\ol{c_{S,S^*}}\circ\ol{\iota_S}, \qquad\qquad
\ol{\widetilde{e_S}}=\ol{e_S}\circ\ol{c_{S,S^*}}\circ(\ol{\vartheta_S}\,\ol{\tens}\textup{id}_{\ul{S^*}}).
\end{equation}
Finally, the identities involving (co-)evaluation and braiding morphisms in \(\Rep^\str\) obtained from \cite[Section 3.5]{Turaev-1994} translate to the following properties under application of the dynamical twist functor:
\begin{align}
\label{from Turaev identities}
\begin{split}
(\id_{\ul{S}}\,\ol{\tens}\, \ol{e_T})(\ol{c_{T^\ast,S}}\,\ol{\tens}\, \id_{\ul{T}}) =\ & (\ol{e_T}\,\ol{\tens}\,\id_{\ul{S}})(\id_{\ul{T^\ast}}\,\ol{\tens}\, \ol{c^{-1}_{T,S}}), \\
(\id_{\ul{S}}\,\ol{\tens}\, \ol{e_T})(\ol{c^{-1}_{S,T^\ast}}\,\ol{\tens}\, \id_{\ul{T}}) =\ & (\ol{e_T}\,\ol{\tens}\,\id_{\ul{S}})(\id_{\ul{T^\ast}}\,\ol{\tens}\, \ol{c_{S,T}}),\\
(\ol{c_{S,T}}\,\ol{\tens}\, \id_{\ul{T^\ast}})(\id_{\ul{S}}\,\ol{\tens}\, \ol{\iota_T}) =\ & (\id_{\ul{T}}\,\ol{\tens}\, \ol{c^{-1}_{S,T^\ast}})(\ol{\iota_T}\,\ol{\tens}\,\id_{\ul{S}}),\\
(\ol{c^{-1}_{T,S}}\,\ol{\tens}\, \id_{\ul{T^\ast}})(\id_{\ul{S}}\,\ol{\tens}\, \ol{\iota_T}) =\ & (\id_{\ul{T}}\,\ol{\tens}\, \ol{c_{T^\ast,S}})(\ol{\iota_T}\,\ol{\tens}\,\id_{\ul{S}}).
\end{split}
\end{align}
We leave it to the reader to write out the explicit identities in $\mathcal{N}_{\mr{fd}}$ underlying \eqref{dynlefteval}, \eqref{dyntensdualtwist},
\eqref{dynrightstraight}, \eqref{dynevtwist} and (\ref{from Turaev identities}).

\subsection{Graphical calculus for the dynamical module category}
\label{Section strictified dynamical module category}
Building on the terminology from Subsection \ref{gcfirst}, we call a ribbon-braid graph a \emph{forest graph} in case its isotopy class has a representative whose ribbon-braid graph diagram does not contain any crossings (and hence, in particular, contains no full twists). A \((k,\ell)\)-forest graph is a forest graph with \(k\) bottom bases and \(\ell\) top bases. 

Let \(\cD = (\cD,\otimes,\mathbb{1},a,\ell,r)\) be a monoidal category. Then we denote by \(\mathbb{G}_{\cD^\str}\) the category whose objects are the tuples \((S_1,\dots,S_k)\) with \(S_i\in\cD^\str\), including \(\emptyset\), and where for \(S_1,\dots,S_k,T_1,\dots,T_\ell\in\cD^\str\), the class of morphisms \(\Hom_{\mathbb{G}_{\cD^\str}}((S_1,\dots,S_k),(T_1,\dots,T_\ell))\) consists of the isotopy classes of \((k,\ell)\)-forest graphs colored by morphisms 
\[
A\in\Hom_{\cD^\str}(S_1\tens\dots\tens S_k,T_1\tens\dots\tens T_\ell),
\]
in the sense of \cite[Section 2.4]{DeClercq&Reshetikhin&Stokman-2022}. It is a strict monoidal category with the monoidal structure defined in the usual manner, cf. Subsection \ref{gcfirst}. 
In analogy with \cite[Theorem 2.2]{DeClercq&Reshetikhin&Stokman-2022}, one can then construct a unique strict monoidal functor
\[
\cF^{\mr{forest}}_{\cD^\str}: \mathbb{G}_{\cD^\str}\to \cD^\str
\]
which maps an object \((S)\in\mathbb{G}_{\cD^\str}\) of length 1 to \(S\in\cD^\str\), and any \(\cD^\str\)-colored coupon to its color. We will write \(L\dot{=} L'\) for 
two morphisms $L$ and $L^\prime$ in \(\mathbb{G}_{\cD^\str}\) if \(\cF_{\cD^\str}^{\mr{forest}}(L) = \cF_{\cD^\str}^{\mr{forest}}(L')\).

When working with \(\cD = \Ndyn\), we will depict \(\cD^\str\)-colored coupons in \(\mathbb{G}_{\cD^\str}\) in blue, such that the coupon colored by \(A\in\Hom_{\Ndyn^\str}(S_1\ol{\tens}\dots\ol{\tens} S_k,T_1\ol{\tens}\dots\ol{\tens} T_\ell)\) is graphically represented by Figure \ref{morphism dynamical}. 
We also introduce Figure \ref{morphism dynamical in lambda} for the coupon colored by 
\[
A(\lambda)\in\Hom_{\cN_{\mr{fd}}^\str}(S_1\tens \dots\tens S_k,T_1\tens\dots\tens T_\ell)
\]
with \(\lambda\in\hh_{\mathrm{reg}}^\ast\), in the graphical calculus of the symmetric monoidal category \(\cN_{\mr{fd}}\). Here the weight $\lambda$ is assigned to the whole region right of the diagram. 
\begin{figure}[H]
	\begin{minipage}{0.48\textwidth}
		\centering
		\includegraphics[scale = 1.2]{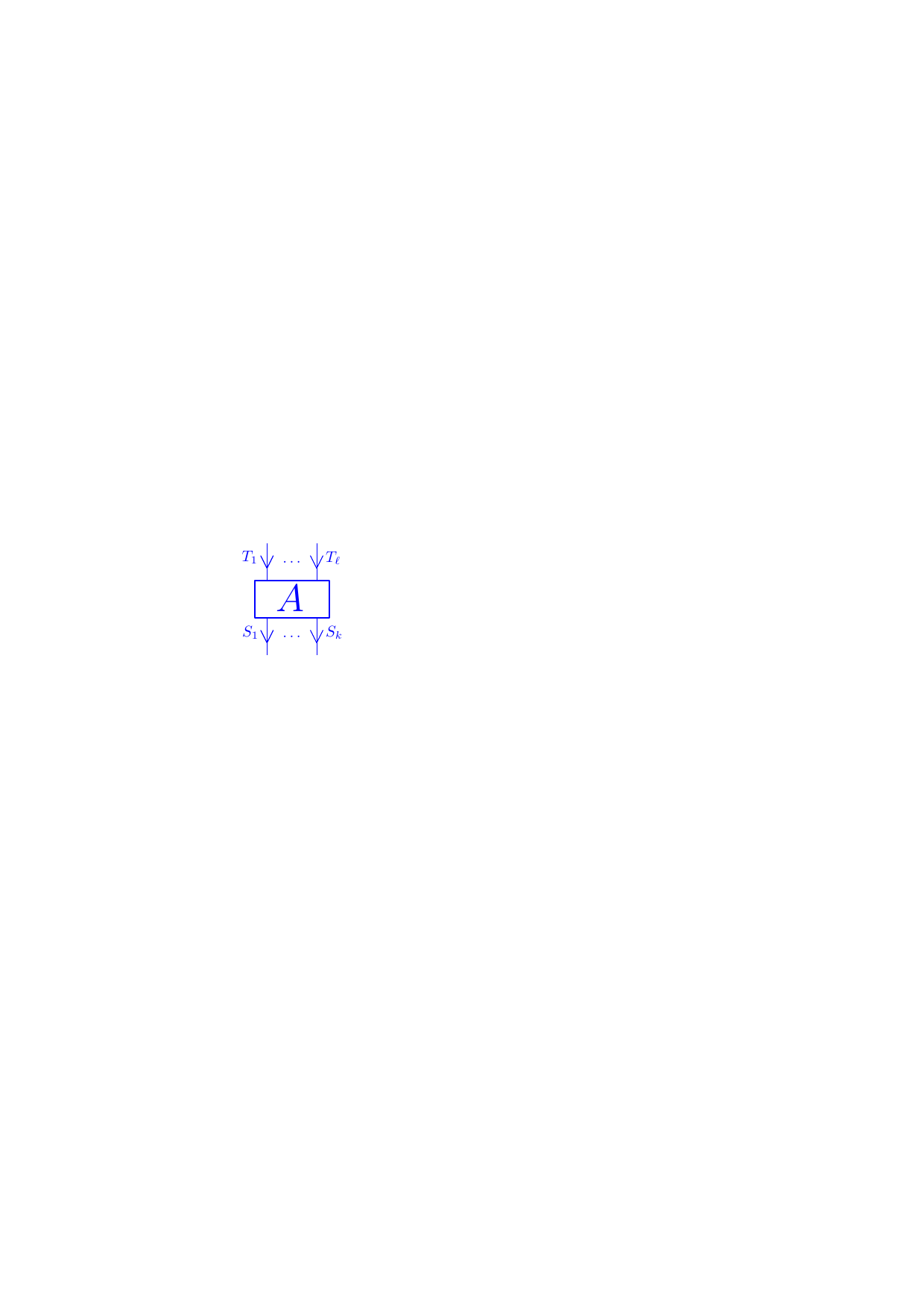}
		\captionof{figure}{}
		\label{morphism dynamical}
	\end{minipage}\quad
	\begin{minipage}{0.48\textwidth}
		\centering
		\includegraphics[scale = 1.2]{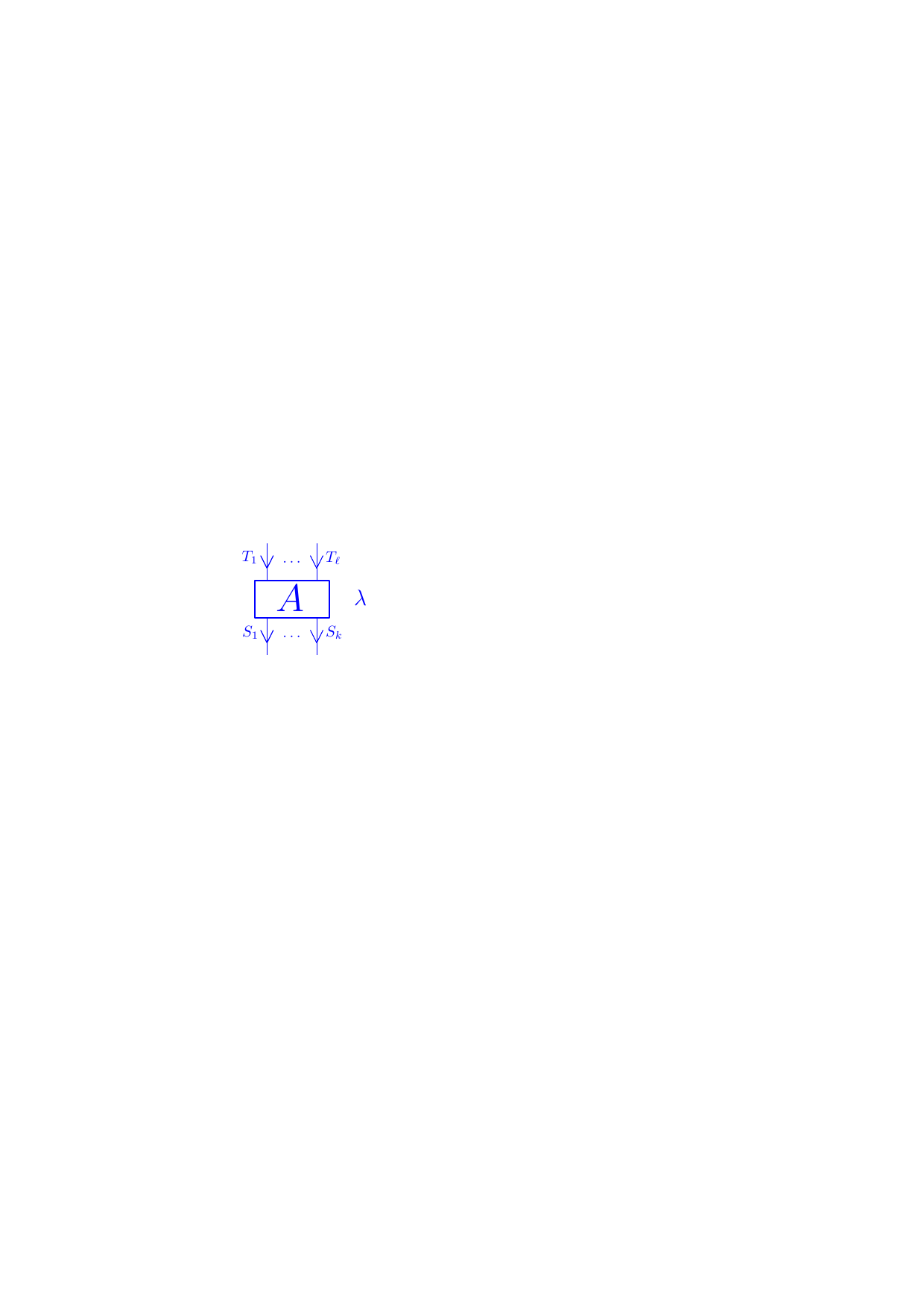}
		\captionof{figure}{}
		\label{morphism dynamical in lambda}
	\end{minipage}	
\end{figure}
In order to make the latter notation compatible with the monoidal structures, we need to impose a rule specifying how to transport the weight $\lambda$ over the vertical (bundles of) blue strands to other regions in between vertical strands and to the left of the diagram. The rule is that crossing a bundle of vertical blue strands labeled by $S$ from right to left shifts the weight $\lambda$ to the weight $\lambda-\mh_{S}$. 
Here $\mh_{S}$ denotes the weight shift operator measuring the grading of the $\mathfrak{h}^*$-graded vector space $\cF^\str(S)$. More precisely yet, if $S=(V_1,\ldots,V_k)$ and if we attach to the bundle of $k$ strands colored by \(V_1,\dots,V_k\) coupons labeled by $\alpha_{v_1},\ldots,\alpha_{v_k}$ with $v_i\in V_i[\nu_i]$, then crossing the bundle from right to left is shifting the weight from $\lambda$ to $\lambda-\sum_{i=1}^k\nu_i$,
\begin{figure}[H]
\centering
	\includegraphics[scale=0.9]{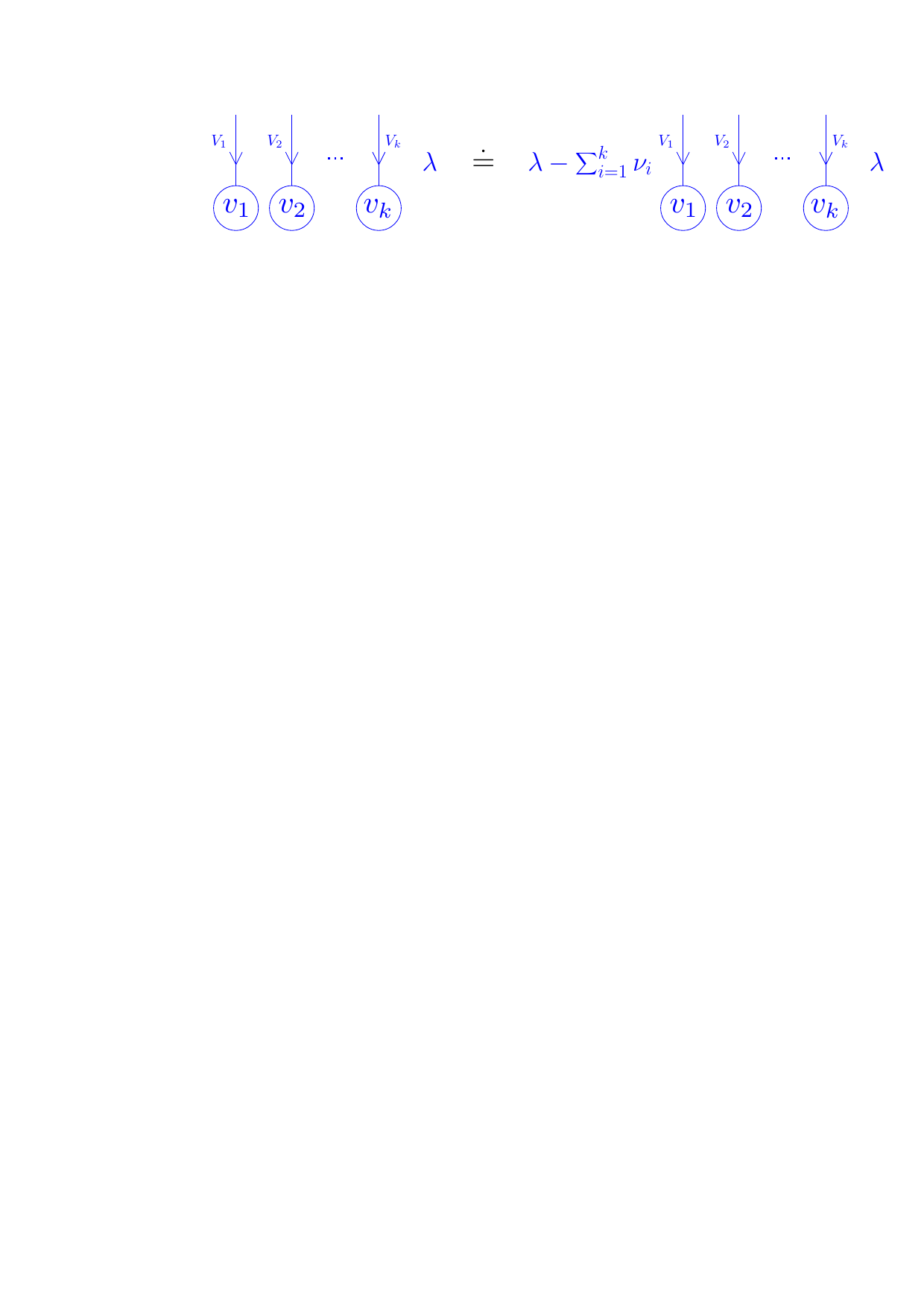}
	\captionof{figure}{}
\label{evgraph}
\end{figure}
With this convention of transporting the weight to other vertical regions, we have for \(A_i\in\Hom_{\Ndyn^\str}(S_i,T_i)\),
\begin{center}
	\includegraphics[scale=1]{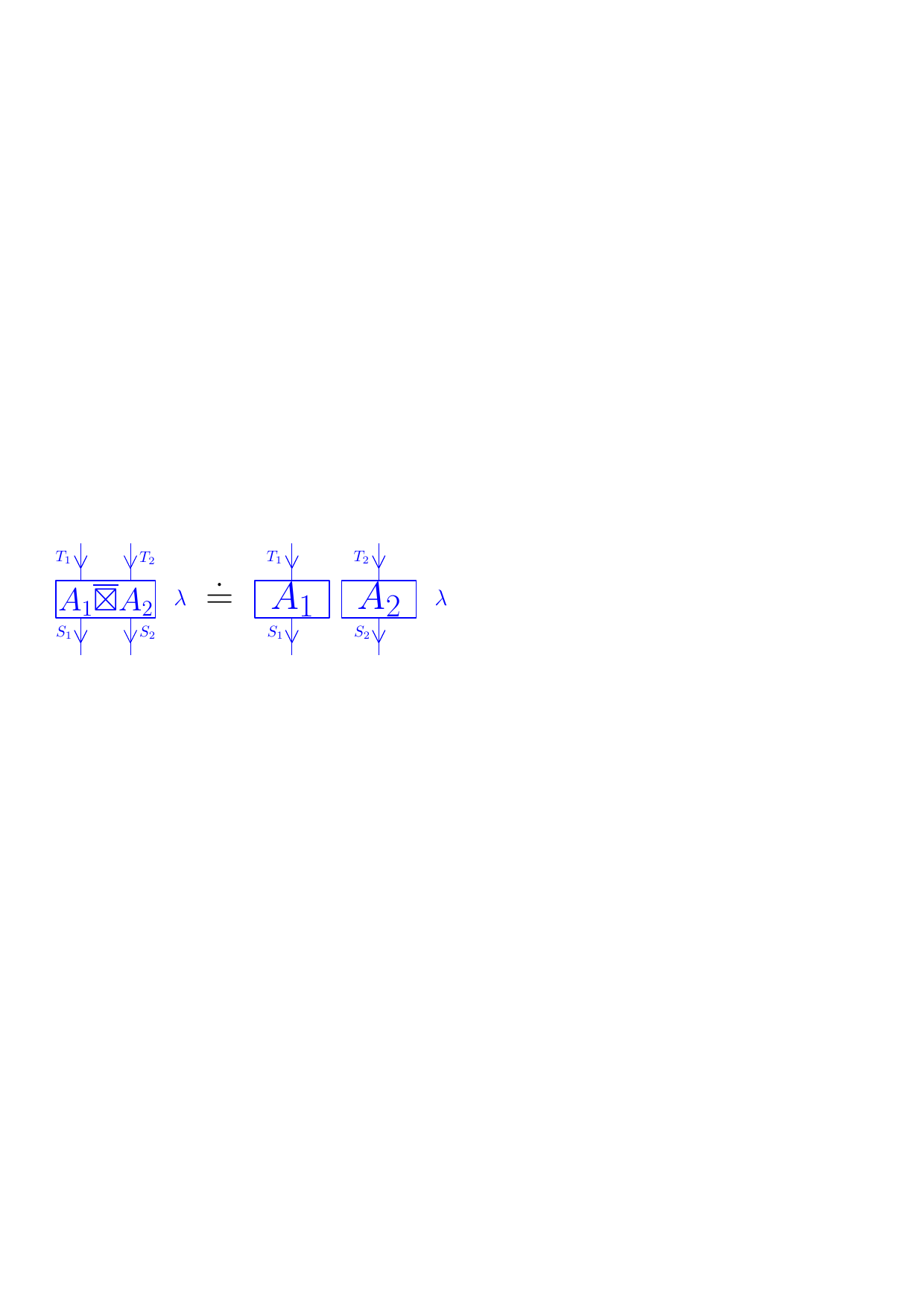}
\end{center}
in the graphical calculus for $\cN_{\mr{fd}}$.

Since \(\Ndyn\) is merely monoidal but not a ribbon category, its graphical calculus is restricted to the use of forest graphs in \(\mathbb{G}_{\Ndyn^\str}\) as shown in Figure \ref{morphism dynamical}, and height-preserving ambient isotopies. In the upcoming Section \ref{sectiongi} we will demonstrate how a nontrivial ribbon graph type of calculus can be obtained for $\Ndyn$, based on the results of Section \ref{sectiondb}.

Note that the diagrams with specialized weight as shown in Figure \ref{morphism dynamical in lambda} can be used inside the Reshetikhin-Turaev graphical calculus for the
symmetric tensor category $\cN_{\mr{fd}}$. 
This graphical calculus includes the oriented $\cN_{\mr{fd}}^\str$-colored ribbon graph diagrams capturing the topological crossings and the maxima and minima of $\cN_{\mr{fd}}^\str$-colored ribbon graphs, as displayed below.
\begin{center}
	\includegraphics[scale=0.8]{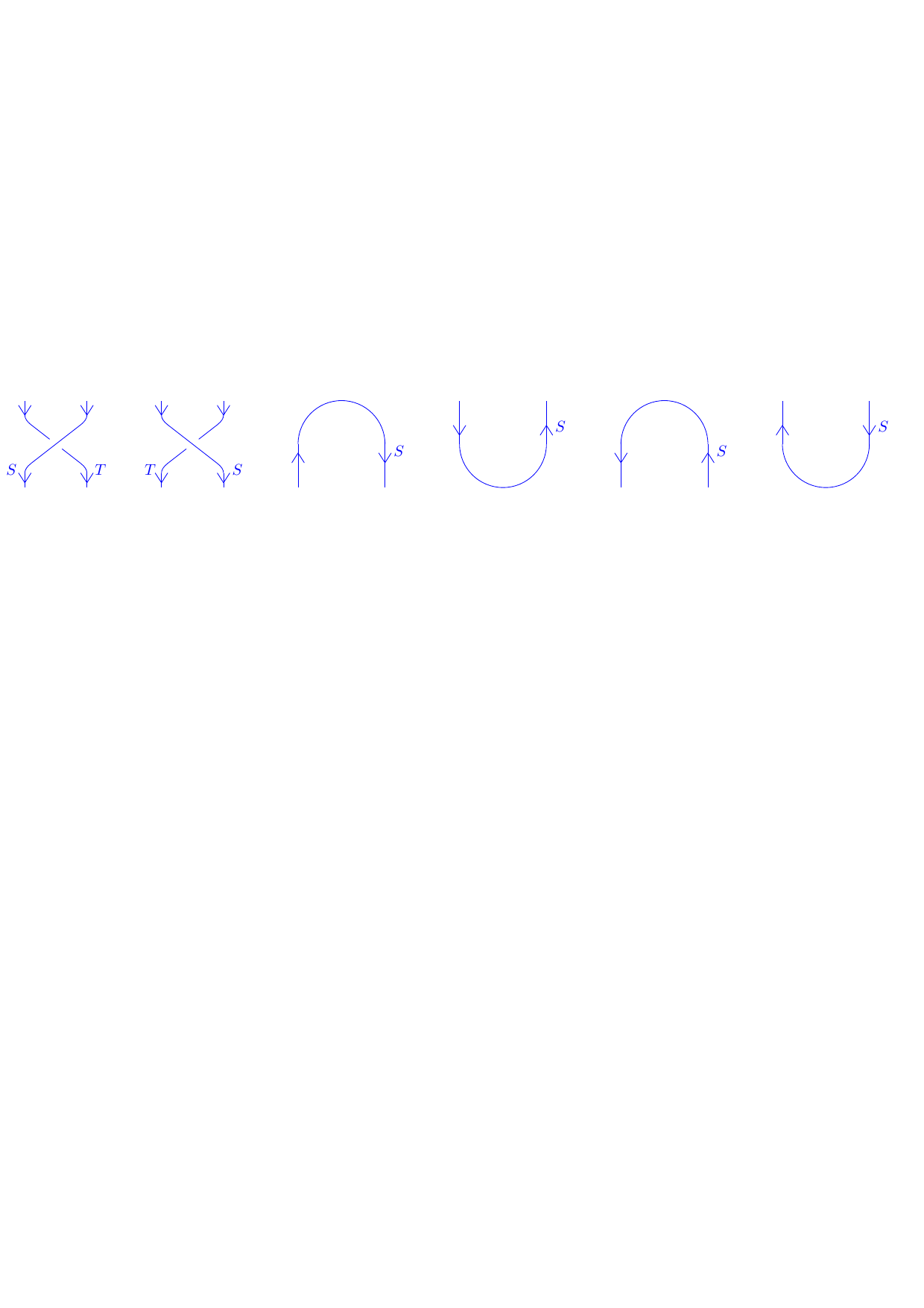}
\end{center}

Let $S=(V_1,\ldots,V_k)\in\Ndyn^{\,\str}$. 
Following the notations from Subsection \ref{gcfirst}, we denote the coupons in \(\mathbb{G}_{\Ndyn^{\,\str}}\) labeled by the fusion morphisms \(J_S\in\textup{Hom}_{\Ndyn^{\,\str}}(S,\cF^\str(S))\) and \(J_S^{-1}\in\Hom_{\Ndyn^{\,\str}}(\cF^\str(S),S)\) by Figures \ref{blue fusion} and \ref{blue antifusion} respectively. 
And following the notations from Subsection \ref{GcSection}, we denote the coupon in $\mathbb{G}_{\Ndyn^{\,\str}}$ labeled by the flip morphism $P_{S,T}\in \Hom_{\Ndyn^{\,\str}}(S\tens T,T\tens S)$
by Figure \ref{PST}.
\begin{figure}[H]
	\begin{minipage}{0.32\textwidth}
		\centering
		\includegraphics[scale = 0.8]{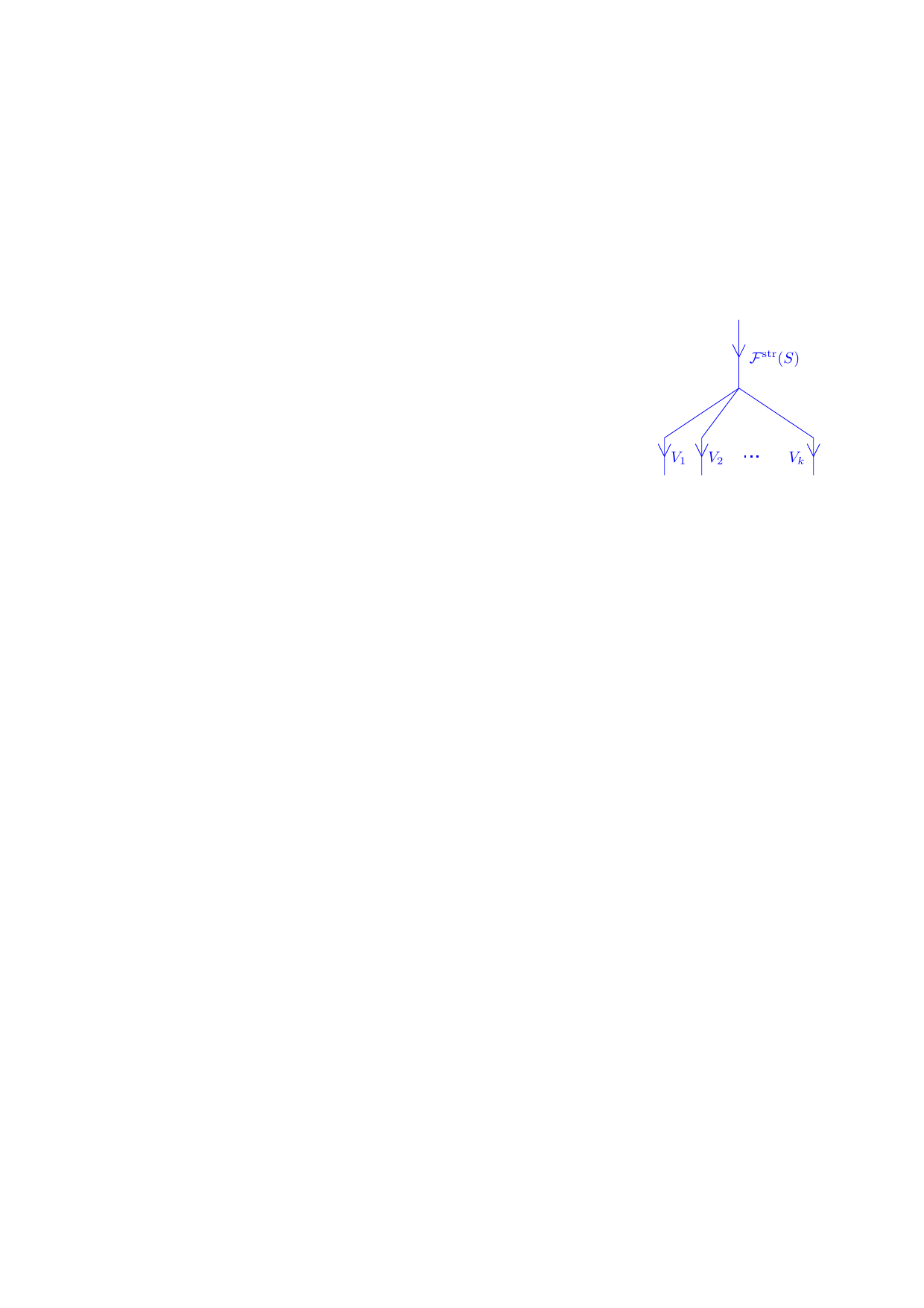}
		\captionof{figure}{}
		\label{blue fusion}
	\end{minipage}
	\begin{minipage}{0.32\textwidth}
		\centering
		\includegraphics[scale = 0.8]{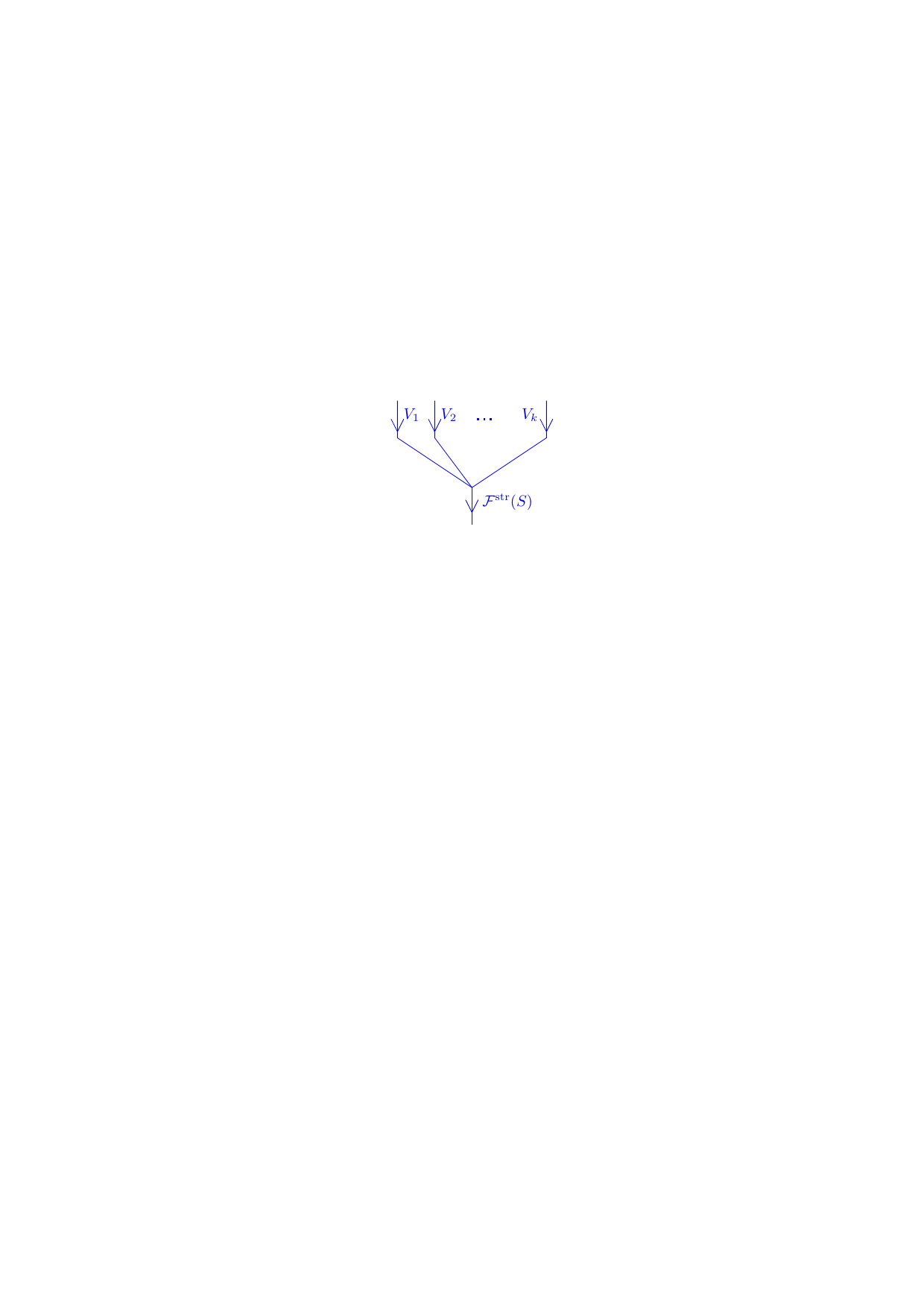}
		\captionof{figure}{}
		\label{blue antifusion}
	\end{minipage}
	\begin{minipage}{0.32\textwidth}
		\centering
		\includegraphics[scale=0.7]{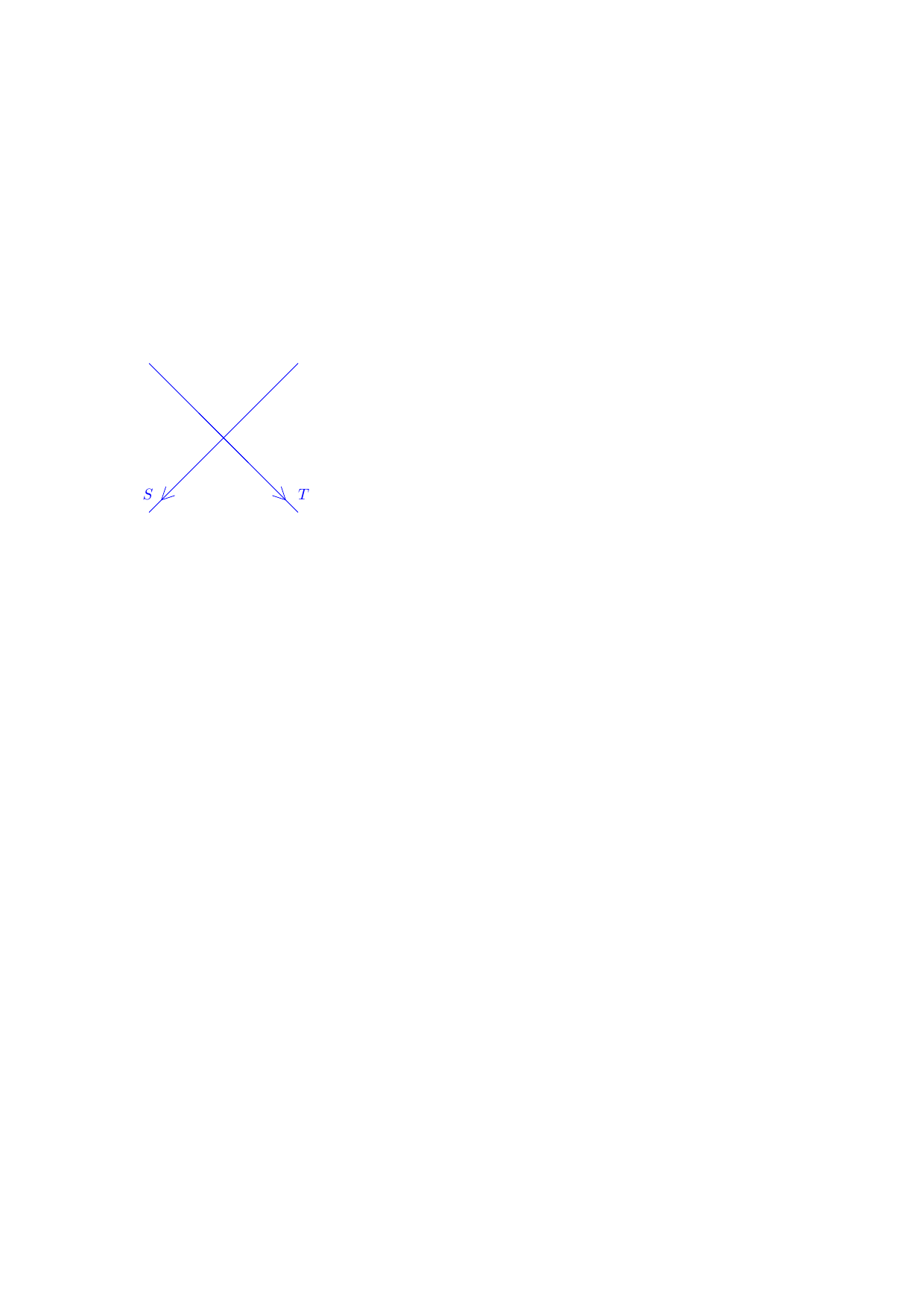}
		\captionof{figure}{}
		\label{PST}
	\end{minipage}
\end{figure}
\subsection{Graphical implementation of the dynamical ribbon structures}\label{sectiongi}
We continue the development of our graphical framework by introducing graphical notations for the dynamical ribbon structures in $\Ndyn^\str$ arising from the dynamical twist functor, as introduced algebraically in
Section \ref{sectiondb}. The basic building blocks of this graphical calculus are forest graphs in $\mathbb{G}_{\Ndyn^{\,\str}}$ with strands colored by objects $\ul{S}\in\Ndyn^\str$ arising from objects $S\in\Rep^\str$. We will label these strands simply by \(S\) instead of \(\ul{S}\) for ease of notation.

Given a morphism \(A\in\Hom_{\Rep^\str}(S,T)\), there are two natural ways to transform \(A\) into a morphism in \(\Ndyn^\str\) that can serve as the color of a coupon in \(\mathbb{G}_{\Ndyn^\str}\), namely by applying either \(\widetilde{\cF^{\mr{EV}}}\) or \(\cF^{\mr{dt}}\) to \(A\). The coupon in \(\mathbb{G}_{\Ndyn^\str}\) colored by \(\widetilde{\cF^\mr{EV}}(A)\in\textup{Hom}_{\Ndyn^{\,\str}}(\ul{S},\ul{T})\) will be denoted by the left-hand side of Figure \ref{D11}. On the other hand, we introduce the left-hand side of Figure \ref{A(lambda)} as a graphical notation for the coupon in \(\mathbb{G}_{\Ndyn^\str}\) colored by the dynamical twist 
\[
\ol{A}=\cF^{\mr{dt}}(A)\in\textup{Hom}_{\Ndyn^{\,\str}}(\ul{S},\ul{T})
\] 
of \(A\). It is hence the red or blue color of the coupon that determines the interpretation of the coloring morphism.

\begin{figure}[H]
	\begin{minipage}{0.48\textwidth}
		\centering
		\includegraphics[scale = 1]{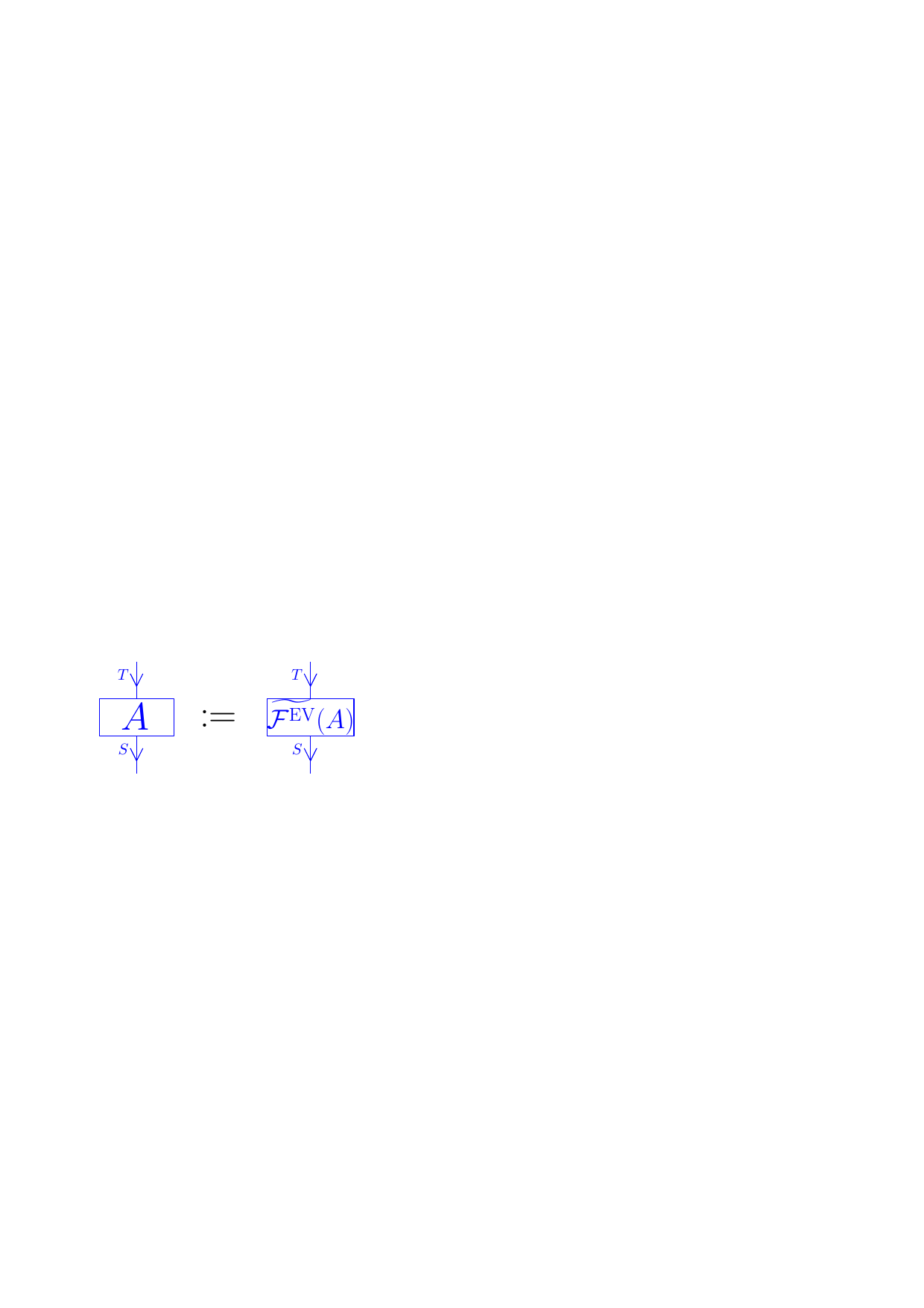}
		\captionof{figure}{}
		\label{D11}
	\end{minipage}\quad
	\begin{minipage}{0.48\textwidth}
		\centering
		\includegraphics[scale=1]{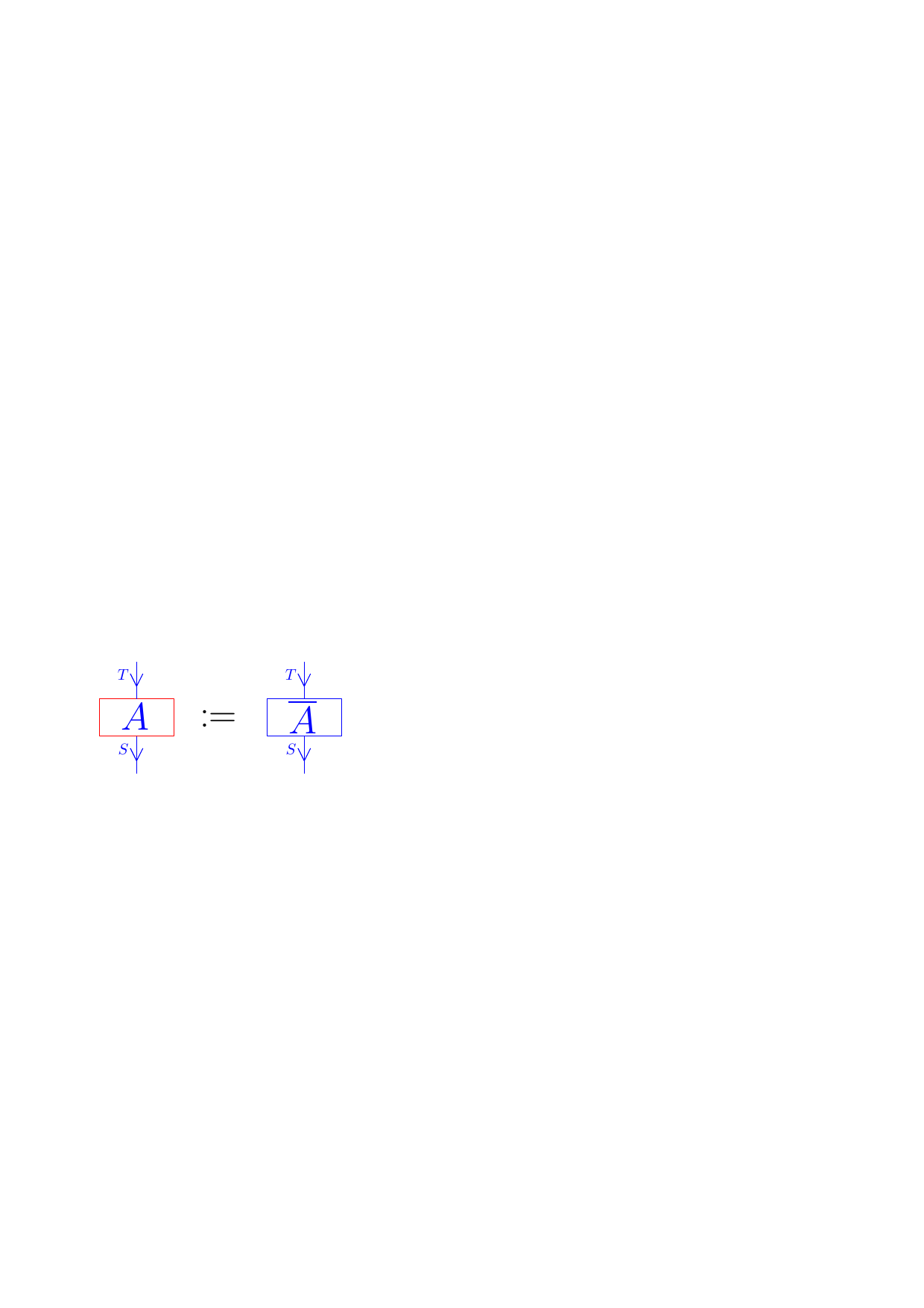}
		\captionof{figure}{}
		\label{A(lambda)}
	\end{minipage}
\end{figure}

The fact that $\cF^{\mr{dt}}$ is a strict monoidal functor is reflected by Figures \ref{D4A}--\ref{D4B}.

\begin{figure}[H]
	\begin{minipage}{0.43\textwidth}
		\centering
		\includegraphics[scale=1]{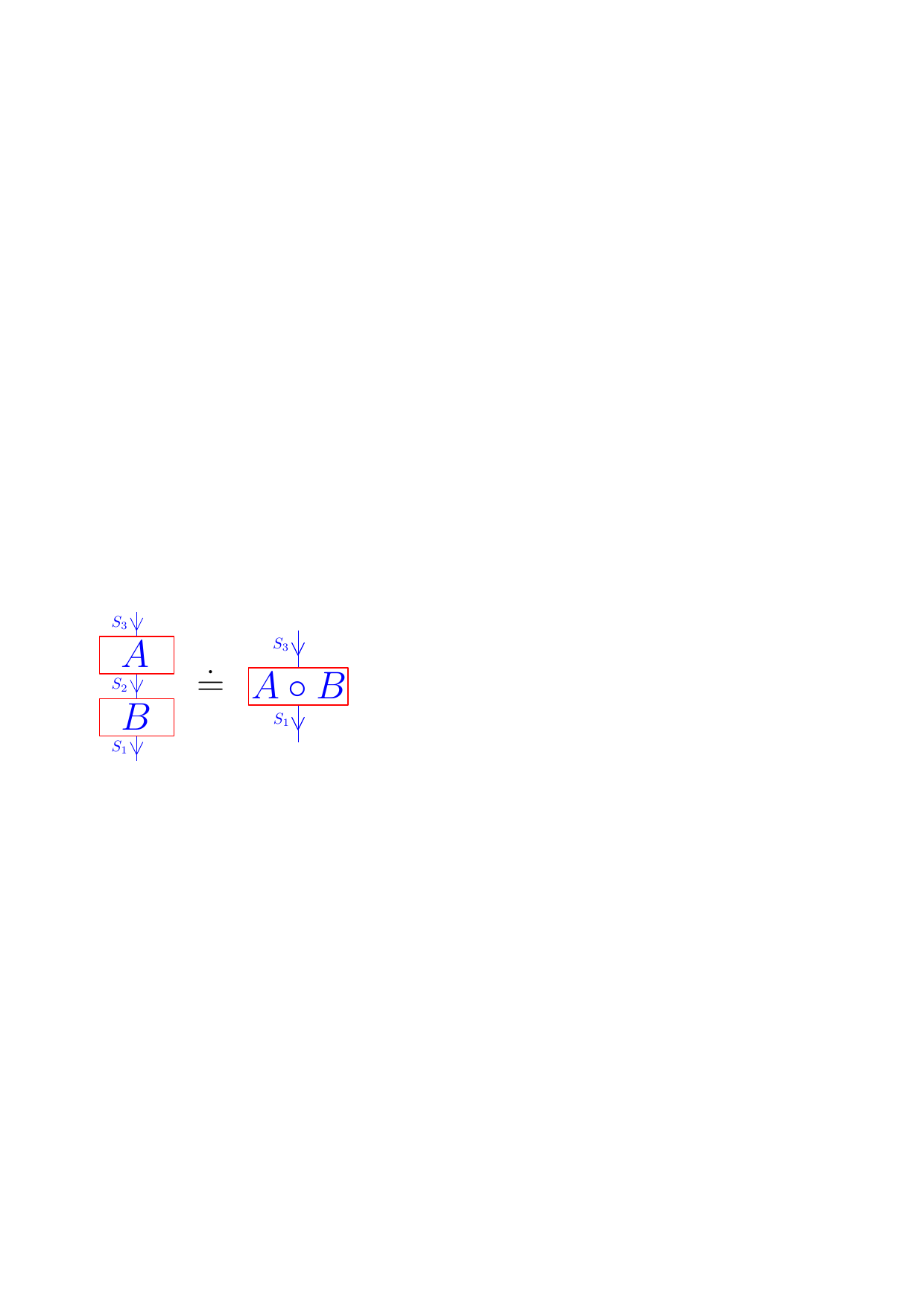}
		\captionof{figure}{}
		\label{D4A}
	\end{minipage}\quad
	\begin{minipage}{0.53\textwidth}
		\centering
		\includegraphics[scale=1]{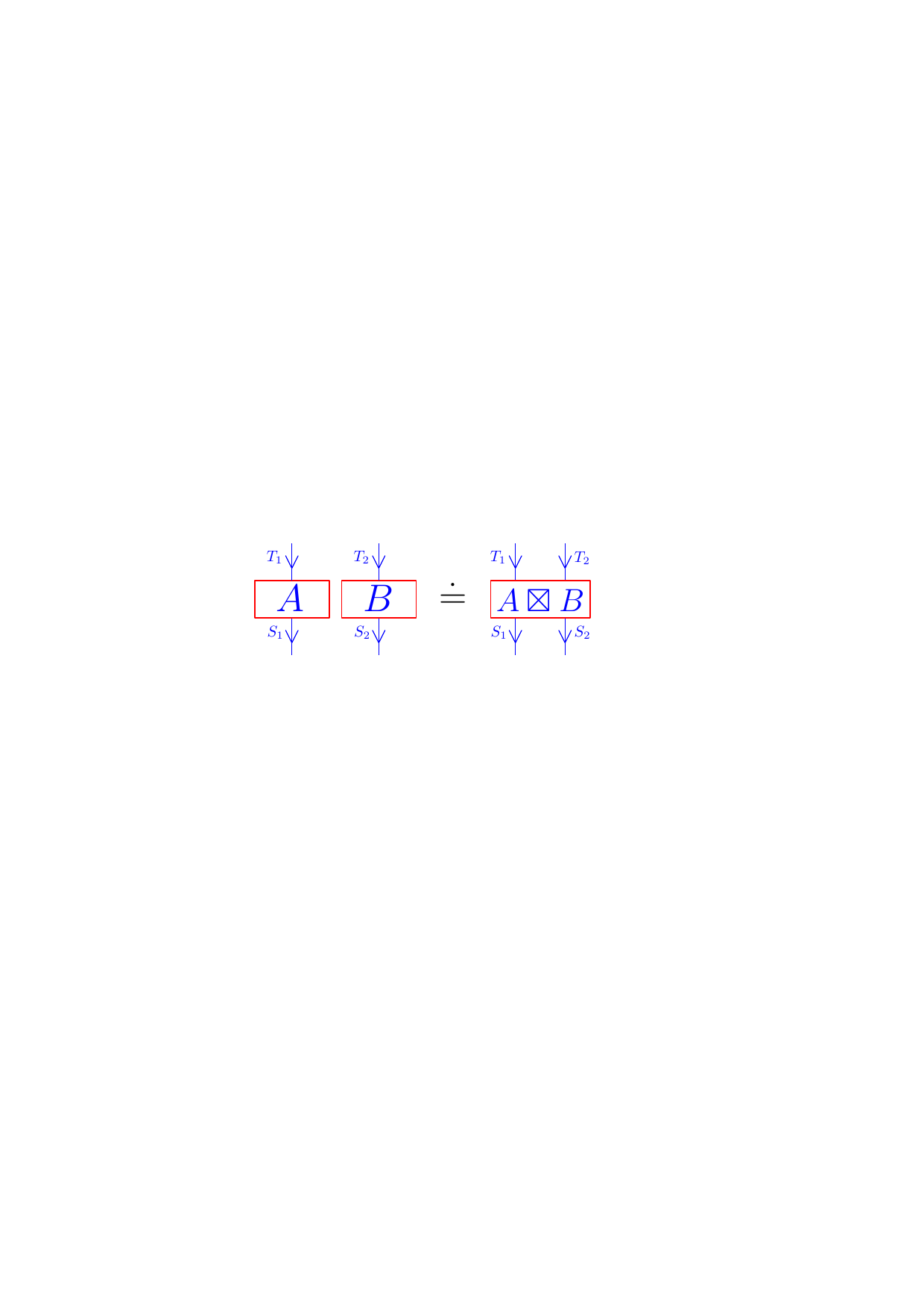}
		\captionof{figure}{}
		\label{D4B}
	\end{minipage}
\end{figure}

In analogy to the special notation for coupons labeled by fusion morphisms $J_{\ul{S}}$ (see Section \ref{Section strictified dynamical module category})
we shrink the coupons in \(\mathbb{G}_{\Ndyn^\str}\) colored by the dynamical fusion morphism \(\ol{J_S}\) and its inverse \(\ol{J_S}^{\,-1}\) to a red dot. 
So the coupons in \(\mathbb{G}_{\Ndyn^\str}\) labeled by the dynamical fusion morphisms \(\ol{J_S}\) and \(\ol{J_S}^{-1}\) are represented by Figures \ref{j dynamical}--\ref{j inverse dynamical}. 
\begin{figure}[H]
	\begin{minipage}{0.3\textwidth}
		\centering
		\includegraphics[scale=0.8]{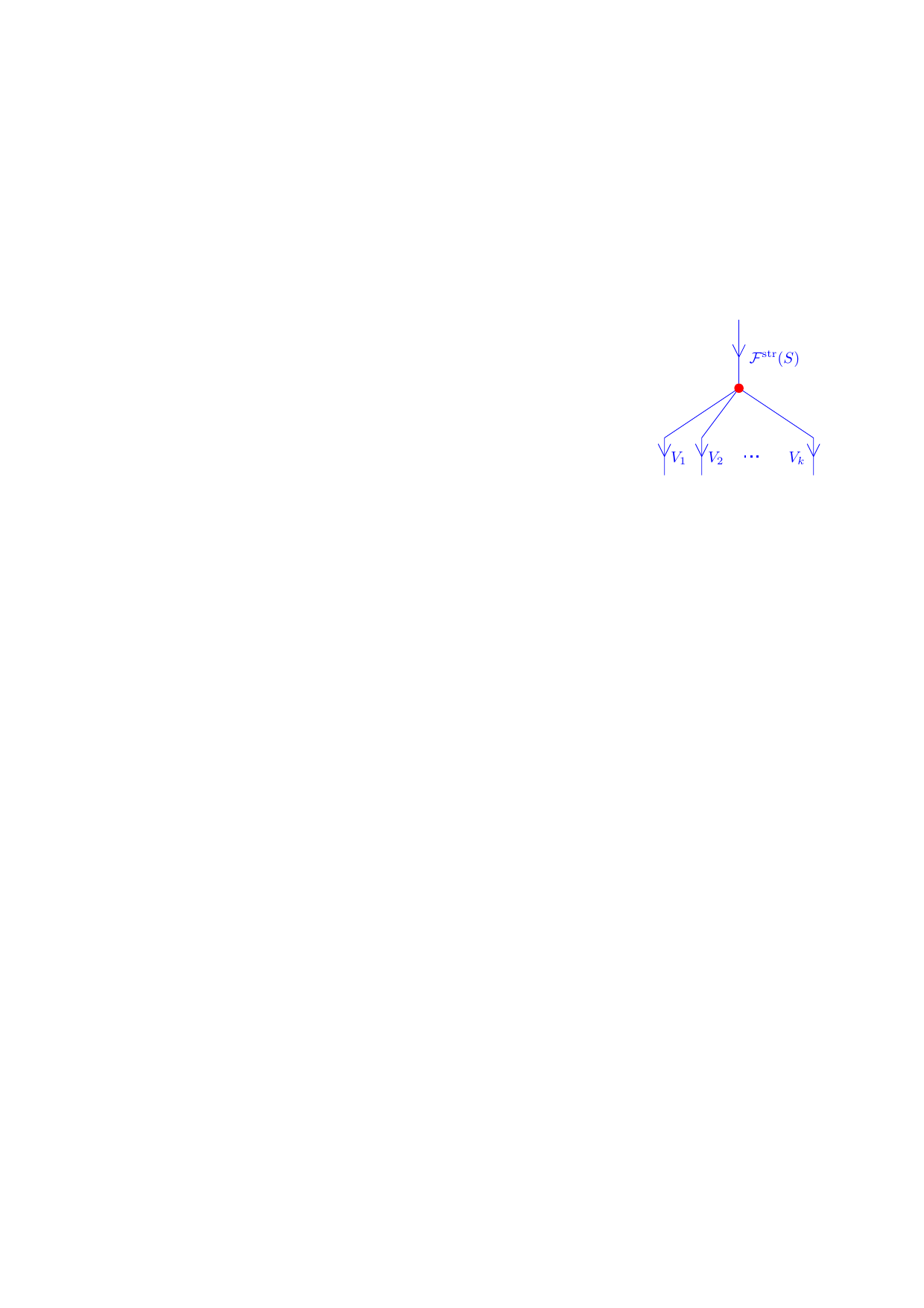}
		\captionof{figure}{}
		\label{j dynamical}
	\end{minipage}\quad
	\begin{minipage}{0.3\textwidth}
		\centering
		\includegraphics[scale=0.8]{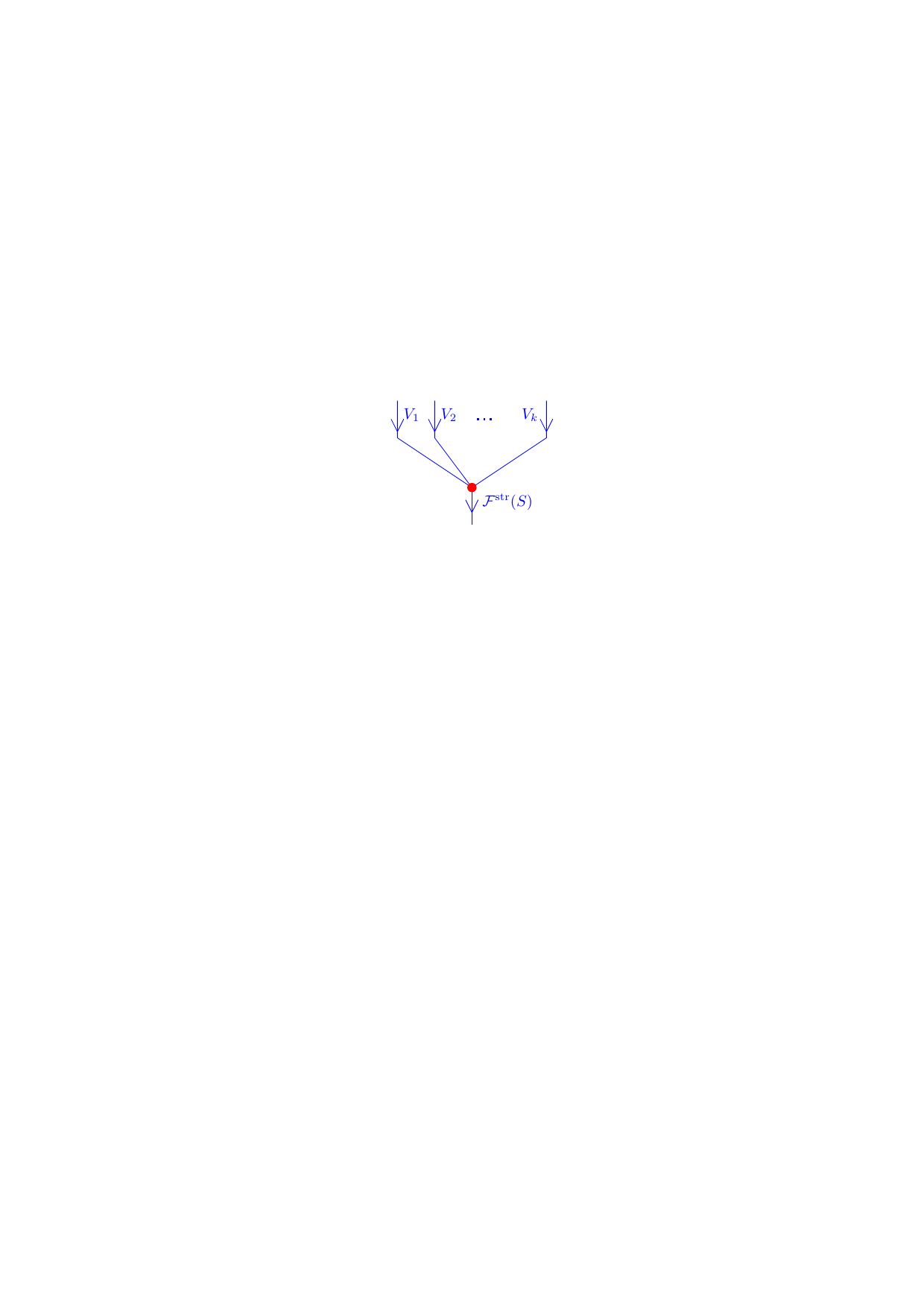}
		\captionof{figure}{}
		\label{j inverse dynamical}
	\end{minipage}
\end{figure}
\noindent
Note that the $\cF_{\mathbb{G}_{\Ndyn^\str}}^{\mr{forest}}$-images of 
these coupons are each other's inverse in view of Figure \ref{D4A}. Through the concept of bundling the strands, as established in \cite[Section 2.8]{DeClercq&Reshetikhin&Stokman-2022}, Figures \ref{j dynamical} and \ref{j inverse dynamical} can alternatively be expressed as Figures \ref{j dynamical bundled} and \ref{j inverse dynamical bundled}, respectively.
\begin{figure}[H]
	\begin{minipage}{0.4\textwidth}
		\centering
		\includegraphics[scale=0.9]{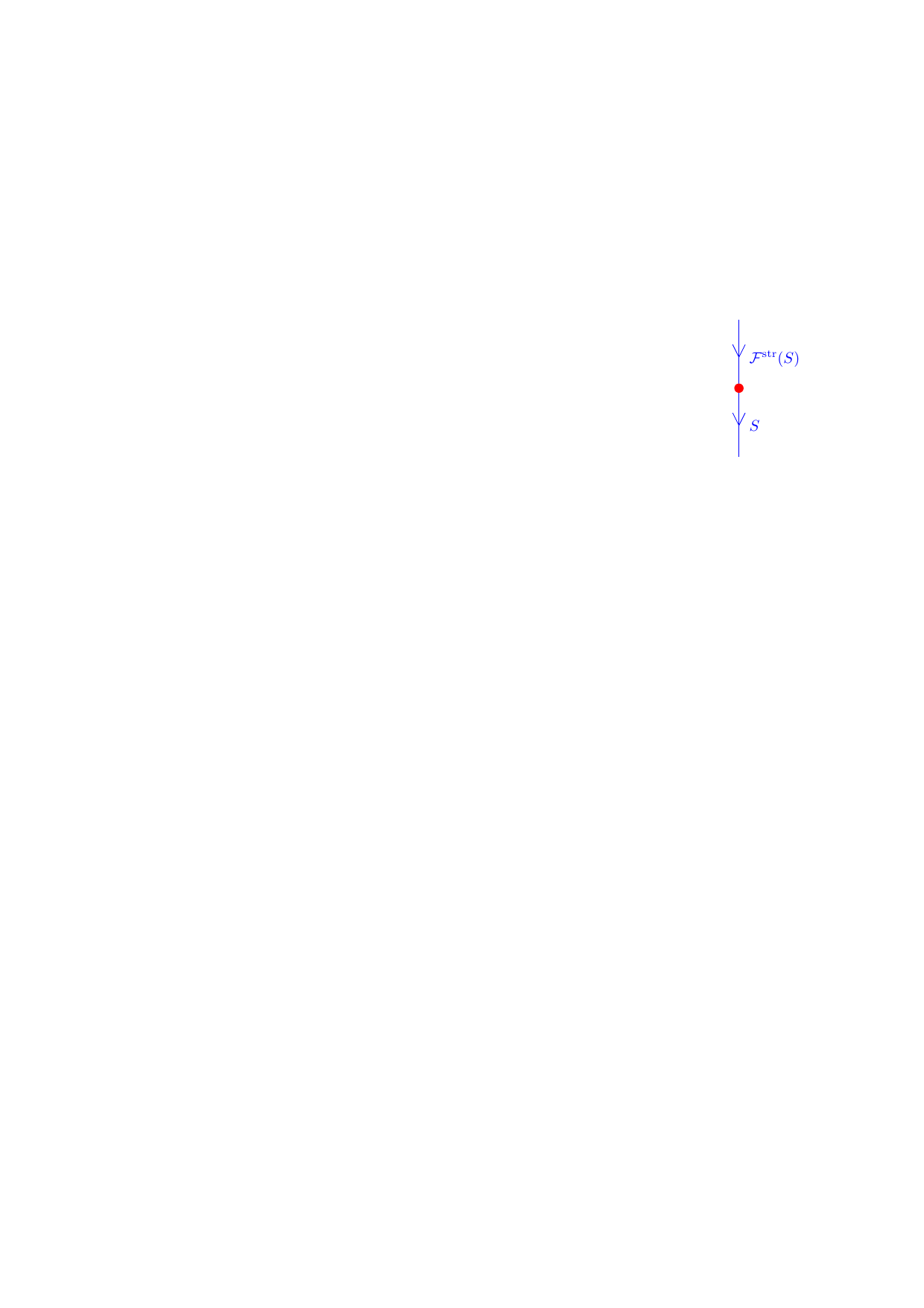}
		\captionof{figure}{}
		\label{j dynamical bundled}
	\end{minipage}\quad
	\begin{minipage}{0.4\textwidth}
		\centering
		\includegraphics[scale=0.9]{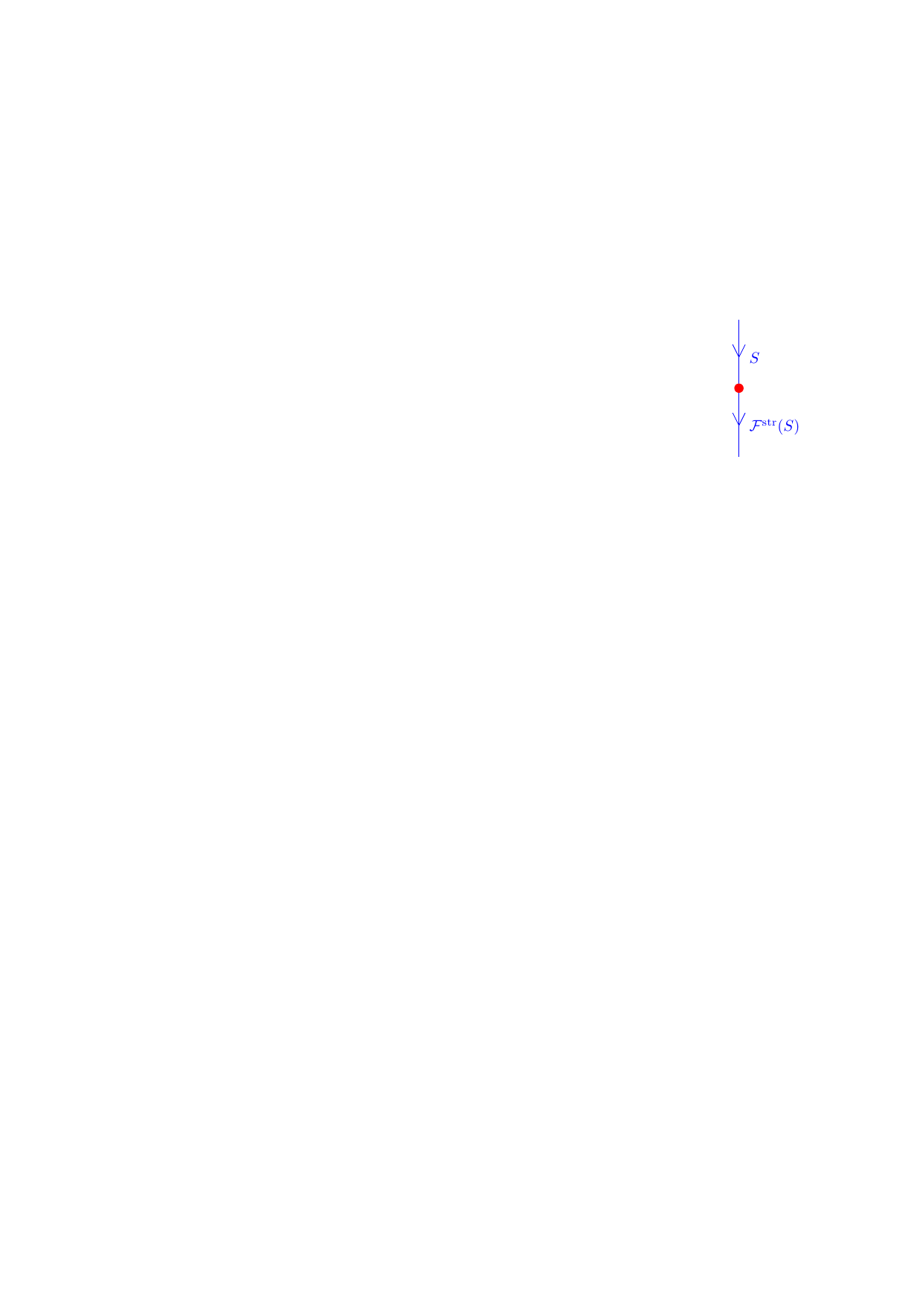}
		\captionof{figure}{}
		\label{j inverse dynamical bundled}
	\end{minipage}
\end{figure}
\noindent
Observe that Lemma \ref{coco} now admits the following graphical reformulation.
\begin{lemma}
	For \(S, T\in\Rep^\str\) of length $>0$ we have
	\begin{center}
		\includegraphics[scale=0.7]{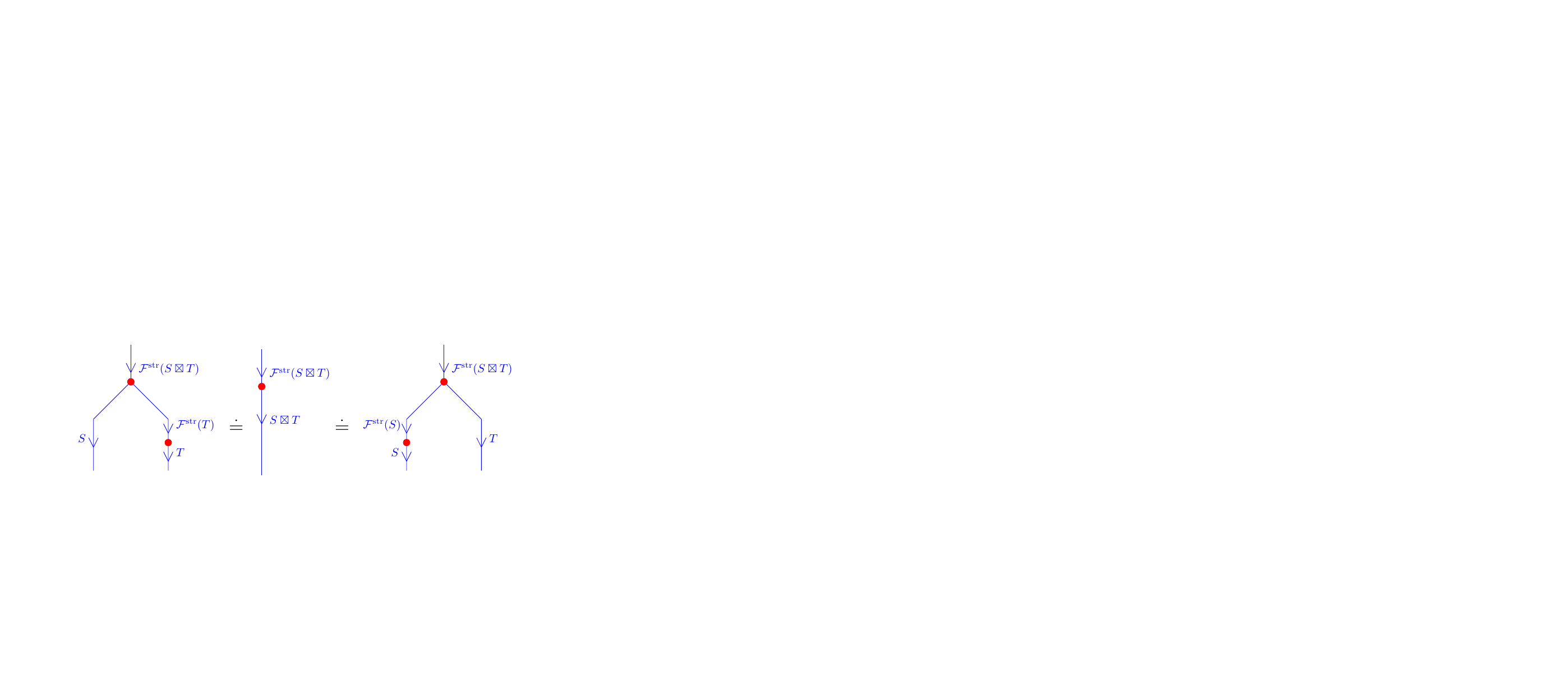}
	\end{center}
\end{lemma}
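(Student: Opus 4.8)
The plan is to prove this identity by unwinding it through the forest functor $\cF^{\mr{forest}}_{\Ndyn^{\,\str}}: \mathbb{G}_{\Ndyn^{\,\str}}\to\Ndyn^{\,\str}$. Recall that for two morphisms $L,L'$ in $\mathbb{G}_{\Ndyn^{\,\str}}$ one has $L\dot{=}L'$ exactly when $\cF^{\mr{forest}}_{\Ndyn^{\,\str}}(L)=\cF^{\mr{forest}}_{\Ndyn^{\,\str}}(L')$. Hence it suffices to read off the $\Ndyn^{\,\str}$-morphism underlying each of the three displayed diagrams and to verify that the three resulting morphisms coincide. This is a direct dictionary translation of the statement, after which the actual equality is supplied by Lemma \ref{coco}.

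First I would identify the coupon colors. By the conventions of Figures \ref{j dynamical}--\ref{j inverse dynamical} together with their bundled versions \ref{j dynamical bundled}--\ref{j inverse dynamical bundled}, a red dot on a bundle colored by $R\in\Rep^\str$ is the coupon colored by the dynamical fusion morphism $\ol{J_R}\in\Hom_{\Ndyn^{\,\str}}(\ul{R},\cF^\str(\ul{R}))$, whose underlying morphism is $\cF^\str(\ol{J_R})=j_R$, the dynamical fusion operator of Example \ref{jS}. In the graphical calculus of Section \ref{Section strictified dynamical module category}, horizontal juxtaposition of subdiagrams corresponds to the tensor product $\ol{\tens}$ of $\Ndyn^{\,\str}$, and the rebundling of strands is implemented by the associator isomorphisms $\sigma_{\ul{S},\ul{T}}$. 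With this dictionary, each of the three diagrams has domain $\ul{S}\,\ol{\tens}\,\ul{T}$ and codomain the length-one object $(\cF^\str(\ul{S})\otimes\cF^\str(\ul{T}))$, so all three are morphisms with the same source and target in $\Ndyn^{\,\str}$.

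Next I would pass to the underlying $\Ndyn$-morphisms via $\cF^\str$, noting that $\cF^\str(\ul{S}\,\ol{\tens}\,\ul{T})=\cF^\str(\ul{S})\otimes\cF^\str(\ul{T})=\cF^\str(\ul{S\tens T})$, so that all three $\cF^\str$-images are endomorphisms of $\cF^\str(\ul{S\tens T})$. Reading off the compositions, the left-hand diagram fuses $S$ first and yields $\sigma_{\ul{S},\ul{T}}\circ j_{(\cF^\str(S))\tens T}\circ(j_S\,\ol{\otimes}\,\textup{id}_{\cF^\str(\ul{T})})\circ\sigma_{\ul{S},\ul{T}}^{-1}$; the middle diagram yields $j_{S\tens T}$; and the right-hand diagram fuses $T$ first and yields $j_{S\tens(\cF^\str(T))}\circ\sigma_{\ul{S},\ul{T}}\circ(\textup{id}_{\cF^\str(\ul{S})}\,\ol{\otimes}\,j_T)\circ\sigma_{\ul{S},\ul{T}}^{-1}$. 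These are precisely the three expressions equated in \eqref{compassoc2}, so Lemma \ref{coco} gives their equality in $\Ndyn$. Since the three forest graphs share domain and codomain in $\Ndyn^{\,\str}$, equality of their $\cF^\str$-images upgrades to equality of their $\cF^{\mr{forest}}_{\Ndyn^{\,\str}}$-images, which is the asserted $\dot{=}$ relation.

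The only real obstacle is bookkeeping rather than mathematics: one must match the implicit associators and the bundling conventions of Section \ref{Section strictified dynamical module category} with the explicit $\sigma_{\ul{S},\ul{T}}$-terms in \eqref{compassoc2}, and confirm that the ``fuse-one-factor-then-rebundle'' pattern drawn in each diagram is transported by $\cF^{\mr{forest}}_{\Ndyn^{\,\str}}$ to the claimed composite. Once this dictionary between red dots and the $\ol{J}$-morphisms, and between juxtaposition/bundling and $\ol{\tens}$/$\sigma$, is pinned down, no further computation is needed and the graphical identity reduces verbatim to the already established Lemma \ref{coco}.
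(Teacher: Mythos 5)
Your proposal is correct and matches the paper's own treatment: the paper presents this lemma precisely as the graphical reformulation of Lemma \ref{coco}, so the content is exactly the dictionary translation (red dots as $\ol{J}$-coupons, juxtaposition/bundling as $\ol{\tens}$ and $\sigma$) followed by an appeal to \eqref{compassoc2}, which is what you do. Your explicit bookkeeping of domains, codomains, and $\cF^\str$-images is a faithful unwinding of what the paper leaves implicit.
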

The explicit expression for $\ol{A}$ in terms of (dynamical) fusion operators obtained in Proposition \ref{olAexplicit}, with \(A\in\Hom_{\Rep^\str}(S,T)\), \(S = (V_1,\dots,V_k)\) and \(T = (W_1,\dots,W_\ell)\), can now be expressed graphically as in Figure \ref{D3}.

\begin{figure}[H]
	\centering
	\includegraphics[scale=0.8]{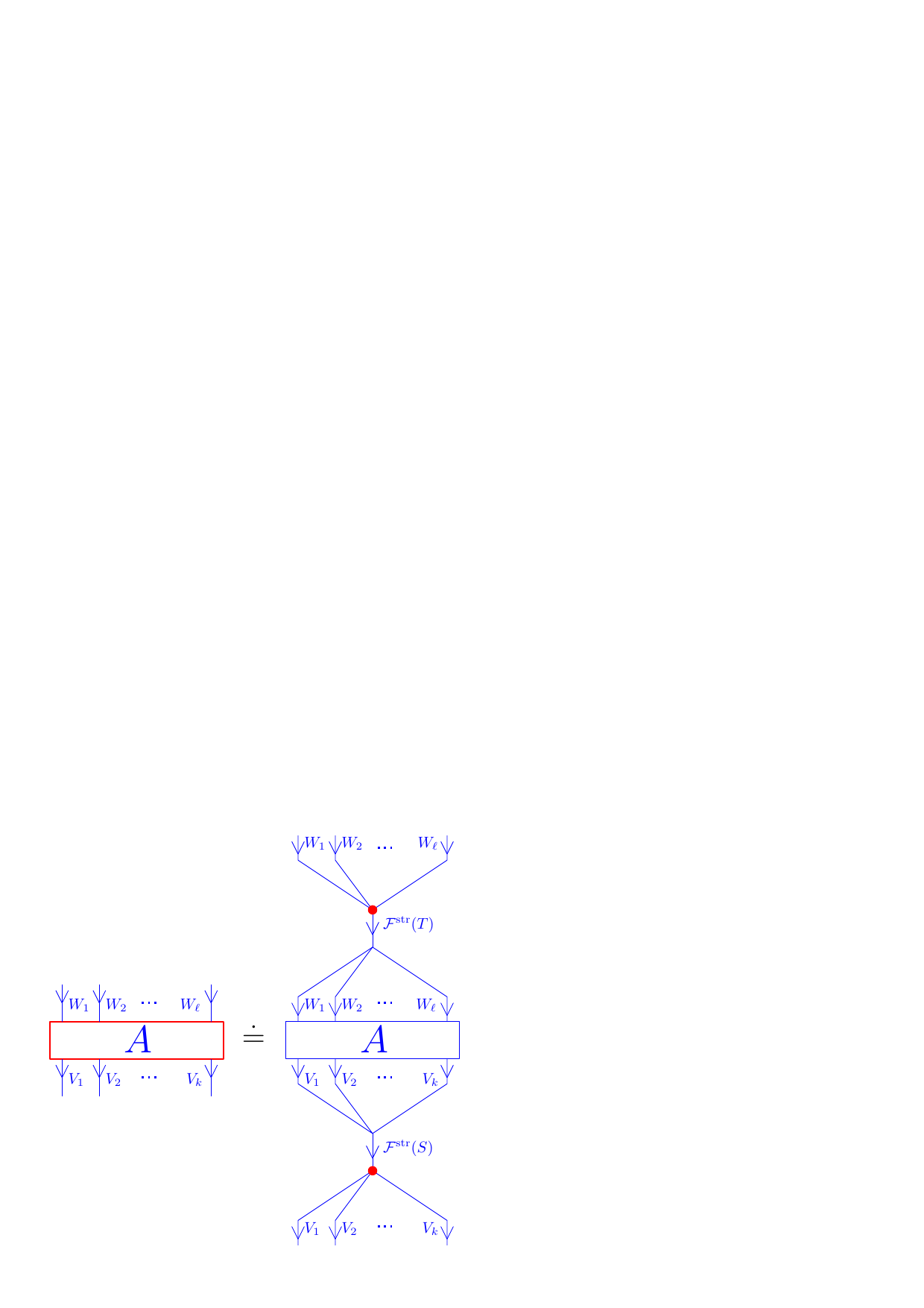}
	\caption{}
	\label{D3}
\end{figure}

As a next step we introduce Figures \ref{dynamical R-matrix}--\ref{co-injection dynamical} for the coupons in $\mathbb{G}_{\Ndyn^{\,\str}}$ colored by the dynamical braiding and dynamical (co-)evaluation morphisms.
\begin{figure}[h]
	\begin{minipage}{0.48\textwidth}
		\centering
		\includegraphics[scale=0.9]{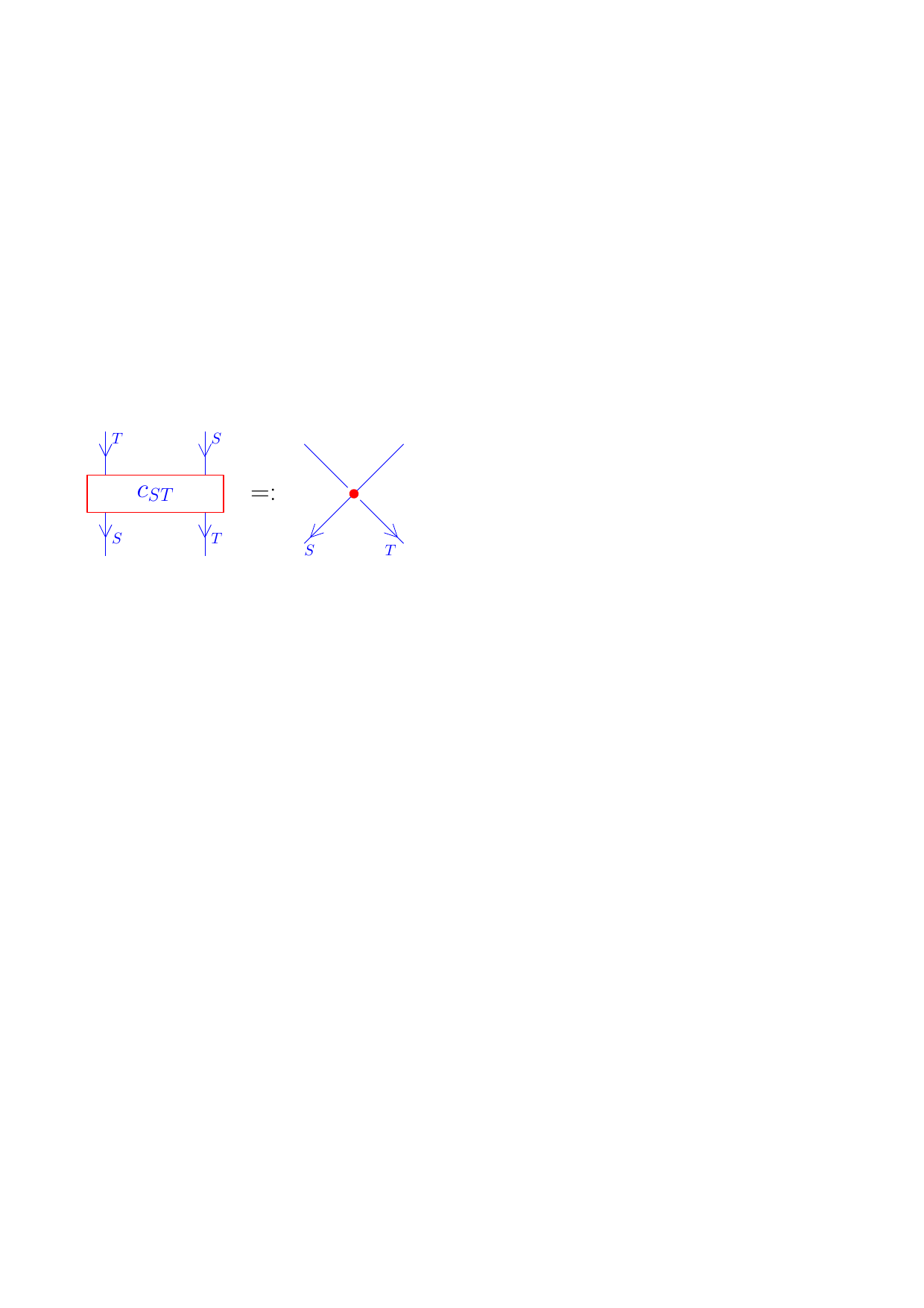}
		\captionof{figure}{}
		\label{dynamical R-matrix}
	\end{minipage}\quad
	\begin{minipage}{0.48\textwidth}
		\centering
		\includegraphics[scale=0.9]{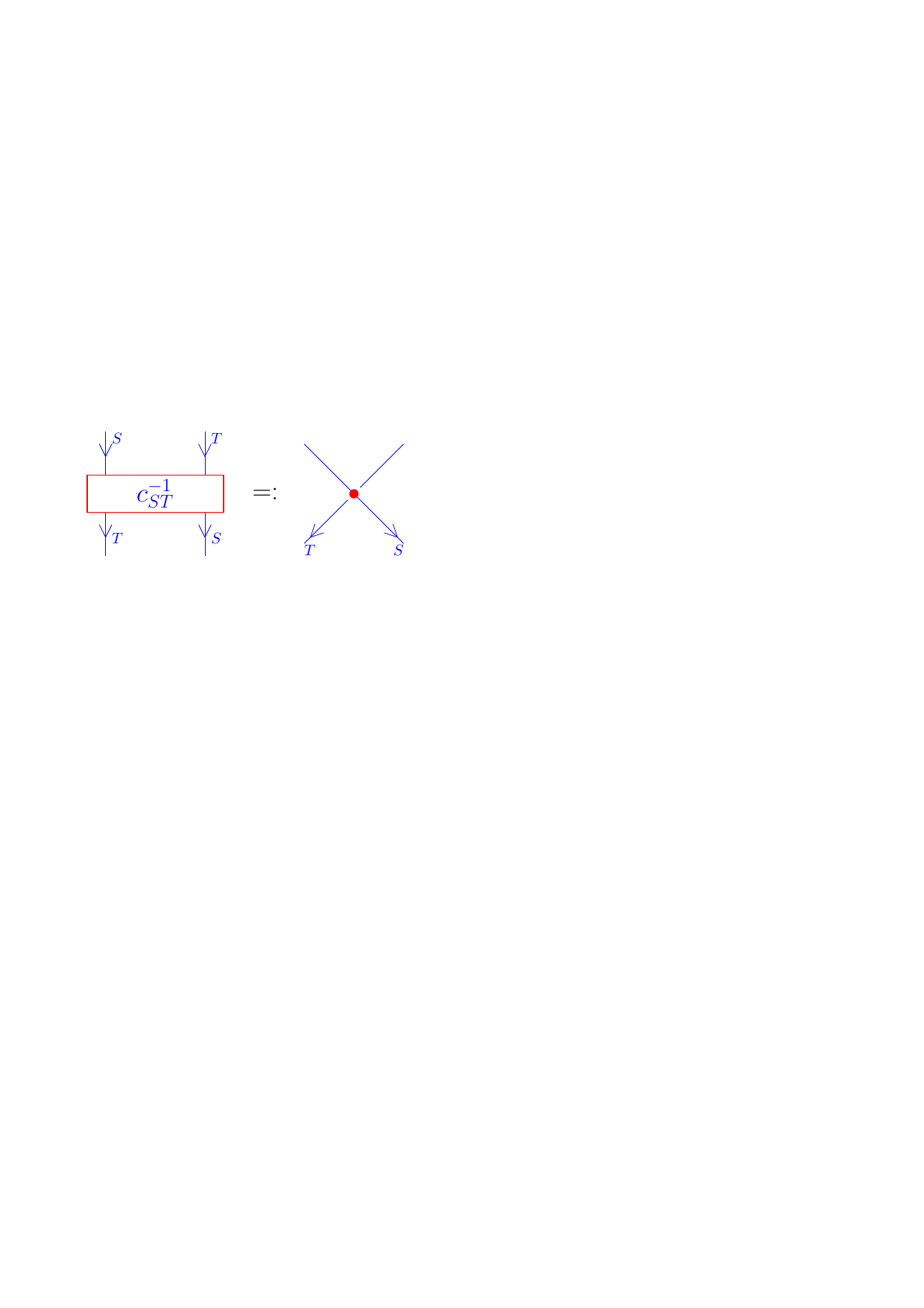}
		\captionof{figure}{}
		\label{dynamical R-matrix inverse}
	\end{minipage}
\end{figure}
\begin{figure}[H]
	\begin{minipage}{0.48\textwidth}
		\centering
		\includegraphics[scale = 0.9]{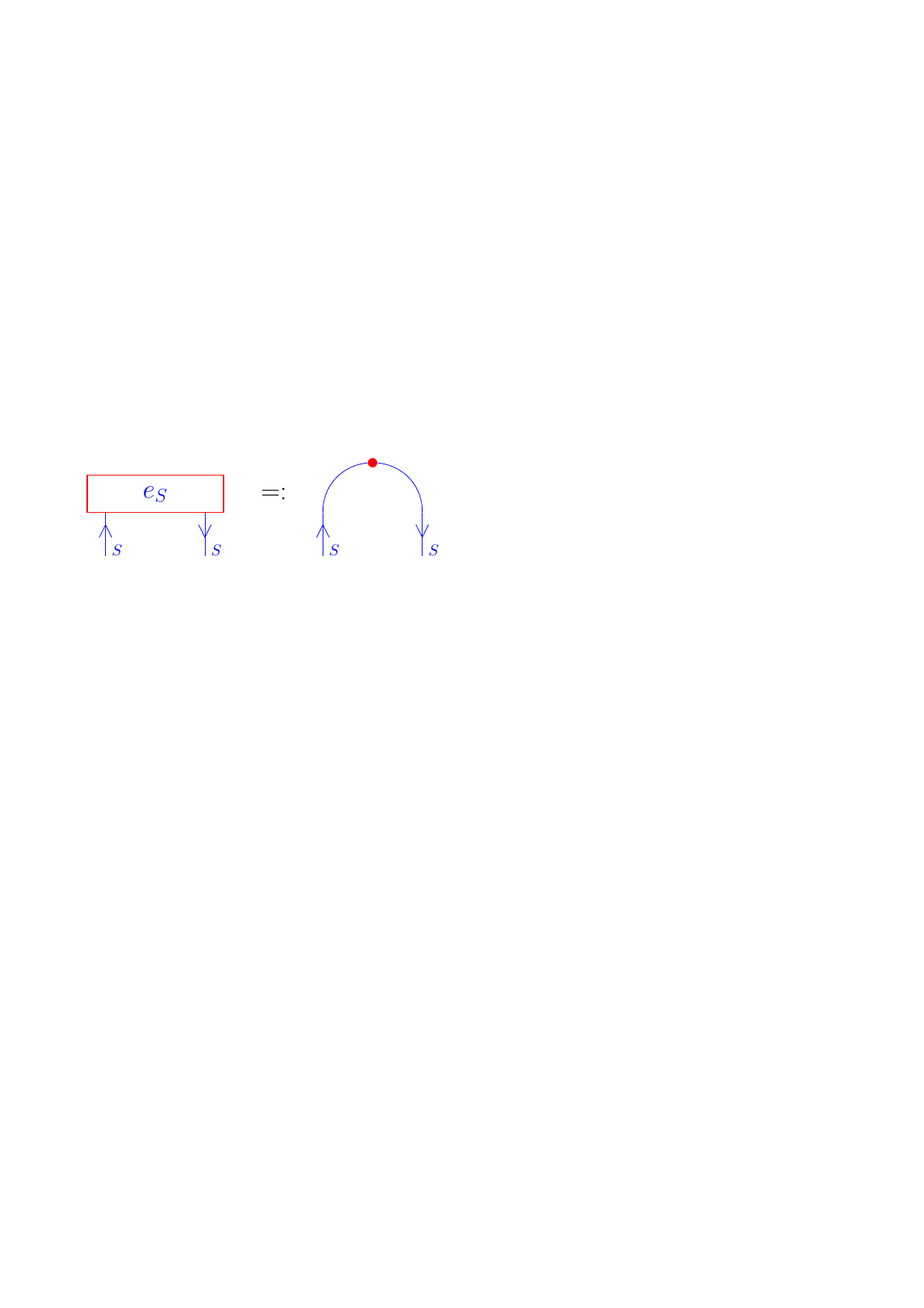}
		\captionof{figure}{}
		\label{evaluation dynamical}
	\end{minipage}\quad
	\begin{minipage}{0.48\textwidth}
		\centering
		\includegraphics[scale = 0.9]{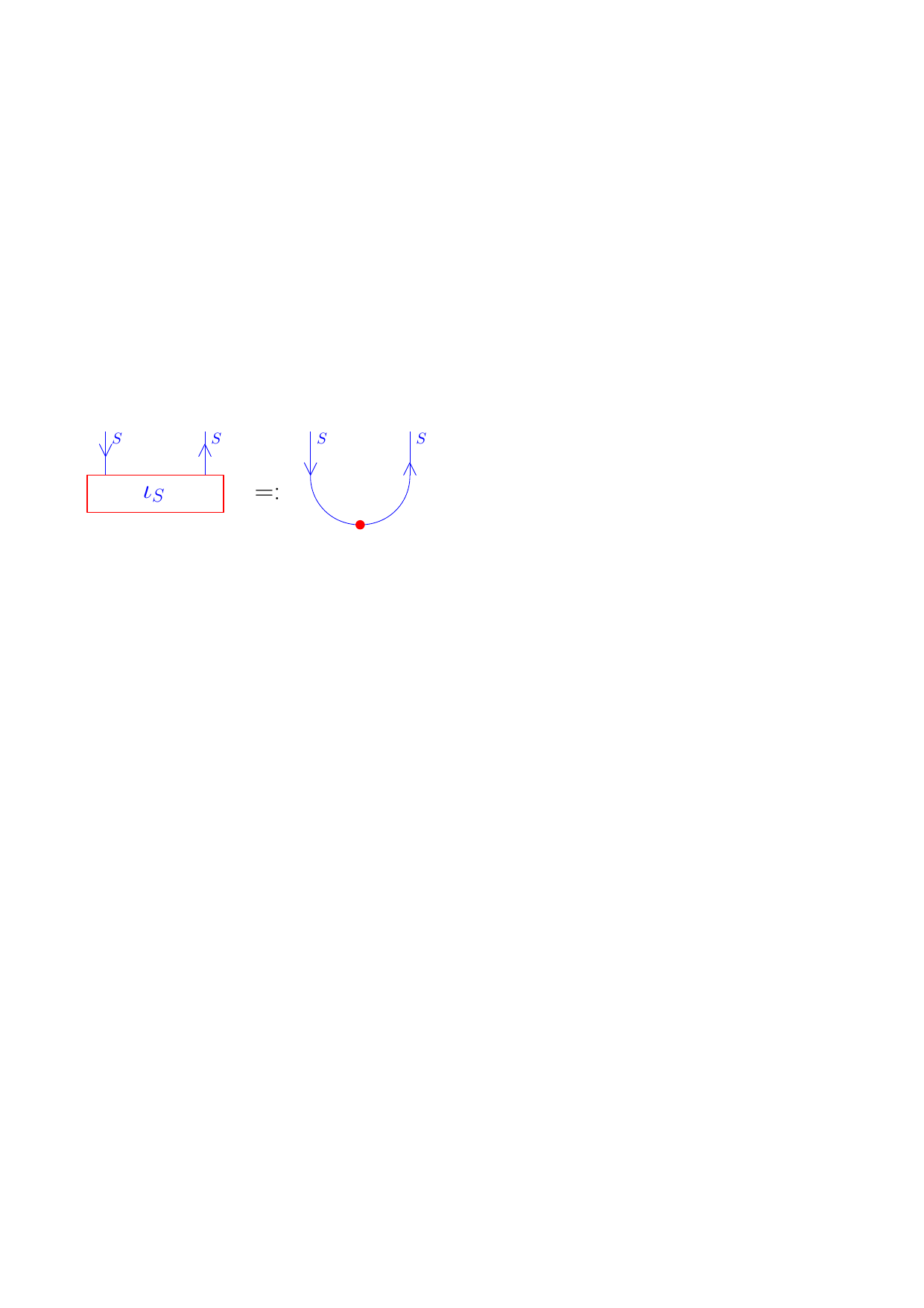}
		\captionof{figure}{}
		\label{injection dynamical}
	\end{minipage}
\end{figure}

\begin{figure}[H]
	\begin{minipage}{0.48\textwidth}
		\centering
		\includegraphics[scale = 0.9]{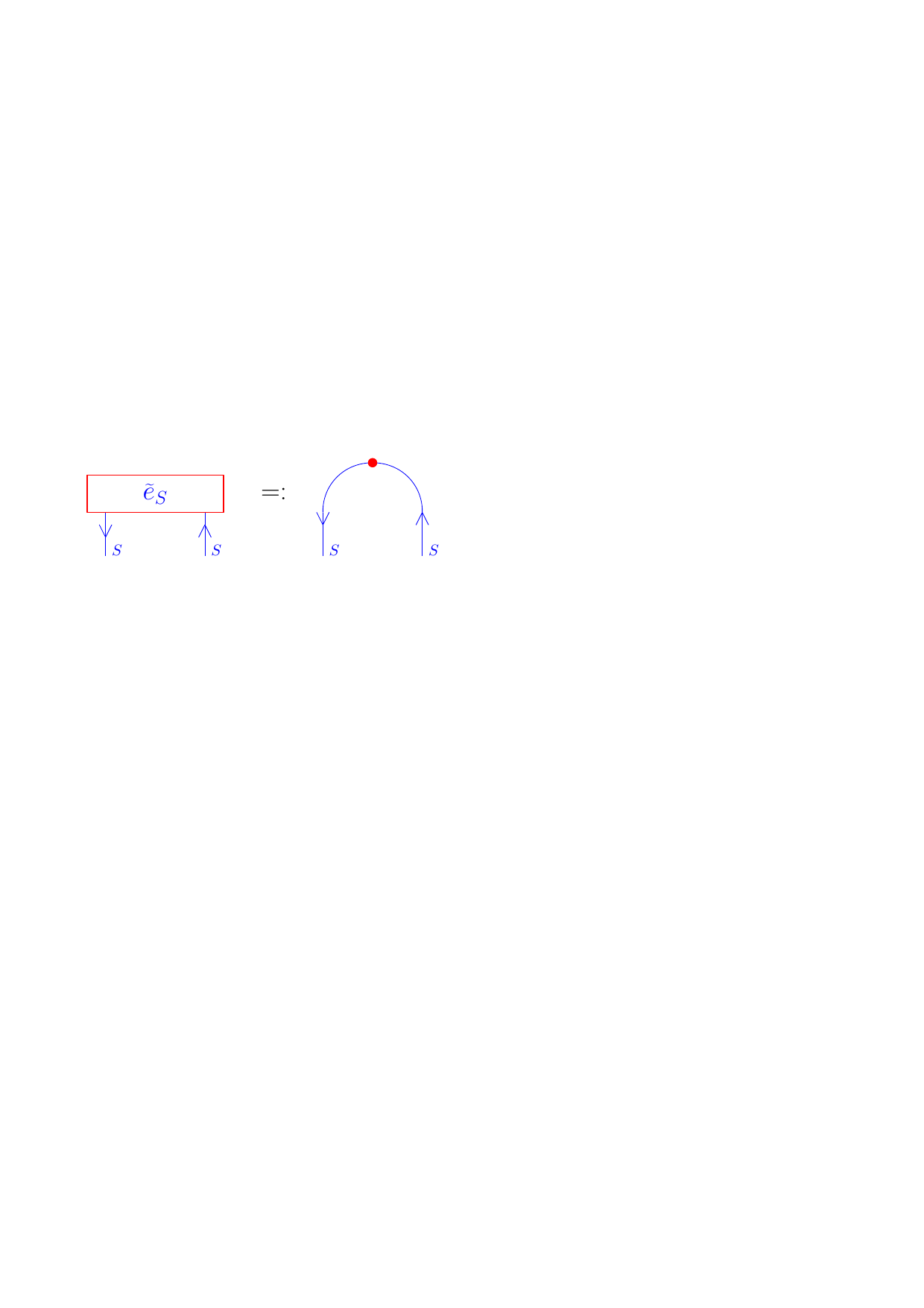}
		\captionof{figure}{}
		\label{co-evaluation dynamical}
	\end{minipage}\quad
	\begin{minipage}{0.48\textwidth}
		\centering
		\includegraphics[scale = 0.9]{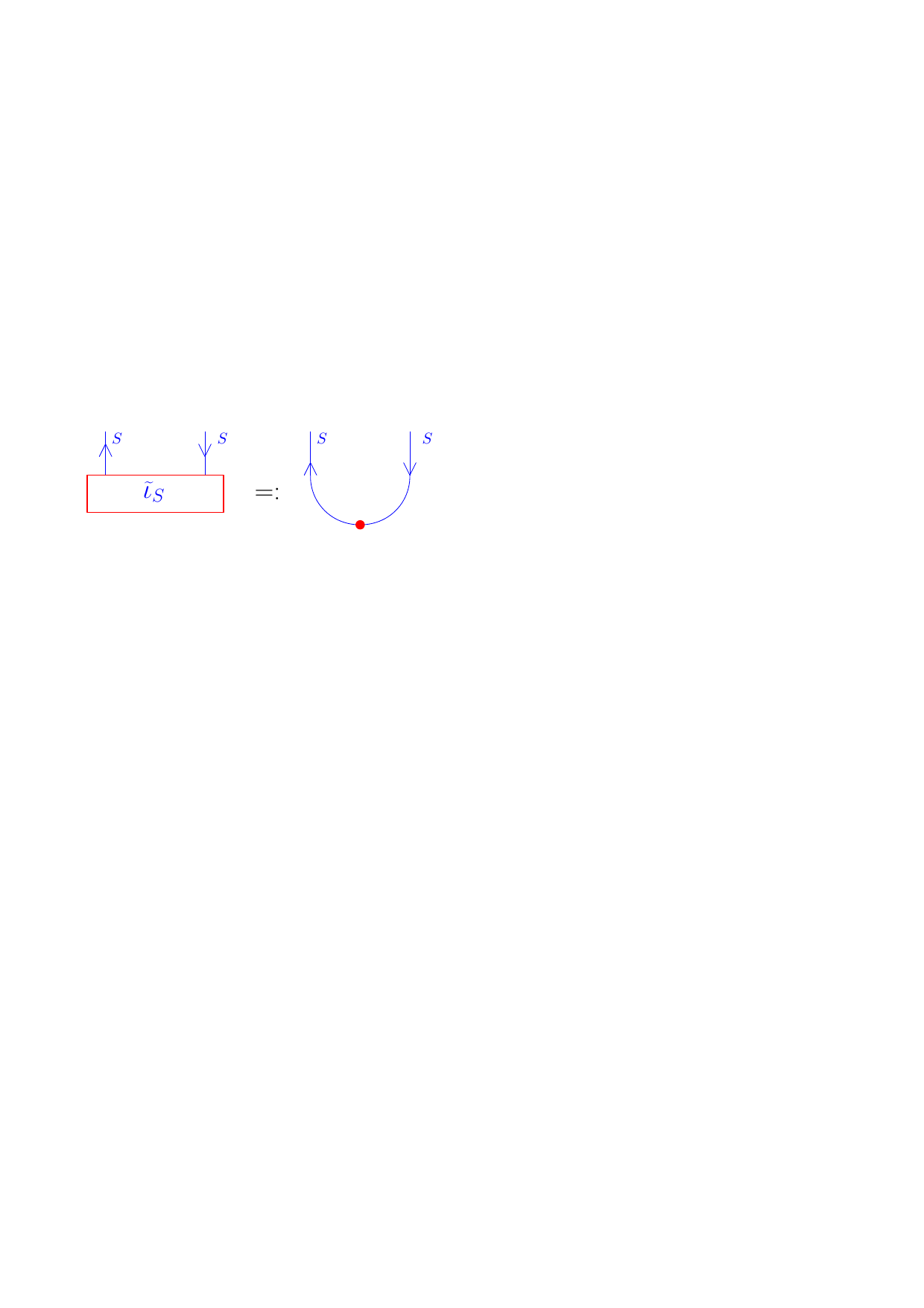}
		\captionof{figure}{}
		\label{co-injection dynamical}
	\end{minipage}
\end{figure}
The dynamical versions of the fundamental properties of the braiding and (co-)evaluation of $\Rep^\str$, as formulated in Section \ref{sectiondb}, now admit natural graphical representations in the graphical calculus for $\Ndyn^{\,\str}$.
For instance, \eqref{dynnatural} is graphically represented by Figure \ref{naturality dynamical braiding}.
\begin{figure}[H]
	\centering
	\includegraphics[scale=0.9]{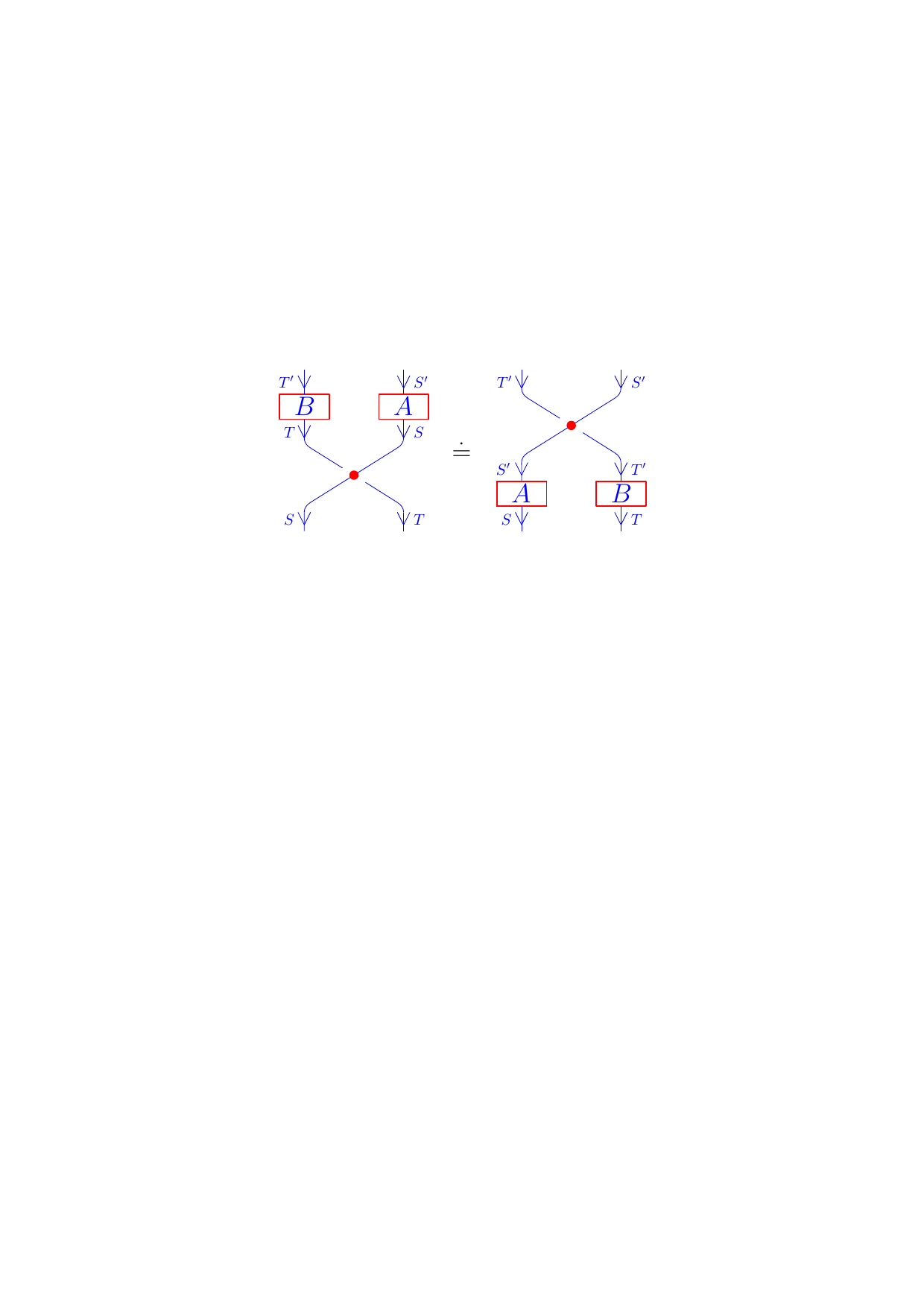}
	\caption{}
	\label{naturality dynamical braiding}
\end{figure}
\noindent
The dynamical hexagon identities \eqref{dynhexagon} are represented by Figures \ref{hexagon dynamical A}--\ref{hexagon dynamical B}.
\begin{figure}[H]
	\begin{minipage}{0.48\textwidth}
		\centering
		\includegraphics[scale=0.65]{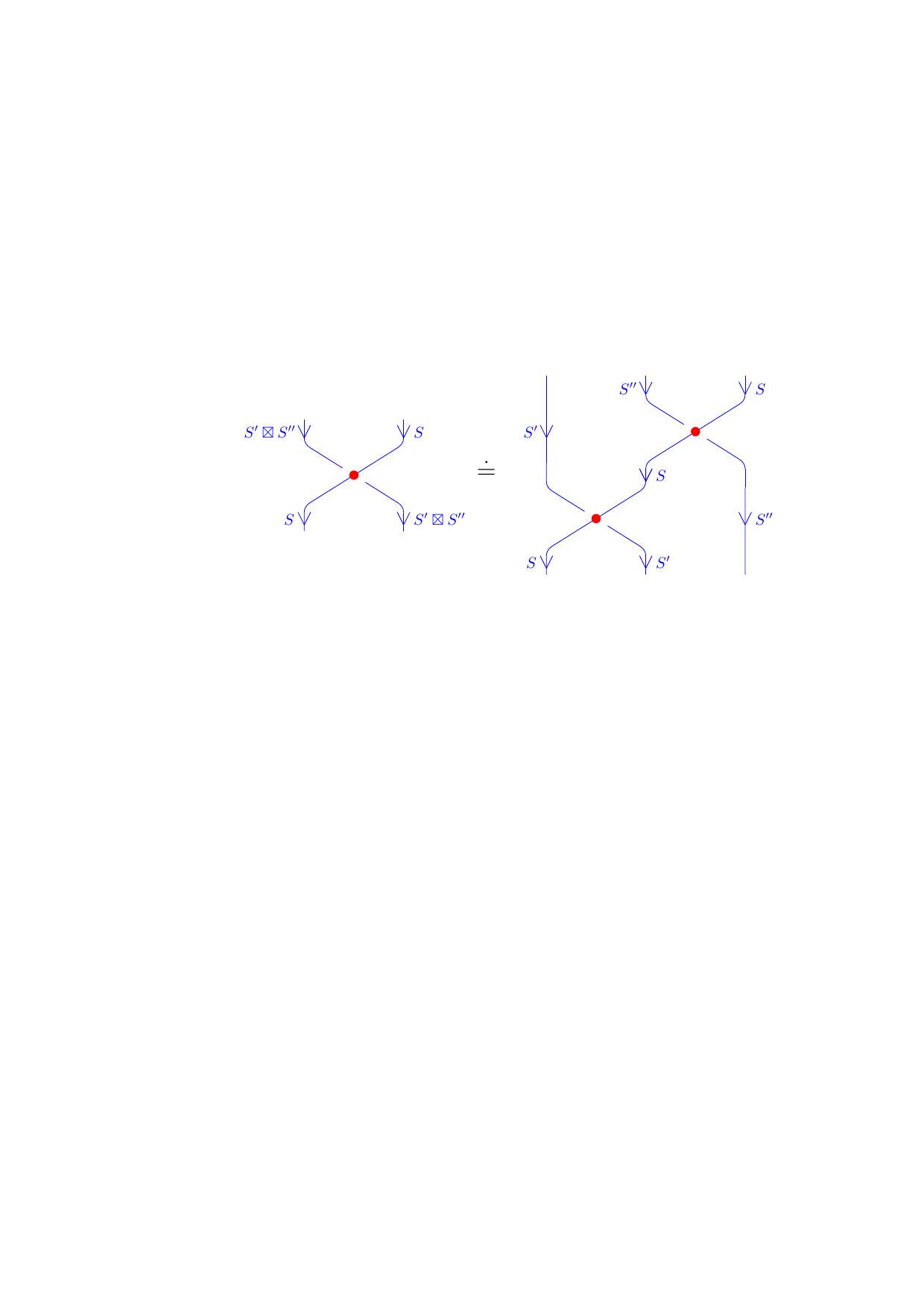}
		\captionof{figure}{}
		\label{hexagon dynamical A}
	\end{minipage}\quad
	\begin{minipage}{0.48\textwidth}
		\centering
		\includegraphics[scale=0.65]{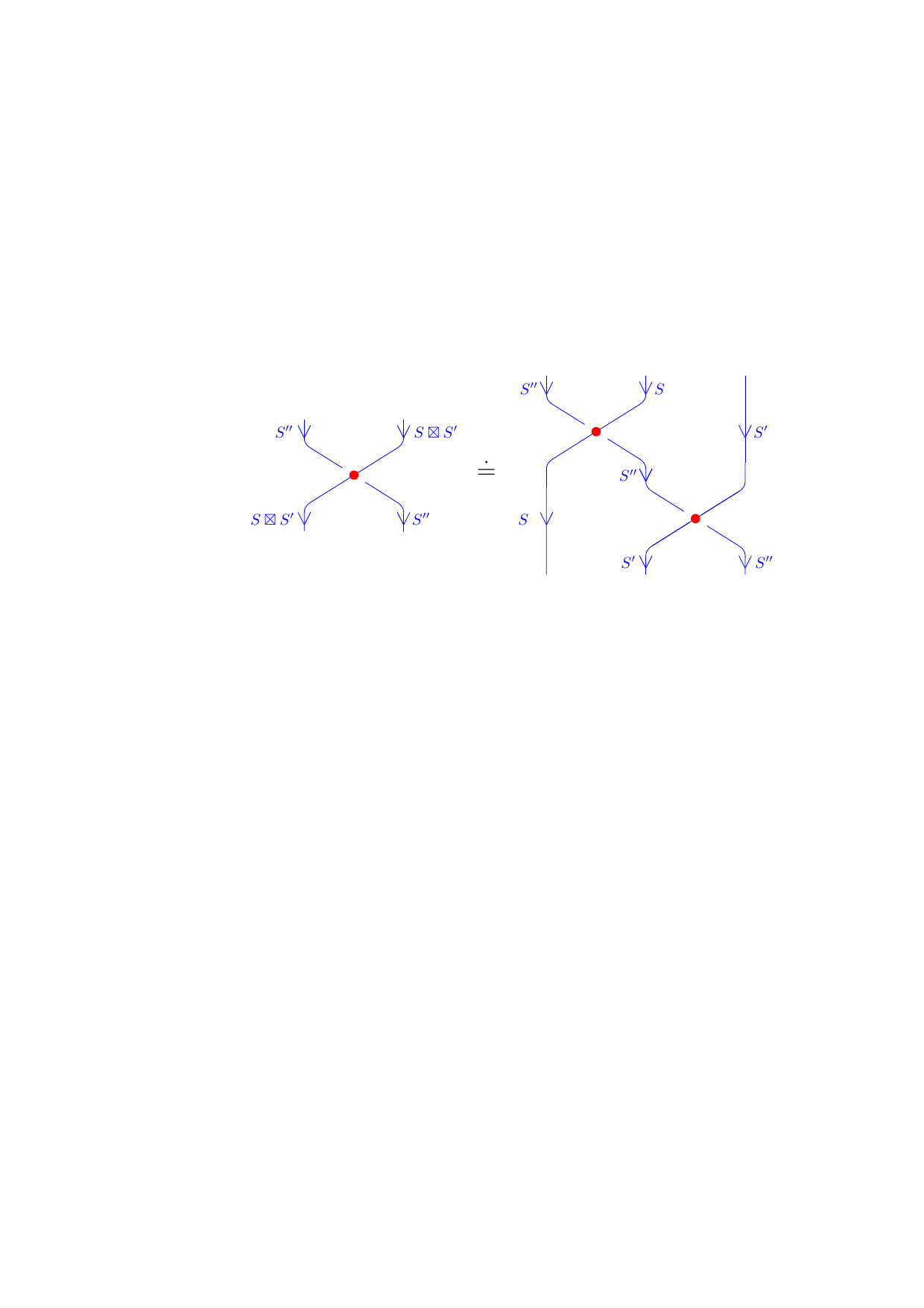}
		\captionof{figure}{}
		\label{hexagon dynamical B}
	\end{minipage}
\end{figure}

The dynamical braid form \eqref{dynbraidYB} of the quantum Yang-Baxter equation is represented by Figure \ref{D5}.

\begin{figure}[H]
	\centering
	\includegraphics[scale=0.8]{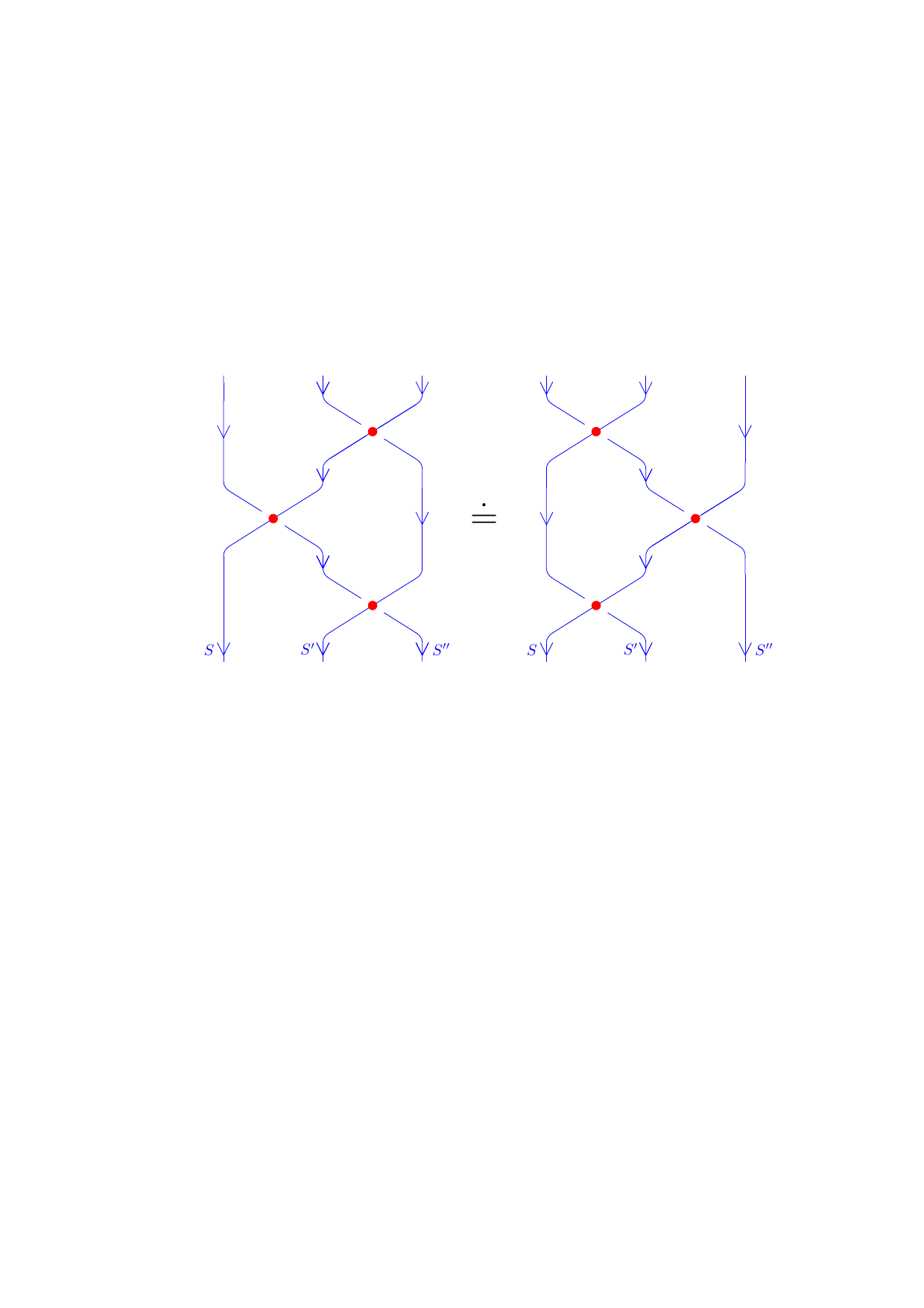}
	\caption{}
	\label{D5}
\end{figure}

The identities \eqref{dynlefteval} and \eqref{dynrightstraight} for the dynamical (co-)evaluations are graphically represented by Figures \ref{duality A}--\ref{duality B}.
\begin{figure}[H]
	\begin{minipage}{0.48\textwidth}
		\centering
		\includegraphics[scale = 0.9]{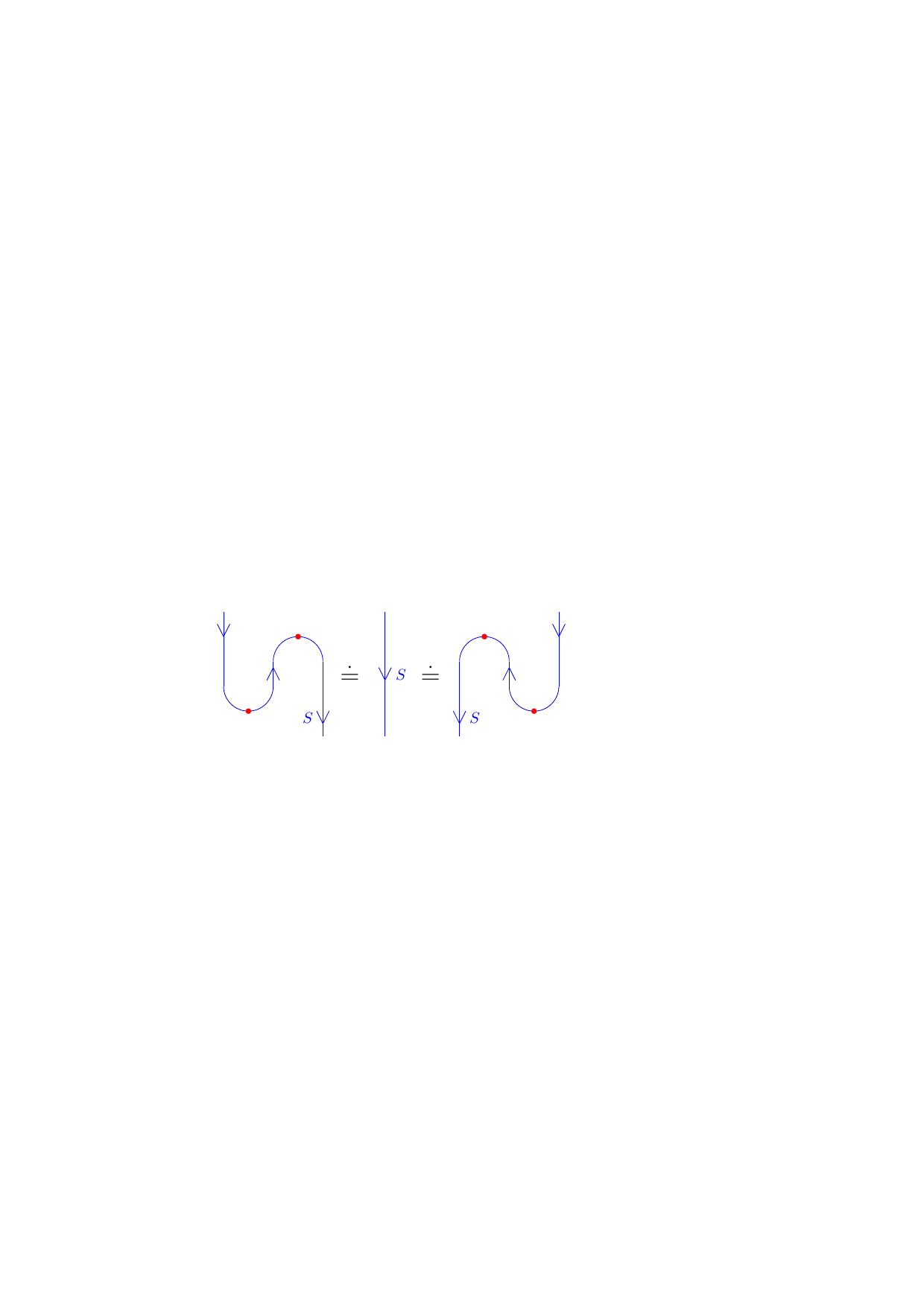}
		\captionof{figure}{}
		\label{duality A}
	\end{minipage}\quad
	\begin{minipage}{0.48\textwidth}
		\centering
		\includegraphics[scale = 0.9]{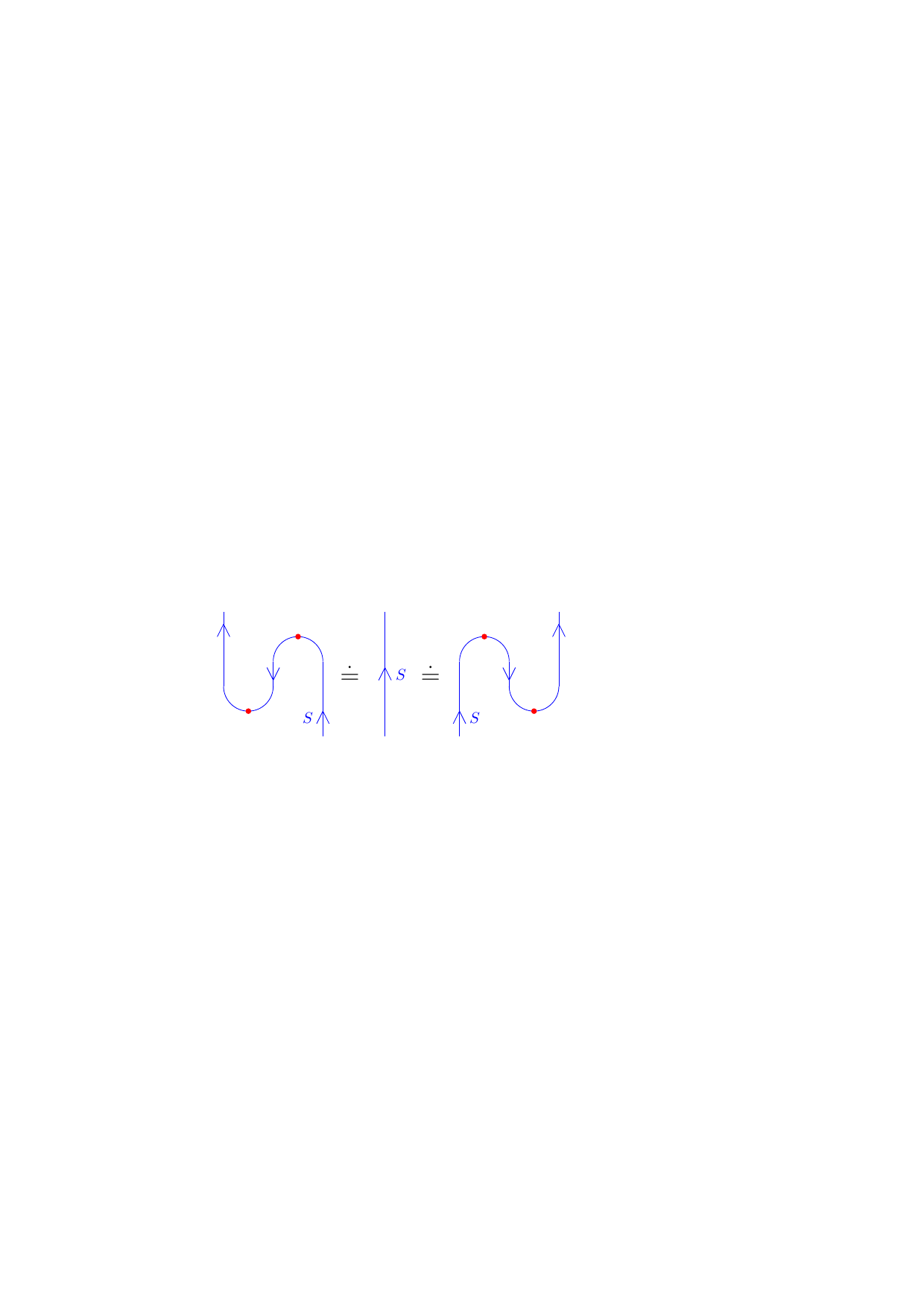}
		\captionof{figure}{}
		\label{duality B}
	\end{minipage}
\end{figure}

Finally, the graphical reformulations of the identities (\ref{from Turaev identities}) relating the dynamical braiding to dynamical (co-)evaluation morphisms are displayed in Figures \ref{braided evaluation A}--\ref{braided evaluation D}.

\begin{figure}[H]
	\begin{minipage}{0.48\textwidth}
		\centering
		\includegraphics[scale = 0.75]{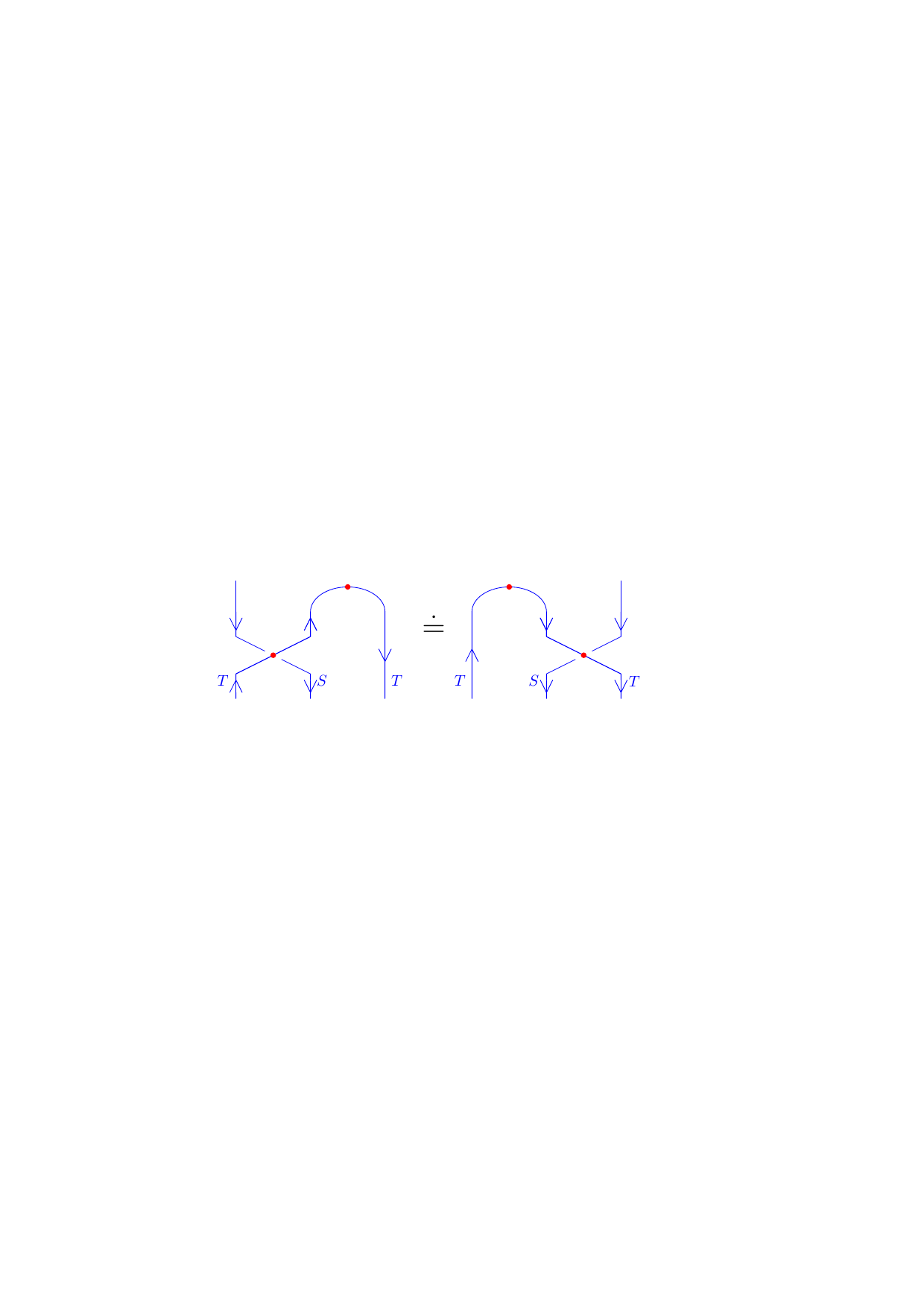}
		\captionof{figure}{}
		\label{braided evaluation A}
	\end{minipage}\quad
	\begin{minipage}{0.48\textwidth}
		\centering
		\includegraphics[scale = 0.75]{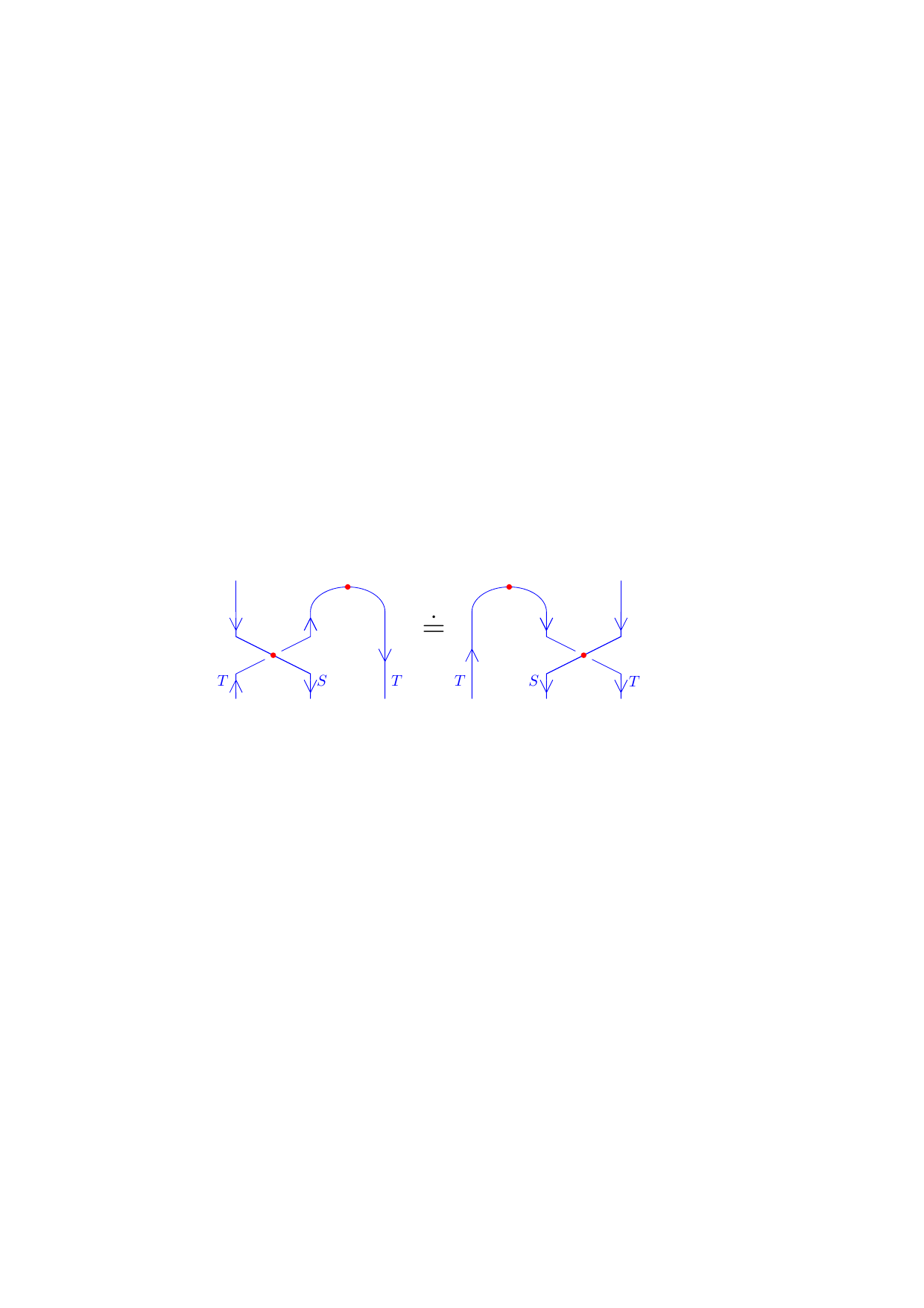}
		\captionof{figure}{}
		\label{braided evaluation B}
	\end{minipage}
\end{figure}

\begin{figure}[H]
	\begin{minipage}{0.48\textwidth}
		\centering
		\includegraphics[scale = 0.75]{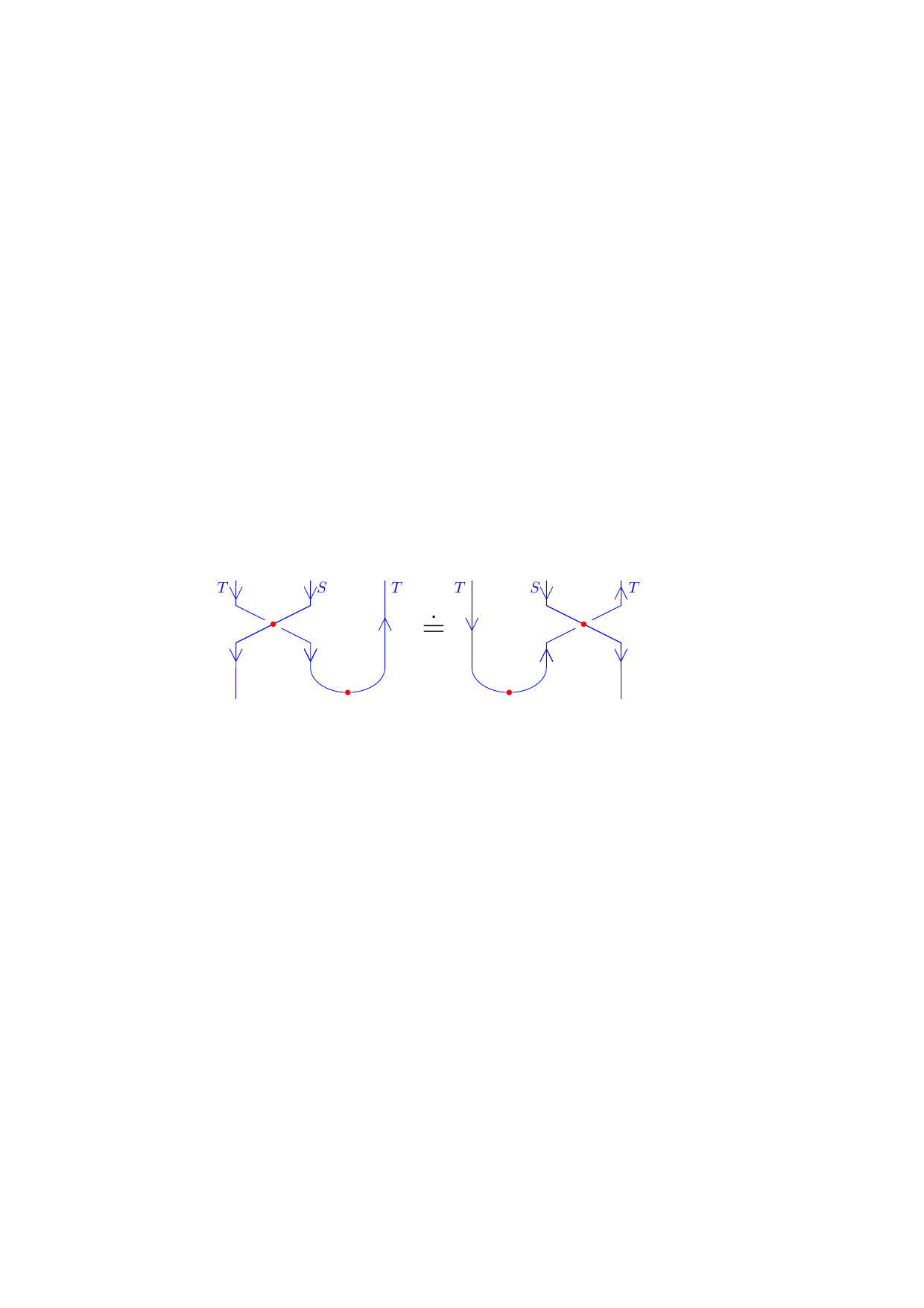}
		\captionof{figure}{}
		\label{braided evaluation C}
	\end{minipage}\quad
	\begin{minipage}{0.48\textwidth}
		\centering
		\includegraphics[scale = 0.75]{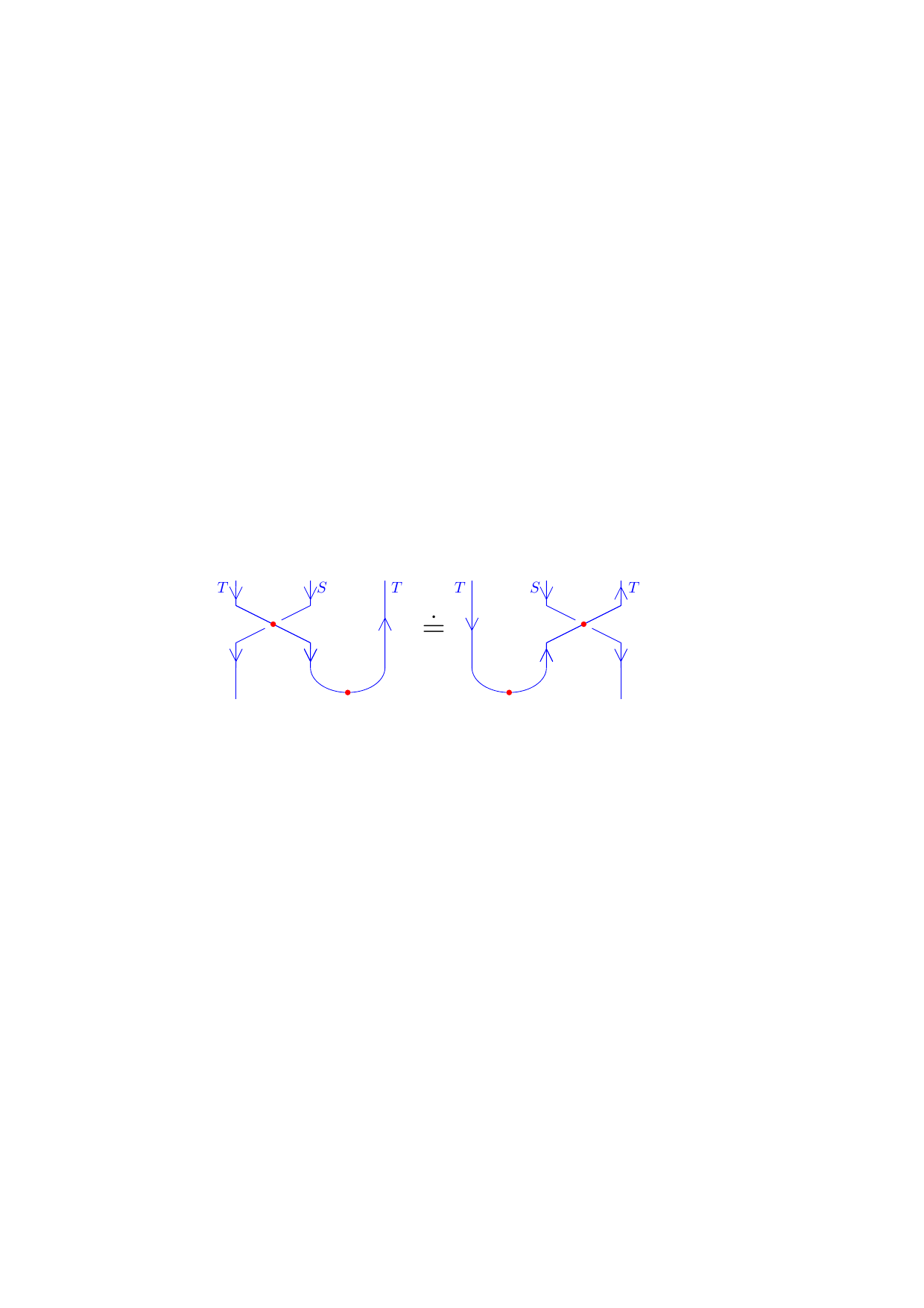}
		\captionof{figure}{}
		\label{braided evaluation D}
	\end{minipage}
\end{figure}

\subsection{Quantum vertex operators as interface between the graphical calculi}
\label{Section Interface}

Theorems \ref{theorem dynamical twist} \& \ref{prop pushed under water} allow to transport computations in the strict ribbon category $\Rep^\str$ to the strict monoidal category $\Ndyn^\str$. In this subsection we incorporate this into the graphical calculus. In this subsection we assume $\lambda\in\mathfrak{h}_{\textup{reg}}^*$ unless stated explicitly otherwise.

Recall that for $S=(V_1,\ldots,V_k)$ and $v_j\in V_j[\nu_j]$ we have introduced Figure \ref{vertex operator} as the notation for the coupon in $\mathbb{B}_{\mathcal{M}_{\textup{adm}}^\str}$ colored by the $k$-point quantum vertex operator
$\Phi_\lambda^{v_1,\ldots,v_k}\in\textup{Hom}_{\mathcal{M}_{\textup{adm}}^\str}(M_\lambda, M_{\lambda_0}\tens S)$, where $\lambda_j:=\lambda-\nu_{j+1}-\cdots-\nu_k$. 
We extend this notation by writing Figure \ref{universal intertwiner bundled} for the coupon with strands both in $\mathbb{B}_{\mathcal{M}_{\textup{adm}}^\str}$
and in $\mathbb{G}_{\Ndyn^\str}$, whose color is represented by the isomorphism
\begin{equation}\label{isoSint}
\mathcal{X}_{\lambda,S}: 
\cF^\str(\ul{S})\overset{\sim}{\longrightarrow} \mr{Int}_{\lambda,\cF^\str(S)}=\bigoplus_{\mu\in\hh^\ast}\textup{Hom}_{\mathcal{M}_{\mr{adm}}}(M_\lambda,M_{\lambda-\mu}\otimes
\cF^\str(S))
\end{equation}
of finite-dimensional $\mathfrak{h}^*$-graded vector spaces, mapping $v_1\otimes\cdots\otimes v_k$ to the $k$-point quantum vertex operator $\phi_\lambda^{v_1,\ldots,v_k}$ 
representing $\Phi_\lambda^{v_1,\ldots,v_k}$. Upon viewing the $\ul{S}$-colored and $S$-colored strands as bundles of \(k\) strands colored by the underlying non-strict monoidal categories $\Rep$ and $\Ndyn$, this coupon will be denoted by Figure \ref{universal intertwiner}. 
Note that these notations are consistent with the graphical notations for the coupon colored by $\Phi_\lambda^{v_1,\ldots,v_k}$ 
(Figure \ref{vertex operator}) and for coupons labeled by tensor products of evaluation morphisms (Figure \ref{evgraph}), since 
\[
\phi_\lambda^{v_1,\ldots,v_k}=\mathcal{X}_{\lambda,S}(v_1\otimes\dots\otimes v_k).
\]

In Figures \ref{universal intertwiner bundled} \& \ref{universal intertwiner}, the weight of the region appearing right of the blue strand(s) has a twofold interpretation. On the one hand it indicates the specialization of the blue diagram to a diagram with coupons colored by morphisms from $\mathcal{N}_{\mr{fd}}^\str$, following the rule of coloring adjacent regions with shifted weights as established in Section \ref{Section strictified dynamical module category}. On the other hand it indicates the highest weight of the Verma module that is being attached as color to the red strand that bounds the above-mentioned vertical region. 
In particular, the omitted weights in Figure \ref{universal intertwiner} are (from left to right) the shifted weights \(\lambda-\mh_{(\ul{V_1},\dots,\ul{V_k})}, \lambda-\mh_{(\ul{V_2},\dots,\ul{V_k})}, \dots, \lambda-\mh_{(\ul{V_k})}\), in the sense of (\ref{shifted weights}). This is consistent with the composition rule 
\[
\Phi_\lambda^{v_1,\ldots,v_k,v_1',\ldots,v_\ell'}=(\Phi_{\lambda-\nu_1'\cdots-\nu_\ell'}^{v_1,\ldots,v_k}\tens\textup{id}_{S'})\Phi_\lambda^{v_1',\ldots,v_\ell'}
\]
for quantum vertex operators, where $S^\prime=(V_1^\prime,\ldots,V_\ell^\prime)\in\Rep^\str$ and $v_i'\in V_i'[\nu_i']$. 

\begin{figure}[H]
	\begin{minipage}{0.48\textwidth}
		\centering
		\includegraphics[scale=0.7]{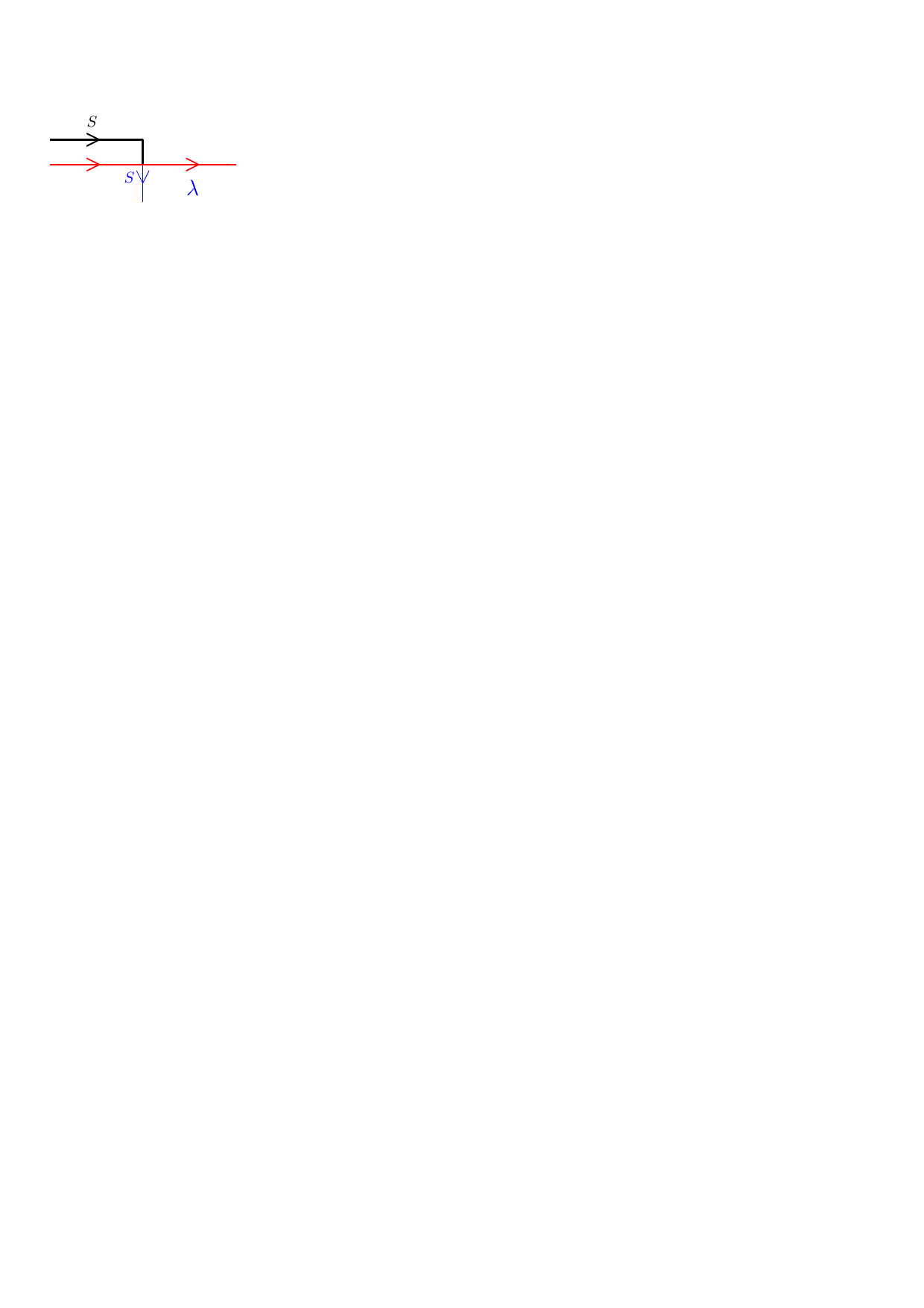}
		\captionof{figure}{}
		\label{universal intertwiner bundled}
	\end{minipage}
	\begin{minipage}{0.48\textwidth}
		\centering
		\includegraphics[scale = 0.7]{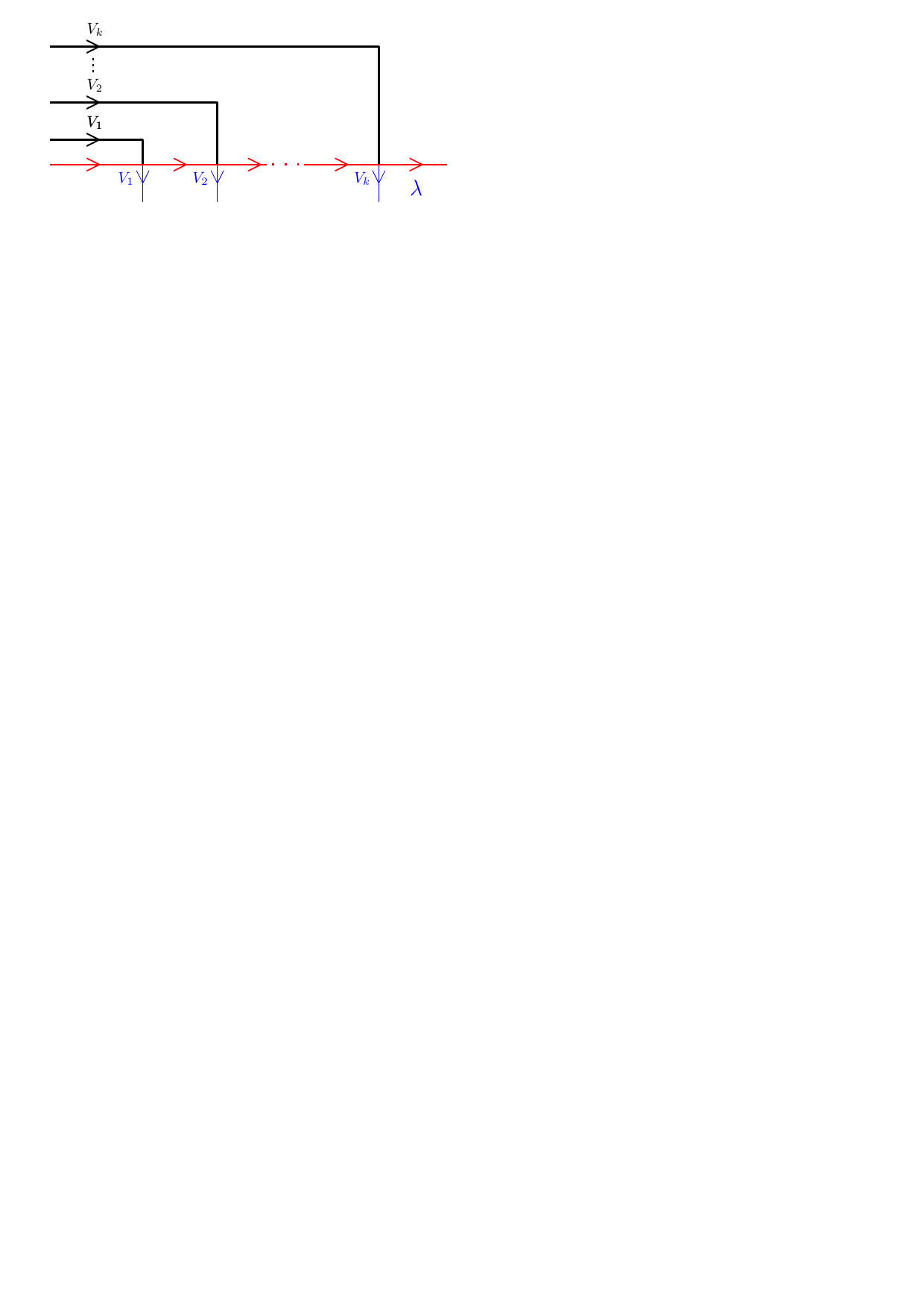}
		\captionof{figure}{}
		\label{universal intertwiner}
	\end{minipage}
\end{figure}
\noindent

It is often convenient to regard the red $\mathcal{M}_{\textup{adm}}^\str$-colored strands in Figures \ref{universal intertwiner bundled} \& \ref{universal intertwiner}
as a fixed border line between the two topological monoidal categories $\textup{Rib}_{\Rep^\str}$ and $\mathbb{G}_{\Ndyn^\str}$.
{}From this viewpoint the open black $S$-colored strand lies in $\textup{Rib}_{\Rep^\str}$, 
the open blue $\underline{S}$-colored strand lies in $\mathbb{G}_{\Ndyn^\str}$, and the isotopies in these two topological categories are supposed to fix the red borderline.
Note that the red borderline and the open black $S$-colored strand may also be viewed as being part of the topological monoidal category $\mathbb{B}_{\mathcal{M}_{\textup{adm}}^\str}$ upon adding evaluation coupons to the blue strands (Figure \ref{evgraph}). This allows for topological interplay in $\mathbb{B}_{\mathcal{M}_{\textup{adm}}^\str}$ between the red border line and the black strands. 

Recall that the dynamical twist functor $\cF^{\mr{dt}}: \Rep^\str\rightarrow\Ndyn^{\,\str}$ is a strict monoidal functor which encodes how the action of morphisms on the spin space $S\in\Rep^\str$ of a quantum vertex operator
$\Phi\in\textup{Hom}_{\mathcal{M}_{\mr{adm}}^\str}(M_\lambda,M_{\lambda-\mu}\tens S)$ can be described on the level of the parametrizing space $\ul{S}$ of $\Phi$.
This leads to the following graphical recipe how coupons in $\textup{Rib}_{\Rep^\str}$ can be moved to $\mathbb{G}_{\Ndyn^\str}$ through the red border line.
Recall the notation defined in Figure \ref{A(lambda)} for the $\ol{A}$-colored coupon in $\mathbb{G}_{\Ndyn^{\,\str}}$, where
$A\in\Hom_{\Rep^\str}(S,T)$.
\begin{proposition}\label{pushdiagram}
	Let $S_1,S_2,S_3,T_2\in\Rep^\str$ and $A\in\textup{Hom}_{\Rep^\str}(S_2,T_2)$. Then we have
	\begin{figure}[H]
		\centering
		\includegraphics[scale=0.8]{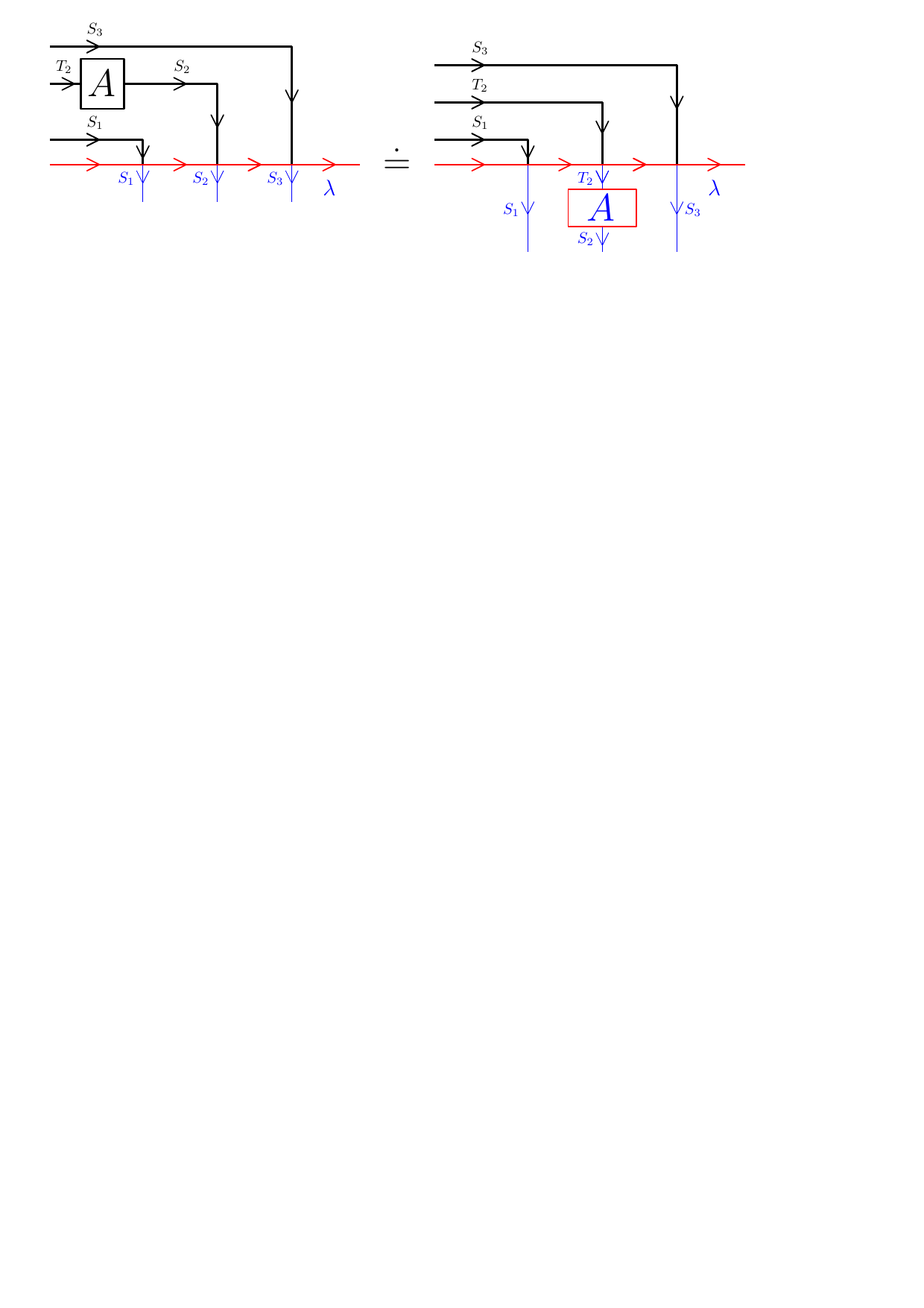}
		\caption{}
		\label{D2}
	\end{figure}
\noindent
meaning that adding to the blue strands in Figure \ref{D2} coupons colored by evaluation maps \textup{(}Figure \ref{evgraph}\textup{)}, the identity holds true in the graphical calculus for $\mathcal{M}_{\textup{adm}}^\str$.
\end{proposition}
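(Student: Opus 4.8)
The plan is to reduce the graphical identity of Figure \ref{D2} to the algebraic transport statement of Theorem \ref{prop pushed under water}, and then to unfold the single morphism $\textup{id}_{S_1}\tens A\tens\textup{id}_{S_3}$ across the three blue strands using the strict monoidality of $\cF^{\mr{dt}}$ established in Theorem \ref{theorem dynamical twist}. Throughout, the red Verma strand is treated as the fixed border line of Subsection \ref{Section Interface}, so that the identity only needs to be verified in the graphical calculus for $\cM_{\mr{adm}}^\str$ after capping the open blue input strands.

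First I would turn the asserted identity into a genuine equality of morphisms in $\cM_{\mr{adm}}^\str$, as the statement instructs: I cap the open blue input strands with evaluation coupons $\alpha_{v_i}$ (Figure \ref{evgraph}), i.e.\ I feed in arbitrary homogeneous vectors $v_1\otimes v_2\otimes v_3$ with $v_i\in\cF^\str(S_i)$ of weight $\nu_i$. Using the interface identification $\phi_\lambda^{w_1,\ldots}=\mathcal{X}_{\lambda,S}(w_1\otimes\cdots)$ from Subsection \ref{Section Interface}, the left-hand side of Figure \ref{D2} becomes the morphism
\[
\bigl(\textup{id}_{M_{\lambda-\nu_1-\nu_2-\nu_3}}\tens(\textup{id}_{S_1}\tens A\tens\textup{id}_{S_3})\bigr)\,\Phi_\lambda^{v_1\otimes v_2\otimes v_3},
\]
while the right-hand side is the quantum vertex operator obtained from the vectors produced by the blue coupon $\ol{A}$ evaluated at the appropriately shifted weight.

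Next I would apply Theorem \ref{prop pushed under water} with spin object $S:=S_1\tens S_2\tens S_3$, target $T:=S_1\tens T_2\tens S_3$, and morphism $\textup{id}_{S_1}\tens A\tens\textup{id}_{S_3}\in\Hom_{\Rep^\str}(S,T)$. This rewrites the displayed left-hand side as the vertex operator built from $\cF^\str\bigl(\ol{\textup{id}_{S_1}\tens A\tens\textup{id}_{S_3}}\bigr)(\lambda)(v_1\otimes v_2\otimes v_3)$. Because $\cF^{\mr{dt}}$ is strict monoidal (Theorem \ref{theorem dynamical twist}), the dynamical twist factorizes as
\[
\ol{\textup{id}_{S_1}\tens A\tens\textup{id}_{S_3}}=\textup{id}_{\ul{S_1}}\,\ol{\tens}\,\ol{A}\,\ol{\tens}\,\textup{id}_{\ul{S_3}},
\]
and evaluating at $\lambda$ via the dynamical tensor product rule \eqref{dynamical tensor product} (equivalently \eqref{shifted weights}) inserts the weight shift: the middle factor acts as $\ol{A}(\lambda-\mh_{\ul{S_3}})$, while the identities on $\ul{S_1}$ and $\ul{S_3}$ contribute trivially since they do not depend on the weight argument. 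By the right-to-left weight-shift convention of Subsection \ref{Section strictified dynamical module category}, the region bordering the $S_2$-strand on its right carries exactly the weight $\lambda-\mh_{\ul{S_3}}$, so this matches the placement of the blue coupon $\ol{A}$ in Figure \ref{D2}. Comparing the two vertex operators, which now agree on the arbitrary input $v_1\otimes v_2\otimes v_3$, yields the claimed identity.

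The routine parts are the compositional bookkeeping of the quantum vertex operator (the composition rule recalled in Subsection \ref{Section Interface}) and the observation that identity morphisms drop out of the dynamical tensor product. I expect the only genuinely delicate point to be the weight-shift bookkeeping: one must confirm that the argument $\lambda-\mh_{\ul{S_3}}$ produced by the strict-monoidal factorization coincides with the weight assigned to the region adjacent to the $\ol{A}$-coupon under the blue graphical conventions of Subsection \ref{Section strictified dynamical module category}, and that the degree $\lambda$ of the underlying vertex operator is the one tracked consistently along the red Verma border line.
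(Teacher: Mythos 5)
Your proposal is correct and follows essentially the same route as the paper's proof: replace $A$ by the padded morphism $\textup{id}_{S_1}\tens A\tens\textup{id}_{S_3}$ attached to all spin strands, push it through the Verma strand via Theorem \ref{prop pushed under water}, and identify the resulting blue coupon as $\textup{id}_{\ul{S_1}}\,\ol{\tens}\,\ol{A}\,\ol{\tens}\,\textup{id}_{\ul{S_3}}$ using the strict monoidality of $\cF^{\mr{dt}}$ from Theorem \ref{theorem dynamical twist}. Your added care with the weight-shift bookkeeping (that the middle factor evaluates as $\ol{A}(\lambda-\mh_{\ul{S_3}})$, matching the blue region-coloring convention) is a detail the paper leaves implicit but is consistent with its argument.
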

\begin{proof}
	For $S_1=\emptyset=S_3$ this is the graphical representation of Theorem \ref{prop pushed under water}. 
	
	In the general case, we may replace in the left-hand side of Figure \ref{D2} the coupon colored by $A$ with the coupon attached to all spin strands and colored by $\textup{id}_{S_1}\tens A\tens\textup{id}_{S_3}$, in which case we have just seen that Figure \ref{D2} applies to push the coupon to the blue strands.
	The result now follows by observing that the resulting coupon in $\mathbb{G}_{\Ndyn^\str}$ is colored by
	\[\mathcal{F}^{\textup{dt}}(\textup{id}_{S_1}\tens A\tens\textup{id}_{S_3})=
	\textup{id}_{\ul{S_1}}\ol{\tens}\,\ol{A}\,\ol{\tens}\textup{id}_{\ul{S_3}}
	\]
	since $\cF^{\mr{dt}}$ is strict monoidal. 
\end{proof}

In particular, the order in which two coupons in $\textup{Rib}_{\Rep^\str}$, involving disjoint spin bundles, are moved to the blue strands, is irrelevant. We also have the following two direct consequences of Proposition \ref{pushdiagram}.

\begin{corollary}
	\label{thm dynamical twist graphical}
	For \(A'\in\Hom_{\Rep^\str}((V),(W))\), with \(V, W\in \Rep\), we have
	\begin{figure}[H]
		\centering
		\includegraphics[scale=0.7]{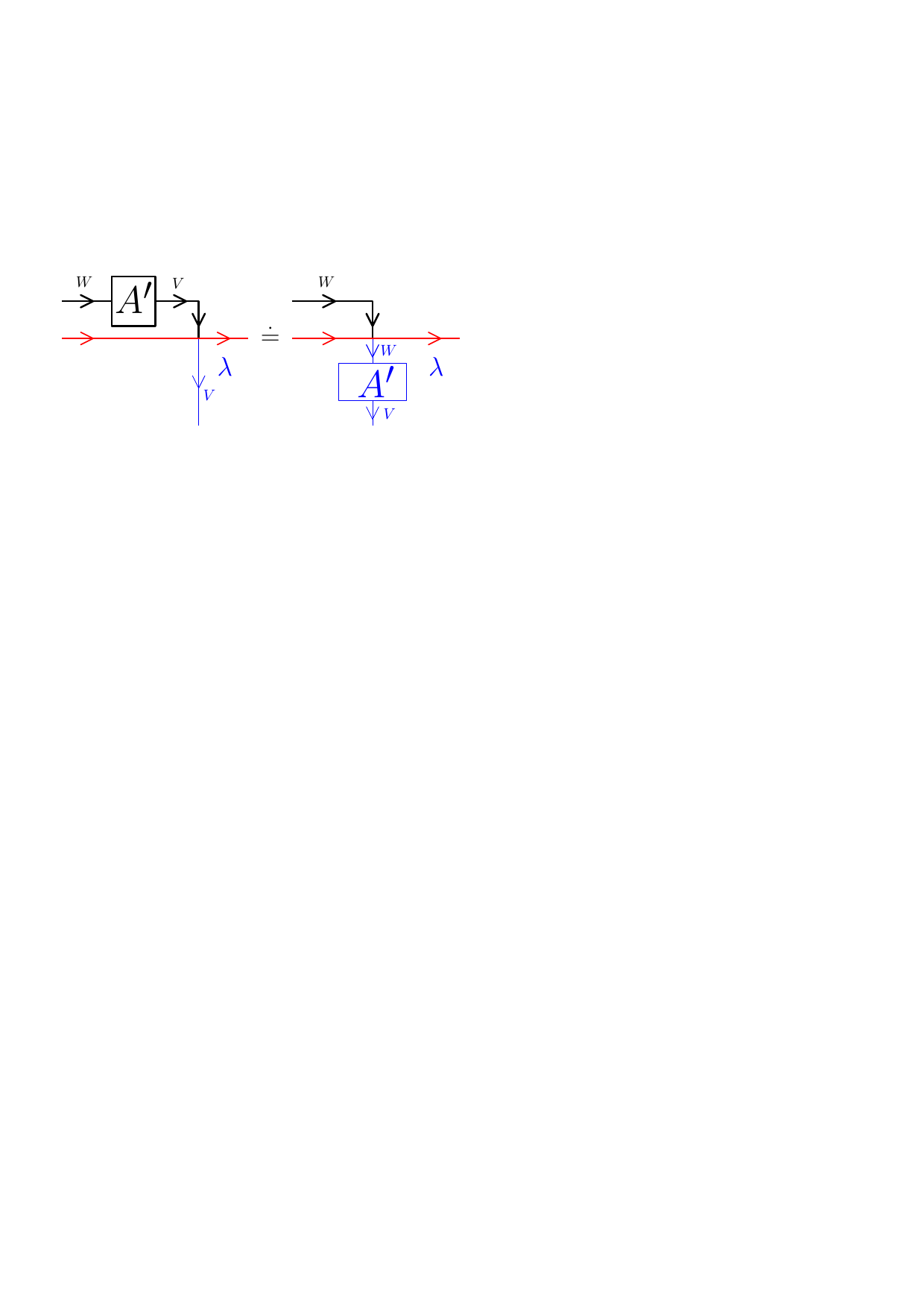}
		\caption{}
		\label{dyn twist 1}
	\end{figure}
	and for \(S= (V_1,\dots,V_k)\in\Rep^\str\) we have 
	\begin{figure}[H]
		\begin{minipage}{0.46\textwidth}
			\centering
			\includegraphics[scale=0.5]{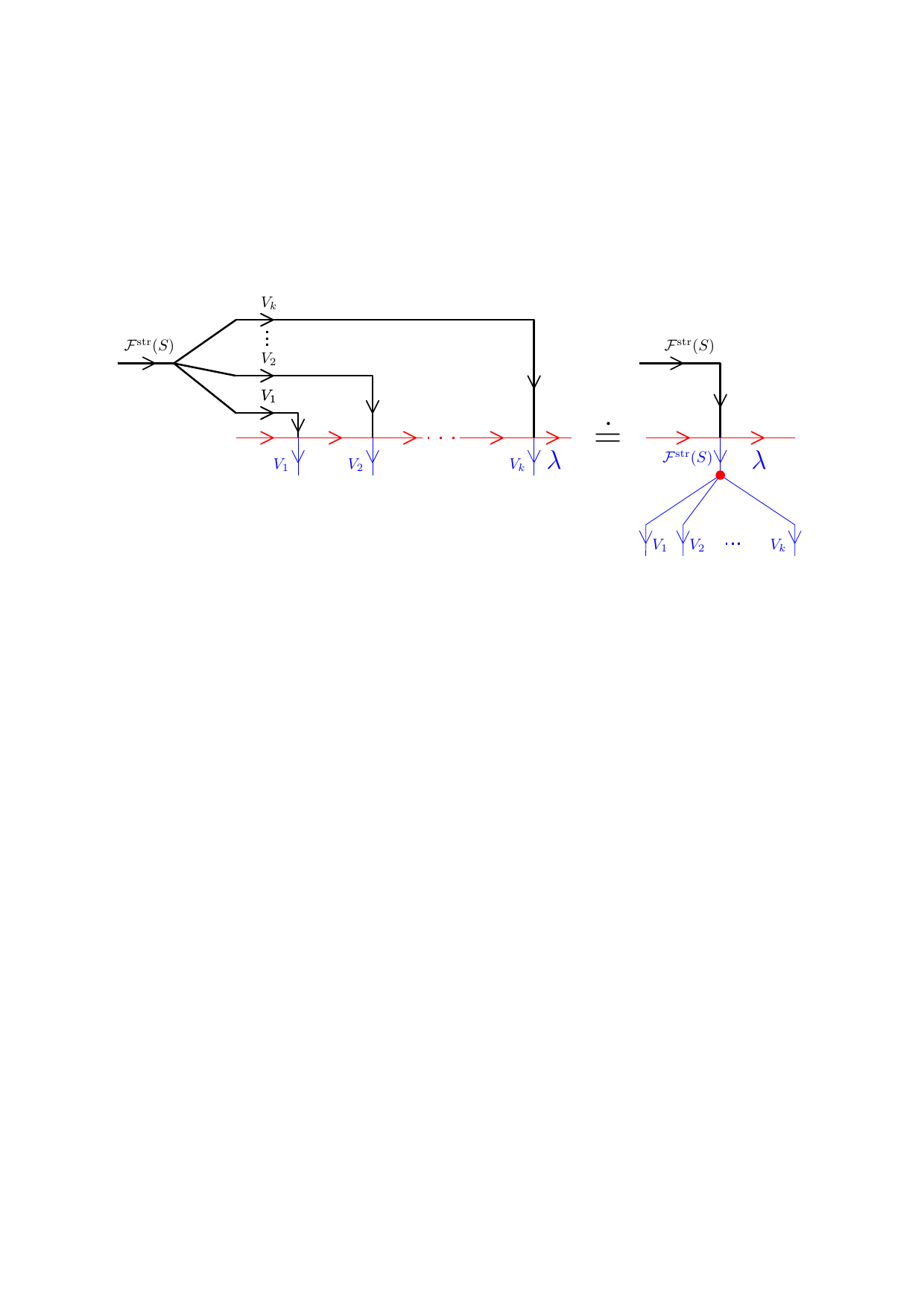}
			\caption{  }
			\label{descent equation J}
		\end{minipage}\quad\quad
		\begin{minipage}{0.46\textwidth}
			\centering
			\includegraphics[scale=0.5]{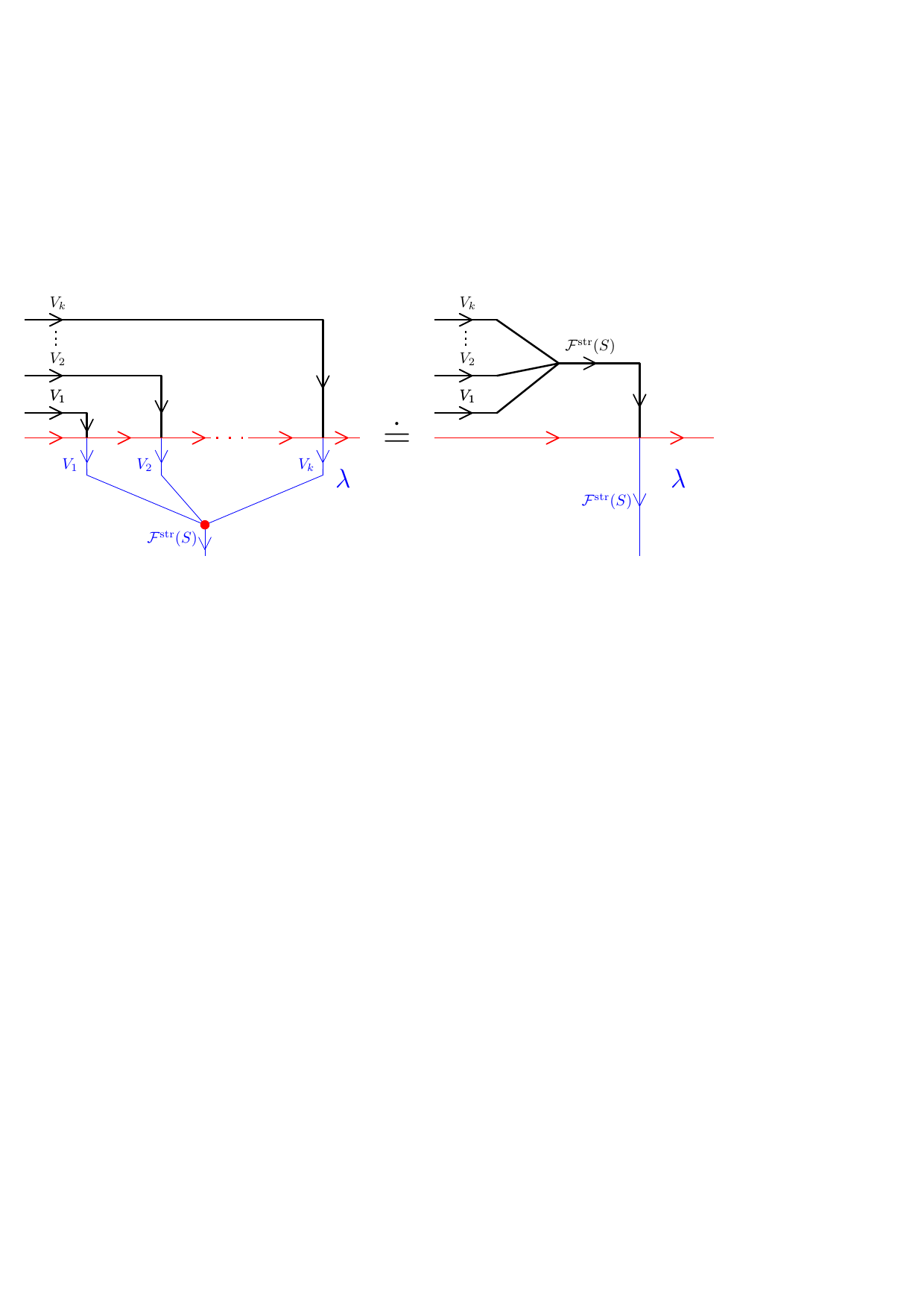}
			\captionof{figure}{}
			\label{descent equation J inverse}
		\end{minipage}
	\end{figure}
\end{corollary}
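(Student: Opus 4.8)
The plan is to read both displayed identities as special cases of Proposition \ref{pushdiagram}, in which the coupon $A$ transported through the red border line is taken to be either a length-one morphism or a (inverse) fusion morphism, and then to simplify the resulting dynamical twist coupon by invoking the identifications established earlier in this section. In all cases I would set $S_1=S_3=\emptyset$ in Proposition \ref{pushdiagram}, so that the coupon being pushed acts on the full spin bundle.

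For Figure \ref{dyn twist 1}, I would apply Proposition \ref{pushdiagram} with $S_2=(V)$, $T_2=(W)$ and the coupon $A'\in\Hom_{\Rep^\str}((V),(W))$. By the proposition the transported coupon in $\mathbb{G}_{\Ndyn^\str}$ is colored by $\cF^{\mr{dt}}(A')=\ol{A'}$, which a priori carries the red-dot decoration of Figure \ref{A(lambda)}. The point of the corollary is that for length-one morphisms this decoration is superfluous: by Corollary \ref{cor dyn twist}(\ref{cor_item2}) one has $\ol{A'}=\widetilde{\cF^{\mr{EV}}}(A')$, so the coupon may be redrawn in the undecorated blue notation of Figure \ref{D11}. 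This identification rests ultimately on $j_{(V)}=\textup{id}_{\ul{V}}$ and $j_{(W)}=\textup{id}_{\ul{W}}$, which trivialize the conjugation by fusion operators appearing in \eqref{olAdown}.

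For Figures \ref{descent equation J}--\ref{descent equation J inverse} I would again invoke Proposition \ref{pushdiagram} with $S_2=S$, now choosing the coupon $A$ to be the fusion morphism $J_S\in\Hom_{\Rep^\str}(S,\cF^\str(S))$ (respectively its inverse $J_S^{-1}$). The transported coupon is then colored by $\cF^{\mr{dt}}(J_S)$ (respectively $\cF^{\mr{dt}}(J_S^{-1})$). By Lemma \ref{jcomp} we have $\cF^{\mr{dt}}(J_S)=\ol{J_S}$, and since $\cF^{\mr{dt}}$ is a functor, $\cF^{\mr{dt}}(J_S^{-1})=\ol{J_S}^{\,-1}$. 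Under the red-dot shorthand of Figures \ref{j dynamical}--\ref{j inverse dynamical} these are exactly the right-hand sides of Figures \ref{descent equation J}--\ref{descent equation J inverse}.

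I expect the only care needed to be bookkeeping rather than genuine mathematics: one must check that the bundle and length conventions on the blue strands, together with the induced weight shifts, match on both sides of each figure, and that collapsing the $(V)$-colored (respectively $\cF^\str(S)$-colored) strand to the appropriate bundle is compatible with the strictness of $\cF^{\mr{dt}}$. No new computation is required beyond Proposition \ref{pushdiagram}, Corollary \ref{cor dyn twist}(\ref{cor_item2}) and Lemma \ref{jcomp}, which is why the statement is recorded as a corollary.
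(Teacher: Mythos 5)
Your proposal is correct and follows essentially the same route as the paper: Proposition \ref{pushdiagram} combined with Corollary \ref{cor dyn twist} for Figure \ref{dyn twist 1}, and with Lemma \ref{jcomp} for the fusion coupons in Figures \ref{descent equation J}--\ref{descent equation J inverse}. The only cosmetic difference is that the paper invokes Corollary \ref{cor dyn twist}(\ref{cor_item3}) (tensor products of length-one morphisms, which covers spectator strands in the bundle) where you set $S_1=S_3=\emptyset$ and invoke item (\ref{cor_item2}); since item (\ref{cor_item3}) is just item (\ref{cor_item2}) plus strict monoidality of $\cF^{\mr{dt}}$, the two readings are interchangeable and your argument extends verbatim to the case with ambient strands.
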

\begin{proof}
	Figure \ref{dyn twist 1} is a consequence of Corollary \ref{cor dyn twist}(\ref{cor_item3}). Figures \ref{descent equation J} and \ref{descent equation J inverse} follow from Lemma \ref{jcomp}, but they can also be seen as the special cases of Proposition \ref{pushdiagram} when the coupons are colored by the morphisms $J_S$ and $J_S^{-1}$, resepctively.
\end{proof}

Let $S=(V_1,\ldots,V_k)\in\Rep^\str$. Recall that the $k$-point quantum vertex operator $\Phi_\lambda^{v_1,\ldots,v_k}$, depending on a fixed $k$-tuple $(v_1,\ldots,v_k)$ of weight vectors $v_i\in V_i[\nu_i]$, is the unique morphism in $\Hom_{\cM_{\mr{adm}}^\str}(M_\lambda,M_{\lambda-\nu}\tens S)$ representing
$\phi_\lambda^{j_S(\lambda)(v_1\otimes\cdots\otimes v_k)}$.
 It is convenient to extend this notation of $k$-point quantum vertex operator to morphisms in $\Hom_{\cM_{\mr{adm}}^\str}(M_\lambda,M_{\lambda-\nu}\tens S)$ 
 representing $\phi_\lambda^v$ for any weight vector $v$ in $V_1\otimes\cdots\otimes V_k$ .
\begin{definition}\label{reversevertex}
	Let $S=(V_1,\ldots,V_k)\in\Rep^{\str}$ 
	and consider $v\in\cF^\str(S)$ a homogeneous vector of weight $\nu$. 
	We write $\Phi_{\lambda;S}^v$ for the morphism in $\Hom_{\cM_{\mr{adm}}^\str}(M_\lambda,M_{\lambda-\nu}\tens S)$ representing 
$\phi_\lambda^v\in\Hom_{\cM_{\mr{adm}}}(M_\lambda,M_{\lambda-\nu}\otimes\cF^\str(S))$.
\end{definition}
The morphism $\Phi_{\lambda;S}^v$ can then be expressed as sum of $k$-point quantum vertex operators in the following way.
\begin{lemma}\label{reversevertexlemma}
	Let $S=(V_1,\ldots,V_k)\in\Rep^{\str}$ 
	and let $v\in\cF^\str(S)$ be a homogeneous vector of weight $\nu$. Then
	\begin{equation}\label{isk}
	\Phi_{\lambda;S}^v=\sum_n\Phi_\lambda^{w_1^n,\dots,w_k^n}
	\end{equation}
	where \(w_i^n\in V_i\) are homogeneous vectors such that
	\[
	j_S^{-1}(\lambda)(v) = \sum_n w_1^n\otimes\dots\otimes w_k^n.
	\]
\end{lemma}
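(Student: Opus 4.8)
The plan is to reduce the claim to the fusion identity \eqref{fused} together with the linearity of the inverse expectation value map from Proposition \ref{prop parametrizing spaces}. By Definition \ref{reversevertex} the morphism $\Phi_{\lambda;S}^v$ is the unique morphism in $\Hom_{\cM_{\mr{adm}}^\str}(M_\lambda,M_{\lambda-\nu}\tens S)$ representing $\phi_\lambda^v$, while each $\Phi_\lambda^{w_1^n,\ldots,w_k^n}$ represents $\phi_\lambda^{w_1^n,\ldots,w_k^n}$. It therefore suffices to establish the underlying identity
\[
\phi_\lambda^v=\sum_n\phi_\lambda^{w_1^n,\ldots,w_k^n}
\]
in $\Hom_{\cM_{\mr{adm}}}(M_\lambda,M_{\lambda-\nu}\otimes\cF^\str(S))$, and to check that every term on the right-hand side shares the codomain $M_{\lambda-\nu}\tens S$ so that the equation is well-typed in $\cM_{\mr{adm}}^\str$.

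First I would write the preimage $j_S^{-1}(\lambda)(v)=\sum_n w_1^n\otimes\cdots\otimes w_k^n$ as a finite sum of pure tensors with $w_i^n\in V_i[\mu_i^n]$ homogeneous. Since $j_S(\lambda)$ is an automorphism of $\cF^\str(\ul{S})$ in $\cN_{\mr{fd}}$ and hence grade-preserving, the vector $j_S^{-1}(\lambda)(v)$ again has weight $\nu$, so one may arrange that $\sum_i\mu_i^n=\nu$ for every index $n$; this guarantees that each $\Phi_\lambda^{w_1^n,\ldots,w_k^n}$ indeed lands in $\Hom_{\cM_{\mr{adm}}^\str}(M_\lambda,M_{\lambda-\nu}\tens S)$, matching the codomain of $\Phi_{\lambda;S}^v$. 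Next I would invoke \eqref{fused}, which yields $\phi_\lambda^{w_1^n,\ldots,w_k^n}=\phi_\lambda^{j_S(\lambda)(w_1^n\otimes\cdots\otimes w_k^n)}$ for each $n$. Summing over $n$ and using that the inverse expectation value map $v'\mapsto\phi_\lambda^{v'}$ is linear (Proposition \ref{prop parametrizing spaces}), I would compute
\[
\sum_n\phi_\lambda^{w_1^n,\ldots,w_k^n}=\phi_\lambda^{\sum_n j_S(\lambda)(w_1^n\otimes\cdots\otimes w_k^n)}=\phi_\lambda^{j_S(\lambda)(j_S^{-1}(\lambda)(v))}=\phi_\lambda^v.
\]

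Finally, I would transfer this equality back to $\cM_{\mr{adm}}^\str$: having matched domains and codomains in the previous step, both $\Phi_{\lambda;S}^v$ and $\sum_n\Phi_\lambda^{w_1^n,\ldots,w_k^n}$ are morphisms in $\Hom_{\cM_{\mr{adm}}^\str}(M_\lambda,M_{\lambda-\nu}\tens S)$ representing the same underlying morphism $\phi_\lambda^v$, whence they coincide by the very definition of morphisms in the strictification. The argument is entirely formal; the only point requiring care — and the closest thing to an obstacle — is the weight bookkeeping, namely using grade-preservation of $j_S(\lambda)$ to force each summand $w_1^n\otimes\cdots\otimes w_k^n$ to have total weight $\nu$, so that all the quantum vertex operators on the right-hand side live in a single Hom-space and the stated equality makes sense in $\cM_{\mr{adm}}^\str$.
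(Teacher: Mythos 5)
Your proposal is correct and follows essentially the same route as the paper's proof: both arguments reduce the identity to the space of intertwiners $\Hom_{\cM_{\mr{adm}}}(M_\lambda,M_{\lambda-\nu}\otimes\cF^\str(S))$ and use the expectation value isomorphism of Proposition \ref{prop parametrizing spaces} together with the definition of $j_S(\lambda)$ (the paper computes the expectation value of the right-hand side and gets $v$, while you equivalently apply the inverse map termwise via \eqref{fused} and linearity). Your explicit weight bookkeeping, ensuring each summand of $j_S^{-1}(\lambda)(v)$ has total weight $\nu$ so that all terms live in one Hom-space, is a point the paper leaves implicit but is a correct and harmless addition.
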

\begin{proof}
The right hand side of \eqref{isk} is represented by the morphism
\[
\sum_n\phi_\lambda^{w_1^n,\dots,w_k^n}\in\Hom_{\cM_{\mr{adm}}}(M_\lambda,M_{\lambda-\nu}\otimes\cF^\str(S)),
\]
whose expectation value is
\[
\sum_n\langle\phi_\lambda^{w_1^n,\ldots,w_k^n}\rangle=\sum_nj_S(\lambda)(w_1^n\otimes\cdots\otimes w_k^n)=v.
\]
So both sides of \eqref{isk} are morphisms in $\Hom_{\cM_{\mr{adm}}^\str}(M_\lambda,M_{\lambda-\nu}\tens S)$ representing 
$\phi_\lambda^v$.
Hence they are equal.
\end{proof}

A natural further extension of $\Phi_{\lambda;S}^v$ is as follows. For \(S_1,\dots,S_r\in\Rep^\str\) and \(w_i\in\cF^\str(S_i)[\nu_i]\), we set
\begin{align}
\label{reversevertex eq}
\begin{split}
\Phi_{\lambda;S_1,\dots,S_r}^{w_1,\dots,w_r}&\ :=\left(\Phi_{\lambda_1;S_1}^{w_1}\tens\id_{S_2\tens\dots\tens S_r}\right)\circ\dots\circ\left(\Phi_{\lambda_{r-1};S_{r-1}}^{w_{r-1}}\tens\id_{S_r}\right)\circ\Phi_{\lambda_r;S_r}^{w_r}\\&\ \in\Hom_{\cM_{\mr{adm}}^\str}(M_\lambda,M_{\lambda_0}\tens S_1\tens\dots\tens S_r),
\end{split}
\end{align}
where we have written \(\lambda_r:=\lambda\) and \(\lambda_j := \lambda-\nu_{j+1}-\dots-\nu_r\) for any \(j=0,\dots,r-1\). 
In Section \ref{Section dual q-KZB}, we will use these notations and the identity (\ref{large intertwiner}) in the special case where \(r = 3\) and \(\ell_1+\ell_2+\ell_3 = k\), as a way of parametrizing a \(k\)-point quantum vertex operator by only 3 vectors \(w_1\), \(w_2\) and \(w_3\). We will refer to this approach as \emph{reduction to the threefold case}.
\begin{remark}
Note that 
\begin{equation}
\label{large intertwiner}
\Phi_{\lambda;S_1,\dots,S_r}^{w_1,\dots,w_r} = \Phi_\lambda^{v_1^1,\dots,v_{\ell_1}^1,v_1^2,\dots,v_{\ell_2}^2,\dots,v_1^r,\dots,v_{\ell_r}^r}
\quad \hbox{ if }\,\, w_i = j_{S_i}(\lambda_i)(v_1^{i}\otimes\dots\otimes v_{\ell_i}^{i})
\end{equation}
where \(S_i = (V_1^{i},\dots,V_{\ell_i}^{i})\) and \(v_{j}^{i}\in V_j^{i}\) \textup{(}$1\leq j\leq\ell_i$\textup{)} are homogeneous vectors with their weights adding up to $\nu_i$.
\end{remark}

In order to use these extensions of $k$-point quantum vertex operators graphically, we introduce the new notation depicted in Figure \ref{evaluation in S} for $v\in\cF^\str(S)$.
Note that for objects \(S = (V)\) of length 1 we have \(j_{(V)} = \id_{(V)}\) and the red dot may be omitted, in which case it matches with the earlier notation for coupons colored by evaluation morphisms (see Figure \ref{evgraph}). Then Figure \ref{universal intertwiner huge} provides a graphical presentation for \(\Phi_{\lambda;S_1,\dots,S_r}^{w_1,\dots,w_r}\), which is consistent with our prior graphical notation, see Figure \ref{vertex operator}, when all the $S_i$'s are objects of length one.

\begin{figure}[H]
	\begin{minipage}{0.36\textwidth}
		\centering
		\includegraphics[scale = 0.9]{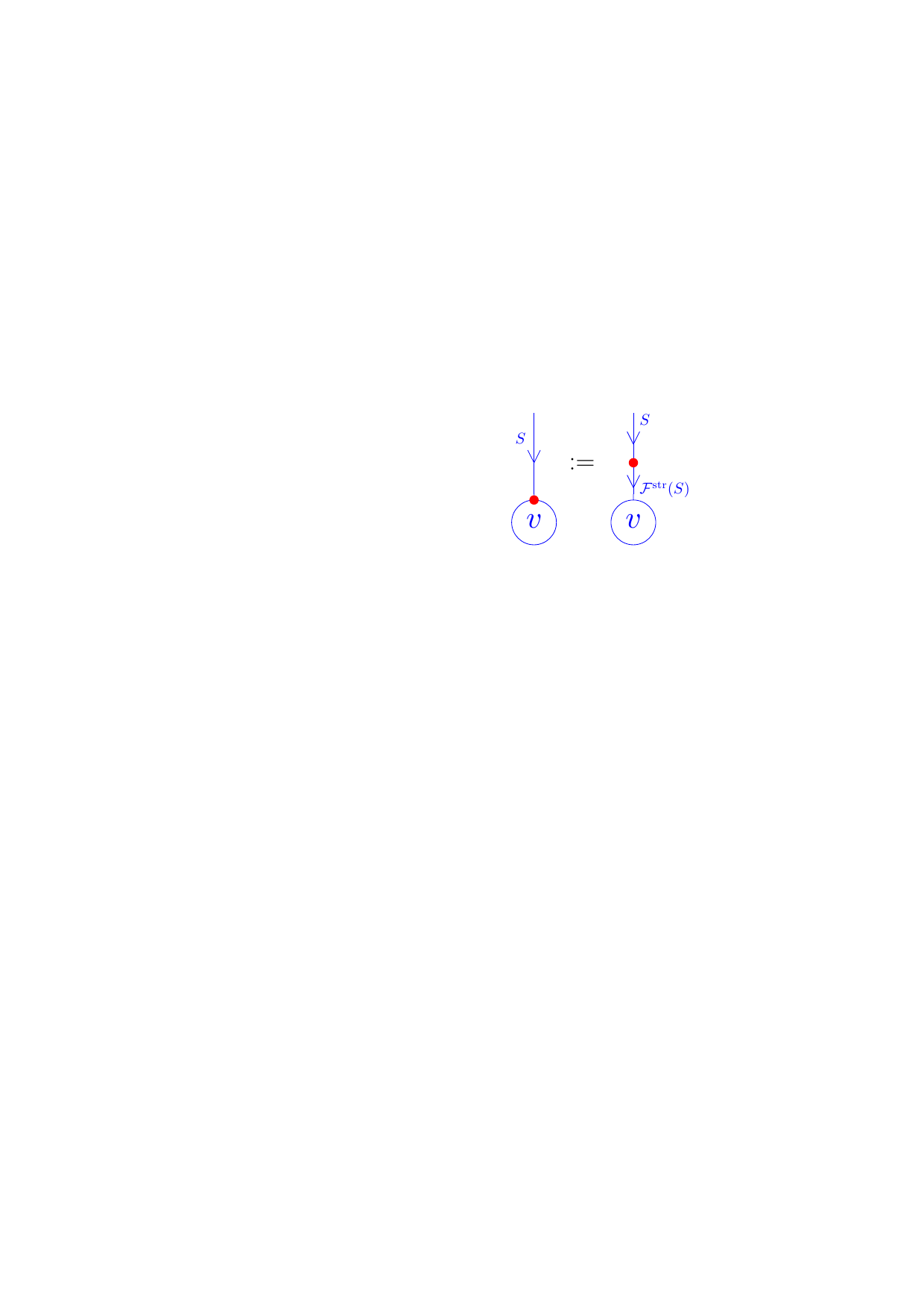}
		\captionof{figure}{}
		\label{evaluation in S}
	\end{minipage}\quad
	\begin{minipage}{0.6\textwidth}
		\centering
		\includegraphics[scale=0.7]{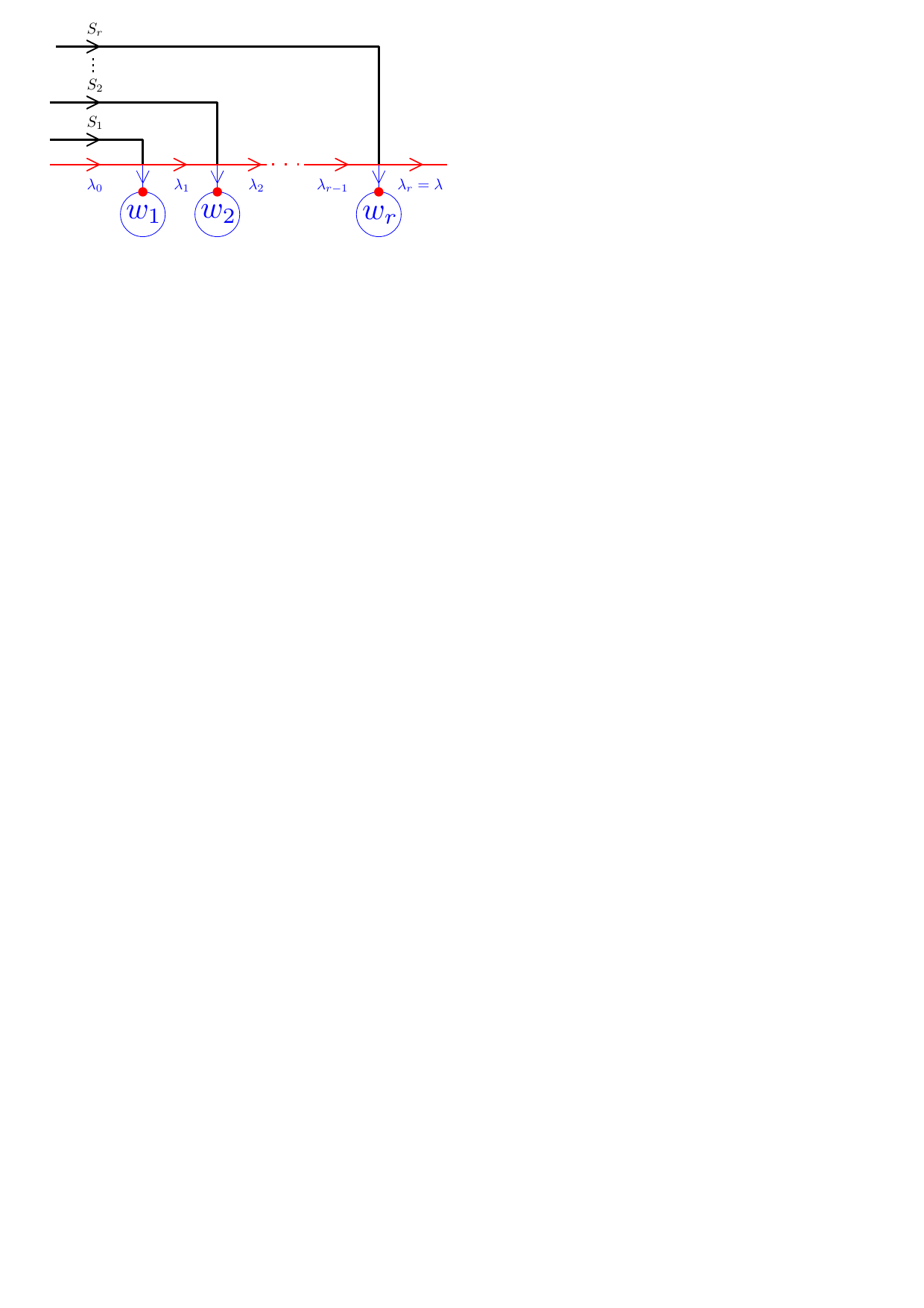}
		\captionof{figure}{}
		\label{universal intertwiner huge}
	\end{minipage}
\end{figure}

\subsection{Dynamical twisting endomorphisms}
\label{Subsection Q(lambda)}
For any \(S\in\Rep^\str\), let us denote by \(\QQ_S\) the endomorphism in \(\End_{\Ndyn^\str}(\ul{S})\) defined by
\begin{equation}
\label{QQ_S lambda def}
\QQ_S(\lambda) := (\ol{\widetilde{e}_S}(\lambda)\tens\id_{\ul{S}})\left(\id_{\ul{S}}\tens\widetilde{\iota}_S\right),\qquad \lambda\in\mathfrak{h}_{\textup{reg}}^*,
\end{equation}
where we slightly abuse notation by writing $\widetilde{\iota}_S$ for $\widetilde{\cF^{\rm{EV}}}(\widetilde{\iota}_S)$ (a convention which we will adopt from now on for any morphism in $\Rep^\str$). We furthermore write 
\[
\mathbb{Q}_V:=\mathcal{F}^\str_{\Ndyn}\bigl(\QQ_{(V)}\bigr)\in\End_{\Ndyn}(\ul{V}).
\]

In view of Figure \ref{co-evaluation dynamical} and \eqref{dualities_B}, definition (\ref{QQ_S lambda def}) can be graphically represented as 
\begin{center}
	\includegraphics[scale=0.75]{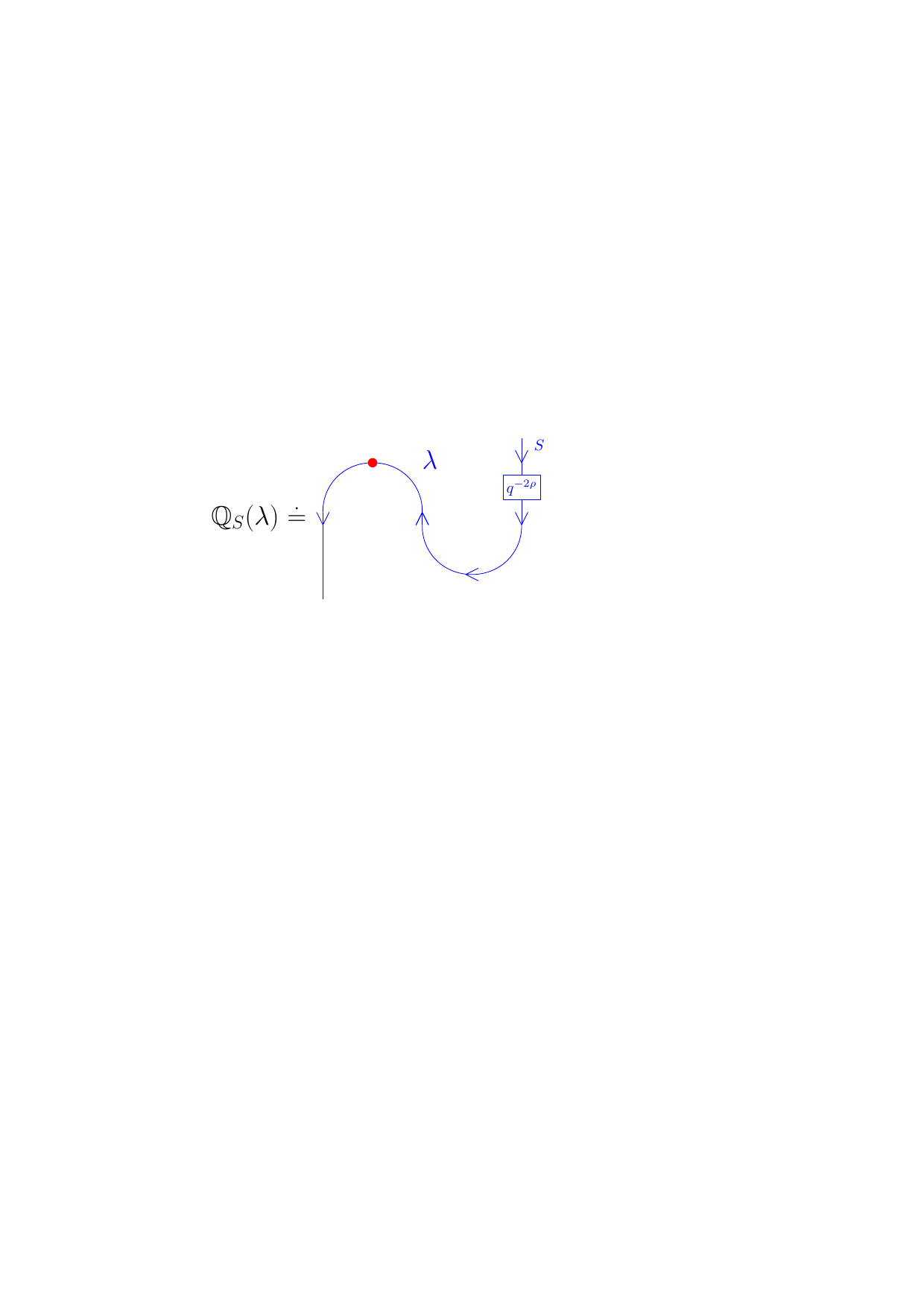}
\end{center}
in the graphical calculus of $\mathcal{N}_{\mr{fd}}^\str$, where the left-hand side stands for the coupon colored by $\mathbb{Q}_S(\lambda)$. 

Note that upon pushing the right duality axiom in $\Rep^\str$ to $\Ndyn^{\,\str}$ through the strict monoidal lift $\widetilde{\cF^{\rm{EV}}}$ of $\cF^{\rm{EV}}$, we obtain the identity 
\begin{equation}
\label{criterion Q}
\ol{\widetilde{e}_S}(\lambda)=(\ol{\widetilde{e}_S}(\lambda)\tens\widetilde{e}_S)(\id_{\ul{S}}\tens\widetilde{\iota}_S\tens\id_{\ul{S^\ast}})=
\widetilde{e}_S(\QQ_S(\lambda)\,\tens\,\id_{\ul{S^\ast}}).
\end{equation} 
This shows that the endomorphism $\mathbb{Q}_S$ encodes the difference between the evaluation morphism $\widetilde{e}_S$ and its dynamical twist $\ol{\widetilde{e}_S}$.  

By \cite[Theorem 5.8]{Etingof&Latour-2005} there exists for $\lambda\in\mathfrak{g}_{\textup{reg}}^*$ a unique solution $\mathbb{J}(\lambda)$ in $\mathcal{U}^{(2)}$ to the Arnaudon-Buffenoir-Ragoucy-Roche \cite{Arnaudon&Buffenoir&Ragoucy&Roche-1998} (ABRR) equation of the form 
\[
\mathbb{J}(\lambda) = \sum_{\beta\in Q^+}\mathbb{J}_\beta(\lambda)\in \mathcal{U}^{(2)}
\]
with \(\mathbb{J}_0(\lambda) = 1\otimes 1\) and \(\mathbb{J}_\beta(\lambda)\in U^-[-\beta]\otimes U^+[\beta]\) (we use here the notations from \cite[Section 1.4]{DeClercq&Reshetikhin&Stokman-2022}). It is invertible and 
\begin{equation}\label{relJj}
(\pi_V\otimes\pi_W)(\mathbb{J}(\lambda))=j_{(V,W)}(\lambda)
\end{equation}
as endomorphisms in $\textup{End}_{\mathcal{N}_{\mr{fd}}}(\ul{V}\otimes\ul{W})\) 
for any \(V,W\in\Rep\).  The element $\mathbb{J}(\lambda)$ is referred to as the {\it universal dynamical fusion matrix}. 

Etingof \& Varchenko \cite{Etingof&Varchenko-2000} introduced the element
\begin{equation}
\label{Q algebraic def}
\QQ(\lambda) := m^{\mr{op}}((\id\otimes S^{-1})\mathbb{J}(\lambda))
\end{equation}
where \(m^{\mr{op}}(a\otimes b) = ba\), which naturally acts on any finite dimensional $\mathcal{U}_q$-representation $V\in\Rep$. The resulting endomorphism
$\mathbb{Q}(\lambda)\vert_V$ is in 
$\textup{End}_{\mathcal{N}_{\textup{fd}}}(\ul{V})$, i.e., it preserves weight spaces. The resulting function $\lambda\mapsto \mathbb{Q}(\lambda)\vert_V$ defines a morphism $\mathbb{Q}(\cdot)\vert_V\in\textup{End}_{\Ndyn}(\ul{V})$.

\begin{proposition}
	\label{prop interpretation of Q} 
	$\mathbb{Q}_V=\mathbb{Q}(\cdot)\vert_V$ for all $V\in\Rep$.
\end{proposition}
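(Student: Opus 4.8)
The plan is to characterize both $\mathbb{Q}_V$ and $\mathbb{Q}(\cdot)|_V$ by the same equation and then match them. First I would record the defining property \eqref{criterion Q} of $\QQ_S$ in the length-one case $S=(V)$, which reads
\[
\ol{\widetilde{e}_{(V)}}(\lambda)=\widetilde{e}_V\circ\bigl(\mathbb{Q}_V(\lambda)\otimes\id_{\ul{V^*}}\bigr)
\]
in $\Hom_{\cN_{\mr{fd}}}(\ul{V}\otimes\ul{V^*},\CC_0)$. Since $\widetilde{e}_V(v\otimes f)=f(q^{2\rho}v)$ by \eqref{dualities_A} and $q^{2\rho}$ is invertible, the pairing $\widetilde{e}_V$ is nondegenerate in its first argument; hence $\widetilde{e}_V\circ(A\otimes\id_{\ul{V^*}})=\widetilde{e}_V\circ(B\otimes\id_{\ul{V^*}})$ forces $A=B$. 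Thus $\mathbb{Q}_V(\lambda)$ is the \emph{unique} endomorphism of $\ul{V}$ satisfying the displayed equation, and it suffices to verify that $\mathbb{Q}(\lambda)|_V$ satisfies it too.

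Next I would compute the left-hand side explicitly. By the definition \eqref{dynrighteval} of the dynamical right evaluation, applying $\cF^\str$ and evaluating at $\lambda$ collapses the fusion coupon $J_{\ul{(V,V^*)}}^{-1}$ (which represents the identity) and turns $\ol{J_{(V,V^*)}}$ into $j_{(V,V^*)}(\lambda)$, giving
\[
\cF^\str\bigl(\ol{\widetilde{e}_{(V)}}\bigr)(\lambda)=\widetilde{e}_V\circ j_{(V,V^*)}(\lambda)=\widetilde{e}_V\circ(\pi_V\otimes\pi_{V^*})(\mathbb{J}(\lambda)),
\]
where the last equality is \eqref{relJj}. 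Writing $\mathbb{J}(\lambda)=\sum_i a_i\otimes b_i$ and using that the left dual action satisfies $(\pi_{V^*}(X)f)(w)=f(\pi_V(S(X))w)$ (the compatibility of $e_V$ with the $U_q$-action), this evaluates on $v\otimes f$ to $\sum_i f\bigl(\pi_V(S(b_i)\,q^{2\rho}\,a_i)v\bigr)$.

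Finally I would expand the target right-hand side using the algebraic definition \eqref{Q algebraic def}, namely $\mathbb{Q}(\lambda)|_V=\sum_i\pi_V(S^{-1}(b_i)a_i)$, obtaining
\[
\widetilde{e}_V\bigl(\mathbb{Q}(\lambda)|_V\,v\otimes f\bigr)=\sum_i f\bigl(\pi_V(q^{2\rho}\,S^{-1}(b_i)\,a_i)v\bigr).
\]
Comparing the two expressions reduces the proposition to the operator identity $S(b_i)\,q^{2\rho}\,a_i=q^{2\rho}\,S^{-1}(b_i)\,a_i$ for each $i$, which follows at once from \eqref{action of q^2rho}: applying $S^2(Y)=q^{2\rho}Yq^{-2\rho}$ with $Y=S^{-1}(b_i)$ yields $S(b_i)\,q^{2\rho}=q^{2\rho}\,S^{-1}(b_i)$, and right-multiplying by $a_i$ gives the claim. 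Hence $\mathbb{Q}(\lambda)|_V$ satisfies the characterizing equation, so $\mathbb{Q}_V(\lambda)=\mathbb{Q}(\lambda)|_V$ for all $\lambda\in\hh_{\mr{reg}}^*$.

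I expect the only delicate point to be bookkeeping of conventions: pinning down the left dual action $(\pi_{V^*}(X)f)(w)=f(\pi_V(S(X))w)$ and the precise $q^{2\rho}$-twist in $\widetilde{e}_V$, together with the nondegeneracy argument that upgrades \eqref{criterion Q} to a genuine characterization. Once these are fixed, the passage through \eqref{dynrighteval} and \eqref{relJj} and the final reduction to $S^2(Y)=q^{2\rho}Yq^{-2\rho}$ is routine.
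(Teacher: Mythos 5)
Your proposal is correct and follows essentially the same route as the paper: characterize $\mathbb{Q}_V(\lambda)$ through the identity $\ol{\widetilde{e}_V}(\lambda)=\widetilde{e}_V(\mathbb{Q}_V(\lambda)\otimes\id_{\ul{V^*}})$ from \eqref{criterion Q}, compute $\ol{\widetilde{e}_V}(\lambda)$ via \eqref{dynrighteval} and \eqref{relJj} in terms of the components of $\mathbb{J}(\lambda)$, and match it against \eqref{Q algebraic def} using $S^2(X)=q^{2\rho}Xq^{-2\rho}$. The only cosmetic difference is that you make the uniqueness step explicit via nondegeneracy of $\widetilde{e}_V$, which the paper leaves implicit in its appeal to \eqref{QQ_S lambda def} and \eqref{criterion Q}.
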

\begin{proof}
	By (\ref{QQ_S lambda def}) and (\ref{criterion Q}), in order to prove that \(\QQ_V(\lambda)\) and \(\QQ(\lambda)\vert_V\) coincide, it suffices to show that
	\[
	\ol{\widetilde{e}_V}(\lambda) = \widetilde{e}_V(\QQ(\lambda)\vert_V\otimes \id_{\ul{V^\ast}}),
	\]
	or in other words, that
	\begin{equation}
	\label{to be proven Q}
	\ol{\widetilde{e}_{V}}(\lambda)(v\otimes f) = f(q^{2\rho}\QQ(\lambda)\cdot v)
	\end{equation}
	for any \(v\in V\) and \(f\in V^\ast\).
	Upon writing
	\[
	\mathbb{J}(\lambda) = \sum\nolimits_{\beta\in Q^+} X_\beta\otimes Y_\beta,
	\]
	with \(X_\beta\in U^-[-\beta]\) and \(Y_\beta\in U^+[\beta]\), the right hand side of \eqref{to be proven Q} equals
	\[
\sum\nolimits_{\beta\in Q^+}f\bigl(q^{2\rho}S^{-1}(Y_\beta)X_\beta\cdot v\bigr),
	\]
	whereas by \eqref{dynrighteval} and \eqref{relJj}
	the left-hand side of (\ref{to be proven Q}) becomes
	\[
	\sum\nolimits_{\beta\in Q^+}(Y_\beta\cdot f)(q^{2\rho}X_\beta\cdot v) =
	 \sum\nolimits_{\beta\in Q^+}f\left(S(Y_\beta)q^{2\rho}X_\beta\cdot v\right).
	\]
	The result hence follows straightforwardly from the property \(q^{2\rho}X = S^{2}(X)q^{2\rho}\), valid for any \(X\in U_q(\g)\).
\end{proof}

The concept of the universal dynamical fusion operator \(\mathbb{J}(\lambda)\) naturally leads to the notion of the {\it universal dynamical R-matrix}, defined by
\begin{equation}
\label{universal dyn R-matrix def}
R(\lambda):= (\mathbb{J}^{21}(\lambda))^{-1}\,\cR\,\mathbb{J}(\lambda),\qquad \lambda\in\mathfrak{h}_{\textup{reg}}^*
\end{equation}
and satisfying
\begin{equation}\label{relRr}
\bigl(\pi_V\otimes\pi_W\bigr)(R(\lambda))=\Rdyn_{V,W}(\lambda)
\end{equation}
as endomorphisms in $\textup{End}_{\mathcal{N}_{\textup{fd}}}(\underline{V}\otimes\underline{W})$ for all $V,W\in\Rep$.

\begin{remark}\label{notconv}
The notational conventions in this subsection deviate from those of Etingof and Varchenko. More precisely, in \cite{Etingof&Varchenko-2000} the notation $\mathbb{J}(\lambda)$ is used for the universal dynamical fusion matrix with argument $-\lambda-\rho$, 
and the notation \(\QQ(\lambda)\) is used for the element (\ref{Q algebraic def}) with $\lambda$ replaced by $-\lambda-\rho$.
 \end{remark}

\begin{proposition}
	\label{prop Q inverse} 
	 \(\QQ_S\in\End_{\Ndyn^\str}(\ul{S})\) is an automorphism, and for $V\in\Rep$ and $\lambda\in\mathfrak{h}_{\textup{reg}}^*$ we have
	\[
	\QQ_V(\lambda)^{-1}\vert_{V[\nu]}=
	m^{\mr{op}}\bigl((S^{-1}\otimes\id)\mathbb{J}(\lambda+\nu)^{-1}\bigr)\vert_{V[\nu]}.
	\] 
\end{proposition}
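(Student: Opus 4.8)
The plan is to reduce the statement to a pointwise computation in $\cN_{\mr{fd}}$ and to read off the inverse from the dynamical right-duality snake relation. By Proposition \ref{prop interpretation of Q} the endomorphism $\QQ_V(\lambda)$ is the action on $V$ of the algebraic element $\QQ(\lambda)=m^{\mr{op}}((\id\otimes S^{-1})\mathbb{J}(\lambda))$ from \eqref{Q algebraic def}, so it preserves each finite-dimensional weight space $V[\nu]$; hence it suffices to produce a one-sided inverse on $V[\nu]$, which is then automatically two-sided. Both the invertibility and the explicit formula will drop out of a single computation that closely mirrors the proof of Proposition \ref{prop interpretation of Q}, but now carried out for the dynamical coevaluation.

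First I would make $\ol{\widetilde{\iota}_V}(\lambda)$ explicit. By \eqref{dynrighteval}, \eqref{relJj} and the fact that $j_{(W)}=\id$, the dynamical coevaluation is $\ol{\widetilde{\iota}_V}(\lambda)=j_{(V^*,V)}(\lambda)^{-1}\widetilde{\iota}_V=(\pi_{V^*}\otimes\pi_V)(\mathbb{J}(\lambda)^{-1})\sum_{b\in\mathcal{B}_V}b^*\otimes q^{-2\rho}b$, where I write $\mathbb{J}(\lambda)^{-1}=\sum_\gamma X_\gamma\otimes Y_\gamma$ with $X_\gamma\in U^-[-\gamma]$ and $Y_\gamma\in U^+[\gamma]$. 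Next I would evaluate the first snake identity of \eqref{dynrightstraight}, namely $(\ol{\widetilde{e}_V}\,\ol{\tens}\,\id_{\ul V})\circ(\id_{\ul V}\,\ol{\tens}\,\ol{\widetilde{\iota}_V})=\id_{\ul V}$, at $\lambda$ in the graphical calculus of $\cN_{\mr{fd}}$. The crucial point is the weight-shift convention of Lemma \ref{lemlow}: in $(\ol{\widetilde{e}_V}\,\ol{\tens}\,\id_{\ul V})$ the dynamical evaluation sits in the left tensor slot, so it is evaluated not at $\lambda$ but at $\lambda$ minus the weight of the through-strand $\ul V$. Rewriting $\ol{\widetilde{e}_V}(\mu)(v\otimes f)=f(q^{2\rho}\QQ(\mu)v)$ via the criterion \eqref{criterion Q}, the snake relation becomes, on $v\in V[\nu]$,
\[
\sum_{\gamma,b}b^*\bigl(S(X_\gamma)\,q^{2\rho}\,\QQ(\lambda-\sigma_{\gamma,b})\,v\bigr)\,Y_\gamma q^{-2\rho}b=v,
\]
where $\sigma_{\gamma,b}$ denotes the weight of $Y_\gamma q^{-2\rho}b$.

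The key observation that produces the spectral shift is that a term survives only when $S(X_\gamma)q^{2\rho}\QQ(\lambda-\sigma_{\gamma,b})v$ has a nonzero $V[\wt(b)]$-component; since $\QQ(\mu)$ and $q^{2\rho}$ preserve weights while $S(X_\gamma)$ lowers weight by $\gamma$, this forces $\wt(b)=\nu-\gamma$ and hence $\sigma_{\gamma,b}=\nu$ for every surviving term. Thus $\QQ$ is uniformly evaluated at $\lambda-\nu$, and I can then collapse the dual-basis sum by completeness $\sum_b b^*(u)b=u$ and use $q^{-2\rho}S(X_\gamma)q^{2\rho}=S^{-1}(X_\gamma)$, which follows from \eqref{action of q^2rho}. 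This yields $m^{\mr{op}}((S^{-1}\otimes\id)\mathbb{J}(\lambda)^{-1})\,\QQ(\lambda-\nu)=\id$ on $V[\nu]$; replacing $\lambda$ by $\lambda+\nu$ gives exactly the claimed left inverse, and finite-dimensionality of $V[\nu]$ upgrades it to a two-sided inverse, establishing simultaneously that $\QQ_V$ is an automorphism and the asserted formula. For general $S\in\Rep^\str$ the same snake computation, now with the weight operator $\mh_{\ul S}$ in place of $\nu$, shows that $\QQ_S$ is invertible.

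The main obstacle I anticipate is precisely this dynamical weight-shift bookkeeping: the inverse carries the argument $\lambda+\nu$ rather than $\lambda$ solely because the dynamical evaluation in the snake relation is evaluated at a shifted spectral parameter, and one must argue that the surviving intermediate weight equals $\nu$ in order to pin this shift down. Once that is handled, everything else is a routine transcription of the computation in the proof of Proposition \ref{prop interpretation of Q}.
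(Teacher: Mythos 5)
Your proposal is correct and is essentially the paper's own argument: the paper also extracts the invertibility of $\QQ_S(\lambda)$ and the shifted formula from the dynamical duality (snake) relations of Figure \ref{duality A}/\eqref{dynrightstraight}, then specializes to $S=(V)$ and identifies the result using \eqref{dynrighteval}, \eqref{action of q^2rho} and Proposition \ref{prop interpretation of Q} --- exactly your three inputs --- with the forward weight shift $\widehat{\mh}_S$ appearing there being your $\lambda+\nu$; your computation is just the algebraic unwinding (via the Reshetikhin--Turaev functor) of the paper's diagram manipulation. One immaterial slip: the homogeneous components of $\mathbb{J}(\lambda)^{-1}$ need only lie in $U_q[-\gamma]\otimes U_q[\gamma]$ rather than $U^-[-\gamma]\otimes U^+[\gamma]$, but your weight bookkeeping uses only this weaker homogeneity, so nothing breaks.
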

\begin{proof}
	For any \(S\in \Rep^\str\), upon combining Figure \ref{duality A} with the relations of \cite[Rel\(_1\) and Rel\(_4\)]{Reshetikhin&Turaev-1990} within the ribbon graph calculus for \(\cN_{\mr{fd}}^\str\), one finds that
	\begin{center}
		\includegraphics[scale=0.8]{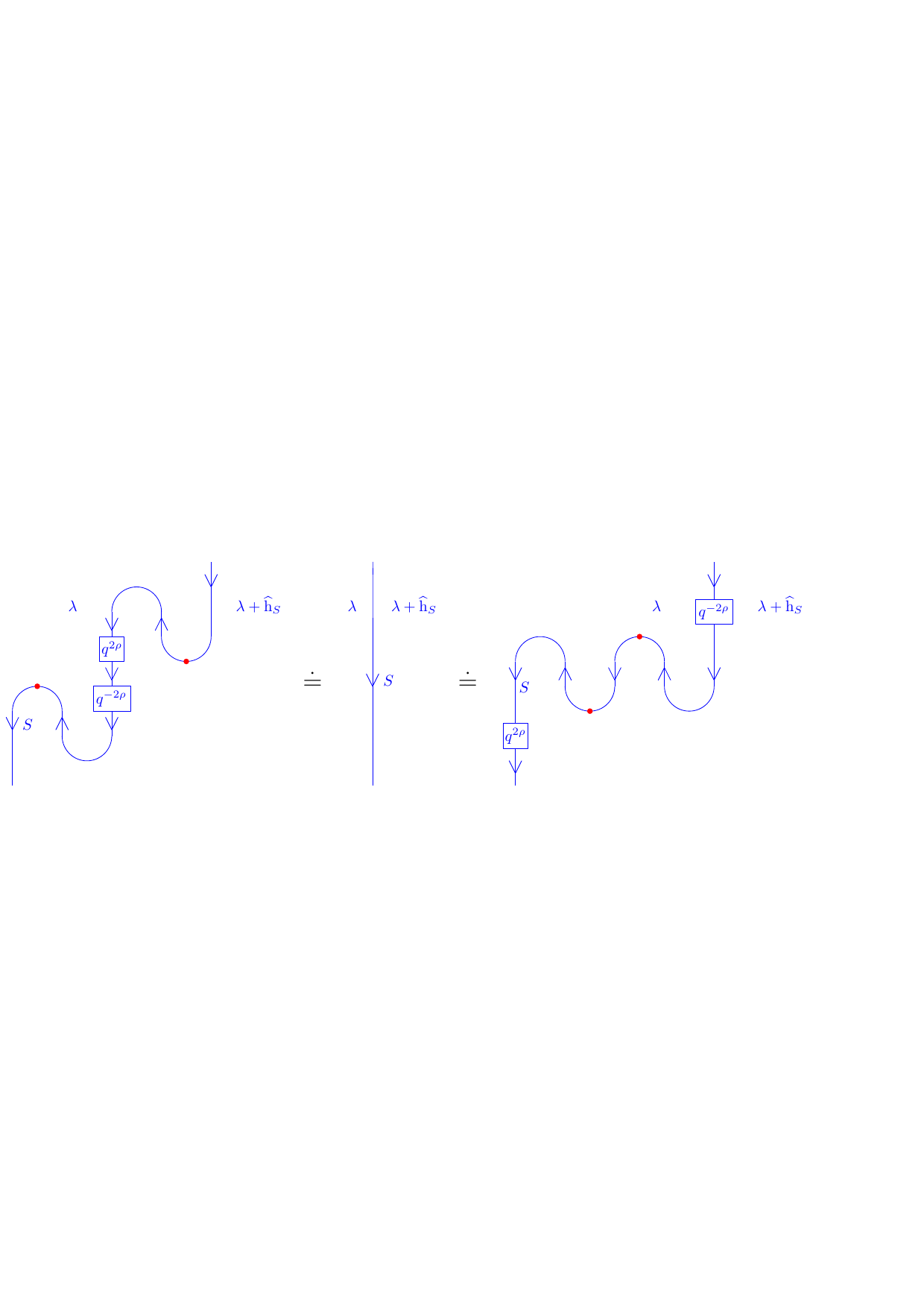}
	\end{center}
	where $\widehat{\mh}_S$ denotes the forward weight shift (i.e.\ if we attach coupons labeled by $\alpha_{v_i}$, for weight vectors $v_i\in V_i[\nu_i]$ ($1\leq i\leq k$), to the bundle of strands colored by $S=(V_1,\ldots,V_k)$, then the weight appearing in the region right of the bundle is $\lambda+\sum_{i=1}^k\nu_i$).
	
	This asserts that \(\QQ_S(\lambda)\) is invertible in \(\End_{\cN_{\mr{fd}}^\str}(\ul{S})\), with inverse
	\begin{center}
		\includegraphics[scale=0.75]{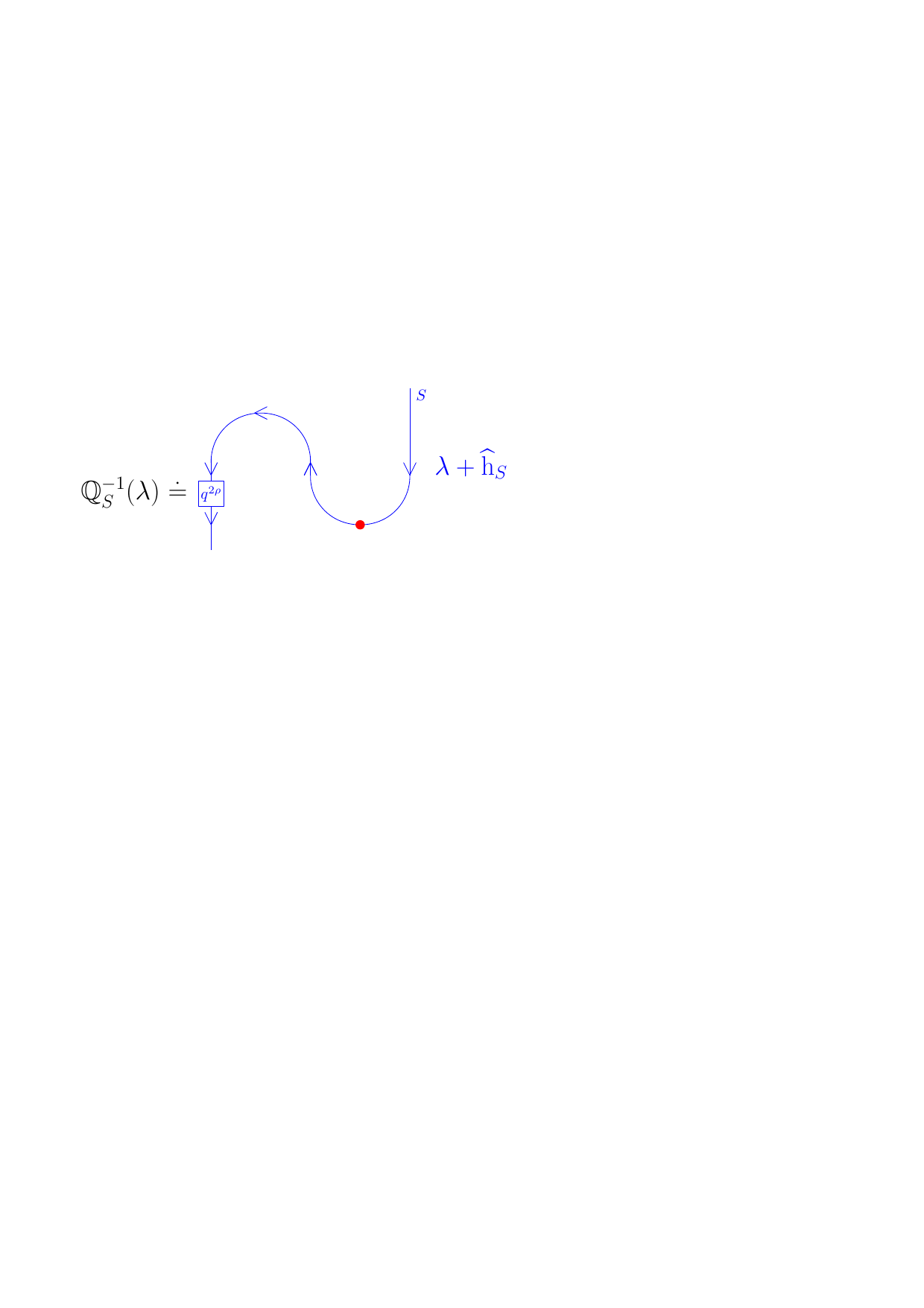}
	\end{center}
	The anticipated formula for \(\QQ_V(\lambda)^{-1}\vert_{V[\nu]}\) then follows from applying the Reshetikhin-Turaev functor \(\cF^{\mr{RT}}_{\cN_{\mr{fd}}^\str}\)
	to both sides 
	with \(S = (V)\) and using \eqref{dynrighteval}, (\ref{action of q^2rho}) and Proposition \ref{prop interpretation of Q}.
\end{proof}

 In the Appendix we will use the graphical calculus to obtain graphical proofs for various other identities involving $\mathbb{J}_S$ and $\mathbb{Q}_S$.

\section{Graphical derivation of dual \(q\)-KZB equations for weighted trace functions}
\label{Section dual q-KZB}

In this section we give a graphical proof of dual $q$-KZB type equations for weighted trace functions of \(k\)-point quantum vertex operators. We will show in
Section \ref{Section Etingof-Varchenko normalization} that these equations imply the dual $q$-KZB equations for weighted trace functions 
derived before by Etingof \& Varchenko \cite{Etingof&Varchenko-2000} through intricate algebraic computations. 

\subsection{Weighted trace functions}\label{Subsection Formal weighted trace functions}

Before introducing the weighted trace functions we need to introduce some more notations.

Suppose that $\phi: U\rightarrow U\otimes V$ is a linear map with $V$ finite dimensional, and fix a basis $\{u_i\}_i$ of $U$ with dual basis $\{u_i^*\}_i$. Suppose that
$\sum_i(u_i^*\otimes\psi)(\phi(u_i))$ absolutely converges for all $\psi\in V^*$. Then there exists a unique vector $\textup{Tr}_U(\phi)\in V$ such that
\[
\psi\bigl(\textup{Tr}_U(\phi)\bigr)=\sum_i(u_i^*\otimes\psi)(\phi(u_i))\qquad \forall\,\psi\in V^*,
\]
and we call $\textup{Tr}_U(\phi)\in V$ the partial trace of $\phi$ over $U$. It does not depend on the choice of basis $\{u_i\}_i$.

The real span $\mathfrak{h}_{\mathbb{R}}^*$  of the roots $\Phi$ is a real form of $\mathfrak{h}^*$. We write $\Re(\xi)\in\mathfrak{h}_{\mathbb{R}}^*$
for the real component of $\xi\in\mathfrak{h}^*$. The negative Weyl chamber is the region in $\mathfrak{h}_{\mathbb{R}}^*$
consisting of $\xi\in\mathfrak{h}_{\mathbb{R}}^*$ satisfying $\langle\xi,\alpha_i\rangle<0$ for $i=1,\ldots,r$. 

Fix \(\mu\in\hh_{\mathrm{reg}}^\ast\), \(S=(V_1,\ldots,V_k)\in\Rep^\str\) and \(\Phi\in\Hom_{\cM_{\mr{adm}}^\str}(M_\mu,M_\mu\tens S)\). From now on we denote the underlying morphism
$\cF^\str(\Phi)\in\Hom_{\cM_{\mr{adm}}}(M_\mu,M_\mu\otimes\cF^\str(S))$ again by $\Phi$ if it does not lead to confusion.
The $\cF^\str(S)[0]$-valued, weight trace function of $\Phi$ (cf. \cite{Etingof&Varchenko-2000}) is defined by
\begin{equation}
\label{trace formally}
H_\mu^\Phi(q^\xi):=\Tr_{M_\mu}(\Phi\circ\pi_\mu(q^\xi))=\sum_{\beta\in Q^+} \Tr_{M_\mu[\mu-\beta]}(\Phi)q^{\langle \mu-\beta,\xi\rangle},
\end{equation}
for $\Re(\xi)$ deep enough in the negative Weyl chamber. The absolute convergence of the series is guaranteed by \cite[Proposition 3.2]{Etingof&Styrkas-1998} (recall here that $0<q<1$).
In this paper we will always assume that \(\xi\) lies deep in the negative Weyl chamber when considering the trace function \(H_\mu^\Phi(q^\xi)\). 
We use Figure \ref{formal trace} as the graphical representation of $H_\mu^\Phi(q^\xi)$.

The formal interpretation of Figure \ref{formal trace} is as a coupon in the graphical calculus of $\mathcal{N}_{\textup{fd}}^\str$ colored by the morphism
$\mathcal{H}_\mu^\Phi(q^\xi)\in\Hom_{\cN_{\mr{fd}}^\str}(\CC_0,\ul{S})$ represented by the co-evaluation map 
\begin{equation}
\label{notation with curly H}
\alpha_{H_\mu^\Phi(q^\xi)}: \CC_0\to \cF^\str(\ul{S}),\qquad 1_0\mapsto H_\mu^\Phi(q^\xi)
\end{equation}
(cf. Subsection \ref{GcSection}).
But the notation Figure \ref{formal trace} purposely includes the structure of the colour of the coupon in terms of $\Phi$, since it 
 is still allowed to manipulate the black spin strands of $\Phi$ according to the graphical calculus of \(\Mfd^\str\) as long as this manipulation only involves the black strands of \(\Phi\). We will encounter an example of such a situation in the proof of the upcoming Proposition \ref{prop boundary qKZB}. 
 
The red strands in Figure \ref{formal trace} remember the fact that the color $H_\mu(q^\xi)$ is defined as weighted trace over $M_\mu$. One does need to be careful in applying graphical calculus to the red strands when it involves the red cups and caps, since they are colored by objects in a monoidal category without duality.

\begin{figure}[H]
	\centering
	\includegraphics[scale=1]{diagram_25_formal_trace}
	\caption{}
	\label{formal trace}
\end{figure}

Spin components of \(\mathcal{H}_\mu^\Phi(q^\xi)\) are obtained by pairing \(\cF^\str(\ul{S})[0]\) to a vector  
 \(f\in\cF^\str(\ul{S^\ast})[0]\).  In turn, $\cF^\str(\ul{S^\ast})[0]$ can be naturally parametrized by expectation values $\langle\Psi\rangle$ of morphisms
\(\Psi\in\Hom_{\cM_{\mr{adm}}^\str}(M_{\lambda},S^\ast\tens M_{\lambda})\) (cf. Remark \ref{remark dual quantum vertex operators}), where $\lambda\in\mathfrak{h}_{\textup{reg}}^*$ is an arbitrary second regular highest weight. We use Figure
\ref{spincomp} as the graphical representation of $\langle\Psi\rangle$.
\begin{figure}[H]
\centering
\includegraphics[scale=1]{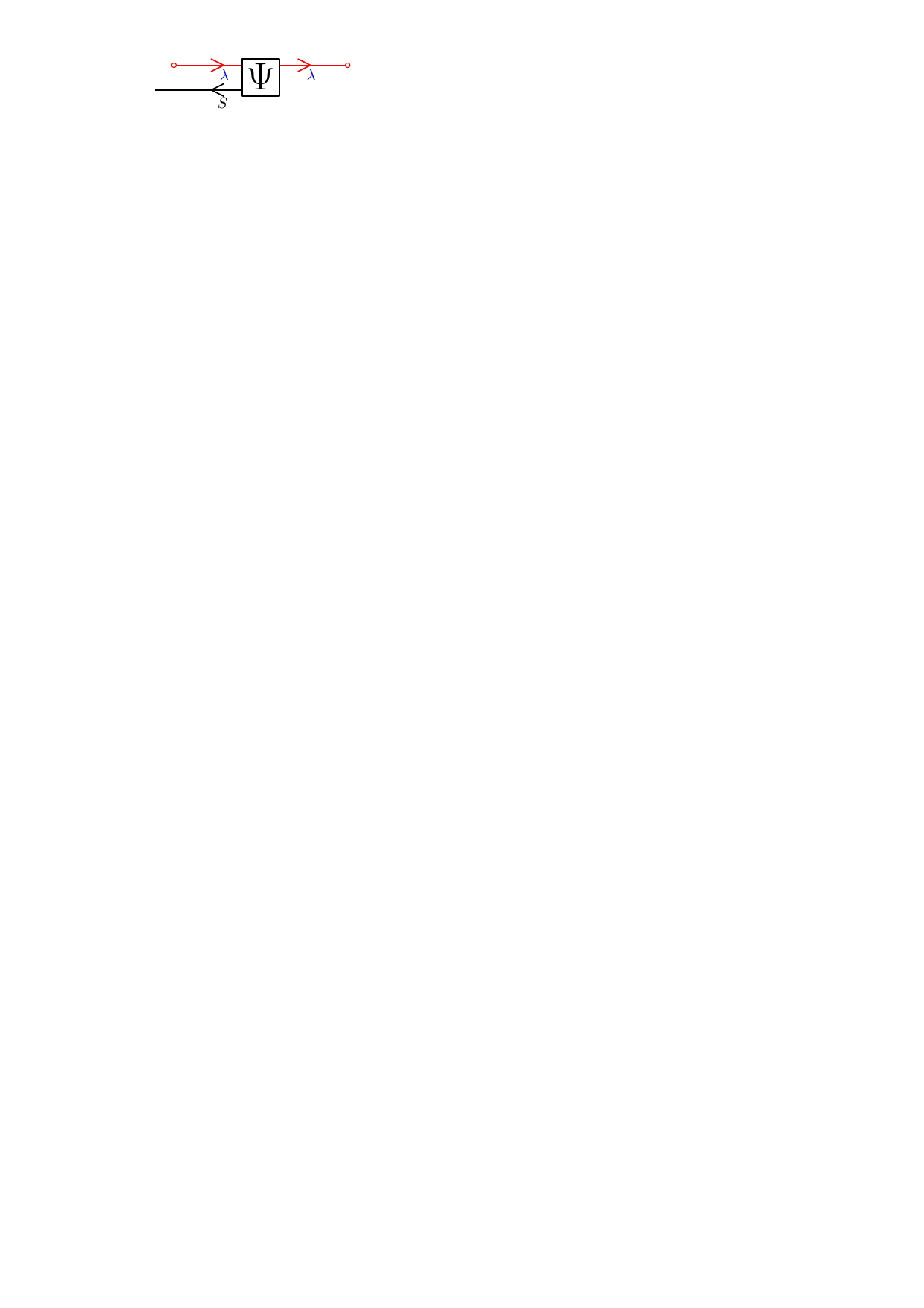}
\caption{}
\label{spincomp}
\end{figure}

\noindent The formal interpretation of Figure \ref{spincomp} is as a coupon in the graphical calculus of $\mathcal{N}_{\textup{fd}}^\str$
colored by the morphism $\mathbb{C}_0\rightarrow\ul{S^*}$ in $\mathcal{N}_{\textup{fd}}^\str$ represented by  
$\alpha_{\langle\Psi\rangle}$. But as is the case for Figure \ref{formal trace}, the notation purposely remembers the structure of the coupon as the expectation value of a dual quantum vertex operator because graphical calculus of $\Rep^\str$ may be applied to the black spin strands. Furthermore, graphical calculus of $\mathcal{M}_{\textup{adm}}^\str$
may be applied to the black and red strands as long as it does not involve the red circles (Figures \ref{eval_lambda} \& \ref{eval_lambda_dual}).
 
The $\langle\Psi\rangle$-spin component of $H_\mu^\Phi(q^\xi)$ is obtained by pairing $H_\mu^\Phi(q^\xi)\in\cF^\str(S)[0]$ to the vector
$\langle\Psi\rangle\in\cF^\str(S^*)[0]$ via the map 
representing the right evaluation morphism
\(\widehat{e}_{\ul{S}}: \ul{S}\tens \ul{S^\ast} \to \emptyset\) in \(\cN_{\mr{fd}}^\str\). 
Lifting it to the strictified category $\mathcal{N}_{\textup{fd}}^\str$ gives the morphism
\begin{equation}
\label{t functions def}
\mathfrak{t}^{\Phi,\Psi}_{\lambda,\mu,\xi}:=\widehat{e}_{\ul{S}}\left(\mathcal{H}_\mu^\Phi(q^\xi)\tens \langle \Psi\rangle\right)
\end{equation}
in $\mathcal{N}_{\textup{fd}}^\str$, where $\langle\Psi\rangle$ now stands for the morphism $\mathbb{C}_0\rightarrow \ul{S^*}$ represented by $\alpha_{\langle\Psi\rangle}$.
Since the map representing $\widehat{e}_{\ul{S}}$ is acting on a vector in $\cF^\str(S)[0]\otimes\cF^\str(S^*)[0]$, we may as well replaced it by the map 
representing the right evaluation morphism $\widetilde{e}_{S}: S\tens S^*\rightarrow\emptyset$ in $\Rep^\str$.
Hence $\mathfrak{t}^{\Phi,\Psi}_{\lambda,\mu,\xi}$ may be graphically represented by
\begin{center}
	\includegraphics[scale=1]{diagram_25b_formal_spin_component}
\end{center}
\noindent
relative to the graphical calculus for $\mathcal{M}_{\mr{adm}}^\str$ and $\Rep^\str$ (the refined graphical calculus for $\Rep^\str$ only applies to the part of the diagram involving solely black strands).

Dual $q$-KZB equations \cite{Etingof&Varchenko-2000} for $\mathfrak{t}^{\Phi,\Psi}_{\lambda,\mu,\xi}$ arise when $\Phi$ and $\Psi$ are (dual) $k$-point quantum vertex operators and 
\begin{equation}
\label{k-point case}
\xi = 2\lambda+2\rho. 
\end{equation}
They then take the form
\begin{equation}\label{dqKZB}
c_{\Psi,\lambda}^{(i)}\,\mathfrak{t}^{\Phi,\Psi}_{\lambda,\mu,2\lambda+2\rho} = \sum_{\sigma\in\wts(V_i)}\mathfrak{t}^{D_{\sigma,i}(\mu)(\Phi),\Psi}_{\lambda,\mu+\sigma,2\lambda+2\rho},\qquad i=1,\ldots,k
\end{equation}
for certain \(D_{\sigma,i}(\mu)\in\End_\CC(\Hom_{\cM_{\mr{adm}}^\str}(M_\mu,M_\mu\tens S)) \) and scalars \(c_{\Psi,\lambda}^{(i)}\). Identifying 
\[
\Hom_{\cM_{\mr{adm}}^\str}(M_\mu,M_\mu\tens S)\simeq\cF^\str(S)[0]
\]
using the expectation value map, the linear operators 
\(D_{\sigma,i}(\mu)\) admit an explicit expression
in terms of dynamical R-matrices. The goal of this section is to derive these dual $q$-KZB equations graphically.

To this end, we will consider any decomposition of \(S=(V_1,\ldots,V_k)\in\Rep^\str\) as three-fold tensor product
\begin{equation}\label{3decomp}
S = S_1\tens S_2\tens S_3
\end{equation}
as well as a corresponding decomposition of the morphism $\Phi$ (resp. $\Psi$) as composition of three morphisms creating the spin space components $S_1,S_2,S_3$ 
(resp. $S_3^*,S_2^*,S_1^*$). We then graphically derive a dual $q$-KZB type equation for $\mathfrak{t}^{\Phi,\Psi}_{\lambda,\mu,2\lambda+2\rho}$ relative to the spin component $S_2$. 
The dual $q$-KZB equations (\ref{dqKZB}) then correspond to the decompositions
\[
S_1 = (V_1,\dots,V_{i-1}), \quad S_2 = (V_i), \quad S_3 = (V_{i+1},\dots,V_k)
\]
of $S$ as three-fold tensor products.

The dual $q$-KZB type equation relative to the decomposition \eqref{3decomp} only requires the morphisms $\Phi\in\textup{Hom}_{\mathcal{M}_{\textup{adm}}^\str}(M_\mu,M_\mu\tens S)$ and $\Psi\in\textup{Hom}_{\mathcal{M}_{\textup{adm}}^\str}(M_\lambda,S^*\tens M_\lambda)$ to be of the form
\begin{equation}\label{Phi3}
\Phi := \Phi_{\mu;S_1,S_2,S_3}^{w_1,w_2,w_3}
\end{equation}
with  \(w_i\in\cF^\str(S_i)[\nu_i]\) such that \(\nu_1+\nu_2+\nu_3 = 0\) (see Definition \ref{reversevertex} and (\ref{reversevertex eq})), and 
 \begin{equation}
\label{bold Psi def}
\Psi:=\Psi^{g_3,g_2,g_1}_{\lambda;S_3^\ast,S_2^\ast,S_1^\ast}
\end{equation}
 with \(g_i\in \cF^\str(S_i^\ast)[\nu_i']\) and \(\nu_1'+\nu_2'+\nu_3'=0\), where
 \[
 \Psi^{g_3,g_2,g_1}_{\lambda;S_3^\ast,S_2^\ast,S_1^\ast}:=(\id_{S_3^\ast\tens S_2^\ast}\tens \Psi_{\lambda_1;S_1^*}^{g_1})(\id_{S_3^\ast}\tens \Psi_{\lambda_2;S_2^*}^{g_2})\Psi_{\lambda_3;S_3^*}^{g_3}
 \]
and
\begin{equation}
\label{choice of Psi_i}
\Psi_{\lambda_i;S_i^*}^{g_i}:= c_{S_i^\ast,M_{\lambda_{i-1}}}^{-1}\circ\Phi_{\lambda_i;S_i^\ast}^{g_i}\in\Hom_{\cM_{\mr{adm}}^\str}(M_{\lambda_i},S_i^\ast\tens M_{\lambda_{i-1}})
\end{equation}
with the highest weight shifts given by
\begin{equation}
\label{lambda_j def}
\lambda_0 = \lambda_3 := \lambda, \qquad \lambda_1 := \lambda-\nu_2'-\nu_3', \qquad \lambda_2:=\lambda-\nu_3'.
\end{equation}

Let us write \(\boldsymbol{S} = (S_1,S_2,S_3)\), \(\boldsymbol{w} = (w_1,w_2,w_3)\) and \(\bs{g} = (g_1, g_2, g_3)\), and introduce the notation
\begin{equation}
\label{Y def}
\mathcal{Y}_{\boldsymbol{S}}^{\bs{w},\bs{g}}(\lambda,\mu, \xi):=\mathfrak{t}^{\Phi,\Psi}_{\lambda,\mu,\xi}
\end{equation}
when $\Phi$ and $\Psi$ are given by \eqref{Phi3} and \eqref{bold Psi def}, respectively.
It is graphically represented by
\begin{figure}[H]
\centering
\includegraphics[scale=0.75]{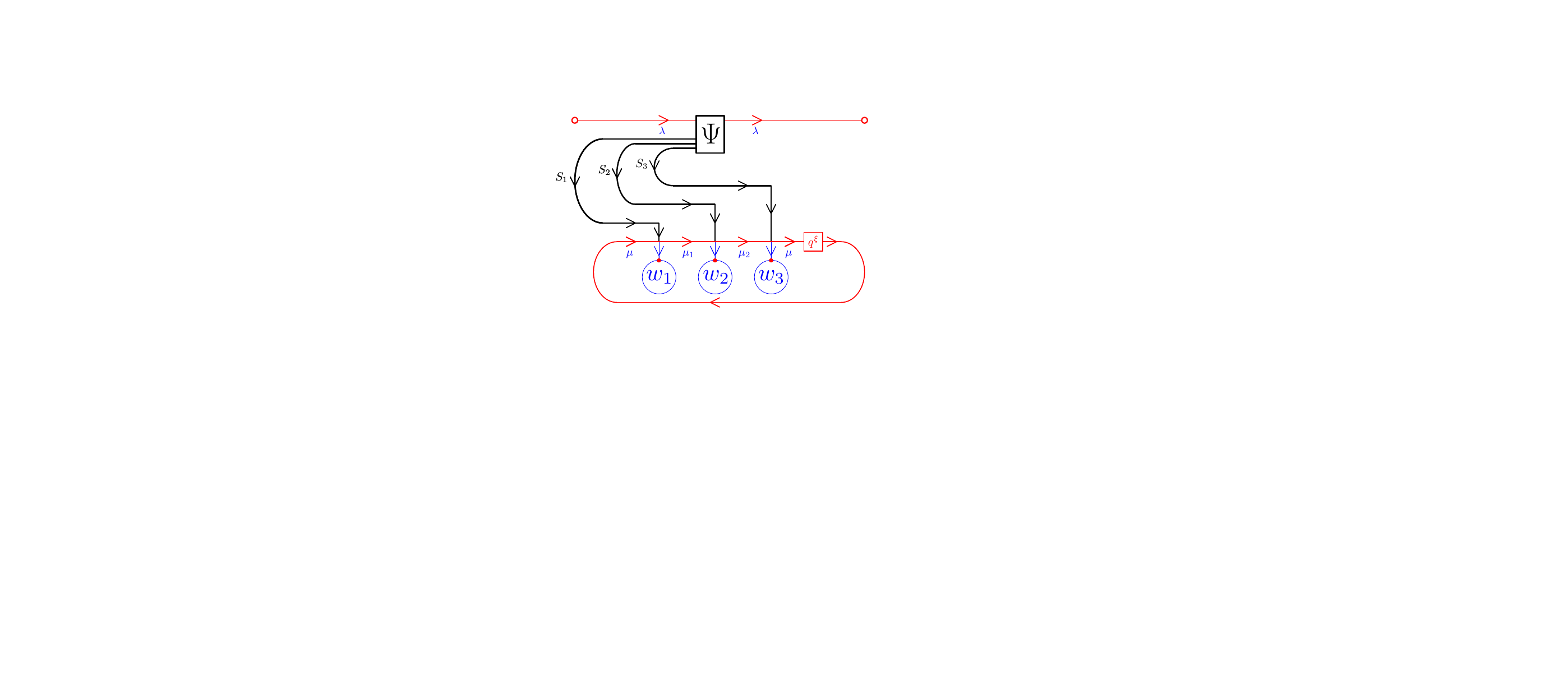}
\caption{}
\label{Ydiagram}
\end{figure}
\noindent relative to the graphical representation for $\cN_{\mr{adm}}^\str$ and $\cN_{\mr{fd}}^\str$,
where $\Psi:= \Psi^{g_3,g_2,g_1}_{\lambda;S_3^\ast,S_2^\ast,S_1^\ast}$ and 
\begin{equation}\label{mu_j def}
\mu_1:=\mu-\nu_2-\nu_3, \qquad \mu_2 := \mu -\nu_3
\end{equation}
(the $\mathbf{g}$-dependence of $\Psi=\Psi^{g_3,g_2,g_1}_{\lambda;S_3^\ast,S_2^\ast,S_1^\ast}$ (see \eqref{bold Psi def}) can also be made visible graphically, but for the moment we do not need it in the diagrams).

As a first step towards the graphical derivation of dual $q$-KZB type equations, we will consider the functions obtained from \(\mathcal{Y}_{\boldsymbol{S}}^{\boldsymbol{w},\bs{g}}(\lambda,\mu, \xi)\) by removing the weighted trace over the lower Verma strand, and the evaluation in a highest weight vector and dual vector in the upper Verma strand. We will refer to the latter operations as removing (or, for the inverse operations, imposing) boundary conditions, as justified by their interpretation as boundary conditions for the corresponding quantum spin chains (cf. \cite{Reshetikhin&Stokman-2020-B}). The resulting functions 
\begin{equation}
\label{T_S def}
\mathcal{T}_{\bs{S}}^{\bs{w},\bs{g}}(\lambda,\mu):= (\id_{M_{\mu}}\tens\widetilde{e}_S\tens\id_{M_{\lambda}})\left(\Phi_{\mu;S_1,S_2,S_3}^{w_1,w_2,w_3}\tens \Psi^{g_3,g_2,g_1}_{\lambda;S_3^\ast,S_2^\ast,S_1^\ast}\right)\in\End_{\cM_{\mr{adm}}^\str}(M_\mu\tens M_{\lambda})
\end{equation}
are graphically represented by
\begin{figure}[H]
\centering
	\includegraphics[scale = 0.75]{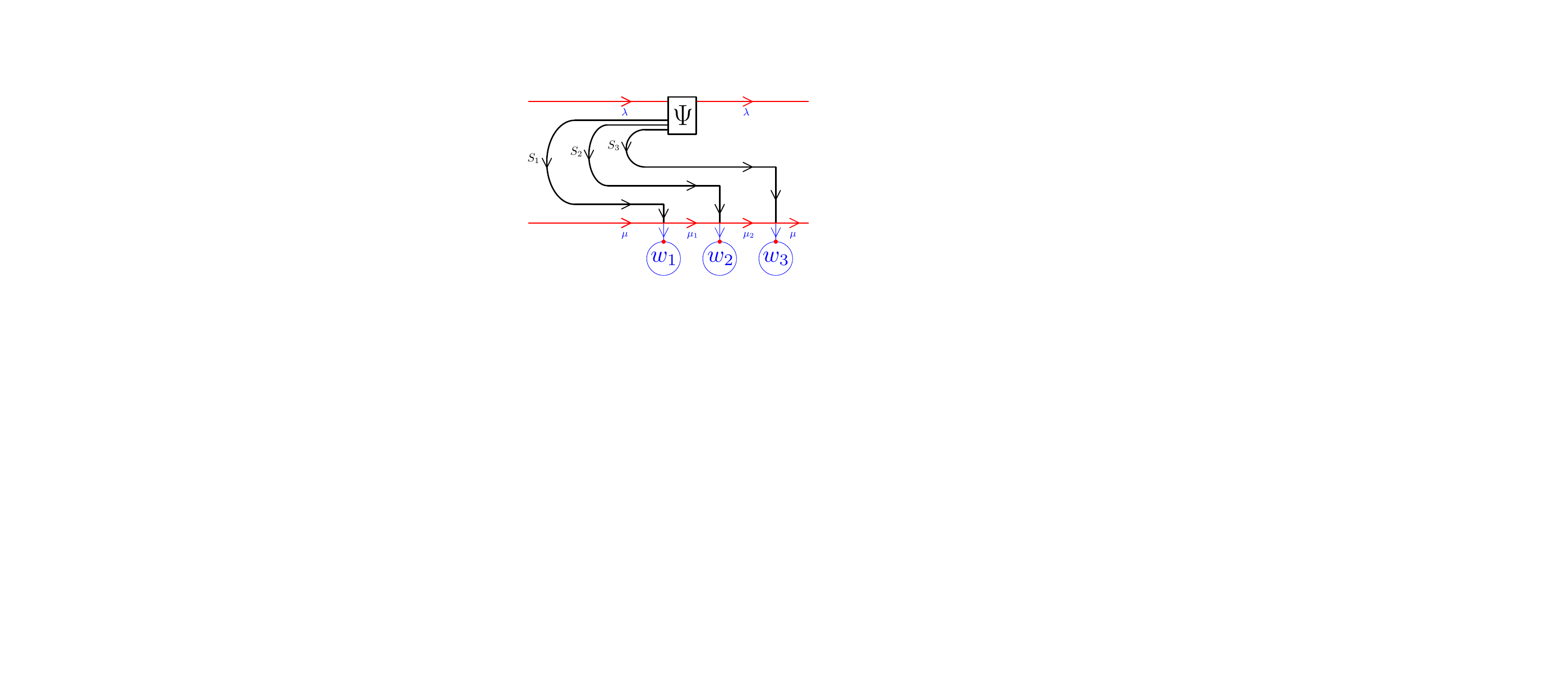}
\caption{}
\label{Tdiagram}
\end{figure}
\noindent
relative to the graphical calculus for $\mathcal{M}_{\mr{adm}}^\str$ and $\Rep^\str$.
The relation to the trace functions is
\begin{equation}
\label{Y in terms of T}
\mathcal{Y}_{\bs{S}}^{\bs{w},\bs{g}}(\lambda,\mu, \xi) =
 \bigl(\Tr_{M_\mu}\otimes\langle\cdot\rangle\bigr)\left(\mathcal{T}_{\bs{S}}^{\bs{w},\bs{g}}(\lambda,\mu)\circ(\pi_\mu(q^\xi)\otimes\id_{M_{\lambda}}) \right)
\end{equation}
on the level of the representing maps.

In the next subsection
we will first obtain operator dual \(q\)-KZB type equations for the functions \(\mathcal{T}_{\bs{S}}^{\bs{w},\bs{g}}(\lambda,\mu)\). We will then use it 
to derive in Section \ref{Subsection Twisted trace functions} dual $q$-KZB type equations in $\mu$ for the scalar trace functions 
\begin{equation}\label{Zdef}
\mathcal{Z}_{\boldsymbol{S}}^{\bs{w},\bs{g}}(\lambda,\mu):=
\mathcal{Y}_{\boldsymbol{S}}^{\boldsymbol{w},\bs{g}}(\lambda,\mu,2\lambda+2\rho)
\end{equation}
upon graphically implementing the boundary conditions (the specialisation of the weight $\xi$ to $2\lambda+2\rho$ will have a natural graphical interpretation).

\subsection{Operator \(q\)-KZB equations}
\label{Subsection Topological operator q-KZB}

Add to the dual quantum vertex operator $\Psi=\Psi_{\lambda; S_3^*,S_2^*,S_1^*}^{g_3,g_2,g_1}$, consisting of the composition of the three intertwiners $\Psi_i=\Psi_{\lambda_i;S_i^*}^{g_i}$ (see \eqref{choice of Psi_i}),
the action of the quantum Casimir \eqref{qC} and its inverse on the Verma modules \(M_{\lambda_1}\) and \(M_{\lambda_2}\), respectively.
Since the quantum Casimir acts as scalars on Verma modules (cf., e.g., \cite[Proposition 1.11]{DeClercq&Reshetikhin&Stokman-2022}), we obtain
\begin{center}
	\includegraphics[scale = 0.73]{diagram_20f_Casimirs}
\end{center}
relative to the graphical calculus for $\mathcal{M}_{\textup{adm}}^\str$. In the following lemma we move the twists in the Verma strands to the spin strands using the graphical calculus for $\mathcal{M}_{\textup{adm}}^\str$.
\begin{lemma}
	\label{lemma before topological qKZB}	
	For \(\lambda_0,\lambda_1,\lambda_2,\lambda_3\in\hh_{\mathrm{reg}}^\ast\), \(S_i\in\Rep^\str\) and \(\Psi_i\in \Hom_{\cM_{\mr{adm}}^\str}(M_{\lambda_i}, S_i^\ast\tens M_{\lambda_{i-1}})\), let us write \(S:=S_1\tens S_2\tens S_3\) and set
	\begin{equation}\label{Psiform}
	\Psi:=(\id_{S_3^\ast\tens S_2^\ast}\tens \Psi_1)(\id_{S_3^\ast}\tens \Psi_2)\Psi_3\in\Hom_{\cM_{\mr{adm}}^\str}(M_{\lambda_3},S^\ast\tens M_{\lambda_0}).
	\end{equation}
	Then we have
	\begin{figure}[H]
	\centering
		\includegraphics[scale = 0.75]{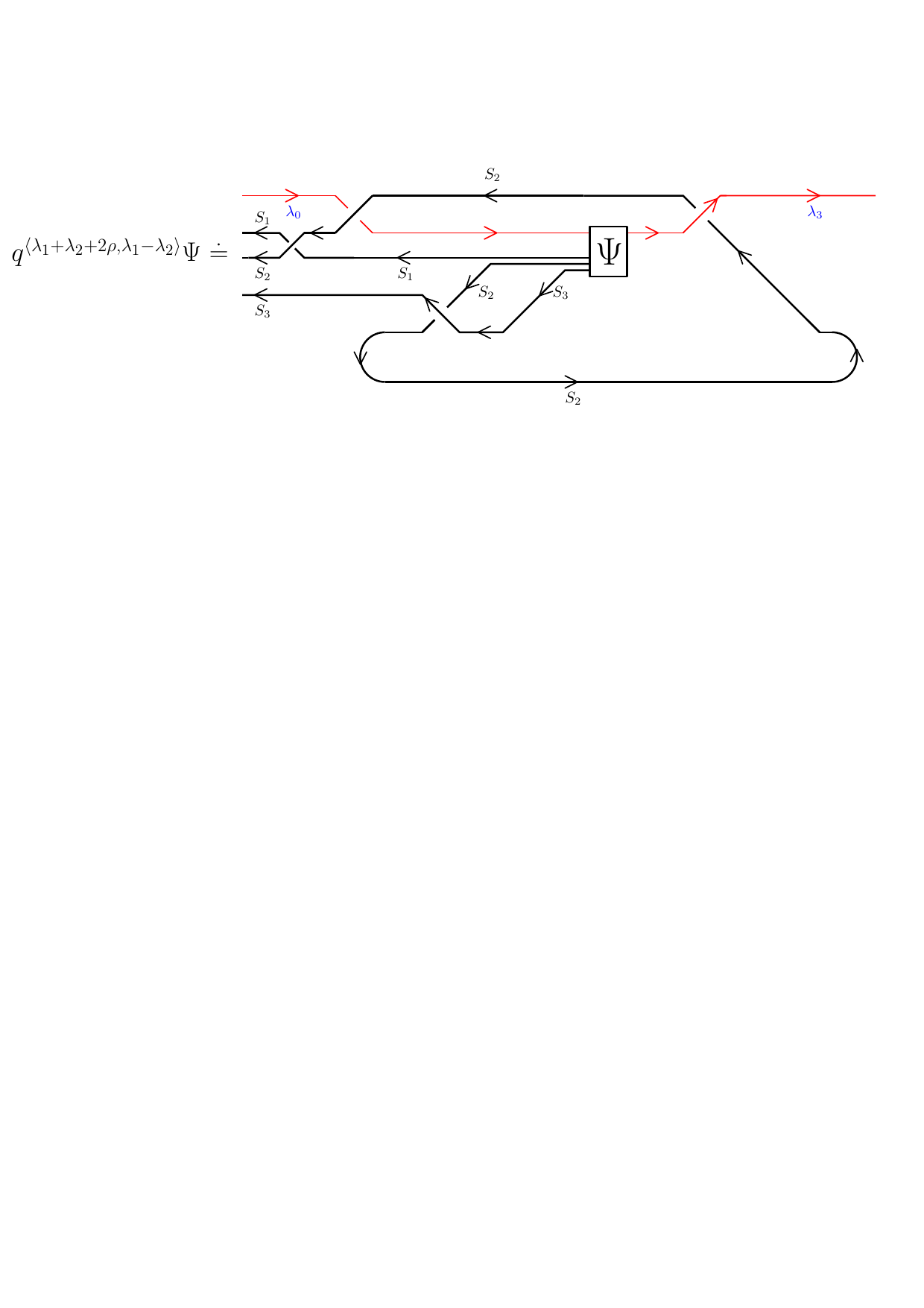}
		\caption{}
		\label{SS1}
	\end{figure}
\noindent
relative to the graphical calculus for $\mathcal{M}_{\mr{adm}}^\str$ and $\Rep^\str$ \textup{(}the left hand side should be considered as the coupon colored by $q^{\langle\lambda_1+\lambda_2+2\rho,\lambda_1-\lambda_2\rangle}\Psi$, and the refined graphical calculus for $\Rep^\str$ applies to the part of the diagram in the right hand side involving solely black strands\textup{)}.
\end{lemma}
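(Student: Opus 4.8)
The plan is to read the left-hand side of Figure \ref{SS1} as the morphism $\Psi$ of \eqref{Psiform} with a full (positive) twist $\vartheta_{M_{\lambda_1}}$ inserted on the internal Verma strand colored by $M_{\lambda_1}$ and a full inverse twist $\vartheta_{M_{\lambda_2}}^{-1}$ inserted on the internal Verma strand colored by $M_{\lambda_2}$ --- this is precisely the diagram displayed just above the lemma. Since the quantum Casimir \eqref{qC} acts on $M_\nu$ by the scalar $q^{\langle\nu,\nu+2\rho\rangle}$ (cf. \cite[Proposition 1.11]{DeClercq&Reshetikhin&Stokman-2022}), the underlying morphism equals $q^{\langle\lambda_1,\lambda_1+2\rho\rangle-\langle\lambda_2,\lambda_2+2\rho\rangle}\Psi=q^{\langle\lambda_1+\lambda_2+2\rho,\lambda_1-\lambda_2\rangle}\Psi$, the last equality using that the cross terms cancel by symmetry of $\langle\cdot,\cdot\rangle$; this justifies the stated color of the left-hand coupon. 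The actual content of the lemma is the graphical identity that transports these two internal Verma twists onto the black spin strands.

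The two structural tools are naturality of the twist in $\cM_{\mr{adm}}^\str$, $\vartheta_N\circ f=f\circ\vartheta_M$ for $f\in\Hom(M,N)$ (the undynamicalized form of \eqref{dynnattwist}), and the balancing axiom $\vartheta_{U\tens V}=(\vartheta_U\tens\vartheta_V)\circ c_{V,U}\circ c_{U,V}$ (the undynamicalized form of \eqref{dyntensdualtwist}). First I would slide the full twist $\vartheta_{M_{\lambda_1}}$ upward along the $M_{\lambda_1}$-strand through the coupon $\Psi_1\colon M_{\lambda_1}\to S_1^\ast\tens M_{\lambda_0}$; by naturality it becomes $\vartheta_{S_1^\ast\tens M_{\lambda_0}}$ on the codomain, and the balancing axiom splits this into the double braiding $c_{M_{\lambda_0},S_1^\ast}\circ c_{S_1^\ast,M_{\lambda_0}}$ (a full twist of the $S_1^\ast$-strand around the $M_{\lambda_0}$-strand), the spin twist $\vartheta_{S_1^\ast}$, and the Verma twist $\vartheta_{M_{\lambda_0}}$. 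Analogously, sliding $\vartheta_{M_{\lambda_2}}^{-1}$ upward through $\Psi_2$ yields $\vartheta_{S_2^\ast\tens M_{\lambda_1}}^{-1}$, whose balancing decomposition contributes an inverse double braiding of $S_2^\ast$ around $M_{\lambda_1}$, the spin twist $\vartheta_{S_2^\ast}^{-1}$, and a residual $\vartheta_{M_{\lambda_1}}^{-1}$ on the internal strand; this residual is then slid once more through $\Psi_1$ and split in the same fashion, using naturality of the braiding to move it past the intervening crossings.

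Collecting the contributions gives the right-hand side of Figure \ref{SS1}, in which the two internal Verma twists have been replaced by double braidings of the spin strands around the Verma strand together with intrinsic twists $\vartheta_{S_i^\ast}$ of the spin strands; the leftover twists on the Verma strands are scalars and are absorbed into the overall coefficient. Throughout, every step is a legal move in the restricted ribbon-braid calculus of $\cM_{\mr{adm}}^\str$ recalled in Subsection \ref{gcfirst}, i.e. an isotopy of $\cM_{\mr{adm}}^\str$-colored ribbon-braid graphs together with the relation \(\dot{=}\) supplied by $\cF^{\mr{br}}_{\cM_{\mr{adm}}^\str}$, so that no cups or caps on the Verma strands are ever used --- this is essential, since $\cM_{\mr{adm}}$ carries no duality, while the twists and crossings purely among the black $S_i^\ast$-strands may afterwards be reinterpreted in the refined calculus of the ribbon category $\Rep^\str$. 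I expect the main obstacle to be the bookkeeping: keeping track of the opposite signs of the two inserted twists, of the strand repositionings forced by the intermediate braidings, and of the order in which the double braidings and spin twists stack, so that after the partial cancellations the accumulated configuration matches exactly the one drawn on the right-hand side of Figure \ref{SS1}.
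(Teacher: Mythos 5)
Your proposal is correct and takes essentially the same approach as the paper, whose proof simply defers to \cite[Prop.~3.7]{DeClercq&Reshetikhin&Stokman-2022}: exactly this insertion of the quantum Casimir and its inverse on the intermediate Verma strands, followed by naturality of the twist and the balancing axiom \(\vartheta_{U\tens V}=(\vartheta_U\tens\vartheta_V)\circ c_{V,U}\circ c_{U,V}\) to transport them onto the spin strands. One bookkeeping remark: the contributions of your Step 1 and Step 3 cancel \emph{exactly} (so no leftover Verma twists survive to be ``absorbed'' into the coefficient, which is essential, since any surviving \(\vartheta_{M_{\lambda_0}}^{\pm 1}\) would spoil the stated scalar \(q^{\langle\lambda_1+\lambda_2+2\rho,\lambda_1-\lambda_2\rangle}\)); the derivation is shorter if one first slides \(\vartheta_{M_{\lambda_2}}^{-1}\) up through \(\Psi_2\), so that the residual \(\vartheta_{M_{\lambda_1}}^{-1}\) cancels the inserted \(\vartheta_{M_{\lambda_1}}\) on the spot, leaving only the inverse double braiding of \(S_2^\ast\) around \(M_{\lambda_1}\) (pushed through \(\Psi_1\) by naturality) and the spin twist \(\vartheta_{S_2^\ast}^{-1}\).
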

\begin{proof}
	This follows in complete analogy to the graphical derivation of the operator \(q\)-KZB equations for 3-point quantum vertex operators given in \cite[Proposition 3.7]{DeClercq&Reshetikhin&Stokman-2022}.
\end{proof}

In particular, this result can be applied to the dual quantum vertex operator  
\(\Psi=\Psi^{g_3,g_2,g_1}_{\lambda;S_3^\ast,S_2^\ast,S_1^\ast}\) in the definition (\ref{T_S def}) of the functions \(\mathcal{T}_{\bs{S}}^{\bs{w},\bs{g}}(\lambda,\mu)\) (recall that $\bs{g}$ is parametrizing the dual quantum vertex operator $\Psi$, see \eqref{bold Psi def}--\eqref{lambda_j def}). The effect of the action of the quantum Casimir on the Verma modules labeled by \(\lambda_1\) and \(\lambda_2\) in $\Psi$, which we partially moved to the spin spaces in Lemma \ref{lemma before topological qKZB},
can then be moved further to the parametrizing space of the quantum vertex operator \(\Phi_{\mu;S_1,S_2,S_3}^{w_1,w_2,w_3}\) using Proposition \ref{pushdiagram}.

\begin{proposition}
	\label{prop bulk q-KZB}
	For \(\Psi=\Psi^{g_3,g_2,g_1}_{\lambda;S_3^\ast,S_2^\ast,S_1^\ast}\) we have 
	\begin{figure}[H]
		\centering
		\includegraphics[scale=0.75]{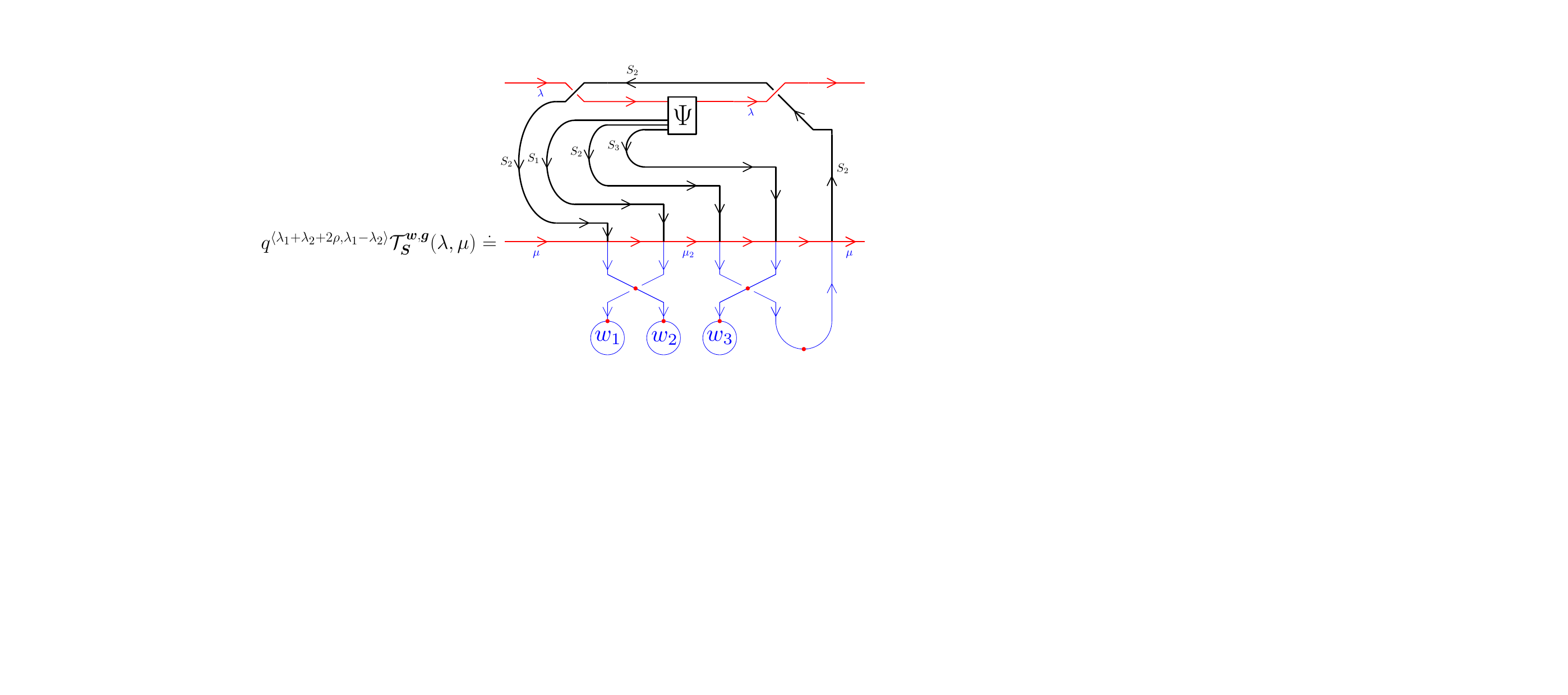}
		\caption{}
		\label{bulk q-KZB diagram}
	\end{figure}
\noindent
relative to the graphical calculi for $\mathcal{M}_{\textup{adm}}^\str$ \textup{(}on black and red strands\textup{)}, $\Rep^\str$ \textup{(}on solely black strands\textup{)}
and $\mathcal{N}_{\textup{fd}}^\str$ \textup{(}on blue strands\textup{)}.
\end{proposition}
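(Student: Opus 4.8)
The plan is to chain together the two preceding results. First I would specialize Lemma \ref{lemma before topological qKZB} to the dual quantum vertex operator $\Psi=\Psi^{g_3,g_2,g_1}_{\lambda;S_3^\ast,S_2^\ast,S_1^\ast}$, with the intermediate highest weights given by \eqref{lambda_j def}. By that lemma the $\Psi$-block carrying the two quantum-Casimir insertions on the Verma strands $M_{\lambda_1}$ and $M_{\lambda_2}$ equals $q^{\langle\lambda_1+\lambda_2+2\rho,\lambda_1-\lambda_2\rangle}\Psi$ on the one hand, and the right-hand side of Figure \ref{SS1} on the other; the latter is a diagram carrying braiding and twist coupons on the black spin strands $S_3^\ast,S_2^\ast,S_1^\ast$, with the red Verma strands left untouched. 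I would then substitute this identity into the combined diagram defining $\mathcal{T}_{\bs{S}}^{\bs{w},\bs{g}}(\lambda,\mu)$ (Figure \ref{Tdiagram}), replacing the Casimir-decorated $\Psi$-block by the right-hand side of Figure \ref{SS1}; in that diagram the spin strands of $\Psi$ are joined to those of $\Phi=\Phi_{\mu;S_1,S_2,S_3}^{w_1,w_2,w_3}$ by the right evaluation $\widetilde{e}_S$.

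The second step is purely topological and takes place inside the black-strand region, where the refined graphical calculus for the ribbon category $\Rep^\str$ applies. The braiding and twist coupons produced by the lemma sit on $\Psi$'s spin strands, above the evaluation caps; since those strands, the $\widetilde{e}_S$ caps, and $\Phi$'s spin strands together form a single $\Rep^\str$-colored ribbon graph, I would slide the coupons across the caps by isotopy. Under this move a braiding transposes to a braiding and a twist to a twist, by the ribbon axioms, so all coupons are repositioned onto the black spin strands of $\Phi$, flush against its red border.

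Finally, with these coupons now realized as morphisms in $\Hom_{\Rep^\str}$ attached to $\Phi$'s spin strands, I would invoke Proposition \ref{pushdiagram} to push each of them through the red border onto the blue parametrizing strands in $\mathbb{G}_{\Ndyn^\str}$, where a coupon colored by $A$ becomes one colored by its dynamical twist $\ol{A}=\cF^{\mr{dt}}(A)$, the scalar being carried along unchanged; the note after Proposition \ref{pushdiagram} guarantees that pushing the several coupons in any order is unambiguous. This yields Figure \ref{bulk q-KZB diagram}. The main obstacle will be the bookkeeping in the middle step: one must track the duality conventions built into $\widetilde{e}_S$ (in particular the $q^{2\rho}$ twist in \eqref{dualities_A}) and the dynamical weight shifts read off from the regions of the blue diagram, so that the transposed coupons and their pushed images carry exactly the weight arguments dictated by the shift rule of Section \ref{Section strictified dynamical module category}. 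This is routine given the compatibility of Lemma \ref{lemma before topological qKZB} and Proposition \ref{pushdiagram} with the ribbon and monoidal structures, but it is where all the index-chasing lives.
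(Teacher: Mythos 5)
Your proposal follows the paper's proof essentially step for step: specialize Lemma \ref{lemma before topological qKZB} to $\Psi=\Psi^{g_3,g_2,g_1}_{\lambda;S_3^\ast,S_2^\ast,S_1^\ast}$, slide the resulting braidings through the caps representing $\widetilde{e}_S$ (and $\widetilde{e}_{S_2}$) inside the $\Rep^\str$-calculus, and then transport everything through the red Verma strand to the blue strands via Proposition \ref{pushdiagram}. The one detail you elide is that among the pieces pushed through the red strand is the co-evaluation cup $\iota_{S_2}$, whose transport also invokes Lemma \ref{reversevertexlemma} (expressing the generalized vertex operators $\Phi^{v}_{\mu;S}$ as sums of $k$-point quantum vertex operators); this sits inside the bookkeeping you already flag, not a missing idea.
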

\begin{proof}
	Lemma \ref{lemma before topological qKZB} asserts that the left-hand side of Figure \ref{bulk q-KZB diagram} may be replaced by
	\begin{center}
		\includegraphics[scale=0.75]{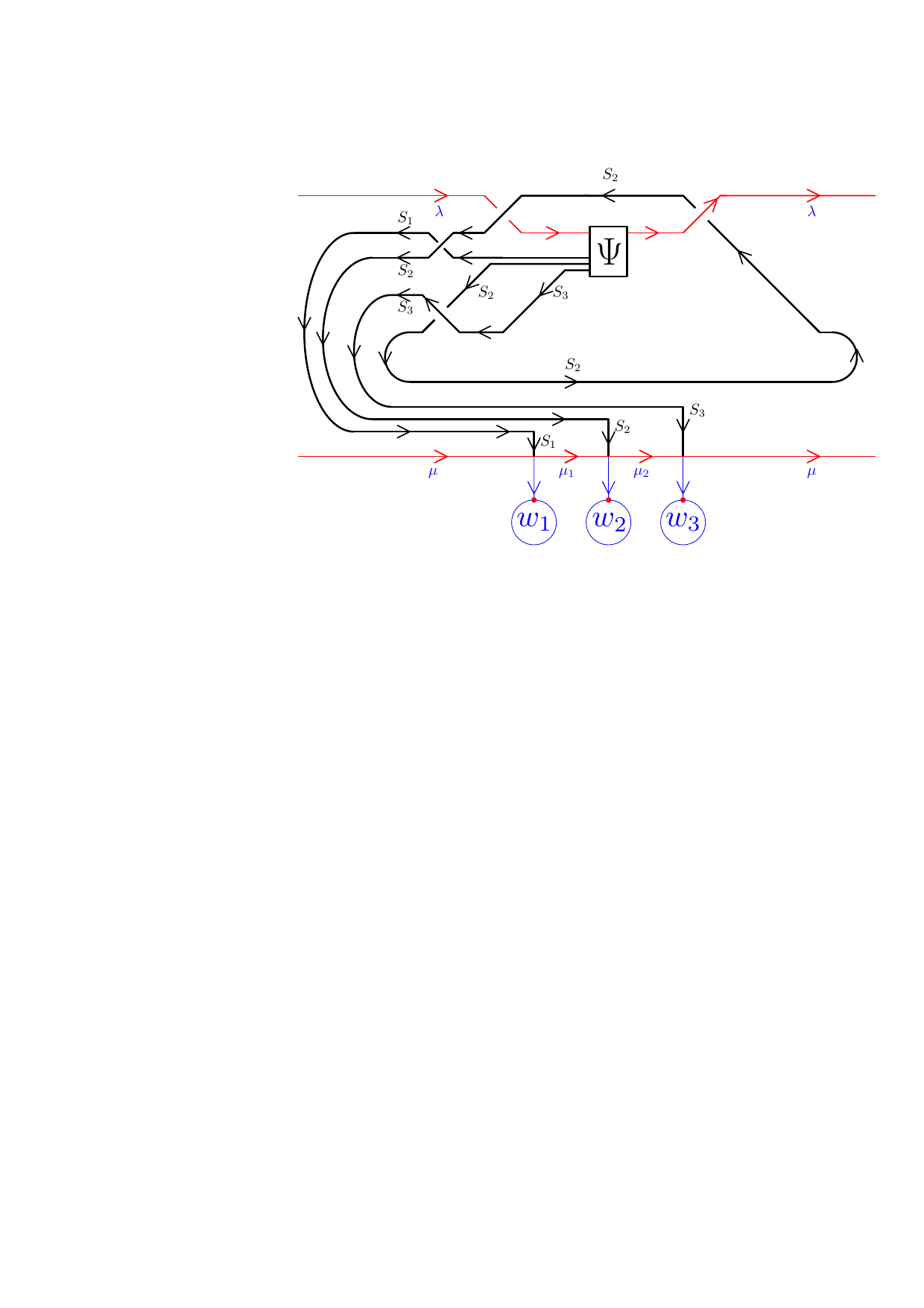}
	\end{center}
	Upon pushing the braidings through the caps representing \(\widetilde{e}_S\) and \(\widetilde{e}_{S_2}\), this may be replaced by
	\begin{center}
		\includegraphics[scale=0.75]{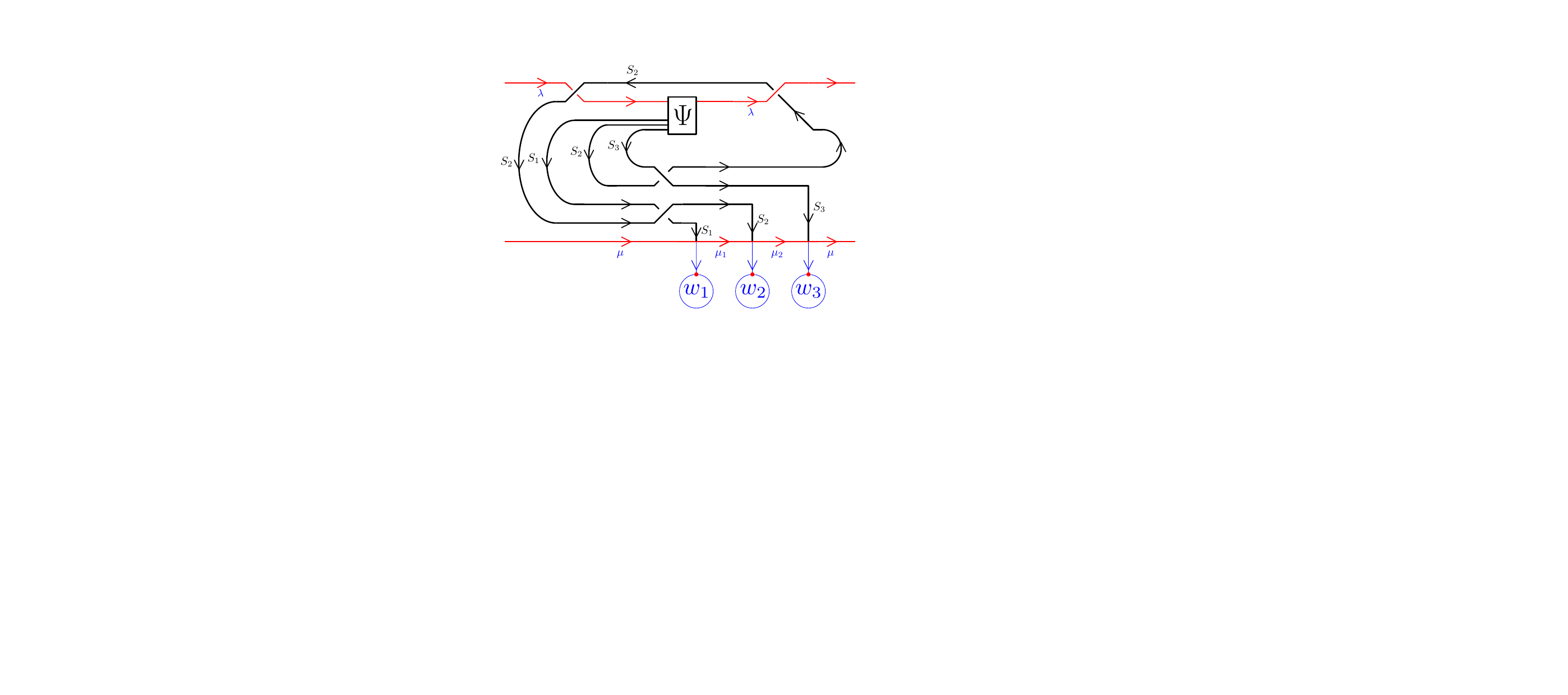}
	\end{center}
	Finally, upon pushing first the cup labeled by \(S_2\) (representing \(\iota_{S_2}\)) and then the two braidings through the red strand using
	Lemma \ref{reversevertexlemma} and Proposition \ref{pushdiagram}, we get to the right-hand side of Figure \ref{bulk q-KZB diagram}.
\end{proof}
The identity in $\textup{End}_{\mathcal{M}_{\textup{adm}}}(M_\mu\otimes M_\lambda)$ underlying Figure \ref{bulk q-KZB diagram} entails operator and dual operator $q$-KZB type equations for $k$-point quantum vertex operators. For instance, the identity in $\textup{End}_{\mathbb{C}}(M_\mu)$ obtained by taking the highest-weight to highest-weight component in $M_\lambda$ of the identity underlying Figure \ref{bulk q-KZB diagram} gives rise to
a dual operator $q$-KZB type equation for $\Phi_{\mu;S_1,S_2,S_3}^{w_1,w_2,w_3}$ (cf. Subsection \ref{Subsection Twisted trace functions}). In this case the dual quantum vertex operator $\Psi=\Psi^{g_3,g_2,g_1}_{\lambda;S_3^\ast,S_2^\ast,S_1^\ast}$ featuring in
Figure \ref{bulk q-KZB diagram} serves as a particular way of parametrizing the spin-component $\mathcal{F}^\str(S)[0]$ of the quantum vertex operator $\Phi_{\mu;S_1,S_2,S_3}^{w_1,w_2,w_3}$.
\begin{remark}
Operator $q$-KZ equations for intertwiners of quantum affine algebra modules were derived in \cite[Thm. 5.1]{Frenkel&Reshetikhin-1992}. 

\end{remark}

\subsection{Applying boundary conditions}
\label{Subsection Boundary conditions}

The diagram in $\mathbb{B}_{\mathcal{M}_{\mr{adm}}^\str}$ given by Figure \ref{Tdiagram}, which is mapped to \(\mathcal{T}_{\bs{S}}^{\bs{w},\bs{g}}(\lambda,\mu)\) by $\mathcal{F}_{\mr{adm}}^{\mr{br}}$, has two incoming and two outgoing Verma strands. In Subsection \ref{Subsection Twisted trace functions} we will graphically impose on these Verma strands the boundary conditions turning \(\mathcal{T}_{\bs{S}}^{\bs{w},\bs{g}}(\lambda,\mu)\)  into the spin components \(\mathcal{Y}_{\bs{S}}^{\bs{w},\bs{g}}(\lambda,\mu,\xi)\) of the weighted trace function. In other words, the lower incoming and outgoing Verma strand, both colored by $M_\mu$, will be coupled via weighted cyclic boundary conditions. The resulting morphism represents a weighted trace function of the intertwiner $\Phi_{\mu;S_1,S_2,S_3}^{w_1,w_2,w_3}$ with the dual intertwiner $\Psi_{\lambda;S_3^*,S_2^*,S_1^*}^{g_3,g_2,g_1}$ attached to its spin strands. In the remaining incoming and outgoing Verma strand, both colored by \(M_\lambda\), we will impose highest-weight-to-highest-weight boundary conditions, which leads to the spin components \(\mathcal{Y}_{\bs{S}}^{\bs{w},\bs{g}}(\lambda,\mu,\xi)\) 
of the weighted trace function. 

When imposing such boundary conditions graphically, the framework of graphical calculus for the category $\mathcal{M}_{\mr{adm}}$ and \(\Rep\) is reduced to the one for 
$\cN_{\mr{adm}}$ and \(\cN_{\mr{fd}}\), cf. \cite[Section 3.3]{DeClercq&Reshetikhin&Stokman-2022}. It will be useful to translate the special coupons from Subsection \ref{sectiongi} that implement the dynamical ribbon type structures for \(\Ndyn^\str\) in terms of the standard graphical calculus for the ribbon category \(\cN_{\mr{fd}}^\str\). Let us start by clarifying this for the dynamical braiding \(\ol{c_{S,T}}\) with \(S,T\in\Rep^\str\).

Denoting the morphism in \(\End_{\Ndyn^\str}(\ul{S}\ol{\tens}\ul{T})\) represented by \(\Rdyn_{S,T}\)
(see Definition \ref{Rdyn})
by \(\Rdyn_{S,T}\) as well, one has the identity
\begin{equation}\label{morform}
\ol{c_{S,T}} = P_{S,T}\Rdyn_{S,T}
\end{equation}
in \(\Hom_{\Ndyn^\str}(\ul{S}\ol{\tens}\ul{T}, \ul{T}\ol{\tens}\ul{S})\). Hence the coupon in $\textup{Rib}_{\cN_{\mr{fd}}^\str}$
colored by \(\ol{c_{S,T}}(\lambda)\), 
which we have denoted by Figure \ref{dynamical R-matrix} with the rightmost vertical region colored by \(\lambda\), and
the standard diagram in $\textup{Rib}_{\cN_{\mr{fd}}^\str}$ given by Figure \ref{dynamical R-matrix boundary}, 
have the same image under 
the Reshetikhin-Turaev functor $\mathcal{F}_{\cN_{\mr{fd}}^\str}^{\mr{RT}}$, i.e.,
\begin{figure}[H]
\centering
\includegraphics[scale=0.75]{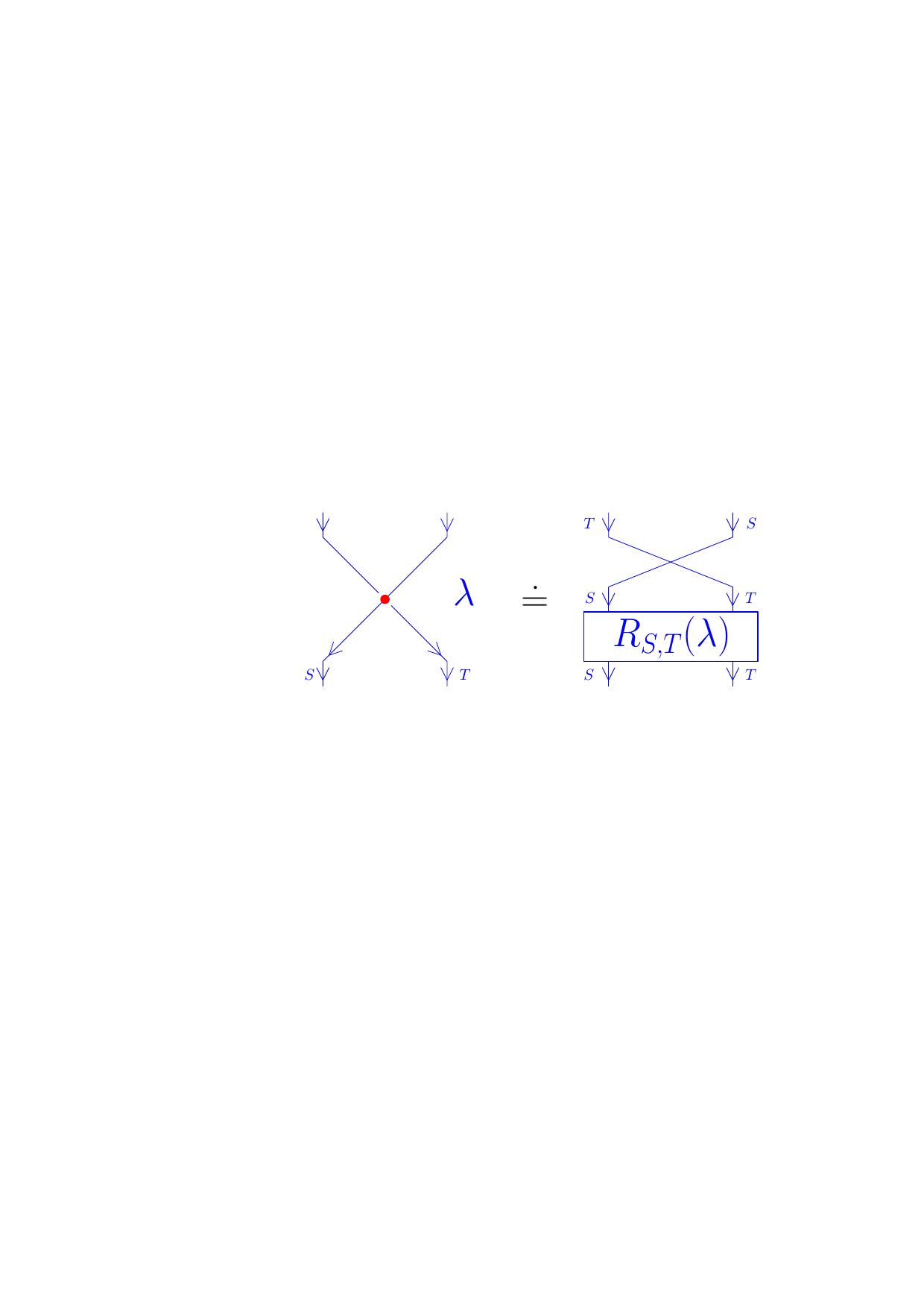}
\caption{}
\end{figure}
Recall that we have introduced Figure \ref{dynamical R-matrix boundary B} as an alternative notation for Figure \ref{dynamical R-matrix boundary}. We will write the color $R_{S,T}$ of these coupons simply by $R$ if no confusion can arise. We will use the same convention for colors obtained from the universal elements $\mathbb{J}(\lambda)$ and $\mathbb{Q}(\lambda)$ introduced in Subsection \ref{Subsection Q(lambda)}.

\begin{figure}[H]
	\begin{minipage}{0.48\textwidth}
		\centering
		\includegraphics[scale=0.75]{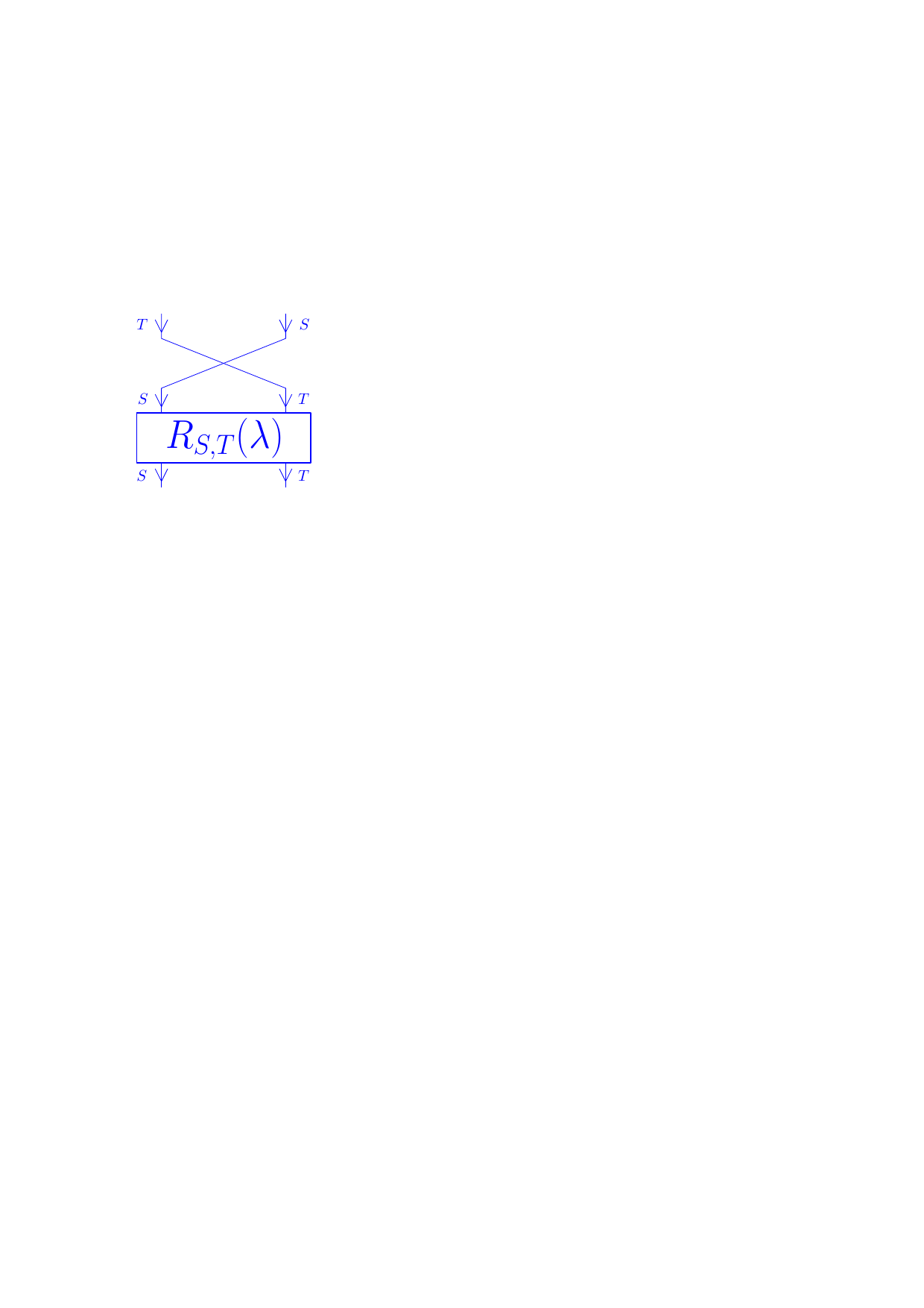}
		\caption{}
		\label{dynamical R-matrix boundary}
	\end{minipage}
	\begin{minipage}{0.48\textwidth}
		\centering
		\includegraphics[scale=0.75]{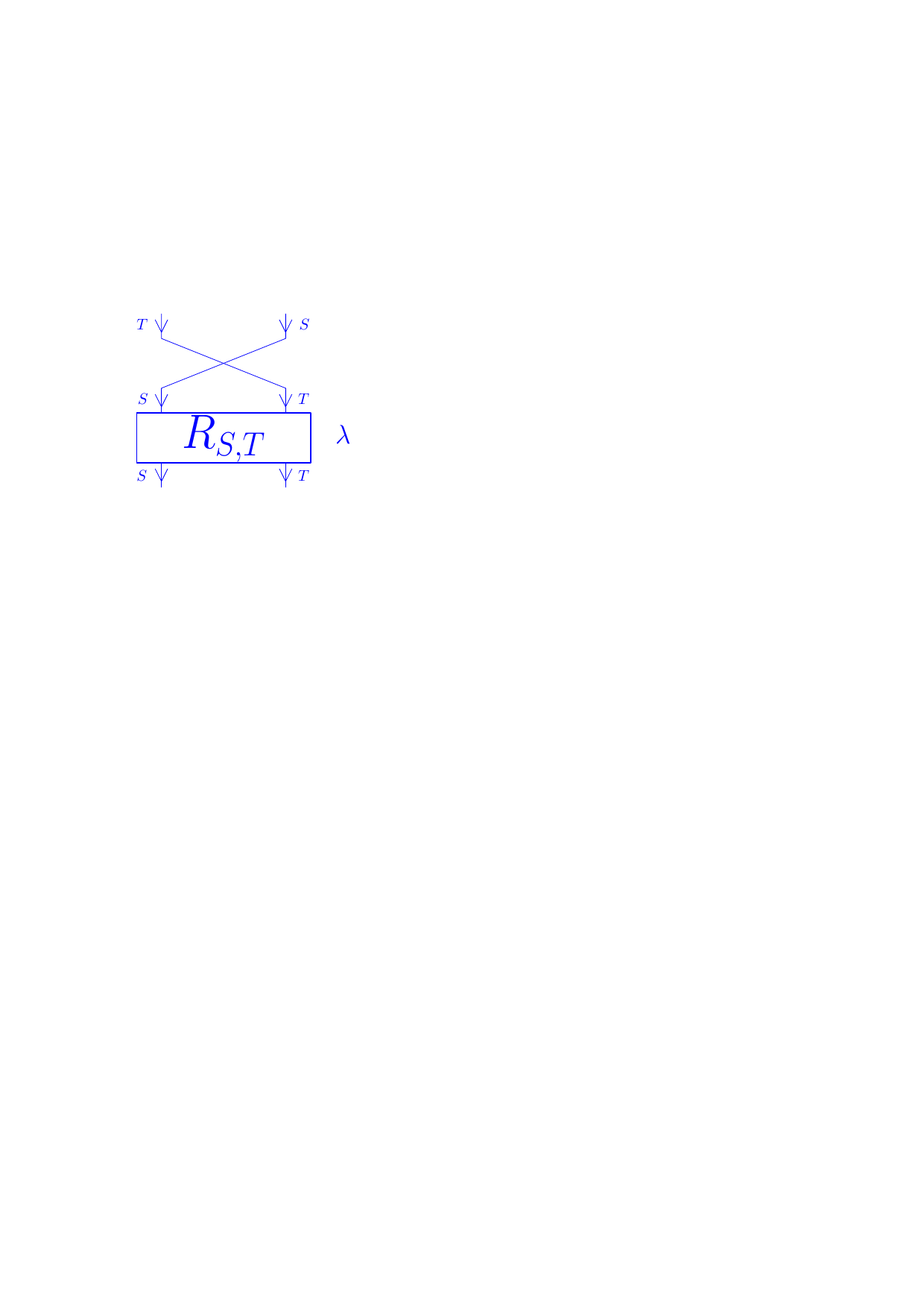}
		\caption{}
		\label{dynamical R-matrix boundary B}
	\end{minipage}
\end{figure}

For the dynamical evaluation and co-evaluation morphisms we only discuss the case of strands colored by an object $S=(V)$ of length one.
By (\ref{dyn eval expr}) the morphism $\ol{e_{(V)}}(\lambda)$ is represented by
$e_V\circ (\pi_{V^\ast}\otimes\pi_V)(\mathbb{J}(\lambda))=e_V\circ j_{(V^*,V)}(\lambda)$.
Then the diagram in $\textup{Rib}_{\cN_{\mr{fd}}^\str}$ given by Figure \ref{dyn eval left boundary} 
and the diagram in $\textup{Rib}_{\cN_{\mr{fd}}^\str}$ given by Figure \ref{evaluation dynamical} with $S=(V)$ and with its rightmost vertical region colored by \(\lambda\) 
have the same image under $\cF_{\cN_{\mr{fd}}^\str}^{\mr{RT}}$.
A similar observation applies to the morphisms \(\ol{\iota_{(V)}}(\lambda)\), \(\ol{\widetilde{e}_{(V)}}(\lambda)\) and \(\ol{\widetilde{\iota}_{(V)}}(\lambda)\), 
leading to Figures \ref{dyn co-eval left boundary}--\ref{dyn co-eval right boundary} as the reformations in terms of the standard graphical calculus of the ribbon category $\cN_{\mr{fd}}^\str$
of Figures \ref{injection dynamical}--\ref{co-injection dynamical} with \(S= (V)\) and specialized weight \(\lambda\).

\begin{figure}[H]
	\begin{minipage}{0.24\textwidth}
		\centering
		\includegraphics[scale = 0.75]{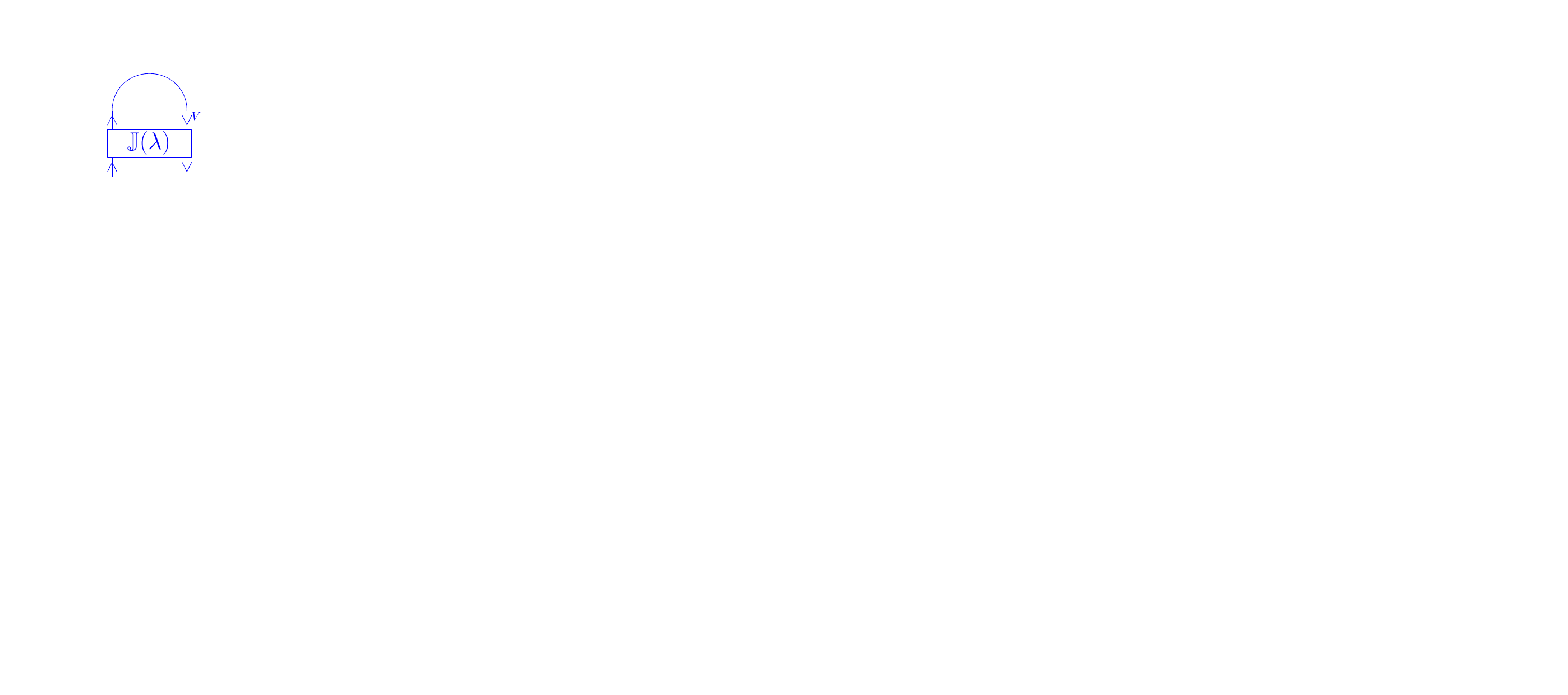}
		\captionof{figure}{}
		\label{dyn eval left boundary}
	\end{minipage}
	\begin{minipage}{0.24\textwidth}
		\centering
		\includegraphics[scale = 0.75]{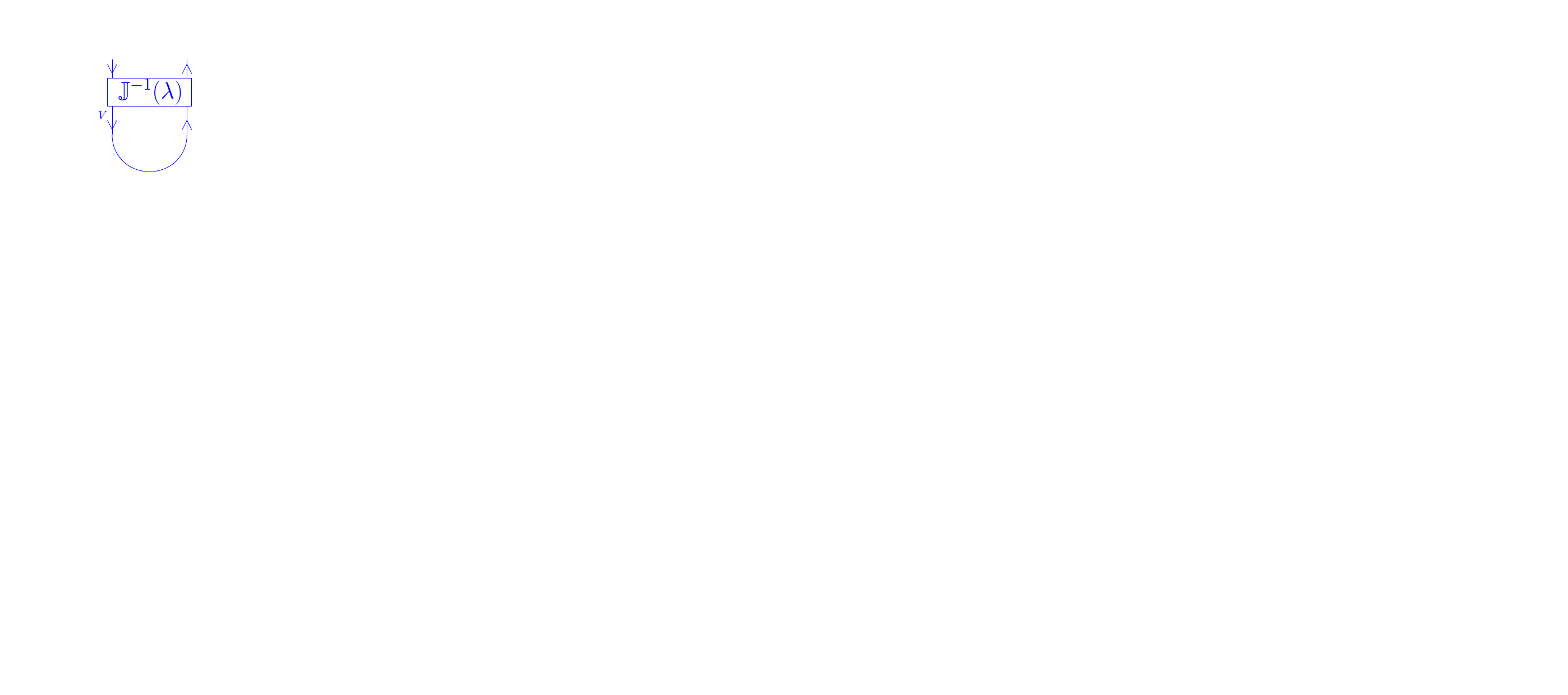}
		\captionof{figure}{}
		\label{dyn co-eval left boundary}
	\end{minipage}
	\begin{minipage}{0.24\textwidth}
		\centering
		\includegraphics[scale = 0.75]{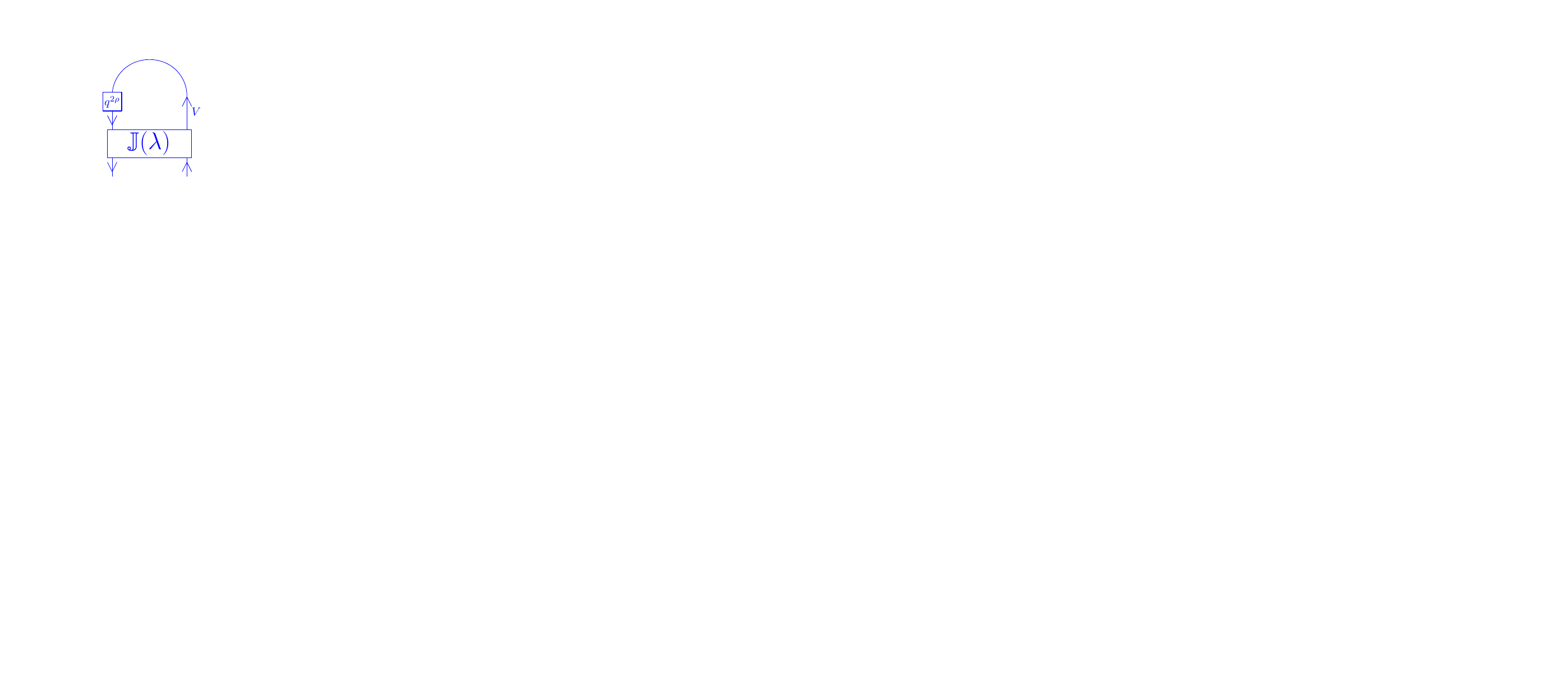}
		\captionof{figure}{}
		\label{dyn eval right boundary}
	\end{minipage}
	\begin{minipage}{0.24\textwidth}
		\centering
		\includegraphics[scale = 0.75]{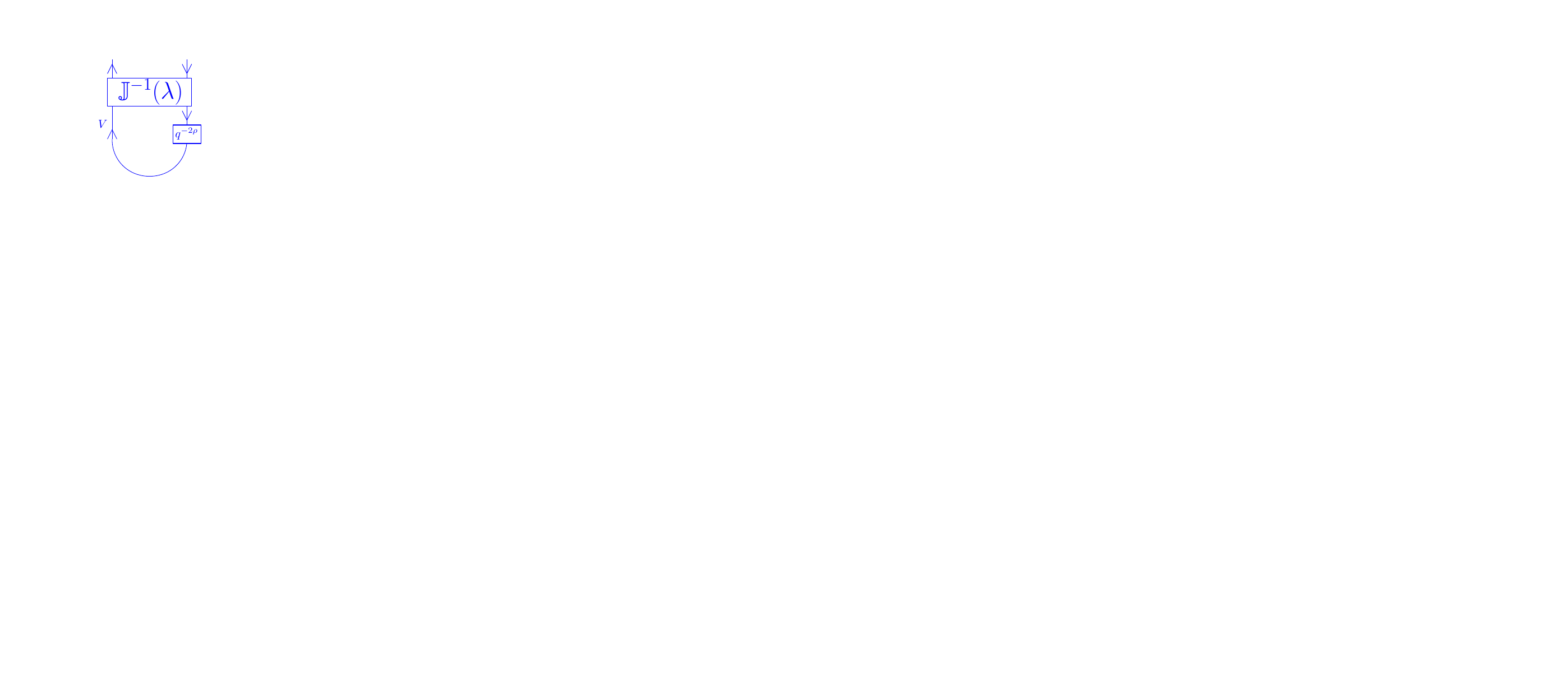}
		\captionof{figure}{}
		\label{dyn co-eval right boundary}
	\end{minipage}
\end{figure}

Finally, for \(S\in\Rep^\str\) and \(\sigma\in\wts(\cF^\str(S))\), let us write \(\mathbb{P}_S[\sigma]\) for the morphism in \(\End_{\cN_{\mr{fd}}^\str}(\ul{S})\) represented by the projection along the \(\hh^\ast\)-grading of \(\cF^\str(\ul{S})\) onto the weight space  \(\cF^\str(\ul{S})[\sigma]\). For ease of notation, we will also write \(\wts(S)\) for \(\wts(\cF^\str(S))\). The coupon in $\textup{Rib}_{\cN_{\mr{fd}}^\str}$ labeled by \(\mathbb{P}_S[\sigma]\) will be depicted by Figure \ref{projection coupon}.

\begin{figure}[H]
	\centering
	\includegraphics[scale=1.1]{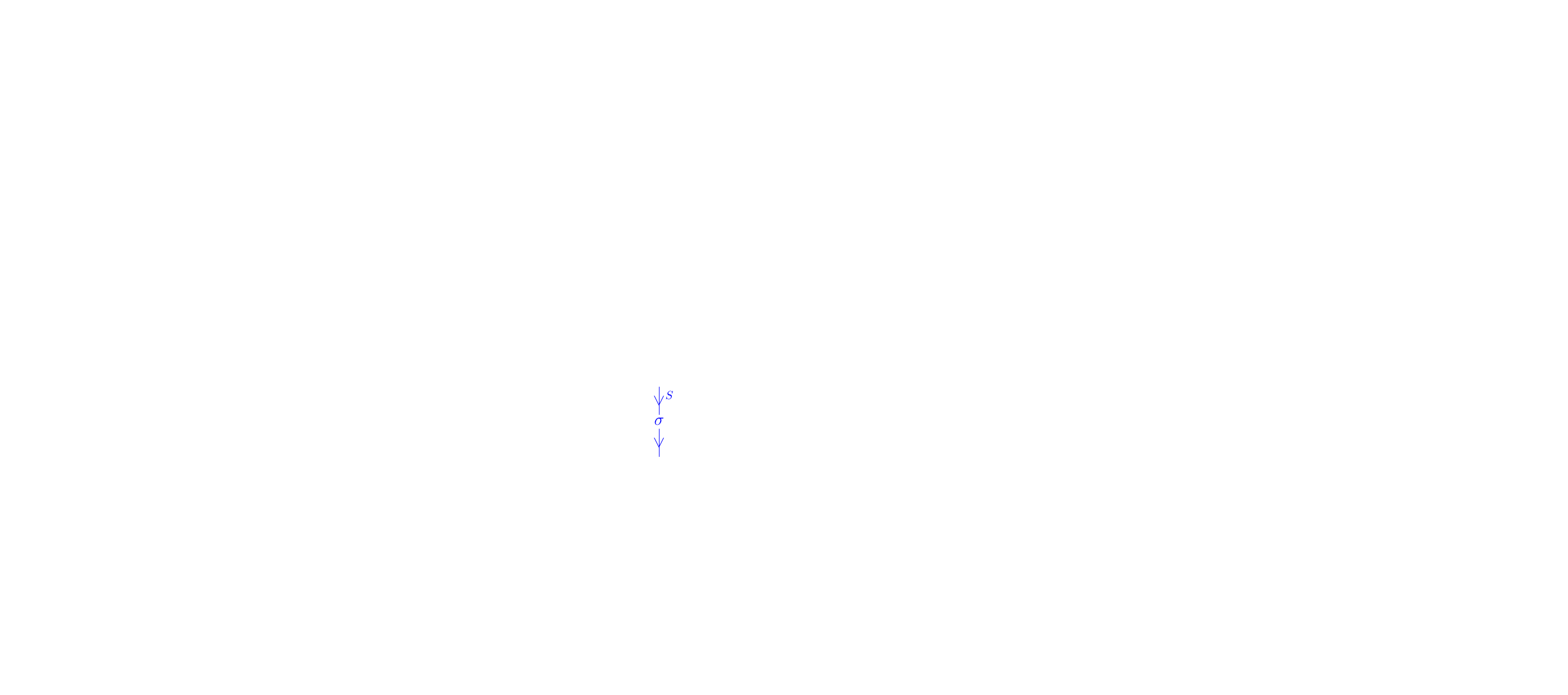}
	\caption{}
	\label{projection coupon}
\end{figure}

\begin{lemma}
	\label{lemma in Section 3.3}
	For any \(V\in\Mfd\), \(\mu\in\hh_{\mathrm{reg}}^\ast\) and \(\sigma\in\wts(V)\), one has
	\begin{figure}[H]
		\begin{minipage}{0.48\textwidth}
			\centering
			\includegraphics[scale=0.75]{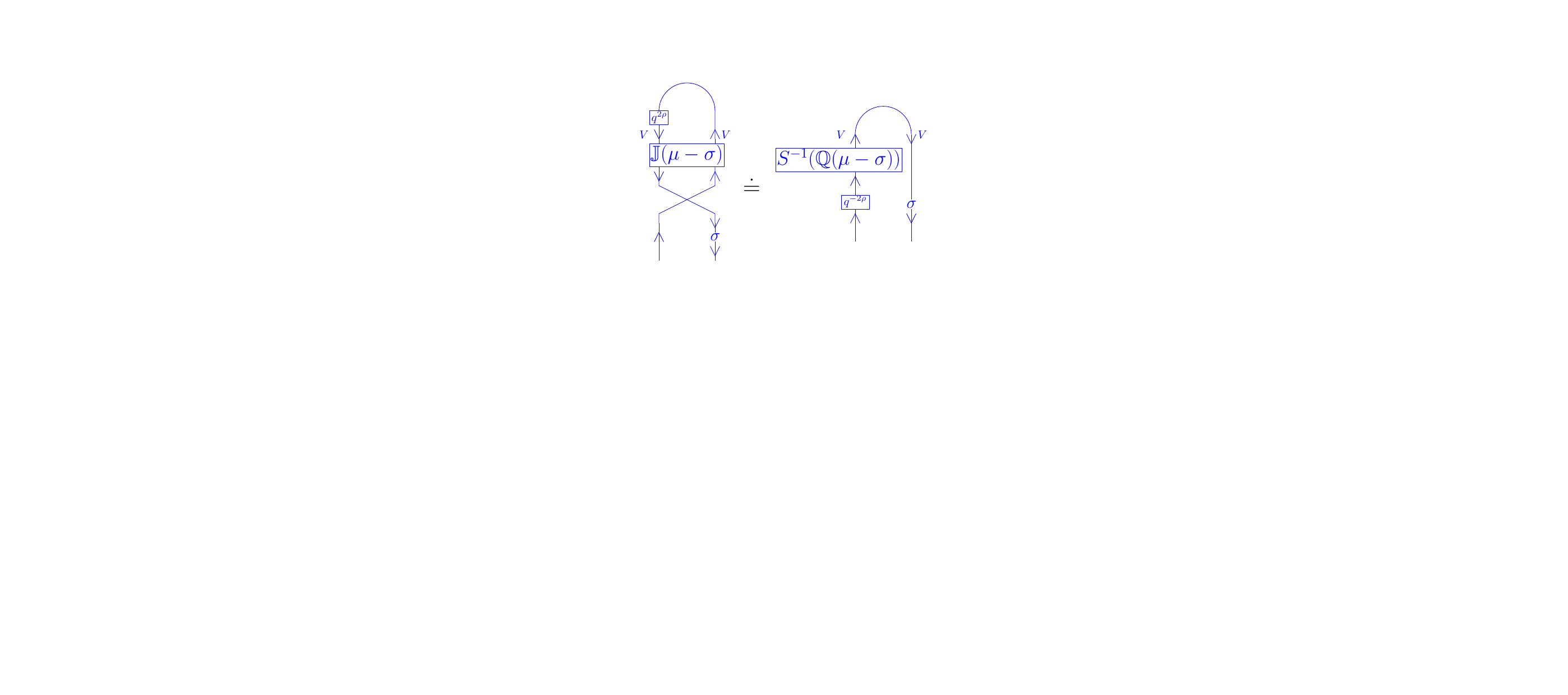}
			\caption{}
			\label{first calculation with J}
		\end{minipage}
		\begin{minipage}{0.48\textwidth}
			\centering
			\includegraphics[scale=0.75]{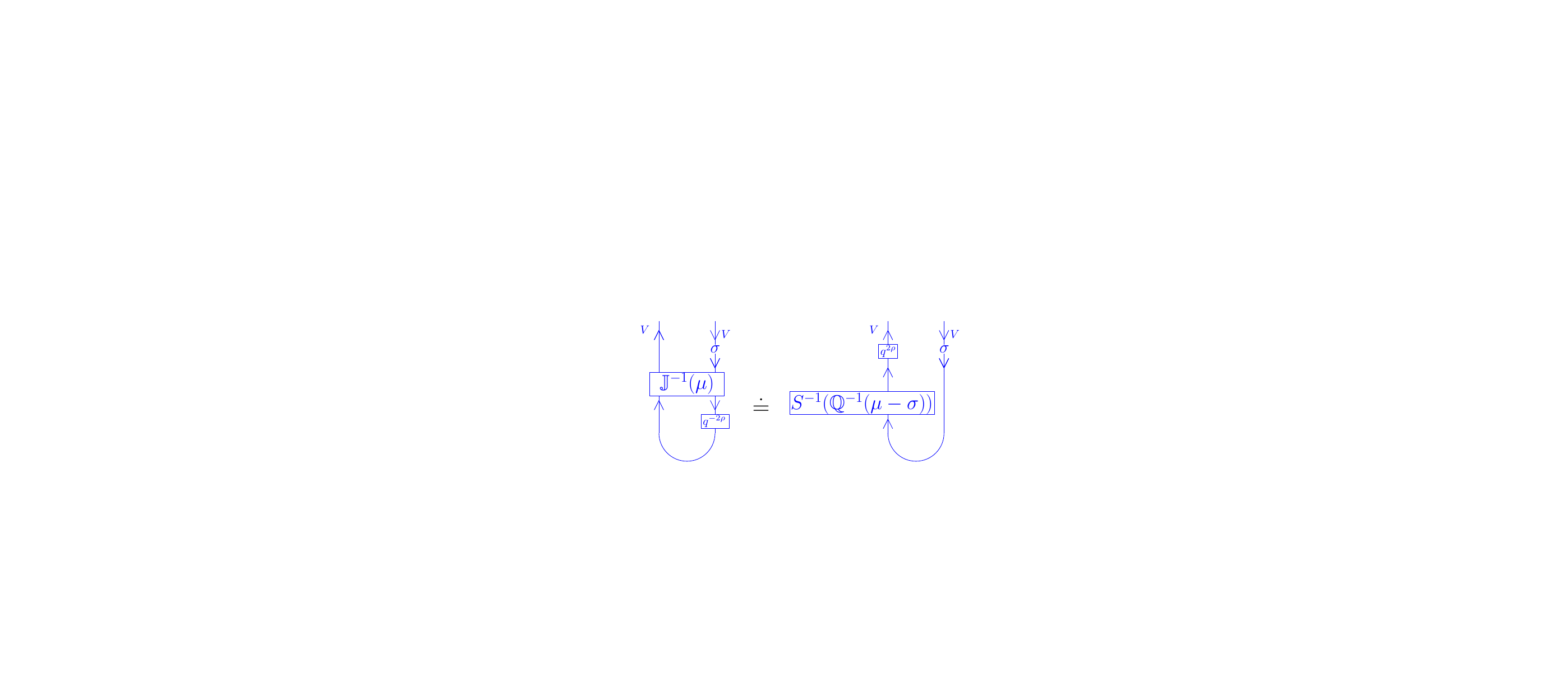}
			\caption{}
			\label{second calculation with J}
		\end{minipage}
	\end{figure}
\noindent	
relative to the graphical calculus for $\cN_{\mr{fd}}^\str$.
\end{lemma}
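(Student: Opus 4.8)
The plan is to reduce both graphical identities to algebraic identities in $\cN_{\mr{fd}}$ and then verify them using the explicit structure of the universal dynamical fusion matrix $\mathbb{J}(\mu)$. Since every coupon occurring in Figures \ref{first calculation with J}--\ref{second calculation with J} is colored by a morphism of $\cN_{\mr{fd}}^\str$ and the diagrams are genuine $\cN_{\mr{fd}}^\str$-colored ribbon graphs, it suffices to check that the two sides have the same image under the Reshetikhin-Turaev functor $\cF^{\mr{RT}}_{\cN_{\mr{fd}}^\str}$, i.e.\ that the underlying morphisms in $\cN_{\mr{fd}}$ coincide. This is the same reduction used in the proof of Proposition \ref{prop Q inverse}.

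First I would replace each dynamical boundary coupon by its standard-calculus reformulation from Figures \ref{dyn eval left boundary}--\ref{dyn co-eval right boundary}. This trades the dynamical caps and cups for the ordinary (co-)evaluations of the ribbon category $\cN_{\mr{fd}}^\str$, each decorated with a $\mathbb{J}$-coupon whose color is $j_{(\cdot,\cdot)}(\mu)=(\pi\otimes\pi)\mathbb{J}(\mu)$ in the appropriate tensor slots (eq.\ \eqref{relJj}). After this substitution the diagrams contain only ordinary cups and caps, the weight projection $\mathbb{P}_{(V)}[\sigma]$, and $\mathbb{J}$-coupons, so they can be straightened using the zig-zag (duality) identities and evaluated by the explicit formulas \eqref{dualities_A} and \eqref{dualities_C} for the left and right dualities of $\cN_{\mr{fd}}^\str$.

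The heart of the computation is then the contraction of $\mathbb{J}(\mu)$, together with its $S^{\pm1}$-twisted copies produced by the right duality morphisms, against the weight projection $\mathbb{P}_{(V)}[\sigma]$. Here I would use the grading structure $\mathbb{J}(\mu)=\sum_{\beta\in Q^+}X_\beta\otimes Y_\beta$ with $X_\beta\in U^-[-\beta]$, $Y_\beta\in U^+[\beta]$ and $\mathbb{J}_0(\mu)=1\otimes 1$, together with the relation $\mathbb{Q}(\mu)=m^{\mr{op}}((\id\otimes S^{-1})\mathbb{J}(\mu))$ and the inverse formula of Proposition \ref{prop Q inverse}, to recognize the contracted expression as the claimed right-hand side built from the universal elements $\mathbb{J}$ and $\mathbb{Q}$. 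The factor $q^{2\rho}$ arising from $\widetilde{e}_V$ and $\widetilde{\iota}_V$ is moved across the $U^+$-part via $S^2(X)=q^{2\rho}Xq^{-2\rho}$ (eq.\ \eqref{action of q^2rho}), exactly as in the proof of Proposition \ref{prop interpretation of Q}; this is what converts an $S$ into an $S^{-1}$ and yields the correct duality-twisted decorations.

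The main obstacle I anticipate is the weight-shift bookkeeping. When the projection $\mathbb{P}_{(V)}[\sigma]$ is pushed through a $\mathbb{J}$-coupon, the dynamical argument of the fusion matrix must be shifted by $\sigma$, and keeping these shifts mutually consistent—so that the arguments of $\mathbb{J}$ (resp.\ $\mathbb{Q}$) on the two sides come out as $\mu$ and $\mu\pm\sigma$ in the precise pattern dictated by the figures—requires care, especially once the $S^{\pm1}$-conjugations interfere with the grading. Once the shifts are pinned down by comparing weight components of both sides on a homogeneous vector of weight $\sigma$, the remaining equalities follow mechanically from the zig-zag identities and the algebraic relations above.
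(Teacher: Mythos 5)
Your proposal is correct and follows essentially the same route as the paper: both reduce the figures to algebraic identities in $\cN_{\mr{fd}}$ via $\cF^{\mr{RT}}_{\cN_{\mr{fd}}^\str}$ and then compute directly, using the graded expansion of $\mathbb{J}^{\pm1}(\mu)$, the definition \eqref{Q algebraic def} of $\QQ$, Proposition \ref{prop Q inverse} for the second identity, the dual-representation structure, and the conjugation property \eqref{action of q^2rho} of $q^{2\rho}$ to trade $S$ for $S^{-1}$. The paper pins down your anticipated weight-shift bookkeeping exactly as you suggest, by evaluating on a homogeneous basis and restricting to the weight-$\sigma$ component, which produces the arguments $\mu-\sigma$ in $\QQ^{\pm1}$.
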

\begin{proof}
	These are obtained through a direct algebraic calculation in $\cN_{\mr{fd}}$. Figure \ref{first calculation with J} 
	follows immediately from the definition of the dual representation, the property (\ref{action of q^2rho}) of the element \(q^{2\rho}\), and the definition (\ref{Q algebraic def}) of \(\QQ(\mu-\sigma)\). 
	
	Figure \ref{second calculation with J} is a bit more involved. Let us write \(\mathbb{J}^{-1}(\mu) = \sum_{\beta\in Q}A_\beta\otimes B_\beta\), with \(A_\beta\in U_q[\beta]\) and \(B_\beta\in U_q[-\beta]\), and note that Proposition \ref{prop Q inverse} implies that
	\begin{equation}
	\label{identity for Q inverse}
	\QQ^{-1}(\mu-\sigma)\big\vert_{V[\sigma]} = \sum_{\beta\in Q} B_{\beta}S^{-1}(A_\beta)\big\vert_{V[\sigma]}.
	\end{equation}
	Then the left-hand side of Figure \ref{second calculation with J} is mapped by $\cF_{\mathcal{N}_{\mr{fd}}^\str}^{\mr{RT}}$ to the morphism in \(\Hom_{\cN_{\mr{fd}}^\str}(\emptyset,\ul{V^\ast}\tens \ul{V})\) represented by \((\id_{\ul{V^\ast}}\otimes P_V[\sigma])\circ A\), where \(A\) is the morphism in \(\Hom_{\cN_{\mr{fd}}}(\mathbb{1},\ul{V^\ast}\otimes \ul{V})\) sending \(1\in\CC\) to
	\begin{equation}
	\label{first description for iota}
	\sum_{w\in \mathcal{B}_V} \sum_{\beta \in Q} A_\beta\cdot w^\ast\otimes B_\beta q^{-2\rho}\cdot w,
	\end{equation}
	where \(\mathcal{B}_V\) is a homogeneous basis for \(\ul{V}\). The definition of the dual representation then readily asserts that (\ref{first description for iota}) equals
	\[
	\sum_{w,w'\in \mathcal{B}_V} \sum_{\beta \in Q}{w'}^\ast(B_\beta q^{-2\rho}\cdot w)A_\beta\cdot w^\ast\otimes w'=
	\sum_{w'\in\mathcal{B}_V}\sum_{\beta\in Q}A_\beta q^{2\rho}S^{-1}(B_\beta)\cdot {w'}^\ast\otimes w'.
	\]
	Writing $\mathcal{B}_V[\sigma]\subseteq\mathcal{B}_V$ for the basis elements in $\mathcal{B}_V$ of weight $\sigma$, we conclude from (\ref{action of q^2rho}) and (\ref{identity for Q inverse})
	that
	\begin{equation*}
	\begin{split}
	(\textup{id}_{\ul{V}^*}\otimes P_V[\sigma])(A(1))&=\sum_{w\in\mathcal{B}_V[\sigma]}\sum_{\beta\in Q}S(B_\beta S^{-1}(A_\beta))q^{2\rho}\cdot w^\ast\otimes w\\
	&=\sum_{w\in\mathcal{B}_V[\sigma]}S(\mathbb{Q}^{-1}(\mu-\sigma))q^{2\rho}\cdot w^\ast\otimes w.
	\end{split}
	\end{equation*}
	We conclude that the $\mathcal{F}_{\cN_{\mr{fd}}^{\str}}^{\mr{RT}}$-image of the
	left-hand side of Figure \ref{second calculation with J} is the morphism in \(\Hom_{\cN_{\mr{fd}}^\str}(\emptyset,\ul{V^\ast}\tens \ul{V})\) represented by
	\[
	(\pi_{V^\ast}(S(\QQ^{-1}(\mu-\sigma))q^{2\rho})\otimes P_V[\sigma])\circ \widehat{\iota}_{\ul{V}},
	\]
	which equals the $\mathcal{F}_{\cN_{\mr{fd}}^\str}^{\mr{RT}}$-image of the diagram depicted by the right-hand side of Figure \ref{second calculation with J}.
\end{proof}

\subsection{The $q$-KZB type  equations}
\label{Subsection Twisted trace functions}

In view of \eqref{Y in terms of T}, applying weighted cyclic boundary conditions to \(M_\mu\) and highest-weight-to-highest-weight components to \(M_\lambda\) turns \(\mathcal{T}_{\bs{S}}^{\bs{w},\bs{g}}(\lambda,\mu)\) (see \eqref{T_S def})
into the spin component \(\mathcal{Y}_{\bs{S}}^{\bs{w},\bs{g}}(\lambda,\mu,\xi)\) of the weighted trace function for the intertwiner 
$\Phi_{\mu; S_1,S_2,S_3}^{w_1,w_2,w_3}$. 
In this subsection we graphically add these boundary conditions to Figure \ref{bulk q-KZB diagram} and derive from it 
 a dual $q$-KZB type equation for $\mathcal{Z}_{\bs{S}}^{\bs{w},\bs{g}}(\lambda,\mu)$ (see \eqref{Zdef}) using the graphical calculus for $\cN_{\mr{adm}}^\str$.
We will need the following lemma, which graphically represents the cyclic property of the trace.
\begin{lemma}
	\label{lemma cyclicity}
	For any \(\mu\in\hh_{\mathrm{reg}}^\ast\), \(\xi\in\hh^\ast\) with \(\Re(\xi)\) deep in the negative Weyl chamber, \(S_1, S_2\in \Rep^\str\) and \(w_i\in\cF^\str(S_i)[\nu_i]\) with \(i = 1,2\), such that \(\nu_1+\nu_2 = 0\), we have 
	\begin{figure}[H]
		\centering
		\includegraphics[scale=0.72]{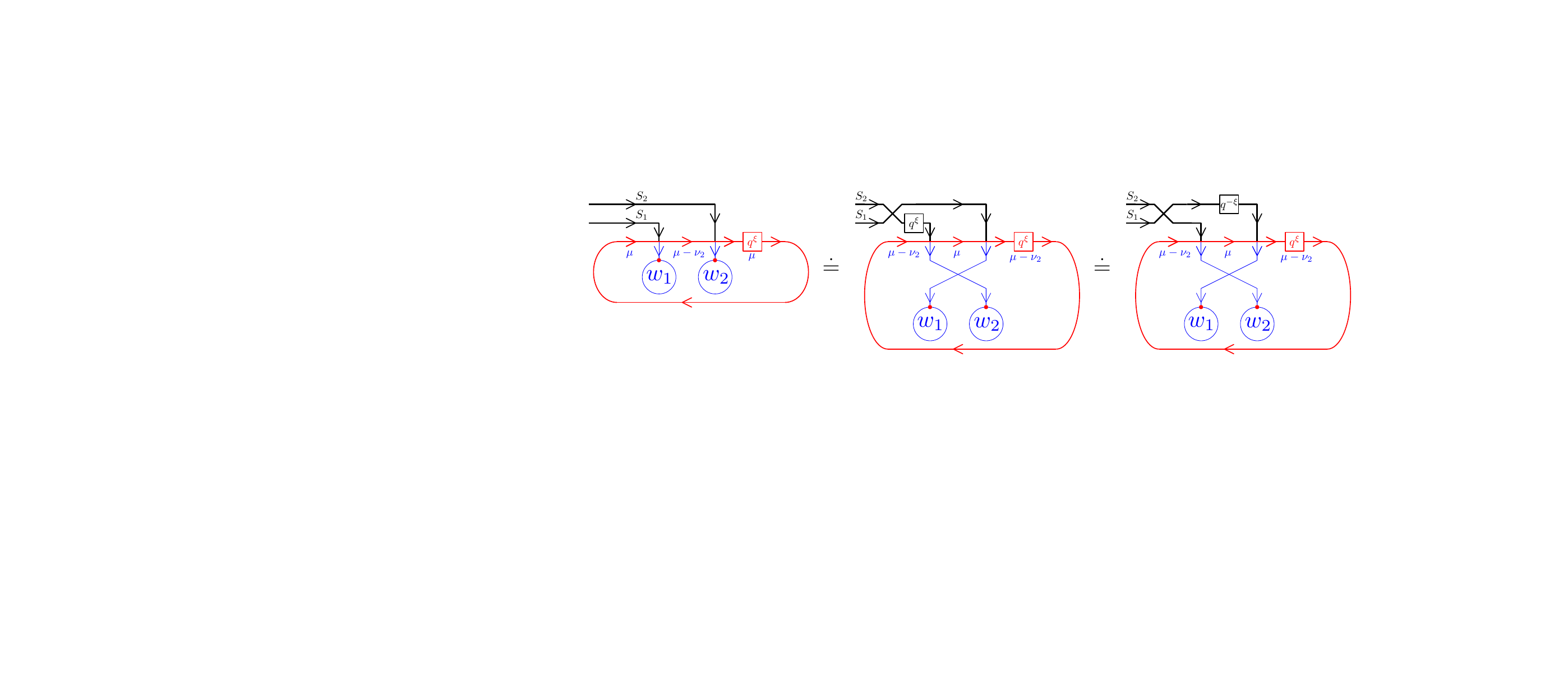}
		\caption{}
		\label{cyclic visual}
	\end{figure}
	relative to the graphical calculus for \(\cN_{\mr{adm}}^\str\) \textup{(}so the black strands are in $\mathbb{B}_{\cN_{\mr{adm}}^\str}$\textup{)}.
\end{lemma}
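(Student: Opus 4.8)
The strategy is to read off what the two sides of Figure \ref{cyclic visual} represent under the functor $\cF_{\cN_{\mr{adm}}^\str}^{\mr{br}}$, and then to prove the resulting equality of morphisms in $\cN_{\mr{adm}}^\str$ as an instance of the cyclic invariance of the weighted partial trace. Setting $\mu_1:=\mu-\nu_2$ (so that, since $\nu_1+\nu_2=0$, the relevant composite of vertex operators returns the Verma weight to $\mu$), I expect the left-hand side to map to the partial trace
\[
\Tr_{M_\mu}\bigl[(\Phi_{\mu_1;S_1}^{w_1}\tens\id_{S_2})\circ\Phi_{\mu;S_2}^{w_2}\circ\pi_\mu(q^\xi)\bigr]\in\cF^\str(S_1\tens S_2)[0],
\]
and the right-hand side to map to the cyclically rotated configuration, a partial trace over $M_{\mu_1}$ in which the displaced $S_2$-strand carries a $q^\xi$-decoration and the two spin factors appear in the opposite order. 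Thus the whole lemma reduces to the single algebraic identity
\[
\Tr_{M_\mu}\bigl[(\Phi_{\mu_1;S_1}^{w_1}\tens\id_{S_2})\Phi_{\mu;S_2}^{w_2}\,q^\xi\bigr]=P_{S_2,S_1}\Bigl[(q^\xi_{S_2}\otimes\id_{S_1})\,\Tr_{M_{\mu_1}}\bigl[(\Phi_{\mu;S_2}^{w_2}\tens\id_{S_1})\Phi_{\mu_1;S_1}^{w_1}\,q^\xi\bigr]\Bigr],
\]
where $q^\xi_{S_2}$ is the action of $q^\xi$ on $\cF^\str(S_2)$ and $P_{S_2,S_1}$ the flip.

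To establish this identity I would pass to the underlying linear maps and compute both partial traces in homogeneous bases $\{m_a\}$ of $M_\mu$ and $\{n_b\}$ of $M_{\mu_1}$, with dual bases. Writing $\Phi_{\mu;S_2}^{w_2}(m_a)=\sum_b n_b\otimes\sigma_{ab}$ and $\Phi_{\mu_1;S_1}^{w_1}(n_b)=\sum_{a'}m_{a'}\otimes\tau_{ba'}$ with homogeneous $\sigma_{ab}\in\cF^\str(S_2)$ and $\tau_{ba'}\in\cF^\str(S_1)$, the defining formula for $\Tr$ from Subsection \ref{Subsection Formal weighted trace functions} gives
\[
\Tr_{M_\mu}\bigl[(\Phi_{\mu_1;S_1}^{w_1}\tens\id_{S_2})\Phi_{\mu;S_2}^{w_2}\,q^\xi\bigr]=\sum_{a,b}q^{\langle\wt(m_a),\xi\rangle}\,\tau_{ba}\otimes\sigma_{ab},
\]
while the rotated trace over $M_{\mu_1}$ comes out as $\sum_{a,b}q^{\langle\wt(n_b),\xi\rangle}\,\sigma_{ab}\otimes\tau_{ba}$. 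The two sums involve the same data and differ only in the grading weight and the tensor order.

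The weights are reconciled using the intertwining property: since $\Phi_{\mu;S_2}^{w_2}$ is $U_q$-linear and $q^\xi$ is group-like, each nonzero $\sigma_{ab}$ is homogeneous with $\wt(\sigma_{ab})=\wt(m_a)-\wt(n_b)$, so that $q^{\langle\wt(m_a),\xi\rangle}=q^{\langle\wt(n_b),\xi\rangle}\,q^{\langle\wt(\sigma_{ab}),\xi\rangle}$, and the extra factor $q^{\langle\wt(\sigma_{ab}),\xi\rangle}$ is precisely the scalar by which $q^\xi$ acts on $\sigma_{ab}$. Substituting this into the first sum and comparing with the rotated one then yields the displayed identity, with the factor $q^\xi_{S_2}$ and the flip $P_{S_2,S_1}$ accounting for the displaced grading and the reversed order; I would finish by noting that the two diagrams in Figure \ref{cyclic visual} therefore have equal image under $\cF_{\cN_{\mr{adm}}^\str}^{\mr{br}}$.

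I expect the genuine difficulty to be analytic rather than algebraic. The partial traces are infinite sums over the weight spaces of $M_\mu$ and $M_{\mu_1}$, so the cyclic rearrangement of the double sum over $(a,b)$ must be justified by absolute convergence; this is exactly what the hypothesis that $\Re(\xi)$ lies deep in the negative Weyl chamber buys, via \cite[Proposition 3.2]{Etingof&Styrkas-1998} (as already used for \eqref{trace formally}), and one should check that replacing $\mu$ by $\mu_1=\mu-\nu_2$, a fixed lattice shift, keeps the trace over $M_{\mu_1}$ in the convergent regime. A secondary point requiring care, as flagged in Subsection \ref{Subsection Formal weighted trace functions}, is that the red cups and caps encoding the trace do not belong to any duality structure, so the cyclicity genuinely has to be proved by the above basis computation rather than obtained from a topological isotopy.
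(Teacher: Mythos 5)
Your proposal is correct and is essentially the paper's own argument: the paper reduces Figure \ref{cyclic visual} to exactly the identity you display and derives it from the group-likeness of \(q^\xi\) together with the cyclic property of the partial trace, which your basis computation simply makes explicit (and your convergence remark supplies the justification that the paper leaves implicit in invoking cyclicity for these infinite partial traces). The only point you miss --- unavoidably, since you could not see the figure --- is that Figure \ref{cyclic visual} contains a \emph{second} equality, in which the \(q^\xi\)-coupon is moved off the \(S_2\)-strand; the paper settles this by noting that the weighted trace \(H_{\mu-\nu_2}^{\widetilde{\Phi}}(q^\xi)\) of the cyclically rotated intertwiner \(\widetilde{\Phi}:=\Phi_{\mu-\nu_2;S_2,S_1}^{w_2,w_1}\) has weight \(0\).
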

\begin{proof}
	Let us define
	\[
	\phi_1:=\cF^\str(\Phi_{\mu-\nu_2;S_1}^{w_1}), \qquad \phi_2 := \cF^\str(\Phi_{\mu;S_2}^{w_2})
	\]
	and set
	\begin{align*}
	\phi := (\phi_1\otimes\id_{\cF^\str(S_2)})\phi_2, \qquad \til{\phi} = (\phi_2\otimes\id_{\cF^\str(S_1)})\phi_1.
	\end{align*}
	Then the fact that \(q^\xi\) is group-like and the cyclic property of the trace assert that
	\begin{align*}
	\Tr_{M_\mu}\left(\phi\circ\pi_{\mu}(q^\xi) \right) =\ & \Tr_{M_\mu}\left( ((\phi_1\circ\pi_{\mu-\nu_2}(q^\xi))\otimes\pi_{\cF^\str(S_2)}(q^\xi)
	)\circ\phi_2 \right)\\
	=\ & P_{\cF^\str(\ul{S_2}),\cF^\str(\ul{S_1})}(\pi_{\cF^\str(S_2)}(q^\xi)\otimes\id_{\cF^\str(\ul{S_1})})\Tr_{M_{\mu-\nu_2}}(\widetilde{\phi}\circ\pi_{\mu-\nu_2}(q^\xi)).
	\end{align*}
	If we then set \(\Phi:= \Phi_{\mu;S_1,S_2}^{w_1,w_2}\) and \(\til{\Phi}:=\Phi_{\mu-\nu_2;S_2,S_1}^{w_2,w_1}\), then with the notations (\ref{notation with curly H}) for the weighted traces and (\ref{sigma def}) for the bracket-moving isomorphisms \(\sigma\), this equality asserts that
	\[
	\mathcal{H}_\mu^{\Phi}(q^\xi) = \sigma_{\ul{S_1},\ul{S_2}}\, P_{\cF^\str(\ul{S_2}),\cF^\str(\ul{S_1})}(\pi_{\cF^\str(S_2)}(q^\xi)\otimes\id_{\cF^\str(\ul{S_1})})\sigma^{-1}_{\ul{S_2},\ul{S_1}}\mathcal{H}_{\mu-\nu_2}^{\widetilde{\Phi}}(q^\xi).
	\]
	The lift of this equality in \(\Hom_{\cN_{\mr{fd}}}(\CC_0,\cF^\str(\ul{S_1}\tens \ul{S_2}))\) to an equality in \(\Hom_{\cN_{\mr{fd}}^\str}(\CC_0,\ul{S_1}\tens \ul{S_2}))\) proves the first equality in Figure \ref{cyclic visual}. The second equality in Figure \ref{cyclic visual} follows upon noting that \(H_{\mu-\nu_2}^{\widetilde{\Phi}}(q^\xi)\) is of weight 0.
\end{proof}

In the remainder of the subsection we fix $S\in\Rep^\str$ and choose a decomposition $S=S_1\tens S_2\tens S_3$ of $S\in\Rep^\str$, as well as vectors $w_i\in\cF^\str(S_i)[\nu_i]$ 
and $g_i\in\cF^\str(S_i^*)[\nu_i']$ with $\nu_1+\nu_2+\nu_3=0=\nu_1'+\nu_2'+\nu_3'$ (cf. Subsection \ref{Subsection Formal weighted trace functions}). 

Taking the highest-weight-to-highest-weight component in the upper Verma strand in Figure \ref{bulk q-KZB diagram} will allow us to resolve the crossings in the upper Verma strand 
using Lemma \ref{lemma 3.10}, whereas taking the weighted trace in the lower Verma strand allows us to cyclically permute the vertex operators in the lower Verma strand using Lemma \ref{lemma cyclicity}. It leads to the following result.
\begin{lemma}
	\label{lemma boundary qKZB with general xi}
	Under the same conventions as in Proposition \ref{prop bulk q-KZB} we have, for $\Psi=\Psi_{\lambda; S_3^*,S_2^*,S_1^*}^{g_3,g_2,g_1}$ and for \(\xi\in\hh^\ast\) with \(\Re(\xi)\) deep in the negative Weyl chamber, 
	\begin{figure}[H]
		\centering
		\includegraphics[scale=0.65]{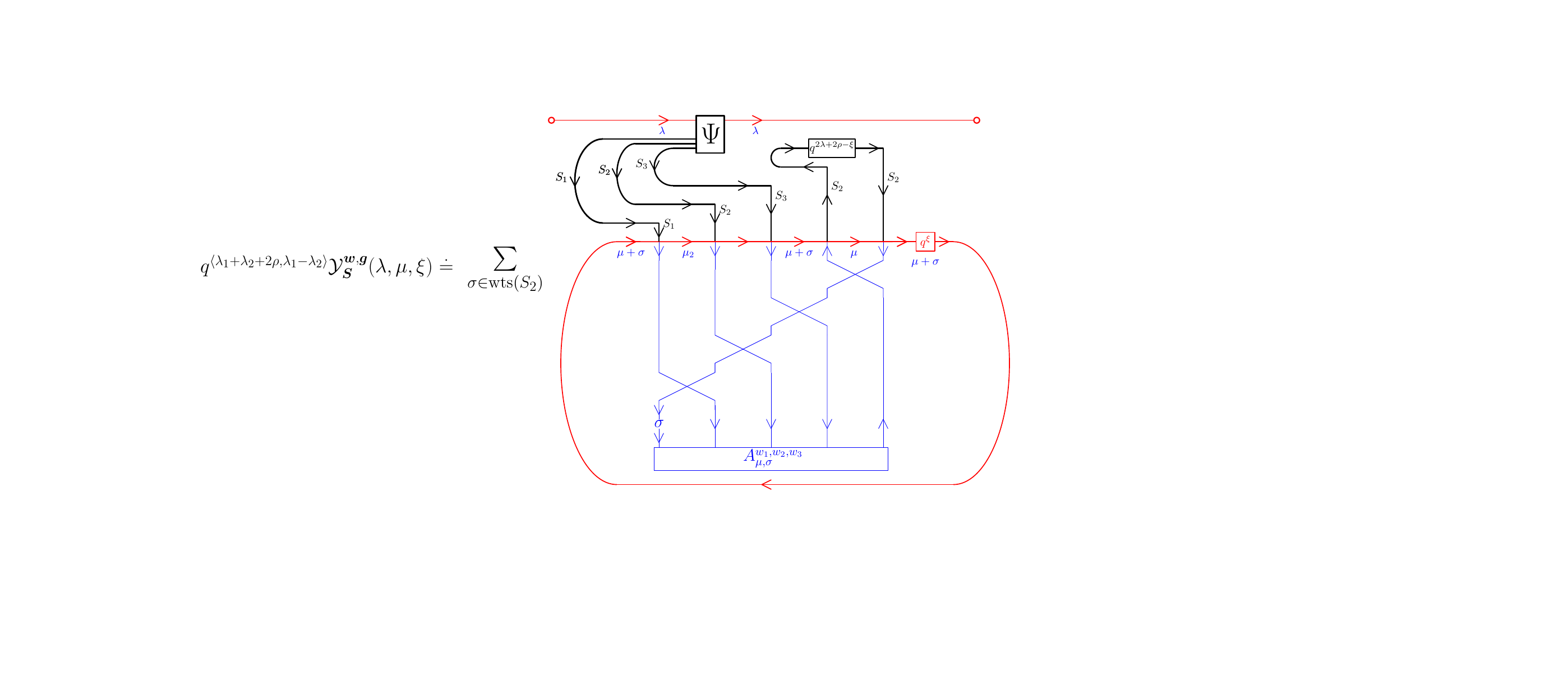}
		\caption{}
		\label{boundary qKZB A}
	\end{figure}
\noindent
relative to the graphical calculus for \(\cN_{\mr{adm}}^\str\) and \(\cN_{\mr{fd}}^\str\) \textup{(}the graphical calculus for $\cN_{\mr{fd}}^\str$ applies to the part of the diagram involving solely black or blue strands\textup{)}. Here $A_{\mu;\sigma}^{w_1,w_2,w_3}$ is the morphism in $\cN_{\mr{fd}}^\str$ such that 
	\begin{figure}[H]
	\centering
		\includegraphics[scale=0.75]{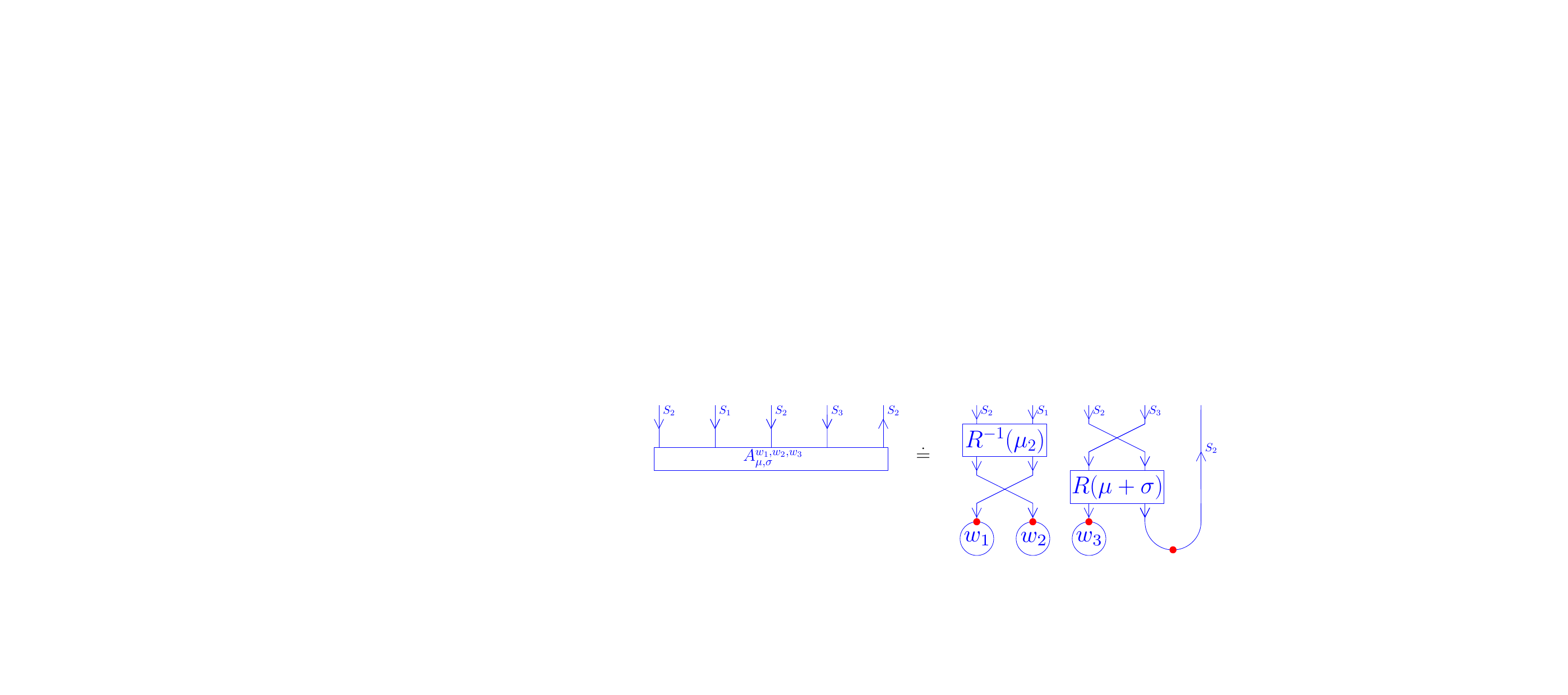}
	\caption{}
	\label{Adef}
	\end{figure}
\end{lemma}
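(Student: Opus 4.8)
The plan is to impose the two sets of boundary conditions directly on the operator identity of Proposition \ref{prop bulk q-KZB} (Figure \ref{bulk q-KZB diagram}), which lives in $\End_{\cM_{\mr{adm}}}(M_\mu\otimes M_\lambda)$. On the upper Verma strand, colored by $M_\lambda$, I would attach the highest-weight vector and dual functional coupons (Figures \ref{eval_lambda} and \ref{eval_lambda_dual}), thereby passing to the expectation value $\langle\cdot\rangle$ over $M_\lambda$; on the lower Verma strand, colored by $M_\mu$, I would close the two $M_\mu$-endpoints by the weighted cyclic boundary that inserts $\pi_\mu(q^\xi)$ and takes $\Tr_{M_\mu}$. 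By \eqref{Y in terms of T} this precisely converts the operator $q$-KZB identity into an identity for the spin components $\mathcal{Y}_{\bs{S}}^{\bs{w},\bs{g}}(\lambda,\mu,\xi)$ of the weighted trace function, with the Casimir scalar $q^{\langle\lambda_1+\lambda_2+2\rho,\lambda_1-\lambda_2\rangle}$ carried along from the left-hand side of Figure \ref{bulk q-KZB diagram}.

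First I would treat the upper strand. After sandwiching the crossings of the black spin strands with the $M_\lambda$-strand between the highest-weight vector and its dual functional, Lemma \ref{lemma 3.10} applies: each such topological crossing is resolved into the algebraic action of $\mathcal{R}$ (resp.\ $\mathcal{R}^{21}$) twisted by $q^\lambda$ on the relevant spin spaces. In the parametrizing (blue) picture these become the decorated crossings representing the dynamical $R$-matrices $R(\lambda)$ as in Figure \ref{dynamical R-matrix boundary}. This eliminates all crossings involving the upper Verma strand and leaves a diagram in which only the black/blue spin strands and the lower Verma strand still carry topological content.

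Next I would treat the lower strand. The weighted trace over $M_\mu$ permits cyclically reordering the constituent one-point operators of $\Phi_{\mu;S_1,S_2,S_3}^{w_1,w_2,w_3}$ by Lemma \ref{lemma cyclicity}. Cyclically transporting the $S_2$-block around the trace produces the shift $\mu\mapsto\mu+\sigma$; the factor $\pi_{\cF^\str(S_2)}(q^\xi)$ appearing in Lemma \ref{lemma cyclicity} acts as $q^{\langle\sigma,\xi\rangle}$ on each weight space $\cF^\str(S_2)[\sigma]$, so decomposing along the $\hh^\ast$-grading of $S_2$ via the projections $\mathbb{P}_{S_2}[\sigma]$ (Figure \ref{projection coupon}) yields exactly the sum $\sum_{\sigma\in\wts(S_2)}$. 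All remaining structure attached to the $S_2$-loop---the surviving weighted cup, the two dynamical braidings produced in the proof of Proposition \ref{prop bulk q-KZB}, and the weight projection---assembles into the morphism $A_{\mu;\sigma}^{w_1,w_2,w_3}$ declared by Figure \ref{Adef}, and one reads off Figure \ref{boundary qKZB A}.

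The main obstacle will be the weight bookkeeping. The dynamical braidings carry $\lambda$-dependent (and $\hh$-shifted) arguments governed by the blue-strand weight-transport rule of Subsection \ref{Section strictified dynamical module category}, while the cyclic reordering simultaneously shifts the Verma label $\mu$; one must check that these two shifts are consistent with the intermediate weights $\lambda_1,\lambda_2$ and $\mu_1,\mu_2$ recorded in \eqref{lambda_j def} and \eqref{mu_j def}, and that the scalar $q^{\langle\sigma,\xi\rangle}$ is absorbed correctly. Note that no specialization of $\xi$ is used here: the weighted cup on the $S_2$-strands survives inside $A_{\mu;\sigma}^{w_1,w_2,w_3}$, and only later, upon setting $\xi=2\lambda+2\rho$, can it be pushed to the parametrizing space via the dynamical twist functor.
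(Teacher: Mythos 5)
Your overall skeleton does match the paper's proof: impose the two boundary conditions on Proposition \ref{prop bulk q-KZB} via \eqref{Y in terms of T} (picking up the Casimir scalar $q^{\langle\lambda_1+\lambda_2+2\rho,\lambda_1-\lambda_2\rangle}$ on the left), resolve the upper Verma strand with Lemma \ref{lemma 3.10}, insert the projections $\mathbb{P}_{S_2}[\sigma]$ and apply Lemma \ref{lemma cyclicity} on the lower strand, and keep the weighted cap without specializing $\xi$. However, your treatment of the upper strand contains a genuine error. Lemma \ref{lemma 3.10} does \emph{not} turn the crossings of the spin strands with the $M_\lambda$-strand into decorated crossings representing dynamical $R$-matrices $R(\lambda)$; it does the opposite. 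Sandwiched between the highest-weight vector and its dual functional, the coupons $\cR^{-1}_{S_2^\ast,M_\lambda}$ and $(\cR^{21})^{-1}_{S_2^\ast,M_\lambda}$ collapse to the weight operator $\pi_{S_2^\ast}(q^{-\lambda})$, which is then transported along the $S_2$-cap. No dynamical $R$-matrix with argument $\lambda$ occurs anywhere in Figure \ref{boundary qKZB A}: the only dynamical $R$-matrices present are the $\mu$-dependent ones that Proposition \ref{prop bulk q-KZB} already placed on the blue strands, and which end up inside $A_{\mu;\sigma}^{w_1,w_2,w_3}$. Dynamical $R$-matrices at argument $\lambda$ would arise only from pushing crossings \emph{among black strands} through the upper red strand, as happens in Lemma \ref{lemma expectation value of Psi} when $\langle\Psi\rangle$ is expanded; in the present lemma $\Psi$ stays an unexpanded coupon, so no such step occurs.

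This is not cosmetic: the factor $\pi_{S_2^\ast}(q^{-\lambda})$ slid along the cap is precisely what gives the surviving cap its $\lambda$-dependent weight, which is (i) the source of the $\lambda$-dependence of the eventual operator $\mathcal{D}_\lambda$ in Corollary \ref{thm boundary qKZB}, and (ii) the mechanism by which the later specialization $\xi=2\lambda+2\rho$ identifies the weighted cap with $\widetilde{e}_{S_2}$, the unique evaluation that is a morphism in $\Rep^\str$ and can therefore be pushed through the lower red strand by Proposition \ref{pushdiagram}. If you insert $R(\lambda)$'s instead, the identity you derive is not Figure \ref{boundary qKZB A}. Two smaller inaccuracies: the weighted cap lives on the \emph{black} $S_2$-strands, outside the blue morphism $A_{\mu;\sigma}^{w_1,w_2,w_3}$ (your closing sentence places it inside $A_{\mu;\sigma}^{w_1,w_2,w_3}$, which contradicts your own correct remark that it must still be pushed to the parametrizing space later); and you omit the removal of the $q^{2\rho}$ coupons on the middle black strands, which uses that $\langle\Psi\rangle$ has weight zero, as well as the weight-coloring argument showing that the relevant regions below the red strand both carry the color $\mu+\sigma$ because $\Psi$ has total weight zero.
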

\begin{proof}
	In view of (\ref{Y in terms of T}),
	inserting in Figure \ref{bulk q-KZB diagram} highest-weight-to-highest-weight boundary conditions in the upper Verma strand and cyclic boundary conditions weighted by \(q^\xi\) in the lower Verma strand shows that 
	\begin{figure}[H]
		\centering
		\includegraphics[scale=0.7]{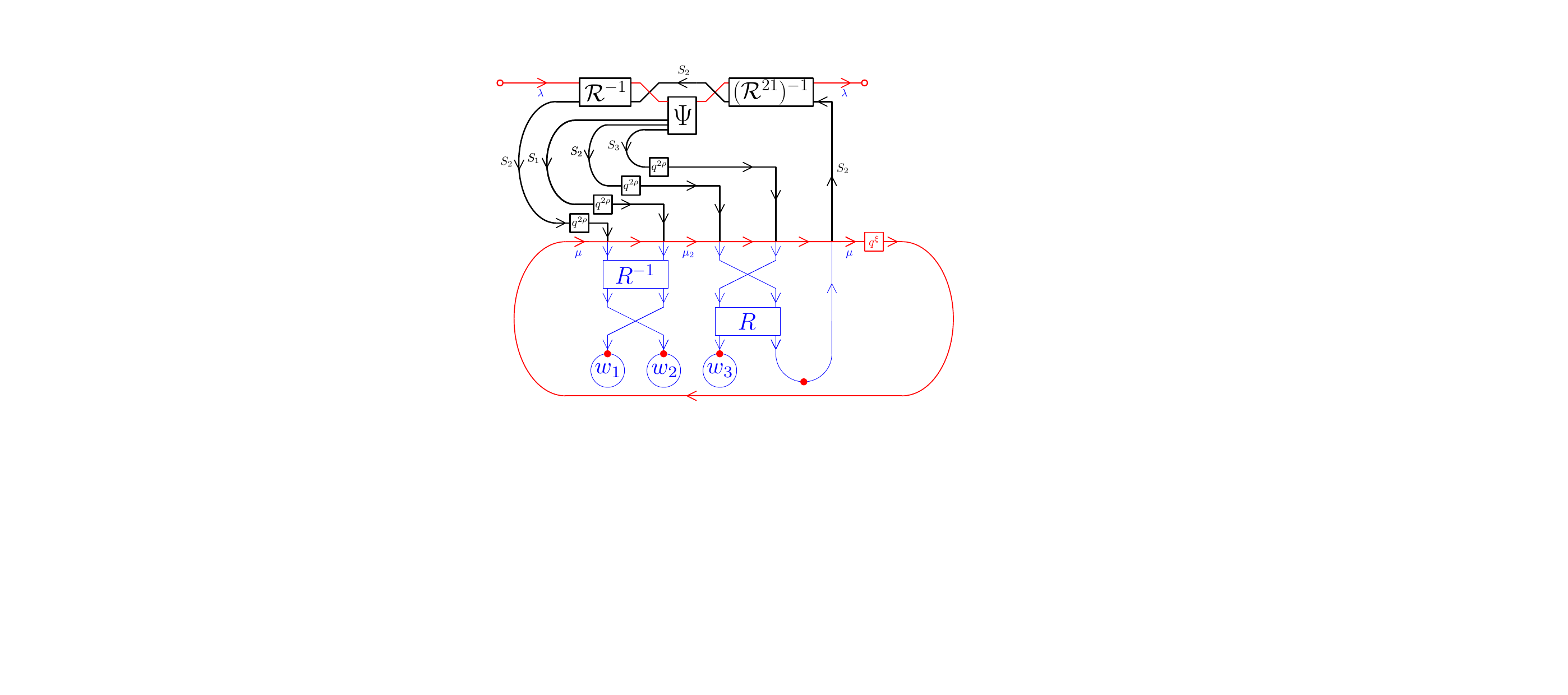}
		\caption{}
		\label{boundary qKZB AA}
	\end{figure}
\noindent
is a graphical representation of $q^{\langle \lambda_1+\lambda_2+2\rho,\lambda_1-\lambda_2\rangle}\mathcal{Y}_{\bs{S}}^{\bs{w},\bs{g}}(\lambda,\mu,\xi)$.
Here we used \cite[Section 3.3]{DeClercq&Reshetikhin&Stokman-2022} and Subsections \ref{GcSection} \& \ref{Subsection Boundary conditions} to express the upper part of the diagram in terms of the standard graphical calculus for $\cN_{\textup{adm}}^\str$ and $\cN_{\textup{fd}}^\str$.
By Lemma \ref{lemma 3.10} we can replace the coupons in Figure \ref{boundary qKZB AA} labeled by \(\cR^{-1}_{S_2^\ast,M_{\lambda}}\) and \((\cR^{21})^{-1}_{S_2^\ast,M_{\lambda}}\) by a coupon labeled by \(\pi_{S_2^\ast}(q^{-\lambda})\), which can then be transported along the cap labeled by \(S_2\). Moreover, one can remove the coupons labeled by \(q^{2\rho}\) on the three middle black strands, since \(\langle \Psi\rangle\) has weight 0. If we then apply \(\mathbb{P}_{S_2}[\sigma]\) on the leftmost blue strand and sum over all \(\sigma \in\wts(S_2)\), Figure \ref{boundary qKZB AA} may be replaced by
	\begin{center}
		\includegraphics[scale=0.7]{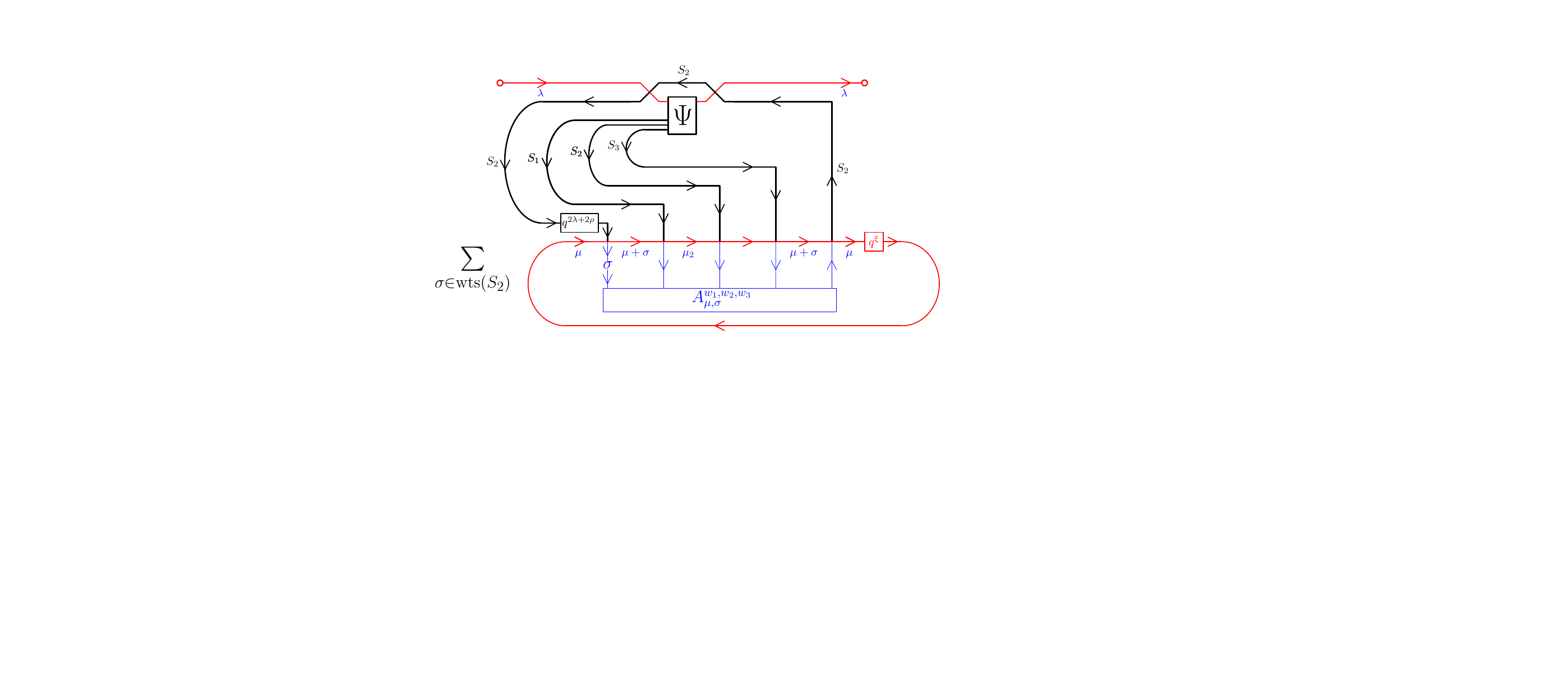}
	\end{center}
\noindent
In the process of coloring the vertical regions below the red strand with weights, note that imposing the projection operator \(\mathbb{P}_{S_2}[\sigma]\) on the leftmost blue strand forces the vertical region right of this strand to be colored by \(\mu+\sigma\). This then implies that the vertical region left of the rightmost blue strand carries the same color \(\mu+\sigma\), because the total weight on the three middle blue strands is zero since \(\Psi\) is a morphism \(M_\lambda\to S^\ast\tens M_\lambda\). 
By Lemma \ref{lemma cyclicity} the diagram above may then be replaced by
	\begin{center}
		\includegraphics[scale=0.7]{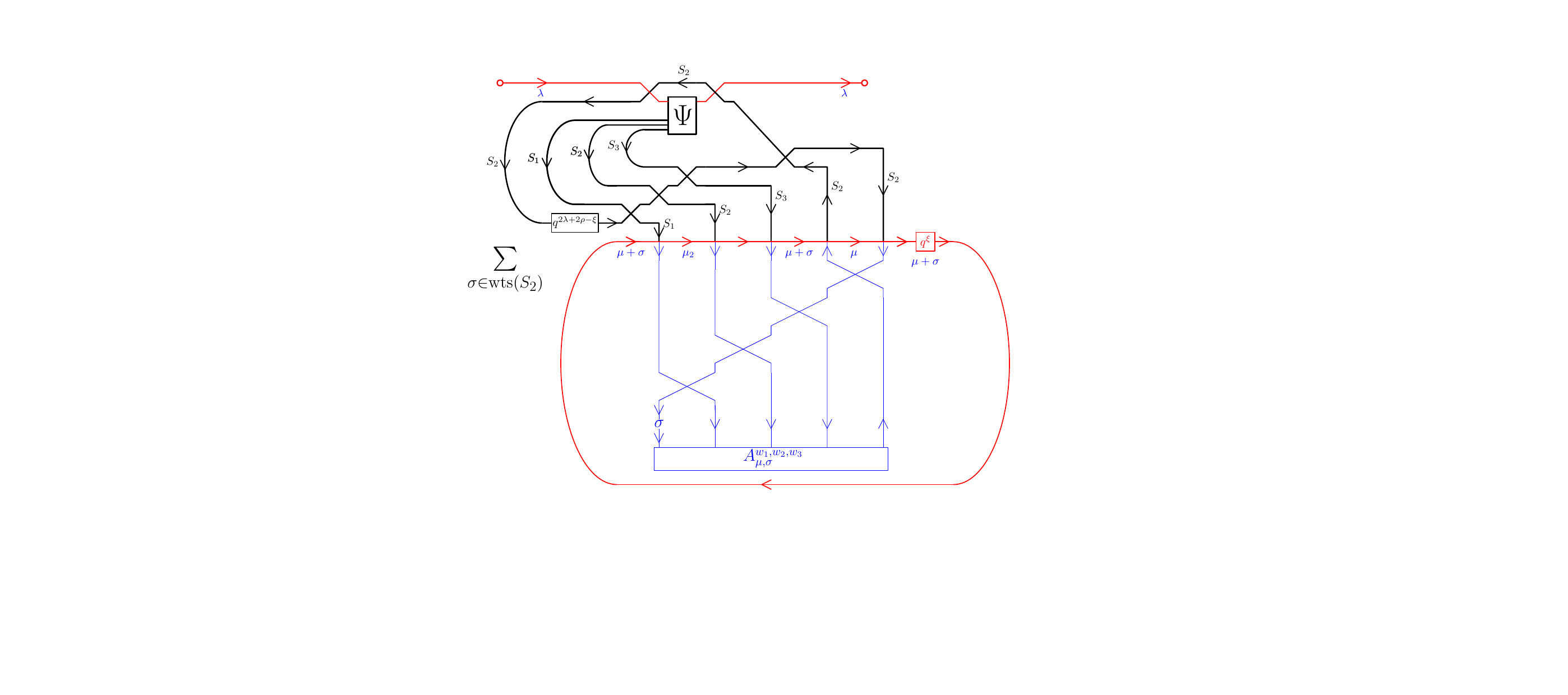}
	\end{center}
	Upon contracting the loop labeled by \(S_2\), this reduces to the right-hand side of Figure \ref{boundary qKZB A}.
\end{proof}

Note that the right hand side of Figure \ref{boundary qKZB A} does not yet graphically represent a sum of spin components of weighted traces of intertwiners, because of the weighted cap in the black strand labeled by $S_2$. To push this weighted cap below the Verma strand using Proposition \ref{pushdiagram} we need to take 
\begin{equation}
\label{choice of xi}
\xi = 2\lambda+2\rho,
\end{equation}
in which case the weighted cap is represented by $\ul{e_{S_2}}$, with $e_{S_2}$ the evaluation morphism in $\Rep^\str$.
With this choice of \(\xi\), the result of Lemma \ref{lemma boundary qKZB with general xi} can be refined to a dual $q$-KZB type equation for the spin components 
\begin{equation}
\label{set Z equal to}
\mathcal{Z}_{\bs{S}}^{\bs{w},\bs{g}}(\lambda,\mu):=\mathcal{Y}_{\bs{S}}^{\bs{w},\bs{g}}(\lambda,\mu,2\lambda+2\rho)
\end{equation}
of the weighted trace functions, with \(\Re(\lambda)\) lying deep in the negative Weyl chamber. 
\begin{proposition}
	\label{prop boundary qKZB}
	Under the same conventions as in Proposition \ref{prop bulk q-KZB} we have, for \(\lambda\in\hh^\ast\) with \(\Re(\lambda)\) deep in the negative Weyl chamber and $\Psi=\Psi_{\lambda; S_3^*,S_2^*,S_1^*}^{g_3,g_2,g_1}$,
	\begin{figure}[H]
		\centering
		\includegraphics[scale=0.75]{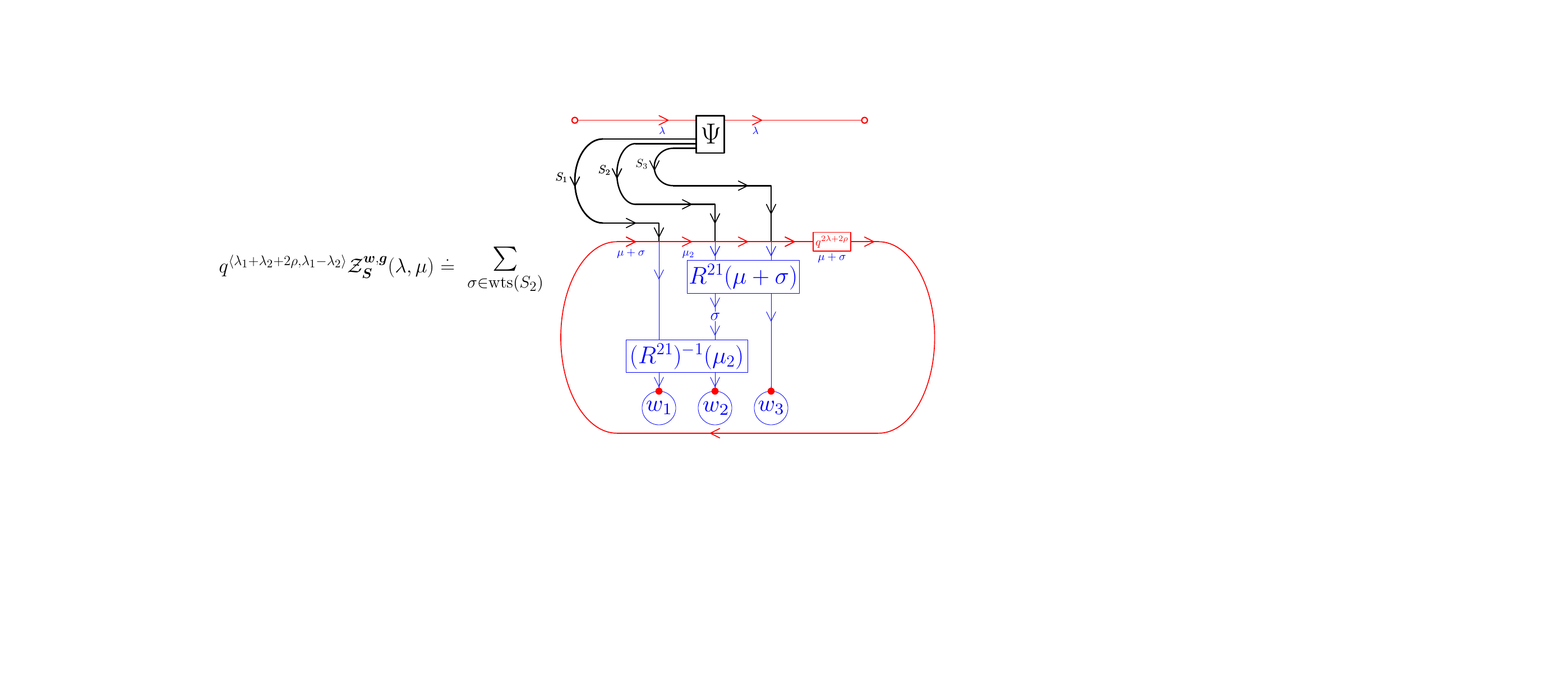}
		\caption{}
		\label{boundary qKZB G}
	\end{figure}
\noindent 
relative to the graphical calculus for \(\Nadm^\str\) and \(\mathcal{N}_{\mr{fd}}^\str\).
\end{proposition}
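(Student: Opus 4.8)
The proof takes as its starting point the identity of Lemma~\ref{lemma boundary qKZB with general xi} (Figure~\ref{boundary qKZB A}), specialized to the value $\xi=2\lambda+2\rho$ dictated by \eqref{choice of xi}. As observed in the text preceding the statement, the right-hand side of Figure~\ref{boundary qKZB A} fails to represent a sum of spin components of weighted traces solely because of the weighted cap closing the black $S_2$-loop: after the reductions of Lemma~\ref{lemma boundary qKZB with general xi} this cap is the only coupon still living in the ribbon calculus for $\Rep^\str$ rather than in the blue forest-graph calculus for $\Ndyn^\str$. The plan is therefore to show that at $\xi=2\lambda+2\rho$ this weighted cap coincides with the underlying $\cN_{\mr{fd}}$-map $\ul{e_{S_2}}$ of a genuine evaluation morphism $e_{S_2}$ of the ribbon category $\Rep^\str$, and then to transport it across the red Verma border line onto the blue strands by means of Proposition~\ref{pushdiagram}.

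The crux is the weight bookkeeping on the $S_2$-cap. Three sources of grade-measuring group-like factors meet on this strand: the factor $\pi_{\cF^\str(S_2)}(q^\xi)$ deposited on the $S_2$ spin strand by the cyclic reordering of Lemma~\ref{lemma cyclicity}; the factor obtained by transporting through the cap the $\pi_{S_2^\ast}(q^{-\lambda})$ produced when the upper-Verma crossings are resolved via Lemma~\ref{lemma 3.10}, using that the antipode sends $q^{-\lambda}$ to $q^{\lambda}$; and the intrinsic $q^{2\rho}$ built into the right evaluation of $\Rep^\str$ (see \eqref{dualities_A}). Combining these through the relation \eqref{action of q^2rho} shows that precisely when $\xi=2\lambda+2\rho$ the three contributions collapse so that the weighted cap equals $\ul{e_{S_2}}$, an honest categorical evaluation coupon. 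This is the single point at which the self-dual value \eqref{choice of xi} is forced: for any other $\xi$ a residual factor $q^{\xi-2\lambda-2\rho}$ survives on the $S_2$-strand, and since this factor is not of the form $\cF^{\mr{frgt}}(\cF^\str(A))$ for a morphism $A$ of $\Rep^\str$, it does not lie in the image of the dynamical twist functor and hence cannot be moved through the Verma strand.

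With the cap identified as an $\Rep^\str$-morphism, I would apply Proposition~\ref{pushdiagram} to move it across the red Verma strand into $\mathbb{G}_{\Ndyn^\str}$, where by the definition of $\cF^{\mr{dt}}$ it becomes the dynamical evaluation coupon $\ol{e_{S_2}}$ evaluated at the weight carried by the adjacent vertical region; as in the proof of Lemma~\ref{lemma boundary qKZB with general xi}, the projection $\mathbb{P}_{S_2}[\sigma]$ on the leftmost blue strand pins this weight, so the summation over $\sigma\in\wts(S_2)$ is preserved. Since all remaining coupons — the morphisms $A_{\mu;\sigma}^{w_1,w_2,w_3}$ of Figure~\ref{Adef}, the dynamical fusion dots, and the projections $\mathbb{P}_{S_2}[\sigma]$ — already belong to the calculus for $\cN_{\mr{adm}}^\str$ and $\cN_{\mr{fd}}^\str$, the resulting diagram is exactly Figure~\ref{boundary qKZB G}, and recalling \eqref{set Z equal to} identifies its left-hand side with (the scalar multiple $q^{\langle\lambda_1+\lambda_2+2\rho,\lambda_1-\lambda_2\rangle}$ of) $\mathcal{Z}_{\bs S}^{\bs w,\bs g}(\lambda,\mu)$. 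The main obstacle is thus the middle step: verifying that the $q^{2\rho}$ of the right duality, the antipode acting on the transported $q^{-\lambda}$, and the cyclicity factor $q^\xi$ conspire to produce the categorical evaluation exactly at $\xi=2\lambda+2\rho$; everything else is a faithful transcription of Lemma~\ref{lemma boundary qKZB with general xi} and Proposition~\ref{pushdiagram} into the appropriate graphical calculus.
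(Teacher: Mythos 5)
Your proposal is correct and follows essentially the same route as the paper's proof: specialize Lemma \ref{lemma boundary qKZB with general xi} to $\xi=2\lambda+2\rho$, identify the weighted $S_2$-cap with (the forgetful image of) a genuine evaluation morphism of $\Rep^\str$, and push it through the red Verma strand via Proposition \ref{pushdiagram}, the weight $\mu+\sigma$ of the adjacent region being pinned by $\mathbb{P}_{S_2}[\sigma]$. The only point you compress is the final step: after the cap has been pushed through, the paper still substitutes the definition of $A_{\mu;\sigma}^{w_1,w_2,w_3}$ (Figure \ref{Adef}) into the resulting diagram and simplifies with the dynamical duality identities (Figure \ref{duality A}) before the picture literally becomes Figure \ref{boundary qKZB G}.
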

\begin{proof}
	By Proposition \ref{pushdiagram} we have
	\begin{figure}[H]
	\centering
		\includegraphics[scale=0.9]{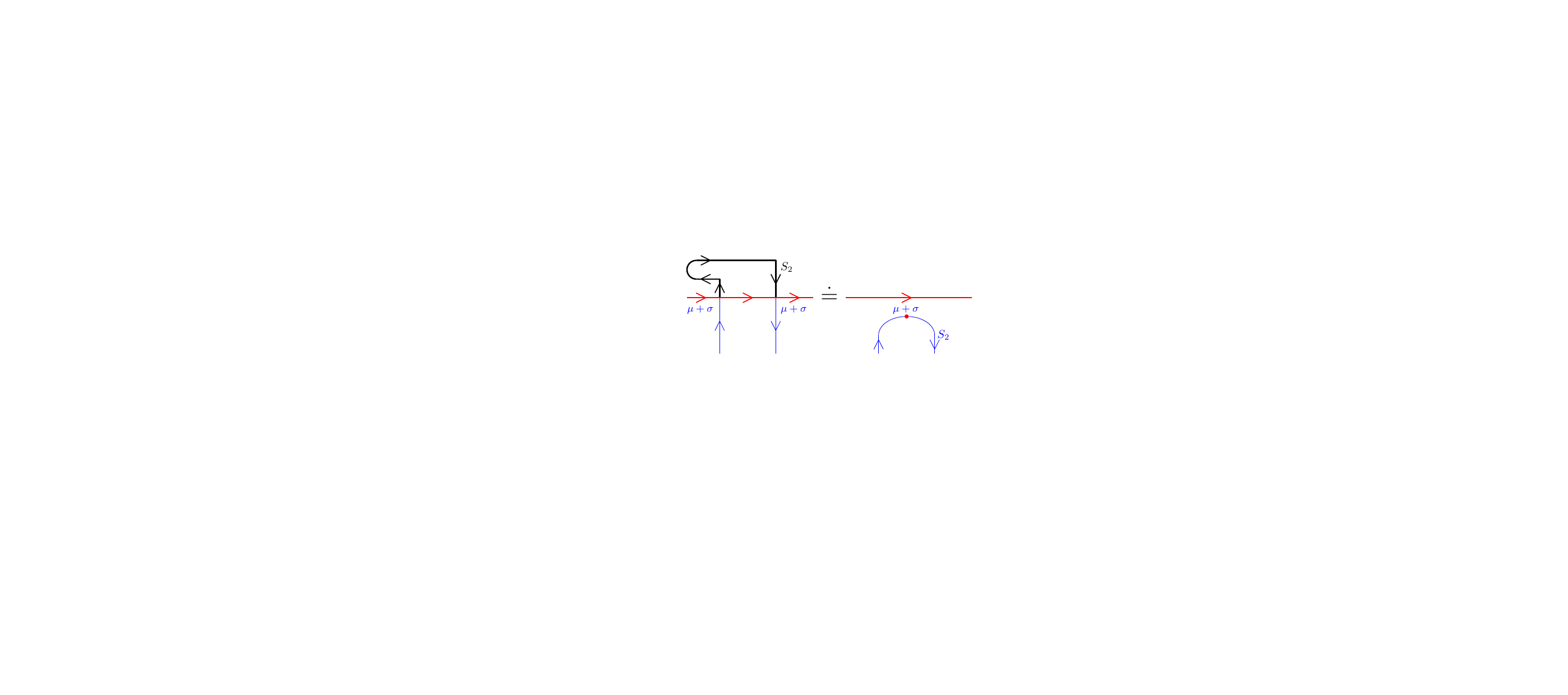}
		\caption{}
		\label{bhhelp}
	\end{figure}
This can be used to accordingly manipulate the strand colored by \(\ul{S_2}\) in Figure \ref{boundary qKZB A}. More precisely, with the specialization of \(\xi\) as in (\ref{choice of xi}), the remaining cap colored by \(\ul{S_2}\) in Figure \ref{boundary qKZB A} can be identified with the cap colored by \(S_2\) in the graphical calculus for \(\Mfd^\str\), and hence we can pull it through the red strand labeled by \(M_{\mu+\sigma}\) using Figure \ref{bhhelp}.
	
We conclude that by Lemma \ref{lemma boundary qKZB with general xi} and by the above observation, 
$q^{\langle\lambda_1+\lambda_2+2\rho,\lambda_1-\lambda_2\rangle}\mathcal{Z}_{\bs{S}}^{\bs{w},\bs{g}}(\lambda,\mu)$ 
is graphically represented by
	\begin{figure}[H]
		\centering
		\includegraphics[scale=0.75]{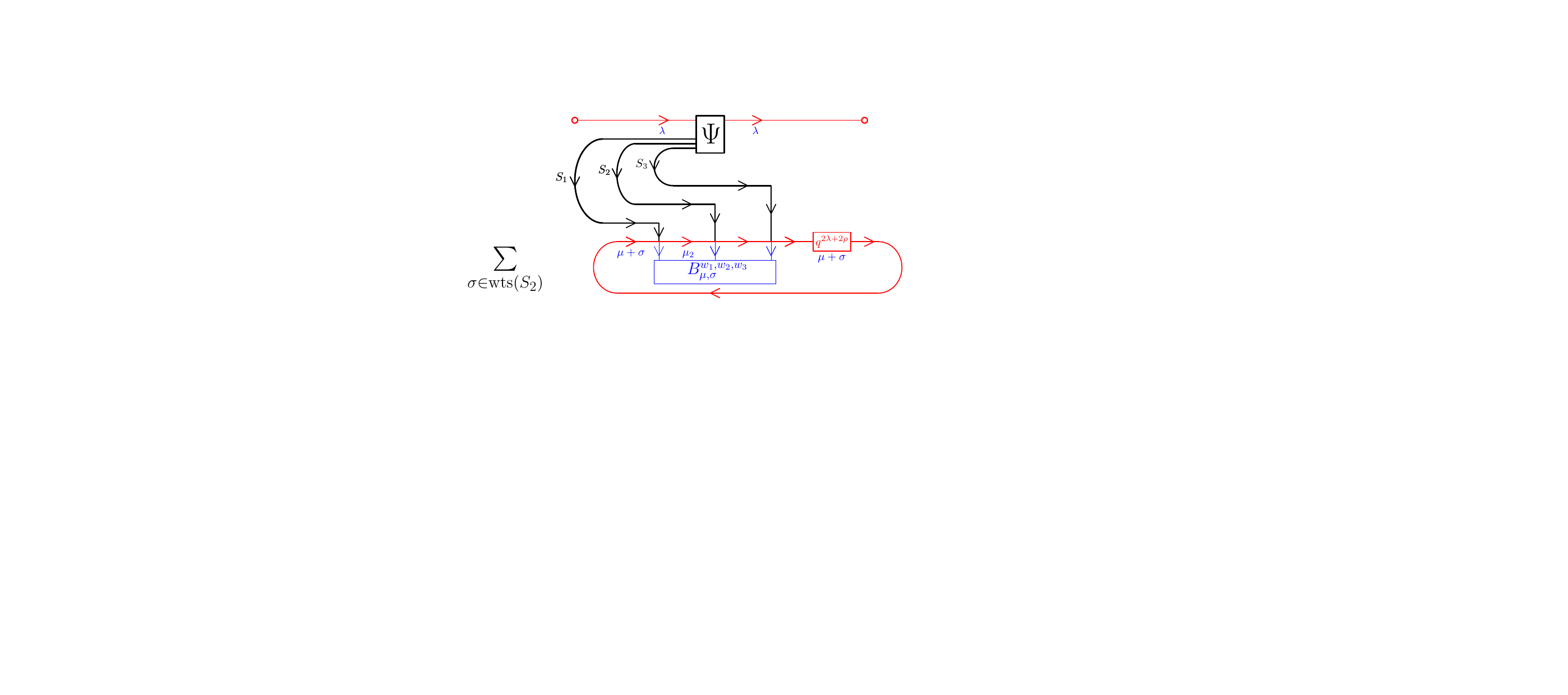}
		\caption{}
		\label{boundary qKZB I}
	\end{figure}
	\noindent
	with $B_{\mu;\sigma}^{w_1,w_2,w_3}$ the morphism in $\cN_{\mr{fd}}^\str$ such that
	\begin{figure}[H]
		\centering
		\includegraphics[scale=0.75]{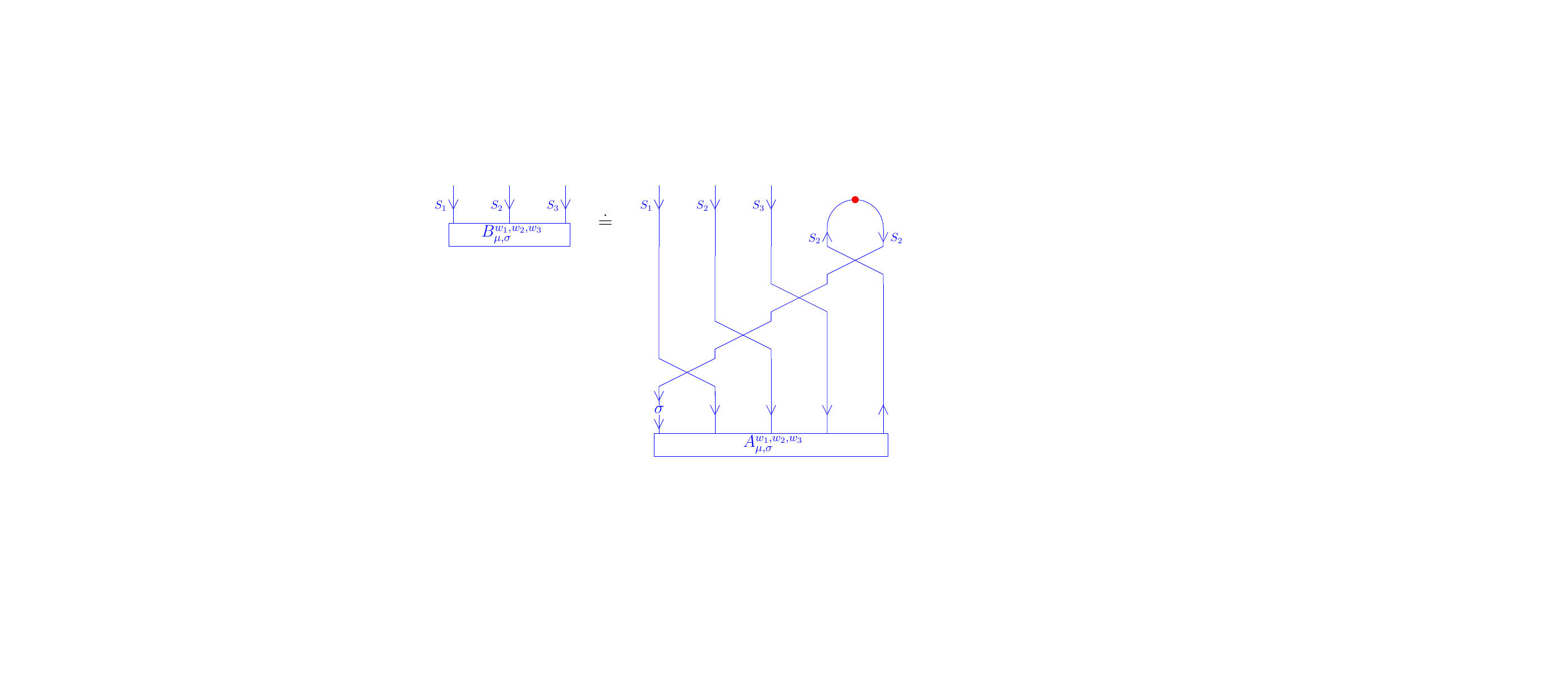}
		\caption{}
		\label{boundary qKZB J}
	\end{figure}
	\noindent 
	We now substitute Figure \ref{Adef} in Figure \ref{boundary qKZB J} and use the graphical calculus for $\cN_{\mr{fd}}^\str$ to simplify. It follows that
	 the $\mathcal{F}_{\cN_{\mr{fd}}^\str}^{\textup{RT}}$-images of the right-hand side of Figure \ref{boundary qKZB J} and of
	\begin{center}
		\includegraphics[scale=0.75]{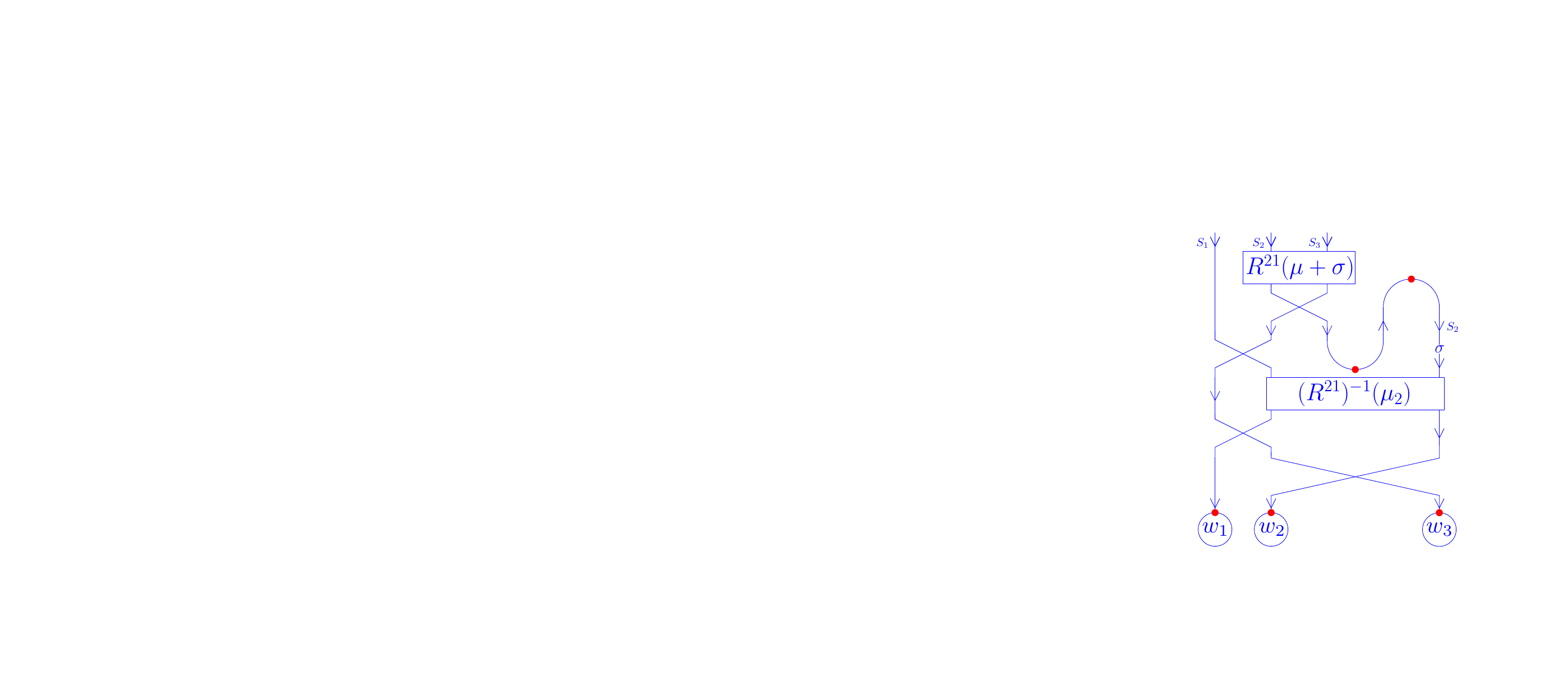}
	\end{center}
	\noindent
	coincide. Using Figure \ref{duality A} and simplifying further, 
	 Figure \ref{boundary qKZB I} can be transformed into the right-hand side of Figure \ref{boundary qKZB G}.
	 This concludes the proof. 
\end{proof}
We now give the reformulation of Proposition \ref{prop boundary qKZB} as a concrete equation for \(\mathcal{Z}_{\bs{S}}^{\bs{w},\bs{g}}(\lambda,\mu)\). 

If \(\mathcal{K}\in\End_{\mathcal{N}_{\textup{fd}}}(\cF^\str(\ul{S}))\) and \(\cD\in\End_{\cN_{\textup{fd}}}(\cF^\str(\ul{S^\ast}))\) and
\[
\mathcal{K}(w_1\otimes w_2\otimes w_3) = \sum_m u_1^m\otimes u_2^m\otimes u_3^m, \qquad \cD(g_3\otimes g_2\otimes g_1) = \sum_n \omega_3^n\otimes \omega_2^n\otimes \omega_1^n
\]
are expansions with homogeneous vectors \(u_i^m\in\cF^\str(\ul{S_i})\) and \(\omega_i^n\in\cF^\str(\ul{S_i^\ast})\), then we write
\[ 
\mathcal{Z}_{\bs{S}}^{\mathcal{K}(\bs{w}),\mathcal{D}(\bs{g})}(\lambda,\mu):=
\sum_{m,n}\mathcal{Z}_{\bs{S}}^{(u_1^m,u_2^m,u_3^m),(\omega_3^n,\omega_2^n,\omega_1^n)}(\lambda,\mu).
\]
This is well defined since $\mathcal{K}$ and $\mathcal{D}$ are $\mathfrak{h}$-linear, so they restrict to endomorphisms of $\cF^\str(\ul{S})[0]$ and $\cF^\str(\ul{S^*})[0]$, respectively.

Recall from \cite[Definition 1.6 and Section 3.3]{DeClercq&Reshetikhin&Stokman-2022} the notations
\begin{equation}\label{thetakappa}
\theta(\lambda):=\lambda+\rho-\frac12\sum_{i=1}^r x_i^2\in U(\hh), \qquad \kappa:=q^{\sum_{i=1}^rx_i\otimes x_i}\in \mathcal{U}^{(2)}.
\end{equation}
\begin{corollary}
	\label{thm boundary qKZB}
	Consider arbitrary \(\lambda,\mu\in\hh_{\mathrm{reg}}^\ast\) with \(\Re(\lambda)\) deep in the negative Weyl chamber, \(S = S_1\tens S_2\tens S_3\in \Rep^\str\), \(w_i\in\cF^\str(S_i)[\nu_i]\) and \(g_i\in \cF^\str(S_i^\ast)[\nu_i']\) for \(i\in\{1,2,3\}\), with \(\nu_1+\nu_2+\nu_3 = \nu_1'+\nu_2'+\nu_3' = 0\). Then 
	\begin{equation}\label{qKZBfromgraphics}
	\mathcal{Z}_{\bs{S}}^{\bs{w},\bs{g}}(\lambda,\mu) = \sum_{\sigma\in\wts(S_2)}\mathcal{Z}_{\bs{S}}^{\mathcal{K}_{\mu,\sigma}(\bs{w}), \mathcal{D}_\lambda(\bs{g})}(\lambda,\mu+\sigma)
	\end{equation}
	with \(\mathcal{K}_{\mu,\sigma}\in\End_{\cN_{\textup{fd}}}(\cF^\str(\ul{S}))\) and \(\mathcal{D}_\lambda\in\End_{\cN_{\textup{fd}}}(\cF^\str(\ul{S^\ast}))\) defined by
	\begin{align}
	\label{u vectors def}
	\mathcal{K}_{\mu,\sigma}:=\ & \mathscr{J}_{\boldsymbol{S}}(\mu+\sigma)\, \Rdyn^{21}_{S_2,S_3}(\mu+\sigma)\,\mathbb{P}_{S_2}[\sigma]\,
	\Rdyn^{21}_{S_1,S_2}(\mu-\mh_{S_3})^{-1}\mathscr{J}_{\boldsymbol{S}}(\mu+\sigma)^{-1}, \\
	\label{the funny Js}
	\mathscr{J}_{\boldsymbol{S}}(\mu+\sigma) :=\ & j_{S_1}(\mu+\sigma-\mh_{S_2\tens S_3})\otimes j_{S_2}(\mu+\sigma-\mh_{S_3}) \otimes j_{S_3}(\mu+\sigma), \\
	\label{the funny D}
	\mathcal{D}_\lambda :=\ & \kappa^{-2}_{S_3^\ast,S_2^\ast}(q^{2\theta(\lambda)})_{S_2^\ast}.
	\end{align}
	\end{corollary}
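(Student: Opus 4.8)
The plan is to translate the graphical identity of Proposition \ref{prop boundary qKZB} (Figure \ref{boundary qKZB G}) into the explicit algebraic equation \eqref{qKZBfromgraphics} by applying the Reshetikhin--Turaev functor $\mathcal{F}^{\mr{RT}}_{\cN_{\mr{fd}}^\str}$ together with the forest functor $\cF^{\mr{forest}}_{\cN_{\mr{fd}}^\str}$ to both sides, and then reading off the resulting morphisms in $\cN_{\mr{fd}}$. Under these functors every decorated crossing becomes a dynamical $R$-matrix via \eqref{relRr}, every red dot becomes a dynamical fusion operator $j_{S_i}$ via \eqref{relJj}, and the projection coupon $\mathbb{P}_{S_2}[\sigma]$ records the weight-$\sigma$ component of the $S_2$-strand. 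Since both sides of \eqref{qKZBfromgraphics} are expectation-value pairings of weighted traces (cf.\ \eqref{Y in terms of T} and \eqref{set Z equal to}), it suffices to match the $\mathfrak{h}$-linear operators acting on the black spin vectors $\bs{w}$ and on the dual spin vectors $\bs{g}$ separately.

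I would first assemble the operator $\mathcal{K}_{\mu,\sigma}$ acting on $\cF^\str(\ul S)$. Tracing the $S$-colored strands through Figure \ref{boundary qKZB G} and reading the operations in the order in which they act on $\bs{w}=w_1\otimes w_2\otimes w_3$ (right to left in \eqref{u vectors def}), the bundle is first unfused by $\mathscr{J}_{\bs S}(\mu+\sigma)^{-1}$, the $S_2$-strand is braided past $S_1$ by the inverse exchange matrix $\Rdyn^{21}_{S_1,S_2}(\mu-\mh_{S_3})^{-1}$, projected onto its weight-$\sigma$ component by $\mathbb{P}_{S_2}[\sigma]$, braided past $S_3$ by $\Rdyn^{21}_{S_2,S_3}(\mu+\sigma)$, and finally refused by $\mathscr{J}_{\bs S}(\mu+\sigma)$. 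The weight arguments are dictated by the weight-coloring convention of Section \ref{Section strictified dynamical module category}: exactly as in the proof of Lemma \ref{lemma boundary qKZB with general xi}, the projection $\mathbb{P}_{S_2}[\sigma]$ forces the region to the right of the $S_3$-strand to carry weight $\mu+\sigma$, so that crossing the bundle from right to left produces the shifts appearing in \eqref{the funny Js}, whereas the inverse braiding $\Rdyn^{21}_{S_1,S_2}$, which is performed before the projection, is evaluated at $\mu-\mh_{S_3}$. Collecting these factors reproduces \eqref{u vectors def}.

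It remains to identify the dual-side operator $\mathcal{D}_\lambda$ of \eqref{the funny D}, and this is where the quantum Casimir enters. By Lemma \ref{lemma before topological qKZB} the two twists inserted on the intermediate Verma strands of $\Psi$ contribute the scalar $q^{\langle\lambda_1+\lambda_2+2\rho,\lambda_1-\lambda_2\rangle}$ together with residual braiding and twist morphisms on the dual spin strands $S_2^\ast$ and $S_3^\ast$. Using \eqref{lambda_j def} to write $\lambda_1-\lambda_2=-\nu_2'$ and $\lambda_1+\lambda_2+2\rho=2\lambda+2\rho-\nu_2'-2\nu_3'$, together with the definitions \eqref{thetakappa} of $\theta$ and $\kappa$, a direct weight computation on $g_3\otimes g_2\otimes g_1$ (with $g_i\in\cF^\str(S_i^\ast)[\nu_i']$) shows that these contributions combine into scalar multiplication by the eigenvalue of $\kappa^{-2}_{S_3^\ast,S_2^\ast}(q^{2\theta(\lambda)})_{S_2^\ast}$; recognizing this $\mathfrak{h}$-linear operator as $\mathcal{D}_\lambda$ and folding the scalar prefactor into it yields \eqref{qKZBfromgraphics}. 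I expect the main obstacle to be precisely this last bookkeeping: one must track the interplay between the scalar Casimir eigenvalues coming from the Verma strands and the genuinely spin-space-valued twist morphisms coming from the ribbon structure on $S_2^\ast,S_3^\ast$, and verify that their combined action is captured cleanly by the universal expression $\kappa^{-2}(q^{2\theta(\lambda)})$. Keeping the dynamical weight arguments $\mu+\sigma$ and $\mu-\mh_{S_3}$ consistent throughout the translation is the only real source of difficulty; the remainder is a routine application of the graphical dictionary established in Subsections \ref{sectiongi} and \ref{Subsection Boundary conditions}.
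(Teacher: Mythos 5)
Your proposal is correct and follows essentially the same route as the paper's proof: there, too, one observes that the final diagram of Proposition \ref{prop boundary qKZB} simultaneously represents $q^{\langle\lambda_1+\lambda_2+2\rho,\lambda_1-\lambda_2\rangle}\,\mathcal{Z}_{\bs{S}}^{\bs{w},\bs{g}}(\lambda,\mu)$ and $\sum_{\sigma\in\wts(S_2)}\mathcal{Z}_{\bs{S}}^{\mathcal{K}_{\mu,\sigma}(\bs{w}),\bs{g}}(\lambda,\mu+\sigma)$, with $\mathcal{K}_{\mu,\sigma}$ read off the blue strands exactly as in your second paragraph, and then the inverse Casimir scalar $q^{\langle 2\lambda+2\rho-2\nu_3'-\nu_2',\nu_2'\rangle}$ is recognised as the eigenvalue of $\mathcal{D}_\lambda$ on $g_3\otimes g_2\otimes g_1$. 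One small correction to your narrative: by the stage of Proposition \ref{prop boundary qKZB} the braidings produced by Lemma \ref{lemma before topological qKZB} have already been pushed through the Verma strand and absorbed into $\mathcal{K}_{\mu,\sigma}$, so there are no ``residual braiding and twist morphisms'' on $S_2^\ast,S_3^\ast$ left to feed into $\mathcal{D}_\lambda$ --- it is identified with the scalar prefactor alone, which is in fact what your final weight computation does.
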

\begin{proof}
	Proposition \ref{prop boundary qKZB} asserts that 
	\begin{figure}[H]
		\centering
		\includegraphics[scale=0.75]{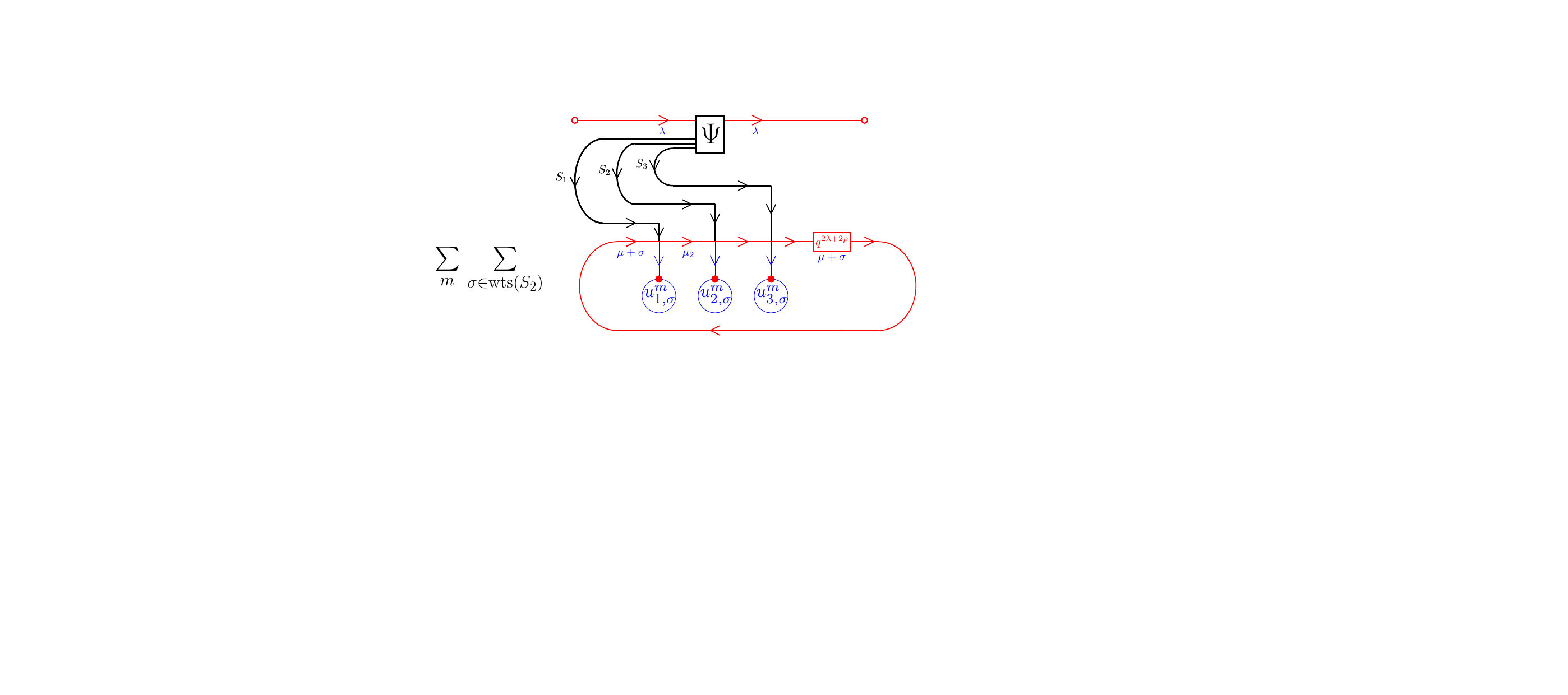}
		\caption{}
		\label{D6}
	\end{figure}
	\noindent 
	is a graphical representation of \(q^{\langle\lambda_1+\lambda_2+2\rho,\lambda_1-\lambda_2\rangle}\,\mathcal{Z}_{\bs{S}}^{\bs{w},\bs{g}}(\lambda,\mu)\),
	where \(u_{i,\sigma}^m\in \cF^\str(S_i)\) are homogeneous vectors such that
	\[
	\sum_m u_{1,\sigma}^m\otimes u_{2,\sigma}^m\otimes u_{3,\sigma}^m = \mathcal{K}_{\mu,\sigma}(w_1\otimes w_2\otimes w_3).
	\]
	Observe that Figure \ref{D6} represents
	\[
	\sum_{m}\sum_{\sigma\in\wts(S_2)}\mathcal{Z}_{\bs{S}}^{(u_{1,\sigma}^m,u_{2,\sigma}^m,u_{3,\sigma}^m), \bs{g}}(\lambda,\mu+\sigma),
	\]
	such that we obtain the equation
	\[
	q^{\langle\lambda_1+\lambda_2+2\rho,\lambda_1-\lambda_2\rangle}\,\mathcal{Z}_{\bs{S}}^{\bs{w},\bs{g}}(\lambda,\mu) = \sum_{\sigma\in\wts(S_2)} \mathcal{Z}_{\bs{S}}^{\mathcal{K}_{\mu,\sigma}(\bs{w}),\bs{g}}(\lambda,\mu+\sigma),
	\]
	with \(\lambda_j\) as defined in (\ref{lambda_j def}). This can be rewritten as
	\[
	\mathcal{Z}_{\bs{S}}^{\bs{w},\bs{g}}(\lambda,\mu) = q^{\langle 2\lambda+2\rho-2\nu'_3-\nu'_2,\nu'_2\rangle} \sum_{\sigma\in\wts(S_2)} \mathcal{Z}_{\bs{S}}^{\mathcal{K}_{\mu,\sigma}(\bs{w}),\bs{g}}(\lambda,\mu+\sigma),
	\]
	and clearly \(q^{\langle 2\lambda+2\rho-2\nu'_3-\nu'_2,\nu'_2\rangle}(g_3\otimes g_2\otimes g_1) = \cD_\lambda(g_3\otimes g_2\otimes g_1)\).
\end{proof}
Formula \eqref{qKZBfromgraphics} already manifests the structure of the dual \(q\)-KZB equation for weighted trace functions of intertwiners from \cite{Etingof&Varchenko-2000}. The comparison to the standard form of the dual \(q\)-KZB equations will be postponed to Section \ref{Section Etingof-Varchenko normalization}.

\section{Graphical derivation of dual coordinate MR equations}
\label{Section dual MR}

In this section, we use the graphical calculus to derive a dual coordinate Macdonald-Ruijsenaars (MR) type equations for the spin components \(\mathcal{Z}_{\bs{S}}^{\bs{w},\bs{g}}(\lambda,\mu)\) (see \eqref{Zdef}) of the weighted trace functions. A subset of these equations will yield the dual MR equations for weighted trace functions from \cite{Etingof&Varchenko-2000}, as we shall show in 
Subsection \ref{Subsection Etingof-Varchenko normalization for MR}.

For the dual coordinate MR type equations it will suffice to work with the\ \textquotedblleft twofold\textquotedblright\ trace functions \(\mathcal{Z}_{\bs{S}}^{\bs{w},\bs{g}}(\lambda,\mu)\) with \(\bs{w} = (w_1,w_2)\) and \(\bs{g} = (g_2,g_1)\), obtained from (\ref{Y def}) upon setting \(S_3 = \emptyset\). 
Our approach will follow the same lines as Section \ref{Section dual q-KZB}. We will graphically derive operator versions of the equations for \(\mathcal{T}_{\bs{S}}^{\bs{w},\bs{g}}(\lambda,\mu)\in\End_{\cM_{\mr{adm}}^\str}(M_\mu\tens M_{\lambda})\).
Consequently, we will graphically apply weighted cyclic and highest-weight-to-highest-weight boundary conditions to the Verma strands to derive the desired equations for
$\mathcal{Z}_{\bs{S}}^{\bs{w},\bs{g}}(\lambda,\mu)$.
\subsection{Operator dual MR equations}
\label{Subsection Topological operator dual MR}

For $W\in\Rep$ consider the central element $\Omega_W\in Z(U_q)$ defined by 
\begin{equation}
\label{Omega_W def}
\Omega_W = (\id_{U_q(\g)}\otimes \Tr_W)(\cR^{21}\cR(1\otimes q^{2\rho})),
\end{equation}
see e.g.\ \cite{Drinfeld-1990, Reshetikhin-1990}. Let \(\chi_W\) be the formal character of \(W\), defined by 
\[
\chi_W(q^\xi) := \Tr_W(\pi_W(q^{\xi})) = \sum_{\sigma\in\wts(W)}\,q^{\langle\xi,\sigma\rangle}\dim(W[\sigma])\qquad (\xi\in\mathfrak{h}^*).
\]
We now have the following direct observation.
\begin{lemma}
	\label{lemma Omega tilde}
	For any \(\lambda\in\hh^*\) and \(W\in\Rep\) we have 
	\begin{figure}[H]
		\centering
		\includegraphics[scale=0.75]{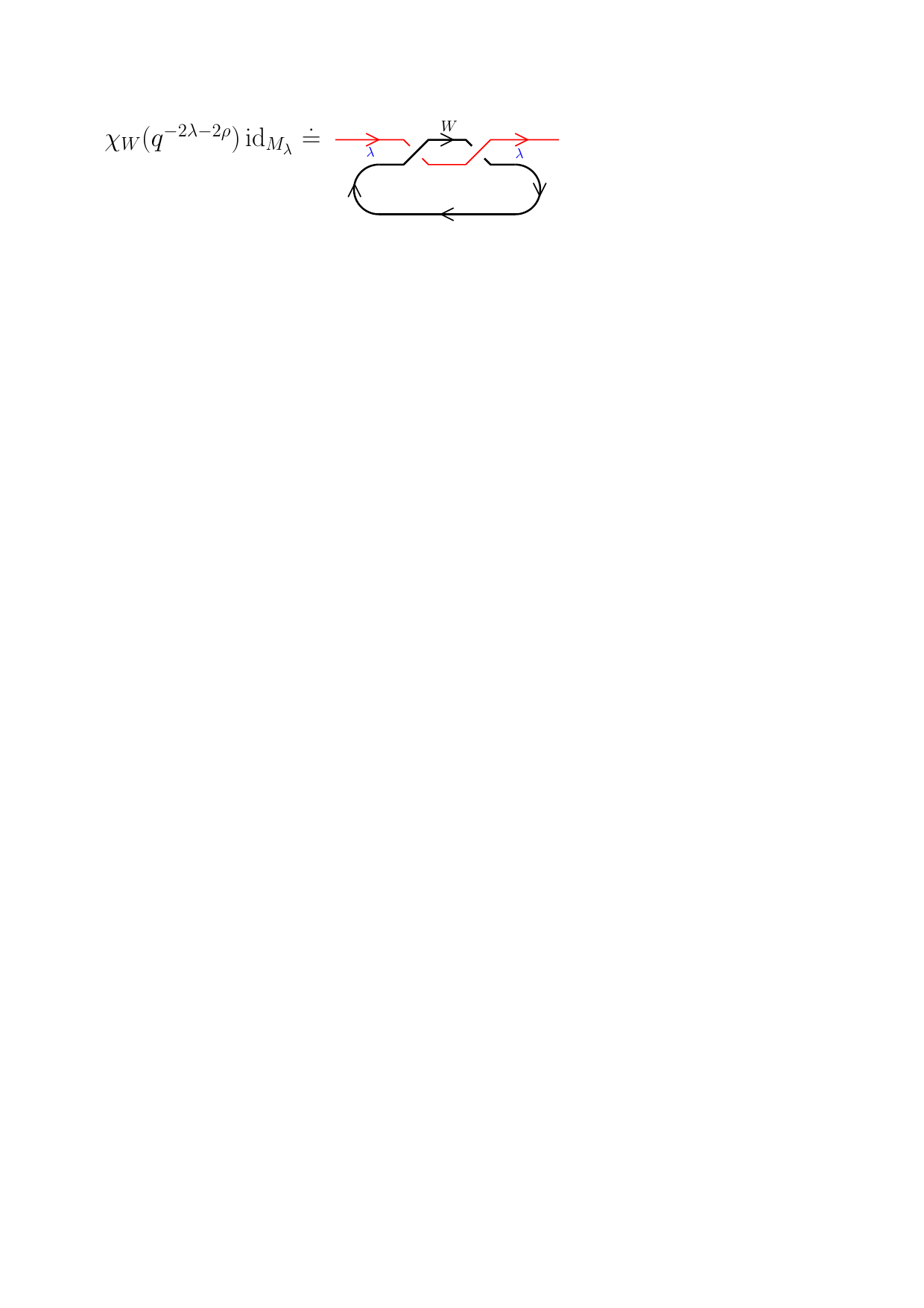}
		\caption{}
		\label{Omega tilde A}
	\end{figure}
	\noindent
	relative to the graphical calculus for $\mathcal{M}_{\textup{adm}}^\str$ and $\Rep^\str$.
\end{lemma}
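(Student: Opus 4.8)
The plan is to identify the left-hand side of Figure \ref{Omega tilde A} with the action $\pi_\lambda(\Omega_W)$ of the central element $\Omega_W$ on the Verma module $M_\lambda$, and then to evaluate this action as scalar multiplication by $\chi_W(q^{-2\lambda-2\rho})$. First I would unwind the diagram: the $W$-colored loop encircling the $M_\lambda$-colored Verma strand is the closure of the double braiding of the $M_\lambda$- and $W$-strands, whose underlying morphism on $M_\lambda\otimes\cF^\str(W)$ is the action of $\cR^{21}\cR$. Closing the $W$-strand with the right (co-)evaluations $\widetilde{\iota}_W,\widetilde{e}_W$ of $\Rep^\str$, which carry the factor $q^{2\rho}$ by \eqref{dualities_A}--\eqref{dualities_B}, implements precisely the partial quantum trace $(\id\otimes\Tr_W)(\,\cdot\,(1\otimes q^{2\rho}))$. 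Hence the diagram equals $\pi_\lambda(\Omega_W)$ by the defining formula \eqref{Omega_W def}.

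Next I would invoke that $\Omega_W\in Z(U_q)$ is central (Drinfeld--Reshetikhin \cite{Drinfeld-1990,Reshetikhin-1990}). Being central, $\pi_\lambda(\Omega_W)$ commutes with $\pi_\lambda(q^h)$ and hence preserves weight spaces, so $\pi_\lambda(\Omega_W)\mathbf{m}_\lambda\in M_\lambda[\lambda]=\CC\mathbf{m}_\lambda$; since $M_\lambda$ is cyclically generated by $\mathbf{m}_\lambda$ and $\pi_\lambda(\Omega_W)$ intertwines the $U_q$-action, it follows that $\pi_\lambda(\Omega_W)=c\,\id_{M_\lambda}$ with $c=\mathbf{m}_\lambda^*\bigl(\pi_\lambda(\Omega_W)\mathbf{m}_\lambda\bigr)$. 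Graphically this is the statement that the loop diagram, capped off by the highest-weight coupons of Figures \ref{eval_lambda}--\ref{eval_lambda_dual}, collapses to the scalar $c$, so the lemma reduces to showing $c=\chi_W(q^{-2\lambda-2\rho})$.

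To compute $c$ I would substitute $\cR=\kappa\ol{\cR}$, with $\kappa=q^{\sum_i x_i\otimes x_i}$ and $\ol{\cR}=\sum_\nu\ol{\cR}_\nu$ Lusztig's quasi-R-matrix satisfying $\ol{\cR}_0=1\otimes 1$, and evaluate the operator $\bigl(\mathbf{m}_\lambda^*\otimes\id_W\bigr)\cR^{21}\cR\bigl(\mathbf{m}_\lambda\otimes(\,\cdot\,)\bigr)$ on a weight vector $w\in W[\sigma]$. The highest-weight relations $E_i\mathbf{m}_\lambda=0$ and $\mathbf{m}_\lambda^*|_{M_\lambda[\mu]}=0$ for $\mu\neq\lambda$ force a collapse of the quasi-R-matrix contributions, leaving $\kappa$ to supply the weight pairings and $1\otimes q^{2\rho}$ the $\rho$-shift. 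Taking $\Tr_W$ of the resulting weight-graded operator then assembles $c=\sum_{\sigma\in\wts(W)}\dim(W[\sigma])\,q^{\langle-2\lambda-2\rho,\sigma\rangle}=\chi_W(q^{-2\lambda-2\rho})$, matching the right-hand side of Figure \ref{Omega tilde A}.

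The hard part will be the bookkeeping in this last step. Depending on the convention fixed for $\ol{\cR}$ in \cite{DeClercq&Reshetikhin&Stokman-2022}, the leg of $\cR$ acting on $\mathbf{m}_\lambda$ need not reduce immediately to its Cartan part, in which case one must track how the off-diagonal quasi-R-matrix terms pair through the Shapovalov-type matrix coefficients $\mathbf{m}_\lambda^*(\,\cdot\,\mathbf{m}_\lambda)$ and verify that they collapse to the character only after the partial trace over $W$. Pinning down that the argument is $q^{-2\lambda-2\rho}$ rather than $q^{2\lambda+2\rho}$ is the delicate point; I would cross-check it against the known eigenvalue $q^{-\langle\lambda,\lambda+2\rho\rangle}$ of the quantum Casimir $\vartheta_{M_\lambda}$ from \eqref{qC}, which fixes the overall sign convention.
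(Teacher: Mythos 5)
Your overall template --- read the closed $W$-loop as a central element acting on $M_\lambda$, use centrality to reduce to a scalar, and compute that scalar on $\mathbf{m}_\lambda$ via $\cR=\kappa\ol{\cR}$ and weight considerations in the partial trace --- is the same as the paper's. But you identify the wrong central element, and this is not a convention issue that can be deferred: the loop in Figure \ref{Omega tilde A} is built from the \emph{inverse} braidings. The paper's proof specializes a general closure identity to $A:=c^{-1}_{M_\lambda,W}c^{-1}_{W,M_\lambda}$, so the diagram represents $\pi_\lambda(\widetilde{\Omega}_W)$ with
\[
\widetilde{\Omega}_W=(\id_{U_q(\g)}\otimes\Tr_W)\bigl(\cR^{-1}(\cR^{21})^{-1}(1\otimes q^{2\rho})\bigr)
\]
(formula \eqref{tildeOmega}; the introduction likewise presents this figure as the action of $\widetilde{\Omega}_W$), and \emph{not} $\pi_\lambda(\Omega_W)$ with $\Omega_W$ as in \eqref{Omega_W def}. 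Your steps 1 and 3 are mutually inconsistent: carrying out your highest-weight computation honestly on $\cR^{21}\cR(1\otimes q^{2\rho})$, each Cartan factor $\kappa$ contributes $q^{+\langle\lambda,\sigma\rangle}$ on $\mathbf{m}_\lambda\otimes W[\sigma]$, so the scalar is $\chi_W(q^{2\lambda+2\rho})$; since $\chi_W(q^{-\xi})=\chi_{W^\ast}(q^{\xi})$, this differs from the asserted $\chi_W(q^{-2\lambda-2\rho})$ whenever $W\not\cong W^\ast$. The minus sign you flagged as ``the delicate point'' is exactly the content of the lemma, and it comes from the crossings being negative (so the Cartan parts enter as $\kappa^{-1}$), not from quasi-$R$-matrix bookkeeping. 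Note also that centrality must then be argued for $\widetilde{\Omega}_W$ rather than quoted for $\Omega_W$; the paper does this by remarking that $(\cR^{21})^{-1}$ is itself a universal $R$-matrix for $U_q$.

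Even after correcting the identification, your computational scheme needs one further ingredient. Evaluating $\cR^{-1}(\cR^{21})^{-1}(1\otimes q^{2\rho})$ directly on $\mathbf{m}_\lambda\otimes w$ does not collapse termwise: $(\cR^{21})^{-1}$ acts first and its $U^-$-legs push $\mathbf{m}_\lambda$ strictly below the highest weight, after which the $U^+$-legs of $\cR^{-1}$ can raise it back, so the non-Cartan terms cannot be discarded leg by leg. The paper first uses the closure identity (Figure \ref{Omega tilde B}) to rewrite
\[
\pi_\lambda(\widetilde{\Omega}_W)=(\id_{U_q(\g)}\otimes\Tr_W)\bigl((\cR^{21})^{-1}\cR^{-1}(1\otimes q^{-2\rho})\bigr),
\]
i.e.\ \eqref{hhh}, with the order reversed \emph{and} $q^{2\rho}$ flipped to $q^{-2\rho}$; this flip reflects which of the two dualities \eqref{dualities_A}--\eqref{dualities_B} closes the loop, the point your sketch leaves vague. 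Only in this form does the argument you outline go through: $\cR^{-1}$ hits $\mathbf{m}_\lambda$ first and reduces to $\kappa^{-1}$, the partial trace kills the nonzero-weight $U^+$-legs of $(\cR^{21})^{-1}$ leaving another $\kappa^{-1}$, and the two factors $q^{-\langle\lambda,\sigma\rangle}$ together with $q^{-\langle 2\rho,\sigma\rangle}$ assemble to $\chi_W(q^{-2\lambda-2\rho})$.
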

\begin{proof}
	For any \(A\in \End_{\cM_{\mr{adm}}^\str}(M_\lambda\tens W)\), the graphical calculus of \(\cM_{\mr{adm}}\) asserts that 
	\begin{center}
		\includegraphics[scale=0.75]{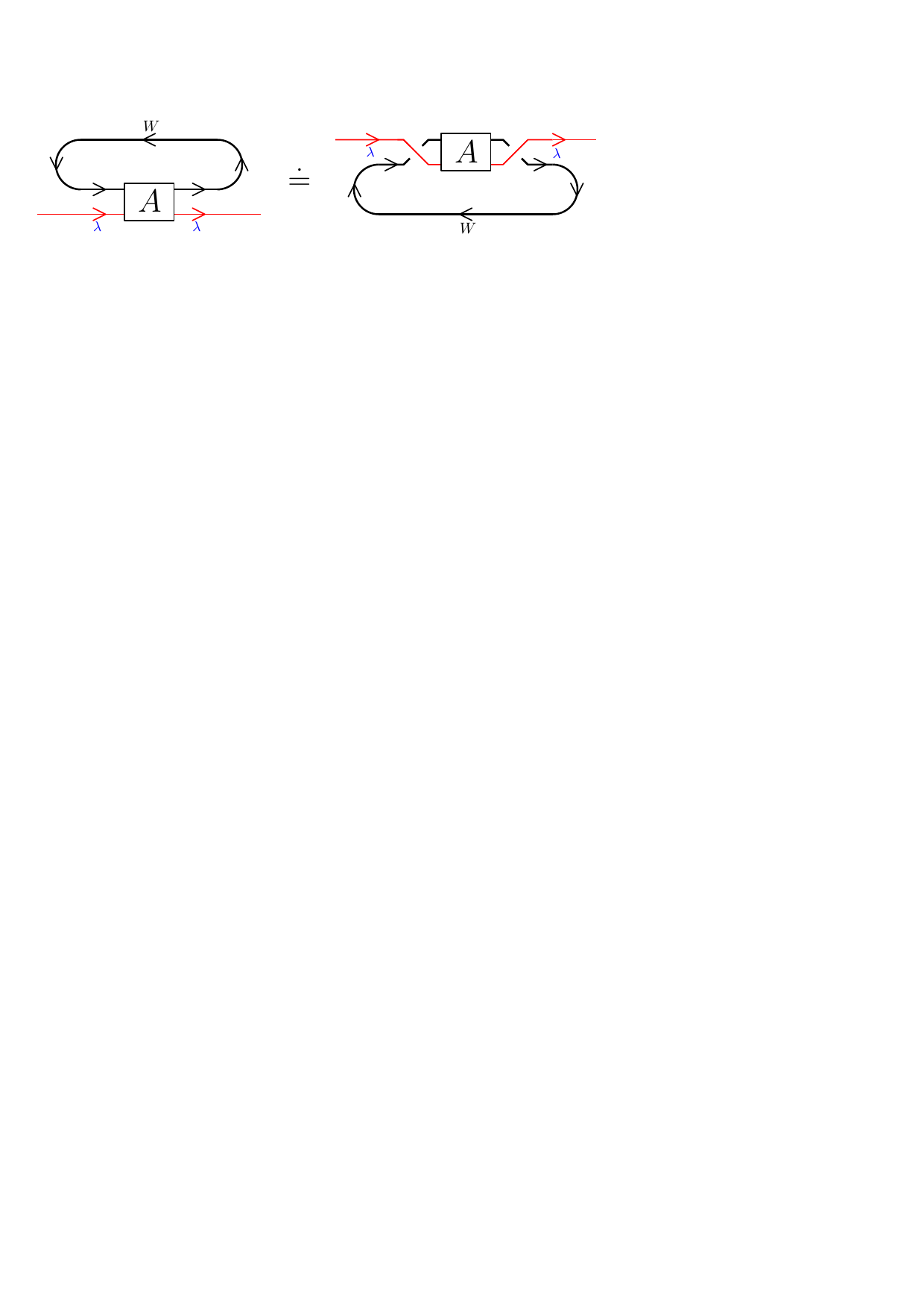}
	\end{center}
	Upon choosing \(A:= c^{-1}_{M_\lambda,W}c^{-1}_{W,M_\lambda}\),	we obtain
	\begin{figure}[H]
		\centering
		\includegraphics[scale=0.75]{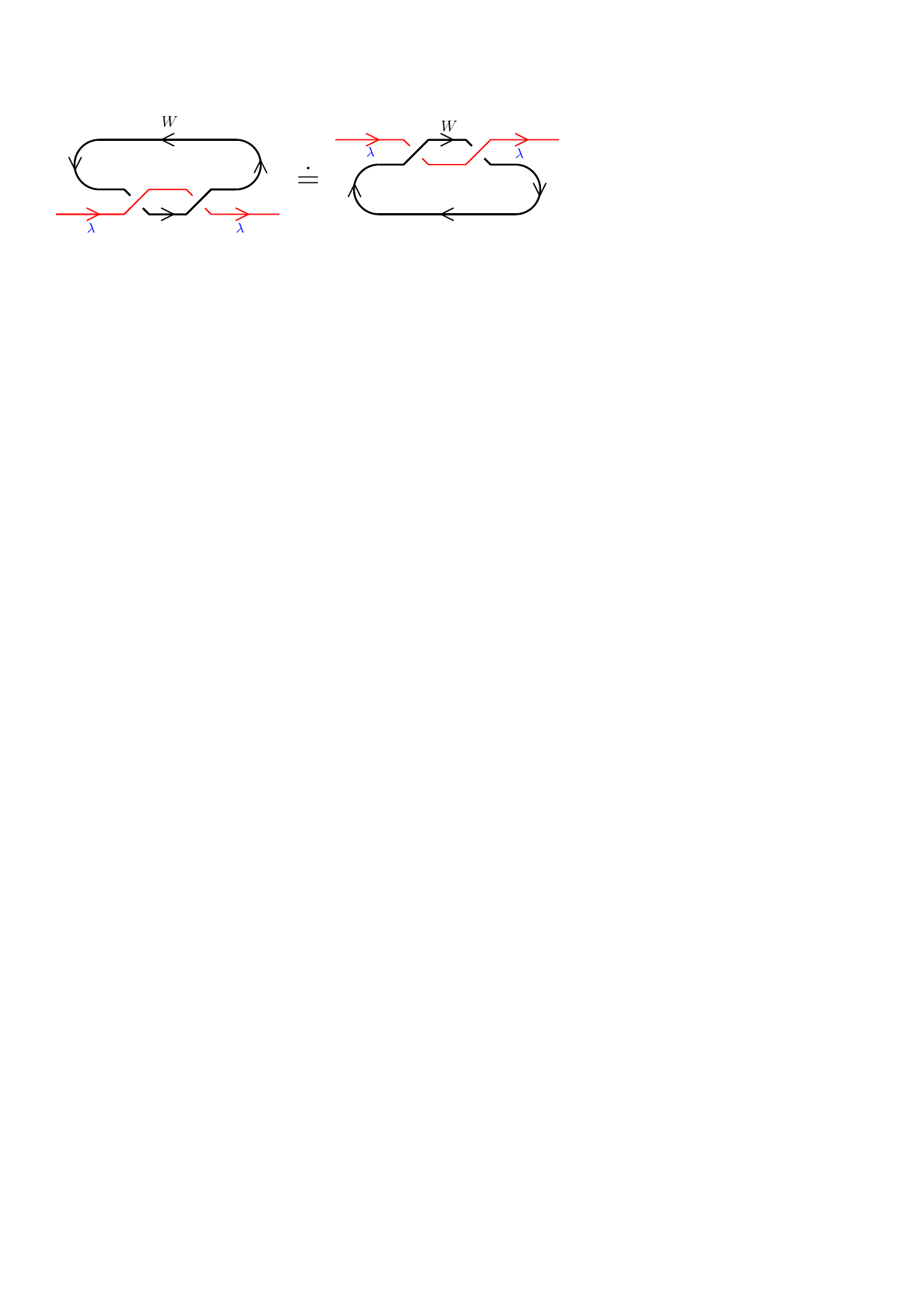}
		\caption{}
		\label{Omega tilde B}
	\end{figure}
	The left hand side of Figure \ref{Omega tilde B} is mapped to \(\pi_\lambda(\widetilde{\Omega}_W)\) by $\cF_{\mathcal{M}_{\textup{adm}}^\str}^{\textup{br}}$, with
	\begin{equation}\label{tildeOmega}
	\widetilde{\Omega}_W:= (\id_{U_q(\g)}\otimes\Tr_W)\bigl(\cR^{-1}(\cR^{21})^{-1}(1\otimes q^{2\rho})\bigr).
	\end{equation}
	Since \((\cR^{21})^{-1}\) is a universal $R$-matrix for \(U_q(\g)\), we have
	$\widetilde{\Omega}_W\in Z(U_q)$, hence $\pi_\lambda(\widetilde{\Omega}_W)$ is a scalar multiple of $\textup{id}_{M_\lambda}$. It suffices to show that the scalar
	is equal to $\chi_W(q^{-2\lambda-2\rho})$.
	
	Applying $\mathcal{F}_{\mathcal{M}_{\textup{adm}}^\str}^{\textup{br}}$ to Figure \ref{Omega tilde B} gives the identity
	\begin{equation}\label{hhh}
	\pi_\lambda(\widetilde{\Omega}_W)=(\id_{U_q(\g)}\otimes\Tr_W)\bigl((\cR^{21})^{-1}\cR^{-1}(1\otimes q^{-2\rho})\bigr).
	\end{equation}
Upon noting that \(\cR = \kappa\ol{\cR}\), where by \cite[Section 1.4]{DeClercq&Reshetikhin&Stokman-2022} the quasi R-matrix \(\ol{\cR}\) has its first tensor components in \(U^+\) and its second tensor components in \(U^-\), we find from \eqref{hhh} that
	\[
	\pi_\lambda(\widetilde{\Omega}_W)(\mathbf{m}_\lambda) 
	=(\pi_\lambda\otimes\Tr_W)\bigl((\cR^{21})^{-1}(1\otimes q^{-\lambda-2\rho}) \bigr)(\mathbf{m}_\lambda),
	\]
	compare with the proof of Lemma \ref{lemma 3.10}. The only component of \((\cR^{21})^{-1}\) that contributes to the trace over \(W\) is the component in \(U_q(\g)[0]\otimes U_q(\g)[0]\). Consequently
	\[
	\pi_\lambda(\widetilde{\Omega}_W)(\mathbf{m}_\lambda) = \Tr_W(\pi_W(q^{-2\lambda-2\rho}))\mathbf{m}_\lambda = \chi_W(q^{-2\lambda-2\rho})\,\mathbf{m}_\lambda,
	\]
	which gives the desired result.
\end{proof}

The analog of Lemma \ref{lemma before topological qKZB} in the current setting is as follows.
\begin{lemma}
	\label{lemma before topological MR}
	Let \(W\in\Rep\), \(\lambda_0,\lambda_1,\lambda_2\in\hh_{\mathrm{reg}}^\ast\), \(S_j\in\Rep^\str\) and \(\Psi_j\in\Hom_{\cM_{\mr{adm}}^\str}(M_{\lambda_j},S_j^\ast\tens M_{\lambda_{j-1}})\) for \(j= 1,2\). Let us write \(S:= S_1\tens S_2\) and set
	\begin{equation}
	\label{Psi def MR}
	\Psi:=(\id_{S_2^\ast}\tens\Psi_1)\Psi_2\in\Hom_{\cM_{\mr{adm}}^\str}(M_{\lambda_2}, S^\ast\tens M_{\lambda_0}).
	\end{equation}
	Then we have
	\begin{figure}[H]
		\centering
		\includegraphics[scale=0.75]{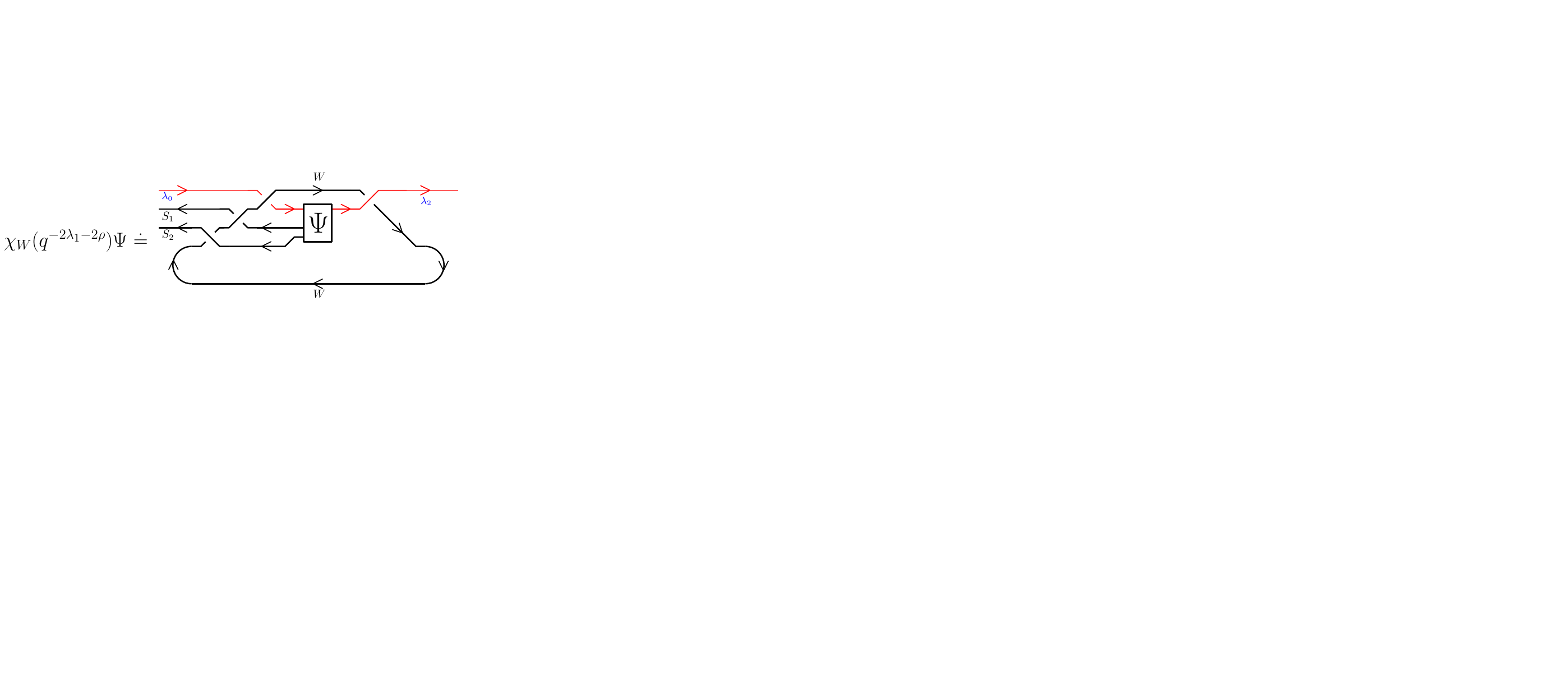}
		\caption{}
		\label{before topological MR}
	\end{figure}
	in the graphical calculus relative to $\cM_{\textup{adm}}^\str$ and $\Rep^\str$.
\end{lemma}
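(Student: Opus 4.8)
The plan is to proceed in complete analogy to the graphical derivation of the operator $q$-KZB equation in Lemma \ref{lemma before topological qKZB} and to \cite[Proposition 3.7]{DeClercq&Reshetikhin&Stokman-2022}, now replacing the quantum Casimir by the Drinfeld--Reshetikhin central element $\widetilde{\Omega}_W$. The starting point is to act by $\widetilde{\Omega}_W$ on the intermediate Verma module $M_{\lambda_1}$ sitting between the two factors $\Psi_2\colon M_{\lambda_2}\to S_2^\ast\tens M_{\lambda_1}$ and $\Psi_1\colon M_{\lambda_1}\to S_1^\ast\tens M_{\lambda_0}$ of $\Psi$ (see \eqref{Psi def MR}). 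By Lemma \ref{lemma Omega tilde} this action is scalar multiplication by $\chi_W(q^{-2\lambda_1-2\rho})$, so inserting it leaves the coupon unchanged up to this scalar; the purpose of inserting its graphical form from Figure \ref{Omega tilde A} is precisely that, as a $W$-colored loop encircling $M_{\lambda_1}$ (with its framing decoration), it can be transported away from the Verma strand.

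First I would replace the scalar action on the $M_{\lambda_1}$-strand by the encircling $W$-loop provided by Lemma \ref{lemma Omega tilde}. Next, using that $\Psi_1$ and $\Psi_2$ are $U_q$-linear intertwiners, I would open up the loop and slide its two legs in opposite directions through the two coupons by naturality of the braiding in $\cM_{\mr{adm}}^\str$: dragging the upper part across $\Psi_1$ converts the encirclement of $M_{\lambda_1}$ into braidings of the $W$-strand past the spin strand $S_1^\ast$, while dragging the lower part across $\Psi_2$ braids the $W$-strand past $S_2^\ast$, leaving the $W$-loop to close on the outside of the whole diagram. The opposite orientations of the two crossings are exactly what produce the two different braiding types (an $\cR$-type crossing on the $S_1^\ast$-side and an inverse $\cR^{21}$-type crossing on the $S_2^\ast$-side), matching the structure of the right-hand side of Figure \ref{before topological MR}. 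All of these are topological isotopies and naturality relations within the graphical calculus for $\cM_{\mr{adm}}^\str$ and $\Rep^\str$, hence legitimate on the red and black strands.

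The main obstacle I expect is the careful bookkeeping of the framing contribution $q^{2\rho}$ and of the orientations and directions of the braidings $\cR^{-1}$ and $(\cR^{21})^{-1}$ constituting the loop, as they are pulled across the coupons $\Psi_1$ and $\Psi_2$; these must be tracked exactly so that the resulting factors on the spin strands $S_1^\ast$ and $S_2^\ast$ agree with the prescribed right-hand side. As in Lemma \ref{lemma before topological qKZB}, this is a delicate but entirely routine diagrammatic computation requiring no new ideas beyond braiding naturality and the graphical identity of Lemma \ref{lemma Omega tilde}, after which the claimed identity follows.
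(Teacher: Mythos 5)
Your proposal is correct and follows essentially the same route as the paper's proof: replace the scalar $\chi_W(q^{-2\lambda_1-2\rho})$ acting on the intermediate Verma strand $M_{\lambda_1}$ by the encircling $W$-loop from Lemma \ref{lemma Omega tilde}, then pull that loop through the coupons $\Psi_1$ and $\Psi_2$ (over one, under the other) using isotopy and the intertwiner property in the graphical calculus for $\cM_{\mr{adm}}^\str$ and $\Rep^\str$, so that the crossings land on the spin strands $S_1^\ast$ and $S_2^\ast$ as in the right-hand side of Figure \ref{before topological MR}. The bookkeeping of crossing types and the $q^{2\rho}$ framing that you flag is exactly the content of the paper's intermediate diagrams, so no idea is missing.
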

\begin{proof}
	By Lemma \ref{lemma Omega tilde}, the left-hand side of Figure \ref{before topological MR} can be graphically represented by
	\begin{center}
		\includegraphics[scale=0.75]{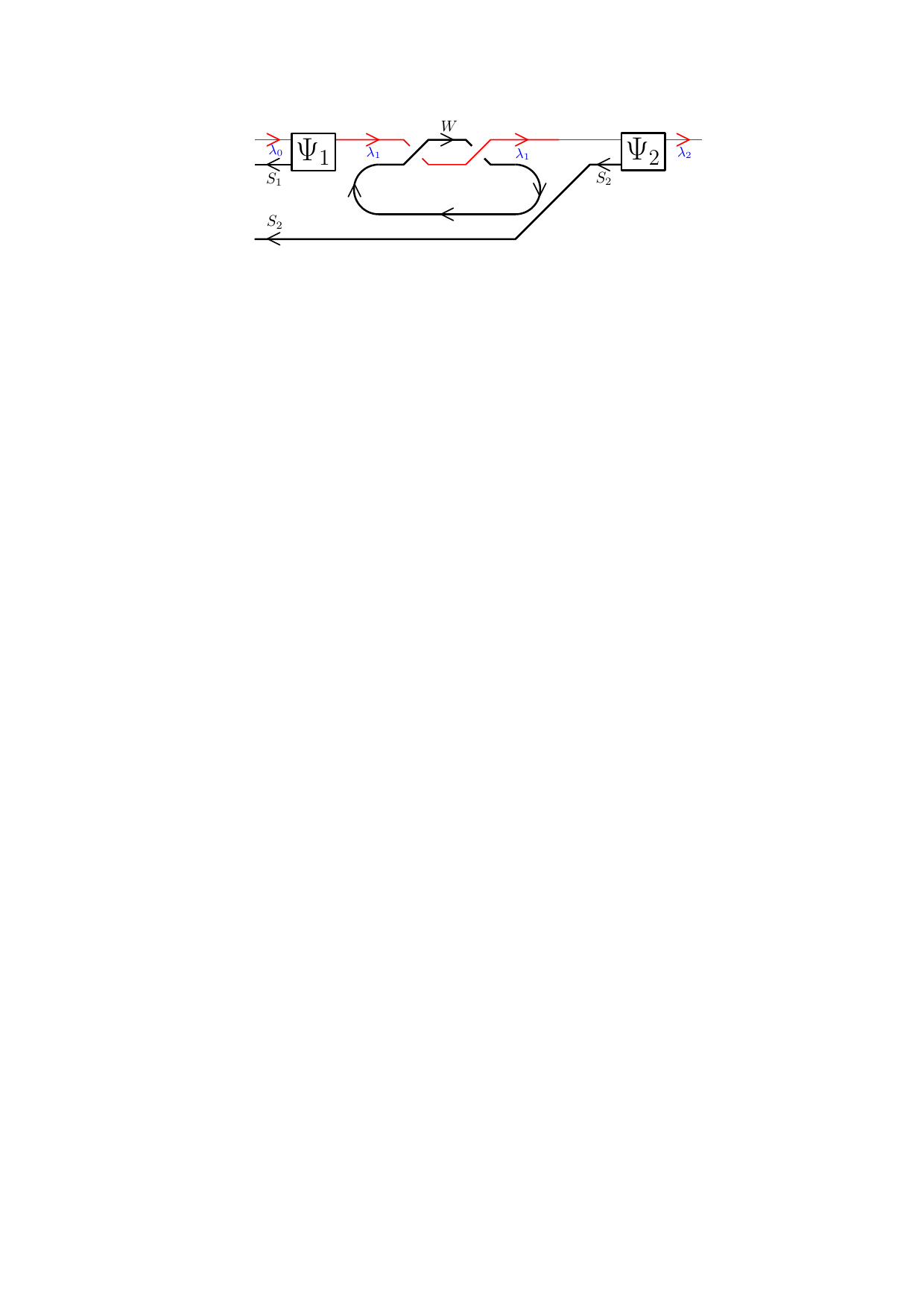}
	\end{center}
	Apply the graphical calculus for \(\Rep^\str\) to pull the strand labeled by $W$ underneath the strand labeled by $S_2$, to obtain
	\begin{center}
		\includegraphics[scale=0.75]{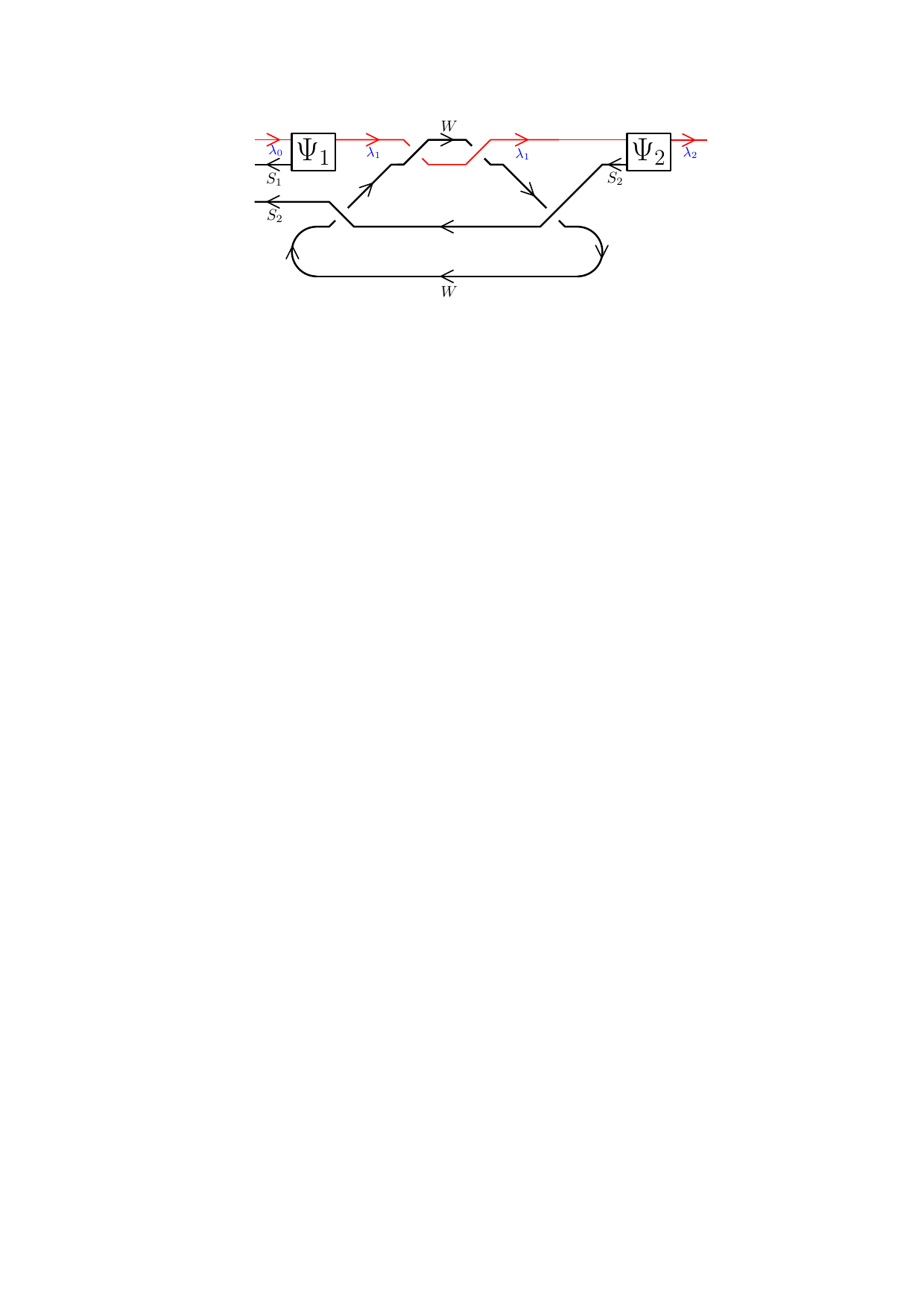}
	\end{center}
	Upon pulling the strand labeled by \(W\) over the coupon labeled by \(\Psi_1\) and under the coupon labeled by \(\Psi_2\), which is allowed in the graphical calculus for
	$\mathcal{M}_{\textup{adm}}$, we obtain
	\begin{center}
		\includegraphics[scale=0.75]{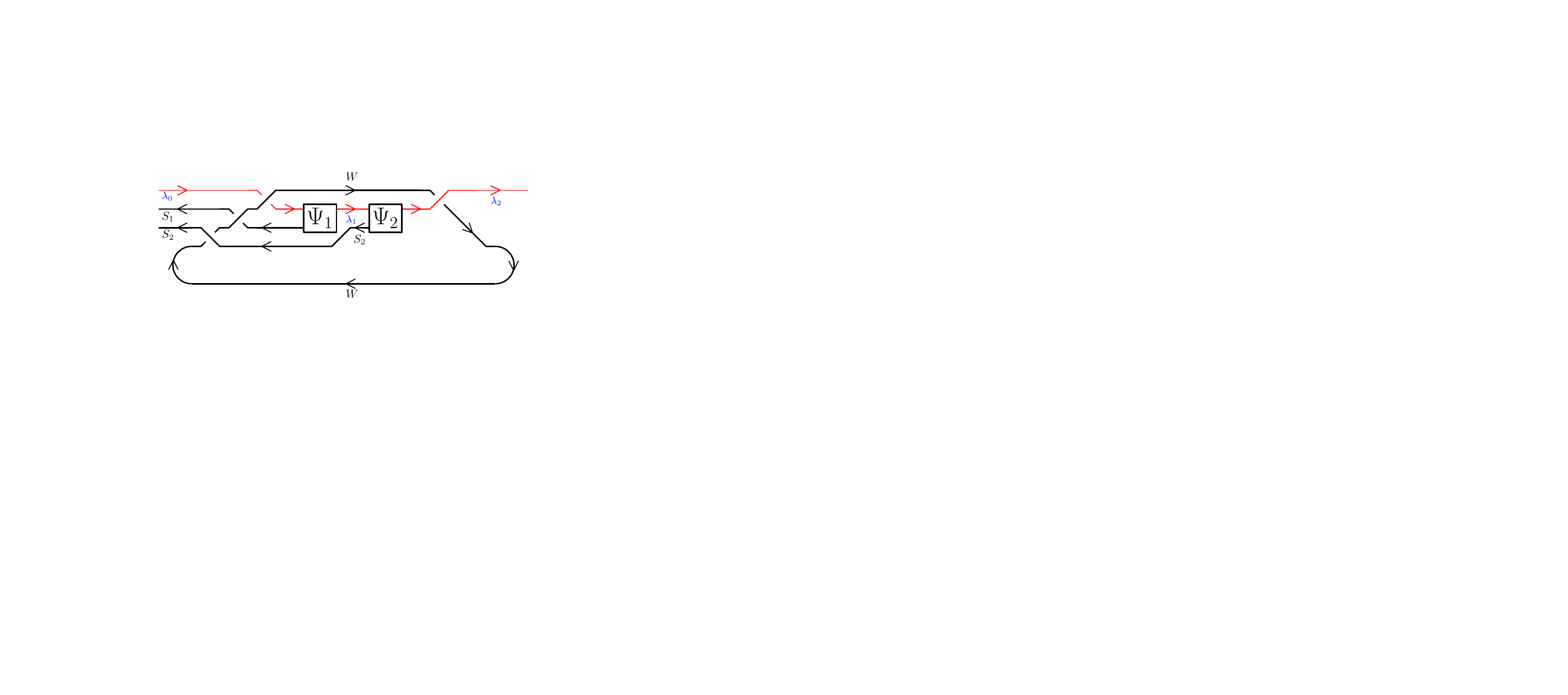}
	\end{center}
	which yields the right-hand side of Figure \ref{before topological MR} by the definition of \(\Psi\). 
\end{proof}

In the remainder of the subsection we fix 
\begin{equation}\label{S2}
S=S_1\tens S_2
\end{equation}
with $S_j\in\Rep^\str$, and we choose weight vectors $w_j\in\cF^\str(S_j)[\nu_j]$,  $g_j\in\cF^\str(S_j^\ast)[\nu_j']$
for $j=1,2$ with weights satisfying \(\nu_1+\nu_2 = 0 = \nu_1'+\nu_2'\). 
We furthermore fix \(\lambda,\mu\in\hh_{\mathrm{reg}}^\ast\) and set
\begin{equation}
\label{lambda_j MR def}
\lambda_0=\lambda_2:=\lambda, \quad \lambda_1= \lambda-\nu_2', \qquad \mu_1:=\mu-\nu_2.
\end{equation}
Define 
\begin{equation}\label{Psi2}
\Psi_{\lambda;S_2^*,S_1^*}^{g_2,g_1}:=(\textup{id}_{S_2^*}\tens\Psi_{\lambda_1;S_1^*}^{g_1})\Psi_{\lambda_2;S_2^*}^{g_2}\in\textup{Hom}_{\mathcal{M}_{\textup{adm}}^\str}\bigl(M_\lambda,S^*\tens M_\lambda\bigr)
\end{equation}
with $\Psi_{\lambda_j;S_j^*}^{g_j}$ as in (\ref{choice of Psi_i}).
Denote $\bs{S}=(S_1,S_2)$, $\bs{w}=(w_1,w_2)$ and $\bs{g}=(g_1,g_2)$, and set
\begin{equation}
\label{Y def MR}
\begin{split}
\mathcal{Y}_{\bs{S}}^{\bs{w},\bs{g}}(\lambda,\mu, \xi):=&\widehat{e}_{\ul{S}}\Bigl(\mathcal{H}_\mu^{\Phi_{\mu;S_1,S_2}^{w_1,w_2}}(q^\xi)\tens\langle\Psi_{\lambda;S_2^*,S_1^*}^{g_2,g_1}\rangle\Bigr)\\
=&\bigl(\textup{Tr}_{M_\mu}\otimes\langle\cdot\rangle\bigr)\bigl(\mathcal{T}_{\bs{S}}^{\bs{w},\bs{g}}(\lambda,\mu)\circ(\pi_\mu(q^\xi)\tens\textup{id}_{M_\lambda})\bigr)
\end{split}
\end{equation}
with
\[
\mathcal{T}_{\bs{S}}^{\bs{w},\bs{g}}(\lambda,\mu)= (\id_{M_{\mu}}\tens\widetilde{e}_S\tens\id_{M_{\lambda}})\left(\Phi_{\mu;S_1,S_2}^{w_1,w_2}\tens\Psi_{\lambda;S_2^*,S_1^*}^{g_2,g_1}\right).
\]
Note that these definitions are the special cases of \eqref{Y in terms of T} and \eqref{T_S def}
when $S_3=\emptyset$.

The following result describes the analog of Proposition \ref{prop bulk q-KZB} in the current context. 
\begin{proposition}
	\label{prop bulk MR}
	For $\Psi=\Psi_{\lambda;S_2^*,S_1^*}^{g_2,g_1}$ and \(W\in\Rep\), we have 
		\begin{figure}[H]
		\centering
		\includegraphics[scale=0.75]{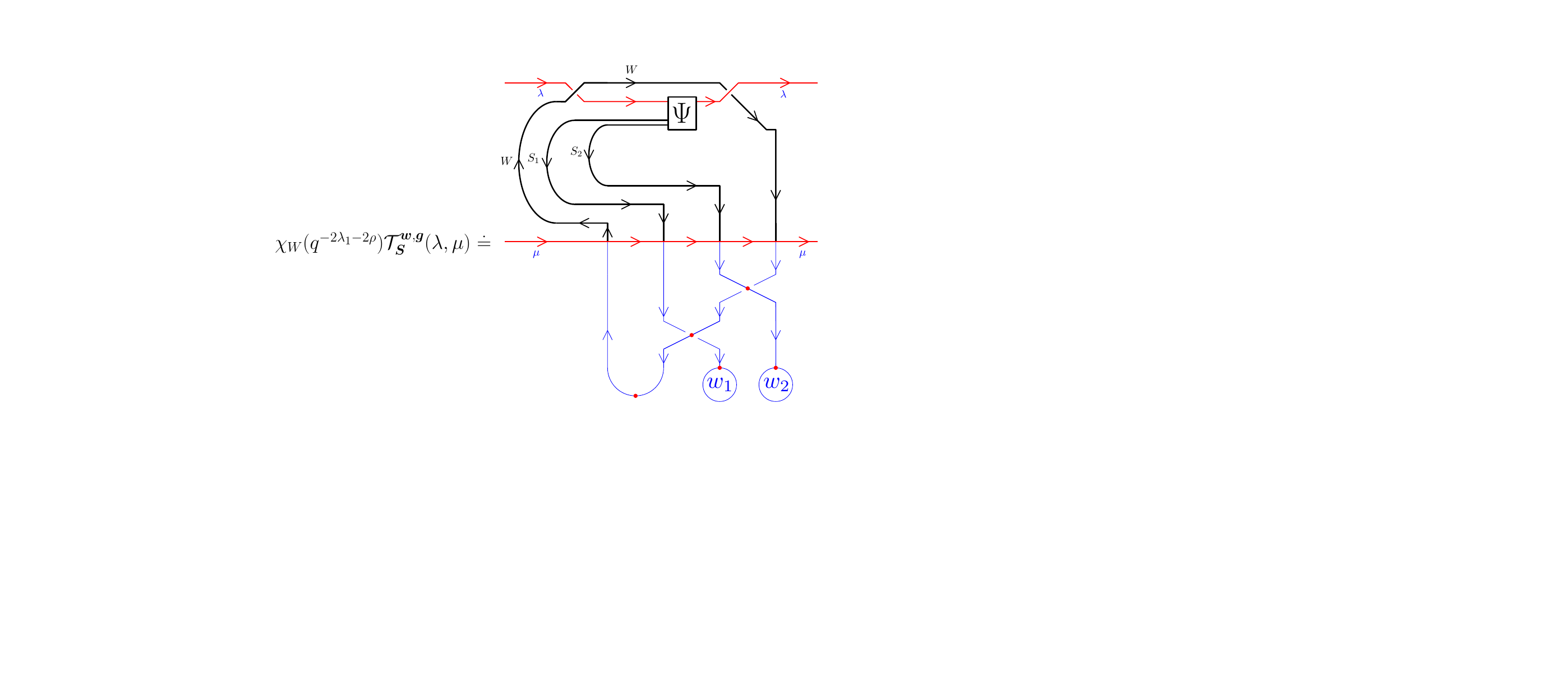}
		\caption{}
		\label{bulk MR diagram}
	\end{figure}
	\noindent
	relative to the graphical calculi for $\mathcal{M}_{\textup{adm}}^\str$ \textup{(}on black and red strands\textup{)}, $\Rep^\str$ \textup{(}on solely black strands\textup{)}
	and $\mathcal{N}_{\textup{fd}}^\str$ \textup{(}on blue strands\textup{)}. 
\end{proposition}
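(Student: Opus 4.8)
The plan is to mirror exactly the strategy used for the dual $q$-KZB case (Proposition \ref{prop bulk q-KZB}), replacing the role of the two quantum Casimir insertions by the single central element $\widetilde{\Omega}_W$ acting on the intermediate Verma strand labeled by $M_{\lambda_1}$ inside $\Psi=\Psi_{\lambda;S_2^*,S_1^*}^{g_2,g_1}$. Concretely, the left-hand side of Figure \ref{bulk MR diagram} is the diagram where a $W$-loop (encoding $\widetilde\Omega_W$) encircles the middle Verma strand of $\Psi$, and Lemma \ref{lemma before topological MR} has already been tailored to move that loop off the Verma strand and onto the spin strands, converting it into a configuration where the $W$-strand braids with the $S^*$-colored spin strands while passing over $\Psi_1$ and under $\Psi_2$.

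First I would invoke Lemma \ref{lemma before topological MR} with $\Psi_j=\Psi_{\lambda_j;S_j^*}^{g_j}$ and $\Psi=\Psi_{\lambda;S_2^*,S_1^*}^{g_2,g_1}$ to rewrite the left-hand side of Figure \ref{bulk MR diagram} as the diagram in which the $W$-loop has been unhooked from the Verma strand and redrawn as braidings threading through the two dual vertex operators. At this stage the $W$-strand together with the spin strands $S_1^*,S_2^*$ forms a ribbon-graph tangle living purely in $\Rep^\str$ (on the black strands), decorated by the appropriate braiding morphisms $c_{W,S_j^*}$ and their inverses.

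Next I would push these braidings through the caps representing $\widetilde{e}_S$ and $\widetilde{e}_{S_2}$ exactly as in the proof of Proposition \ref{prop bulk q-KZB}, using the graphical calculus for $\Rep^\str$, and then resolve the resulting cup/cap and crossing configuration by pulling first the relevant cup (colored by $S_2$, representing $\iota_{S_2}$) and subsequently the braidings through the red Verma strand. Here the crucial tools are Lemma \ref{reversevertexlemma}, which rewrites $\Phi_{\lambda;S}^v$ as a sum of $k$-point quantum vertex operators so that the spin-strand manipulations are legitimate, and Proposition \ref{pushdiagram}, which transports the $\Rep^\str$-coupons (the braidings with $W$) through the red border line into the dynamical graphical calculus of $\Ndyn^\str$ via the dynamical twist functor $\cF^{\mr{dt}}$. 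Since $\cF^{\mr{dt}}$ is strict monoidal, each $c_{W,S_j^*}$ becomes a dynamical braiding $\overline{c_{W,S_j^*}}$ (a decorated crossing colored by the dynamical $R$-matrix $R_{W,S_j^*}(\lambda)$), landing us at the right-hand side of Figure \ref{bulk MR diagram}.

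The main obstacle I anticipate is bookkeeping of the weight shifts and of the over/under-crossing data as the $W$-strand is dragged through both $\Psi_1$ and $\Psi_2$ and then through the evaluation caps: one must ensure that the braiding with $W$ passes the correct spin strands in the correct order (so that, after application of $\cF^{\mr{dt}}$, the factors assemble into the precise product $R_{W,S_1^*}\cdots R_{W,S_2^*}^{21}$-type arrangement of dynamical $R$-matrices with the right dynamical weight arguments $\lambda-\mh_{\bullet}$). In particular I would track carefully which crossings are positive and which are inverse — stemming from whether the $W$-strand passes over $\Psi_1$ but under $\Psi_2$ in Lemma \ref{lemma before topological MR} — since this is what ultimately distinguishes the $R$-factors from their $R^{21}$-inverses in the MR operator. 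Apart from this combinatorial care, every individual move is a direct application of an already-established lemma, so the verification should be routine once the diagrammatic accounting is set up, just as the author indicates for the parallel $q$-KZB computation.
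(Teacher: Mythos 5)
Your overall strategy coincides with the paper's: invoke Lemma \ref{lemma before topological MR} to trade the $\widetilde{\Omega}_W$-insertion on the intermediate Verma strand for braidings of a $W$-strand through the two dual vertex operators, slide those braidings through evaluation caps in the graphical calculus for $\Mfd^\str$, and then transport the remaining cup and the two braidings through the lower red Verma strand via Proposition \ref{pushdiagram}. However, you carry over the $q$-KZB bookkeeping too literally, and in several places your moves reference objects that do not occur in the MR diagram. The caps through which the braidings are pushed are $\widetilde{e}_S$ and $e_W$ --- the latter being the cap of the $W$-loop created by the trace in $\widetilde{\Omega}_W$ --- not $\widetilde{e}_S$ and $\widetilde{e}_{S_2}$; likewise the cup subsequently pushed through the red strand is $\widetilde{\iota}_W$, not $\iota_{S_2}$. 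There is no $S_2$-colored cup or cap anywhere in this diagram: that structure is specific to the quantum Casimir manipulation of Lemma \ref{lemma before topological qKZB}, whereas here the loop being unhooked is colored by $W$. Moreover, after the braidings are pushed through $\widetilde{e}_S$ they involve $W$ and the spin strands $S_1,S_2$ of $\Phi$ (not $S_1^*,S_2^*$), and since Proposition \ref{pushdiagram} is applied to the \emph{lower} Verma strand (colored by $M_\mu$ and its shifts), the resulting dynamical $R$-matrices carry $\mu$-dependent arguments --- compare $\Rdyn^{21}_{W,S_2}(\mu)^{-1}$ and $\Rdyn_{W,S_1}(\mu-\mh_{S_2})$ in Corollary \ref{thm MR for Y}. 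Your claim that the crossings land as $R_{W,S_j^*}(\lambda)$-type factors with $\lambda$-shifted arguments is therefore wrong on both the strands and the dynamical parameter.

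There is also a missing final step: pushing $\widetilde{\iota}_W$ and the two braidings through the red strand does not immediately produce the right-hand side of Figure \ref{bulk MR diagram}; it produces the intermediate diagram (Figure \ref{alternative bulk MR diagram}), and one still needs the dynamical analogues of the Turaev braiding/(co-)evaluation identities, Figures \ref{braided evaluation C}--\ref{braided evaluation D}, to convert it into the stated form. This short step is precisely where the distinction between the $R$- and $R^{21}$-type factors that you flag as the main difficulty gets resolved, so it cannot be omitted. None of these corrections requires a new idea --- the skeleton of your argument is the right one --- but as written the manipulations in your second and third paragraphs act on strands and colors that are absent from the MR diagram, so the proof would not go through as stated.
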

\begin{proof}
	Lemma \ref{lemma before topological MR} asserts that the left-hand side of Figure \ref{bulk MR diagram} can be graphically represented by
	\begin{center}
		\includegraphics[scale=0.75]{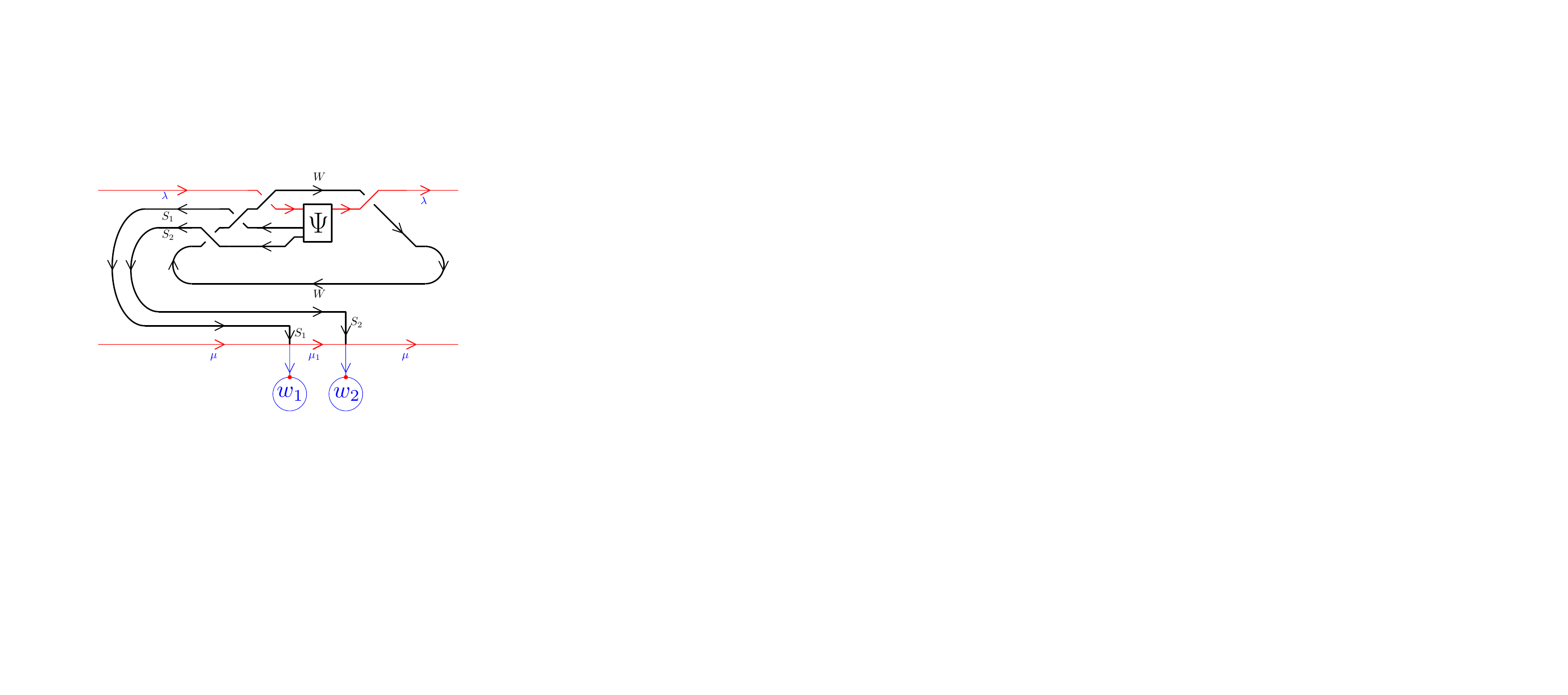}
	\end{center}
	Upon pushing the braidings through the caps representing \(\widetilde{e}_S\) and \(e_W\), 
	which are allowed moves in the graphical calculus for \(\Mfd^\str\), this becomes
	\begin{center}
		\includegraphics[scale=0.75]{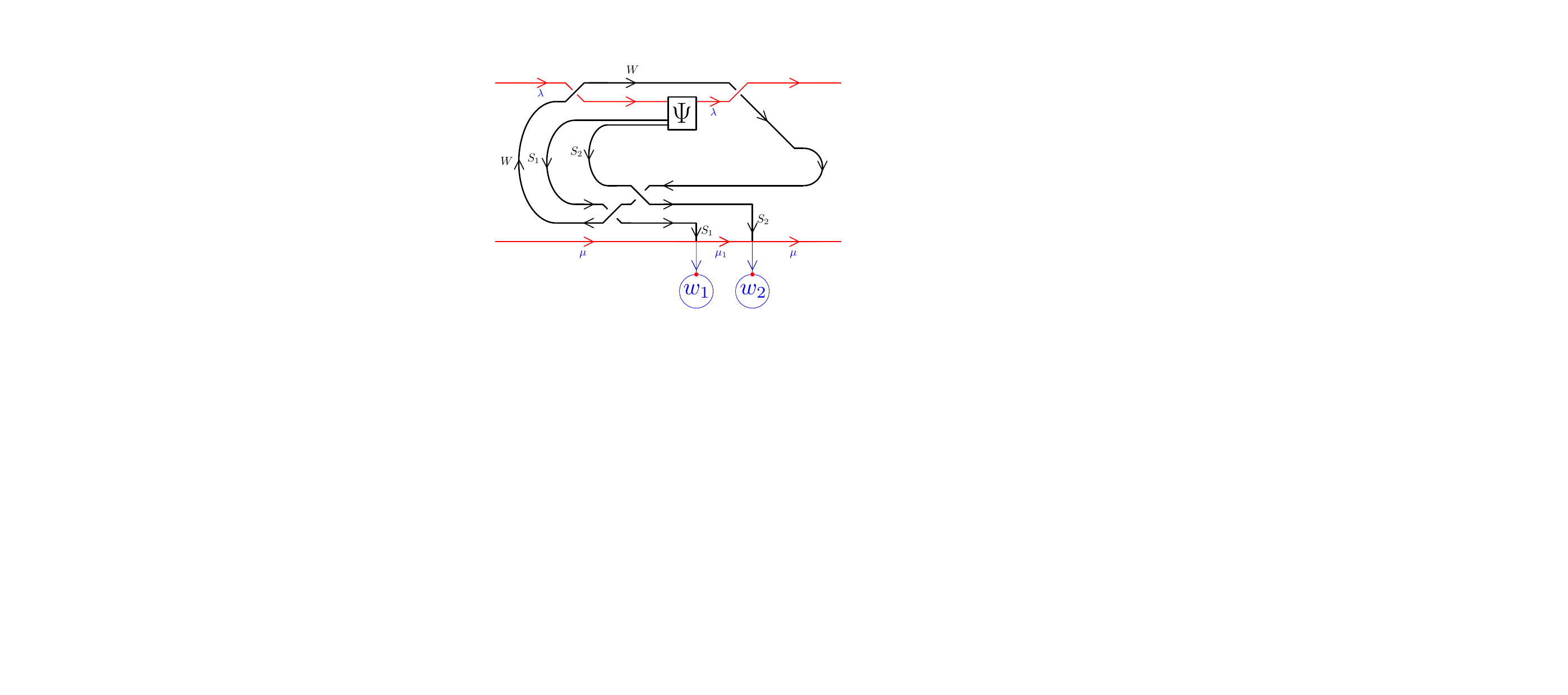}
	\end{center}
	If we now push first the cup labeled by \(W\) (representing \(\widetilde{\iota}_W\)) and then the two braidings through the red strand by invoking Proposition \ref{pushdiagram}, this transforms into
	\begin{figure}[H]
		\centering
		\includegraphics[scale=0.75]{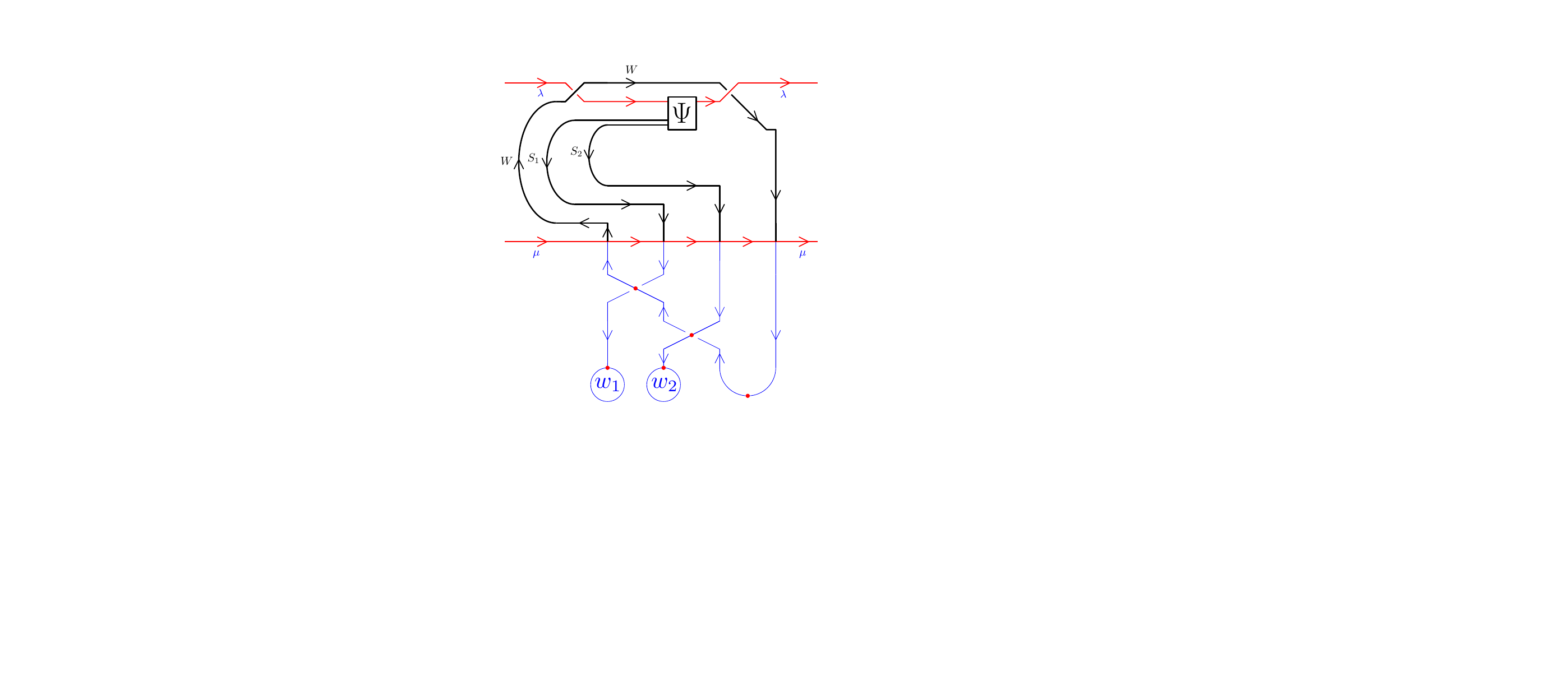}
		\caption{}
		\label{alternative bulk MR diagram}
	\end{figure}
	The claim now follows from application of Figures \ref{braided evaluation C}--\ref{braided evaluation D}.
\end{proof}

\subsection{The dual MR type equations}
\label{Subsection dual MR proof}

In analogy to Section \ref{Subsection Twisted trace functions}, we will now apply weighted cyclic boundary conditions and highest-weight-to-highest-weight components to the lower respectively upper Verma strands in both sides of Figure \ref{bulk MR diagram}. This will give rise to a difference equation of Macdonald-Ruijsenaars type for the spin 
components  \(\mathcal{Z}_{\bs{S}}^{\bs{w},\bs{g}}(\lambda,\mu)\)  (see \eqref{Zdef}) of weighted trace functions. We keep the conventions and notations as in the previous subsection (see in particular \eqref{S2}--\eqref{Y def MR}). 

\begin{lemma}
	\label{lemma boundary MR with general xi}
	For any \(W\in\Rep\) and \(\xi\in\hh^\ast\) with \(\Re(\xi)\) deep in the negative Weyl chamber, we have for $\Psi=\Psi_{\lambda;S_2^*,S_1^*}^{g_2,g_1}$,
	\begin{figure}[H]
		\centering
		\includegraphics[scale=0.7]{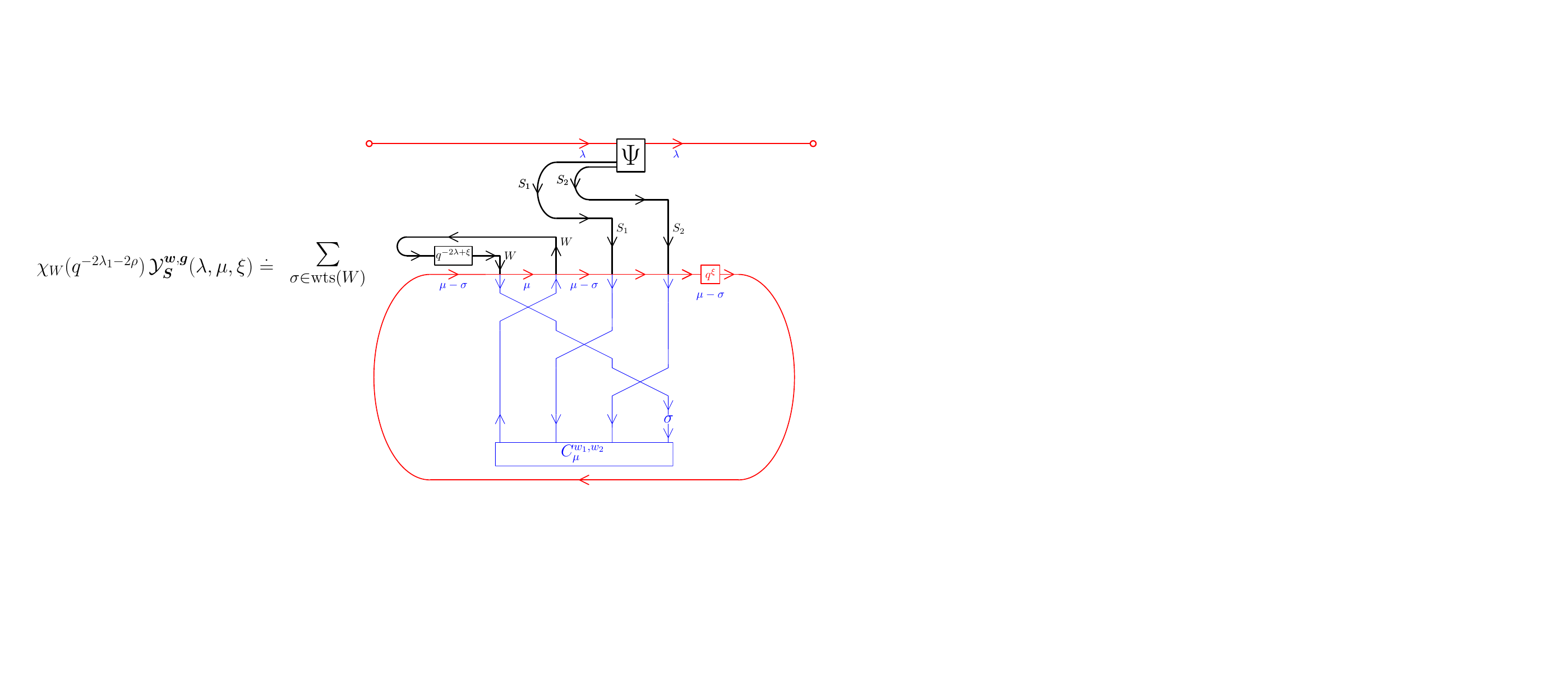}
		\caption{}
		\label{boundary MR A}
	\end{figure}
	\noindent
	relative to the graphical calculus for $\mathcal{N}_{\textup{adm}}^\str$ and $\mathcal{N}_{\textup{fd}}^\str$. Here $C_{\mu}^{w_1,w_2}$ is the morphism in $\mathcal{N}_{\textup{fd}}^\str$ such that 
	\begin{center}
		\includegraphics[scale=0.75]{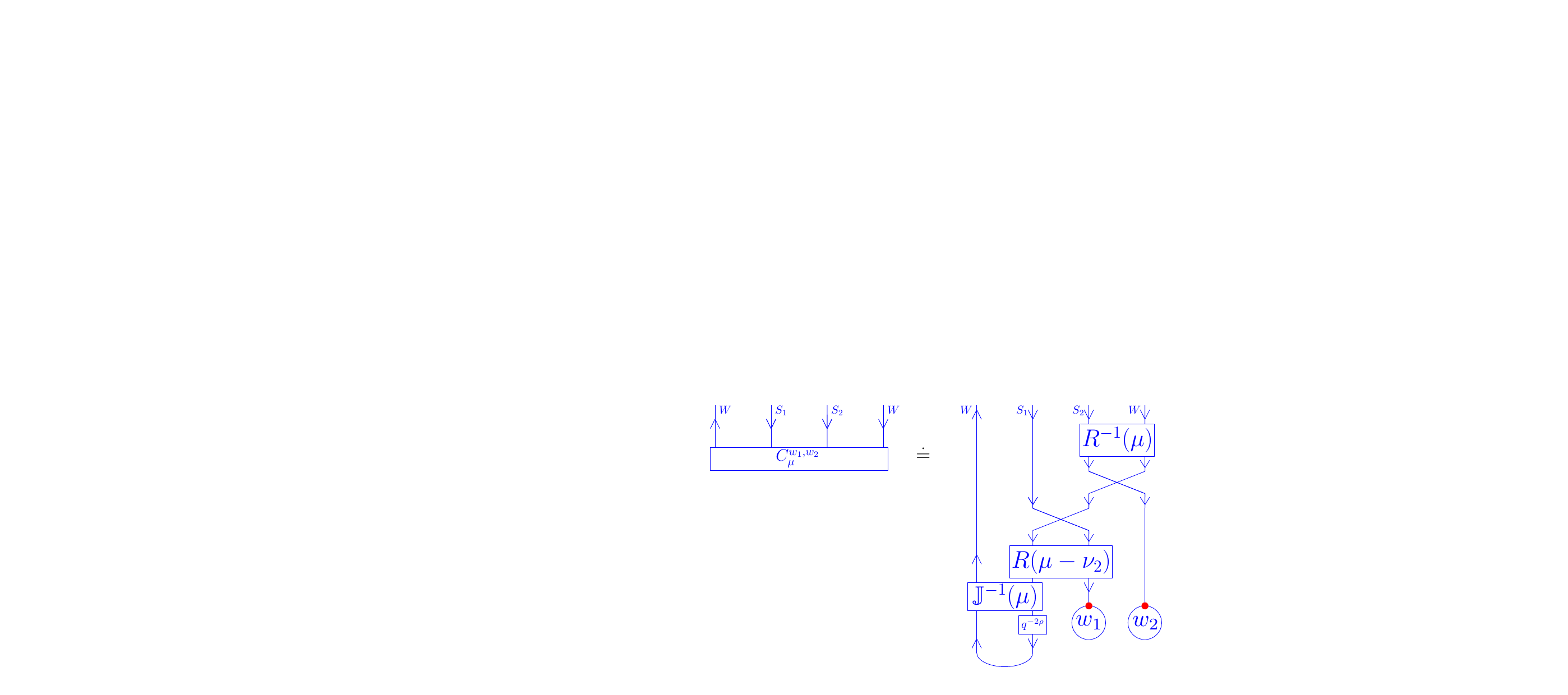}
	\end{center}
	\end{lemma}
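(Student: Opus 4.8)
The plan is to mirror the proof of Lemma~\ref{lemma boundary qKZB with general xi}, now taking as starting point the operator identity of Proposition~\ref{prop bulk MR} (Figure~\ref{bulk MR diagram}) instead of the bulk $q$-KZB identity. In view of the defining relation~\eqref{Y def MR}, I would impose on the two pairs of Verma strands the boundary conditions that turn $\mathcal{T}_{\bs{S}}^{\bs{w},\bs{g}}(\lambda,\mu)$ into $\mathcal{Y}_{\bs{S}}^{\bs{w},\bs{g}}(\lambda,\mu,\xi)$: weighted cyclic boundary conditions (weighted by $q^\xi$) on the lower Verma strand colored by $M_\mu$, and highest-weight-to-highest-weight components on the upper Verma strand colored by $M_\lambda$. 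Using \cite[Section 3.3]{DeClercq&Reshetikhin&Stokman-2022} together with Subsections~\ref{GcSection} and~\ref{Subsection Boundary conditions}, this expresses the upper portion of the diagram in the standard graphical calculus for $\cN_{\mr{adm}}^\str$ and $\cN_{\mr{fd}}^\str$, and exhibits the result as a graphical representation of $\mathcal{Y}_{\bs{S}}^{\bs{w},\bs{g}}(\lambda,\mu,\xi)$ up to the scalar produced by the central element $\widetilde{\Omega}_W$.

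The distinctive feature of the MR case, compared with the $q$-KZB derivation, is the closed $W$-loop carried along by the partial trace in $\widetilde{\Omega}_W$. I would first resolve the crossings between this $W$-strand and the upper Verma strand by Lemma~\ref{lemma 3.10}, replacing the $\mathcal{R}^{\pm1}$-type coupons between the $W$-strand and $M_\lambda$ by a single coupon realizing the action of $q^{-\lambda}$ on the $W$-strand, which can then be transported along the adjacent (co)evaluation; and I would drop the $q^{2\rho}$-coupons on the middle strands since $\langle\Psi\rangle$ has weight zero. The $W$-cup and $W$-cap closing the loop are exactly the configuration treated by Lemma~\ref{lemma in Section 3.3}: after inserting the projectors $\mathbb{P}_W[\sigma]$ and summing over $\sigma\in\wts(W)$, the two identities of Figures~\ref{first calculation with J} and~\ref{second calculation with J} rewrite the loop, together with the attached dynamical fusion data, in terms of $\mathbb{Q}(\mu-\sigma)^{\pm1}$.

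With the $W$-loop rewritten, I would invoke the cyclicity of the trace in the lower Verma strand (Lemma~\ref{lemma cyclicity}) to cyclically reorder the two quantum vertex operators, tracking the accompanying shift of $\mu$, and then color the intervening vertical regions using the weight-shift rule of Section~\ref{Section strictified dynamical module category} and the notation~\eqref{shifted weights}. Collecting the resulting dynamical $R$-matrices, projectors and $\mathbb{Q}$-factors into a single coupon produces precisely the endomorphism $C_\mu^{w_1,w_2}$ defined by Figure~\ref{boundary MR B}, and contracting the residual loops yields the right-hand side of Figure~\ref{boundary MR A}. The main obstacle I anticipate is bookkeeping rather than conceptual: correctly tracking the weight-shifted arguments of the dynamical fusion operators $j_{S_j}$ and of the $\mathbb{Q}$-factors as the $W$-loop is pulled through the red Verma border line, and checking that the scalars generated by Lemma~\ref{lemma 3.10} combine consistently with the weight-coloring convention so that the loop collapses to the character contribution encoded in $C_\mu^{w_1,w_2}$.
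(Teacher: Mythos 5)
Your overall skeleton is the one the paper uses: start from Proposition \ref{prop bulk MR}, impose the $q^\xi$-weighted cyclic boundary conditions on the $M_\mu$-strand and highest-weight-to-highest-weight conditions on the $M_\lambda$-strand, reformulate in the standard graphical calculus for $\cN_{\mr{adm}}^\str$ and $\cN_{\mr{fd}}^\str$, resolve the crossings of the $W$-strand with the upper Verma strand via Lemma \ref{lemma 3.10}, insert the projectors $\mathbb{P}_W[\sigma]$ and sum over $\sigma\in\wts(W)$, apply the cyclicity Lemma \ref{lemma cyclicity}, and finally contract the $W$-loop. Up to the minor point of which weight-zero vector ($\mathcal{H}_\mu^{\Phi}(q^\xi)$ in the paper, $\langle\Psi\rangle$ in your write-up) is invoked to kill the $q^{2\rho}$-coupons, all of this is exactly what the paper does.

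The genuine problem is your middle step, where you assert that ``the $W$-cup and $W$-cap closing the loop are exactly the configuration treated by Lemma \ref{lemma in Section 3.3}'' and propose to rewrite the loop in terms of $\mathbb{Q}(\mu-\sigma)^{\pm1}$ already at this stage. For general $\xi$ this cannot be done, and the paper's proof of this lemma does not use Lemma \ref{lemma in Section 3.3} at all. Only the $W$-cup is in such a configuration: it was pushed through the lower Verma strand in Proposition \ref{prop bulk MR}, so after the boundary reformulation it appears with a $\mathbb{J}^{-1}$-coupon as in Figure \ref{dyn co-eval right boundary}, matching Figure \ref{second calculation with J}. The $W$-cap, however, never crossed the red border line; after Lemma \ref{lemma 3.10} resolves the two crossings with $M_\lambda$ and the cyclicity step transports the $q^\xi$-weighting onto it, it is a cap carrying a $\xi$-dependent weight which equals the $q^{2\rho}$-weighting of $\widetilde{e}_W$ only at $\xi=2\lambda+2\rho$. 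For generic $\xi$ it is therefore not the evaluation morphism of any (dynamical) duality structure, so neither Proposition \ref{pushdiagram} nor Figure \ref{first calculation with J} applies to it. This is precisely the point of the present lemma: for general $\xi$ the weighted $\ul{W}$-cap must remain explicit in the right-hand side of Figure \ref{boundary MR A}, and the $\mathbb{Q}$-manipulations via Lemma \ref{lemma in Section 3.3} are deferred to Proposition \ref{prop also for MR}, where the specialization $\xi=2\lambda+2\rho$ first turns the cap into $\widetilde{e}_W$, lets it be pushed through the Verma strand, and only then creates the cap-plus-$\mathbb{J}$ configuration that Lemma \ref{lemma in Section 3.3} converts. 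Relatedly, your rewriting would not land on the stated identity: $C_\mu^{w_1,w_2}$ as defined by Figure \ref{boundary MR B} is built from dynamical $R$-matrix and fusion coupons only, with no $\mathbb{Q}$-factors; the $\mathbb{Q}^{\pm1}$'s appear, and subsequently cancel, only in the later derivation of Corollary \ref{thm MR for Y}. Deleting this step and keeping the rest of your outline recovers the paper's proof.
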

\begin{proof}
	By Proposition \ref{prop bulk MR} and Figures \ref{braided evaluation C} \& \ref{braided evaluation D}, the left-hand side of Figure \ref{boundary MR A} is graphically represented by
	\begin{center}
		\includegraphics[scale=0.75]{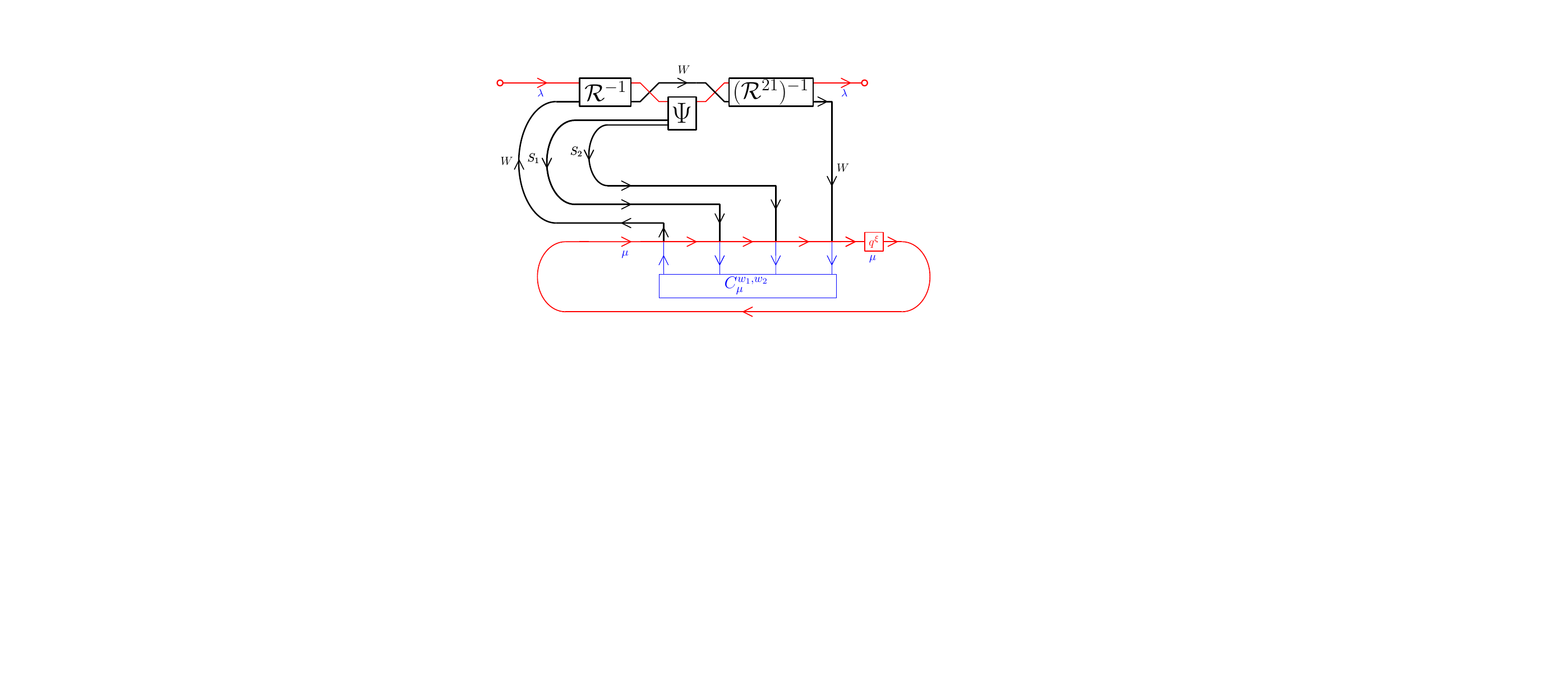}
	\end{center}
	Here we have again applied the reformulations in terms of the standard graphical calculus for \(\cN_{\mr{fd}}^\str\) and \(\cN_{\mr{adm}}^\str\) as discussed in \cite[Section 3.3]{DeClercq&Reshetikhin&Stokman-2022} and Section \ref{Subsection Boundary conditions}. In particular, the caps colored in black in the diagram above should be viewed within the graphical calculus for \(\cN_{\mr{fd}}^\str\), and we have immediately invoked the fact that \(\mathcal{H}_\mu^{\Phi_{\mu;S_1,S_2}^{w_1,w_2}}(q^\xi)\) is of weight 0 to remove the coupons labeled by \(q^{2\rho}\) on \(S_1\) and \(S_2\) resulting from this graphical reformulation. Note also that we have replaced Figure \ref{co-injection dynamical} with \(S = (W)\) and specialized to \(\lambda\), by Figure \ref{dyn co-eval right boundary}, which is justified by the discussion in Subsection \ref{Subsection Boundary conditions}. 
	
	As in the proof of Lemma \ref{lemma boundary qKZB with general xi}, we can use Lemma \ref{lemma 3.10}, apply \(\mathbb{P}_W[\sigma]\) and sum over all \(\sigma\in\wts(W)\) to replace this diagram by
	\begin{center}
		\includegraphics[scale=0.75]{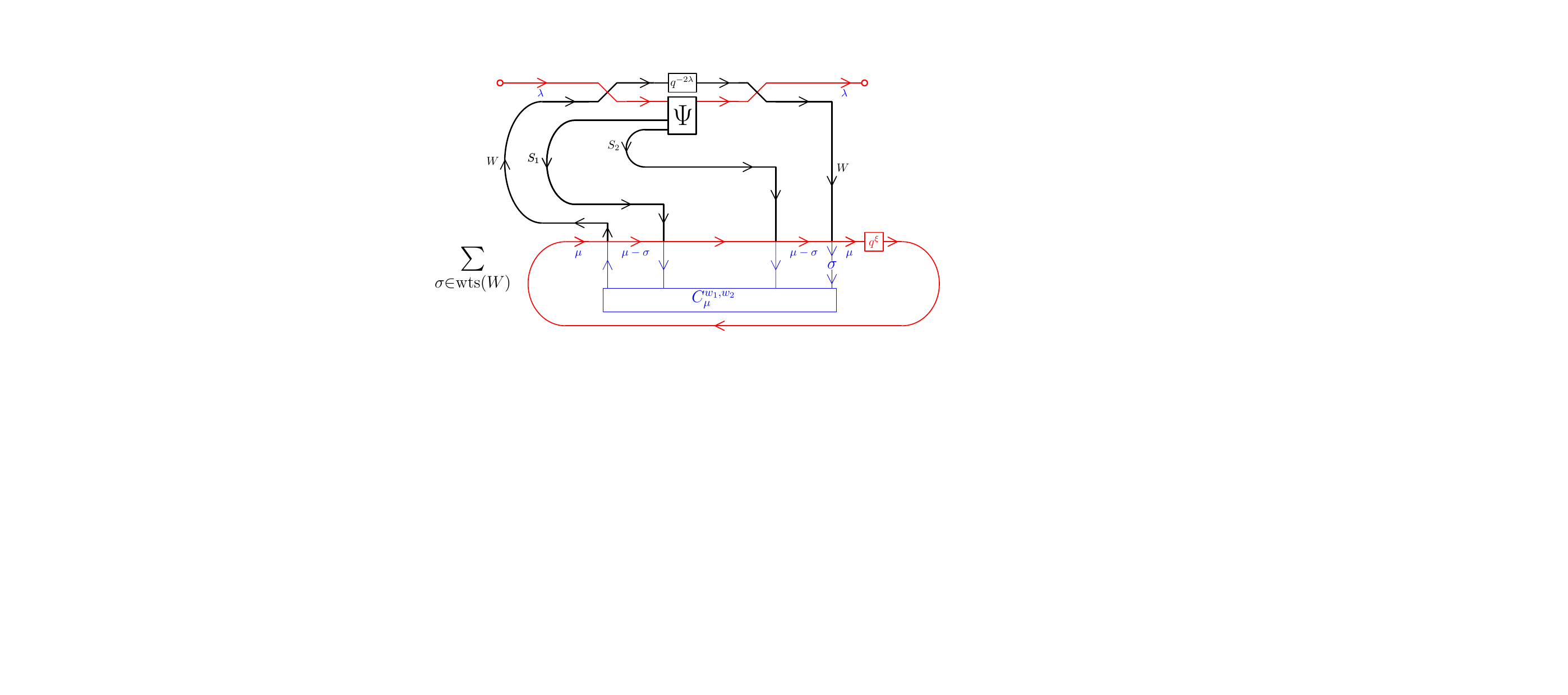}
	\end{center}
	Note that requiring the weight in the rightmost blue strand to yield \(\sigma\) forces both the vertical regions below the red strand left of the rightmost and right of the leftmost blue strand, to be colored by the weight \(\mu-\sigma\), by the same argument as in the proof of Lemma \ref{lemma boundary qKZB with general xi}. 
	
	By Lemma \ref{lemma cyclicity} the diagram above can be turned into
	\begin{center}
		\includegraphics[scale=0.75]{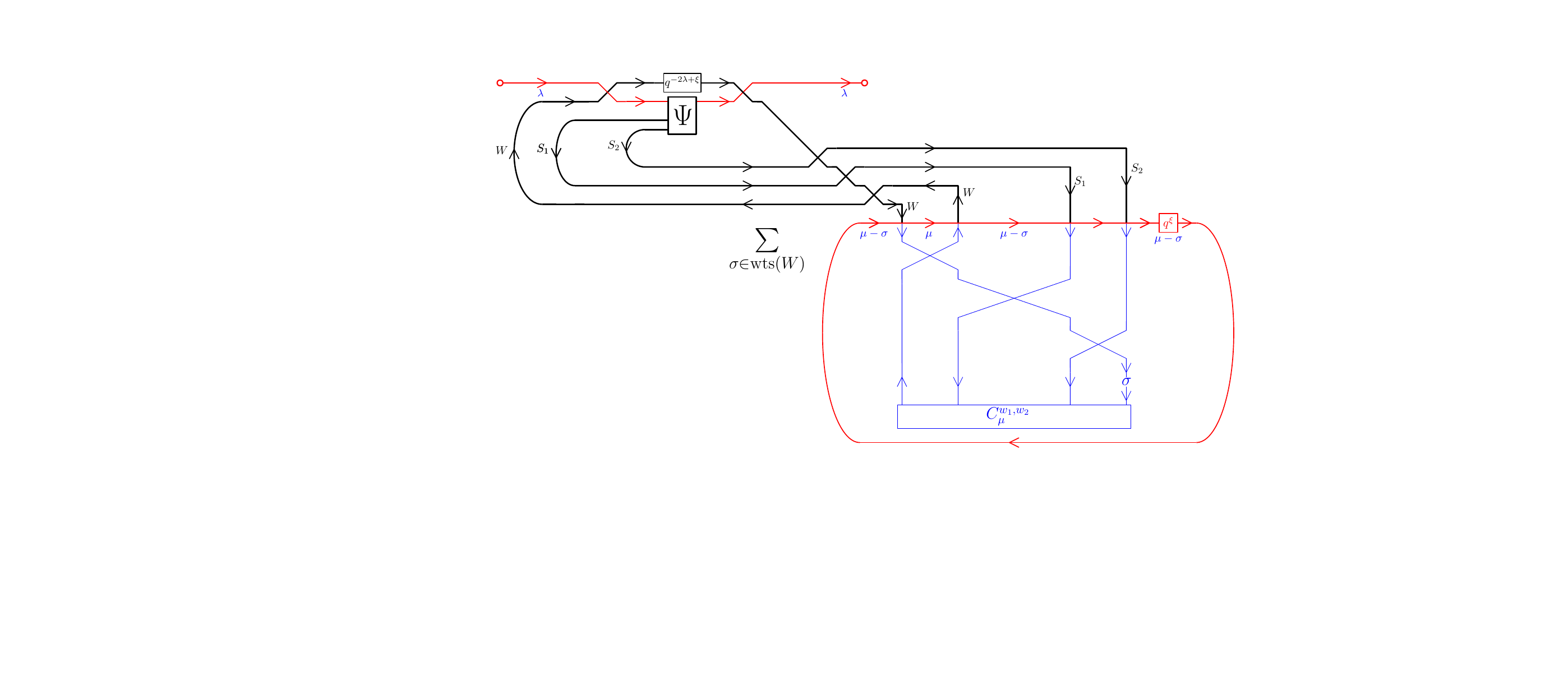}
	\end{center}
	Finally we contract the loop labeled by \(W\) to arrive at the right-hand side of Figure \ref{boundary MR A}.
\end{proof}

The final step will once more be to choose the twisting parameter \(\xi\) in such a way that the remaining cap colored by \(\ul{W}\) in Figure \ref{boundary MR A} can be pulled through the Verma strand via Proposition \ref{pushdiagram}. This requires the identification of this cap with the cap colored by \(W\) in the graphical calculus for \(\Mfd^\str\), which dictates to choose \(\xi = 2\lambda+2\rho\), in agreement with (\ref{choice of xi}). Let us once more write 
\begin{equation}\label{Zspec}
\mathcal{Z}_{\boldsymbol{S}}^{\bs{w},\bs{g}}(\lambda,\mu)=\mathcal{Y}_{\boldsymbol{S}}^{\bs{w},\bs{g}}(\lambda,\mu,2\lambda+2\rho), 
\end{equation}
as in (\ref{set Z equal to}). We then obtain the following result.
\begin{proposition}
	\label{prop also for MR}
	For any \(W\in\Rep\) and \(\lambda\in\hh_{\mathrm{reg}}^\ast\) such that \(\Re(\lambda)\) lies deep in the negative Weyl chamber, we have 
	for $\Psi=\Psi_{\lambda;S_2^*,S_1^*}^{g_2,g_1}$,
	\begin{figure}[H]
		\centering
		\includegraphics[scale=0.75]{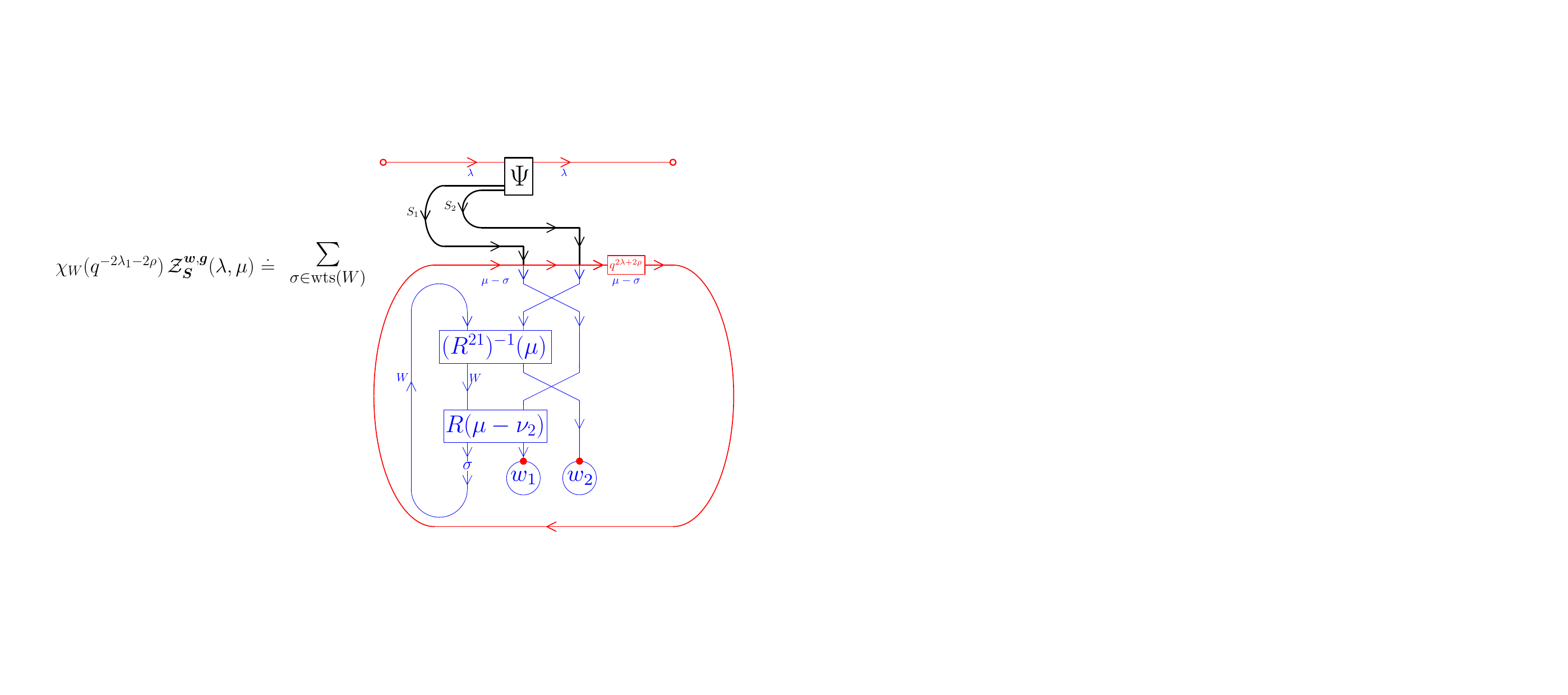}
		\caption{}
		\label{D7}
	\end{figure}
\end{proposition}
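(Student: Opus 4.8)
The proof will run in close parallel with that of Proposition \ref{prop boundary qKZB}, the only structural change being that the rank-one spin space $S_2$ there is replaced by the auxiliary module $W$ arising from the central element $\widetilde{\Omega}_W$. The plan is to start from the general-$\xi$ identity of Lemma \ref{lemma boundary MR with general xi}, specialise the twisting parameter to $\xi = 2\lambda+2\rho$ as in \eqref{choice of xi}, and then transport the single remaining weighted cap colored by $\ul{W}$ across the red Verma strand using Proposition \ref{pushdiagram}.

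Concretely, I would first record that Lemma \ref{lemma boundary MR with general xi} expresses $q^{\langle\cdot\rangle}\mathcal{Y}_{\bs{S}}^{\bs{w},\bs{g}}(\lambda,\mu,\xi)$ as the displayed sum over $\sigma\in\wts(W)$, in which the only feature preventing the diagram from being read as a genuine spin component of a weighted trace function is the $\ul{W}$-colored weighted cap sitting above the red strand labeled by $M_{\mu-\sigma}$. The decisive step is then the specialisation $\xi = 2\lambda+2\rho$: with this value the weighting by $q^\xi$ combines with the $q^{2\rho}$-twist built into the right (co-)evaluation data of $\Rep^\str$ (cf. \eqref{dualities_A}) exactly so that the weighted cap becomes the image under $\widetilde{\cF^{\mr{EV}}}$ of a genuine evaluation morphism for $W$ in $\Rep^\str$. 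This is the same mechanism that resolved the $\ul{S_2}$-cap in Proposition \ref{prop boundary qKZB}, and it is the point at which the choice \eqref{choice of xi} is forced. Having made this identification, I would apply Proposition \ref{pushdiagram}, in the form of the auxiliary move used in the proof of Proposition \ref{prop boundary qKZB} (Figure \ref{bhhelp}), to pull the cap through the Verma strand colored by $M_{\mu-\sigma}$, thereby turning it into the corresponding red-dotted dynamical cap in $\mathbb{G}_{\Ndyn^\str}$. A short simplification in the graphical calculus for $\cN_{\mr{fd}}^\str$ --- contracting the remaining $W$-data and absorbing the scalar $q^{\langle\cdot\rangle}$ --- then yields the right-hand side of Figure \ref{D7}.

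The main obstacle I anticipate is precisely the bookkeeping in that central identification: one must verify that at $\xi = 2\lambda+2\rho$ the $\lambda$-dependent dynamical twisting naturally attached to the $\ul{W}$-cap, which a priori lives in $\Ndyn^\str$ and carries a $\mathbb{Q}_W$-type correction in the sense of Subsection \ref{Subsection Q(lambda)}, matches the plain evaluation of $\Rep^\str$ on the nose, so that Proposition \ref{pushdiagram} is genuinely applicable and no stray twisting factor survives. Once this single compatibility is checked, the remainder is a routine transcription of the $q$-KZB derivation, with $W$ in place of $S_2$ throughout.
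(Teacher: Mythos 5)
Your proposal follows the paper's proof essentially step for step: start from Lemma \ref{lemma boundary MR with general xi}, specialise \(\xi = 2\lambda+2\rho\) so that the remaining weighted \(\ul{W}\)-cap is colored by \(\widehat{e}_{\ul{W}}\circ(\pi_W(q^{2\rho})\tens\id_{\ul{W^\ast}})\), i.e.\ coincides with the forgetful image of the right evaluation \(\widetilde{e}_W\) of \(\Rep^\str\), pull it through the red Verma strand with Proposition \ref{pushdiagram}, and simplify in \(\cN_{\mr{fd}}^\str\). The only thing your outline compresses is the final bookkeeping — in the paper this is precisely Figure \ref{dyn eval right boundary}, the insertion of \(\mathbb{P}_W[\sigma]\), and Lemma \ref{lemma in Section 3.3}, which make the \(\QQ\)-corrections cancel — and this is exactly the compatibility you flagged, so your plan matches the paper's argument.
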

\begin{proof}
	By Proposition \ref{pushdiagram} we have 
	\begin{center}
		\includegraphics[scale=0.9]{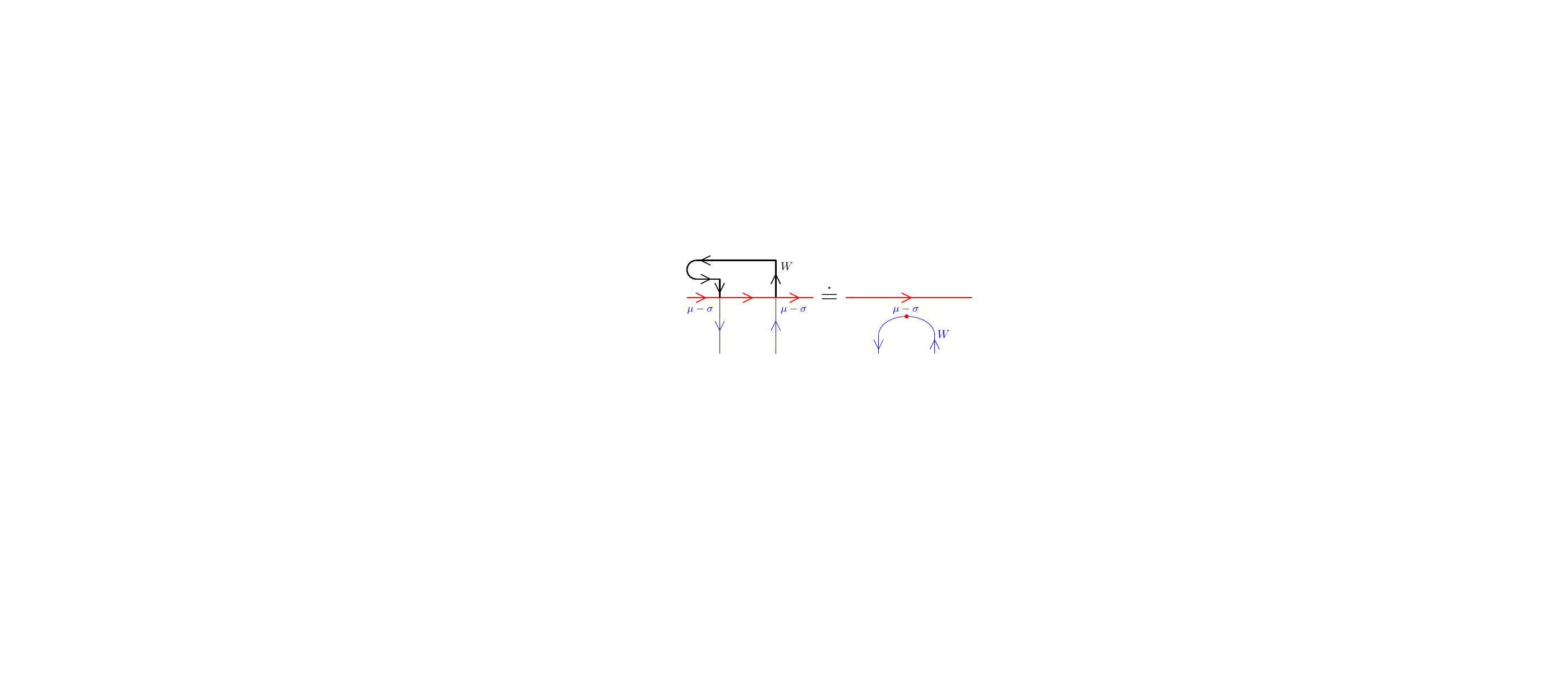}
	\end{center}
	As in the proof of Proposition \ref{prop boundary qKZB}, we may use this to manipulate the remaining cap colored by \(\ul{W}\) in Figure \ref{boundary MR A}, by pulling it through the red strand labeled by \(M_{\mu-\sigma}\). This is only allowed since with the present specialization \(\xi = 2\lambda+2\rho\), the latter cap is colored by the morphism \(\widehat{e}_{\ul{W}}\circ(\pi_W(q^{2\rho})\tens\id_{\ul{W^\ast}}) \) in \(\cN_{\mr{fd}}^\str\). This morphism algebraically coincides with \(\til{\cF^{\mr{frgt}}}(\til{e}_{W})\), and hence this cap can be identified with the cap with the same orientation and colored by \(W\) in the graphical calculus for \(\Mfd^\str\).
	
	In combination with the result of Lemma \ref{lemma boundary MR with general xi} and Figure \ref{dyn eval right boundary}, we conclude that the left-hand side of Figure \ref{D7} is graphically represented by
	\begin{center}
		\includegraphics[scale=0.75]{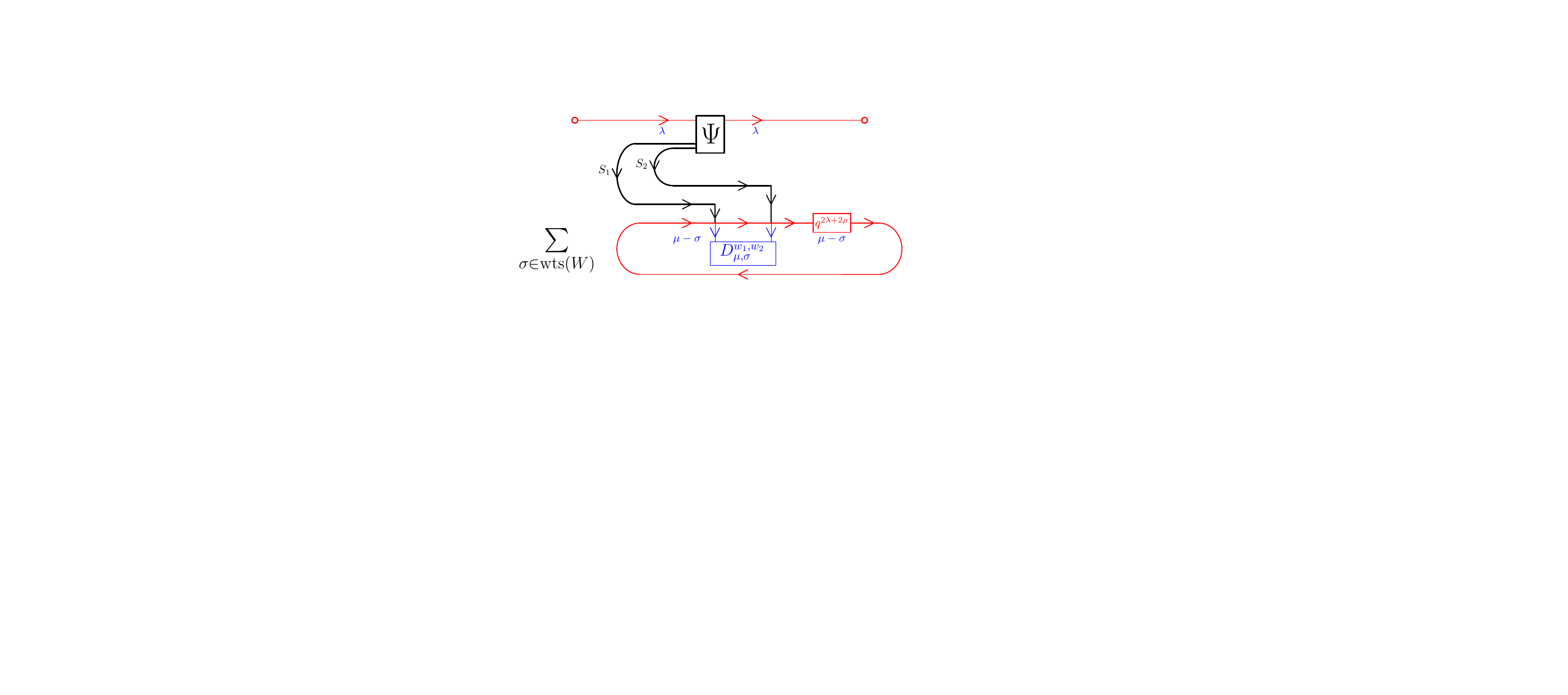}
	\end{center}
	\noindent
	with $D_{\mu\sigma}^{w_1,w_2}$ the morphism in $\mathcal{N}_{\textup{fd}}^\str$ such that
	\begin{figure}[H]
		\centering
		\includegraphics[scale=0.75]{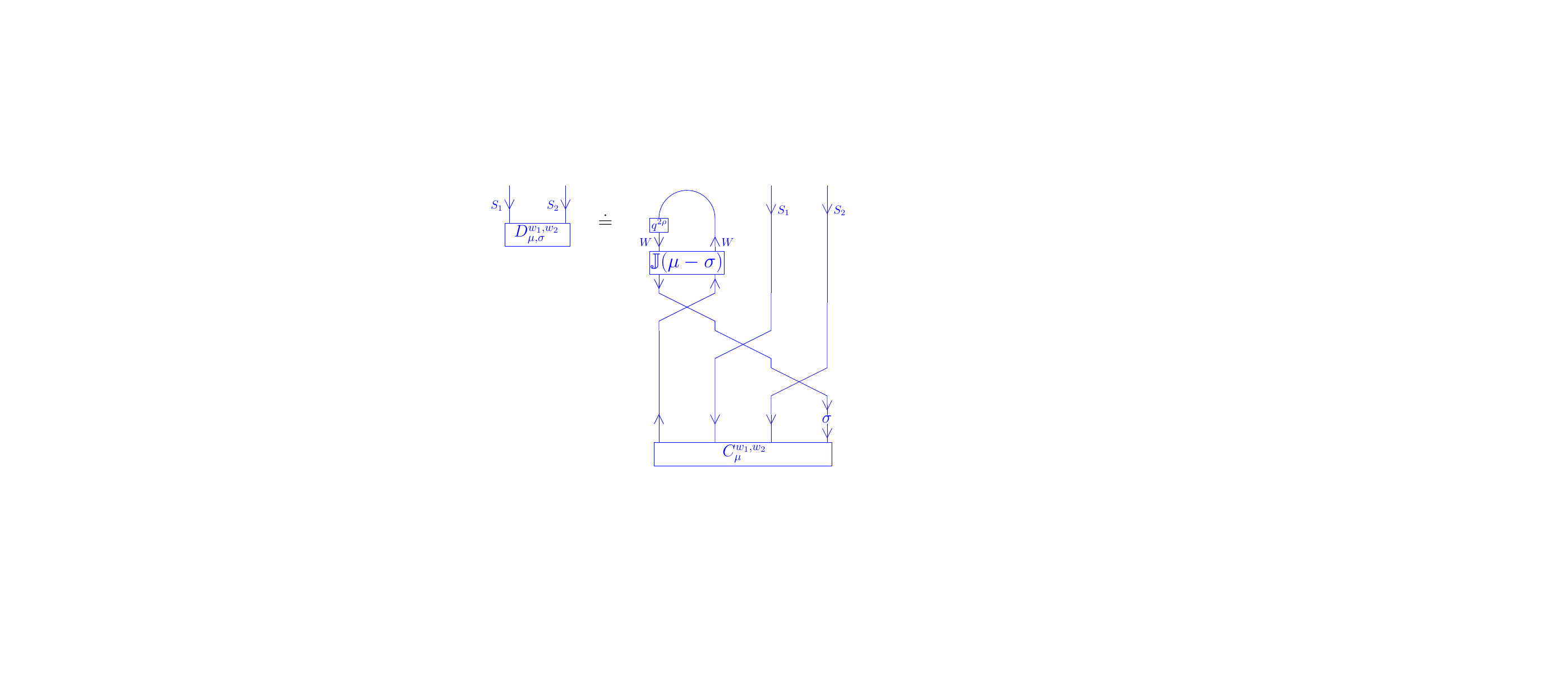}
		\caption{}
		\label{boundary MR H}
	\end{figure}
	Using the graphical calculus for $\mathcal{N}_{\textup{fd}}^\str$ and the results from Subsection \ref{Subsection Boundary conditions}, the 
	$\mathcal{F}_{\mathcal{N}_{\textup{fd}}^\str}^{\textup{RT}}$-image of the right-hand side of Figure \ref{boundary MR H} and of 
	\begin{center}
		\includegraphics[scale=0.75]{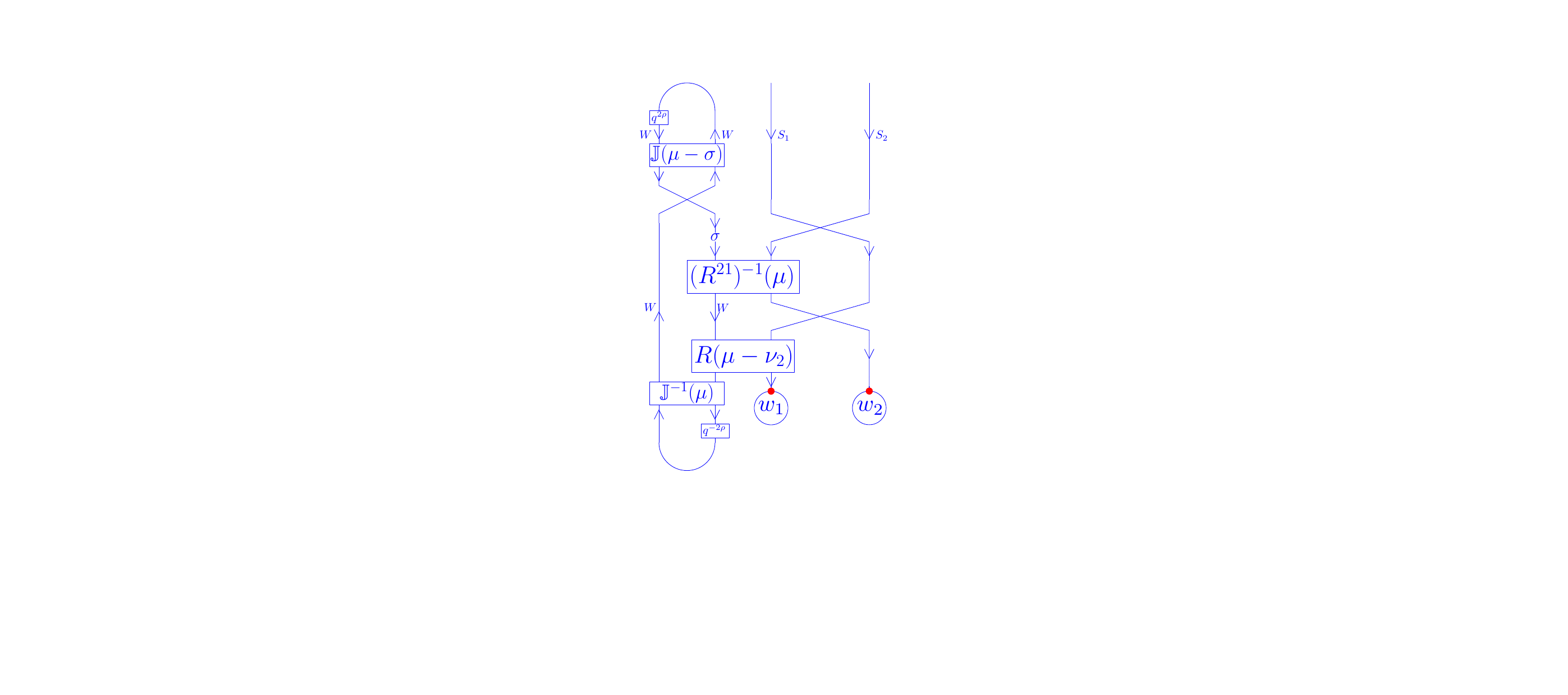}
	\end{center}
	\noindent
	coincide.
	Note that one can insert an additional coupon colored by the projection morphism \(\mathbb{P}_W[\sigma]\) below the coupon labeled by \(\Rdyn(\mu-\nu_2)\), since \(\mathbb{J}(\mu-\sigma)\) and \(\mathbb{J}^{-1}(\mu)\) are of zero weight. Consequently, by Lemma \ref{lemma in Section 3.3}, the $\mathcal{F}_{\mathcal{N}_{\textup{fd}}^\str}^{\textup{RT}}$-image of the right-hand side of Figure \ref{boundary MR H} and of 
	\begin{center}
		\includegraphics[scale=0.75]{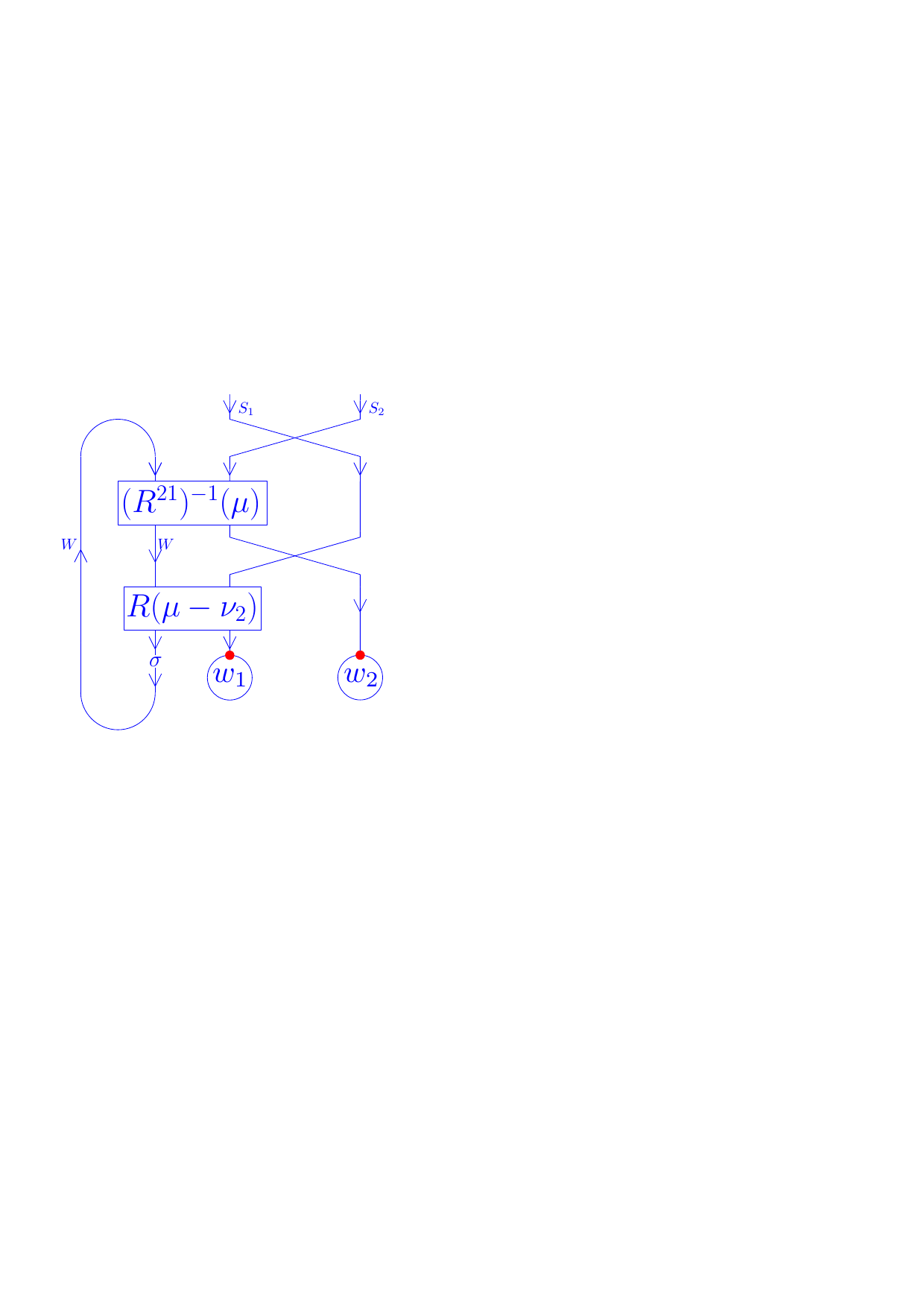}
	\end{center}
\noindent 
coincide. This proves the result.
\end{proof}

We give the reformulation of Proposition \ref{prop also for MR} as a concrete equation for $\mathcal{Z}_{\bs{S}}^{\bs{w},\bs{g}}(\lambda,\mu)$. In Section \ref{Subsection Etingof-Varchenko normalization for MR} we will show how this equation gives rise to the dual Macdonald-Ruijsenaars (MR) equations for weighted trace functions of $k$-point quantum vertex operators from \cite{Etingof&Varchenko-2000}, as well as to new families of dual MR equations. 

\begin{corollary}
	\label{thm MR for Y}
	Consider arbitrary \(\lambda,\mu\in\hh_{\mathrm{reg}}^\ast\) with \(\Re(\lambda)\) deep in the negative Weyl chamber, \(W\in\Mfd\), \(S = S_1\tens S_2\in \Rep^\str\), \(w_j\in\cF^\str(S_j)[\nu_j]\) and \(g_j\in \cF^\str(S_j^\ast)[\nu_j]\), for \(j\in\{1,2\}\), with \(\nu_1+\nu_2 = 0=\nu_1'+\nu_2'\). Then 
	\begin{equation}
	\label{equation MR with general xi}
	\mathcal{Z}_{\bs{S}}^{\bs{w},\mathcal{D}_{\lambda,W}^\vee(\bs{g})}(\lambda,\mu) = \sum_{\sigma\in\wts(W)}\mathcal{Z}_{\bs{S}}^{\mathcal{K}_{\mu,\sigma,W}^\vee(\bs{w}), \bs{g}}(\lambda,\mu-\sigma),
	\end{equation}
	with \(\mathcal{K}_{\mu,\sigma,W}^\vee\in \End_{\cN_{\textup{fd}}}(\cF^\str(\ul{S}))\) and \(\mathcal{D}_{\lambda,W}^\vee\in\End_{\cN_{\textup{fd}}}(\cF^\str(\ul{S^\ast}))\) defined by
	\begin{align}
	\label{u vectors def MR}
	\mathcal{K}_{\mu,\sigma,W}^\vee :=\ & \mathscr{J}^\vee_{\boldsymbol{S}}(\mu-\sigma)\, \Tr_{W[\sigma]}\left(\Rdyn^{21}_{W,S_2}(\mu)^{-1}\Rdyn_{W,S_1}(\mu-\mh_{S_2}) \right)\mathscr{J}^\vee_{\boldsymbol{S}}(\mu-\sigma)^{-1}, \\
	\label{the funny J check}
	\mathscr{J}^\vee_{\boldsymbol{S}}(\mu-\sigma) :=\ & j_{S_1}(\mu-\sigma-\mh_{S_2})\otimes j_{S_2}(\mu-\sigma), \\
	\label{the funny D check}
	\mathcal{D}_{\lambda,W}^\vee:=\ & \Tr_W\left((q^{-2\lambda-2\rho})_W\,\kappa^2_{W,S_2^\ast}\right).
	\end{align}
\end{corollary}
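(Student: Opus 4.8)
The plan is to read off the claimed identity \eqref{equation MR with general xi} from the graphical identity of Proposition \ref{prop also for MR} (Figure \ref{D7}), in exactly the manner used to deduce Corollary \ref{thm boundary qKZB} from Proposition \ref{prop boundary qKZB}. The left-hand side of Figure \ref{D7} is, by construction, the spin component $\mathcal{Z}_{\bs{S}}^{\bs{w},\bs{g}}(\lambda,\mu)$ carrying the action of the central element $\widetilde{\Omega}_W$ on the intermediate Verma strand $M_{\lambda_1}$ of $\Psi=\Psi_{\lambda;S_2^\ast,S_1^\ast}^{g_2,g_1}$. By Lemma \ref{lemma Omega tilde}, $\widetilde{\Omega}_W$ acts on $M_{\lambda_1}$ as the scalar $\chi_W(q^{-2\lambda_1-2\rho})$, so this left-hand side equals $\chi_W(q^{-2\lambda_1-2\rho})\,\mathcal{Z}_{\bs{S}}^{\bs{w},\bs{g}}(\lambda,\mu)$.

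First I would identify this scalar with the action of $\mathcal{D}_{\lambda,W}^\vee$ on $\bs{g}$. Since $\lambda_1=\lambda-\nu_2'$ by \eqref{lambda_j MR def} and $g_2\in\cF^\str(S_2^\ast)[\nu_2']$, a short computation with $\kappa=q^{\sum_i x_i\otimes x_i}$ shows that $\mathcal{D}_{\lambda,W}^\vee=\Tr_W\bigl((q^{-2\lambda-2\rho})_W\kappa^2_{W,S_2^\ast}\bigr)$ acts on $\cF^\str(S_2^\ast)[\nu_2']$ as multiplication by $\chi_W(q^{-2\lambda-2\rho+2\nu_2'})=\chi_W(q^{-2\lambda_1-2\rho})$, the character being evaluated via $\chi_W(q^\eta)=\sum_{\sigma}\dim(W[\sigma])\,q^{\langle\eta,\sigma\rangle}$. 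Hence the left-hand side of Figure \ref{D7} is $\mathcal{Z}_{\bs{S}}^{\bs{w},\mathcal{D}_{\lambda,W}^\vee(\bs{g})}(\lambda,\mu)$, which is the left-hand side of \eqref{equation MR with general xi}.

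Next I would read off the right-hand side of Figure \ref{D7}. Inserting the projections $\mathbb{P}_W[\sigma]$ and summing over $\sigma\in\wts(W)$, the weight bookkeeping below the red border line forces the lower Verma strand to carry weight $\mu-\sigma$ (by the argument used in Lemma \ref{lemma boundary MR with general xi}, using that $\langle\Psi\rangle$ has weight zero). Thus the diagram represents $\sum_{\sigma\in\wts(W)}\mathcal{Z}_{\bs{S}}^{\widetilde{\bs{w}}_\sigma,\bs{g}}(\lambda,\mu-\sigma)$, where $\widetilde{\bs{w}}_\sigma$ is obtained from $\bs{w}$ by the operator coloring the blue strands. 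That operator is the partial trace over $W[\sigma]$ of the composite $\Rdyn_{W,S_2}^{21}(\mu)^{-1}\Rdyn_{W,S_1}(\mu-\mh_{S_2})$ of dynamical $R$-matrices, produced by pushing the two braidings of Figure \ref{bulk MR diagram} through the red border line via Proposition \ref{pushdiagram} and applying Lemma \ref{lemma in Section 3.3}, conjugated by the dynamical fusion operators recorded by the red dots, namely $\mathscr{J}_{\bs{S}}^\vee(\mu-\sigma)=j_{S_1}(\mu-\sigma-\mh_{S_2})\otimes j_{S_2}(\mu-\sigma)$. This is precisely $\mathcal{K}_{\mu,\sigma,W}^\vee$ of \eqref{u vectors def MR}, so equating the two readings of Figure \ref{D7} yields \eqref{equation MR with general xi}.

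I expect the main obstacle to be this last identification, i.e.\ pinning down the weight arguments in \eqref{u vectors def MR}. One must verify that the region between the blue strands is colored by $\mu-\sigma$ rather than $\mu$, so that the conjugating fusion operators $\mathscr{J}_{\bs{S}}^\vee$ carry argument $\mu-\sigma$, while the two dynamical $R$-matrices carry the arguments $\mu$ and $\mu-\mh_{S_2}$ dictated by the right-to-left weight-shift convention of Section \ref{Section strictified dynamical module category}; the appearance of $\Rdyn^{21}$ versus $\Rdyn$, and the inverse on the $S_2$-factor, must likewise be tracked through the relative orientation of the two braidings in Figure \ref{bulk MR diagram}. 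Once this weight bookkeeping is settled, the remaining manipulations are routine applications of the dynamical ribbon identities of Section \ref{sectiongi}.
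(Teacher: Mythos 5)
Your proposal is correct and follows essentially the same route as the paper's own proof: both read the two sides of the graphical identity in Proposition \ref{prop also for MR} (Figure \ref{D7}), identify the right-hand side as $\sum_{\sigma}\mathcal{Z}_{\bs{S}}^{\mathcal{K}_{\mu,\sigma,W}^\vee(\bs{w}),\bs{g}}(\lambda,\mu-\sigma)$ via the projections $\mathbb{P}_W[\sigma]$ and the fusion-operator conjugation, and then convert the scalar $\chi_W(q^{-2\lambda_1-2\rho})$ into $\mathcal{D}_{\lambda,W}^\vee(\bs{g})$ using $\lambda_1=\lambda-\nu_2'$. Your explicit $\kappa$-computation of the character action is exactly the content of the paper's closing remark, just spelled out in more detail.
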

\begin{proof}
	Proposition \ref{prop also for MR} asserts that 
	\begin{figure}[H]
		\centering
		\includegraphics[scale=0.65]{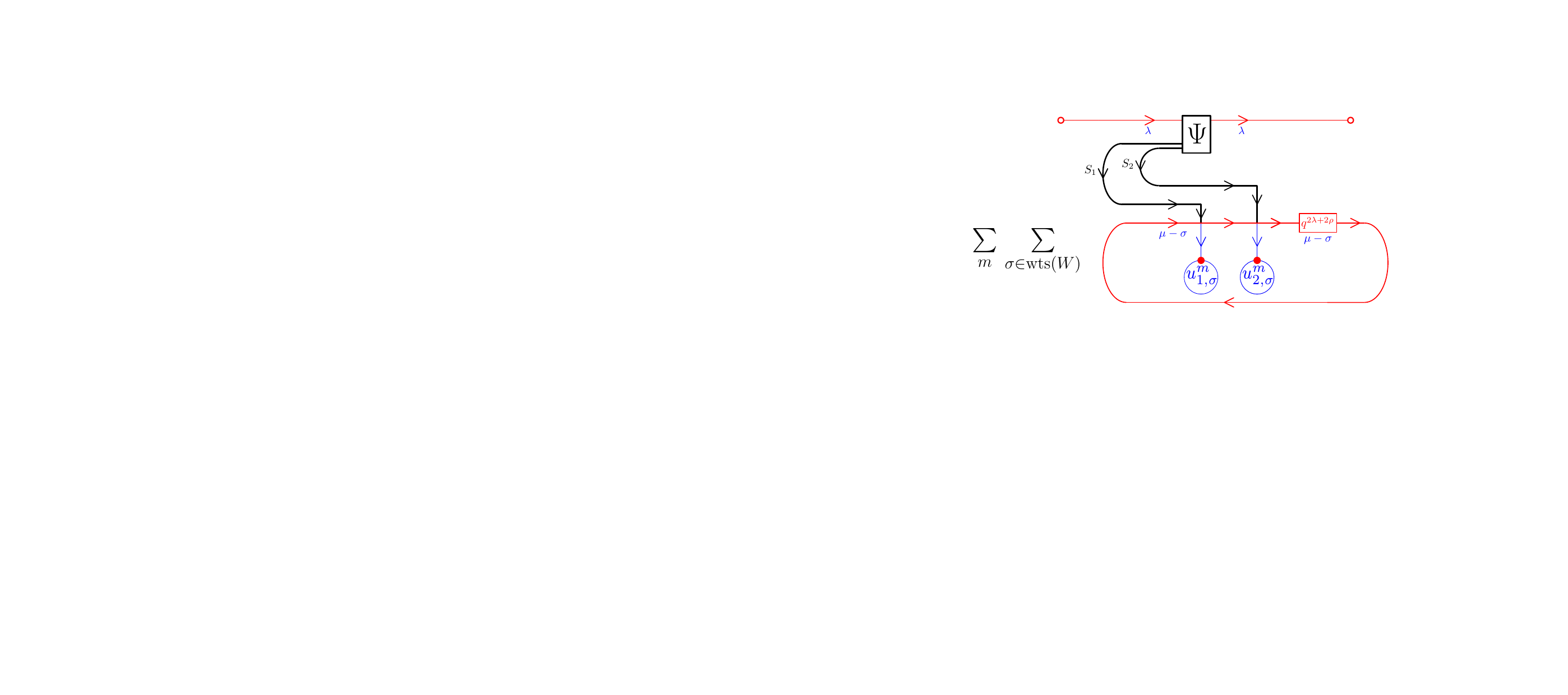}
		\caption{}
		\label{D8}
	\end{figure}
	\noindent 
	for $\Psi=\Psi_{\lambda;S_2^*,S_1^*}^{g_2,g_1}$ is a graphical representation of \(\chi_W(q^{-2\lambda_1-2\rho})\,\mathcal{Z}_{\bs{S}}^{\bs{w},\bs{g}}(\lambda,\mu)\),
	where the vectors \(u_{i,\sigma}^m\in\cF^\str(S_i)\) are homogeneous such that
	\[
	\sum_m u_{1,\sigma}^m\otimes u_{2,\sigma}^m = \mathcal{K}_{\mu,\sigma,W}^\vee(w_1\otimes w_2).
	\]
	Observe that Figure \ref{D8} represents
	\[
	\sum_m \sum_{\sigma\in\wts(W)}\mathcal{Z}_{\boldsymbol{S}}^{(u_{1,\sigma}^m, u_{2,\sigma}^m),\bs{g}}(\lambda,\mu-\sigma),
	\]
	such that we obtain the equation
	\begin{equation}
	\label{almost a conclusion}
	\chi_W(q^{-2\lambda_1-2\rho})\,\mathcal{Z}_{\bs{S}}^{\bs{w},\bs{g}}(\lambda,\mu) = \sum_{\sigma\in\wts(W)}\mathcal{Z}_{\bs{S}}^{\mathcal{K}_{\mu,\sigma,W}^\vee(\bs{w}), \bs{g}}(\lambda,\mu-\sigma).
	\end{equation}
	The definition of the character \(\chi_W\), in combination with the expression (\ref{lambda_j MR def}) for \(\lambda_1\), asserts that
	\[
	\chi_W(q^{-2\lambda_1-2\rho})(g_2\otimes g_1) = \mathcal{D}_{\lambda,W}^\vee(g_2\otimes g_1),
	\]
	such that the left-hand side of (\ref{almost a conclusion}) equals \(\mathcal{Z}_{\bs{S}}^{\bs{w},\mathcal{D}_{\lambda,W}^\vee(\bs{g})}(\lambda,\mu)\).
\end{proof}

\section{The $q$-KZB and coordinate MR equations}
\label{Section Etingof-Varchenko normalization}

In this section we show that the equations for spin components of weighted traces of intertwiners that we obtained using graphical calculus (see Corollary \ref{thm boundary qKZB} and Corollary \ref{thm MR for Y}) entail the dual $q$-KZB and dual MR equations for weighted traces of intertwiners, as derived in  \cite{Etingof&Varchenko-2000} by intricate algebraic computations. 

The dual $q$-KZB equations are a system of $k$ difference equations for weighted traces of $k$-point quantum vertex operators, cf. \cite{Etingof&Varchenko-2000}. The $i^{\textup{th}}$ equation ($1\leq i\leq k$) will follow from Corollary \ref{thm boundary qKZB} by choosing the decomposition $S=S_1\tens S_2\tens S_3$
of $S=(V_1,\ldots,V_k)\in\Rep^\str$ with 
\[
S_1 = (V_1,\dots,V_{i-1}), \quad S_2 = (V_i), \quad S_3 = (V_{i+1},\dots,V_k).
\]
The system of dual MR equations for weighted traces of $k$-point quantum vertex operators from \cite{Etingof&Varchenko-2000} is a family of difference equations naturally parametrised by $W\in\Rep$. In this section we derive from Corollary \ref{thm MR for Y} $k+1$ such systems of dual $q$-MR equations. The $i^{\textup{th}}$
system ($0\leq i\leq k$) corresponds to choosing in Corollary \ref{thm MR for Y} the decomposition $S=S_1\tens S_2$ of $S=(V_1,\ldots,V_k)\in\Rep^\str$ with
\[
S_1 = (V_1,\dots,V_i), \quad S_2 = (V_{i+1},\dots,V_k).
\]
The original system of dual MR equations from \cite{Etingof&Varchenko-2000} then corresponds to the special case that $i=k$. In Subsection \ref{Subsection Etingof-Varchenko normalization} we discuss the dual $q$-KZB equations, in Subsection \ref{Subsection Etingof-Varchenko normalization for MR} the dual MR equations.

\subsection{The dual \(q\)-KZB equations}
\label{Subsection Etingof-Varchenko normalization}
To relate the equations for the spin components $\mathcal{Z}_{\bs{S}}^{\bs{w},\bs{g}}(\lambda,\mu)$ of weighted traces of intertwiners obtained in Corollary \ref{thm boundary qKZB} to the dual $q$-KZB equations from \cite[Thm. 1.4]{Etingof&Varchenko-2000}, we need to express the parametrisation of the spin components in terms of explicit linear operators acting on $\cF^\str(S^*)[0]$. This boils down to explicitly computing the expectation values of the dual intertwiners
 \(\Psi_{\lambda;S_3^\ast,S_2^\ast,S_1^\ast}^{g_3,g_2,g_1}\) (see (\ref{bold Psi def})--(\ref{choice of Psi_i})).

Recall the notational conventions introduced at the start of Section \ref{Section dual q-KZB}. In particular we have $\lambda\in\mathfrak{h}_{\textup{reg}}^*$,  $S=S_1\tens S_2\tens S_3\in\Rep^\str$ and $g_j\in\cF^\str(S_j^*)[\nu_j']$ for $j=1,2,3$ with the weight $\nu_j'$ such that
$\nu_1'+\nu_2'+\nu_3'=0$. 
\begin{lemma}
	\label{lemma expectation value of Psi}
	The expectation value \(\langle \Psi_{\lambda;S_3^\ast,S_2^\ast,S_1^\ast}^{g_3,g_2,g_1}\rangle\in \Hom_{\cN_{\mr{fd}}^\str}(\CC_\lambda,\ul{S^\ast}\tens \CC_\lambda)\) is represented by the vector
	\[
	j_{S^\ast}(\lambda)\,\Rdyn_{S_2^\ast,S_1^\ast}(\lambda)^{-1}\Rdyn_{S_3^\ast,S_1^\ast}(\lambda-\mathrm{h}_{S_2^\ast})^{-1}\Rdyn_{S_3^\ast,S_2^\ast}(\lambda)^{-1}\,\left(j_{S_3^\ast}(\lambda_3)^{-1}g_3\otimes j_{S_2^\ast}(\lambda_2)^{-1}g_2\otimes j_{S_1^\ast}(\lambda_1)^{-1}g_1\right)
	\]
	in \(\cF^\str(\ul{S^\ast})[0]\),
	with \(\lambda_j\) as defined in (\ref{lambda_j def}).
\end{lemma}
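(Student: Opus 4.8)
The plan is to compute the highest-weight-to-highest-weight component of the braided composition $\Psi=\Psi^{g_3,g_2,g_1}_{\lambda;S_3^\ast,S_2^\ast,S_1^\ast}$ directly, using the dual expectation value map of Remark \ref{remark dual quantum vertex operators}. First I would record the two elementary facts that drive the computation. On the one hand, each constituent $\Psi_{\lambda_i;S_i^*}^{g_i}=c_{S_i^\ast,M_{\lambda_{i-1}}}^{-1}\circ\Phi_{\lambda_i;S_i^*}^{g_i}$ is an ordinary (reversed) vertex operator of Definition \ref{reversevertex} post-composed with an inverse braiding, and by Lemma \ref{reversevertexlemma} the factor $\Phi_{\lambda_i;S_i^*}^{g_i}$ decomposes into a sum of elementary multipoint vertex operators governed by $j_{S_i^*}(\lambda_i)^{-1}g_i$; this is precisely where the inner factors $j_{S_i^*}(\lambda_i)^{-1}g_i$ in the claimed formula originate. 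On the other hand, since $\Phi_{\lambda_i;S_i^*}^{g_i}(\mathbf{m}_{\lambda_i})\in \mathbf{m}_{\lambda_{i-1}}\otimes g_i+\bigoplus_{\nu<\lambda_{i-1}}M_{\lambda_{i-1}}[\nu]\otimes\cF^\str(\ul{S_i^*})$, the net effect of the three inverse braidings, once the outer Verma strand is capped at highest weight, is to transport the three spin bundles $S_3^\ast,S_2^\ast,S_1^\ast$ past one another.

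The core step is to convert these spin--Verma braidings into dynamical $R$-matrices on the parametrising spaces. Graphically, I would draw $\langle\Psi\rangle$ as a ladder in which the Verma strand runs from bottom to top, emitting the bundles $S_3^\ast,S_2^\ast,S_1^\ast$, each of which is pulled to the left across the Verma strand by an inverse braiding, and then reduce the outer Verma strand to its highest-weight line. By Lemma \ref{lemma 3.10} a braiding against a Verma strand at highest weight collapses to the diagonal action of $q^\lambda$, so that the only surviving crossings are those between the spin bundles themselves; via the dynamical twist functor (Theorem \ref{prop pushed under water}) together with \eqref{relJj}--\eqref{relRr} and Proposition \ref{prop hexagon for dynamical R-matrix}, each such crossing is exactly a dynamical braiding $\ol{c_{S_i^\ast,S_j^\ast}}$, i.e. a flip composed with a dynamical $R$-matrix $\Rdyn_{S_i^\ast,S_j^\ast}$. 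The inverse braidings produce the three inverse dynamical $R$-matrices, and reversing the order $(S_1^\ast,S_2^\ast,S_3^\ast)$ into $(S_3^\ast,S_2^\ast,S_1^\ast)$ using the three available crossings yields the word $\Rdyn_{S_2^\ast,S_1^\ast}(\lambda)^{-1}\Rdyn_{S_3^\ast,S_1^\ast}(\lambda-\mh_{S_2^\ast})^{-1}\Rdyn_{S_3^\ast,S_2^\ast}(\lambda)^{-1}$, with the dynamical shift $-\mh_{S_2^\ast}$ on the middle factor dictated by the bundle $S_2^\ast$ sitting between the two strands being exchanged (exactly the convention of the dynamical hexagon identities of Proposition \ref{prop hexagon for dynamical R-matrix}). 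The residual re-fusion of the reversed bundle into $\cF^\str(\ul{S^\ast})$ is the outer operator $j_{S^\ast}(\lambda)$, while the permutation parts of the braidings combine with this reversal so that the $R$-factors act in place as endomorphisms of $\cF^\str(\ul{S^\ast})$, as claimed.

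The main obstacle will be that the two intermediate Verma modules $M_{\lambda_1}$ and $M_{\lambda_2}$ are not at highest weight --- only the outer strand $M_{\lambda_0}=M_{\lambda_3}=M_\lambda$ is capped --- so Lemma \ref{lemma 3.10} cannot be applied strand by strand, and the crossings must be resolved in the correct order while retaining the full internal Verma structure. I would handle this by induction on the number of constituents, assembling $\Psi$ from the bottom up: the base case is the single dual operator, where a direct computation using $\cR=\kappa\ol{\cR}$ and the triangularity of the quasi-$R$-matrix $\ol{\cR}$ shows that capping the unique Verma strand at highest weight extracts exactly the expected $j$-twist of $g_i$; the inductive step adds one further bundle at the bottom and routes it past the previously assembled bundles using the dynamical hexagon and naturality identities \eqref{dynhexagon}--\eqref{dynnatural}, which is where the cocycle property of the fusion operators (Lemma \ref{coco}) and the weight-shift bookkeeping of the dynamical tensor product enter. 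The delicate point throughout is keeping the arguments of the fusion operators $j_{S_i^\ast}$ and of the dynamical $R$-matrices synchronised with the intermediate highest weights $\lambda_1,\lambda_2$ of \eqref{lambda_j def}; verifying that these shifts assemble precisely into the stated arguments $\lambda$, $\lambda-\mh_{S_2^\ast}$, and $\lambda_i$ is the heart of the computation.
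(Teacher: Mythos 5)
Your proposal is correct and follows essentially the same route as the paper's proof: pull the spin bundles over the lower coupons using naturality of the braiding and the hexagon identities (which is exactly what your bottom-up induction amounts to when unwound), push the resulting spin--spin crossings through the red strand via Proposition \ref{pushdiagram} to obtain the three inverse dynamical $R$-matrices with the stated shifts, resolve the crossings with the outer Verma strand at highest weight via Lemma \ref{lemma 3.10}, and recognize the inner factors $j_{S_i^\ast}(\lambda_i)^{-1}g_i$ from Lemma \ref{reversevertexlemma} and the outer $j_{S^\ast}(\lambda)$ as the final fusion of the three-point dual vertex operator. The only point you gloss over is that Lemma \ref{lemma 3.10} leaves residual factors $\pi_{\cF^\str(S^\ast)}(q^{\mp\lambda})$ acting on the spin bundles, which may be discarded only because the expectation value lies in the zero weight space $\cF^\str(\ul{S^\ast})[0]$.
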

\begin{proof}
	By definition, \(\Psi_{\lambda;S_3^\ast,S_2^\ast,S_1^\ast}^{g_3,g_2,g_1}\) is graphically represented by
	\begin{center}
		\includegraphics[scale=0.67]{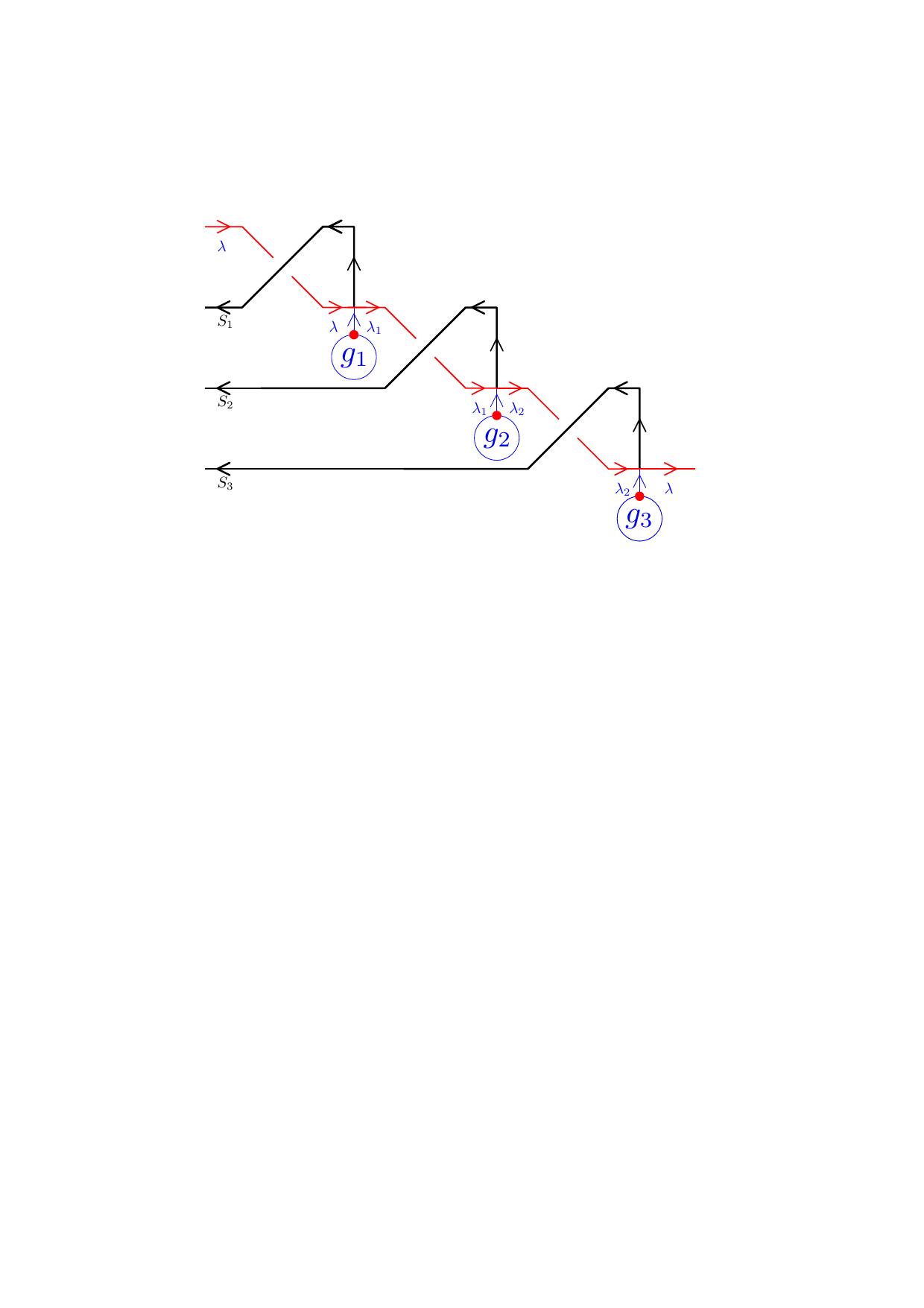}
	\end{center}
	Upon pulling the strands labeled by \(S_2\) and \(S_3\) over the coupons colored by $\Psi_{\lambda_1;S_1^*}^{g_1}$ and $\Psi_{\lambda_2;S_2^*}^{g_2}$ using the graphical calculus for $\mathcal{M}_{\textup{adm}}^\str$, we obtain
	\begin{center}
		\includegraphics[scale=0.75]{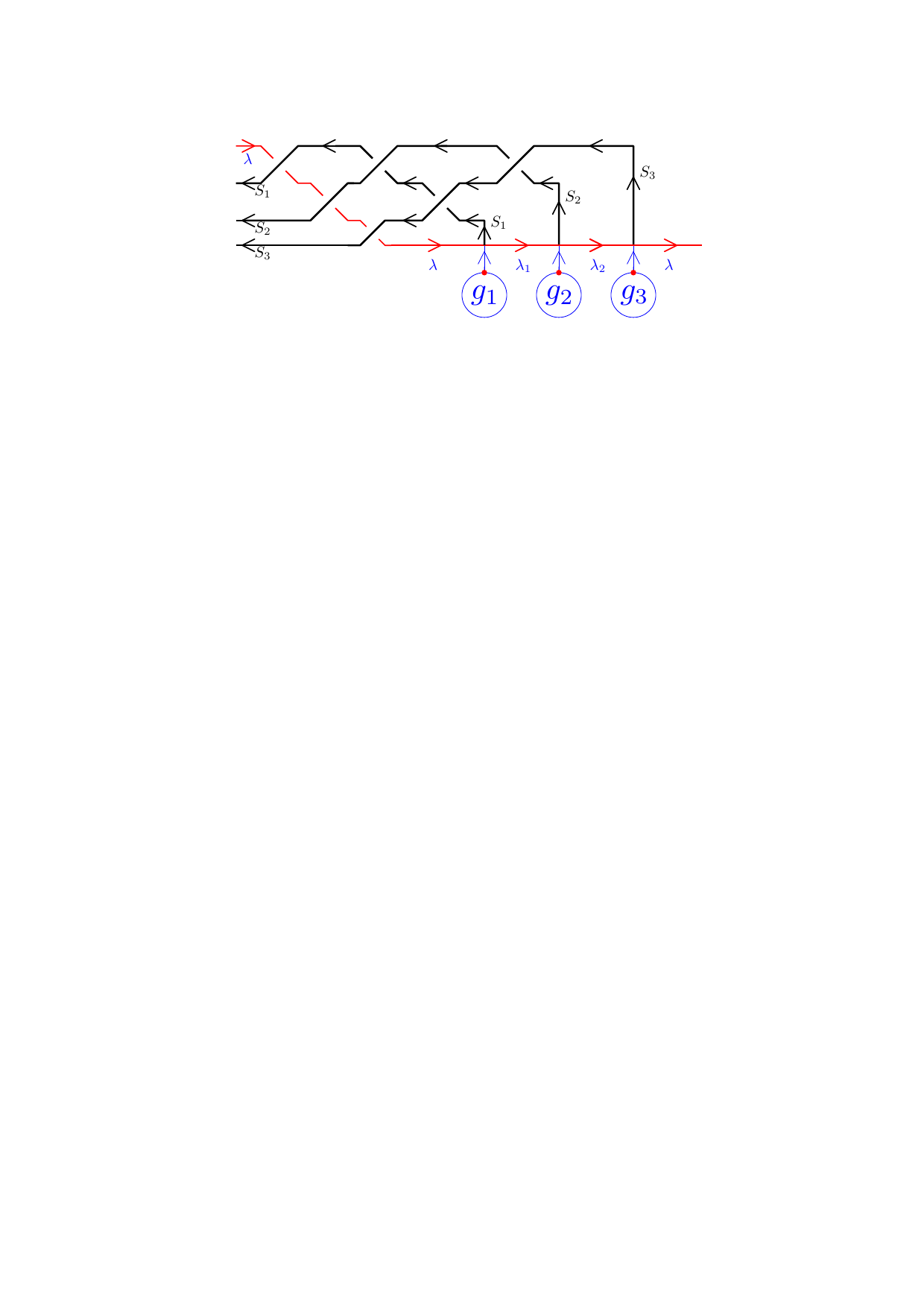}
	\end{center}
as graphical representation of \(\Psi_{\lambda;S_3^\ast,S_2^\ast,S_1^\ast}^{g_3,g_2,g_1}\).
After pushing the braidings on the black strands through the red strand using Proposition \ref{pushdiagram}, and using Figure \ref{evaluation in S}, we obtain
	\begin{center}
		\includegraphics[scale=0.75]{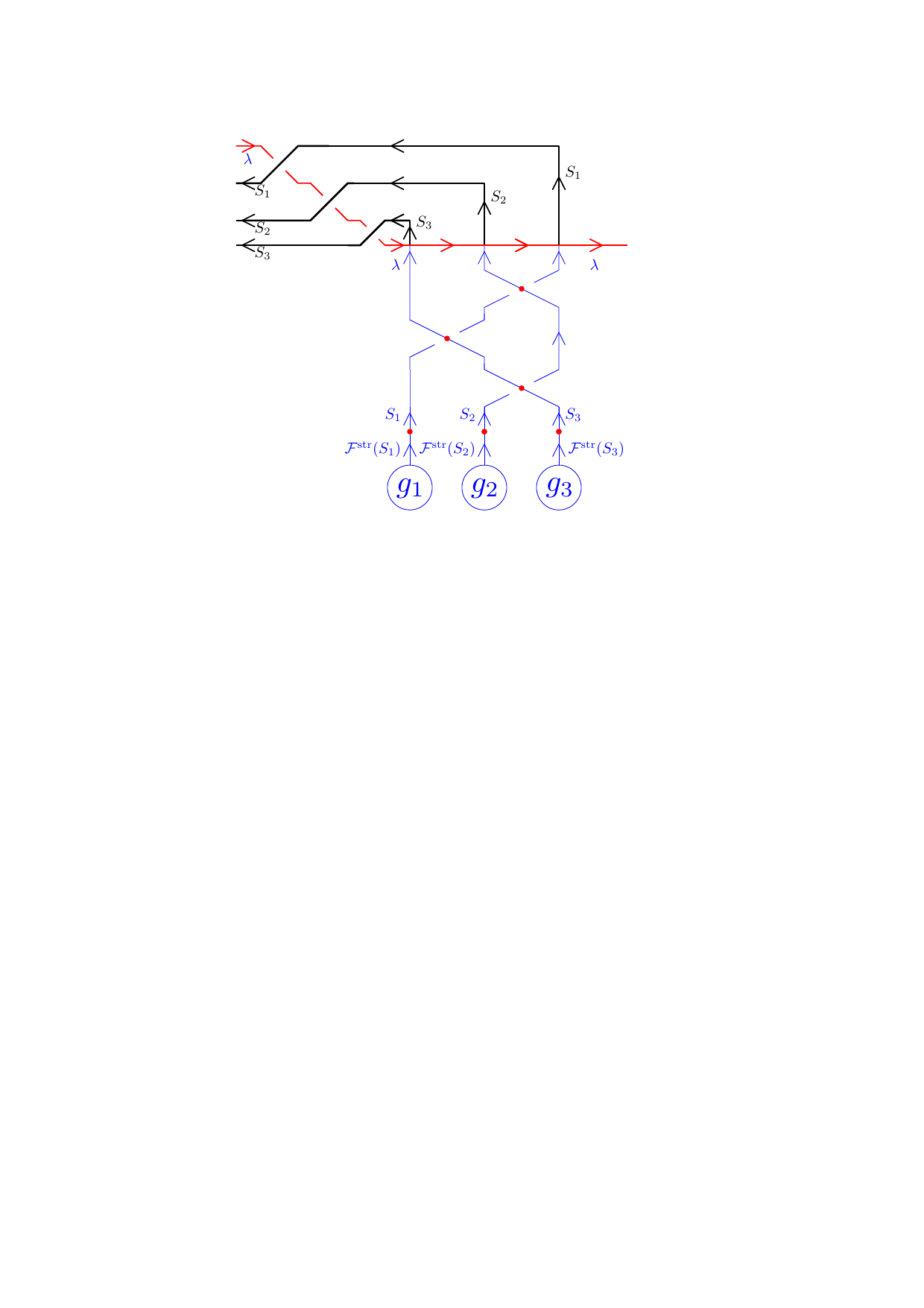}
	\end{center}
	as graphical representation. Lemma \ref{lemma 3.10} then shows that
	\begin{figure}[H]
		\centering
		\includegraphics[scale=0.75]{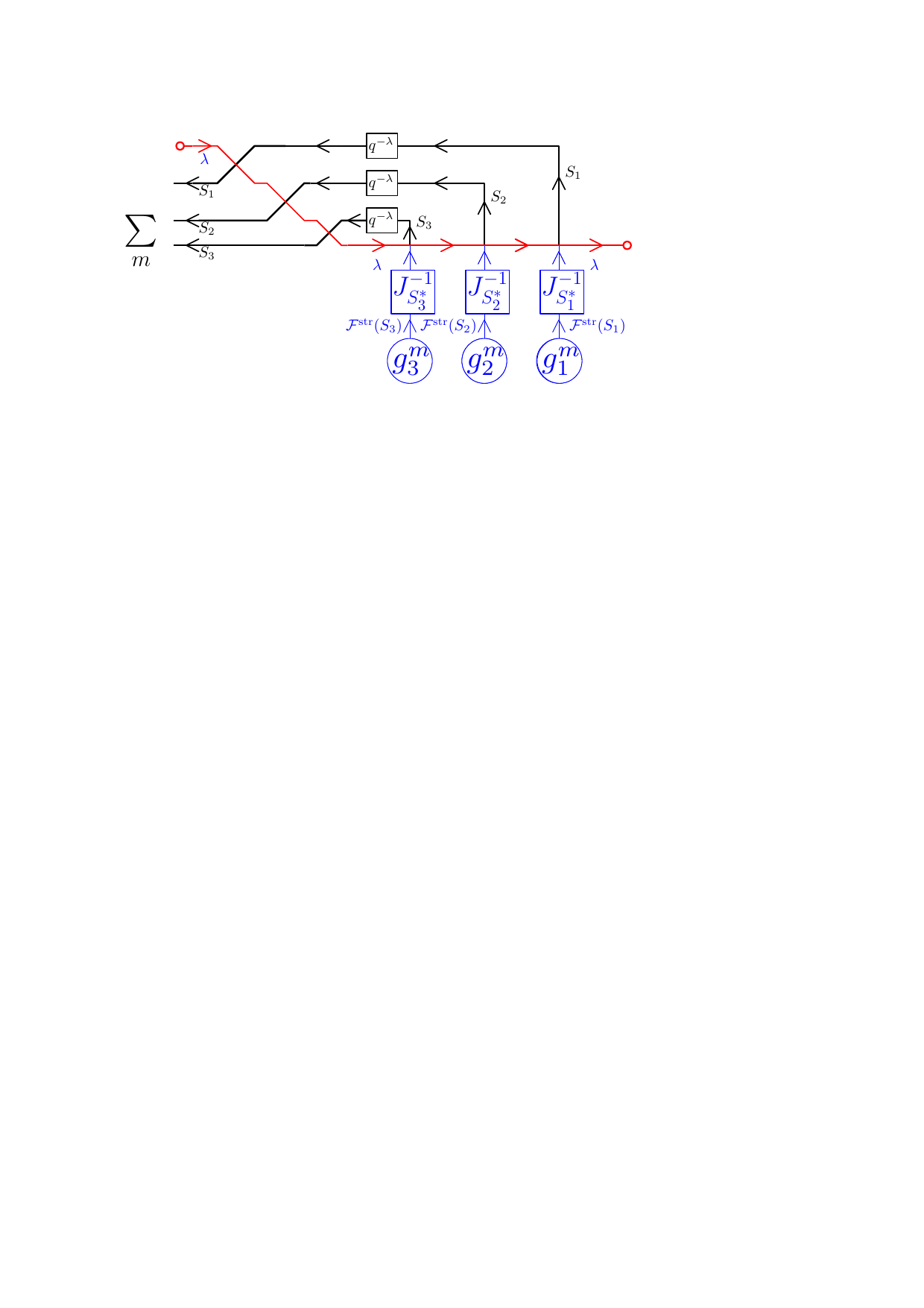}
		\caption{}
		\label{Psi D}
	\end{figure}
	\noindent 
	is a graphical representation of the expectation value \(\langle\Psi_{\lambda;S_3^\ast,S_2^\ast,S_1^\ast}^{g_3,g_2,g_1}\rangle\),
	where the blue coupons are colored by non-dynamical fusion operators, and where \(g_i^m\in\cF^\str(S_i^\ast)\) are homogeneous vectors such that
	\begin{equation*}
	\begin{split}
	&\sum_m g_3^m\otimes g_2^m\otimes g_1^m =\\
	&\,\,=\Rdyn_{S_2^\ast,S_1^\ast}(\lambda)^{-1}\Rdyn_{S_3^\ast,S_1^\ast}(\lambda-\mathrm{h}_{S_2^\ast})^{-1}\Rdyn_{S_3^\ast,S_2^\ast}(\lambda)^{-1}\left(j_{S_3^\ast}(\lambda_3)^{-1}g_3\otimes j_{S_2^\ast}(\lambda_2)^{-1}g_2\otimes j_{S_1^\ast}(\lambda_1)^{-1}g_1\right).
	\end{split}
	\end{equation*}
	Note that Figure \ref{Psi D} graphically represents
	\[
	\pi_{\cF^\str(S^\ast)}(q^{-\lambda})j_{S^\ast}(\lambda)\left(\sum\nolimits_{m}g_3^m\otimes g_2^m\otimes g_1^m\right).
	\]
	The result now follows from the fact that the factor \(\pi_{\cF^\str(S^\ast)}(q^{-\lambda})\) can be omitted from the above expression, since it acts on a zero weight space.
\end{proof}

Fix $i\in\{1,\ldots,k\}$ and fix $\lambda,\mu\in\mathfrak{h}^*_{\mathrm{reg}}$ with $\Re(\lambda)$ deep in the negative Weyl chamber. We now first specialise Corollary \ref{thm boundary qKZB} to obtain the $i^{\textup{th}}$ dual $q$-KZB equation for certain spin components of weighted traces of $k$-point quantum vertex operators. For this we specialize $\bs{S}, \bs{w}$ and $\bs{g}$ in the definition \eqref{T_S def}--\eqref{Zdef} of $\mathcal{Z}_{\bs{S}}^{\bs{w},\bs{g}}(\lambda,\mu)$ in the following way.
 
In the remainder of this subsection we take $S=(V_1,\ldots,V_k)\in\Rep^\str$ and decompose it as $S=S_1\tens S_2\tens S_3$ with
\begin{equation}\label{idecomp}
S_1 = (V_1,\dots,V_{i-1}), \quad S_2 = (V_i), \quad S_3 = (V_{i+1},\dots,V_k).
\end{equation}
Here $S_1$ (resp. $S_3$) should be read as $\emptyset$ if $i=1$ (resp. $i=k$). Note that the resulting $3$-tuple $\mathbf{S}=(S_1,S_1,S_3)$ depends on $i$, while $S=S_1\tens S_2\tens S_3$ does not.
We parametrise the $3$-tuples of (co)vectors $\bs{w}=(w_1,w_2,w_3)$ and $\bs{g}=(g_1,g_2,g_3)$ as follows.

Let $\xi_j\in\textup{wts}(V_j)$ and $\xi_j'\in\textup{wts}(V_j^\ast)$ such that 
\[
\xi_1+\dots+\xi_k = 0 = \xi_1'+\dots+\xi_k',
\]
and set
\begin{equation*}
\begin{split}
\nu_1&:=\xi_1+\cdots+\xi_{i-1},\qquad \nu_2:=\xi_i,\qquad \nu_3:=\xi_{i+1}+\cdots+\xi_k,\\
\nu_1'&:=\xi_1'+\cdots+\xi_{i-1}',\qquad \nu_2':=\xi_i',\qquad \nu_3':=\xi_{i+1}'+\cdots+\xi_k'.
\end{split}
\end{equation*}
These weights give rise to the shifted weights $\lambda_j$ and $\mu_j$ ($j=0,1,2$) as defined by \eqref{lambda_j def} and \eqref{mu_j def},
\begin{equation}\label{lambdamushifts}
\begin{split}
\lambda_1&=\lambda-\xi_i'-\cdots-\xi_k',\qquad \lambda_2=\lambda-\xi_{i+1}'-\cdots-\xi_k',\\
\mu_1&=\mu-\xi_i-\cdots-\xi_k,\qquad\,\mu_2=\mu-\xi_{i+1}-\cdots-\xi_k.
\end{split}
\end{equation}
The corresponding $3$-tuples $\bs{w}=(w_1,w_2,w_3)$ and $\bs{g}=(g_1,g_2,g_3)$ with vectors $w_j\in\cF^\str(S_j)[\nu_j]$ and covectors
$g_j\in\cF^\str(S_j^*)[\nu_j']$ are
\begin{equation*}
\begin{split}
w_1&:= j_{S_1}(\mu_{1})(v_1\otimes\dots\otimes v_{i-1}), \qquad w_2 := v_i, \qquad w_3 := j_{S_3}(\mu)(v_{i+1}\otimes\dots\otimes v_k),\\
g_1 &:= j_{S_1^*}(\lambda_{1})(f_{i-1}\otimes\dots\otimes f_1), \qquad g_2 := f_i, \qquad g_3 := j_{S_3^*}(\lambda)(f_k\otimes\dots\otimes f_{i+1})
\end{split}
\end{equation*} 
with \(v_j\in V_j[\xi_j]\) and \(f_j\in V_j^\ast[\xi'_j]\).

\begin{definition}\label{Thetai}
We write
\begin{equation}
\label{Theta def}
\Theta_{S,i}^{v_1,\dots,v_k\vert f_k,\dots, f_1}(\lambda,\mu):= \widehat{e}_{\ul{S}}\left(\Tr_{M_\mu}\bigl(\Phi_\mu^{v_1,\dots,v_k}\circ\pi_\mu(q^{2\lambda+2\rho})\bigr)\otimes\mathbb{B}_{\lambda,i}(f_k\otimes\dots\otimes f_1) \right)
\end{equation}
with the endomorphism
\(\mathbb{B}_{\lambda,i}\in\End_{\cN_{\textup{fd}}}(\cF^\str(\ul{S^\ast}))\) defined by
\begin{equation}
\label{def B_lambda,i}
\mathbb{B}_{\lambda,i}:=j_{S^\ast}(\lambda)\Rdyn_{V_i^\ast,(V_{i-1}^\ast,\dots,V_1^\ast)}(\lambda)^{-1}\Rdyn_{(V_k^\ast,\dots,V_{i+1}^\ast),(V_{i-1}^\ast,\dots,V_1^\ast)}(\lambda-\mathrm{h}_{V_i^\ast})^{-1}\Rdyn_{(V_k^\ast,\dots,V_{i+1}^\ast),V_i^\ast}(\lambda)^{-1}.
\end{equation}
\end{definition}
In \eqref{Theta def} we have written $\widehat{e}_{\underline{S}}$ for $\cF^\str(\widehat{e}_{\underline{S}})$ for the map underlying the right evaluation morphism of $\cN_{\textup{fd}}^\str$, viewed as map $\cF^\str(\ul{S})\otimes\cF^\str(\ul{S^*})\rightarrow\mathbb{C}_0$. In other words,
\begin{equation}\label{hateN}
\widehat{e}_{\ul{S}}(u_1\otimes\cdots\otimes u_k,h_k\otimes\cdots\otimes h_1)=h_1(u_1)\cdots h_k(u_k) \qquad (u_i\in V_i,\, h_i\in V_i^*).
\end{equation}

Note that $\Theta_{S,i}^{v_1,\dots,v_k\vert f_k,\dots, f_1}(\lambda,\mu)$ is a spin component of the $q^{2\lambda+2\rho}$-weighted trace of the $k$-point quantum vertex operator $\Phi_\mu^{v_1,\dots,v_k}$. The linear operator $\mathbb{B}_{\lambda,i}$ describes how the spin component is deviating from the standard parametrisation of the spin components.
\begin{remark}
Note that the $i$-dependence of $\Theta_{S,i}^{v_1,\dots,v_k\vert f_k,\dots, f_1}(\lambda,\mu)$ is captured by the linear operator $\mathbb{B}_{\lambda,i}\vert_{\cF^\str(\ul{S^*})[0]}$ on $\cF^\str(\ul{S^*})[0]$. 
\end{remark}

With the current choices of $\bs{S}$, $\bs{w}$ and $\bs{g}$ the spin components $\mathcal{Z}_{\bs{S}}^{\bs{w},\bs{g}}(\lambda,\mu)$ of the weighted traces of intertwiners occurring in the equations in Corollary \ref{thm boundary qKZB} reduce to $\Theta_{S,i}^{v_1,\dots,v_k\vert f_k,\dots, f_1}(\lambda,\mu)$.
\begin{lemma}
	\label{prop Theta is Z}
	We have
	\[
	\mathcal{Z}_{\bs{S}}^{\bs{w},\bs{g}}(\lambda,\mu)=\Theta_{S,i}^{v_1,\dots,v_k\vert f_k,\dots, f_1}(\lambda,\mu).
	\]
\end{lemma}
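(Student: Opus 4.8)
The plan is to unwind both sides back to their definitions and show they represent the same spin component of the $q^{2\lambda+2\rho}$-weighted trace of the $k$-point quantum vertex operator $\Phi_\mu^{v_1,\dots,v_k}$. Recall from \eqref{Zdef}, \eqref{set Z equal to} and the definition \eqref{Y def}, \eqref{t functions def} that
\[
\mathcal{Z}_{\bs{S}}^{\bs{w},\bs{g}}(\lambda,\mu)=\mathcal{Y}_{\bs{S}}^{\bs{w},\bs{g}}(\lambda,\mu,2\lambda+2\rho)
=\widehat{e}_{\ul{S}}\left(\mathcal{H}_\mu^{\Phi}(q^{2\lambda+2\rho})\tens\langle\Psi\rangle\right),
\]
where $\Phi=\Phi_{\mu;S_1,S_2,S_3}^{w_1,w_2,w_3}$ and $\Psi=\Psi_{\lambda;S_3^\ast,S_2^\ast,S_1^\ast}^{g_3,g_2,g_1}$. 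The strategy is thus to identify the two tensor factors in this pairing with the corresponding factors in \eqref{Theta def} separately, and then to verify that the pairing $\widehat{e}_{\ul{S}}$ matches in both formulas.

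\textbf{Key steps.} First I would treat the $\Phi$-factor. The identity \eqref{large intertwiner} of the closing remark in Subsection \ref{Section Interface} guarantees that, with the current choices
\[
w_1=j_{S_1}(\mu_1)(v_1\otimes\dots\otimes v_{i-1}),\qquad w_2=v_i,\qquad w_3=j_{S_3}(\mu)(v_{i+1}\otimes\dots\otimes v_k),
\]
one has $\Phi_{\mu;S_1,S_2,S_3}^{w_1,w_2,w_3}=\Phi_\mu^{v_1,\dots,v_k}$ (using that the weight shifts \eqref{mu_j def} reduce to the shifts \eqref{lambdamushifts} for $\mu_1,\mu_2$). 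Consequently $\mathcal{H}_\mu^{\Phi}(q^{2\lambda+2\rho})$ represents the same weighted trace $\Tr_{M_\mu}(\Phi_\mu^{v_1,\dots,v_k}\circ\pi_\mu(q^{2\lambda+2\rho}))$ appearing in \eqref{Theta def}. Second, I would treat the $\Psi$-factor: by Lemma \ref{lemma expectation value of Psi} the expectation value $\langle\Psi_{\lambda;S_3^\ast,S_2^\ast,S_1^\ast}^{g_3,g_2,g_1}\rangle$ is represented by
\[
j_{S^\ast}(\lambda)\,\Rdyn_{S_2^\ast,S_1^\ast}(\lambda)^{-1}\Rdyn_{S_3^\ast,S_1^\ast}(\lambda-\mh_{S_2^\ast})^{-1}\Rdyn_{S_3^\ast,S_2^\ast}(\lambda)^{-1}\left(j_{S_3^\ast}(\lambda_3)^{-1}g_3\otimes j_{S_2^\ast}(\lambda_2)^{-1}g_2\otimes j_{S_1^\ast}(\lambda_1)^{-1}g_1\right).
\]
The substitution $S_1^\ast=(V_{i-1}^\ast,\dots,V_1^\ast)$, $S_2^\ast=(V_i^\ast)$, $S_3^\ast=(V_k^\ast,\dots,V_{i+1}^\ast)$ turns the product of dynamical $R$-matrices into exactly the product
\[
\Rdyn_{V_i^\ast,(V_{i-1}^\ast,\dots,V_1^\ast)}(\lambda)^{-1}\Rdyn_{(V_k^\ast,\dots,V_{i+1}^\ast),(V_{i-1}^\ast,\dots,V_1^\ast)}(\lambda-\mh_{V_i^\ast})^{-1}\Rdyn_{(V_k^\ast,\dots,V_{i+1}^\ast),V_i^\ast}(\lambda)^{-1}
\]
that defines $\mathbb{B}_{\lambda,i}$ in \eqref{def B_lambda,i}. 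With the chosen decompositions of $g_1,g_2,g_3$ as $j_{S_j^\ast}(\lambda_j)$ applied to tensors of the $f$'s, the trailing factors $j_{S_j^\ast}(\lambda_j)^{-1}g_j$ collapse to $f_{i-1}\otimes\dots\otimes f_1$, $f_i$, $f_k\otimes\dots\otimes f_{i+1}$ respectively, so that after reassembling one finds that $\langle\Psi\rangle$ is represented by $\mathbb{B}_{\lambda,i}(f_k\otimes\dots\otimes f_1)$, matching the second argument of $\widehat{e}_{\ul{S}}$ in \eqref{Theta def}.

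\textbf{Main obstacle.} The step requiring the most care is the bookkeeping of the dynamical weight shifts and of the bracket-reordering isomorphisms $\sigma$ that are suppressed when one writes the multipoint objects via $\cF^\str$. Concretely, I must check that the weight arguments $\lambda_j$ of the fusion operators $j_{S_j^\ast}(\lambda_j)^{-1}$ supplied by Lemma \ref{lemma expectation value of Psi} agree with the internal fusion arguments used to define $g_1,g_2,g_3$; this is where \eqref{lambda_j def} and \eqref{lambdamushifts} must be reconciled, and where the cocycle identity of Lemma \ref{coco} (equivalently \eqref{compassoc2}) is implicitly used to break $j_{S^\ast}(\lambda)$ into the nested fusion pattern underlying $\mathbb{B}_{\lambda,i}$. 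Once the weight shifts are confirmed to line up, both $\mathcal{Z}_{\bs{S}}^{\bs{w},\bs{g}}(\lambda,\mu)$ and $\Theta_{S,i}^{v_1,\dots,v_k\vert f_k,\dots,f_1}(\lambda,\mu)$ are the image under the same right-evaluation pairing $\widehat{e}_{\ul{S}}$ (as in \eqref{hateN}) of the same pair of vectors, so equality is immediate.
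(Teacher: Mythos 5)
Your proposal is correct and follows essentially the same route as the paper's own proof: unwind $\mathcal{Z}_{\bs{S}}^{\bs{w},\bs{g}}(\lambda,\mu)$ via \eqref{t functions def}, \eqref{Y def} and \eqref{Zdef}, identify the trace factor using \eqref{large intertwiner} with the chosen $w_j$, and identify $\langle\Psi\rangle$ with $\mathbb{B}_{\lambda,i}(f_k\otimes\dots\otimes f_1)$ via Lemma \ref{lemma expectation value of Psi} with the chosen $g_j$. The weight-shift bookkeeping you flag as the main obstacle is indeed the only point requiring care, and your reconciliation of \eqref{lambda_j def}, \eqref{mu_j def} with \eqref{lambdamushifts} handles it as the paper does implicitly.
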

\begin{proof}
	By \eqref{t functions def}, \eqref{Y def} and \eqref{Zdef} we have
	\[
	\mathcal{Z}_{\bs{S}}^{\bs{w},\bs{g}}(\lambda,\mu) = \widehat{e}_{\ul{S}}\left(H_\mu^{\Phi_{\mu;S_1,S_2,S_3}^{w_1,w_2,w_3}}(q^{2\lambda+2\rho})\otimes\langle\Psi\rangle \right)
	\]
	with \(\Psi:=\Psi_{\lambda;S_3^\ast,S_2^\ast,S_1^\ast}^{g_3,g_2,g_1}\) given by (\ref{bold Psi def})--(\ref{choice of Psi_i}). Our choice for \(w_i\), together with equation (\ref{large intertwiner}), asserts that 
	\(
	\Phi_{\mu;S_1,S_2,S_3}^{w_1,w_2,w_3} = \Phi_\mu^{v_1,\dots,v_k}
	\), and hence by (\ref{trace formally}), 
	\[
	H_\mu^{\Phi_{\mu;S_1,S_2,S_3}^{w_1,w_2,w_3}}(q^{2\lambda+2\rho})=\Tr_{M_\mu}(\Phi_\mu^{v_1,\dots,v_k}\circ\pi_\mu(q^{2\lambda+2\rho})).
	\]
	With the present choice of weight vectors \(g_j\), Lemma \ref{lemma expectation value of Psi} asserts that
	\(
	\langle\Psi\rangle = \mathbb{B}_{\lambda,i}(f_k\otimes\dots\otimes f_1)
	\). This concludes the proof.
\end{proof}

Given two endomorphisms \(A\in \End_{\cN_{\mr{fd}}}(\cF^\str(\ul{S}))\) and \(B\in \End_{\cN_{\mr{fd}}}(\cF^\str(\ul{S^\ast}))\), we set
\[
\Theta_{S,i}^{A(v_1,\dots,v_k)\vert B(f_k,\dots,f_1)}:=
\sum_{m,n}\Theta_{S,i}^{w_1^m, \dots,w_k^m\vert g_k^n,\dots,g_1^n}
\]
where \(w_j^m\in V_j\) and \(g_j^n\in V_j^\ast\) are homogeneous vectors such that 
\[
A(v_1\otimes\dots\otimes v_k) = \sum_m w_1^m\otimes\dots\otimes w_k^m, \qquad B(f_k\otimes\dots\otimes f_1) = \sum_n g_k^n\otimes\dots\otimes g_1^n.
\]
This is well defined since $A$ and $B$ are $\mathfrak{h}$-linear. 

With the current choices of $\bs{S}$, $\bs{w}$ and $\bs{g}$, the graphically derived dual $q$-KZB type equation in
 Corollary \ref{thm boundary qKZB} gives rise to the following equation for $\Theta_{S,i}^{v_1,\ldots,v_k\vert f_k,\ldots,f_1}(\lambda,\mu)$.
\begin{corollary}
	\label{cor qKZB for Theta}
We have
	\begin{equation}
	\label{q-KZB for Theta}
	\Theta_{S,i}^{v_1,\dots,v_k\vert f_k,\dots, f_1}(\lambda,\mu) = \sum_{\sigma\in\wts(V_i)}\Theta_{S,i}^{\mathcal{K}_{\mu,\sigma}^{(i)}(v_1,\dots,v_k)\vert \mathcal{D}_\lambda^{(i)}(f_k,\dots,f_1)}(\lambda,\mu+\sigma)
	\end{equation}
	with \(\mathcal{K}_{\mu,\sigma}^{(i)}\in\End_{\cN_{\textup{fd}}}(\cF^\str(\ul{S}))\) and \(\mathcal{D}_\lambda^{(i)}\in\End_{\cN_{\textup{fd}}}(\cF^\str(\ul{S^\ast}))\) defined by
	\begin{align}
	\label{K_mu,sigma,i def}
	\begin{split}
	\mathcal{K}_{\mu,\sigma}^{(i)} :=\ & \Rdyn^{21}_{V_i, V_{i+1}}(\mu+\sigma-\mh_{(V_{i+2},\dots,V_k)})\cdots \Rdyn^{21}_{V_i, V_k}(\mu+\sigma)\,\mathbb{P}_{V_i}[\sigma]\\&\circ\Rdyn^{21}_{V_1,V_i}(\mu-\mh_{(V_2,\dots,V_{i-1}, V_{i+1},\dots,V_k)})^{-1}\cdots \Rdyn^{21}_{V_{i-1},V_i}(\mu-\mh_{(V_{i+1},\dots,V_k)})^{-1},
	\end{split}\\
	\label{D_lambda,i def}
	\mathcal{D}_\lambda^{(i)} :=\ & \kappa^{-2}_{V_k^\ast,V_i^\ast}\dots\kappa^{-2}_{V_{i+1}^\ast,V_i^\ast}\,(q^{2\theta(\lambda)})_{V_i^\ast}.
	\end{align}
\end{corollary}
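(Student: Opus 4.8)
The plan is to specialize the general equation from Corollary~\ref{thm boundary qKZB} to the threefold decomposition \eqref{idecomp} with $S_1=(V_1,\dots,V_{i-1})$, $S_2=(V_i)$, $S_3=(V_{i+1},\dots,V_k)$, and then rewrite both the ``bulk'' operator $\mathcal{K}_{\mu,\sigma}$ and the ``boundary'' operator $\mathcal{D}_\lambda$ from \eqref{u vectors def}--\eqref{the funny D} in terms of the genuine $k$-point spin components $\Theta_{S,i}$. First I would invoke Lemma~\ref{prop Theta is Z}, which already identifies $\mathcal{Z}_{\bs{S}}^{\bs{w},\bs{g}}(\lambda,\mu)=\Theta_{S,i}^{v_1,\dots,v_k\vert f_k,\dots,f_1}(\lambda,\mu)$ for our chosen $\bs{w}$ and $\bs{g}$; this converts the left-hand side of \eqref{qKZBfromgraphics} directly into the left-hand side of \eqref{q-KZB for Theta}. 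The remaining work is entirely on the right-hand side: I must show that applying $\mathcal{K}_{\mu,\sigma}$ to $\bs{w}$ and $\mathcal{D}_\lambda$ to $\bs{g}$, and then passing to the corresponding $\mathcal{Z}$, reproduces the operators $\mathcal{K}_{\mu,\sigma}^{(i)}$ and $\mathcal{D}_\lambda^{(i)}$ acting directly on the $v_j$ and $f_j$.

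The key mechanism here is the conjugation by the fusion operators $\mathscr{J}_{\boldsymbol{S}}(\mu+\sigma)$ appearing in \eqref{the funny Js}. Since $w_1=j_{S_1}(\mu_1)(v_1\otimes\cdots\otimes v_{i-1})$, $w_2=v_i$ and $w_3=j_{S_3}(\mu)(v_{i+1}\otimes\cdots\otimes v_k)$, the operator $\mathscr{J}_{\boldsymbol{S}}(\mu+\sigma)^{-1}$ is precisely designed to strip off these fusion factors and return the unfused tensor $v_1\otimes\cdots\otimes v_k$; after the middle factor $\Rdyn^{21}_{S_2,S_3}(\mu+\sigma)\,\mathbb{P}_{S_2}[\sigma]\,\Rdyn^{21}_{S_1,S_2}(\mu-\mh_{S_3})^{-1}$ acts, the leading $\mathscr{J}_{\boldsymbol{S}}(\mu+\sigma)$ refuses back into the form needed so that the resulting $\mathcal{Z}$ equals a $\Theta_{S,i}$ at shifted argument $\mu+\sigma$. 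Thus when I re-express $\mathcal{Z}_{\bs{S}}^{\mathcal{K}_{\mu,\sigma}(\bs{w}),\mathcal{D}_\lambda(\bs{g})}(\lambda,\mu+\sigma)$ as a $\Theta_{S,i}$-quantity via Lemma~\ref{prop Theta is Z} again, the outer $\mathscr{J}_{\boldsymbol{S}}$ factors are absorbed into the definition of $\Theta_{S,i}$ (which already incorporates the fusion operators through $w_j$ and $g_j$), leaving exactly the naked $\Rdyn^{21}$-product. The hexagon identities of Proposition~\ref{prop hexagon for dynamical R-matrix}(1) then expand the compound dynamical $R$-matrices $\Rdyn^{21}_{S_1,S_2}$ and $\Rdyn^{21}_{S_2,S_3}$ (where $S_1,S_3$ are multiletter objects) into the telescoping products over the individual $V_j$ with the correct weight shifts $\mh_{(V_{i+1},\dots,V_k)}$ etc., producing \eqref{K_mu,sigma,i def}.

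For the boundary operator, I would check that $\mathcal{D}_\lambda=\kappa^{-2}_{S_3^\ast,S_2^\ast}(q^{2\theta(\lambda)})_{S_2^\ast}$ acting on $\bs{g}=(g_1,g_2,g_3)$ translates, after the fusion bookkeeping in $g_j=j_{S_j^*}(\cdot)(\cdots)$, into $\mathcal{D}_\lambda^{(i)}=\kappa^{-2}_{V_k^\ast,V_i^\ast}\cdots\kappa^{-2}_{V_{i+1}^\ast,V_i^\ast}(q^{2\theta(\lambda)})_{V_i^\ast}$ acting on $f_k\otimes\cdots\otimes f_1$. Here $S_2^\ast=(V_i^\ast)$ gives the single $\theta(\lambda)$-factor, while $\kappa^{-2}_{S_3^\ast,S_2^\ast}$ with $S_3^\ast=(V_k^\ast,\dots,V_{i+1}^\ast)$ factors as the product of $\kappa^{-2}_{V_j^\ast,V_i^\ast}$ for $j=i+1,\dots,k$ because $\kappa=q^{\sum x_i\otimes x_i}$ is multiplicative (``primitive'') in each tensor slot and commutes with the fusion operators since it is $\mathfrak{h}$-diagonal. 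The main obstacle I anticipate is the careful tracking of the weight-shift arguments through the hexagon expansion: the compound $\Rdyn^{21}$ for multiletter $S_1$ and $S_3$ must be unravelled so that each elementary $\Rdyn^{21}_{V_j,V_i}$ carries exactly the shift $\mh_{(\dots)}$ dictated by the dynamical tensor product convention \eqref{shifted weights}, and one must verify that these shifts are consistent between the two decompositions $\mathscr{J}_{\boldsymbol{S}}$ uses and the target form \eqref{K_mu,sigma,i def}. Once the fusion operators are confirmed to cancel and the hexagon expansion is applied with the bookkeeping of shifts matched, equation \eqref{q-KZB for Theta} follows directly from \eqref{qKZBfromgraphics}.
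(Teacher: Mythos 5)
Your proposal is correct and follows essentially the same route as the paper's own proof: convert the left-hand side via Lemma \ref{prop Theta is Z}, cancel the $\mathscr{J}_{\boldsymbol{S}}(\mu+\sigma)$-conjugation in $\mathcal{K}_{\mu,\sigma}$ against the fusion factors built into $\bs{w}$ and $\bs{g}$ (absorbing the outer fusion operators into the $\Theta_{S,i}$-parametrization at the shifted argument $\mu+\sigma$), and expand the compound operators into \eqref{K_mu,sigma,i def}--\eqref{D_lambda,i def} via the hexagon identities of Proposition \ref{prop hexagon for dynamical R-matrix}(1) and the comultiplication property of $\kappa$, using $\mathfrak{h}$-linearity to commute $\mathcal{D}_\lambda^{(i)}$ past the fusion operators. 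The paper's proof carries out exactly these steps, including the weight-shift bookkeeping that you flag as the main remaining check.
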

\begin{proof}
	First observe that by Proposition \ref{prop hexagon for dynamical R-matrix} we have
	\[
	\mathcal{K}_{\mu,\sigma}^{(i)} = 
	\Rdyn^{21}_{(V_i), (V_{i+1},\dots,V_k)}(\mu+\sigma)\,\mathbb{P}_{V_i}[\sigma]\,\Rdyn^{21}_{(V_1,\dots,V_{i-1}), (V_i)}(\mu-\mh_{(V_{i+1},\dots,V_k)})^{-1}
	\]
	and that by definition of the comultiplication \(\Delta\) we can write
	\[
	\mathcal{D}_\lambda^{(i)} = 
	\kappa^{-2}_{(V_k^\ast,\dots,V_{i+1}^\ast),(V_i^\ast)}\,(q^{2\theta(\lambda)})_{V_i^\ast}.
	\]
	By Lemma \ref{prop Theta is Z}, Corollary \ref{thm boundary qKZB} gives
	\begin{equation}\label{almostthere}
	\Theta_{S,i}^{v_1,\dots,v_k\vert f_k,\dots, f_1}(\lambda,\mu) = \sum_{m,n}\sum_{\sigma\in\wts(V_i)} \mathcal{Z}_{\boldsymbol{S}}^{(\upsilon_{1,\sigma}^m, \upsilon_{2,\sigma}^m, \upsilon_{3,\sigma}^m), (\varpi_{3}^n, \varpi_{2}^n, \varpi_{1}^n)}(\lambda,\mu+\sigma),
	\end{equation}
	where \(\upsilon_{j,\sigma}^m\in\cF^\str(S_j)\) and \(\varpi_{j}^n\in\cF^\str(S_j^\ast)\) are homogeneous vectors such that
	\begin{equation}\label{the funny varpi}
	\begin{split}
	\sum_m \upsilon_{1,\sigma}^m\otimes \upsilon_{2,\sigma}^m\otimes \upsilon_{3,\sigma}^m&=\Bigl\{
	\mathscr{J}_{\boldsymbol{S}}(\mu+\sigma)\,\mathcal{K}_{\mu,\sigma}^{(i)}\,\mathscr{J}_{\boldsymbol{S}}(\mu+\sigma)^{-1}\\
	\circ\bigl( &j_{(V_1,\dots,V_{i-1})}(\mu_{1}+\sigma)\otimes\id_{V_i}\otimes  j_{(V_{i+1},\dots,V_{k})}(\mu+\sigma)\bigr)\Bigr\}(v_1\otimes\dots\otimes v_k),\\
	\sum_n \varpi_{3}^n\otimes \varpi_{2}^n\otimes \varpi_{1}^n&=\mathcal{D}_\lambda^{(i)}\left(j_{(V_k^\ast,\dots,V_{i+1}^\ast)}(\lambda) \otimes\id_{V_i^\ast}\otimes j_{(V_{i-1}^\ast,\dots,V_1^\ast)}(\lambda_{1}) \right)(f_k\otimes\dots\otimes f_1).
	\end{split}
	\end{equation}
	By the definition (\ref{the funny Js}) of \(\mathscr{J}_{\boldsymbol{S}}(\mu+\sigma)\), the first equation in \eqref{the funny varpi} can be rewritten as
	\[
	\sum_m \upsilon_{1,\sigma}^m\otimes \upsilon_{2,\sigma}^m\otimes \upsilon_{3,\sigma}^m = \left( j_{(V_1,\dots,V_{i-1})}(\mu+\sigma-\mh_{(V_i,\dots,V_k)})\otimes\id_{V_i}\otimes  j_{(V_{i+1},\dots,V_{k})}(\mu+\sigma)\right)\mathcal{K}_{\mu,\sigma}^{(i)}(v_1\otimes\dots\otimes v_k).
	\]
	Since dynamical fusion operators are weight preserving, the second equation 
	in (\ref{the funny varpi}) can be rewritten as
	\[
	\sum_n \varpi_{3}^n\otimes \varpi_{2}^n\otimes \varpi_{1}^n= \left(j_{(V_k^\ast,\dots,V_{i+1}^\ast)}(\lambda) \otimes\id_{V_i^\ast}\otimes j_{(V_{i-1}^\ast,\dots,V_1^\ast)}(\lambda_{1}) \right)\mathcal{D}_\lambda^{(i)}(f_k\otimes\dots\otimes f_1).
	\]
	Substituting in \eqref{almostthere}, the result follows from Lemma \ref{prop Theta is Z}.
\end{proof}

We will now replace the $i$-dependent operator $\mathbb{B}_{\lambda,i}\vert_{\cF^\str(\ul{S^*})[0]}$ in the definition of the spin components $\Theta_{S,i}^{v_1,\ldots,v_k\vert f_k,\ldots,f_1}(\lambda,\mu)$ by an appropriate $i$-independent linear operator without changing the equation \eqref{q-KZB for Theta}. Returning to the graphical context, replacing 
$\mathbb{B}_{\lambda,i}\vert_{\cF^\str(\ul{S^*})[0]}$ amounts to replacing the color $\Psi=\Psi_{\lambda;S_3^*,S_2*,s_1^*}^{g_3,g_2,g_1}$ in the graphical representation of $\mathcal{Z}_{\bs{S}}^{\bs{w},\bs{g}}(\lambda,\mu)$
by another dual intertwiner $\Psi\in\textup{Hom}_{\mathcal{M}_{\textup{adm}}^\str}(M_\lambda,S^*\tens M_\lambda)$. 
By a direct inspection of the graphical proof of Corollary \ref{thm boundary qKZB}, it follows that \eqref{qKZBfromgraphics} will hold true for the class of intertwiners
$\Psi\in\textup{Hom}_{\mathcal{M}_{\textup{adm}}^\str}(M_\lambda,S^*\tens M_\lambda)$ satisfying Figure \ref{SS1} with highest-weight to highest-weight conditions imposed on the Verma line (note that Lemma \ref{lemma before topological qKZB} already describes a subclass of such intertwiners, including
$\Psi_{\lambda,S_3^*,S_2^*,S_1^*}^{g_3,g_2,g_1}$). These intertwiners can be characterized as follows.
\begin{lemma}\label{generalizelemma}
With the current conventions, in particular, for $S=(V_1,\ldots,V_k)\in\Rep^\str$ with \textup{(}$i$-dependent\textup{)} decomposition $S=S_1\tens S_2\tens S_3$ given by \eqref{idecomp}, the intertwiner $\Psi\in\textup{Hom}_{\mathcal{M}_{\textup{adm}}^\str}(M_\lambda,S^*\tens M_\lambda)$ satisfies
\begin{figure}[H]
		\centering
		\includegraphics[scale=0.75]{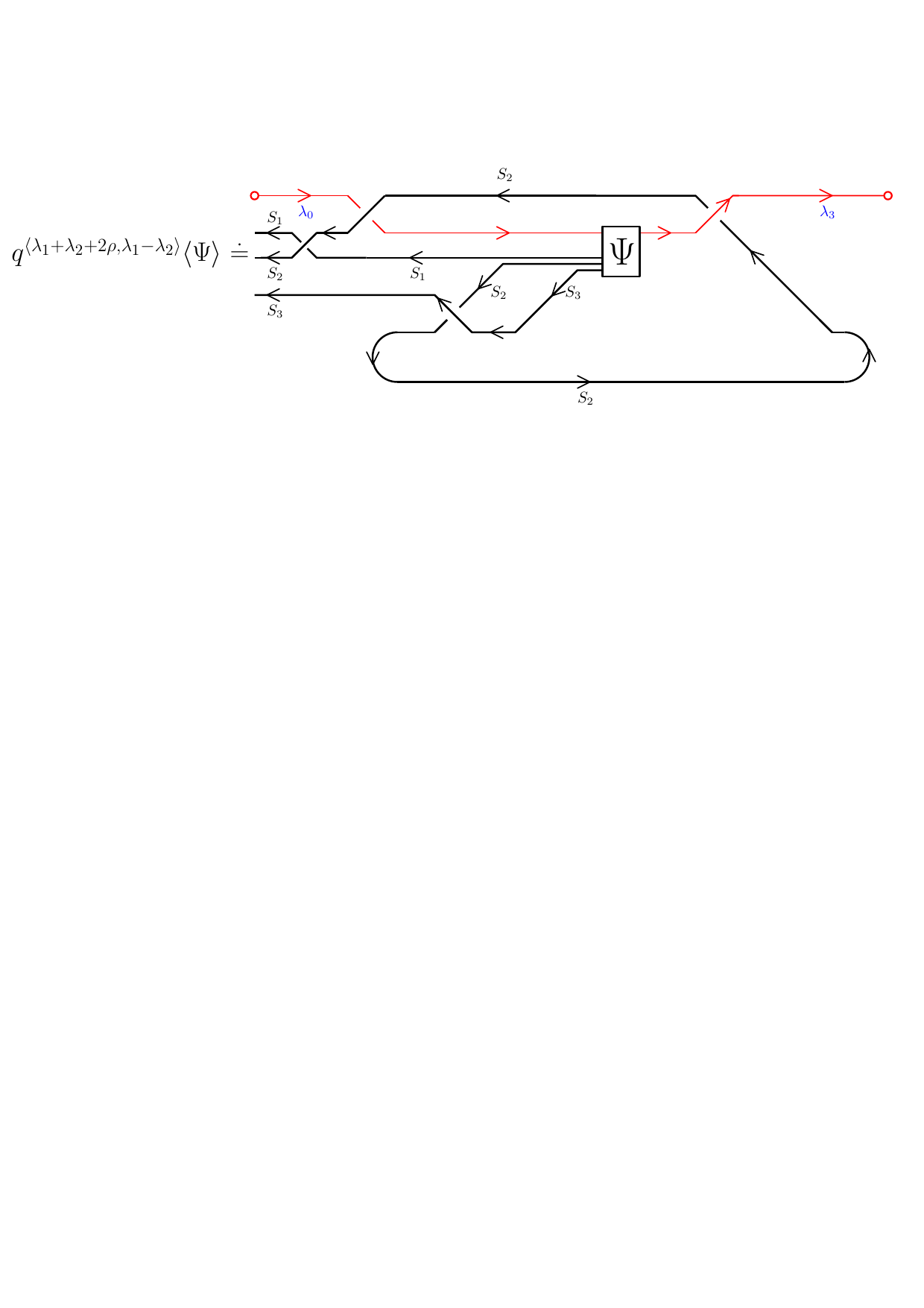}
	\end{figure}
if and only if
\begin{equation}\label{requiredformula}
q^{\langle \lambda_1+\lambda_2+2\rho,\lambda_1-\lambda_2\rangle}\langle\Psi\rangle=\mathcal{R}_{V_i^*,(V_{i-1}^*,\ldots,V_1^*)}\Upsilon_\lambda^{(i)}
\mathcal{R}_{(V_k^*,\ldots,V_{i+1}^*),V_i^*}\langle\Psi\rangle,
\end{equation}
where
\[
\Upsilon^{(i)}_\lambda:=\kappa_{V_i^\ast,V_1^\ast}\dots\kappa_{V_{i}^\ast,V_{i-1}^\ast}\,(q^{-2\theta(\lambda)})_{V_i^\ast}\,\kappa_{V_{i+1}^\ast,V_{i}^\ast}\dots\kappa_{V_k^\ast,V_i^\ast}.
\]
\end{lemma}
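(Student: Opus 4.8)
The plan is to translate the graphical identity in the statement into an algebraic identity for the expectation value $\langle\Psi\rangle\in\cF^\str(\ul{S^*})[0]$, and then to recognize that this algebraic identity is exactly \eqref{requiredformula}. The displayed graphical equation asserts that $\Psi$ is a fixed point (up to the scalar $q^{\langle\lambda_1+\lambda_2+2\rho,\lambda_1-\lambda_2\rangle}$) of the operation obtained by acting with the two quantum Casimirs $\vartheta_{M_{\lambda_1}}$ and $\vartheta_{M_{\lambda_2}}^{-1}$ on the intermediate Verma strands, which is precisely the content of Lemma \ref{lemma before topological qKZB}. So the essential input is already available: Lemma \ref{lemma before topological qKZB} rewrites the left-hand graphical expression, with the two Casimir insertions, in terms of braidings pushed onto the spin strands, times the scalar $q^{\langle\lambda_1+\lambda_2+2\rho,\lambda_1-\lambda_2\rangle}$.

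First I would apply the highest-weight-to-highest-weight boundary conditions to the upper Verma strand in the identity of Lemma \ref{lemma before topological qKZB} (with $S_2=(V_i)$, $S_1=(V_{i-1},\ldots,V_1)^{\mathrm{op}}$ in the appropriate sense, $S_3=(V_k,\ldots,V_{i+1})$, matching the decomposition \eqref{idecomp} of $S$ and the dual structure of $\Psi$). Imposing $\langle\cdot\rangle$ turns the diagram into an identity in $\cF^\str(\ul{S^*})[0]$. The two full twists on the intermediate Verma lines contribute the scalar $q^{\langle\lambda_1+\lambda_2+2\rho,\lambda_1-\lambda_2\rangle}$, exactly the prefactor on the left of \eqref{requiredformula}, since the quantum Casimir acts on $M_{\lambda_j}$ by the scalar associated with $\lambda_j$ (cf. the computation preceding Lemma \ref{lemma before topological qKZB}). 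Second, I would resolve the crossings between the black spin strands and the upper Verma strand using Lemma \ref{lemma 3.10}: each such crossing, once the highest-weight component is extracted, contributes a factor $\mathcal{R}$ acting on the appropriate spin tensor components together with a factor $q^{\pm\lambda}$ acting on the spin space; since $\langle\Psi\rangle$ lies in a zero-weight space, the $q^{\pm\lambda}$ factors cancel. This produces precisely the two undynamical $R$-matrix factors $\mathcal{R}_{V_i^*,(V_{i-1}^*,\ldots,V_1^*)}$ and $\mathcal{R}_{(V_k^*,\ldots,V_{i+1}^*),V_i^*}$ on the right-hand side of \eqref{requiredformula}.

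The remaining piece is to identify the operator $\Upsilon^{(i)}_\lambda$ sitting between the two $R$-matrices. This should emerge from the part of the twist in Lemma \ref{lemma before topological qKZB} that lands on the spin strands after the scalar part has been factored off — namely the contributions of the form $\kappa^{\pm 2}$ from writing $\vartheta = q^{2\rho}m^{\mathrm{op}}((\id\otimes S^{-1})\cR^{-1})$ and peeling off the Cartan part $\kappa = q^{\sum x_i\otimes x_i}$, combined with the element $(q^{-2\theta(\lambda)})_{V_i^*}$ coming from the $\lambda$-dependence of the Verma Casimir eigenvalue through $\theta(\lambda)=\lambda+\rho-\frac12\sum x_i^2$. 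I expect the main obstacle to be this bookkeeping: carefully matching the $\kappa_{V_i^*,V_j^*}$ factors (for $j<i$ on the left, $j>i$ on the right of $(q^{-2\theta(\lambda)})_{V_i^*}$) against the definition \eqref{thetakappa} and the braiding structure, and verifying the exact exponents and orderings. Once the scalar and $R$-matrix factors are in place, the converse direction is automatic, since every manipulation (imposing $\langle\cdot\rangle$, resolving crossings by Lemma \ref{lemma 3.10}) is reversible; hence the graphical identity holds if and only if \eqref{requiredformula} does, which is exactly the "if and only if" asserted in the lemma.
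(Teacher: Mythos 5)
Your high-level plan --- read off the algebraic content of the displayed diagram by evaluating it with the highest-weight-to-highest-weight conditions, resolve the crossings with the Verma strand via Lemma \ref{lemma 3.10}, and note that this translation is an exact evaluation and hence works in both directions --- is the same route the paper takes (its proof is precisely ``graphical calculus in $\mathcal{N}_{\textup{fd}}^\str$ and Lemma \ref{lemma 3.10}''). However, the factor bookkeeping, which is the actual content of the proof, is wrong in your write-up. Lemma \ref{lemma 3.10} does \emph{not} say that a crossing of a spin strand with the Verma strand contributes ``a factor $\cR$ acting on the appropriate spin tensor components together with a factor $q^{\pm\lambda}$'': once the highest-weight vector (or its dual) is inserted, the quasi-$R$-matrix part of $\cR=\kappa\ol{\cR}$ acts trivially, and the crossing reduces to the Cartan factor $q^{\pm\lambda}$ on the spin strand \emph{alone}. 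This is exactly how the lemma is used in the proof of Lemma \ref{lemma boundary qKZB with general xi}, where the coupons labeled $\cR^{-1}_{S_2^*,M_\lambda}$ and $(\cR^{21})^{-1}_{S_2^*,M_\lambda}$ are replaced by $\pi_{S_2^*}(q^{-\lambda})$. Consequently the undynamical factors $\cR_{V_i^*,(V_{i-1}^*,\ldots,V_1^*)}$ and $\cR_{(V_k^*,\ldots,V_{i+1}^*),V_i^*}$ in \eqref{requiredformula} cannot arise from the Verma crossings, as you claim; they are the spin--spin braidings that are already present in the right-hand side of the diagram (they were created, in the derivation of Figure \ref{SS1}, when the Casimir twists were moved off the intermediate Verma strands via $\vartheta_{M\otimes N}=(\vartheta_M\otimes\vartheta_N)\,c_{N,M}\,c_{M,N}$), and they pass to the algebraic identity unchanged.

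Relatedly, the $q^{\pm\lambda}$ factors produced by the Verma crossings do not cancel. They act only on the $V_i^*$ leg, not on all spin legs simultaneously, so the zero-weight argument (which is valid, e.g., at the end of the proof of Lemma \ref{lemma expectation value of Psi}, where $\pi_{\cF^\str(S^\ast)}(q^{-\lambda})$ acts on the whole of $\cF^\str(\ul{S^*})[0]$) does not apply here. In fact the two crossings of the $V_i^*$ strand with the upper $M_\lambda$ strand each contribute $q^{-\lambda}$ on $V_i^*$, and their product is exactly the $q^{-2\lambda}$ part of $(q^{-2\theta(\lambda)})_{V_i^*}$ inside $\Upsilon_\lambda^{(i)}$; this is the only source of $\lambda$-dependence in the operator on the right-hand side of \eqref{requiredformula}, so if these factors cancelled, the formula you would derive could not be the stated one. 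Since the formula to be matched is given in the statement, this factor-tracking \emph{is} the proof, and as written your recipe would not produce it. A smaller point of framing: the graphical identity in the lemma is a \emph{hypothesis} on an arbitrary $\Psi\in\textup{Hom}_{\mathcal{M}_{\textup{adm}}^\str}(M_\lambda,S^*\tens M_\lambda)$, not something deduced from Lemma \ref{lemma before topological qKZB} (that lemma establishes it only for the composite intertwiners $\Psi^{g_3,g_2,g_1}_{\lambda;S_3^\ast,S_2^\ast,S_1^\ast}$); the ``if and only if'' comes from the fact that evaluating both sides of the given diagram in $\cN_{\mr{fd}}^\str$ is an exact computation, not from reversing an application of that lemma.
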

\begin{proof}
This follows using graphical calculus in $\mathcal{N}_{\textup{fd}}^\str$ and Lemma \ref{lemma 3.10} (cf. the proof of \cite[Prop. 2.11]{DeClercq&Reshetikhin&Stokman-2022}).
\end{proof}
We now obtain the following generalisation of Corollary \ref{cor qKZB for Theta}. Recall the definition of the operator 
$\mathcal{D}_\lambda^{(i)}\in\textup{End}_{\cN_{\textup{fd}}}\bigl(\cF^\str(\ul{S^*})\bigr)$ (see (\ref{D_lambda,i def})), which acts by a multiplicative scalar on any tensor product $f_k\otimes\cdots\otimes f_1$
of weight vectors $f_j\in V_j^*[\nu_j']$.
\begin{proposition}\label{almostEVdualqKZB}
With the above assumptions, 
let
$\widetilde{\mathbb{B}}_\lambda\in\textup{End}_{\mathbb{C}}(\cF^\str(\ul{S^*})[0])$ be a linear operator satisfying
\begin{equation}\label{ioper}
\widetilde{\mathbb{B}}_\lambda=\mathcal{R}_{V_i^*,(V_{i-1}^*,\ldots,V_1^*)}^{-1}\circ\Upsilon_\lambda^{(i)}\circ\mathcal{R}_{(V_k^*,\ldots,V_{i+1}^*),V_i^*}\circ\widetilde{\mathbb{B}}_\lambda\circ
\mathcal{D}_\lambda^{(i)}
\end{equation}
and consider the associate spin component
\begin{equation}\label{tildeTheta}
\widetilde{\Theta}_S^{v_1,\ldots,v_k\vert f_k,\ldots,f_1}(\lambda,\mu):=\widehat{e}_{\ul{S}}\bigl(\textup{Tr}_{M_\mu}(\Phi_\mu^{v_1,\ldots,v_k}\circ\pi_\mu(q^{2\lambda+2\rho}))
\otimes\widetilde{\mathbb{B}}_\lambda(f_k\otimes\cdots\otimes f_1)\bigr)
\end{equation}
of the $q^{2\lambda+2\rho}$-weighted trace of the $k$-point quantum vertex operator $\Phi_\mu^{v_1,\ldots,v_k}$.
Then
\begin{equation}
	\label{q-KZB for Theta gen}
	\widetilde{\Theta}_{S}^{v_1,\dots,v_k\vert f_k,\dots, f_1}(\lambda,\mu) = \sum_{\sigma\in\wts(V_i)}\widetilde{\Theta}_{S}^{\mathcal{K}_{\mu,\sigma}^{(i)}(v_1,\dots,v_k)\vert \mathcal{D}_\lambda^{(i)}(f_k,\dots,f_1)}(\lambda,\mu+\sigma).
	\end{equation}
\end{proposition}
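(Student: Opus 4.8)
The plan is to reduce the general statement to the already-proven Corollary \ref{cor qKZB for Theta} by exhibiting an intertwiner $\Psi$ whose expectation value realises the prescribed operator $\widetilde{\mathbb{B}}_\lambda$. First I would observe that the graphical derivation culminating in Corollary \ref{thm boundary qKZB} never uses the specific form \eqref{bold Psi def}--\eqref{choice of Psi_i} of $\Psi=\Psi_{\lambda;S_3^*,S_2^*,S_1^*}^{g_3,g_2,g_1}$; it only uses that $\Psi$ satisfies the twisted-Casimir identity of Lemma \ref{lemma before topological qKZB}, i.e.\ the graphical relation of Figure \ref{SS1}. By Lemma \ref{generalizelemma} this identity is equivalent, after imposing highest-weight-to-highest-weight conditions on the Verma line, to the algebraic condition \eqref{requiredformula} on $\langle\Psi\rangle\in\cF^\str(\ul{S^*})[0]$. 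Hence \eqref{qKZBfromgraphics}, and therefore the equation \eqref{q-KZB for Theta} of Corollary \ref{cor qKZB for Theta}, remains valid verbatim for \emph{any} $\Psi$ satisfying \eqref{requiredformula}, with $\langle\Psi\rangle$ replacing the explicit vector computed in Lemma \ref{lemma expectation value of Psi}.

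The key step is then to recognise that the defining relation \eqref{ioper} for $\widetilde{\mathbb{B}}_\lambda$ is precisely \eqref{requiredformula} rewritten at the level of the operator. Concretely, I would parametrise the family of admissible dual intertwiners by their expectation values: given a vector $w\in\cF^\str(\ul{S^*})[0]$ satisfying \eqref{requiredformula}, Remark \ref{remark dual quantum vertex operators} (the inverse expectation-value isomorphism) produces a unique $\Psi$ with $\langle\Psi\rangle=w$, and this $\Psi$ satisfies Figure \ref{SS1} by Lemma \ref{generalizelemma}. Applying this with $w=\widetilde{\mathbb{B}}_\lambda(f_k\otimes\cdots\otimes f_1)$ for each weight basis $f_j\in V_j^*$ and using linearity shows that the spin component \eqref{tildeTheta} coincides with the corresponding $\mathcal{Z}_{\bs{S}}^{\bs{w},\bs{g}}(\lambda,\mu)$ attached to such a $\Psi$, exactly as in Lemma \ref{prop Theta is Z}. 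The factor $\mathcal{D}_\lambda^{(i)}$ appearing on the right of \eqref{ioper} matches the operator $\mathcal{D}_\lambda^{(i)}$ acting on the dual vectors in Corollary \ref{cor qKZB for Theta}, which is why the shifted spin components on the right-hand side of \eqref{q-KZB for Theta gen} are again of the form \eqref{tildeTheta}.

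Thus the proof assembles as follows: fix weight vectors $v_j\in V_j[\xi_j]$ and $f_j\in V_j^*[\xi_j']$ with the weights summing to zero; form $w:=\widetilde{\mathbb{B}}_\lambda(f_k\otimes\cdots\otimes f_1)$; invoke \eqref{ioper} to verify \eqref{requiredformula} holds for $w$; let $\Psi$ be the associated dual intertwiner via Remark \ref{remark dual quantum vertex operators}; and apply Corollary \ref{cor qKZB for Theta} with this $\Psi$, noting that by construction $\langle\Psi\rangle=w=\widetilde{\mathbb{B}}_\lambda(f_k\otimes\cdots\otimes f_1)$ and that $\langle\cdot\rangle$ intertwines $\mathcal{D}_\lambda^{(i)}$ on both sides. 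The equation \eqref{q-KZB for Theta} then reads exactly as \eqref{q-KZB for Theta gen} after substituting the definition \eqref{tildeTheta}. I expect the main obstacle to be the bookkeeping verifying that \eqref{ioper} is genuinely the operator-level restatement of \eqref{requiredformula}: one must check that the $\mathcal{R}$-factors and $\Upsilon_\lambda^{(i)}$ act on the weight-zero space $\cF^\str(\ul{S^*})[0]$ with the orientation and argument shifts dictated by Lemma \ref{generalizelemma}, and that composing with $\mathcal{D}_\lambda^{(i)}$ on the right (rather than rescaling $\langle\Psi\rangle$ directly) produces a consistent, $i$-independent family—this is the content that makes the passage from the $i$-dependent $\mathbb{B}_{\lambda,i}$ to the $i$-independent $\widetilde{\mathbb{B}}_\lambda$ legitimate, and it is where one must be careful that a solution $\widetilde{\mathbb{B}}_\lambda$ to \eqref{ioper} exists and respects the grading.
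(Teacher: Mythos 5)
Your proposal is correct and follows essentially the same route as the paper's proof: replace the explicit dual intertwiner by the unique $\Psi$ with $\langle\Psi\rangle=\widetilde{\mathbb{B}}_\lambda(f_k\otimes\cdots\otimes f_1)$ via the inverse expectation-value map, observe that the graphical derivation behind Corollary \ref{thm boundary qKZB} only uses the relation of Figure \ref{SS1} (equivalently, by Lemma \ref{generalizelemma}, condition \eqref{requiredformula} on $\langle\Psi\rangle$), and check that \eqref{ioper} yields \eqref{requiredformula} because $\mathcal{D}_\lambda^{(i)}$ acts on $f_k\otimes\cdots\otimes f_1$ by the scalar matching $q^{-\langle\lambda_1+\lambda_2+2\rho,\lambda_1-\lambda_2\rangle}$. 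Your closing worry about the existence of solutions to \eqref{ioper} is immaterial here, since $\widetilde{\mathbb{B}}_\lambda$ is given as a hypothesis of the proposition.
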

\begin{proof}
Let $\Psi\in\textup{Hom}_{\mathcal{M}_{\textup{adm}}^\str}(M_\lambda,S^*\tens M_\lambda)$ be the unique intertwiner such that 
\[
\langle\Psi\rangle=\widetilde{\mathbb{B}}_\lambda(f_k\otimes\cdots\otimes f_1),
\]
so that
\[
\widetilde{\Theta}_S^{v_1,\ldots,v_k\vert f_k,\ldots,f_1}(\lambda,\mu)=\widehat{e}_{\ul{S}}\bigl(\textup{Tr}_{M_\mu}(\Phi_\mu^{v_1,\ldots,v_k}\circ\pi_\mu(q^{2\lambda+2\rho}))
\otimes\langle\Psi\rangle\bigr).
\]
It can be graphically represented by Figure \ref{Ydiagram} with $\xi=2\lambda+2\rho$. Note that 
$\widetilde{\Theta}_S^{v_1,\ldots,v_k\vert f_k,\ldots,f_1}(\lambda,\mu)$ is equal to 
$\Theta_{S,i}^{v_1,\ldots,v_k\vert f_k,\ldots,f_1}(\lambda,\mu)$ 
when $\widetilde{\mathbb{B}}_\lambda=\mathbb{B}_{\lambda,i}\vert_{\cF^\str(\ul{S^*})[0]}$. In this case the corresponding intertwiner is $\Psi=\Psi_{\lambda;S_3^*,S_2^*,S_1^*}^{g_3,g_2,g_1}$.

We now return to the graphical proof of the dual $q$-KZB type equation in Corollary \ref{thm boundary qKZB}, which led to equation \eqref{q-KZB for Theta} for the spin component
\(\Theta_{S,i}^{v_1,\dots,v_k\vert f_k,\dots,f_1}(\lambda,\mu)\) of the $k$-point quantum vertex operator $\Phi_\mu^{v_1,\ldots,v_k}$ (see \eqref{Theta def}).
The starting point of the graphical derivation was Lemma \ref{lemma before topological qKZB} applied to $\Psi=\Psi_{\mu;S_3^*,S_2^*,S_1^*}^{g_3,g_2,g_1}$. 
Looking at the proof of Lemma \ref{lemma boundary qKZB with general xi}, which is the step that boundary conditions are imposed, it is clear that the derivation of the dual $q$-KZB type of equation for $\mathcal{Z}_{\bs{S}}^{\bs{w},\bs{g}}(\lambda,\mu)$ in Corollary \ref{thm boundary qKZB} is also valid when $\Psi=\Psi_{\mu;S_3^*,S_2^*,S_1^*}^{g_3,g_2,g_1}$ is replaced by any intertwiner $\Psi\in\textup{Hom}_{\mathcal{M}_{\textup{adm}}^\str}(M_\lambda,S^*\tens M_\lambda)$ satisfying Figure \ref{SS1} with highest-weight to highest-weight conditions imposed on the Verma line. By Lemma \ref{generalizelemma}, this holds true if and only if $\langle\Psi\rangle$ satisfies equation \eqref{requiredformula}.

By the definition of the shifted weights $\lambda_j$ (see \eqref{lambdamushifts}), we have 
\[
\langle\lambda_1+\lambda_2+2\rho,\lambda_1-\lambda_2\rangle=-\langle2\lambda+2\rho-\xi_i'-2\xi_{i+1}'\cdots-2\xi_k',\xi'_i\rangle,
\]
and hence equation \eqref{ioper} for $\widetilde{\mathbb{B}}_\lambda$ implies that if the expectation value of 
$\Psi$  is of the form
$\langle\Psi\rangle=\widetilde{\mathbb{B}}_\lambda(f_k\otimes\cdots\otimes f_1)$, then $\langle\Psi\rangle$ satisfies \eqref{requiredformula}. This concludes the proof of the proposition.
\end{proof}
For an $\mathfrak{h}$-linear operator $\mathbb{A}\in\textup{End}_{\cN_{\textup{fd}}}(\mathcal{F}^\str(\ul{S}))$, we write $\mathbb{A}^T\in\textup{End}_{\cN_{\textup{fd}}}(\mathcal{F}(\ul{S^*}))$
for its transpose relative to $\widehat{e}_{\ul{S}}$ (see \eqref{hateN}),
\begin{equation}\label{transposedef}
\widehat{e}_{\ul{S}}(\mathbb{A}v,h)=\widehat{e}_{\ul{S}}(v,\mathbb{A}^Th)\qquad \forall\, (v,h)\in\mathcal{F}^\str(\ul{S})\times\mathcal{F}^\str(\ul{S^*}).
\end{equation}
Note that $(q^h_{V_i})^T=q^{-h}_{V_i^*}$ for $h\in\mathfrak{h}$, and
\[
\bigl(q^{\theta(-\lambda-2\rho)}_{V_i}\bigr)^T=q^{\theta(\lambda)}_{V_i^*}.
\]
Since
\begin{equation}\label{hattilde}
\widehat{e}_{\ul{S}}(v,h)=\widetilde{e}_S(v,h)\qquad \forall (v,h)\in\cF^\str(\ul{S})[0]\times\cF^\str(\ul{S^*})[0],
\end{equation}
so that $\mathbb{A}^T\vert_{\cF^\str(\ul{S})[0]}$ may as well be computed by taking the transpose of $A$ relative to 
$\widetilde{e}_{\ul{S}}$ and then restricting to $\cF^\str(\ul{S})[0]$. This is of use when the operator is given in terms of the action of the quantum group. For example, with $S=(V_1,\ldots,V_k)\in\Rep^\str$, 
\[
(\mathcal{R}_{V_j,V_{j+1}})^T\vert_{\cF^\str(\ul{S^*})[0]}=\mathcal{R}^{21}_{V_{j+1}^*,V_j^*}\vert_{\cF^\str(\ul{S^*})[0]}
\]
since $(S\otimes S)(\mathcal{R})=\mathcal{R}$. 

We will also be needing the opposite transpose; for $\mathbb{B}\in\textup{End}_{\cN_{\textup{fd}}}(\mathcal{F}(\ul{S^*}))$ we will write
$\mathbb{B}^{T*}\in\textup{End}_{\cN_{\textup{fd}}}(\mathcal{F}(\ul{S}))$ for the operator such that 
\begin{equation}\label{dualtranspose}
\widehat{e}_{\ul{S}}(v,\mathbb{B}h)=\widehat{e}_{\ul{S}}(\mathbb{B}^{T*}v,h)\qquad \forall\, (v,h)\in\mathcal{F}^\str(\ul{S})\times\mathcal{F}^\str(\ul{S^*}).
\end{equation}

We now have the following lemma.

\begin{lemma}\label{qKZfusion}
The linear operator $\widetilde{\mathbb{B}}_\lambda:=(j_S(-\lambda-2\rho)^{-1})^T\vert_{\cF^\str(\ul{S^*})[0]}$ on $\mathcal{F}^\str(\ul{S^*})[0]$ satisfies equation \eqref{ioper}
for all $i=1,\ldots,k$.
\end{lemma}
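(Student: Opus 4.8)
The plan is to reduce the fixed-point equation \eqref{ioper} for $\widetilde{\mathbb{B}}_\lambda=(j_S(-\lambda-2\rho)^{-1})^T$ to an eigenvalue equation for the \emph{non-dual} fusion operator $j_S(-\lambda-2\rho)$, and then to recognise the latter as the expectation value of the operator $q$-KZ (Casimir) identity for a standard $k$-point quantum vertex operator. Writing $\mu:=-\lambda-2\rho$ and $J:=j_S(\mu)\in\textup{Aut}_{\cN_{\mr{fd}}}(\cF^\str(\ul{S}))$, we have $\widetilde{\mathbb{B}}_\lambda=(J^{-1})^T$ on $\cF^\str(\ul{S^*})[0]$, where transpose is taken relative to the perfect pairing furnished by \eqref{hattilde}; in particular $T$ and $T*$ are mutually inverse on weight-zero components, so $((J^{-1})^T)^{T*}=J^{-1}$.

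First I would record the scalar nature of the rightmost factor $\mathcal{D}_\lambda^{(i)}$. Exactly as in the proof of Proposition \ref{almostEVdualqKZB}, on the weight component of $\cF^\str(\ul{S^*})$ indexed by $(\xi_1',\ldots,\xi_k')$ with $\sum_j\xi_j'=0$ the operator $\mathcal{D}_\lambda^{(i)}$ acts as multiplication by $q^{-\langle\lambda_1+\lambda_2+2\rho,\lambda_1-\lambda_2\rangle}$, with $\lambda_1,\lambda_2$ the shifts \eqref{lambdamushifts} determined by the $\xi_j'$. Hence it suffices to verify \eqref{ioper} weight-component by weight-component, on each of which $\mathcal{D}_\lambda^{(i)}$ contributes only this known scalar.

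Next I would transpose \eqref{ioper}. Applying $(\cdot)^{T*}$, which reverses composition order and sends operators on $\cF^\str(\ul{S^*})[0]$ to operators on $\cF^\str(\ul{S})[0]$, turns \eqref{ioper} into an identity purely for $J^{-1}$. Here I would use the transpose relations $(\kappa_{A^*,B^*})^{T*}=\kappa_{A,B}$ and $(q^{2\theta(\lambda)})^{T*}_{V_i^*}=(q^{-2\theta(\lambda)})_{V_i}$, which follow at once from the symmetry of $\kappa$, from $\theta(\lambda)\in U(\hh)$ acting by the weight, and from the fact that dualising reverses the sign of weights; together these render $(\Upsilon_\lambda^{(i)})^{T*}$ and $(\mathcal{D}_\lambda^{(i)})^{T*}$ as explicit $\kappa$- and $q^{\theta}$-operators on $\cF^\str(\ul{S})$. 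I would likewise convert the transposed dynamical $R$-matrices $(\mathcal{R}_{A^*,B^*})^{T*}$ into $\Rdyn^{21}$-operators on the corresponding tensor legs of $\cF^\str(\ul{S})$, using the antipode compatibility of the universal dynamical $R$-matrix $R(\lambda)$ from \eqref{universal dyn R-matrix def} (equivalently the identities for $R(\lambda)$ collected in Section \ref{App}) together with \eqref{relRr}.

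The resulting identity for $j_S(\mu)$ is precisely the operator $q$-KZ eigenvalue equation for the $k$-point fusion operator attached to the decomposition $S=S_1\tens S_2\tens S_3$ of \eqref{idecomp}. To establish it I would run the non-dual counterpart of Lemma \ref{lemma before topological qKZB}: insert the quantum Casimir \eqref{qC} and its inverse on the two intermediate Verma strands of the standard quantum vertex operator $\phi_\mu^{v_1,\ldots,v_k}=\phi_{\mu;S_1,S_2,S_3}^{w_1,w_2,w_3}$ and move the resulting full twists to the spin strands as in \cite[Proposition 3.7]{DeClercq&Reshetikhin&Stokman-2022}; taking the expectation value $\langle\cdot\rangle_{\mu,\cF^\str(S)}$ and invoking Proposition \ref{prop hexagon for dynamical R-matrix} and Lemma \ref{lemma 3.10} then yields the eigenvalue equation for $j_S(\mu)$, the scalar $q^{\langle\lambda_1+\lambda_2+2\rho,\lambda_1-\lambda_2\rangle}$ being produced by the Casimir eigenvalues. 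Since this argument works for every $i$ (each giving a decomposition \eqref{idecomp}), transposing back produces \eqref{ioper} for all $i=1,\ldots,k$. The main obstacle is the bookkeeping: matching the transposed dynamical $R$-matrices and the $\kappa,q^{\theta}$ factors to the operators appearing on the non-dual side, and checking that the Casimir scalar and the scalar coming from $\mathcal{D}_\lambda^{(i)}$ cancel exactly; the conceptual content is carried entirely by the Casimir identity together with the transpose/fixed-point reformulation.
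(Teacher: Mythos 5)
Your overall strategy is the paper's: transpose \eqref{ioper} relative to the pairing \eqref{hattilde}, so that it becomes an identity for $\widehat{\mathbb{B}}_\lambda:=j_S(-\lambda-2\rho)^{-1}\vert_{\cF^\str(\ul{S})[0]}$ (this is the displayed equation \eqref{equations for A tilde} in the paper's proof), and then recognise that identity as the $q$-KZ equations for the dynamical fusion operator. Where the paper simply cites \cite[Theorem 3.13]{DeClercq&Reshetikhin&Stokman-2022}, you re-derive those equations by the Casimir-insertion argument of \cite[Proposition 3.7]{DeClercq&Reshetikhin&Stokman-2022}; that is legitimate, since this is precisely how the prequel proves them.

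However, two of the transpose rules you assert are incorrect, and executed as written the bookkeeping step would fail. First, $(q^{2\theta(\lambda)})_{V_i^*}^{T*}$ is \emph{not} $(q^{-2\theta(\lambda)})_{V_i}$: the element $\theta(\lambda)=\lambda+\rho-\tfrac12\sum_i x_i^2$ of \eqref{thetakappa} is quadratic, not linear, in the weight, so dualising $\nu\mapsto-\nu$ flips the sign of the linear part only. On $V_i[\nu]$ both $(q^{2\theta(\lambda)})_{V_i^*}^{T*}$ and $(q^{2\theta(-\lambda-2\rho)})_{V_i}$ act by $q^{-2\langle\lambda+\rho,\nu\rangle-\langle\nu,\nu\rangle}$, whereas $(q^{-2\theta(\lambda)})_{V_i}$ acts by $q^{-2\langle\lambda+\rho,\nu\rangle+\langle\nu,\nu\rangle}$; this is exactly why the transposed equation \eqref{equations for A tilde} and the operator $\mathbb{D}_{\lambda,i}^{\vee,S}$ of Remark \ref{Dicheck} carry $\theta(-\lambda-2\rho)$, in accordance with the rule $\bigl(q^{\theta(-\lambda-2\rho)}_{V_i}\bigr)^T=q^{\theta(\lambda)}_{V_i^*}$ recorded just before the lemma. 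Second, the $\mathcal{R}$'s in \eqref{ioper} are the ordinary (non-dynamical) universal $R$-matrix actions, not dynamical ones; their transposes on the zero weight space are computed from $(S\otimes S)(\cR)=\cR$ and give non-dynamical $\cR^{21}$-operators on the legs of $\cF^\str(\ul{S})$, which is what appears in \eqref{equations for A tilde} and in the prequel's $q$-KZ equations. Converting them into dynamical $R^{21}(\lambda)$-operators via \eqref{universal dyn R-matrix def} and \eqref{relRr}, as you propose, produces a different (and wrong) target identity: no exchange matrices occur here, and none arise from the Casimir argument either, since the full twists one moves onto the spin strands act by the non-dynamical monodromies $\cR^{21}\cR$. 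With these two rules corrected, your argument closes up and coincides with the paper's.
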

\begin{proof}
By the observations in the paragraph preceding the lemma it suffices to show that $\widehat{\mathbb{B}}_\lambda:=j_S^{-1}(-\lambda-2\rho)\vert_{\cF^\str(\ul{S})[0]}$
satisfies 
\begin{equation}
	\label{equations for A tilde}
	\widehat{\mathbb{B}}_\lambda =\widetilde{\mathcal{D}}_\lambda^{(i)}\circ \widehat{\mathbb{B}}_\lambda \circ \cR^{21}_{V_i, (V_{i+1},\dots,V_k)}\,\widetilde{\Upsilon}_\lambda^{(i)}\,(\cR^{21})^{-1}_{(V_1,\dots,V_{i-1}),V_i}
	\end{equation}
	for $i=1,\ldots,k$,
	with \(\widetilde{\mathcal{D}}_\lambda^{(i)}, \widetilde{\Upsilon}_\lambda^{(i)}\in\End_\CC(\cF^\str(\ul{S})[0])\) defined by
	\begin{align*}
	\widetilde{\mathcal{D}}_\lambda^{(i)}:=\ & (q^{2\theta(-\lambda-2\rho)})_{V_i}\kappa^{-2}_{V_i,V_{i+1}}\dots\kappa^{-2}_{V_i,V_k}, \\
	\widetilde{\Upsilon}_\lambda^{(i)}:=\ & \kappa_{V_i,V_{i+1}}\dots\kappa_{V_i,V_k}\,(q^{-2\theta(-\lambda-2\rho)})_{V_i}\,\kappa_{V_1,V_i}\dots\kappa_{V_{i-1}, V_i}.
	\end{align*}
	The equations \eqref{equations for A tilde} for $\widehat{\mathbb{B}}_\lambda$ are equivalent to the $q$-KZ equations for fusion operators, see \cite[Theorem 3.13]{DeClercq&Reshetikhin&Stokman-2022}.
\end{proof}
Recall that $S=(V_1,\ldots,V_k)\in\Rep^\str$.
\begin{definition}\label{definitionTT}
For $v_j\in V_j[\xi_j]$,
$f_j\in V_j^*[\xi_j']$ with weights $\xi_j,\xi_j'$ such that $\sum_j\xi_j=0=\sum_j\xi_j'$, define 
\[
\TT_S^{v_1,\dots,v_k\vert f_k,\dots,f_1}(\lambda,\mu) :=\delta(q^{2\lambda+2\rho})\,\widehat{e}_{\ul{S}}\left(j_S(-\lambda-2\rho)^{-1}\Tr_{M_\mu}(\Phi_\mu^{v_1,\dots,v_k}\circ\pi_\mu(q^{2\lambda+2\rho}))\otimes(f_k\otimes\dots\otimes f_1) \right)
\]
for $\lambda,\mu\in\mathfrak{h}_{\textup{reg}}^*$ with $\Re(\lambda)$ deep in the negative Weyl chamber,
with
	\begin{equation}\label{Wd}
	\delta(q^{2\lambda+2\rho}):= q^{2\langle \lambda+\rho,\rho\rangle}\prod_{\alpha\in\Phi^+}(1-q^{-2\langle\lambda+\rho,\alpha\rangle})
	\end{equation}
the Weyl denominator.
\end{definition}
We follow here \cite{Etingof&Varchenko-2000} by adding the Weyl denominator to the definition of $\TT_S^{v_1,\dots,v_k\vert f_k,\dots,f_1}(\lambda,\mu) $ as a convenient normalisation factor. We furthermore define
\begin{equation}\label{universalTT}
\begin{split}
\mathbb{T}^{f_k,\ldots,f_1}_S(\lambda,\mu)&:=\sum_{v_j\in\mathcal{B}_{V_j}: \sum_j\textup{wts}(v_j)=0}\mathbb{T}^{v_1,\ldots,v_k\vert f_k,\ldots,f_1}(\lambda,\mu)(v_k^*\otimes \cdots\otimes v_1^*),\\
\mathbb{T}_S(\lambda,\mu)&:=\sum_{u_j\in\mathcal{B}_{V_j}: \sum_j\textup{wts}(u_j)=0}(u_1\otimes\cdots\otimes u_k)\otimes\mathbb{T}_S^{u_k^*,\ldots,u_1^*}(\lambda,\mu).
\end{split}
\end{equation}
which take values in $\cF^\str(\ul{S^*})[0]$ and $\cF^\str(\ul{S})[0]\otimes\cF^\str(\ul{S^*})[0]$, respectively. Recall here that $b^*\in V_j^*$ for $b\in\mathcal{B}_{V_j}$ is the linear functional such that $b^*(b')=\delta_{b,b'}$ for all $b'\in\mathcal{B}_{V_j}$. 
The meaning of these functions
can be best understood in terms of the generating function $\Phi^S_\mu$ of the $k$-point quantum vertex operators
$M_\mu\rightarrow M_\mu\otimes\cF^\str(S)$, which is defined by
\begin{equation}\label{generatingPhimu}
\Phi_\mu^S:=\sum_{v_j\in\mathcal{B}_{V_j}: \sum_j\textup{wts}(v_j)=0}\Phi_\mu^{v_1,\ldots,v_k}\otimes v_k^*\otimes\cdots\otimes v_1^*.
\end{equation}
It may be viewed as morphism $\ul{M_\mu}\rightarrow\ul{M_\mu}\otimes\cF^\str(\ul{S})\otimes\cF^\str(\ul{S^*})[0]$ in $\mathcal{N}_{\textup{adm}}$. For fixed $\lambda\in\mathfrak{h}_{\textup{reg}}^*$ with $\Re(\lambda)$ deep in the negative Weyl chamber, $\mathbb{T}_S(\lambda,\mu)$ then becomes the normalised $q^{2\lambda+2\rho}$-weighted trace function
of $\Phi_\mu^S$,
\begin{equation}\label{TTPhi}
\mathbb{T}_S(\lambda,\mu)=
\delta(q^{2\lambda+2\rho})j_S(-\lambda-2\rho)^{-1}\textup{Tr}_{M_\mu}(\Phi_\mu^S\circ\pi_\mu(q^{2\lambda+2\rho})),
\end{equation}
and $\mathbb{T}_S^{f_k,\ldots,f_1}(\lambda,\mu)$ is the spin-component
\begin{equation}\label{TTf}
\mathbb{T}_S^{f_k,\ldots,f_1}(\lambda,\mu)=\widehat{e}_{\ul{S}}\bigl(\mathbb{T}_S(\lambda,\mu),f_k\otimes\cdots\otimes f_1\bigr)
\end{equation}
of the $q^{2\lambda+2\rho}$-weighted trace of $\Phi_\mu^S$. Here the first tensor component of $\mathbb{T}_S(\lambda,\mu)\in\cF^\str(\ul{S})[0]\otimes\cF^\str(\ul{S^*})[0]$ is paired with the dual vector $f_k\otimes\cdots\otimes f_1\in\cF^\str(\ul{S^*})[0]$. 
Pairing the second tensor component of $\mathbb{T}_S(\lambda,\mu)$ with $v_1\otimes\cdots\otimes v_k\in\cF^\str(\ul{S})[0]$ yields the normalised $q^{2\lambda+2\rho}$-weighted trace of $\Phi_\mu^{v_1,\ldots,v_k}$,
\begin{equation}\label{TTv}
\begin{split}
\mathbb{T}_S^{v_1,\ldots,v_k}(\lambda,\mu):=&\widehat{e}_{\ul{S}}\bigl(v_1\otimes\cdots\otimes v_k,\mathbb{T}_S(\lambda,\mu)\bigr)\\
=&\delta(q^{2\lambda+2\rho})j_S(-\lambda-2\rho)^{-1}\textup{Tr}_{M_\mu}(\Phi_\mu^{v_1,\ldots,v_k}\circ\pi_\mu(q^{2\lambda+2\rho})).
\end{split}
\end{equation}
\begin{remark}
The universal weighted trace function $\varphi_{V_1,\ldots,V_k}(\lambda,\mu)$ defined in \cite[(1.8)]{Etingof&Varchenko-2000} is equal to $\mathbb{T}_S(\lambda-\rho,\mu)$.
\end{remark}
 We now come to the main result of this subsection.
  We will see shortly that it is equivalent to the dual $q$-KZB equations \cite[Thm. 1.4]{Etingof&Varchenko-2000} for weighted trace functions.
\begin{theorem}\label{dualqKZB}
With the above notations and assumptions,
\begin{equation}\label{qKZBcoord}
\TT_{S}^{\,v_1,\dots,v_k\vert f_k,\dots, f_1}(\lambda,\mu) = \sum_{\sigma\in\wts(V_i)}\TT_{S}^{\,\mathcal{K}_{\mu,\sigma}^{(i)}(v_1,\dots,v_k)\vert \mathcal{D}_\lambda^{(i)}(f_k,\dots,f_1)}(\lambda,\mu+\sigma)
\end{equation}
for $i=1,\ldots,k$.
\end{theorem}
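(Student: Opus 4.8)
The plan is to deduce Theorem \ref{dualqKZB} from Proposition \ref{almostEVdualqKZB} by exhibiting the specific normalising operator appearing in Definition \ref{definitionTT} as an admissible choice of $\widetilde{\mathbb{B}}_\lambda$. Concretely, I would set
\[
\widetilde{\mathbb{B}}_\lambda:=\bigl(j_S(-\lambda-2\rho)^{-1}\bigr)^T\big\vert_{\cF^\str(\ul{S^*})[0]}\in\textup{End}_{\mathbb{C}}\bigl(\cF^\str(\ul{S^*})[0]\bigr),
\]
which is exactly the operator whose transpose (relative to $\widehat{e}_{\ul{S}}$) produces the normalisation $j_S(-\lambda-2\rho)^{-1}$ used in $\TT_S^{v_1,\ldots,v_k\vert f_k,\ldots,f_1}(\lambda,\mu)$. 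The first step is to observe that, after discarding the Weyl denominator $\delta(q^{2\lambda+2\rho})$ (which is an overall scalar independent of $\mu$, $i$, and of the chosen weight vectors, hence cancels from both sides of \eqref{qKZBcoord}), the function $\widetilde{\Theta}_S^{v_1,\ldots,v_k\vert f_k,\ldots,f_1}(\lambda,\mu)$ from \eqref{tildeTheta} with this choice of $\widetilde{\mathbb{B}}_\lambda$ coincides with $\delta(q^{2\lambda+2\rho})^{-1}\TT_S^{v_1,\ldots,v_k\vert f_k,\ldots,f_1}(\lambda,\mu)$. This is a direct unwinding of the transpose definition \eqref{transposedef} together with the identity \eqref{hattilde}: moving $\bigl(j_S(-\lambda-2\rho)^{-1}\bigr)^T$ off the dual vector $f_k\otimes\cdots\otimes f_1$ in \eqref{tildeTheta} places $j_S(-\lambda-2\rho)^{-1}$ onto the trace component, which is precisely Definition \ref{definitionTT} up to the normalising constant.

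The second step, which is the crux of the argument, is to verify that this $\widetilde{\mathbb{B}}_\lambda$ satisfies the hypothesis \eqref{ioper} of Proposition \ref{almostEVdualqKZB} for every $i=1,\ldots,k$. This is exactly the content of Lemma \ref{qKZfusion}: the lemma reduces equation \eqref{ioper} for $\widetilde{\mathbb{B}}_\lambda$ to the companion equation \eqref{equations for A tilde} for $\widehat{\mathbb{B}}_\lambda:=j_S^{-1}(-\lambda-2\rho)\vert_{\cF^\str(\ul{S})[0]}$, using $(q^{\theta(-\lambda-2\rho)}_{V_i})^T=q^{\theta(\lambda)}_{V_i^*}$, the relations $(q^h_{V_i})^T=q^{-h}_{V_i^*}$, and $(S\otimes S)(\cR)=\cR$ to transpose the braiding factors $\mathcal{R}^{21}_{V_{i+1}^*,V_i^*}$ and the $\kappa$-factors correctly between the $S$-picture and the $S^*$-picture. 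The equations \eqref{equations for A tilde} are then identified with the $q$-KZ equations for the fusion operators $j_S$ established in \cite[Theorem 3.13]{DeClercq&Reshetikhin&Stokman-2022}, so they hold. I expect the main obstacle to lie precisely here, in carefully matching the weight-shift conventions and the ordering of the $\kappa$- and $\mathcal{R}$-factors on both sides of the transpose, since $\widetilde{\Upsilon}_\lambda^{(i)}$ and $\Upsilon_\lambda^{(i)}$ differ by transposition and the dynamical shifts $\mh$ must be tracked through the weight-zero restriction; this bookkeeping is, however, exactly what Lemma \ref{qKZfusion} is designed to absorb.

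With these two ingredients in place, Proposition \ref{almostEVdualqKZB} applies verbatim with $\widetilde{\mathbb{B}}_\lambda=\bigl(j_S(-\lambda-2\rho)^{-1}\bigr)^T\vert_{\cF^\str(\ul{S^*})[0]}$, yielding
\[
\widetilde{\Theta}_S^{v_1,\dots,v_k\vert f_k,\dots,f_1}(\lambda,\mu)=\sum_{\sigma\in\wts(V_i)}\widetilde{\Theta}_S^{\mathcal{K}_{\mu,\sigma}^{(i)}(v_1,\dots,v_k)\vert\mathcal{D}_\lambda^{(i)}(f_k,\dots,f_1)}(\lambda,\mu+\sigma).
\]
The final step is to multiply through by $\delta(q^{2\lambda+2\rho})$. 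Since $\lambda$ is held fixed throughout the equation (only $\mu$ is shifted, by $\sigma\in\wts(V_i)$), the Weyl denominator is the same scalar on both sides, so multiplying converts every $\widetilde{\Theta}_S$ into the corresponding $\TT_S$ and produces exactly \eqref{qKZBcoord}. One subtlety worth flagging is that the summands on the right-hand side carry the twisted arguments $\mathcal{K}_{\mu,\sigma}^{(i)}(v_1,\dots,v_k)$ and $\mathcal{D}_\lambda^{(i)}(f_k,\dots,f_1)$, and the linear-extension convention for $\widetilde{\Theta}_S$ in such twisted arguments must agree with the convention already fixed for $\TT_S$ after the paragraph introducing \eqref{transposedef}; since both are defined by $\mathfrak{h}$-linear extension over the relevant weight-zero spaces, this compatibility is immediate. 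This completes the deduction.
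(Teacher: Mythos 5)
Your proposal is correct and follows essentially the same route as the paper's proof: the paper likewise reduces Theorem \ref{dualqKZB} to Proposition \ref{almostEVdualqKZB} by taking the operator $\mathbb{Y}_{\lambda,S^*}:=\delta(q^{2\lambda+2\rho})\,(j_S(-\lambda-2\rho)^{-1})^T$ acting on $f_k\otimes\cdots\otimes f_1$, and invokes Lemma \ref{qKZfusion} to verify the hypothesis \eqref{ioper}. The only cosmetic difference is that the paper absorbs the Weyl denominator into the operator (using that \eqref{ioper} is homogeneous in $\widetilde{\mathbb{B}}_\lambda$, so the scalar is harmless), whereas you factor it out and restore it at the end; both are valid.
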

\begin{proof}
Note that
\[
\TT_S^{v_1,\dots,v_k\vert f_k,\dots,f_1}(\lambda,\mu)=\widehat{e}_{\ul{S}}\left(\Tr_{M_\mu}(\Phi_\mu^{v_1,\dots,v_k}\circ\pi_\mu(q^{2\lambda+2\rho}))\otimes\mathbb{Y}_{\lambda,S^*}(f_k\otimes\dots\otimes f_1) \right)
\]
with $\mathbb{Y}_{\lambda,S^*}:=\delta(q^{2\lambda+2\rho})(j_S(-\lambda-2\rho)^{-1})^T$.
By Lemma \ref{qKZfusion}, the linear operator $\mathbb{Y}_{\lambda,S^*}$ satisfies the equations
\eqref{ioper} for $i=1,\ldots,k$. 
 The result now immediately follows
 from Proposition \ref{almostEVdualqKZB}.
\end{proof}
We have the following reformulations of Theorem \ref{dualqKZB}.
\begin{corollary}\label{dualqKZBcor}
With the above notations \textup{(}in particular, $f_j\in V_j^*[\xi_j']$ with $\sum_j\xi_j'=0$\textup{)}, we have
\begin{enumerate}
\item[\textup{(i)}] For $i=1,\ldots,k$,
\[
\mathbb{T}_S(\lambda,\mu)=\sum_{\sigma\in\textup{wts}(V_i)}\bigl(\bigl(\mathcal{D}_\lambda^{(i)}\bigr)^{T*}\otimes\bigl(\mathcal{K}_{\mu,\sigma}^{(i)}\bigr)^{T}\bigr)\mathbb{T}_S(\lambda,\mu+\sigma).
\]
\item[\textup{(ii)}] For $i=1,\ldots,k$,
\[
\sum_{\sigma\in\textup{wts}(V_i)}\bigl(\mathcal{K}_{\mu,\sigma}^{(i)}\bigr)^T\,\mathbb{T}_S^{f_k,\ldots,f_1}(\lambda,\mu+\sigma)=
q^{\langle \xi_i'+2\xi_{i+1}'+\cdots+2\xi_k'-2\lambda-2\rho,\xi_i'\rangle}\mathbb{T}_S^{f_k,\ldots,f_1}(\lambda,\mu).
\]
\end{enumerate}
\end{corollary}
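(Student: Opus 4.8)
The plan is to deduce both identities directly from Theorem \ref{dualqKZB} via the nondegeneracy of the pairing $\widehat{e}_{\ul{S}}$ on zero-weight spaces, after first eliminating the operator $\mathcal{D}_\lambda^{(i)}$. The key preliminary observation is that, by \eqref{D_lambda,i def} and \eqref{thetakappa}, the operator $\mathcal{D}_\lambda^{(i)}=\kappa^{-2}_{V_k^\ast,V_i^\ast}\cdots\kappa^{-2}_{V_{i+1}^\ast,V_i^\ast}(q^{2\theta(\lambda)})_{V_i^\ast}$ acts on the \emph{fixed} weight vector $f_k\otimes\cdots\otimes f_1$ (with $f_j\in V_j^\ast[\xi_j']$) by a scalar that is independent of $\sigma$. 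Since $\kappa_{V_j^\ast,V_i^\ast}$ acts by $q^{\langle\xi_j',\xi_i'\rangle}$ and, using that $\sum_a x_a^2$ acts on a weight space of weight $\nu$ by $\langle\nu,\nu\rangle$, the element $q^{2\theta(\lambda)}$ acts on $V_i^\ast[\xi_i']$ by $q^{2\langle\lambda+\rho,\xi_i'\rangle-\langle\xi_i',\xi_i'\rangle}$, this scalar equals $c:=q^{-2\sum_{j>i}\langle\xi_j',\xi_i'\rangle+2\langle\lambda+\rho,\xi_i'\rangle-\langle\xi_i',\xi_i'\rangle}$, which is exactly the reciprocal of the factor $q^{\langle\xi_i'+2\xi_{i+1}'+\cdots+2\xi_k'-2\lambda-2\rho,\xi_i'\rangle}$ in (ii). By linearity of $\TT_S^{\cdots\vert\cdots}$ in its dual arguments, Theorem \ref{dualqKZB} then rewrites as $\TT_S^{\,v_1,\ldots,v_k\vert f_k,\ldots,f_1}(\lambda,\mu)=c\sum_{\sigma\in\wts(V_i)}\TT_S^{\,\mathcal{K}_{\mu,\sigma}^{(i)}(v_1,\ldots,v_k)\vert f_k,\ldots,f_1}(\lambda,\mu+\sigma)$.

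For (ii), which is an equality in $\cF^\str(\ul{S^\ast})[0]$, I would pair both sides against an arbitrary $v_1\otimes\cdots\otimes v_k\in\cF^\str(\ul{S})[0]$ via $\widehat{e}_{\ul{S}}(v_1\otimes\cdots\otimes v_k,\cdot)$. On the left, the transpose relation \eqref{transposedef} moves $(\mathcal{K}_{\mu,\sigma}^{(i)})^{T}$ across the pairing, so that each summand becomes $\widehat{e}_{\ul{S}}(\mathcal{K}_{\mu,\sigma}^{(i)}(v_1\otimes\cdots\otimes v_k),\TT_S^{f_k,\ldots,f_1}(\lambda,\mu+\sigma))$, which by \eqref{universalTT} and \eqref{hateN} equals $\TT_S^{\,\mathcal{K}_{\mu,\sigma}^{(i)}(v_1,\ldots,v_k)\vert f_k,\ldots,f_1}(\lambda,\mu+\sigma)$; on the right one gets $c^{-1}\TT_S^{\,v_1,\ldots,v_k\vert f_k,\ldots,f_1}(\lambda,\mu)$. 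The resulting scalar identity is precisely the rewritten form of Theorem \ref{dualqKZB} displayed above. Since $v_1\otimes\cdots\otimes v_k$ is arbitrary and $\widehat{e}_{\ul{S}}$ restricts by \eqref{hateN} to a perfect pairing between $\cF^\str(\ul{S})[0]$ and $\cF^\str(\ul{S^\ast})[0]$, the vector identity (ii) follows.

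For (i), which is an equality in $\cF^\str(\ul{S})[0]\otimes\cF^\str(\ul{S^\ast})[0]$, I would fully contract both tensor slots, pairing the first factor against an arbitrary $f_k\otimes\cdots\otimes f_1$ and the second against an arbitrary $v_1\otimes\cdots\otimes v_k$. By \eqref{TTf}--\eqref{TTv} the left-hand side contracts to $\TT_S^{\,v_1,\ldots,v_k\vert f_k,\ldots,f_1}(\lambda,\mu)$. On the right the two transpose conventions let the adjoints act back: \eqref{dualtranspose} turns $(\mathcal{D}_\lambda^{(i)})^{T\ast}$ on the $f$-slot into $\mathcal{D}_\lambda^{(i)}$, producing the scalar $c$, while \eqref{transposedef} turns $(\mathcal{K}_{\mu,\sigma}^{(i)})^{T}$ on the $v$-slot into $\mathcal{K}_{\mu,\sigma}^{(i)}$; the right-hand side therefore contracts to $c\sum_{\sigma}\TT_S^{\,\mathcal{K}_{\mu,\sigma}^{(i)}(v_1,\ldots,v_k)\vert f_k,\ldots,f_1}(\lambda,\mu+\sigma)$. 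This is again the rewritten Theorem \ref{dualqKZB}, and nondegeneracy of the pairing in each slot yields (i).

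The only genuinely computational step — which I regard as the main (though mild) obstacle — is the explicit evaluation of the scalar by which $\mathcal{D}_\lambda^{(i)}$ acts on $f_k\otimes\cdots\otimes f_1$ and the verification that it is the reciprocal of the exponential factor in (ii); this requires unwinding the definitions of $\kappa$ and $\theta(\lambda)$ in \eqref{thetakappa} under the identification $\hh^\ast\cong\hh$. Beyond this, the care needed is purely bookkeeping: applying the correct transpose ($(\cdot)^{T}$ on a slot carrying $\cF^\str(\ul{S})$-endomorphisms versus $(\cdot)^{T\ast}$ on one carrying $\cF^\str(\ul{S^\ast})$-endomorphisms, see \eqref{transposedef}--\eqref{dualtranspose}) to each tensor factor, and noting that $\mathcal{K}_{\mu,\sigma}^{(i)}$ and $\mathcal{D}_\lambda^{(i)}$ are $\hh$-linear so that their transposes restrict to the relevant zero-weight spaces.
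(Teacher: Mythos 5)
Your proposal is correct and follows essentially the same route as the paper: both deduce the corollary from Theorem \ref{dualqKZB} combined with the observation that $\mathcal{D}_\lambda^{(i)}$ acts on $f_k\otimes\cdots\otimes f_1$ by the scalar $q^{\langle 2\lambda+2\rho-\xi_i'-2\xi_{i+1}'-\cdots-2\xi_k',\xi_i'\rangle}$ (which you compute correctly) and the transpose definitions \eqref{transposedef}--\eqref{dualtranspose}. The only difference is presentational: the paper substitutes \eqref{qKZBcoord} into the basis expansion \eqref{universalTT} of $\mathbb{T}_S(\lambda,\mu)$ and re-sums, whereas you pair against arbitrary test vectors and invoke nondegeneracy of $\widehat{e}_{\ul{S}}$ on the zero-weight spaces --- these are the same bookkeeping in dual form.
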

\begin{proof}
(i) By \eqref{universalTT} we have
\begin{equation}\label{universalTTwrittenout}
\mathbb{T}_S(\lambda,\mu):=\sum\mathbb{T}_S^{v_1,\ldots,v_k\vert u_k^*,\ldots,u_1^*}(\lambda,\mu)(u_1\otimes\cdots\otimes u_k)\otimes (v_k^*\otimes\cdots\otimes v_1^*),
\end{equation}
where the sum is over $u_j, v_j\in\mathcal{B}_{V_j}$ such that $ \sum_j\textup{wt}(u_j)=0=\sum_j\textup{wt}(v_j)$. Now substitute \eqref{qKZBcoord} into the right hand side of this expression, then the result follows from the definition of the transpose.\\
(ii) This follows in a similar way from the definition of $\mathbb{T}_S^{f_k,\ldots,f_1}(\lambda,\mu)$ (see the first line of \eqref{universalTT}), formula \eqref{qKZBcoord},
and the 
observation that 
\[
\mathcal{D}_\lambda^{(i)}(f_k\otimes\cdots\otimes f_1)=q^{\langle 2\lambda+2\rho-\xi_i'-2\xi_{i+1}'\cdots-2\xi_k',\xi_i'\rangle}f_k\otimes\cdots\otimes f_1.
\]
\end{proof}

\begin{remark}\label{Dicheck}
For $S=(V_1,\ldots,V_k)\in\Rep^\str$ and $i=1,\ldots,k$ the $\mathfrak{h}$-linear operator
\begin{equation}\label{Dicheckformula}
\mathbb{D}_{\lambda,i}^{\vee,S}:=\bigl(\mathcal{D}_\lambda^{(i)}\bigr)^{T*}=\bigl(q^{2\theta(-\lambda-2\rho)}\bigr)_{V_i}\kappa_{V_i,V_{i+1}}^{-2}\cdots\kappa_{V_i,V_k}^{-2}
\end{equation}
on $\cF^\str(\ul{S})$
coincides with the operator $D_i^\vee$ from \cite[(1.16)]{Etingof&Varchenko-2000}.
\end{remark}

Corollary \ref{dualqKZBcor}(ii) shows that for any $\lambda\in\mathfrak{h}_{\textup{reg}}^*$ with $\Re(\lambda)$ deep in the negative Weyl chamber and any choice of weight vectors $f_j\in V_j^*[\xi_j']$ with $\sum_j\xi_j'=0$,
the corresponding spin component $\mathbb{T}_S^{f_k,\ldots,f_1}(\lambda,\mu)\in\cF^\str(\ul{S^*})[0]$ of the $q^{2\lambda+2\rho}$-weighted trace of $\Phi_\mu^S$ is a joint eigenfunction of the 
difference operators
\[
(\widetilde{\mathbb{L}}_i^{\vee,S}f)(\mu):=\sum_{\sigma\in\textup{wts}(V_i)}\bigl(\mathcal{K}_{\mu,\sigma}^{(i)}\bigr)^Tf(\mu+\sigma)\qquad (i=1,\ldots,k)
\]
on $\mathfrak{h}_{\textup{reg}}^*$, where $f$ is a $\cF^\str(\ul{S^*})[0]$-valued function on $\mathfrak{h}_{\textup{reg}}^*$. The $\widetilde{\mathbb{L}}_i^{\vee,S}$ are gauged versions of the dual $q$-KZB operators $K_i^\vee$ arising in the dual $q$-KZB equations from \cite[Thm. 1.4]{Etingof&Varchenko-2000}. The gauge arises from the fact that 
we need to renormalise $\mathbb{T}_S^{f_k,\ldots,f_1}(\lambda,\mu)$ and
$\mathbb{T}_S(\lambda,\mu)$ further by a $\mu$-dependent factor to 
reach the normalisation of the weighted trace functions as in \cite{Etingof&Varchenko-2000}. The normalisation factor  \(\mathbb{X}_{\mu,S^\ast}\in\End_{\cN_{\textup{fd}}}(\cF^\str(\ul{S^\ast}))\) is
	\begin{equation}\label{doubleX}
	\mathbb{X}_{\mu,S^\ast}:=\QQ_{V_k^\ast}(\mu)^{-1}\otimes\QQ_{V_{k-1}^\ast}(\mu+\mh_{V_{k}^\ast})^{-1}\otimes\cdots\otimes\QQ_{V_1^\ast}(\mu+\mh_{(V_k^\ast,\ldots,V_2^\ast)})^{-1},
	\end{equation}
and the resulting normalised weighted trace functions are
\begin{equation}\label{Fnorm}
\begin{split}
\mathbb{F}_S(\lambda,\mu)&:=\bigl(\textup{id}_{\cF^\str(\ul{S})[0]}\otimes\mathbb{X}_{\mu,S^*}\bigr)\mathbb{T}_S(\lambda,\mu),\\
\mathbb{F}^{f_k,\ldots,f_1}_S(\lambda,\mu)&:=\widehat{e}_{\ul{S}}\bigl(\mathbb{F}_S(\lambda,\mu),f_k\otimes\cdots\otimes f_1\bigr)=
\mathbb{X}_{\mu,S^*}\mathbb{T}_S^{f_k,\ldots,f_1}(\lambda,\mu).
\end{split}
\end{equation}
\begin{remark}\label{Fremark}\hfill
\begin{enumerate}
\item
The normalised universal trace function $F_{V_1,\ldots,V_k}(\lambda,\mu)$ defined in \cite[(1.9)]{Etingof&Varchenko-2000} is equal to 
$\mathbb{F}_S(\lambda-\rho,-\mu-\rho)$ \textup{(}be aware here of Remark \ref{notconv}\textup{)}.
\item The normalisation by $\mathbb{X}_{\mu,S^\ast}$ turns the weighted trace function $\mathbb{F}_S(\lambda,\mu)$ into a function admitting an important symmetry with respect to interchanging $\lambda$ and $\mu$, see \cite[Thm. 1.5]{Etingof&Varchenko-2000}. 
\end{enumerate}
\end{remark}
The equations in Corollary \ref{dualqKZBcor} give rise to difference equations for $\mathbb{F}_S(\lambda,\mu)$ in $\mu$, involving the following gauged versions of $\widetilde{\mathbb{L}}_i^{\vee,S}$. 
\begin{definition}\label{LiS}
For $i=1,\ldots,k$ we call the difference operators
\begin{equation}
\bigl(\mathbb{L}_i^{\vee,S}f\bigr)(\mu):=\sum_{\sigma\in\textup{wts}(V_i)}\mathcal{K}_{\mu,\sigma}^{(i),\vee}f(\mu+\sigma)
\end{equation}
for functions $f: \mathfrak{h}_{\textup{reg}}^*\rightarrow\cF^\str(\ul{S^*})[0]$ the dual $q$-KZB operators, with $\mathcal{K}_{\mu,\sigma}^{(i),\vee}$ the $\mathfrak{h}$-linear operator
\begin{equation*}
\begin{split}
\mathcal{K}_{\mu,\sigma}^{(i),\vee}:=&
R_{V_i^*V_{i-1}^*}\bigl(\mu-h_{(V_{i-2}^*,\ldots,V_1^*)}\bigr)^{-1}\cdots R_{V_i^*V_1^*}\bigl(\mu\bigr)^{-1}\mathbb{P}_{V_i^*}[-\sigma]\\
\circ &R_{V_k^*V_i^*}\bigl(\mu+\sigma-h_{(V_{k-1}^*,\ldots,V_{i+1}^*,V_{i-1}^*,\ldots,V_1^*)}\bigr)\cdots R_{V_{i+1}^*V_i^*}\bigl(\mu+\sigma-h_{(V_{i-1}^*,\ldots,V_1^*)}\bigr)
\end{split}
\end{equation*}
on $\cF^\str(\ul{S^*})$.
\end{definition}
The difference equations for $\mathbb{F}_S(\lambda,\mu)$ resulting from Corollary \ref{dualqKZBcor} now read as follows.
\begin{corollary}\label{corKKK}\textup{(}\cite[Thm. 1.4]{Etingof&Varchenko-2000}\textup{)}.
With the above notations, we have for $i=1,\ldots,k$,
\begin{equation}\label{qKZBnew}
\bigl(\mathbb{D}_{\lambda,i}^{\vee,S}\otimes\mathbb{L}_i^{\vee,S}\bigr)\mathbb{F}_S(\lambda,\cdot)=\mathbb{F}_S(\lambda,\cdot),
\end{equation}
which is equivalent to the system of difference equations
\begin{equation}
\mathbb{L}_i^{\vee,S}\mathbb{F}_S^{f_k,\ldots,f_1}(\lambda,\cdot)=q^{\langle\xi_i'+2\xi_{i+1}'+\cdots+2\xi_k'-2\lambda-2\rho,\xi_i'\rangle}
\mathbb{F}_S^{f_k,\ldots,f_1}(\lambda,\cdot)
\end{equation}
for  $f_j\in V_j^*[\xi_j']$  with $\sum_j\xi_j'=0$.
\end{corollary}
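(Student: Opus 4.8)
The plan is to deduce Corollary \ref{corKKK} from Corollary \ref{dualqKZBcor} by conjugating the difference equations with the normalisation operator $\mathbb{X}_{\mu,S^*}$. The starting point is part (i) of Corollary \ref{dualqKZBcor}, which reads
\[
\mathbb{T}_S(\lambda,\mu)=\sum_{\sigma\in\textup{wts}(V_i)}\bigl(\bigl(\mathcal{D}_\lambda^{(i)}\bigr)^{T*}\otimes\bigl(\mathcal{K}_{\mu,\sigma}^{(i)}\bigr)^{T}\bigr)\mathbb{T}_S(\lambda,\mu+\sigma).
\]
First I would rewrite this in terms of $\mathbb{F}_S(\lambda,\mu)$ using the defining relation \eqref{Fnorm}, i.e. $\mathbb{T}_S(\lambda,\mu)=(\textup{id}\otimes\mathbb{X}_{\mu,S^*}^{-1})\mathbb{F}_S(\lambda,\mu)$. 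Applying $\textup{id}\otimes\mathbb{X}_{\mu,S^*}$ on the left and substituting $\mathbb{T}_S(\lambda,\mu+\sigma)=(\textup{id}\otimes\mathbb{X}_{\mu+\sigma,S^*}^{-1})\mathbb{F}_S(\lambda,\mu+\sigma)$ on the right, the first tensor leg contributes the constant operator $\mathbb{D}_{\lambda,i}^{\vee,S}=(\mathcal{D}_\lambda^{(i)})^{T*}$ of Remark \ref{Dicheck}, while the second tensor leg produces the operator
\[
\mathbb{X}_{\mu,S^*}\,\bigl(\mathcal{K}_{\mu,\sigma}^{(i)}\bigr)^{T}\,\mathbb{X}_{\mu+\sigma,S^*}^{-1}.
\]
So the entire content of the corollary is the identity
\begin{equation}\label{gaugeidentity}
\mathbb{X}_{\mu,S^*}\,\bigl(\mathcal{K}_{\mu,\sigma}^{(i)}\bigr)^{T}\,\mathbb{X}_{\mu+\sigma,S^*}^{-1}=\mathcal{K}_{\mu,\sigma}^{(i),\vee}
\end{equation}
with $\mathcal{K}_{\mu,\sigma}^{(i),\vee}$ the operator of Definition \ref{LiS}; once \eqref{gaugeidentity} is in hand, equation \eqref{qKZBnew} is immediate, and the equivalent scalar-eigenvalue reformulation follows by pairing with $f_k\otimes\cdots\otimes f_1$ exactly as in the passage from part (i) to part (ii) of Corollary \ref{dualqKZBcor}, noting $\mathbb{D}_{\lambda,i}^{\vee,S}$ acts on $V_i[\xi_i']^{\,*}$-components by the scalar recorded there.

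The main work, and the main obstacle, is establishing the gauge identity \eqref{gaugeidentity}. I would proceed by taking the transpose of the explicit formula \eqref{K_mu,sigma,i def} for $\mathcal{K}_{\mu,\sigma}^{(i)}$ relative to $\widehat{e}_{\ul{S}}$, using \eqref{transposedef} and the observation preceding Lemma \ref{qKZfusion} that a transpose of $\mathcal{R}_{V_j,V_{j+1}}$ is $\mathcal{R}_{V_{j+1}^*,V_j^*}^{21}$; the universal dynamical $R$-matrix $R(\lambda)$ transposes to the appropriate $R_{V_i^*V_j^*}(\lambda)^{\pm 1}$ on the dual spaces, turning the product of $R^{21}$-factors on $\cF^\str(\ul{S})$ into a product of $R$-factors on $\cF^\str(\ul{S^*})$ with reversed order, and the projection $\mathbb{P}_{V_i}[\sigma]$ into $\mathbb{P}_{V_i^*}[-\sigma]$. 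The hard part is then checking that conjugation by $\mathbb{X}_{\mu,S^*}$ correctly reproduces the dynamical weight shifts $h_{(\cdots)}$ appearing in Definition \ref{LiS}. Here I would use Proposition \ref{prop Q inverse} together with the relation \eqref{relRr} between $\mathbb{Q}(\lambda)$, $\mathbb{J}(\lambda)$ and the $R$-matrix; the factors of $\mathbb{Q}_{V_j^*}$ in $\mathbb{X}_{\mu,S^*}$ are precisely what convert the exchange-matrix form $R^{21}$ appearing after transposition into the unflipped $R$ appearing in the definition of $\mathcal{K}_{\mu,\sigma}^{(i),\vee}$, absorbing the fusion-operator normalisation and shifting the arguments from $\mu$ to the weight-shifted values. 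This is the intricate algebraic manipulation alluded to in the paper's discussion, and I expect it to require careful bookkeeping of which tensor slots each $\mathbb{Q}$-factor and each weight shift acts on, especially the asymmetry between the slots $i+1,\ldots,k$ (evaluated at $\mu+\sigma$-type shifts) and $1,\ldots,i-1$ (evaluated at $\mu$-type shifts).

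Rather than verifying \eqref{gaugeidentity} by hand for general tensor length, I would organise the computation by invoking the hexagon identities of Proposition \ref{prop hexagon for dynamical R-matrix} to collapse the products of elementary $R$-matrices into single $R$-matrices $R_{V_i^*,(V_{i-1}^*,\ldots,V_1^*)}$ and $R_{(V_k^*,\ldots,V_{i+1}^*),V_i^*}$, matching the two-block structure already used in the proof of Corollary \ref{cor qKZB for Theta}. This reduces the identity to a statement about how $\mathbb{X}_{\mu,S^*}$ intertwines a single exchange matrix with its dynamical counterpart, which can be handled by the relation \eqref{criterion Q} characterising $\mathbb{Q}_S$ as the discrepancy between $\widetilde{e}_S$ and its dynamical twist $\ol{\widetilde{e}_S}$, combined with \eqref{dynrighteval}. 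Since all the constituent identities are stated earlier in the excerpt, the argument is in principle a bounded, if laborious, algebraic reduction, and I would present it as such, citing Remark \ref{Dicheck} for the first-leg scalar and \cite[Thm. 1.4]{Etingof&Varchenko-2000} for the match with the Etingof--Varchenko dual $q$-KZB operators $K_i^\vee$.
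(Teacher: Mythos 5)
Your proposal is correct and is essentially the paper's own proof: the paper likewise conjugates Corollary \ref{dualqKZBcor} by $\mathbb{X}_{\mu,S^*}$, reduces the whole corollary to the gauge identity \eqref{toX}, and establishes it by transposing \eqref{K_mu,sigma,i def} factor by factor and converting each transposed factor via the $\mathbb{Q}$-conjugation formula \eqref{transposeT} for transposes of dynamical $R$-matrices (which the paper imports from \cite[(3.12)]{Etingof&Varchenko-2000} rather than rederiving, inviting a graphical proof of the kind you sketch), together with the commutation rule $\mathbb{P}_{V_i^*}[-\sigma]\mathbb{Q}_{V_j^*}(\mu+\sigma+h_{V_i^*})=\mathbb{Q}_{V_j^*}(\mu)\mathbb{P}_{V_i^*}[-\sigma]$. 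Two caveats to tighten: your gauge identity is false as an identity in $\End_{\cN_{\mr{fd}}}(\cF^\str(\ul{S^*}))$ — on the full space the conjugation produces the kernel at the shifted argument, $\mathcal{K}_{\mu+h_{(V_k^*,\ldots,V_1^*)},\sigma}^{(i),\vee}$ (the paper's \eqref{toXh}), and only the restriction to $\cF^\str(\ul{S^*})[0]$ gives \eqref{toX}, which suffices because $\mathbb{F}_S$ is zero-weight valued — and the naive analogy with the non-dynamical transpose $(\mathcal{R}_{V_jV_{j+1}})^T=\mathcal{R}^{21}_{V_{j+1}^*V_j^*}$ does not carry over to $R(\lambda)$, so the ``careful bookkeeping'' you defer lives entirely in the precise form of \eqref{transposeT} (the shift $h_{(W^*,V^*)}$ in the argument of $R^{21}_{W^*V^*}$ and the staggered arguments of the four $\mathbb{Q}$-factors), without which the conversion of the transposed $R^{21}$-factors into the $R$-factors of Definition \ref{LiS} cannot actually be completed.
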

\begin{proof}
By Remark \ref{Dicheck} the difference equations for $\mathbb{T}_S(\lambda,\mu)$ and $\mathbb{T}_S^{f_k,\ldots,f_1}(\lambda,\mu)$ from Corollary \ref{dualqKZBcor} imply
\begin{equation*}
\begin{split}
&\mathbb{F}_S(\lambda,\mu)=\sum_{\sigma\in\textup{wts}(V_i)}\Bigl(\mathbb{D}_{\lambda,i}^{\vee,S}\otimes\mathbb{X}_{\mu,S^*}(\mathcal{K}_{\mu,\sigma}^{(i)})^T
\mathbb{X}_{\mu+\sigma,S^*}^{-1}\Bigr)\mathbb{F}_S(\lambda,\mu+\sigma),\\
&\sum_{\sigma\in\textup{wts}(V_i)}\bigl(\mathbb{X}_{\mu,S^*}(\mathcal{K}_{\mu,\sigma}^{(i)})^T\mathbb{X}_{\mu+\sigma,S^*}^{-1}\bigr)\mathbb{F}_S^{f_k,\ldots,f_1}(\lambda,\mu)
=q^{\langle\xi_i'+2\xi_{i+1}'+\cdots+2\xi_k'-2\lambda-2\rho,\xi_i'\rangle}\mathbb{F}^{f_k,\ldots,f_1}_S(\lambda,\mu)
\end{split}
\end{equation*}
for the (spin-components of the) renormalised weighted trace function $\mathbb{F}_S(\lambda,\mu)$. So it remains to show that
\begin{equation}\label{toX}
\mathbb{X}_{\mu,S^*}(\mathcal{K}_{\mu,\sigma}^{(i)})^T\mathbb{X}_{\mu+\sigma,S^*}^{-1}\vert_{\cF^\str(\ul{S^*})[0]}=\mathcal{K}_{\mu,\sigma}^{(i),\vee}\vert_{\cF^\str(\ul{S^*})[0]}.
\end{equation}
By \eqref{K_mu,sigma,i def} we have
\begin{equation*}
\begin{split}
\bigl(\mathcal{K}_{\mu,\sigma}^{(i)}\bigr)^T=&\bigl(R^{21}_{V_{i-1}V_i}(\mu-h_{(V_{i+1},\ldots,V_k)})^{-1}\bigr)^T\cdots
\bigl(R^{21}_{V_1V_i}(\mu-h_{(V_2,\ldots,V_{i-1},V_{i+1},\ldots,V_k)})^{-1}\bigr)^T\\
&\circ \mathbb{P}_{V_i^*}[-\sigma]R^{21}_{V_iV_k}(\mu+\sigma)^T\cdots R^{21}_{V_iV_{i+1}}(\mu+\sigma-h_{(V_{i+2},\ldots,V_k)})^T.
\end{split}
\end{equation*}
To compute the effect of dynamically conjugating $\bigl(\mathcal{K}_{\mu,\sigma}^{(i)}\bigr)^T$ by the tensor product of $\mathbb{Q}_{V_j^*}$-operators, we use for any $V,W\in\Rep$ the formula
\begin{equation}\label{transposeT}
R_{VW}(\mu)^T=\bigl(\mathbb{Q}_{W^*}(\mu)\otimes\mathbb{Q}_{V^*}(\mu+h_{W^*})\bigr)
R^{21}_{W^*V^*}(\mu+h_{(W^*,V^*)})\bigl(\mathbb{Q}_{W^*}(\mu+h_{V^*})^{-1}\otimes\mathbb{Q}_{V^*}(\mu)^{-1}\bigr)
\end{equation}
in $\textup{End}_{\mathcal{N}_{\textup{fd}}}\bigl(W^*\otimes V^*\bigr)$. Formula \eqref{transposeT} can be directly inferred from \cite[(3.12)]{Etingof&Varchenko-2000}, taking into account  Remark \ref{smfRemark}(2) and Remark \ref{notconv} (we invite the reader to give a graphical proof using the enriched graphical calculus for $\mathcal{N}_{\textup{fd}}$ from
Subsections \ref{sectiongi} \& \ref{Subsection Q(lambda)}). This formula implies for $i<j$,
\begin{equation*}
\begin{split}
R_{V_iV_j}^{21}&(\mu+\sigma-h_{(V_{j+1},\ldots,V_k)})^T\bigl(\mathbb{Q}_{V_j^*}(\mu+\sigma+h_{(V_k^*,\ldots,V_{j+1}^*)})\otimes
\mathbb{Q}_{V_i^*}(\mu+\sigma+h_{(V_k,\ldots,V_j^*)})\bigr)=\\
=&\bigl(\mathbb{Q}_{V_j^*}(\mu+\sigma+h_{(V_k^*,\ldots,V_{j+1}^*,V_i^*)})\otimes\mathbb{Q}_{V_i^*}(\mu+\sigma+h_{(V_k^*,\ldots,V_{j+1}^*)})\bigr)
R_{V_j^*V_i^*}(\mu+\sigma+h_{(V_k,\ldots,V_{j}^*,V_i^*)})
\end{split}
\end{equation*}
and for $r<i$,
\begin{equation*}
\begin{split}
\bigl(R^{21}_{V_rV_i}&(\mu-h_{(V_{r+1},\ldots,V_{i-1},V_{i+1},\ldots,V_k)})^{-1}\bigr)^T\\
&\circ
\bigl(\mathbb{Q}_{V_i^*}(\mu+h_{(V_k^*,\ldots,V_{i+1}^*,V_{i-1}^*,\ldots,V_r^*)})\otimes\mathbb{Q}_{V_r^*}(\mu+h_{(V_k^*,\ldots,V_{i+1}^*,V_{i-1}^*,\ldots,V_{r+1}^*)})\bigr)\\
&=\bigl(\mathbb{Q}_{V_i^*}(\mu+h_{(V_k^*,\ldots,V_{i+1}^*,V_{i-1}^*,\ldots,V_{r+1}^*)})\otimes\mathbb{Q}_{V_r^*}(\mu+h_{(V_k^*,\ldots,V_{r+1}^*)})\bigr)
R_{V_i^*V_r^*}(\mu+h_{(V_k^*,\ldots,V_r^*)})^{-1}.
\end{split}
\end{equation*}
Using in addition that
\[
\mathbb{P}_{V_i^*}[-\sigma]\mathbb{Q}_{V_j^*}(\mu+\sigma+h_{V_i^*})=\mathbb{Q}_{V_j^*}(\mu)\mathbb{P}_{V_i^*}[-\sigma]
\]
for all $j=1,\ldots,k$, which follows from the fact that $\mathbb{Q}_{V_j^*}(\mu)$ is $\mathfrak{h}$-linear,
a straightforward computation now establishes 
\begin{equation}\label{toXh}
\mathbb{X}_{\mu,S^*}(\mathcal{K}_{\mu,\sigma}^{(i)})^T\mathbb{X}_{\mu+\sigma,S^*}^{-1}=\mathcal{K}_{\mu+h_{(V_k^*,\ldots,V_1^*)},\sigma}^{(i),\vee},
\end{equation}
which in turn gives \eqref{toX}.
\end{proof}

\begin{remark}\label{relremqKZB}
By Remark \ref{Fremark}(1), the dual $q$-KZB equations \eqref{qKZBnew} are equivalent to
\begin{equation}\label{qKZBn2}
\bigl(\mathbb{D}_{\lambda-\rho,i}^{S,\vee}\otimes \mathbb{L}^{\vee,S}_{\textup{EV},i})F_S(\lambda,\cdot)=F_S(\lambda,\cdot)
\end{equation}
for the Etingof-Varchenko normalized universal weighted trace function $F_S(\lambda,\mu)$, where 
\begin{equation}\label{qKZBnew2}
\bigl(\mathbb{L}^{\vee,S}_{\textup{EV},i}f\bigr)(\mu):=\sum_{\sigma\in\textup{wts}(V_i^*)}\mathcal{K}_{-\mu-\rho,-\sigma}^{(i),\vee}f(\mu+\sigma).
\end{equation}
By Remarks \ref{notconv} \& \ref{Fremark}(1), the operator $\mathbb{R}(\lambda)$ in \cite{Etingof&Varchenko-2000} equals $R^{21}(-\lambda-\rho)$,
which shows that $\mathbb{L}_{\textup{EV},i}^{\vee,S}$ is the operator $K_i^\vee$ 
from \cite[(1.17)]{Etingof&Varchenko-2000}. Combined with Remark \ref{Dicheck}, we conclude that \eqref{qKZBn2} are the dual $q$-KZB equations 
from \cite[Thm. 1.4]{Etingof&Varchenko-2000}.
\end{remark}

\subsection{The dual coordinate Macdonald-Ruijsenaars equations}
\label{Subsection Etingof-Varchenko normalization for MR}
In this subsection we use Corollary \ref{thm MR for Y} with $S\in\Rep^\str$ of the form
\begin{equation}\label{SMRsplit}
S=S_1\tens S_2,\qquad S_1=(V_1,\ldots,V_i),\qquad S_2=(V_{i+1},\ldots,V_k)
\end{equation}
for $i=0,\ldots,k$ to derive dual coordinate Macdonald-Ruijsenaars (MR) type equations for the weighted trace functions. We will show that the equations for $i=0$ reduce to the dual MR equations from \cite[Thm. 1.2]{Etingof&Varchenko-2000}. 

Let \(\lambda,\mu\in\hh_{\mathrm{reg}}^\ast\) with \(\Re(\lambda)\) deep in the negative Weyl chamber,  \(v_j\in V_j[\xi_j]\) and \(f_j\in V_j^\ast[\xi'_j]\) such that \(\sum_j\xi_j
= 0 = \sum_j\xi_j'\) and set
\begin{equation}
\label{Xi def}
\Xi_{S,i}^{v_1,\dots,v_k\vert f_k,\dots, f_1}(\lambda,\mu):= \widehat{e}_{\ul{S}}\left(\Tr_{M_\mu}(\Phi_\mu^{v_1,\dots,v_k}\circ\pi_\mu(q^{2\lambda+2\rho}))\otimes\mathbb{A}_{\lambda,i}(f_k\otimes\dots\otimes f_1) \right),
\end{equation}
with $\mathbb{A}_{\lambda,i}\in\textup{End}_{\cN_{\textup{fd}}}\bigl(\cF^\str(\ul{S^*})\bigr)$ the linear operator
\[
\mathbb{A}_{\lambda,i}:=j_{S^\ast}(\lambda)\Rdyn_{(V_k^\ast,\dots,V_{i+1}^\ast),(V_{i}^\ast,\dots,V_1^\ast)}(\lambda)^{-1}.
\]
We can now derive from Corollary \ref{thm MR for Y} the following equations for $\Xi_{S,i}^{v_1,\dots,v_k\vert f_k,\dots, f_1}(\lambda,\mu)$. The proof is analogous to the proof of Corollary \ref{cor qKZB for Theta}. We will sketch the main steps of the proof for the sake of completeness.
\begin{proposition}
	\label{prop MR for Theta}
	For any \(W\in\Rep\) and \(i=0,\dots,k\), we have the equation
	\begin{equation}
	\label{MR for Theta}
	\Xi_{S,i}^{v_1,\dots,v_k\vert \mathcal{D}_{\lambda,W}^{\vee,(i)}(f_k,\dots,f_1)}(\lambda,\mu) = \sum_{\sigma\in\wts(W)}\Xi_{S,i}^{\mathcal{K}_{\mu,\sigma,W}^{\vee,(i)}(v_1,\dots,v_k)\vert f_k,\dots,f_1}(\lambda,\mu-\sigma),
	\end{equation}
	with \(\mathcal{K}_{\mu,\sigma,W}^{\vee,(i)}\in\End_{\cN_{\textup{fd}}}(\cF^\str(\ul{S})) \) and \(\mathcal{D}_{\lambda,W}^{\vee,(i)} \in\End_{\cN_{\textup{fd}}}(\cF^\str(\ul{S^\ast}))\) defined by
	\begin{align}
	\label{K_mu,sigma,i vee def}
	\begin{split}
	\mathcal{K}_{\mu,\sigma,W}^{\vee,(i)} :=\ & \Tr_{W[\sigma]}\left(\Rdyn^{21}_{W,V_k}(\mu)^{-1}\Rdyn^{21}_{W,V_{k-1}}(\mu-\mh_{V_k})^{-1}\dots \Rdyn^{21}_{W,V_{i+1}}(\mu-\mh_{(V_{i+2,\dots,V_k})})^{-1}\right.\\& 
	\qquad\qquad\qquad\circ\Rdyn_{W,V_i}(\mu-\mh_{(V_{i+1},\dots,V_k)})\dots \Rdyn_{W,V_1}(\mu-\mh_{(V_{2},\dots,V_k)}) \Big),
	\end{split} \\
	\label{D_lambda,i vee def}
	\mathcal{D}_{\lambda,W}^{\vee,(i)} :=\ & \Tr_W\left((q^{-2\lambda-2\rho})_W\,\kappa^{2}_{W,V_k^\ast}\dots\kappa^{2}_{W,V_{i+1}^\ast}\right).
	\end{align}
\end{proposition}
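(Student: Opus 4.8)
The plan is to follow the blueprint established for the dual $q$-KZB case in Corollary \ref{cor qKZB for Theta}, specializing Corollary \ref{thm MR for Y} to the decomposition $S=S_1\tens S_2$ given in \eqref{SMRsplit} and parametrizing the two-fold data $\bs{w}=(w_1,w_2)$, $\bs{g}=(g_1,g_2)$ by the individual weight vectors $v_j\in V_j[\xi_j]$ and $f_j\in V_j^*[\xi_j']$. Concretely, I would set
\[
w_1:=j_{S_1}(\mu_1)(v_1\otimes\cdots\otimes v_i),\qquad w_2:=j_{S_2}(\mu)(v_{i+1}\otimes\cdots\otimes v_k),
\]
and analogously $g_1:=j_{S_1^*}(\lambda_1)(f_i\otimes\cdots\otimes f_1)$, $g_2:=j_{S_2^*}(\lambda)(f_k\otimes\cdots\otimes f_{i+1})$, where $\lambda_1,\mu_1$ are the shifted weights dictated by \eqref{lambda_j MR def}. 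The first step is to establish the analogue of Lemma \ref{prop Theta is Z}, namely that with these choices the two-fold spin component $\mathcal{Z}_{\bs{S}}^{\bs{w},\bs{g}}(\lambda,\mu)$ equals $\Xi_{S,i}^{v_1,\dots,v_k\vert f_k,\dots,f_1}(\lambda,\mu)$. This requires computing $\langle\Psi_{\lambda;S_2^*,S_1^*}^{g_2,g_1}\rangle$ by a two-fold version of Lemma \ref{lemma expectation value of Psi}: pulling the $S_2$-strand over the $\Psi_1$-coupon, pushing the resulting braiding through the Verma strand via Proposition \ref{pushdiagram}, and invoking Lemma \ref{lemma 3.10}, which produces exactly the single dynamical $R$-matrix factor $\Rdyn_{(V_k^*,\dots,V_{i+1}^*),(V_i^*,\dots,V_1^*)}(\lambda)^{-1}$ appearing in $\mathbb{A}_{\lambda,i}$, together with the outer fusion operator $j_{S^*}(\lambda)$.

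Second, I would substitute this identification into the equation \eqref{equation MR with general xi} of Corollary \ref{thm MR for Y}. The operators $\mathcal{K}_{\mu,\sigma,W}^\vee$ and $\mathcal{D}_{\lambda,W}^\vee$ from \eqref{u vectors def MR}--\eqref{the funny D check} are stated for the coarse two-fold data; the task is to conjugate $\mathcal{K}_{\mu,\sigma,W}^\vee$ by the fusion operators $\mathscr{J}^\vee_{\bs{S}}$ and use the dynamical hexagon identities of Proposition \ref{prop hexagon for dynamical R-matrix} to split the two $R$-matrices $\Rdyn_{W,S_1}$, $\Rdyn^{21}_{W,S_2}$ into the products over individual factors $\Rdyn_{W,V_1},\dots,\Rdyn_{W,V_i}$ and $\Rdyn^{21}_{W,V_{i+1}},\dots,\Rdyn^{21}_{W,V_k}$ displayed in \eqref{K_mu,sigma,i vee def}, keeping careful track of the weight shifts $\mh_{V_j}$. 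Simultaneously the diagonal operator $\mathcal{D}_{\lambda,W}^\vee=\Tr_W((q^{-2\lambda-2\rho})_W\kappa^2_{W,S_2^*})$ must be rewritten, using $\Delta(\kappa)$-type coproduct factorization as in the $q$-KZB proof, into the form $\Tr_W((q^{-2\lambda-2\rho})_W\kappa^2_{W,V_k^*}\cdots\kappa^2_{W,V_{i+1}^*})$ of \eqref{D_lambda,i vee def}. As in Corollary \ref{cor qKZB for Theta}, the fusion operators on the $\Psi$-side commute past $\mathcal{D}_{\lambda,W}^{\vee,(i)}$ because dynamical fusion operators preserve weights, so they can be absorbed back into the definition of $\Xi$.

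The main obstacle, as in the $q$-KZB setting, is the bookkeeping of the dynamical weight shifts $\mh_{V_j}$ when distributing the single coarse $R$-matrices into the fine products via the hexagon identities, and ensuring that the fusion-operator conjugation $\mathscr{J}^\vee_{\bs{S}}(\mu-\sigma)\,(\cdot)\,\mathscr{J}^\vee_{\bs{S}}(\mu-\sigma)^{-1}$ is exactly what re-expresses $w_1\otimes w_2$ in terms of the individual vectors $v_1\otimes\cdots\otimes v_k$ under $\Xi$. This is routine but delicate. Since the statement asks only for the main steps, I would present the reduction to Corollary \ref{thm MR for Y}, the computation of $\langle\Psi\rangle$ paralleling Lemma \ref{lemma expectation value of Psi}, and the hexagon-based refactorization, and then note that the remaining verification is a direct computation analogous to the proof of Corollary \ref{cor qKZB for Theta}.
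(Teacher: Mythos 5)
Your proposal is correct and matches the paper's proof essentially step for step: the paper makes the same choice of $w_1,w_2,g_1,g_2$ via fusion operators, identifies $\Xi_{S,i}^{v_1,\dots,v_k\vert f_k,\dots,f_1}(\lambda,\mu)=\mathcal{Z}_{\bs{S}}^{\bs{w},\bs{g}}(\lambda,\mu)$ by invoking the expectation-value computation of Lemma \ref{lemma expectation value of Psi} as the special case $S_3=\emptyset$ (rather than re-deriving it graphically, as you suggest, but the content is identical), and then reformulates Corollary \ref{thm MR for Y} using the hexagon identities of Proposition \ref{prop hexagon for dynamical R-matrix}, the coproduct factorization of $\kappa$, and the $\mathfrak{h}$-linearity of the dynamical fusion operators. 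The weight-shift bookkeeping you flag as the remaining delicate point is exactly what the paper's equations \eqref{the funny upsilon}--\eqref{the funny varpi again} carry out.
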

\begin{proof}
Set 
\begin{equation*}
\begin{split}
\nu_1&:=\xi_1+\cdots+\xi_i,\qquad \nu_2:=\xi_{i+1}+\cdots+\xi_k,\\
\nu_1'&:=\xi_1'+\cdots+\xi_i',\qquad \nu_2':=\xi_{i+1}'+\cdots+\xi_k',
\end{split}
\end{equation*}
so that $\nu_1+\nu_2=0=\nu_1'+\nu_2'$. Define vectors $w_j\in\cF^\str(S_j)[\nu_j]$ and covectors $g_j\in\cF^\str(S_j^*)[\nu_j']$ for $j=1,2$ by
\begin{align}\label{choice of w, g}
\begin{split}
	w_1 = j_{(V_1,\dots,V_i)}(\mu-\nu_2)(v_1\otimes\dots\otimes v_i), \qquad w_2 = j_{(V_{i+1},\dots,V_k)}(\mu)(v_{i+1}\otimes\dots\otimes v_k),\\
	g_1 = j_{(V_i^\ast,\dots,V_1^\ast)}(\lambda-\nu'_2)(f_i\otimes\dots\otimes f_1), \qquad g_2 = j_{(V_k^\ast,\dots,V_{i+1}^\ast)}(\lambda)(f_k\otimes\dots\otimes f_{i+1}).
\end{split}
\end{align}
Then 
\[
\Phi_{\mu;S_1,S_2}^{w_1,w_2} = \Phi_\mu^{v_1,\dots,v_k}
\]
by (\ref{large intertwiner}), and for the dual intertwiner $\Psi_{\lambda;S_2^*,S_1^*}^{g_2,g_1}$ 
(see \eqref{Psi2}) we have
\[
\langle \Psi_{\lambda;S_2^*,S_1^*}^{g_2,g_1}\rangle=\mathbb{A}_{\lambda,i}(f_k\otimes\cdots\otimes f_1)
\]
by Lemma \ref{prop Theta is Z} (it is the special case that $S_3=\emptyset$). Hence
	\begin{equation}
	\label{Xi is also Z}
	\Xi_{S,i}^{v_1,\dots,v_k\vert f_k,\dots, f_1}(\lambda,\mu) = \mathcal{Z}_{\bs{S}}^{\bs{w},\bs{g}}(\lambda,\mu)
	\end{equation}
	by \eqref{Y def MR} and \eqref{Zspec}, where $\bs{S}=(S_1,S_2)$, $\bs{w}=(w_1,w_2)$ and $\bs{g}=(g_1,g_2)$.
	
	Note also that by Proposition \ref{prop hexagon for dynamical R-matrix} we have
	\[
	\mathcal{K}_{\mu,\sigma,W}^{\vee,(i)} = \Tr_{W[\sigma]}\left(\Rdyn^{21}_{W,(V_{i+1},\dots,V_k)}(\mu)^{-1}\Rdyn_{W,(V_1,\dots,V_i)}(\mu-\mh_{(V_{i+1},\dots,V_k)}) \right),
	\]
	and moreover
	\begin{equation}\label{Dal}
	\mathcal{D}_{\lambda,W}^{\vee,(i)} = 
	\Tr_W\left((q^{-2\lambda-2\rho})_W\,\kappa^2_{W,(V_k^\ast,\dots,V_{i+1}^\ast)} \right).
	\end{equation}
	Consequently, the result of Corollary \ref{thm MR for Y} can be reformulated as
	\[
	\sum_{n} \mathcal{Z}_{\boldsymbol{S}}^{\bs{w},(\varpi_{2}^n, \varpi_{1}^n)}(\lambda,\mu) = \sum_{m}\sum_{\sigma\in\wts(W)} \mathcal{Z}_{\boldsymbol{S}}^{(\upsilon_{1,\sigma}^m, \upsilon_{2,\sigma}^m), \bs{g}}(\lambda,\mu-\sigma),
	\]
	where \(\upsilon_{j,\sigma}^m\in\cF^\str(S_j)\) and \(\varpi_{j}^n\in\cF^\str(S_j^\ast)\) are homogeneous vectors such that 
	\begin{equation}
	\label{the funny upsilon}
	\sum_m \upsilon_{1,\sigma}^m\otimes \upsilon_{2,\sigma}^m=\mathscr{J}^\vee_{\boldsymbol{S}}(\mu-\sigma)\,\mathcal{K}_{\mu,\sigma,W}^{\vee,(i)}
	(v_1\otimes\dots\otimes v_k),
	\end{equation}
	and
	\begin{equation}
	\label{the funny varpi again}
	\begin{split}
	\sum_n \varpi_{2}^n\otimes \varpi_{1}^n&= \mathcal{D}_{\lambda,W}^{\vee,(i)}\bigl(j_{(V_k^\ast,\dots,V_{i+1}^\ast)}(\lambda)\otimes j_{(V_{i}^\ast,\dots,V_1^\ast)}(\lambda-\nu_2') \bigr)(f_k\otimes\dots\otimes f_1)\\
	&=\bigl(j_{(V_k^\ast,\dots,V_{i+1}^\ast)}(\lambda)\otimes j_{(V_{i}^\ast,\dots,V_1^\ast)}(\lambda-\nu_2') \bigr)\mathcal{D}_{\lambda,W}^{\vee,(i)}
	(f_k\otimes\cdots\otimes f_1)
	\end{split}
	\end{equation}
	where the last equality in \eqref{the funny varpi again} follows from \eqref{Dal} and the $\mathfrak{h}$-linearity of the dynamical fusion operators.
	The result now immediately follows from the identity (\ref{Xi is also Z}).
\end{proof}

The next step is proving that the normalized weighted trace functions \(\TT_S^{v_1,\dots,v_k\vert f_k,\dots, f_1}(\lambda,\mu)\) satisfy the same equations (\ref{MR for Theta}) of dual MR type. The reasoning goes along the same lines as in Section \ref{Subsection Etingof-Varchenko normalization} for the dual $q$-KZB equations. By definition we have
\begin{equation}
\label{expression for Xi}
\Xi_{S,i}^{v_1,\dots,v_k\vert f_k,\dots,f_1}(\lambda,\mu) = \widehat{e}_{\ul{S}}\left(\Tr_{M_\mu}(\Phi_\mu^{v_1,\dots,v_k}\circ\pi_\mu(q^{2\lambda+2\rho}))\tens\langle\Psi\rangle\right),
\end{equation}
where \(\Psi\in \Hom_{\cM_{\mr{adm}}^\str}(M_\lambda,S^\ast\tens M_\lambda)\) is such that \(\langle\Psi\rangle = \mathbb{A}_{\lambda,i}(f_k\otimes\dots\otimes f_1)\).  We now consider for which other intertwiners \(\Psi\in\Hom_{\cM_{\mr{adm}}^\str}(M_\lambda,S^\ast\tens M_\lambda)\) the resulting expression \eqref{expression for Xi} satisfies
equation \eqref{MR for Theta}.  It turns out that $\Psi$ should satisfy the equation obtained from Figure \ref{before topological MR}
  upon taking highest-weight-to-highest-weight components (with the appropriate choices for the $\lambda_j$). 
\begin{lemma}\label{lemma equation for Psi}
Let \(W\in\Rep\), \(\lambda_0,\lambda_1,\lambda_2\in\hh_{\mathrm{reg}}^\ast\) and $S=S_1\tens S_2\in\Rep^\str$.
The intertwiner
$\Psi\in\textup{Hom}_{\mathcal{M}_{\textup{adm}}^\str}\bigl(M_{\lambda_2},S^\ast\tens M_{\lambda_0}\bigr)$ satisfies
\begin{figure}[H]
		\centering
		\includegraphics[scale=0.75]{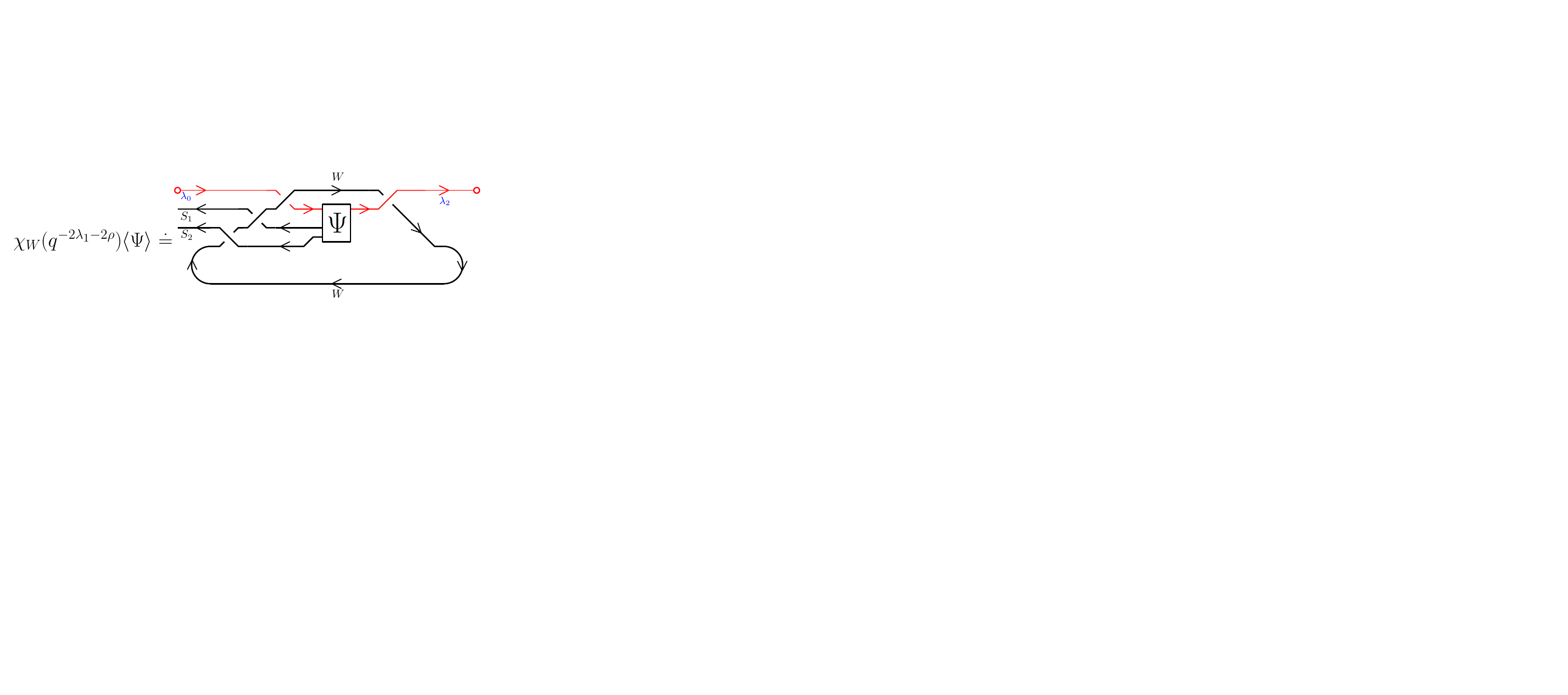}
\end{figure}
if and only if 
\begin{equation}
	\label{eq for Psi}
	\chi_W(q^{-2\lambda_1-2\rho})\langle\Psi\rangle = \Tr_W\left(\cR^{21}_{W,S_2^\ast}\,\cR^{-1}_{W,S_1^\ast}\,(q^{-\lambda_0-\lambda_2-2\rho})_W\right)\langle\Psi\rangle
	\end{equation}
in $\cF^\str(\ul{S^*})[\lambda_2-\lambda_0]$.
\end{lemma}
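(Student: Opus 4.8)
The plan is to establish the stated equivalence by applying the graphical calculus for $\mathcal{M}_{\textup{adm}}^\str$ together with Lemma \ref{lemma 3.10}, following the template of the proof of Lemma \ref{generalizelemma} (which in turn refers to the proof of \cite[Prop. 2.11]{DeClercq&Reshetikhin&Stokman-2022}). The key observation is that the displayed graphical identity is precisely the highest-weight-to-highest-weight reduction of the identity in Lemma \ref{lemma before topological MR}. Concretely, the diagrammatic equation in the statement asserts that attaching highest-weight boundary conditions $\mathbf{m}_{\lambda_2}$ and $\mathbf{m}_{\lambda_0}^*$ to the incoming and outgoing Verma strands of both sides of Figure \ref{before topological MR} produces equal morphisms in $\mathcal{N}_{\textup{fd}}^\str$. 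So the first step is to write out what each side of the graphical identity computes to, in terms of the expectation value $\langle\Psi\rangle \in \cF^\str(\ul{S^*})[\lambda_2-\lambda_0]$.

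First I would treat the left-hand side. By Lemma \ref{lemma Omega tilde}, the action of the central element $\widetilde{\Omega}_W$ on the Verma strand labeled by $M_{\lambda_1}$ contributes the scalar $\chi_W(q^{-2\lambda_1-2\rho})$, and since $\Psi$ decomposes as $(\id_{S_2^\ast}\tens\Psi_1)\Psi_2$ through the intermediate Verma module $M_{\lambda_1}$, imposing highest-weight conditions on the outer Verma strands extracts exactly $\chi_W(q^{-2\lambda_1-2\rho})\langle\Psi\rangle$. This is the left side of \eqref{eq for Psi}. For the right-hand side I would start from the right side of Figure \ref{before topological MR}, where the $W$-loop has been pulled over $\Psi_1$ and under $\Psi_2$ so that it encircles $\Psi$ as a whole, and then impose the highest-weight-to-highest-weight boundary conditions. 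The strategy here is to use Lemma \ref{lemma 3.10} to resolve the two crossings between the $W$-strand and the Verma strand against the highest weight vector: the crossing read off against $\mathbf{m}_{\lambda}$ collapses $\cR$ (resp. $\cR^{21}$) to the diagonal Cartan contribution, producing the factors $q^{-\lambda_0}$ and $q^{-\lambda_2}$ on $W$ as in the computation in the proof of Lemma \ref{lemma Omega tilde}. Together with the $q^{-2\rho}$ coming from the $\widetilde{e}_W$-type cap (cf. \eqref{tildeOmega} and \eqref{dualities_A}), this accounts for the $(q^{-\lambda_0-\lambda_2-2\rho})_W$ factor. Taking the partial trace over $W$ of the remaining $\cR^{21}_{W,S_2^\ast}\,\cR^{-1}_{W,S_1^\ast}$ acting on the spin spaces, applied to $\langle\Psi\rangle$, then yields the right side of \eqref{eq for Psi}.

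The logical skeleton is that the graphical identity holds if and only if its $\mathcal{F}_{\mathcal{M}_{\textup{adm}}^\str}^{\textup{br}}$-image, after imposing highest-weight-to-highest-weight boundary conditions, holds in $\textup{Hom}_{\mathcal{N}_{\textup{fd}}^\str}(\CC_{\lambda_2},\ul{S^*}\tens\CC_{\lambda_0})$; since both the diagram-level boundary reduction and the algebraic translation via Lemma \ref{lemma 3.10} are reversible, I would phrase the whole argument as a chain of equivalences rather than a one-directional implication. The forward direction (graphical identity $\Rightarrow$ \eqref{eq for Psi}) is immediate from the above computation; for the converse I would note that an intertwiner $\Psi$ is uniquely determined by its expectation value $\langle\Psi\rangle$ via Remark \ref{remark dual quantum vertex operators}, so that equation \eqref{eq for Psi} in $\cF^\str(\ul{S^*})[\lambda_2-\lambda_0]$ is genuinely equivalent to equality of the two morphisms obtained after boundary reduction, hence to the graphical identity.

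The main obstacle I anticipate is bookkeeping the precise Cartan factors and the direction conventions when applying Lemma \ref{lemma 3.10} to both crossings simultaneously, particularly tracking how the $q^{-\lambda_0}$ and $q^{-\lambda_2}$ contributions combine with the $q^{-2\rho}$ from the twisted cap to give exactly $(q^{-\lambda_0-\lambda_2-2\rho})_W$, and ensuring that the order of $\cR^{21}_{W,S_2^\ast}$ and $\cR^{-1}_{W,S_1^\ast}$ emerges correctly from the way the $W$-strand was routed over $S_2$ and under $S_1$ in Lemma \ref{lemma before topological MR}. This is entirely parallel to the careful normalization already carried out in the proof of Lemma \ref{lemma Omega tilde}, so I expect no genuine difficulty beyond diligent tracking; the conceptual content is fully supplied by Lemma \ref{lemma before topological MR} and Lemma \ref{lemma 3.10}.
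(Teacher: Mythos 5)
Your proposal follows essentially the same route as the paper's proof, which consists of exactly the two steps you describe: the graphical identity is equivalent to $\chi_W(q^{-2\lambda_1-2\rho})\langle\Psi\rangle$ being represented by the right-hand diagram with highest-weight-to-highest-weight conditions imposed, and Lemma \ref{lemma 3.10} then resolves the two crossings of the $W$-strand with the Verma strand into the Cartan factors, producing the partial trace $\Tr_W\bigl(\cR^{21}_{W,S_2^\ast}\,\cR^{-1}_{W,S_1^\ast}\,(q^{-\lambda_0-\lambda_2-2\rho})_W\bigr)$ on the right of \eqref{eq for Psi}.

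One caveat is worth correcting. Your treatment of the left-hand side invokes a decomposition $\Psi=(\id_{S_2^\ast}\tens\Psi_1)\Psi_2$ through an intermediate Verma module $M_{\lambda_1}$, together with the action of $\widetilde{\Omega}_W$ on that intermediate strand. For the general intertwiner $\Psi\in\Hom_{\cM_{\mr{adm}}^\str}(M_{\lambda_2},S^\ast\tens M_{\lambda_0})$ of the statement no such factorization is given, and none is needed: the weight $\lambda_1$ enters the graphical identity only through the explicit scalar $\chi_W(q^{-2\lambda_1-2\rho})$ decorating the left-hand coupon, exactly as in Figure \ref{before topological MR}, so the left side of \eqref{eq for Psi} is read off directly rather than derived from a loop around an intermediate Verma strand. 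This point is not cosmetic, since the purpose of the lemma (see Corollary \ref{generalcaseMR}) is precisely to characterize intertwiners that need not factor through $M_{\lambda_1}$; baking the factorization into the argument would collapse the characterization to the class already covered by Lemma \ref{lemma before topological MR}. With that reading corrected, the remainder of your argument — the resolution of the crossings via Lemma \ref{lemma 3.10} and the bidirectionality, justified by the fact that $\Psi$ is determined by its expectation value — is sound and coincides with the paper's proof.
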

\begin{proof}
	In the graphical calculus for \(\Nadm^\str\), Figure NEW ONE is equivalent to \(\chi_W(q^{-2\lambda_1-2\rho})\langle\Psi\rangle\) being represented by
	\begin{center}
		\includegraphics[scale=0.75]{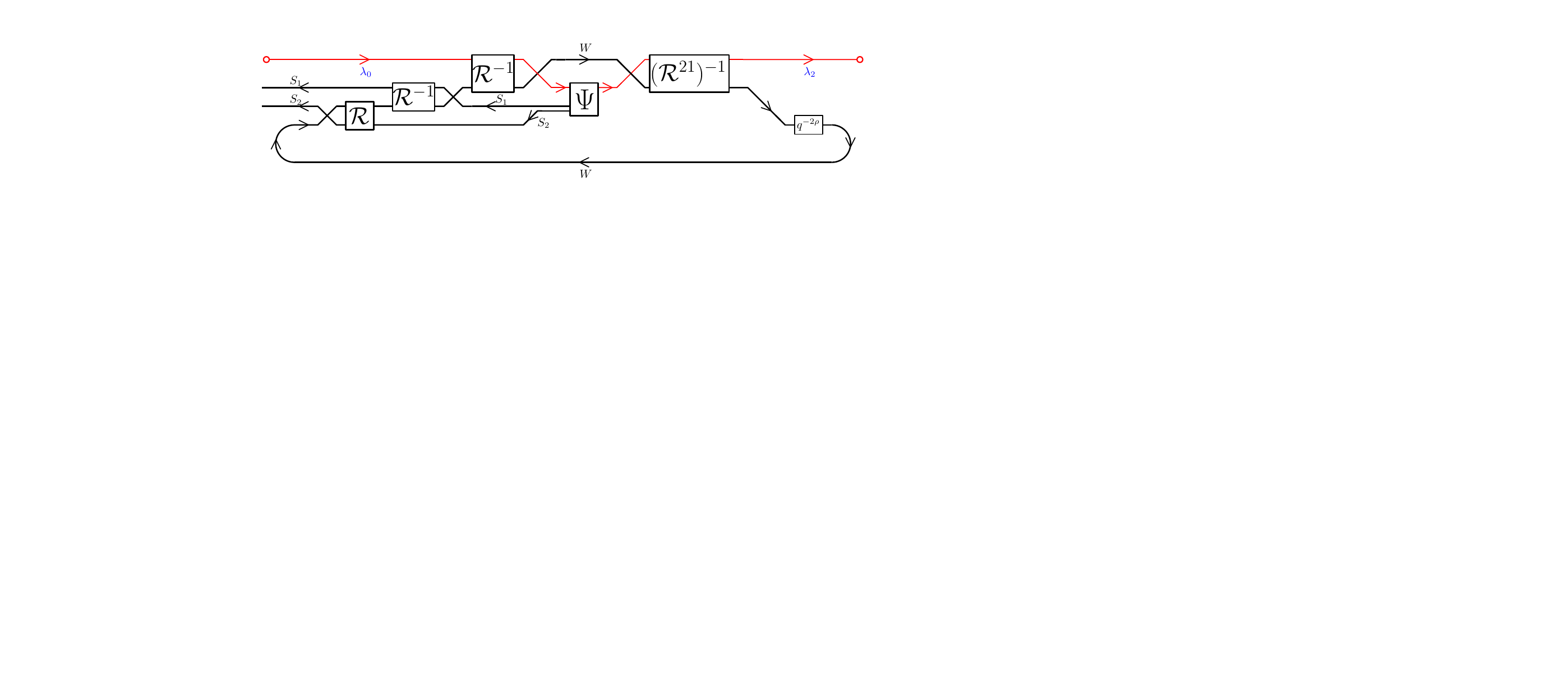}
	\end{center}
	By Lemma \ref{lemma 3.10}, this becomes
	\begin{center}
		\includegraphics[scale=0.75]{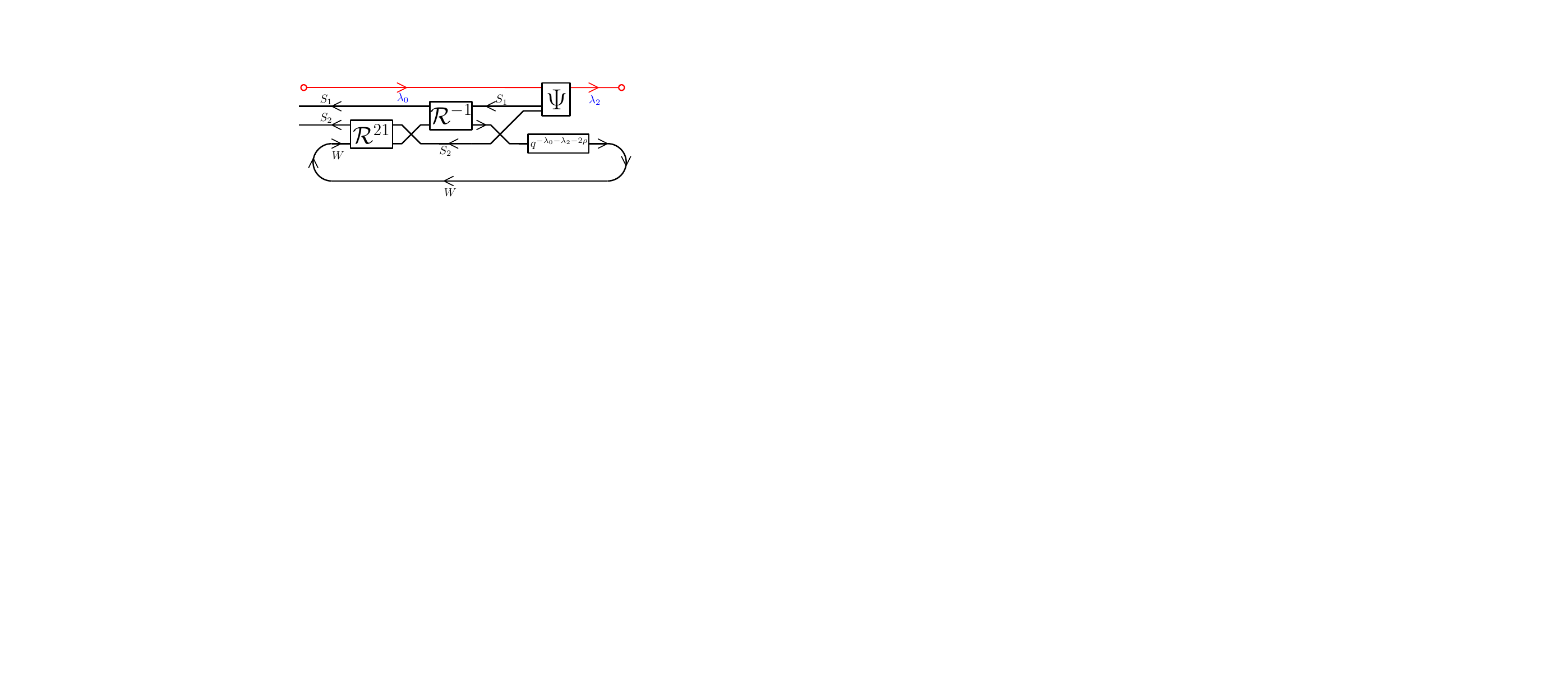}
	\end{center}
	which represents the right-hand side of (\ref{eq for Psi}).
\end{proof}
\noindent
Note that Lemma \ref{lemma before topological MR} gives a large class of dual quantum vertex operators $\Psi$ that satisfy \eqref{eq for Psi}.

\begin{corollary}\label{generalcaseMR}
Suppose that $\widetilde{\mathbb{A}}_{\lambda}\in\textup{End}_{\mathbb{C}}\bigl(\cF^\str(\ul{S^*})[0]\bigr)$ satisfies 
\begin{equation}
\label{equations for L}
\widetilde{\mathbb{A}}_\lambda\circ\mathcal{D}_{\lambda,W}^{\vee,(i)} = \Tr_W\left(\cR^{21}_{W,(V_k^\ast,\dots,V_{i+1}^\ast)}\cR^{-1}_{W,(V_i^\ast,\dots,V_1^\ast)}\,(q^{-2\lambda-2\rho})_W\right)\circ\widetilde{\mathbb{A}}_\lambda
\end{equation}
for all \(i = 0,\dots,k\) and \(W\in\Rep\). Then
\begin{equation}
\label{expression for tilde Xi}
\widetilde{\Xi}_{S}^{v_1,\dots,v_k\vert f_k,\dots,f_1}(\lambda,\mu) := \widehat{e}_{\ul{S}}\left(\Tr_{M_\mu}(\Phi_\mu^{v_1,\dots,v_k}\circ\pi_\mu(q^{2\lambda+2\rho}))\tens\widetilde{\mathbb{A}}_\lambda(f_k\otimes\cdots\otimes f_1)\right)
\end{equation}
satisfies 
\begin{equation}
	\label{MR for tilde Theta}
	\widetilde{\Xi}_{S}^{v_1,\dots,v_k\vert \mathcal{D}_{\lambda,W}^{\vee,(i)}(f_k,\dots,f_1)}(\lambda,\mu) = \sum_{\sigma\in\wts(W)}
	\widetilde{\Xi}_{S}^{\mathcal{K}_{\mu,\sigma,W}^{\vee,(i)}(v_1,\dots,v_k)\vert f_k,\dots,f_1}(\lambda,\mu-\sigma)
	\end{equation}
	for $i=0,\ldots,k$ and $W\in\Rep$.
\end{corollary}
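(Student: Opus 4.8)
The plan is to mimic the structure of the proof of Proposition~\ref{almostEVdualqKZB} from the dual $q$-KZB section, adapting it to the MR setting. The key insight is that Proposition~\ref{prop MR for Theta} already establishes the desired equation \eqref{MR for Theta} for the specific spin component $\Xi_{S,i}^{v_1,\dots,v_k\vert f_k,\dots,f_1}(\lambda,\mu)$ built from the operator $\mathbb{A}_{\lambda,i}$, which corresponds to the dual quantum vertex operator $\Psi=\Psi_{\lambda;S_2^*,S_1^*}^{g_2,g_1}$ with the $g_j$ chosen as in \eqref{choice of w, g}. The goal is to replace the $i$-dependent operator $\mathbb{A}_{\lambda,i}\vert_{\cF^\str(\ul{S^*})[0]}$ by the single $i$-independent operator $\widetilde{\mathbb{A}}_\lambda$, which amounts graphically to replacing $\Psi_{\lambda;S_2^*,S_1^*}^{g_2,g_1}$ by an arbitrary intertwiner $\Psi\in\Hom_{\cM_{\mr{adm}}^\str}(M_\lambda,S^*\tens M_\lambda)$ whose expectation value is $\widetilde{\mathbb{A}}_\lambda(f_k\otimes\cdots\otimes f_1)$.

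First I would observe that the graphical derivation of Proposition~\ref{prop MR for Theta} (via Proposition~\ref{prop also for MR}, Lemma~\ref{lemma boundary MR with general xi} and ultimately Lemma~\ref{lemma before topological MR}) only uses the intertwiner $\Psi$ through the identity expressed in the figure of Lemma~\ref{lemma equation for Psi}, with highest-weight-to-highest-weight conditions imposed on the Verma line. Thus the entire chain of reasoning producing \eqref{MR for Theta} remains valid for \emph{any} $\Psi$ satisfying that identity, exactly as in the $q$-KZB case where the analogous role was played by Lemma~\ref{generalizelemma} and Figure~\ref{SS1}. By Lemma~\ref{lemma equation for Psi}, for our splitting \eqref{SMRsplit} with $S_1=(V_1,\ldots,V_i)$, $S_2=(V_{i+1},\ldots,V_k)$ and the weight shifts $\lambda_0=\lambda_2=\lambda$, $\lambda_1=\lambda-\nu_2'$ dictated by \eqref{lambda_j MR def}, this identity holds if and only if $\langle\Psi\rangle$ satisfies equation \eqref{eq for Psi}, namely
\[
\chi_W(q^{-2\lambda_1-2\rho})\langle\Psi\rangle = \Tr_W\left(\cR^{21}_{W,S_2^\ast}\,\cR^{-1}_{W,S_1^\ast}\,(q^{-2\lambda-2\rho})_W\right)\langle\Psi\rangle.
\]

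Next I would verify that if $\langle\Psi\rangle=\widetilde{\mathbb{A}}_\lambda(f_k\otimes\cdots\otimes f_1)$ with $\widetilde{\mathbb{A}}_\lambda$ satisfying the hypothesis \eqref{equations for L}, then $\langle\Psi\rangle$ indeed solves \eqref{eq for Psi}. This is the crux of the argument, and it requires matching the normalisation scalars: since $f_j\in V_j^*[\xi_j']$ with $\nu_2'=\xi_{i+1}'+\cdots+\xi_k'$ and $\lambda_1=\lambda-\nu_2'$, the left-hand scalar $\chi_W(q^{-2\lambda_1-2\rho})=\chi_W(q^{-2\lambda-2\rho+2\nu_2'})$ must be reconciled with the operator $\mathcal{D}_{\lambda,W}^{\vee,(i)}$ appearing in \eqref{equations for L}. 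By definition \eqref{D_lambda,i vee def} and the computation \eqref{Dal}, $\mathcal{D}_{\lambda,W}^{\vee,(i)}=\Tr_W\left((q^{-2\lambda-2\rho})_W\,\kappa^2_{W,(V_k^*,\ldots,V_{i+1}^*)}\right)$ acts on $f_k\otimes\cdots\otimes f_1$ by a scalar, and I expect this scalar to be precisely $\chi_W(q^{-2\lambda_1-2\rho})$; the factor $\kappa^2_{W,(V_k^*,\ldots,V_{i+1}^*)}$ contributes the shift $q^{2\nu_2'}$ on the $W$-trace via the standard computation with $\kappa=q^{\sum_i x_i\otimes x_i}$. The main obstacle will be tracking this scalar bookkeeping carefully together with the fact that $\widetilde{\mathbb{A}}_\lambda$ is only required to satisfy \eqref{equations for L} on the zero-weight space, so I must restrict all identities to $\cF^\str(\ul{S^*})[0]$ and confirm that composing \eqref{equations for L} against $f_k\otimes\cdots\otimes f_1$ reproduces \eqref{eq for Psi} after this scalar identification. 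Once this is established, plugging $\langle\Psi\rangle=\widetilde{\mathbb{A}}_\lambda(f_k\otimes\cdots\otimes f_1)$ into the graphical derivation yields \eqref{MR for tilde Theta} directly, completing the proof.
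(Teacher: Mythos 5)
Your proposal is correct and follows essentially the same route as the paper: revisit the graphical derivation behind Corollary \ref{thm MR for Y}/Proposition \ref{prop MR for Theta}, note via Lemma \ref{lemma equation for Psi} that it only requires $\langle\Psi\rangle$ to satisfy \eqref{eq for Psi} with $(\lambda_0,\lambda_1,\lambda_2)=(\lambda,\lambda-\nu_2',\lambda)$, and check that $\langle\Psi\rangle=\widetilde{\mathbb{A}}_\lambda(f_k\otimes\cdots\otimes f_1)$ satisfies this when \eqref{equations for L} holds. Your scalar identification $\mathcal{D}_{\lambda,W}^{\vee,(i)}(f_k\otimes\cdots\otimes f_1)=\chi_W(q^{-2\lambda_1-2\rho})\,f_k\otimes\cdots\otimes f_1$ is exactly the bookkeeping the paper relies on (it is the same computation as in the proof of Corollary \ref{thm MR for Y}), so no gap remains.
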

\begin{proof}

Revisiting the proof of Corollary \ref{thm MR for Y} and taking Lemma \ref{lemma equation for Psi} into account, it follows that formula \eqref{equation MR with general xi} 
for $\mathcal{Z}_{\bs{S}}^{\bs{w},\bs{g}}(\lambda,\mu)$ will also hold true when the intertwiner $\Psi=\Psi_{\lambda;S_2^*,S_1^*}^{g_2,g_1}$ in the definition of $\mathcal{Z}_{\bs{S}}^{\bs{w},\bs{g}}(\lambda,\mu)$
(see \eqref{Zspec} and \eqref{Y def MR}) is replaced by any intertwiner $\Psi\in\textup{Hom}_{\mathcal{M}_{\textup{adm}}^\str}\bigl(M_\lambda,S^*\tens M_\lambda\bigr)$ with expectation value satisfying \eqref{eq for Psi} for 
\begin{equation}\label{lambdaspecial}
(\lambda_0,\lambda_1,\lambda_2)=(\lambda,\lambda-\xi_{i+1}'\cdots-\xi_{k}',\lambda).
\end{equation}
It remains to note that this holds true if the expectation value of $\Psi\in\textup{Hom}_{\mathcal{M}_{\textup{adm}}^\str}(M_\lambda,S^*\tens M_\lambda)$ is of the form
$\langle\Psi\rangle=\widetilde{\mathbb{A}}_\lambda(f_k\otimes\cdots\otimes f_1)$ with $\widetilde{\mathbb{A}}_\lambda$ satisfying \eqref{equations for L}.
\end{proof}
The next goal is to show that $\widetilde{\mathbb{A}}_\lambda=(j_S(-\lambda-2\rho)^{-1})^T\vert_{\cF^\str(\ul{S^*})[0]}$ satisfies \eqref{equations for L}. For this we require the following lemma.
\begin{lemma}
	\label{lemma before inverse MR}
	Let \(W\in\Rep\), \(\lambda_0,\lambda_1,\lambda_2\in\hh_{\mathrm{reg}}^\ast\) and $S=S_1\tens S_2$. Define 
	\[
	\Phi:= (\Phi_1\tens\id_{S_2})\Phi_2 \in \Hom_{\cM_{\mr{adm}}^\str}(M_{\lambda_2}, M_{\lambda_0}\tens S)
	\]
	with \(\Phi_i\in\Hom_{\cM_{\mr{adm}}^\str}(M_{\lambda_i},M_{\lambda_{i-1}}\tens S_i)\) for \(i = 1,2\). 
	Then we have
	\begin{equation}
	\label{eq for Phi}
	\chi_W(q^{2\lambda_1+2\rho})\langle\Phi\rangle = \Tr_W\left((\cR^{21}_{W,S_2})^{-1}\cR_{W,S_1}\,(q^{\lambda_0+\lambda_2+2\rho})_W\right)\langle\Phi\rangle.
	\end{equation}
\end{lemma}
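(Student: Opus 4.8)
The statement for $\Phi$ in Lemma \ref{lemma before inverse MR} is the direct (non-dual) counterpart of the expectation-value identity \eqref{eq for Psi} for the dual intertwiner $\Psi$ proved in Lemma \ref{lemma equation for Psi}. The plan is therefore to mirror the proof of Lemma \ref{lemma equation for Psi} using the graphical calculus for $\cN_{\mr{adm}}^\str$, but now starting from the \emph{non-dual} version of the topological move in Lemma \ref{lemma before topological MR}. Concretely, the central element $\widetilde{\Omega}_W$ from \eqref{tildeOmega} acts on $M_{\lambda_1}$ by the scalar $\chi_W(q^{-2\lambda_1-2\rho})$ (Lemma \ref{lemma Omega tilde}); for the present direct statement one instead inserts the \emph{inverse} construction, i.e.\ the central element $\Omega_W$ from \eqref{Omega_W def}, which by the same graphical computation acts on $M_{\lambda_1}$ by the scalar $\chi_W(q^{2\lambda_1+2\rho})$ (the sign change in the exponent is exactly the difference between using $\cR^{21}\cR$ with $q^{2\rho}$ versus $\cR^{-1}(\cR^{21})^{-1}$ with $q^{-2\rho}$, compare \eqref{Omega_W def} and \eqref{tildeOmega}). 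This produces the left-hand scalar $\chi_W(q^{2\lambda_1+2\rho})$ in \eqref{eq for Phi}.

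\textbf{Key steps in order.} First I would record the analogue of Lemma \ref{lemma Omega tilde} for $\Omega_W$, namely that $\pi_{\lambda_1}(\Omega_W)=\chi_W(q^{2\lambda_1+2\rho})\,\id_{M_{\lambda_1}}$, and its graphical representation as a $W$-loop encircling the Verma strand $M_{\lambda_1}$ (with orientations opposite to those in Figure \ref{Omega tilde A}). Second, I would insert this loop on the intermediate Verma strand $M_{\lambda_1}$ inside the composition $\Phi=(\Phi_1\tens\id_{S_2})\Phi_2$ and, using the graphical calculus for $\cM_{\mr{adm}}$, slide the $W$-strand over the coupon colored by $\Phi_1$ and under the coupon colored by $\Phi_2$ (the non-dual mirror of the move in the proof of Lemma \ref{lemma before topological MR}); this pulls the $W$-loop so that it braids once positively with $S_1$ and once negatively with $S_2$, producing the operator $\Tr_W\!\bigl((\cR^{21}_{W,S_2})^{-1}\cR_{W,S_1}(q^{\lambda_0+\lambda_2+2\rho})_W\bigr)$ acting on the spin spaces. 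Third, I would pass to expectation values: applying $\langle\cdot\rangle$ picks out the highest-weight-to-highest-weight component in the Verma strands, turning the diagrammatic identity into the claimed equality \eqref{eq for Phi} in $\cF^\str(\ul{S})[\lambda_2-\lambda_0]$, exactly as the pairing step at the end of Lemma \ref{lemma equation for Psi} produces \eqref{eq for Psi}. The factor $q^{\lambda_0+\lambda_2+2\rho}$ (rather than $q^{-\lambda_0-\lambda_2-2\rho}$ in the dual case) arises because for the non-dual intertwiner the evaluation at the highest weight vector $\mathbf{m}_{\lambda_2}$ and the twist by $q^{2\rho}$ combine with the incoming highest weight $\lambda_0$ with the opposite sign, compare with the trace computation in the proof of Lemma \ref{lemma Omega tilde}.

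\textbf{Main obstacle.} The routine part is the algebra; the genuinely delicate point will be bookkeeping the orientations and the placement of the $q^{2\rho}$-ribbon factors so that the exponents come out as $q^{\lambda_0+\lambda_2+2\rho}$ and the $R$-matrices as $(\cR^{21}_{W,S_2})^{-1}\cR_{W,S_1}$ rather than their inverses or flips. Since $\cM_{\mr{adm}}$ lacks rigidity, one must be careful that every cup/cap used on the $W$-strand is a genuine $\Rep^\str$-(co)evaluation — the $W$-loop lives entirely in the ribbon category $\Rep^\str$ and only the Verma strand $M_{\lambda_1}$ is in $\cM_{\mr{adm}}^\str$ — so that the topological moves are legitimate. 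Once the loop is correctly oriented and the scalar $\chi_W(q^{2\lambda_1+2\rho})$ extracted from $\Omega_W$ acting on $M_{\lambda_1}$, the identity \eqref{eq for Phi} follows; I expect the proof to be at most a few lines of graphical manipulation paralleling Lemma \ref{lemma equation for Psi}, with the sign/orientation check being the only subtlety.
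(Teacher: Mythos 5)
Your proposal is correct and follows essentially the same route as the paper, whose proof is just a one-line reduction: derive the analogue of Lemma \ref{lemma before topological MR} for quantum vertex operators $\Phi$ (i.e.\ insert the monodromy central element $\Omega_W$ of \eqref{Omega_W def}, with eigenvalue $\chi_W(q^{2\lambda_1+2\rho})$ on $M_{\lambda_1}$, and slide the $W$-loop past the coupons) and then take highest-weight-to-highest-weight components as in Lemma \ref{lemma equation for Psi}. Your three steps are precisely this argument with the details filled in, and your identification of $\Omega_W$ (positive crossings) as the mirror of $\widetilde{\Omega}_W$, together with the resulting operator $\Tr_W\bigl((\cR^{21}_{W,S_2})^{-1}\cR_{W,S_1}(q^{\lambda_0+\lambda_2+2\rho})_W\bigr)$, is exactly right.
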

\begin{proof}
This follows by deriving the analog of Lemma \ref{lemma before topological MR} for quantum vertex operators $\Phi$ instead of dual quantum vertex operators $\Psi$, and subsequently taking highest-weight-to-highest-weight components (cf. Lemma \ref{lemma equation for Psi}).
\end{proof}
This leads to Macdonald-Ruijsenaars' type equations for the \(k\)-point dynamical fusion operator. 
\begin{corollary}
	\label{lemma inverse MR for j}
	Let \(\lambda\in\hh_{\mathrm{reg}}^\ast\), \(W\in\Rep\) and \(S = (V_1,\dots,V_k)\in\Rep^\str\). Then for any \(i = 0,\dots,k\) we have
	\begin{equation}\label{dMRj}
	\begin{split}
	j_S(\lambda)&\circ \Tr_W\left((q^{2\lambda+2\rho})_W\,\kappa^{-2}_{W,V_{i+1}}\dots\kappa^{-2}_{W,V_k} \right) =\\ 
	=\ & \Tr_W\left((\cR^{21}_{W,(V_{i+1},\dots,V_k)})^{-1}\cR_{W,(V_1,\dots,V_i)}\,(q^{2\lambda+2\rho})_W\,\kappa^{-1}_{W,V_1}\dots\kappa^{-1}_{W,V_k} \right) \circ j_S(\lambda)
	\end{split}
	\end{equation}
in $\textup{End}_{\cN_{\mr{fd}}}(\cF^\str(\ul{S}))$.
\end{corollary}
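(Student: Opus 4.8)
The plan is to derive \eqref{dMRj} from the trace identity \eqref{eq for Phi} of Lemma \ref{lemma before inverse MR} by choosing the factorization of the $k$-point quantum vertex operator that splits $S$ as $S_1\tens S_2$ with $S_1=(V_1,\ldots,V_i)$ and $S_2=(V_{i+1},\ldots,V_k)$, and then extracting the fusion operator from the expectation value. First I would apply Lemma \ref{lemma before inverse MR} with $\Phi=\Phi_\lambda^{v_1,\ldots,v_k}$ written as $(\Phi_1\tens\id_{S_2})\Phi_2$ where $\Phi_1$ produces $S_1$ and $\Phi_2$ produces $S_2$, with the highest weight shifts $\lambda_0,\lambda_1,\lambda_2$ dictated by the weights $\nu_j=\textup{wt}(w_j)$ of the component vectors. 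The key point is that the expectation value $\langle\Phi\rangle$ equals $j_S(\lambda)(v_1\otimes\cdots\otimes v_k)$ by the definition of the dynamical fusion operator, so \eqref{eq for Phi} becomes an identity relating $j_S(\lambda)$ applied to a vector to a trace operator applied to $j_S(\lambda)$ of that same vector.

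The main technical step is to convert the operators $\mathcal{R}_{W,S_1}$, $(\mathcal{R}^{21}_{W,S_2})^{-1}$ and the scalars $\chi_W(q^{2\lambda_1+2\rho})$, $q^{\lambda_0+\lambda_2+2\rho}$ appearing in \eqref{eq for Phi} — which act on tensor products with the full modules $\cF^\str(S_1)$, $\cF^\str(S_2)$ — into the $\kappa$-conjugated form appearing in \eqref{dMRj}. Here I would use the explicit formula for $\kappa=q^{\sum_i x_i\otimes x_i}$ from \eqref{thetakappa} to rewrite the weight-dependent scalars: on a weight vector of weight $\xi_j$ in $V_j$, the shift of $\lambda_1$ relative to $\lambda$ by $-\xi_{i+1}'-\cdots-\xi_k'$ (or its analog for the spin side) is precisely what $\kappa_{W,V_j}^{\pm 1}$ or $\kappa_{W,V_j}^{\pm 2}$ records via the pairing $\langle\cdot,\cdot\rangle$. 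The character $\chi_W(q^{2\lambda_1+2\rho})=\Tr_W(\pi_W(q^{2\lambda_1+2\rho}))$ can be absorbed into the trace over $W$, and the weight shift $\lambda_1\mapsto\lambda$ turns $q^{2\lambda_1+2\rho}$ into $q^{2\lambda+2\rho}$ times the appropriate $\kappa_{W,V_{i+1}}^{-2}\cdots\kappa_{W,V_k}^{-2}$ factor on the left-hand side. Symmetrically, on the right-hand side the scalar $q^{\lambda_0+\lambda_2+2\rho}=q^{2\lambda+2\rho}$ combined with the weight-shift bookkeeping produces the half-power factors $\kappa_{W,V_1}^{-1}\cdots\kappa_{W,V_k}^{-1}$.

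The hard part will be tracking the weight shifts carefully so that the $\kappa$-powers come out exactly as in \eqref{dMRj} — in particular distinguishing the $\kappa^{-2}$ factors (which package the full quadratic shift coming from $q^{2\lambda+2\rho}$ acting through a single $W$-slot) from the $\kappa^{-1}$ factors (which are half-shifts arising from the symmetric splitting of $q^{\lambda_0+\lambda_2+2\rho}$). I expect that the cleanest route is to verify the identity on each weight component $v_1\otimes\cdots\otimes v_k$ with $v_j\in V_j[\xi_j]$ and $\sum_j\xi_j=0$, reducing everything to the scalar action of $\kappa_{W,V_j}$ on $W\otimes V_j[\xi_j]$, namely multiplication by $q^{\langle\cdot,\xi_j\rangle}$ on the $W$-weight components inside the trace; the trace over $W$ then matches on both sides. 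Finally, since the equality $j_S(\lambda)\circ A=B\circ j_S(\lambda)$ holds on every weight vector, and since all operators involved are $\mathfrak{h}$-linear, it holds as an identity in $\textup{End}_{\cN_{\mr{fd}}}(\cF^\str(\ul{S}))$, giving \eqref{dMRj}.
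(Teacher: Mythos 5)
Your overall route is exactly the paper's: apply Lemma \ref{lemma before inverse MR} to $\Phi_\lambda^{v_1,\ldots,v_k}$ factored through $S_1=(V_1,\ldots,V_i)$, $S_2=(V_{i+1},\ldots,V_k)$, use $\langle\Phi_\lambda^{v_1,\ldots,v_k}\rangle=j_S(\lambda)(v_1\otimes\cdots\otimes v_k)$, and convert the weight-dependent scalars into $\kappa$-factors. However, there is a genuine gap in your execution: you verify the identity only on weight vectors with $\sum_j\xi_j=0$. The corollary is an identity in $\textup{End}_{\cN_{\mr{fd}}}(\cF^\str(\ul{S}))$, i.e.\ on \emph{every} weight component of $\cF^\str(\ul{S})$, and $\mathfrak{h}$-linearity does not let you propagate an equality from the zero-weight subspace to the others — it only says each weight space is preserved, so your final extension step is a non sequitur. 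Worse, on $\ul{W}\otimes\cF^\str(\ul{S})[0]$ the factors $\kappa^{-1}_{W,V_1}\cdots\kappa^{-1}_{W,V_k}$ act as the identity (this is precisely the observation the paper exploits afterwards, in the proof of Proposition \ref{propdMRj}), so a zero-weight computation cannot \emph{produce} these factors: it cannot distinguish the stated right-hand side of \eqref{dMRj} from the operator with those factors deleted. Your identification $q^{\lambda_0+\lambda_2+2\rho}=q^{2\lambda+2\rho}$ also holds only under this restriction (in general $\lambda_0+\lambda_2+2\rho=2\lambda+2\rho-\xi_1-\cdots-\xi_k$), which shows the restriction is load-bearing in your argument and not a harmless slip.

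The repair is to run your computation for arbitrary weights $\xi_j$, as the paper does. With $\lambda_0=\lambda-\xi_1-\cdots-\xi_k$, $\lambda_1=\lambda-\xi_{i+1}-\cdots-\xi_k$, $\lambda_2=\lambda$, Lemma \ref{lemma before inverse MR} gives
\begin{equation*}
\chi_W(q^{2\lambda+2\rho-2\xi_{i+1}-\cdots-2\xi_k})\,\langle\Phi_\lambda^{v_1,\ldots,v_k}\rangle
=\Tr_W\Bigl((\cR^{21}_{W,S_2})^{-1}\cR_{W,S_1}\,(q^{2\lambda+2\rho-\xi_1-\cdots-\xi_k})_W\Bigr)\langle\Phi_\lambda^{v_1,\ldots,v_k}\rangle.
\end{equation*}
On the left, $(q^{-2\xi_{i+1}-\cdots-2\xi_k})_W$ restricted to $W[\sigma]$ is the scalar by which $\kappa^{-2}_{W,V_{i+1}}\cdots\kappa^{-2}_{W,V_k}$ acts on $W[\sigma]\otimes v_1\otimes\cdots\otimes v_k$, which matches the left-hand side of \eqref{dMRj} applied to $v_1\otimes\cdots\otimes v_k$. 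On the right you need one further observation, since $j_S(\lambda)(v_1\otimes\cdots\otimes v_k)$ is not a pure tensor: the operator $\kappa^{-1}_{W,V_1}\cdots\kappa^{-1}_{W,V_k}$ acting on $\ul{W}\otimes(\text{total weight }\nu\text{ component})$ equals $(q^{-\nu})_W$, i.e.\ it depends only on the \emph{total} weight; because $j_S(\lambda)$ preserves total weight, this operator acts on $\ul{W}\otimes j_S(\lambda)(v_1\otimes\cdots\otimes v_k)$ as $(q^{-\xi_1-\cdots-\xi_k})_W$, recovering $(q^{\lambda_0+\lambda_2+2\rho})_W$ and hence the right-hand side of \eqref{dMRj}. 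With these two conversions the identity holds on every weight vector of $\cF^\str(\ul{S})$, and only then does $\mathfrak{h}$-linearity legitimately finish the proof.
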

\begin{proof}
It is easy to check that all operators in \eqref{dMRj} are $\mathfrak{h}$-linear.
	Let \(v_j\in V_j[\xi_j]\) and take \(S_1 = (V_1,\dots,V_i)\), \(S_2 = (V_{i+1},\dots,V_k)\), \(\lambda_0 = \lambda-\xi_1-\dots-\xi_k\), \(\lambda_1 = \lambda-\xi_{i+1}-\dots-\xi_k\), \(\lambda_2 = \lambda\), \(\Phi_1=\Phi_{\lambda_1}^{v_1,\dots,v_i}\) and \(\Phi_2=\Phi_\lambda^{v_{i+1},\dots,v_k}\). Then Lemma \ref{lemma before inverse MR} asserts that
	\begin{equation*}
	\begin{split}
	\chi_W(q^{2\lambda+2\rho-2\xi_{i+1}-\dots-2\xi_k})&\langle\Phi_\lambda^{v_1,\dots,v_k}\rangle=\\ 
	=\ & \Tr_W\left((\cR^{21}_{W,(V_{i+1},\dots,V_k)})^{-1}\cR_{W,(V_1,\dots,V_i)}\,(q^{2\lambda+2\rho-\xi_1-\dots-\xi_k})_W\right)\langle\Phi_\lambda^{v_1,\dots,v_k}\rangle.
	\end{split}
	\end{equation*}
	This leads to the desired formula \eqref{dMRj} in view of the definition of \(\kappa\) and the character \(\chi_W\), and upon noting that \(\langle\Phi_\lambda^{v_1,\dots,v_k}\rangle = j_S(\lambda)(v_1\otimes\dots\otimes v_k)\).
\end{proof}
\begin{proposition}\label{propdMRj}
The linear operator $\widetilde{\mathbb{A}}_\lambda:=(j_S(-\lambda-2\rho)^{-1})^T\vert_{\cF^\str(\ul{S^*})[0]}$ satisfies 
\eqref{equations for L} for all $W\in\Rep$ and all $i=0,\ldots,k$.
\end{proposition}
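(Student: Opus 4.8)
The plan is to deduce Proposition~\ref{propdMRj} from the Macdonald-Ruijsenaars type equation~\eqref{dMRj} for the dynamical fusion operator established in Corollary~\ref{lemma inverse MR for j}, by transposing that identity with respect to $\widehat{e}_{\ul{S}}$ and then substituting $\lambda\mapsto -\lambda-2\rho$. The key observation is that $\widetilde{\mathbb{A}}_\lambda=(j_S(-\lambda-2\rho)^{-1})^T\vert_{\cF^\str(\ul{S^*})[0]}$ is essentially the transpose of $j_S(-\lambda-2\rho)^{-1}$, so the two operators flanking $j_S$ in \eqref{dMRj} will become, after transposition, exactly the two operators flanking $\widetilde{\mathbb{A}}_\lambda$ in \eqref{equations for L}.

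First I would apply Corollary~\ref{lemma inverse MR for j} with $\lambda$ replaced by $-\lambda-2\rho$, turning the factor $(q^{2\lambda+2\rho})_W$ into $(q^{-2\lambda-2\rho})_W$. Next I would take the transpose of the resulting identity relative to $\widehat{e}_{\ul{S}}$ (using \eqref{transposedef} and \eqref{dualtranspose}), restricted to the zero weight space $\cF^\str(\ul{S^*})[0]$, where by \eqref{hattilde} the pairings $\widehat{e}_{\ul{S}}$ and $\widetilde{e}_S$ agree. The transpose reverses the order of composition, so the transposed equation reads
\[
\Tr_W\bigl((q^{-2\lambda-2\rho})_W\,\kappa_{W,V_{i+1}}^{-2}\cdots\kappa_{W,V_k}^{-2}\bigr)^T\circ\widetilde{\mathbb{A}}_\lambda
=\widetilde{\mathbb{A}}_\lambda\circ\Tr_W\bigl((\cR^{21}_{W,(V_{i+1},\dots,V_k)})^{-1}\cR_{W,(V_1,\dots,V_i)}(q^{-2\lambda-2\rho})_W\,\kappa^{-1}_{W,V_1}\cdots\kappa^{-1}_{W,V_k}\bigr)^T.
\]
The heart of the proof is then computing these two transposes explicitly. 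I would use the facts recorded just before Lemma~\ref{qKZfusion}, namely $(q^h_{V_j})^T=q^{-h}_{V_j^*}$ (so $(q^{-2\lambda-2\rho})_W$ contributes the scalar character $\chi_W(q^{-2\lambda-2\rho})$ after tracing, or transforms the $W$-weights appropriately) together with the behaviour of $(S\otimes S)\cR=\cR$ under transposition, which converts $\cR_{W,V_j}$ and $\cR^{21}_{W,V_j}$ into factors acting on the dual spaces $V_j^*$, and similarly sends $\kappa_{W,V_j}$ to $\kappa_{W,V_j^*}$ (since $\kappa$ is built from the symmetric element $\sum_i x_i\otimes x_i$, whose transpose in the second leg flips the sign of the $\mathfrak h$-action, matching the dual representation). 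Matching the resulting operators against \eqref{D_lambda,i vee def} for $\mathcal{D}_{\lambda,W}^{\vee,(i)}$ on the left and against the right-hand side of \eqref{equations for L} on the right should yield the claimed identity verbatim.

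The main obstacle I anticipate is bookkeeping the transposes of the $R$-matrix and $\kappa$ factors correctly, in particular tracking how $\cR_{W,(V_1,\dots,V_i)}$ versus $\cR^{21}_{W,(V_{i+1},\dots,V_k)}$ transpose into $\cR^{-1}_{W,(V_i^*,\dots,V_1^*)}$ versus $\cR^{21}_{W,(V_k^*,\dots,V_{i+1}^*)}$, and verifying that the inverses and the $21$-flips land on the correct tensor legs so as to reproduce precisely the operator $\Tr_W(\cR^{21}_{W,(V_k^\ast,\dots,V_{i+1}^\ast)}\cR^{-1}_{W,(V_i^\ast,\dots,V_1^\ast)}(q^{-2\lambda-2\rho})_W)$ appearing in \eqref{equations for L}. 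One must also confirm that the partial trace $\Tr_W$ commutes with transposition in the spin legs, which it does since $W$ is traced out separately from the $V_j$. Once these transposition rules are applied consistently, the equality is forced, and the restriction to the zero weight space guarantees all fusion operators and $R$-matrices are the honest $\mathfrak h$-linear endomorphisms for which these identities were derived.
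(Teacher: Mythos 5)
Your overall strategy is exactly the paper's: specialize Corollary~\ref{lemma inverse MR for j} at $-\lambda-2\rho$, transpose relative to $\widehat{e}_{\ul{S}}$ (using \eqref{hattilde} to work on the zero weight space), and identify the transposed $\kappa$- and $\cR$-trace operators with $\mathcal{D}_{\lambda,W}^{\vee,(i)}$ and the dual-space trace operator appearing in \eqref{equations for L}. However, your displayed transposed identity is wrong, and as written the final matching step would fail. Write $j:=j_S(-\lambda-2\rho)$, $D:=\Tr_W\bigl((q^{-2\lambda-2\rho})_W\,\kappa^{-2}_{W,V_{i+1}}\cdots\kappa^{-2}_{W,V_k}\bigr)$ and $M$ for the $\cR$-matrix trace operator on the right-hand side of \eqref{dMRj}. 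The corollary gives $j\circ D=M\circ j$, whose transpose is $D^T\circ j^T=j^T\circ M^T$. Since $\widetilde{\mathbb{A}}_\lambda=(j^{-1})^T=(j^T)^{-1}$, the identity you need is obtained by conjugating with $(j^T)^{-1}$, namely $\widetilde{\mathbb{A}}_\lambda\circ D^T=M^T\circ\widetilde{\mathbb{A}}_\lambda$ (equivalently: transpose the inverted identity $D\circ j^{-1}=j^{-1}\circ M$). You instead wrote $D^T\circ\widetilde{\mathbb{A}}_\lambda=\widetilde{\mathbb{A}}_\lambda\circ M^T$, which amounts to substituting $\widetilde{\mathbb{A}}_\lambda$ for $j^T$ rather than for its inverse; undoing the transpose, your equation is equivalent to $D\circ j=j\circ M$, i.e.\ the corollary with its sides interchanged, which is false in general. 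Consequently, matching $D^T$ with $\mathcal{D}_{\lambda,W}^{\vee,(i)}$ and $M^T$ with the dual trace operator in your equation would yield $\mathcal{D}_{\lambda,W}^{\vee,(i)}\circ\widetilde{\mathbb{A}}_\lambda=\widetilde{\mathbb{A}}_\lambda\circ\Tr_W\bigl(\cR^{21}_{W,(V_k^\ast,\dots,V_{i+1}^\ast)}\cR^{-1}_{W,(V_i^\ast,\dots,V_1^\ast)}(q^{-2\lambda-2\rho})_W\bigr)$, which is not \eqref{equations for L}. The repair is one line, after which your argument coincides in substance with the paper's proof (the paper organizes it in the reverse order: first compute the transpose of $M$ using $(S\otimes\id)\cR=\cR^{-1}=(\id\otimes S^{-1})\cR$, \eqref{DeltaR} and \eqref{action of q^2rho}, modulo a $q^{\pm 2\rho}$-conjugation that disappears on the zero weight space, and then reduce \eqref{equations for L} to the untransposed equation \eqref{equations for K}, which is the corollary at $-\lambda-2\rho$).

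Two smaller points. First, your transposition rule for $\kappa$ must carry the sign coming from the dual $\mathfrak{h}$-action: $\kappa^{-2}_{W,V_j}$ transposes to $\kappa^{+2}_{W,V_j^*}$, which is precisely what is needed to match \eqref{D_lambda,i vee def}; your literal statement that transposition ``sends $\kappa_{W,V_j}$ to $\kappa_{W,V_j^*}$'' (no exponent flip) contradicts your own parenthetical explanation and would not reproduce $\mathcal{D}_{\lambda,W}^{\vee,(i)}$. Second, you never say why the extra factor $\kappa^{-1}_{W,V_1}\cdots\kappa^{-1}_{W,V_k}$ present in \eqref{dMRj} disappears from \eqref{equations for L}: it acts as the identity on $\ul{W}\otimes\cF^\str(\ul{S})[0]$ because the total weight in the spin legs is zero, as the paper notes explicitly; the same remark is needed after transposition on $\ul{W}\otimes\cF^\str(\ul{S^*})[0]$.
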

\begin{proof}
Note that the linear operator
\[
L:=\Tr_W\left((\cR^{21}_{W,(V_{i+1},\dots,V_k)})^{-1}\cR_{W,(V_1,\dots,V_i)}\,(q^{-2\lambda-2\rho})_W \right) 
\]
on $\cF^\str(S)$ is $\mathfrak{h}$-linear. We claim that $L^T\vert_{\cF^\str(\ul{S^*})[0]}$ (see \eqref{transposedef}) is the restriction of the $\mathfrak{h}$-linear operator
\begin{equation}\label{transposebig}
\Tr_W\left(\cR^{21}_{W,(V_k^\ast,\dots,V_{i+1}^\ast)}\cR^{-1}_{W,(V_i^\ast,\dots,V_1^\ast)}\,(q^{-2\lambda-2\rho})_W\right)
\end{equation}
on $\cF^\str(S^*)$ to $\cF^\str(\ul{S^*})[0]$. To verify this, note that by \eqref{hattilde} $L^T\vert_{\cF^\str(\ul{S^*})[0]}$ is the restriction to $\cF^\str(\ul{S^*})[0]$ of
the transpose of $L$ relative to the pairing 
$\widetilde{e}_S: \cF^\str(S)\otimes\cF^\str(S^*)\rightarrow\mathbb{C}$. To compute the transpose of $L$ relative to $\widetilde{e}_S$, first note that
\begin{equation*}
(\cR^{21}_{W,(V_{i+1},\dots,V_k)})^{-1}\cR_{W,(V_1,\dots,V_i)}=(\cR^{21}_{W,V_k})^{-1}\cdots(\cR^{21}_{W,V_{i+1}})^{-1}\cR_{W,V_i}\cdots\cR_{W,V_1},
\end{equation*}
which follows from  
\begin{equation}\label{DeltaR}
(\textup{id}\otimes\Delta)(\mathcal{R})=\mathcal{R}^{13}\mathcal{R}^{12},\qquad (\Delta\otimes\textup{id})(\mathcal{R})=\mathcal{R}^{13}\mathcal{R}^{23}.
\end{equation} Using in addition that $(\textup{id}\otimes S^{-1})(\mathcal{R})=\mathcal{R}^{-1}=(S\otimes\textup{id})(\mathcal{R})$ and \eqref{action of q^2rho}, the transpose of $L$ relative to $\widetilde{e}_S$ is 
\begin{equation}\label{transposebig2}
(q^{2\rho})_{(V_k^*,\ldots,V_1^*)}\Tr_W\left(\cR^{21}_{W,(V_k^\ast,\dots,V_{i+1}^\ast)}\cR^{-1}_{W,(V_i^\ast,\dots,V_1^\ast)}\,(q^{-2\lambda-2\rho})_W\right)
(q^{-2\rho})_{(V_k^*,\ldots,V_1^*)}.
\end{equation}
Here we have also used the fact that 
\begin{equation*}
\cR^{21}_{W,V_k^*}\cdots\cR^{21}_{W,V_{i+1}^*}\cR_{W,V_i^*}^{-1}\cdots\cR_{W,V_1^*}^{-1}=\cR^{21}_{W,(V_k^\ast,\dots,V_{i+1}^\ast)}\cR^{-1}_{W,(V_i^\ast,\dots,V_1^\ast)},
\end{equation*}
which follows from \eqref{DeltaR}. Upon restricting \eqref{transposebig2} to $\cF^\str(\ul{S^*})[0]$, the action of $q^{\pm 2\rho}$ drops out, hence $L^T\vert_{\cF^\str(\ul{S^*})[0]}$
is the restriction of \eqref{transposebig} to $\cF^\str(\ul{S^*})[0]$.

After this observation, it suffices to prove that $\widehat{\mathbb{A}}_\lambda:=j_S^{-1}(-\lambda-2\rho)\vert_{\cF^\str(\ul{S})[0]}$ satisfies
\begin{equation}
	\label{equations for K}
	\widehat{\mathbb{A}}_\lambda \circ \Tr_W\left((\cR^{21}_{W,(V_{i+1},\dots,V_k)})^{-1}\cR_{W,(V_1,\dots,V_i)}\,(q^{-2\lambda-2\rho})_W \right) 	
	=\widetilde{\mathcal{D}}_{\lambda,W}^{\vee,(i)}\circ \widehat{\mathbb{A}}_\lambda
	\end{equation}
	as linear operators on $\cF^\str(\ul{S})[0]$, with \(\widetilde{\mathcal{D}}_{\lambda,W}^{\vee,(i)} \in \End_{\cN_{\textup{fd}}}(\cF^\str(\ul{S}))\) defined by
	\begin{equation}
	\label{D tilde def}
	\widetilde{\mathcal{D}}_{\lambda,W}^{\vee,(i)}:= \Tr_W\left((q^{-2\lambda-2\rho})_W\,\kappa^{-2}_{W,V_{i+1}}\dots\kappa^{-2}_{W,V_{k}} \right)
	\end{equation}
	(here we used that the operators $\kappa_{W,V_j}$ for $j=1,\ldots,k$ pairwise commute).
This is an immediate consequence of Corollary \ref{lemma inverse MR for j}, upon noting that \(\kappa^{-1}_{W,V_1}\dots\kappa^{-1}_{W,V_k}\) acts as the identity on \(\ul{W}\otimes\cF^\str(\ul{S})[0]\).
\end{proof}

This leads to the following equations for the normalized spin components  \(\TT_S^{v_1,\dots,v_k\vert f_k,\dots,f_1}(\lambda,\mu)\) of the weighted trace function, defined in Definition \ref{definitionTT} (we will see later at the end of this subsection that the equations for $i=0$ are equivalent to the dual MR equations in \cite[Thm. 1.2]{Etingof&Varchenko-2000}).

\begin{theorem}
	\label{thm dual MR}
Let \(\lambda,\mu\in\hh_{\mathrm{reg}}^\ast\) with \(\Re(\lambda)\) deep in the negative Weyl chamber. Let \(S = (V_1,\dots,V_k)\in\Rep^\str\), \(v_j\in V_j[\xi_j]\) and \(f_j\in V_j^\ast[\xi_j']\) such that \(\xi_1+\dots+\xi_k = 0 = \xi_1'+\dots+\xi_k' \). 

	For any \(i = 0,\dots,k\) and \(W\in\Rep\) we have
	\[
	\TT_S^{v_1,\dots,v_k\vert \mathcal{D}_{\lambda,W}^{\vee,(i)}(f_k,\dots,f_1)}(\lambda,\mu) = \sum_{\sigma\in\wts(W)} \TT_S^{\mathcal{K}_{\mu,\sigma,W}^{\vee,(i)}(v_1,\dots,v_k)\vert f_k,\dots,f_1}(\lambda,\mu-\sigma)
	\]
	with \(\mathcal{K}_{\mu,\sigma,W}^{\vee,(i)}\) and \(\mathcal{D}_{\lambda,W}^{\vee,(i)}\) defined by (\ref{K_mu,sigma,i vee def})--(\ref{D_lambda,i vee def}).
\end{theorem}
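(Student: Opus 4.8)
The proof will follow the template established in Section \ref{Subsection Etingof-Varchenko normalization} for the dual $q$-KZB equations, transferring the equation \eqref{MR for Theta} for the auxiliary spin components $\Xi_{S,i}^{v_1,\dots,v_k\vert f_k,\dots,f_1}(\lambda,\mu)$ to the normalized trace functions $\TT_S^{v_1,\dots,v_k\vert f_k,\dots,f_1}(\lambda,\mu)$. The point is that these two families of spin components differ only by the choice of the $\mathfrak{h}$-linear operator applied to the covector part: $\Xi_{S,i}$ uses $\mathbb{A}_{\lambda,i}$, whereas $\TT_S$ uses $\delta(q^{2\lambda+2\rho})(j_S(-\lambda-2\rho)^{-1})^T\vert_{\cF^\str(\ul{S^*})[0]}$ (compare \eqref{Xi def} with Definition \ref{definitionTT}). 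First I would rewrite $\TT_S^{v_1,\dots,v_k\vert f_k,\dots,f_1}(\lambda,\mu)$ in the form \eqref{expression for tilde Xi} of Corollary \ref{generalcaseMR}, i.e.
\[
\TT_S^{v_1,\dots,v_k\vert f_k,\dots,f_1}(\lambda,\mu)=\widetilde{\Xi}_S^{v_1,\dots,v_k\vert f_k,\dots,f_1}(\lambda,\mu)
\]
with the choice $\widetilde{\mathbb{A}}_\lambda:=\delta(q^{2\lambda+2\rho})(j_S(-\lambda-2\rho)^{-1})^T\vert_{\cF^\str(\ul{S^*})[0]}$.

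The key step is to verify that this $\widetilde{\mathbb{A}}_\lambda$ satisfies the hypothesis \eqref{equations for L} of Corollary \ref{generalcaseMR} for all $i=0,\dots,k$ and $W\in\Rep$. This is exactly the content of Proposition \ref{propdMRj}, whose proof reduces (via the transpose computation relating $L^T$ to the operator \eqref{transposebig}) to the statement that $\widehat{\mathbb{A}}_\lambda:=j_S^{-1}(-\lambda-2\rho)\vert_{\cF^\str(\ul{S})[0]}$ satisfies the Macdonald-Ruijsenaars type equation \eqref{equations for K} for the $k$-point dynamical fusion operator. That equation, in turn, is Corollary \ref{lemma inverse MR for j}, derived graphically from Lemma \ref{lemma before inverse MR}. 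Since the scalar prefactor $\delta(q^{2\lambda+2\rho})$ is independent of $\mu$ and commutes with everything in sight, it does not interfere with \eqref{equations for L} (which is an identity of $\lambda$-dependent operators acting on the covector slot), so the presence of the Weyl denominator is harmless.

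Once Corollary \ref{generalcaseMR} applies, its conclusion \eqref{MR for tilde Theta} reads verbatim as the desired system of equations for $\TT_S^{v_1,\dots,v_k\vert f_k,\dots,f_1}(\lambda,\mu)$, with the operators $\mathcal{K}_{\mu,\sigma,W}^{\vee,(i)}$ and $\mathcal{D}_{\lambda,W}^{\vee,(i)}$ given by \eqref{K_mu,sigma,i vee def}--\eqref{D_lambda,i vee def}. I therefore expect the proof to be short: invoke Proposition \ref{propdMRj} to check the hypothesis, then apply Corollary \ref{generalcaseMR}. The main obstacle — already absorbed into the cited results — is the transpose bookkeeping in Proposition \ref{propdMRj}, where one must track how $\widetilde{e}_S$-transposition converts the product $(\cR^{21})^{-1}\cR$ into the dual-module expression \eqref{transposebig}, using $(\mathrm{id}\otimes S^{-1})(\mathcal{R})=\mathcal{R}^{-1}=(S\otimes\mathrm{id})(\mathcal{R})$ together with \eqref{action of q^2rho} and the coproduct identities \eqref{DeltaR}, and then observe that the conjugating factors $q^{\pm 2\rho}$ drop out upon restriction to the zero-weight space. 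Since all of this is established prior to the statement, the proof of Theorem \ref{thm dual MR} itself amounts to assembling these ingredients.
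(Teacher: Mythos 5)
Your proposal is correct and follows exactly the paper's own argument: the paper proves Theorem \ref{thm dual MR} by combining Definition \ref{definitionTT}, Corollary \ref{generalcaseMR}, and Proposition \ref{propdMRj}, which is precisely your assembly, including the (correct) observation that the scalar $\delta(q^{2\lambda+2\rho})$ is harmless since \eqref{equations for L} is linear in $\widetilde{\mathbb{A}}_\lambda$.
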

\begin{proof}
This follows immediately from Definition \ref{definitionTT}, Corollary \ref{generalcaseMR} and Proposition \ref{propdMRj}.
\end{proof}
Similarly as we did for the dual $q$-KZB equations in the previous subsection, we reformulate the equations from Theorem \ref{thm dual MR} as difference equations for the enormalised $k$-point weighted trace functions $\mathbb{F}_S(\lambda,\mu)$ and $\mathbb{F}_S^{f_k,\ldots,f_1}(\lambda,\mu)$. As a first step we have the following immediate corollary of Theorem \ref{thm dual MR} (cf. Corollary \ref{dualqKZBcor} in the context of the dual $q$-KZB equations).
\begin{corollary}\label{dualqMRcor}
With the notations and assumptions as in Theorem \ref{thm dual MR}, we have
\begin{enumerate}
\item For $i=0,\ldots,k$,
\[
\sum_{\sigma\in\textup{wts}(W)}\Bigl(\textup{id}_{\cF^\str(\ul{S})[0]}\otimes\bigl(\mathcal{K}_{\mu,\sigma,W}^{\vee,(i)}\bigr)^T\Bigr)\mathbb{T}_S(\lambda,\mu-\sigma)=\Bigl(\bigl(\mathcal{D}_{\lambda,W}^{\vee,(i)}\bigr)^{T\ast}\otimes\textup{id}_{\cF^\str(\ul{S}^*)[0]}\Bigr)\mathbb{T}_S(\lambda,\mu).
\]
\item For $i=0,\ldots,k$,
\[
\sum_{\sigma\in\textup{wts}(W)}\bigl(\mathcal{K}_{\mu,\sigma,W}^{\vee,(i)}\bigr)^T\mathbb{T}_S^{f_k,\ldots,f_1}(\lambda,\mu-\sigma)=
\chi_W\bigl(q^{-2(\lambda-\xi_{i+1}^\prime\cdots-\xi_k^\prime)-2\rho}\bigr)\mathbb{T}_S^{f_k,\ldots,f_1}(\lambda,\mu).
\]
\end{enumerate}
\end{corollary}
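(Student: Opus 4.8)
The plan is to deduce Corollary \ref{dualqMRcor} from Theorem \ref{thm dual MR} by the same bookkeeping argument that was used to pass from Theorem \ref{dualqKZB} (equivalently Corollary \ref{cor qKZB for Theta}) to Corollary \ref{dualqKZBcor}. The content of Theorem \ref{thm dual MR} is a scalar identity among the spin components $\TT_S^{v_1,\dots,v_k\vert f_k,\dots,f_1}(\lambda,\mu)$; the corollary merely repackages these scalar identities as operator identities for the $\cF^\str(\ul{S})[0]\otimes\cF^\str(\ul{S^*})[0]$-valued function $\mathbb{T}_S(\lambda,\mu)$ and for the $\cF^\str(\ul{S^*})[0]$-valued spin functions $\mathbb{T}_S^{f_k,\dots,f_1}(\lambda,\mu)$.

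For part (ii), I would start from the defining expansion of $\mathbb{T}_S^{f_k,\dots,f_1}(\lambda,\mu)$ in the first line of \eqref{universalTT}, namely $\mathbb{T}_S^{f_k,\ldots,f_1}(\lambda,\mu)=\sum \mathbb{T}^{v_1,\ldots,v_k\vert f_k,\ldots,f_1}(\lambda,\mu)(v_k^*\otimes\cdots\otimes v_1^*)$ over homogeneous bases with total weight zero. Applying $\sum_{\sigma}\bigl(\mathcal{K}_{\mu,\sigma,W}^{\vee,(i)}\bigr)^T$ and using the equation of Theorem \ref{thm dual MR} with the roles of the two linear operators read off via the transpose: since $\mathcal{K}_{\mu,\sigma,W}^{\vee,(i)}$ acts on $\cF^\str(\ul{S})$ in the vector slot of $\TT_S^{v_1,\dots,v_k\vert\cdots}$, its transpose acts on $\cF^\str(\ul{S^*})$ in the covector-index slot, which is exactly how the operator appears in part (ii). The key computation is that $\mathcal{D}_{\lambda,W}^{\vee,(i)}$ acts as a scalar on a fixed weight line: from \eqref{D_lambda,i vee def} together with $\kappa=q^{\sum_i x_i\otimes x_i}$ one checks that $\mathcal{D}_{\lambda,W}^{\vee,(i)}(f_k\otimes\cdots\otimes f_1)=\chi_W\bigl(q^{-2(\lambda-\xi_{i+1}'-\cdots-\xi_k')-2\rho}\bigr)(f_k\otimes\cdots\otimes f_1)$, since each $\kappa^2_{W,V_j^*}$ contributes the weight factor $q^{-2\langle\cdot,\xi_j'\rangle}$ in the $W$-trace and these combine with $(q^{-2\lambda-2\rho})_W$ to give the character evaluated at the shifted argument. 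Pulling this scalar to the right-hand side yields the stated eigenvalue. Part (i) then follows by substituting the same Theorem \ref{thm dual MR} identity into the expansion \eqref{universalTTwrittenout} of $\mathbb{T}_S(\lambda,\mu)$ and invoking the definitions of the transpose $(\cdot)^T$ and the opposite transpose $(\cdot)^{T*}$ from \eqref{transposedef}--\eqref{dualtranspose}: the operator $\mathcal{K}_{\mu,\sigma,W}^{\vee,(i)}$ acting on the vector-index is recorded as $\bigl(\mathcal{K}_{\mu,\sigma,W}^{\vee,(i)}\bigr)^T$ acting on the second (covector) tensor factor of $\mathbb{T}_S$, while $\mathcal{D}_{\lambda,W}^{\vee,(i)}$ acting on the covector-index becomes $\bigl(\mathcal{D}_{\lambda,W}^{\vee,(i)}\bigr)^{T*}$ on the first (vector) tensor factor.

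The main obstacle is purely notational rather than conceptual: one must be scrupulous about which tensor slot of $\mathbb{T}_S(\lambda,\mu)\in\cF^\str(\ul{S})[0]\otimes\cF^\str(\ul{S^*})[0]$ carries the $v$-indices and which carries the $f$-indices, and correspondingly whether a given operator enters via $(\cdot)^T$ or $(\cdot)^{T*}$. The reversed ordering of the dual factors $(V_k^*,\ldots,V_1^*)$ relative to $(V_1,\ldots,V_k)$, already built into the definitions, must be tracked consistently so that the transposes land on the correct components. Once the dictionary between the scalar equation of Theorem \ref{thm dual MR} and the two transpose conventions is fixed, both statements are immediate, exactly as in the proof of Corollary \ref{dualqKZBcor}. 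I would therefore present the argument compactly, emphasizing the scalar evaluation of $\mathcal{D}_{\lambda,W}^{\vee,(i)}$ and citing the transpose bookkeeping already introduced in the previous subsection.
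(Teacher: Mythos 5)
Your proposal is correct and is essentially the paper's own argument: the paper treats this corollary as immediate from Theorem \ref{thm dual MR} via exactly the bookkeeping you describe (and which it cites from the proof of Corollary \ref{dualqKZBcor}), namely substituting the scalar identity of Theorem \ref{thm dual MR} into the expansions \eqref{universalTT}, invoking the transpose conventions \eqref{transposedef}--\eqref{dualtranspose} so that $\mathcal{K}_{\mu,\sigma,W}^{\vee,(i)}$ on the vector indices becomes $\bigl(\mathcal{K}_{\mu,\sigma,W}^{\vee,(i)}\bigr)^T$ on the covector factor and $\mathcal{D}_{\lambda,W}^{\vee,(i)}$ becomes $\bigl(\mathcal{D}_{\lambda,W}^{\vee,(i)}\bigr)^{T*}$ on the vector factor, and evaluating $\mathcal{D}_{\lambda,W}^{\vee,(i)}$ as the scalar $\chi_W\bigl(q^{-2(\lambda-\xi_{i+1}'-\cdots-\xi_k')-2\rho}\bigr)$ on $f_k\otimes\cdots\otimes f_1$. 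One small correction to your prose: each $\kappa^2_{W,V_j^*}$ contributes $q^{+2\langle\tau,\xi_j'\rangle}$ (with $\tau$ the $W$-weight appearing in the trace), not $q^{-2\langle\cdot,\xi_j'\rangle}$ — your final eigenvalue formula is nonetheless the correct one.
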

Note that for $i=0,\ldots,k$ we have
\begin{equation}\label{DiWcheckformula}
\mathbb{D}_{\lambda,W,i}^{\vee,S}:=\bigl(\mathcal{D}_{\lambda,W}^{\vee,(i)}\bigr)^{T*}=\textup{Tr}_W\bigl((q^{-2\lambda-2\rho}\bigr)_W\kappa_{WV_{i+1}}^{-2}\cdots\kappa_{WV_k}^{-2}\bigr).
\end{equation}
The equations in Corollary \ref{dualqMRcor} now give rise to equations for $\mathbb{F}_S(\lambda,\mu)$ involving $\mathbb{D}_{\lambda,W,i}^{\vee,S}$ and the following difference operators in $\mu$. Note that $R_{W^*V_j^*}^{21}(\mu+h_{(W^*,V_j^*)})$ is invertible since $R_{W^*V_j^*}^{21}(\mu)$ is $\mathfrak{h}$-linear.

\begin{definition}\label{idualMRoper}
Let $W\in\Rep$ and $S=(V_1,\ldots,V_k)\in\Rep^\str$. For $i=0,\ldots,k$ we call the difference operators
\begin{equation*}
\begin{split}
\bigl(\mathbb{L}_{W,i}^{\vee,S}f\bigr)(\mu):=&\sum_{\sigma\in\textup{wts}(W)}\textup{Tr}_{W^*[-\sigma]}\Bigl(
R_{W^*V_1^*}(\mu+h_{(W^*,V_k^*,\ldots,V_1^*)})\cdots R_{W^*V_i^*}(\mu+h_{(W^*,V_k^*,\ldots,V_i^*)})\\
&\qquad\qquad\quad R_{W^*V_{i+1}^*}^{21}(\mu+h_{(W^*,V_k^*,\ldots,V_{i+1}^*)})^{-1}\cdots
R_{W^*V_k^*}^{21}(\mu+h_{(W^*,V_k^*)})^{-1}\Bigr)f(\mu-\sigma)
\end{split}
\end{equation*}
for functions $f: \mathfrak{h}_{\textup{reg}}^*\rightarrow\cF^\str(\ul{S^*})[0]$ the coordinate dual Macdonald-Ruijsenaars \textup{(}MR\textup{)} operators associated to $W$ and $S$.
\end{definition}
The formula for the coordinate dual MR operator $\mathbb{L}_{W,i}^{\vee,S}$ can be compressed as follows.
\begin{remark}\label{Lcompact}
By Proposition \ref{prop hexagon for dynamical R-matrix}(1) we have
\begin{equation*}
\begin{split}
(\mathbb{L}_{W,i}^{\vee,S}f)(\mu)=\sum_{\sigma\in\textup{wts}(W)}\textup{Tr}_{W^*[-\sigma]}\Bigl(&R_{W^*,(V_i^*,\ldots,V_1^*)}(\mu+h_{(W^*,V_k^*,\ldots,V_1^*)})\\
&\quad
R^{21}_{W^*,(V_k^*,\ldots,V_{i+1}^*)}(\mu+h_{(W^*,V_k^*,\ldots,V_{i+1}^*)})^{-1}\Bigr)f(\mu-\sigma)
\end{split}
\end{equation*}
for $f: \mathfrak{h}_{\textup{reg}}^*\rightarrow\cF^\str(\ul{S^*})[0]$. For $i=0$ and $i=k$ this reduces to
\begin{equation}\label{case0k}
\begin{split}
(\mathbb{L}_{W,0}^{\vee,S}f)(\mu)&=\sum_{\sigma\in\textup{wts}(W)}\textup{Tr}_{W^*[-\sigma]}\Bigl(R^{21}_{W^*,(V_k^*,\ldots,V_{1}^*)}(\mu-\sigma)^{-1}\Bigr)f(\mu-\sigma),\\
(\mathbb{L}_{W,k}^{\vee,S}f)(\mu)&=\sum_{\sigma\in\textup{wts}(W)}\textup{Tr}_{W^*[-\sigma]}\Bigl(R_{W^*,(V_k^*,\ldots,V_1^*)}(\mu-\sigma)\Bigr)f(\mu-\sigma).
\end{split}
\end{equation}
\end{remark}
The difference equations resulting from Corollary \ref{dualqMRcor} now read as follows.
\begin{corollary}[Dual MR equations]\label{dualqMRthm}
Let \(\lambda\in\hh_{\mathrm{reg}}^\ast\) with \(\Re(\lambda)\) deep in the negative Weyl chamber. Let \(S = (V_1,\dots,V_k)\in\Rep^\str\) and let \(f_j\in V_j^\ast[\xi_j']\) such that \(\xi_1'+\dots+\xi_k'=0\). Let $W\in\Rep$.
\begin{enumerate}
\item For $i=0,\ldots,k$,
\begin{equation}\label{dualMReqnuniversal}
\bigl(\textup{id}_{\cF^\str(\ul{S})[0]}\otimes\mathbb{L}_{W,i}^{\vee,S}\bigr)\mathbb{F}_S(\lambda,\cdot)=
\bigl(\mathbb{D}_{\lambda,W,i}^{\vee,S}\otimes\textup{id}_{\cF^\str(\ul{S^*})[0]}\bigr)\mathbb{F}_S(\lambda,\cdot).
\end{equation}
\item For $i=0,\ldots,k$,
\begin{equation}\label{dualMReqn}
\mathbb{L}_{W,i}^{\vee,S}\mathbb{F}_S^{f_k,\ldots,f_1}(\lambda,\cdot)=\chi_W\bigl(q^{-2(\lambda-\xi_{i+1}^\prime-\cdots-\xi_k^\prime)-2\rho}\bigr)\mathbb{F}_S^{f_k,\ldots,f_1}(\lambda,\cdot).
\end{equation}
\end{enumerate}
\end{corollary}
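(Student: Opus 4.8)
The plan is to mirror the proof of Corollary \ref{corKKK}, reformulating the equations of Corollary \ref{dualqMRcor} for the partially normalised functions $\mathbb{T}_S(\lambda,\mu)$ and $\mathbb{T}_S^{f_k,\ldots,f_1}(\lambda,\mu)$ into equations for the fully normalised functions $\mathbb{F}_S(\lambda,\mu)$ and $\mathbb{F}_S^{f_k,\ldots,f_1}(\lambda,\mu)$, using the normalisation relations \eqref{Fnorm}. First I would substitute $\mathbb{F}_S^{f_k,\ldots,f_1}(\lambda,\mu)=\mathbb{X}_{\mu,S^*}\mathbb{T}_S^{f_k,\ldots,f_1}(\lambda,\mu)$ into Corollary \ref{dualqMRcor}(2); since $\mathbb{D}_{\lambda,W,i}^{\vee,S}=(\mathcal{D}_{\lambda,W}^{\vee,(i)})^{T*}$ by \eqref{DiWcheckformula} and the character on the right-hand side of Corollary \ref{dualqMRcor}(2) is precisely $\chi_W(q^{-2(\lambda-\xi_{i+1}'-\cdots-\xi_k')-2\rho})$, the whole statement reduces to the single conjugation identity
\begin{equation*}
\mathbb{X}_{\mu,S^*}\,\bigl(\mathcal{K}_{\mu,\sigma,W}^{\vee,(i)}\bigr)^T\,\mathbb{X}_{\mu-\sigma,S^*}^{-1}\big\vert_{\cF^\str(\ul{S^*})[0]}
=\bigl(\text{kernel of }\mathbb{L}_{W,i}^{\vee,S}\text{ at shift }\sigma\bigr)\big\vert_{\cF^\str(\ul{S^*})[0]},
\end{equation*}
the exact analogue of \eqref{toX}. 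The operator equation of part (1) then follows by writing out $\mathbb{T}_S(\lambda,\mu)$ as in \eqref{universalTTwrittenout} and invoking the definition of the transpose, just as Corollary \ref{dualqKZBcor}(i) was deduced from \eqref{toX}.

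The heart of the argument is the computation of $(\mathcal{K}_{\mu,\sigma,W}^{\vee,(i)})^T$. Using \eqref{hattilde} I would first compute the transpose relative to $\widetilde{e}_S$ and then restrict to the zero weight space. Two ingredients enter. First, transposition must convert the partial trace $\Tr_{W[\sigma]}$ over the spin factor in the definition \eqref{K_mu,sigma,i vee def} of $\mathcal{K}_{\mu,\sigma,W}^{\vee,(i)}$ into a partial trace $\Tr_{W^*[-\sigma]}$ over $W^*$, in parallel with the dualisation $V_j\rightsquigarrow V_j^*$ of the remaining legs; this is the same mechanism used to dualise $\cR$ in the proof of Proposition \ref{propdMRj} (via $(S\otimes\textup{id})(\cR)=\cR^{-1}=(\textup{id}\otimes S^{-1})(\cR)$ and \eqref{action of q^2rho}), now applied to the dynamical $R$-matrices. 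Second, each transposed factor $\Rdyn_{W,V_j}(\nu)^T$ and $(\Rdyn^{21}_{W,V_j}(\nu))^T$ must be rewritten, through the $\mathbb{Q}$-conjugation formula \eqref{transposeT}, as a factor $R_{W^*V_j^*}$ (respectively an inverse of $R^{21}_{W^*V_j^*}$) flanked by $\mathbb{Q}$-operators.

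Stitching these together, the plan is to let the $\mathbb{Q}$-operators produced by \eqref{transposeT} telescope against the outer conjugation by $\mathbb{X}_{\mu,S^*}$ and $\mathbb{X}_{\mu-\sigma,S^*}^{-1}$, exactly as the $\mathbb{Q}$-factors telescoped in the proof of Corollary \ref{corKKK}; the residual weight shift by $\sigma$ is then absorbed using the commutation $\mathbb{P}_{W^*}[-\sigma]\,\mathbb{Q}_{V_j^*}(\mu+\sigma+\mh_{W^*})=\mathbb{Q}_{V_j^*}(\mu)\,\mathbb{P}_{W^*}[-\sigma]$ coming from the $\mathfrak{h}$-linearity of $\mathbb{Q}_{V_j^*}$. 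After telescoping, the surviving factors are precisely $R_{W^*V_1^*}\cdots R_{W^*V_i^*}$ together with the inverse factors $(R^{21}_{W^*V_{i+1}^*})^{-1}\cdots(R^{21}_{W^*V_k^*})^{-1}$ carrying the dynamical arguments of Definition \ref{idualMRoper}, which yields the displayed identity. I expect the main obstacle to be exactly the bookkeeping in this telescoping: one must track the weight-shift arguments of all dynamical $R$-matrices (which differ from the $q$-KZB case because the sum here is over $f(\mu-\sigma)$ rather than $f(\mu+\sigma)$), confirm that the $\mathbb{Q}$-flanking in \eqref{transposeT} lines up with the tensor-leg ordering encoded in $\mathbb{X}_{\mu,S^*}$, and treat the boundary cases $i=0$ and $i=k$ (Remark \ref{Lcompact}) separately. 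Once this conjugation identity is established, both parts (1) and (2) follow immediately.
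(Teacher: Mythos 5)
Your proposal follows essentially the same route as the paper's proof: both reduce the statement, via \eqref{Fnorm} and Corollary \ref{dualqMRcor}, to the conjugation identity $\mathbb{X}_{\mu,S^*}\bigl(\mathcal{K}_{\mu,\sigma,W}^{\vee,(i)}\bigr)^T\mathbb{X}_{\mu-\sigma,S^*}^{-1}\vert_{\cF^\str(\ul{S^*})[0]}$ equals the kernel of $\mathbb{L}_{W,i}^{\vee,S}$, and both establish it by transposing also over the auxiliary space $W$ (turning $\Tr_{W[\sigma]}$ into $\Tr_{W^*[-\sigma]}$), rewriting each transposed dynamical $R$-matrix via the $\mathbb{Q}$-conjugation formula \eqref{transposeT}, and letting the $\mathbb{Q}$-factors telescope against $\mathbb{X}_{\mu,S^*}$ and $\mathbb{X}_{\mu-\sigma,S^*}^{-1}$ using $\mathfrak{h}$-linearity, weight considerations and cyclicity of the trace. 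The bookkeeping you flag as the main obstacle is exactly what the paper describes as "a careful computation using appropriate weight considerations," so your plan is a faithful match.
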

\begin{proof}
By the definition of the renormalised weighted trace function $\mathbb{F}_S(\lambda,\mu)$ (see \eqref{Fnorm}) and Corollary \ref{dualqMRcor} it suffices to prove that
\begin{equation}\label{todddo}
\begin{split}
&\mathbb{X}_{\mu,S^*}\bigl(\mathcal{K}_{\mu,\sigma,W}^{\vee,(i)}\bigr)^T\,\mathbb{X}_{\mu-\sigma,S^*}^{-1}\,\vert_{\cF^\str(\ul{S^*})[0]}=\\
&\quad=\textup{Tr}_{W^*[-\sigma]}\Bigl(
R_{W^*V_1^*}(\mu+h_{(W^*,V_k^*,\ldots,V_1^*)})\cdots R_{W^*V_i^*}(\mu+h_{(W^*,V_k^*,\ldots,V_i^*)})\\
&\qquad\qquad\quad R_{W^*V_{i+1}^*}^{21}(\mu+h_{(W^*,V_k^*,\ldots,V_{i+1}^*)})^{-1}\cdots
R_{W^*V_k^*}^{21}(\mu+h_{(W^*,V_k^*)})^{-1}\Bigr)\vert_{\cF^{\str}(\ul{S^*})[0]}.
\end{split}
\end{equation}
Rewrite the transpose of the linear operator $\mathcal{K}_{\mu,\sigma,W}^{\vee,(i)}$ (see \eqref{K_mu,sigma,i vee def}) by also taking the transpose over the auxiliary space $W$, which in particular results in replacing the trace over $W[\sigma]$ by a trace over $W^*[-\sigma]$. We then obtain
\begin{equation}\label{KTred}
\begin{split}
\bigl(\mathcal{K}_{\mu,\sigma,W}^{\vee,(i)}\bigr)^T&=\textup{Tr}_{W^*[-\sigma]}\Bigl(
R_{WV_1}(\mu+h_{(V_k^*,\ldots,V_2^*)})^T\cdots R_{WV_i}(\mu+h_{(V_k^*,\ldots,V_{i+1}^*)})^T\\
&\qquad\qquad\qquad\qquad R^{21}_{WV_{i+1}}(\mu+h_{(V_k^*,\ldots,V_{i+2})})^{-1,T}\cdots R^{21}_{WV_k}(\mu)^{-1,T}\Bigr)
\end{split}
\end{equation}
where $R_{UV}(\mu)^T\in\textup{End}_{\cN_{\textup{fd}}}(\ul{V^*}\otimes\ul{U^*})$ is the transpose of $R_{UV}(\mu)$ (see \eqref{transposedef}).
By \cite[(3.12)]{Etingof&Varchenko-2000} we have
\begin{equation*}
\begin{split}
R_{UV}(\mu)^T&=\bigl(\mathbb{Q}_{V^*}(\mu)\otimes\mathbb{Q}_{U^*}(\mu+h_{V^*})\bigr)
R^{21}_{V^*U^*}(\mu+h_{(V^*,U^*)})\bigl(\mathbb{Q}_{V^*}(\mu+h_{U^*})^{-1}\otimes\mathbb{Q}_{U^*}(\mu)^{-1}\bigr),\\
R^{21}_{UV}(\mu)^{-1,T}&=\bigl(\mathbb{Q}_{V^*}(\mu)\otimes\mathbb{Q}_{U^*}(\mu+h_{V^*})\bigr)
R_{V^*U^*}(\mu+h_{(V^*,U^*)})^{-1}
\bigl(\mathbb{Q}_{V^*}(\mu+h_{U^*})^{-1}\otimes\mathbb{Q}_{U^*}(\mu)^{-1}\bigr),
\end{split}
\end{equation*}
and substituting into \eqref{KTred} gives, after a careful computation using appropriate weight considerations
\begin{equation*}
\begin{split}
\bigl(\mathcal{K}_{\mu,\sigma,W}^{\vee,(i)}\bigr)^T&=\mathbb{X}_{\mu,S}^{-1}\textup{Tr}_{W^*[-\sigma]}\Bigl(
\mathbb{Q}_{W^*}(\mu+h_{(V_k^*,\ldots,V_1^*)})R_{V_1^*W^*}^{21}(\mu+h_{(V_k^*,\ldots,V_1^*,W^*)})\cdots\\
&\qquad\qquad\qquad\quad\cdots R_{V_iW^*}^{21}(\mu+h_{(V_k^*,\ldots,V_i^*,W^*)})R_{V_{i+1}^*W^*}(\mu+h_{(V_k,\ldots,V_{i+1}^*,W^*)})
\cdots\\
&\qquad\qquad\qquad\qquad\qquad\cdots R_{V_k^*W^*}(\mu+h_{(V_k^*,W^*)})\mathbb{Q}_{W^*}(\mu)^{-1}\Bigr)\mathbb{X}_{\mu-\sigma,S}^{-1}.
\end{split}
\end{equation*}
Restricting the latter identity to $\cF^\str(\ul{S^*})[0]$, and using weight considerations and the cyclicity of the trace, we get \eqref{todddo}.
\end{proof}
We now show that for $i=0$, the equations \eqref{dualMReqn} reduce to the dual Macdonald-Ruijsenaars equations from \cite[Thm. 1.2]{Etingof&Varchenko-2000}. First note that for $i=0$, the equation \eqref{dualMReqnuniversal} reduces to the eigenvalue equation
\begin{equation}\label{eee}
\bigl(\textup{id}_{\cF^\str(\ul{S})[0]}\otimes\mathbb{L}_{W,0}^{\vee,S}\bigr)\mathbb{F}_S(\lambda,\cdot)=\chi_W\bigl(q^{-2\lambda-2\rho}\bigr)\mathbb{F}_S(\lambda,\cdot).
\end{equation}
Secondly, the trace over $W^*[-\sigma]$ in the definition of $\mathbb{L}_{W}^{\vee,S}$ can be rewritten as trace over $W[\sigma]$:
\begin{lemma}\label{Ltodualtrace}
For $S=(V_1,\ldots,V_k)$ and $f: \mathfrak{h}_{\textup{reg}}^*\rightarrow\cF^\str(\ul{S^*})[0]$ we have
\[
\bigl(\mathbb{L}_{W,0}^{\vee,S}f\bigr)(\mu)=\sum_{\sigma\in\textup{wts}(W)}\textup{Tr}_{W[\sigma]}\Bigl(R_{WV_k^*}^{21}(\mu-h_{(V_{k-1}^*,\ldots,V_1^*)})\cdots
R_{WV_2^*}^{21}(\mu-h_{V_1^*})R_{WV_1^*}(\mu)\Bigr)f(\mu-\sigma).
\]
\end{lemma}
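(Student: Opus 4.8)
The plan is to prove Lemma~\ref{Ltodualtrace} by taking the transpose, over the auxiliary space only, of the factor inside the trace in the $i=0$ specialization of Definition~\ref{idualMRoper}. Recall from Remark~\ref{Lcompact} that
\[
\bigl(\mathbb{L}_{W,0}^{\vee,S}f\bigr)(\mu)=\sum_{\sigma\in\textup{wts}(W)}\textup{Tr}_{W^*[-\sigma]}\Bigl(R^{21}_{W^*,(V_k^*,\ldots,V_{1}^*)}(\mu-\sigma)^{-1}\Bigr)f(\mu-\sigma).
\]
The goal is thus to show that the partial trace over $W^*[-\sigma]$ of $R^{21}_{W^*,(V_k^*,\ldots,V_1^*)}(\mu-\sigma)^{-1}$, restricted to $\cF^\str(\ul{S^*})[0]$, equals the partial trace over $W[\sigma]$ of the operator
\[
R_{WV_k^*}^{21}(\mu-h_{(V_{k-1}^*,\ldots,V_1^*)})\cdots R_{WV_2^*}^{21}(\mu-h_{V_1^*})R_{WV_1^*}(\mu)
\]
on $\ul{W}\otimes\cF^\str(\ul{S^*})$, again restricted to the zero-weight space.

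First I would rewrite $R^{21}_{W^*,(V_k^*,\ldots,V_1^*)}(\mu-\sigma)^{-1}$ using the dynamical hexagon identities of Proposition~\ref{prop hexagon for dynamical R-matrix}(1) to break the multifold $R$-matrix in the second tensor slot into a product of single-strand factors, carefully tracking the weight shifts $\mh_{V_j^*}$ that appear. This produces an expression in $\End_{\mathcal{N}_{\mr{fd}}}(\ul{W^*}\otimes\cF^\str(\ul{S^*}))$ built from factors of the form $\Rdyn^{21}_{W^*V_j^*}(\cdots)^{-1}$. The key step is then to pass from the auxiliary space $W^*$ to $W$ by taking the transpose in the first tensor slot only; under this operation the partial trace $\textup{Tr}_{W^*[-\sigma]}$ turns into $\textup{Tr}_{W[\sigma]}$ (since transposition reverses weights), and each factor $\Rdyn^{21}_{W^*V_j^*}(\cdots)^{-1}$ is converted into a factor acting on $\ul{W}\otimes\ul{V_j^*}$. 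I expect the cleanest route is to invoke the universal dynamical $R$-matrix $R(\lambda)$ from \eqref{universal dyn R-matrix def} together with \eqref{relRr}, so that the transposition statements reduce to a property of $R(\lambda)$ under $(S\otimes\textup{id})$ or $(\textup{id}\otimes S)$ applied to the first tensor leg, analogous to the relation $(\textup{id}\otimes S^{-1})(\mathcal{R})=\mathcal{R}^{-1}=(S\otimes\textup{id})(\mathcal{R})$ used in the proof of Proposition~\ref{propdMRj}. The identities for $R(\lambda)^T$ recorded in the proof of Corollary~\ref{dualqMRthm} (i.e.\ \cite[(3.12)]{Etingof&Varchenko-2000}) should supply exactly the needed conversion, up to conjugation by $\mathbb{Q}$-operators.

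The main obstacle I anticipate is bookkeeping: the $\mathbb{Q}_{V_j^*}$ and $\mathbb{Q}_{W^*}$ conjugating factors produced by each transposition must be shown to telescope and ultimately cancel upon restriction to the zero-weight space $\cF^\str(\ul{S^*})[0]$, exactly as happened in the proof of Corollary~\ref{dualqMRthm} where the restriction to the zero-weight space made the $q^{\pm2\rho}$ dressings drop out. A careful weight analysis of the argument shifts, using that the total $S^*$-weight is zero and that the trace is taken over the single weight space $W[\sigma]$ (resp.\ $W^*[-\sigma]$), should force these $\mathbb{Q}$-factors to combine into a net weight-preserving automorphism that acts trivially after restriction. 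Once the transposition and the hexagon decomposition are combined and the $\mathbb{Q}$-dressings are verified to cancel, rearranging the resulting product into the stated order $R_{WV_k^*}^{21}\cdots R_{WV_2^*}^{21}R_{WV_1^*}$ is a routine application of the hexagon identities and the cyclicity of the partial trace. I would present the argument at the level of the universal elements and $\mathcal{N}_{\mr{fd}}$-morphisms rather than in coordinates, to keep the weight-shift tracking transparent.
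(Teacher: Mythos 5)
Your proposal reproduces the skeleton of the paper's argument (convert $\textup{Tr}_{W^*[-\sigma]}$ into $\textup{Tr}_{W[\sigma]}$ of a transpose taken in the auxiliary leg only, trade the transposed inverse dynamical $R$-matrix for a non-inverted one at the cost of $\mathbb{Q}$-dressings, cancel those dressings on the zero-weight space, and finish with the hexagon identities), but the identity you designate for the crucial trade is the wrong one, and this is a genuine gap. The formula \cite[(3.12)]{Etingof&Varchenko-2000} invoked in the proof of Corollary \ref{dualqMRthm} is a \emph{two-legged} transpose: it dualizes both tensor factors, converting $R_{UV}(\mu)^T$ into a $\mathbb{Q}$-conjugate of $R^{21}_{V^*U^*}(\mu+h_{(V^*,U^*)})$. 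In the present lemma only the auxiliary leg may be dualized (replacing $W^*$ by $W$), while the legs $V_j^*$ must remain untouched; (3.12) therefore does not supply the needed conversion, and it cannot be bootstrapped into it (composing two applications of (3.12) double-dualizes both legs rather than dualizing one). What is needed is the one-legged identity that your hedge about $(S\otimes\id)(\mathcal{R})=\mathcal{R}^{-1}$ correctly anticipates, namely the dynamical Turaev relation \eqref{from Turaev identities} (Figure \ref{braided evaluation B}, with colors $(S^*,(W))$). Combined with $e_W\circ j_{(W^*,W)}(\mu)=e_W\circ(\id_{\ul{W^*}}\otimes S(\QQ(\mu))_W)$ it gives, for $w\in W[\sigma]$ and $g\in\cF^\str(\ul{S^*})$,
\[
R^{21}_{W^*,S^*}(\mu-\sigma)^{-1,T_1*}(w\otimes g)
=\bigl(S(\QQ(\mu-h_{\ul{S^*}}))_W\otimes\id_{\cF^\str(\ul{S^*})}\bigr)\,R^{21}_{W,S^*}(\mu)\,\bigl(S(\QQ(\mu))_W^{-1}w\otimes g\bigr),
\]
where $T_1*$ denotes the transpose in the first leg relative to $\widehat{e}_{\ul{W}}$; the Appendix identity \eqref{inverse R A} is an equivalent substitute. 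This is precisely the ingredient the paper's proof rests on, and without it your key step has no justification.

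Two further points of bookkeeping that your expand-first-then-transpose ordering would force you to address, and which the paper's ordering avoids. First, a one-legged transpose anti-distributes over a product, $(A_1\cdots A_m)^{T_1*}=A_m^{T_1*}\cdots A_1^{T_1*}$, and this is legitimate here only because the non-auxiliary legs of the factors $R^{21}_{W^*V_j^*}$ are pairwise distinct (so their second-leg components commute); this needs to be stated. Second, the chain of $S(\QQ(\cdot))_W$-dressings produced factor by factor must be shown to telescope, which requires matching the dynamical arguments across consecutive factors. The paper transposes the multifold $R$-matrix $R^{21}_{W^*,S^*}(\mu-\sigma)^{-1}$ in a single stroke, so that only the outer pair $S(\QQ(\mu-h_{\ul{S^*}}))_W\cdots S(\QQ(\mu))_W^{-1}$ survives; this pair then disappears because $f$ takes values in $\cF^\str(\ul{S^*})[0]$ (so the shift by $h_{\ul{S^*}}$ is immaterial), $R^{21}_{W,S^*}(\mu)$ is $\mathfrak{h}$-linear, and the partial trace over $W[\sigma]$ is cyclic; only afterwards is the hexagon expansion performed. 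With the key identity corrected, and either the one-shot ordering adopted or the two points above supplied, your argument goes through.
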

\begin{proof}
By \eqref{case0k} we have
\begin{equation}\label{startproof}
\bigl(\mathbb{L}_{W,0}^{\vee,S}f\bigr)(\mu)=\sum_{\sigma\in\textup{wts}(W)}\textup{Tr}_{W[\sigma]}\Bigl(R_{W^*,S^*}^{21}(\mu-\sigma)^{-1,T_1*}\Bigr)f(\mu-\sigma)
\end{equation}
with $T_1*$ the transpose in the first component relative to $\widehat{e}_{\ul{W}}$.  
The identity in $\mathcal{N}_{\textup{fd}}$ underlying Figure \ref{braided evaluation B}, with the colors $(S,T)$ in Figure \ref{braided evaluation B} taken to be $(S^*,(W))$,
is
\begin{equation*}
\begin{split}
\bigl(\textup{id}_{\cF^\str(\ul{S^*})}\otimes &(e_W\circ j_{(W^*,W)}(\lambda))\bigr)\circ \bigl((R_{S^*,W^*}^{-1}(\lambda-h_W)P_{W^*,\cF^\str(\ul{S^*})})\otimes\textup{id}_{\ul{W}}\bigr)\\
=&\bigl((e_W\circ j_{(W^*,W)}(\lambda-h_{\ul{S^*}}))\otimes\textup{id}_{\cF^\str(\ul{S^*})}\bigr)\circ\bigl(\textup{id}_{\ul{W^*}}\otimes (P_{\cF^\str(\ul{S^*}),W}R_{S^*,W}(\lambda))\bigr)
\end{split}
\end{equation*}
for $\lambda\in\mathfrak{h}_{\textup{reg}}^*$, viewed as identity between $\mathfrak{h}$-linear maps $\ul{W^*}\otimes\cF^\str(\ul{S^*})\otimes\ul{W}\rightarrow\cF^\str(\ul{S^*})$. Using
\[
e_W\circ j_{(W^*,W)}(\lambda)=e_W\circ(\textup{id}_{\ul{W^*}}\otimes 
\mathbb{Q}^\prime(\lambda)_W)
\]
with $\mathbb{Q}^\prime(\lambda):=S(\mathbb{Q}(\lambda))$ (cf. Subsection \ref{Subsection Q(lambda)}), the above identity implies that for $w\in W[\sigma]$ and $g\in\cF^\str(\ul{S^*})$,
\begin{equation}\label{Qinv2}
R_{W^*,S^*}^{21}(\lambda-\sigma)^{-1,T_1*}(w\otimes g)=\bigl(\mathbb{Q}^\prime(\lambda-h_{S^*}))_W\otimes\textup{id}_{\cF^\str(\ul{S^*})}\bigr)R_{W,S^*}^{21}(\lambda)
\bigl(\mathbb{Q}^\prime(\lambda)_W^{-1}w\otimes g\bigr).
\end{equation}
Substituting into \eqref{startproof}, we get
\[
\bigl(\mathbb{L}_{W,0}^{\vee,S}f\bigr)(\mu)=\sum_{\sigma\in\textup{wts}(W)}\textup{Tr}_{W[\sigma]}\Bigl(\mathbb{Q}^\prime(\mu-h_{\ul{S^*}})_WR_{W,S^*}^{21}(\mu)\mathbb{Q}^\prime(\mu)_W^{-1}\Bigr)f(\mu-\sigma).
\]
The dynamical shift in $\mathbb{Q}^\prime$
may be removed since $\mathbb{Q}^\prime(\mu)_W$ preserves $W[\sigma]$, $R^{21}_{W,S^*}(\mu)$ is $\mathfrak{h}$-linear, and $f$ takes value in the zero weight space
$\cF^\str(\ul{S^*})[0]$ of $\cF^\str(\ul{S^*})$. The cyclicity of the trace then gives 
\[
\bigl(\mathbb{L}_{W,0}^{\vee,S}f\bigr)(\mu)=\sum_{\sigma\in\textup{wts}(W)}\textup{Tr}_{W[\sigma]}\Bigl(R_{W,S^*}^{21}(\mu)\Bigr)f(\mu-\sigma).
\]
The result now follows by expanding $R_{W,S^*}$ as product of dynamical $R$-matrices using Proposition  \ref{prop hexagon for dynamical R-matrix}(1). 
\end{proof}

Using the alternative expression of the dual MR operator $\mathbb{L}_{W,0}^{\vee,S}$ from Lemma \ref{Ltodualtrace} it now follows directly that \eqref{eee} reduces to the Etingof-Varchenko \cite[Thm. 1.2]{Etingof&Varchenko-2000} dual MR equations.

\subsection{The $q$-KZB and the coordinate MR equations for weighted trace functions}
\label{Section Symmetry}
In \cite[Thm.1.5]{Etingof&Varchenko-2000} Etingof and Varchenko proved the symmetry property 
\begin{equation}\label{symmF}
\mathbb{F}_S(\lambda,\mu)=P_{\cF^\str(\ul{S^*})[0],\cF^\str(\ul{S})[0]}\,\mathbb{F}_{S^*}(-\mu-2\rho,-\lambda-2\rho)
\end{equation}
for the quantum $k$-point weighted trace function $\mathbb{F}_S(\lambda,\mu)$, where $S\in\Rep^\str$ and $\lambda,\mu\in\mathfrak{h}_{\textup{reg}}^*$ are such that both sides of \eqref{symmF} are well defined. Here we identified $\cF^\str(\ul{S})[0]$ with $\cF^\str(\ul{S^{**}})[0]$ by restriction of the $U_q$-linear isomorphism 
\[
\mathcal{I}_{V_1}\otimes\cdots\otimes\mathcal{I}_{V_k}: \cF^\str(\ul{S})\overset{\sim}{\longrightarrow}\cF^\str(\ul{S^{**}})
\]
where for $V\in\Rep$,
\[
\mathcal{I}_V: V\overset{\sim}{\longrightarrow} V^{**},\qquad \mathcal{I}_V(v)f:=f(q^{2\rho}v)\qquad\quad (v\in V,\, f\in V^*)
\]
(note that $\mathcal{I}_V := (\widetilde{e}_V\otimes\id_{V^{\ast\ast}})(\id_V\otimes\iota_{V^\ast})$). The dual $q$-KZB equations and the dual coordinate MR equations for $\mathbb{F}_S(\lambda,\mu)$ (see Corollary \ref{corKKK} and Corollary \ref{dualqMRthm} respectively) are difference equations in $\mu$. Following \cite{Etingof&Varchenko-2000}, the symmetry can be used to derive analogous equations in $\lambda$. We will give the resulting explicit equations in this subsection. We start with the $q$-KZB equations.
\begin{definition}\label{LiSSS}
For $S=(V_1,\ldots,V_k)\in\Rep^\str$ and $i=1,\ldots,k$ we call the difference operators
\begin{equation}
\bigl(\mathbb{L}_i^Sf\bigr)(\lambda):=\sum_{\sigma\in\textup{wts}(V_i)}\mathcal{K}_{\lambda,\sigma}^{(i)}f(\lambda+\sigma)
\end{equation}
for functions $f: \mathfrak{h}_{\textup{reg}}^*\rightarrow\cF^\str(\ul{S^*})[0]$ the $q$-KZB operators, with $\mathcal{K}_{\lambda,\sigma}^{(i)}$ the $\mathfrak{h}$-linear operator
\begin{equation*}
\begin{split}
\mathcal{K}_{\lambda,\sigma}^{(i)}:=&
R_{V_iV_{i+1}}\bigl(-\lambda-2\rho-h_{(V_{i+2},\ldots,V_k)}\bigr)^{-1}\cdots R_{V_iV_k}\bigl(-\lambda-2\rho\bigr)^{-1}\mathbb{P}_{V_i}[\sigma]\\
&R_{V_1V_i}\bigl(-\lambda-2\rho-\sigma-h_{(V_2,\ldots,V_{i-1},V_{i+1},\ldots,V_k)}\bigr)\cdots R_{V_{i-1}V_i}\bigl(-\lambda-2\rho-\sigma-h_{(V_{i+1},\ldots,V_k)}\bigr)
\end{split}
\end{equation*}
on $\cF^\str(\ul{S})$.
\end{definition}
Write for $i=1,\ldots,k$,
\begin{equation}\label{DmuiS}
\mathbb{D}_{\mu,i}^S:=\bigl(q^{\theta(\mu)}\bigr)_{V_i^*}\kappa_{V_i^*V_{i-1}^*}^{-2}\cdots\kappa_{V_i^*V_1^*}^{-2}\in\textup{End}_{\mathcal{N}_{\textup{fd}}}\bigl(\cF^\str(\ul{S^*})\bigr)
\end{equation}
(see \eqref{thetakappa} for the definitions of $\theta(\mu)$ and $\kappa$).
The difference equations for $\mathbb{F}_S(\lambda,\mu)$ resulting from the symmetry \eqref{symmF} and the dual $q$-KZB equations (Corollary \ref{dualqKZBcor}) now read as follows.
\begin{theorem}\label{qKZBTHM}\textup{(}\cite[Thm. 1.3]{Etingof&Varchenko-2000}\textup{)}.
With the above notations, $\mathbb{F}_S(\lambda,\mu)$ satisfies the $q$-KZB equations
\begin{equation}\label{qKZBneww}
\bigl(\mathbb{L}_i^S\otimes \mathbb{D}_{\mu,i}^{S}\bigr)\mathbb{F}_S(\cdot,\mu)=\mathbb{F}_S(\cdot,\mu)
\end{equation}
for $i=1,\ldots,k$, and the same holds true for $\mathbb{T}_S(\lambda,\mu)$ \textup{(}see \eqref{universalTT}\textup{)}. The $q$-KZB equations for $\mathbb{T}_S(\lambda,\mu)$ are equivalent to the system of difference equations
\begin{equation}\label{coordinateqKZBneww}
\mathbb{L}_i^S\mathbb{T}_S^{v_1,\ldots,v_k}(\cdot,\mu)=q^{\langle\nu_i+2(\nu_{i-1}+\cdots+\nu_1)+2\mu+2\rho,\nu_i\rangle}
\mathbb{T}_S^{v_1,\ldots,v_k}(\cdot,\mu)
\end{equation}
for  $i=1,\ldots,k$ and $v_j\in V_j[\nu_j]$ with $\sum_j\nu_j=0$, with $\mathbb{T}_S^{v_1,\ldots,v_k}(\lambda,\mu)$ given by \eqref{TTv}.

\end{theorem}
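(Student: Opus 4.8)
The plan is to derive Theorem~\ref{qKZBTHM} directly from the symmetry property \eqref{symmF} and the already-established dual $q$-KZB equations of Corollary~\ref{corKKK}, exactly in the spirit of how \cite{Etingof&Varchenko-2000} obtains $q$-KZB equations from dual $q$-KZB equations. The idea is that the dual $q$-KZB equations \eqref{qKZBnew} are difference equations for $\mathbb{F}_S(\lambda,\cdot)$ in the spectral variable $\mu$, built from the operators $\mathbb{L}_i^{\vee,S}$ (acting on the $\cF^\str(\ul{S^*})[0]$-factor) and $\mathbb{D}_{\lambda,i}^{\vee,S}$ (acting on the $\cF^\str(\ul{S})[0]$-factor). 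Applying the symmetry $\mathbb{F}_S(\lambda,\mu)=P\,\mathbb{F}_{S^*}(-\mu-2\rho,-\lambda-2\rho)$ converts a difference equation in $\mu$ for $\mathbb{F}_{S^*}$ into a difference equation in $\lambda$ for $\mathbb{F}_S$, with the roles of the two tensor factors interchanged by $P$.

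Concretely, first I would write down the dual $q$-KZB equations \eqref{qKZBnew} of Corollary~\ref{corKKK} for the reversed object $S^*=(V_k^*,\ldots,V_1^*)$ rather than for $S$ itself, since the symmetry relates $\mathbb{F}_S$ to $\mathbb{F}_{S^*}$. This produces, for each index, a relation $\bigl(\mathbb{D}^{\vee,S^*}_{\bullet,i}\otimes\mathbb{L}^{\vee,S^*}_{\bullet,i}\bigr)\mathbb{F}_{S^*}(\nu,\cdot)=\mathbb{F}_{S^*}(\nu,\cdot)$ as a difference equation in the second slot. The second step is the substitution: set $\nu=-\mu-2\rho$ and evaluate the second argument at $-\lambda-2\rho$, then apply $P$ to both sides and use \eqref{symmF}. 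Under this substitution a shift $\mu\mapsto\mu+\sigma$ inside $\mathbb{L}^{\vee,S^*}$ becomes a shift $\lambda\mapsto\lambda-\sigma$ (up to reindexing $\sigma\mapsto-\sigma$ over $\wts(V_i)$, which is a bijection since weight sets are symmetric), turning the $\mu$-difference operator $\mathbb{L}^{\vee,S^*}$ into a $\lambda$-difference operator. The bookkeeping step is then to check that, after conjugating by $P$ and tracking how the duality isomorphisms $\mathcal{I}_{V_j}$ intertwine the $R$-matrices, the operator $\mathbb{L}^{\vee,S^*}_{\bullet,i}$ transforms precisely into the claimed $q$-KZB operator $\mathbb{L}_i^S$ of Definition~\ref{LiSSS} acting on the $\cF^\str(\ul{S})[0]$-factor, while the scalar/multiplicative factor $\mathbb{D}^{\vee,S^*}_{\bullet,i}$ becomes $\mathbb{D}_{\mu,i}^S$ of \eqref{DmuiS}. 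Finally I would pass from the universal statement \eqref{qKZBneww} to the equivalent spin-component form \eqref{coordinateqKZBneww} exactly as in the proof of Corollary~\ref{corKKK}: pair the $\cF^\str(\ul{S^*})[0]$-slot against weight vectors, use that $\mathbb{D}_{\mu,i}^S$ acts by a scalar on each $f_k\otimes\cdots\otimes f_1$, and read off the eigenvalue, which yields the exponential $q^{\langle\nu_i+2(\nu_{i-1}+\cdots+\nu_1)+2\mu+2\rho,\nu_i\rangle}$ after substituting $\xi_j'\leftrightarrow\nu_j$ under the symmetry.

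The main obstacle I anticipate is the careful \emph{index- and variable-matching} under the symmetry: the reversal $S\mapsto S^*$ reverses the tensor order, the double-dual identification $\mathcal{I}_V$ introduces $q^{2\rho}$-twists, and the shift of the geometric variable is $\lambda\mapsto-\lambda-2\rho$, so one must verify that the composite of all these transformations sends $R_{V_i^*V_j^*}(\mu+\cdots)^{\pm1}$-factors in $\mathbb{L}^{\vee,S^*}$ to exactly the $R_{V_iV_j}(-\lambda-2\rho+\cdots)^{\pm1}$-factors appearing in Definition~\ref{LiSSS}, with the correct dynamical weight shifts and the correct placement of the projection $\mathbb{P}_{V_i}[\sigma]$. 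This requires the transpose formula \eqref{transposeT} (equivalently \cite[(3.12)]{Etingof&Varchenko-2000}) together with Proposition~\ref{prop hexagon for dynamical R-matrix}(1) to reassemble compressed $R$-matrices into their factorized form, and it requires tracking the $\mathbb{Q}$-normalizations $\mathbb{X}_{\mu,S^*}$ that distinguish $\mathbb{T}_S$ from $\mathbb{F}_S$; these were precisely the ingredients used in \eqref{todddo}--\eqref{toXh}, and I expect the verification here to be a mirror image of that computation under the involution $\lambda\leftrightarrow-\mu-2\rho$. Once the operator identities are matched, the equivalence of \eqref{qKZBneww} and \eqref{coordinateqKZBneww}, and the fact that the same equation holds for $\mathbb{T}_S$ (since $\mathbb{F}_S$ and $\mathbb{T}_S$ differ only by the $\mu$-dependent automorphism $\mathbb{X}_{\mu,S^*}$, which commutes through $\mathbb{D}_{\mu,i}^S$ appropriately), follow by the same routine manipulations as in Subsection~\ref{Subsection Etingof-Varchenko normalization}.
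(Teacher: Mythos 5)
Your proposal follows essentially the same route as the paper's proof: apply the symmetry \eqref{symmF} to the dual $q$-KZB equations of Corollary \ref{corKKK} written for $S^*$ (with index $i$ replaced by $k-i+1$), change variables $\lambda\leftrightarrow-\mu-2\rho$, match the resulting operators against Definition \ref{LiSSS} and \eqref{DmuiS}, and then pass to the spin-component form \eqref{coordinateqKZBneww} using that $\mathbb{X}_{\mu,S^*}$ is an $\mathfrak{h}$-linear automorphism. One small correction to your justification: the reindexing $\sigma\mapsto-\sigma$ is legitimate not because weight sets are symmetric under negation (they are not in general), but because the sum in $\mathbb{L}_{k-i+1}^{\vee,S^*}$ runs over $\wts(V_i^*)=-\wts(V_i)$.
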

\begin{proof}
For convenience to the reader we sketch the proof. Apply the symmetry property \eqref{symmF} to both sides of the $i^{\textup{th}}$ dual $q$-KZB equation \eqref{qKZBnew} and replace $S$ by $S^*$ and $i$ by $k-i+1$. After a change of variables and a direct computation using the explicit expression for the dual coordinate $q$-KZB operator 
 (see Definition \ref{LiS}), this yields \eqref{qKZBneww}. 

For the proof of \eqref{coordinateqKZBneww}, first note that
\[
\mathbb{F}_S(\lambda,\mu)=\sum \mathbb{T}^{\mathbb{X}_{\mu,S^*}^{T*}(v_1,\ldots,v_k)}(\lambda,\mu)\otimes v_k^*\otimes\cdots\otimes v_1^*
\]
with the sum over $v_j\in\mathcal{B}_{V_j}$ such that $\sum_j\nu_j=0$, where $\nu_j=\textup{wt}(v_j)$. Then \eqref{qKZBneww} implies
\begin{equation}\label{coordqKZBNORM}
\mathbb{L}_{i}^S\mathbb{T}_S^{\mathbb{X}_{\mu,S^*}^{T*}(v_1,\ldots,v_k)}(\cdot,\mu)=q^{\langle\nu_i+2(\nu_{i-1}+\cdots+\nu_1)+2\mu+2\rho,\nu_i\rangle}\mathbb{T}_S^{\mathbb{X}_{\mu,S^*}^{T*}(v_1,\ldots,v_k)}(\cdot,\mu).
\end{equation}
But $\mathbb{X}_{\mu,S^*}^{T^*}$ is an $\mathfrak{h}$-linear automorphism of $\cF^\str(\ul{S})$ that preserves $V_1[\nu_1]\otimes\cdots\otimes V_k[\nu_k]$ for all weights $\nu_i$, hence 
\eqref{coordqKZBNORM} implies \eqref{coordinateqKZBneww}. Then \eqref{coordinateqKZBneww} implies \eqref{qKZBneww} with $\mathbb{F}_S(\lambda,\mu)$ replaced by
$\mathbb{T}_S(\lambda,\mu)$.
\end{proof}
Next we derive coordinate MR equations for $\mathbb{F}_S(\lambda,\mu)$ using the symmetry \eqref{symmF} and the dual coordinate MR equations 
\eqref{dualMReqnuniversal}.
\begin{definition}[Coordinate MR operators]\label{coordMRDEF}
Let $S=(V_1,\ldots,V_k)\in\Rep^\str$ and $W\in\Rep$. For $i=0,\ldots,k$ we call the difference operators
\begin{equation*}
\begin{split}
\bigl(\mathbb{L}_{W,i}^Sf\bigr)(\lambda):=&\sum_{\sigma\in\textup{wts}(W)}\textup{Tr}_{W^*[-\sigma]}\Bigl(R_{W^*V_k}(-\lambda-2\rho+h_{(W^*,V_1,\ldots,V_k)})
\cdots\\
\cdots&R_{W^*V_{i+1}}(-\lambda-2\rho+h_{(W^*,V_1,\ldots,V_{i+1})})R_{W^*V_{i}}^{21}(-\lambda-2\rho+h_{(W^*,V_1,\ldots,V_{i})})^{-1}\cdots\\
&\qquad\qquad\qquad\qquad\cdots R_{W^*V_1}^{21}(-\lambda-2\rho+h_{(W^*,V_1)})^{-1}\Bigr)f(\lambda+\sigma)
\end{split}
\end{equation*}
for functions $f\in\mathfrak{h}_{\textup{reg}}^*\rightarrow\cF^\str(\ul{S})[0]$ the coordinate Macdonald-Ruijsenaars \textup{(}MR\textup{)} operators associated to $W$ and $S$.
\end{definition}
Write for $i=0,\ldots,k$, set
 \begin{equation}\label{DmuWiS}
 \mathbb{D}_{\mu,W,i}^{S}:=\textup{Tr}_W\Bigl(\bigl(q^{2\mu+2\rho}\bigr)_W\kappa_{WV_{i}^*}^{-2}\cdots\kappa_{WV_1^*}^{-2}\Bigr)\in\textup{End}_{\mathcal{N}_{\textup{fd}}}(\cF^\str(\ul{S})).
 \end{equation}
 The difference equations for $\mathbb{F}_S(\lambda,\mu)$ resulting from the symmetry \eqref{symmF} and the coordinate dual MR equations \eqref{dualMReqnuniversal} now read
 as follows.
 
\begin{theorem}[Coordinate MR equations]\label{coordMRthm}
With the above notations, $\mathbb{F}_S(\lambda,\mu)$ satisfies the coordinate MR equations
\begin{equation}\label{MRi}
\bigl(\mathbb{L}_{W,i}^S\otimes\textup{id}_{\cF^\str(\ul{S^*})[0]}\bigr)\mathbb{F}_S(\cdot,\mu)=
\bigl(\textup{id}_{\cF^\str(\ul{S})[0]}\otimes \mathbb{D}_{\mu,W,i}^{S}\bigr)\mathbb{F}_S(\cdot,\mu)
\end{equation}
for $i=0,\ldots,k$, and the same holds true for
$\mathbb{T}_S(\lambda,\mu)$. The coordinate MR equations for $\mathbb{T}_S(\lambda,\mu)$ are equivalent to the system of difference equations
\begin{equation}\label{coordMRi}
\mathbb{L}_{W,i}^S\mathbb{T}_S^{v_1,\ldots,v_k}(\cdot,\mu)=\chi_W\bigl(q^{2(\mu+\nu_1+\cdots+\nu_{i})+2\rho}\bigr)\mathbb{T}_S^{v_1,\ldots,v_k}(\cdot,\mu),
\end{equation}
for $v_j\in V_j[\nu_j]$ \textup{(}$j=1,\ldots,k$\textup{)} and $i=0,\ldots,k$,
with $\mathbb{T}_S^{v_1,\ldots,v_k}(\lambda,\mu)$ given by \eqref{TTv}.
\end{theorem}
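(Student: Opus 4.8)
The plan is to derive the coordinate MR equations \eqref{MRi} from the dual coordinate MR equations \eqref{dualMReqnuniversal} (Corollary \ref{dualqMRthm}) by invoking the Etingof-Varchenko symmetry \eqref{symmF}, in complete parallel to the derivation of the $q$-KZB equations in Theorem \ref{qKZBTHM}. First I would apply the symmetry property \eqref{symmF} to both sides of the dual coordinate MR equation \eqref{dualMReqnuniversal}, written out for the object $S^*$ in place of $S$ and with index $i$ replaced by $k-i$. The left-hand side of \eqref{dualMReqnuniversal} carries $\mathbb{L}_{W,k-i}^{\vee,S^*}$ acting on the second (i.e.\ $\cF^\str(\ul{S^{**}})[0]$) tensor leg of $\mathbb{F}_{S^*}(-\mu-2\rho,-\lambda-2\rho)$, while the right-hand side carries $\mathbb{D}_{-\mu-2\rho,W,k-i}^{\vee,S^*}$ on the first leg. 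After swapping the tensor factors by $P$ and changing variables $\lambda\leftrightarrow-\mu-2\rho$, $\mu\leftrightarrow-\lambda-2\rho$, this produces a difference operator in $\lambda$ acting on the first tensor leg $\cF^\str(\ul{S})[0]$ of $\mathbb{F}_S(\lambda,\mu)$ on one side, and a scalar-type endomorphism on the second leg on the other side.

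The core of the argument is then the bookkeeping identification of the resulting operators with $\mathbb{L}_{W,i}^S$ (Definition \ref{coordMRDEF}) and $\mathbb{D}_{\mu,W,i}^S$ \eqref{DmuWiS}. For the difference operator, I would start from the explicit formula for $\mathbb{L}_{W,k-i}^{\vee,S^*}$ in Definition \ref{idualMRoper}, noting that the dual object $S^*=(V_k^*,\ldots,V_1^*)$ has local spaces $(S^*)^*=(V_k^{**},\ldots,V_1^{**})$, which we identify with $(V_k,\ldots,V_1)$ via the isomorphisms $\mathcal{I}_{V_j}$ as in \eqref{symmF}. Substituting $\mu\mapsto-\lambda-2\rho$ and reindexing the $R$-matrix factors then turns the string $R_{W^*V_1^{**}}\cdots R_{W^*V_{k-i}^{**}}(R_{W^*V_{k-i+1}^{**}}^{21})^{-1}\cdots(R_{W^*V_k^{**}}^{21})^{-1}$ appearing in $\mathbb{L}_{W,k-i}^{\vee,S^*}$ into the string $R_{W^*V_k}\cdots R_{W^*V_{i+1}}(R_{W^*V_i}^{21})^{-1}\cdots(R_{W^*V_1}^{21})^{-1}$ defining $\mathbb{L}_{W,i}^S$, once the dynamical weight shifts are tracked carefully. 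On the eigenvalue side, the scalar $\chi_W(q^{-2((-\mu-2\rho)-\xi_{k-i+1}-\cdots-\xi_k)-2\rho})$ becomes $\chi_W(q^{2(\mu+\nu_1+\cdots+\nu_i)+2\rho})$ after the substitution and reindexing of weights, matching the action of $\mathbb{D}_{\mu,W,i}^S$ in \eqref{DmuWiS} via $\mathbb{D}_{\mu,W,i}^S=\textup{Tr}_W((q^{2\mu+2\rho})_W\kappa_{WV_i^*}^{-2}\cdots\kappa_{WV_1^*}^{-2})$ acting on $V_k^*[\xi_k]\otimes\cdots\otimes V_1^*[\xi_1]$.

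Once \eqref{MRi} is established for $\mathbb{F}_S(\lambda,\mu)$, the passage to $\mathbb{T}_S(\lambda,\mu)$ and to the scalar equation \eqref{coordMRi} follows exactly as in the proof of Theorem \ref{qKZBTHM}: one writes $\mathbb{F}_S(\lambda,\mu)=\sum\mathbb{T}_S^{\mathbb{X}_{\mu,S^*}^{T*}(v_1,\ldots,v_k)}(\lambda,\mu)\otimes v_k^*\otimes\cdots\otimes v_1^*$ using \eqref{Fnorm} and \eqref{universalTT}, observes that the difference operator $\mathbb{L}_{W,i}^S$ acts only on the first tensor leg while $\mathbb{D}_{\mu,W,i}^S$ acts diagonally on weight spaces of the second leg, and uses that $\mathbb{X}_{\mu,S^*}^{T*}$ is an $\mathfrak{h}$-linear automorphism of $\cF^\str(\ul{S})$ preserving each $V_1[\nu_1]\otimes\cdots\otimes V_k[\nu_k]$. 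This removes the $\mathbb{X}$-gauge and yields \eqref{coordMRi}, and conversely \eqref{coordMRi} reassembles into \eqref{MRi} for $\mathbb{T}_S(\lambda,\mu)$.

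The main obstacle will be the reindexing step in the second paragraph: correctly tracking how the dynamical weight-shift arguments $h_{(W^*,\ldots)}$ of the $R$-matrices transform under the order-reversing dualization $S\mapsto S^*$, the index flip $i\mapsto k-i$, and the spectral variable exchange, and confirming that the double-dual identifications $\mathcal{I}_{V_j}$ intertwine the $R$-matrices as required (so that $R_{W^*V_j^{**}}$ becomes $R_{W^*V_j}$ under $\mathcal{I}_{V_j}$). I expect this to reduce to repeated application of the hexagon identities of Proposition \ref{prop hexagon for dynamical R-matrix}(1) together with the compatibility of the transpose conventions \eqref{transposedef}–\eqref{dualtranspose}, much as in the proof of Corollary \ref{dualqMRthm}, but the combinatorial matching of shifts is where the genuine care is needed.
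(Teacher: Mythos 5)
Your proposal follows essentially the same route as the paper's own proof: the paper likewise obtains \eqref{MRi} by applying the symmetry \eqref{symmF} to the dual coordinate MR equations \eqref{dualMReqnuniversal} with $S$ replaced by $S^*$ and $i$ by $k-i$, followed by a change of variables and a direct computation identifying the resulting operators via Definition \ref{idualMRoper}, and then passes to $\mathbb{T}_S(\lambda,\mu)$ and \eqref{coordMRi} through the gauged equation involving $\mathbb{X}_{\mu,S^*}^{T*}$ exactly as in the proof of Theorem \ref{qKZBTHM}. Your bookkeeping conclusions — the reindexed $R$-matrix string $R_{W^*V_k}\cdots R_{W^*V_{i+1}}(R^{21}_{W^*V_i})^{-1}\cdots(R^{21}_{W^*V_1})^{-1}$ and the eigenvalue $\chi_W\bigl(q^{2(\mu+\nu_1+\cdots+\nu_i)+2\rho}\bigr)$ — agree with what the paper's ``direct computation'' yields.
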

\begin{proof}
The proof is analogous to the proof of Theorem \ref{qKZBTHM}. 

Apply the symmetry property \eqref{symmF} to both sides of the $i^{\textup{th}}$ dual MR equation \eqref{dualMReqnuniversal} and replace $S$ by $S^*$ and $i$ by $k-i$. After a change of variables and a direct computation using the explicit expression for the dual coordinate MR operator (see Definition \ref{idualMRoper}), this yields \eqref{MRi}. 
Then \eqref{MRi} implies
\begin{equation}\label{coordMRiNORM}
\mathbb{L}_{W,i}^S\mathbb{T}_S^{\mathbb{X}_{\mu,S^*}^{T*}(v_1,\ldots,v_k)}(\cdot,\mu)=\chi_W\bigl(q^{2(\mu+\nu_1+\cdots+\nu_{i})+2\rho}\bigr)\mathbb{T}_S^{\mathbb{X}_{\mu,S^*}^{T*}(v_1,\ldots,v_k)}(\cdot,\mu)
\end{equation}
for $v_j\in\mathcal{B}_{V_j}$ of weight $\nu_j$ such that $\sum_j\nu_j=0$. As in the proof of Theorem \ref{qKZBTHM} this implies 
\eqref{coordMRi}, as well as \eqref{MRi} with $\mathbb{F}_S(\lambda,\mu)$ replaced by $\mathbb{T}_S(\lambda,\mu)$.
\end{proof}
\begin{remark}
For $i=k$ the coordinate MR equation \eqref{MRi} reduces to
\begin{equation}\label{ah}
\bigl(\mathbb{L}_{W,k}^S\otimes\textup{id}_{\cF^\str(\ul{S^*})[0]}\bigr)\mathbb{F}_S(\cdot,\mu)=\chi_W(q^{2\mu+2\rho})
\mathbb{F}_S(\cdot,\mu).
\end{equation}
Furthermore, by Lemma \ref{Ltodualtrace} and the proof of Theorem \ref{coordMRthm}, $L_{k,W}^S$ admits the alternative expression
\[
(\mathbb{L}_{W,k}^Sf)(\lambda)=\sum_{\sigma\in\textup{wts}(W)}\textup{Tr}_{W[\sigma]}\Bigl(R_{WV_1}^{21}(-\lambda-2\rho-h_{(V_2,\ldots,V_k)})\cdots
R_{WV_k}^{21}(-\lambda-2\rho)\Bigr)f(\lambda+\sigma)
\]
for $f: \mathfrak{h}_{\textup{ref}}^*\rightarrow\cF^\str(\ul{S})[0]$. Using these two observations it follows directly that the $k^{\textup{th}}$ coordinate MR equations \eqref{ah} reduce to the Etingof-Varchenko \cite[Thm. 1.1]{Etingof&Varchenko-2000}
Macdonald-Ruijsenaars equations.
\end{remark}

\section{Appendix}\label{App}
In \cite[formula (2.38)]{Etingof&Varchenko-2000}, Etingof and Varchenko obtain an expression for  \(\Delta(\QQ(\lambda))\)
by an algebraic proof that invokes the results of \cite{Etingof&Varchenko-1999}. We provide an alternative purely graphical proof.

\begin{proposition}
	\label{prop EV equation 2.38}
	For any \(\lambda\in\hh_{\mathrm{reg}}^\ast\) and \(V,W\in \Rep\), we have
	\[
	\Delta(\QQ(\lambda))\,\mathbb{J}(\lambda+\nu+\nu')\big\vert_{V[\nu]\otimes W[\nu']} = 
	(S\otimes S)(\mathbb{J}^{21}(\lambda)^{-1})\left(\QQ(\lambda)\otimes\QQ(\lambda+\nu) \right)\big\vert_{V[\nu]\otimes W[\nu']}.
	\]
	\end{proposition}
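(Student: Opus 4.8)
The plan is to give a graphical proof using the enriched graphical calculus for $\mathcal{N}_{\mr{fd}}$ developed in Subsections \ref{sectiongi} \& \ref{Subsection Q(lambda)}. The identity to be proven is an equation in $\mathcal{U}^{(2)}$ evaluated on $V[\nu]\otimes W[\nu']$, and since all the ingredients --- $\mathbb{J}(\lambda)$, $\mathbb{Q}(\lambda)$, and their coproducts --- have concrete graphical representations (the dynamical fusion morphisms as red dots, and $\mathbb{Q}_S(\lambda)$ via the diagram following \eqref{QQ_S lambda def}), I expect the whole statement to reduce to a topological identity between two $\mathcal{N}_{\mr{fd}}^\str$-colored ribbon graphs, which I would verify by applying the Reshetikhin--Turaev functor $\mathcal{F}_{\mathcal{N}_{\mr{fd}}^\str}^{\mr{RT}}$.

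\textbf{Key steps.} First I would translate each algebraic object into the graphical calculus. By \eqref{relJj} the factor $\mathbb{J}(\lambda+\nu+\nu')|_{V[\nu]\otimes W[\nu']}$ is the dynamical fusion operator $j_{(V,W)}$ evaluated at the forward-shifted weight, which by the weight-shift conventions of Section \ref{Section strictified dynamical module category} is exactly the operator represented by a red dot on the bundle $(V,W)$. By Proposition \ref{prop interpretation of Q} and the graphical definition \eqref{QQ_S lambda def}, $\mathbb{Q}(\lambda)|_V = \mathbb{Q}_{(V)}(\lambda)$ and $\mathbb{Q}(\lambda)|_W = \mathbb{Q}_{(W)}(\lambda)$ are represented by the $\mathbb{Q}_S(\lambda)$-diagram. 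For the left-hand side I would interpret $\Delta(\mathbb{Q}(\lambda))$ acting on the fused space $\cF^\str((V,W))$ as $\mathbb{Q}_{(V,W)}(\lambda)$; the relation $\mathbb{Q}(\lambda)|_{V\otimes W} = \Delta(\mathbb{Q}(\lambda))|_{V\otimes W}$ follows since $\mathbb{Q}(\lambda)$ is built from $\mathbb{J}(\lambda)$ via \eqref{Q algebraic def} and the tensor factors are glued by the coproduct. Thus the left-hand side becomes $\mathbb{Q}_{(V,W)}(\lambda)\circ j_{(V,W)}$, which graphically is the stacking of the $\mathbb{Q}_{(V,W)}(\lambda)$-coupon on top of a red dot --- that is, exactly the diagram \eqref{QQ_S lambda def} with the bundle $S=(V,W)$ and the extra fusion dot absorbed via the dynamical twist $\ol{\widetilde{e}_S}$. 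I would then expand this single $\mathbb{Q}_{(V,W)}(\lambda)$-coupon into two separate strands using the dynamical evaluation/co-evaluation diagrams (Figures \ref{co-evaluation dynamical}, \ref{co-injection dynamical}) together with the bundling relations, and simplify using the duality axioms \eqref{dynlefteval}, \eqref{dynrightstraight} and the relations of \cite[Rel$_1$, Rel$_4$]{Reshetikhin&Turaev-1990}, in the same spirit as the proof of Proposition \ref{prop Q inverse}. The right-hand side, involving $(S\otimes S)(\mathbb{J}^{21}(\lambda)^{-1})$ and the product $\mathbb{Q}(\lambda)\otimes\mathbb{Q}(\lambda+\nu)$, I would likewise render graphically: the factor $(S\otimes S)(\mathbb{J}^{21}(\lambda)^{-1})$ is, by \eqref{dyn eval expr} and \eqref{relJj}, the morphism underlying a dynamical cap/antifusion dot with flipped tensor order, while $\mathbb{Q}(\lambda+\nu)|_W$ carries precisely the forward weight shift by $\nu$ that matches the dynamical shift rule on the $V$-strand.

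\textbf{Main obstacle.} The delicate point will be matching the weight-shift bookkeeping on both sides: the argument $\lambda+\nu$ in $\mathbb{Q}(\lambda+\nu)$ and the argument $\lambda+\nu+\nu'$ in $\mathbb{J}$ must be reproduced automatically by the rule that crossing a bundle colored by $V$ from right to left shifts $\lambda$ to $\lambda - \mh_V$ (and the forward-shift convention $\widehat{\mh}_S$ from the proof of Proposition \ref{prop Q inverse}). I would therefore take care to fix, once and for all, which vertical region of each diagram carries which specialized weight, and to confirm that the projections onto $V[\nu]$ and $W[\nu']$ force the intended shifts. Once the two diagrams are drawn with consistent weight labels, the equality should follow from pulling the dynamical cap through the fusion dots using the naturality \eqref{dynnatural} and the braided-evaluation identities (Figures \ref{braided evaluation A}--\ref{braided evaluation D}), after which applying $\mathcal{F}_{\mathcal{N}_{\mr{fd}}^\str}^{\mr{RT}}$ yields the stated operator identity on $V[\nu]\otimes W[\nu']$. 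I expect no genuinely new algebraic input beyond the already-established dynamical ribbon identities; the work is entirely in organizing the diagrammatic manipulation and the weight labels correctly.
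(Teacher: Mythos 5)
Your overall plan---prove the identity inside the enriched graphical calculus for $\mathcal{N}_{\mr{fd}}$---is the same general approach as the paper's, but your treatment of the left-hand side contains a genuine error that begs the question. You identify $\Delta(\QQ(\lambda))$ acting on $V\otimes W$ with the operator $\QQ_{(V,W)}(\lambda)$ defined by \eqref{QQ_S lambda def} for the length-\emph{two} object $S=(V,W)$, and then assert that the fusion dot $j_{(V,W)}$ is ``absorbed'' into $\ol{\widetilde{e}_{(V,W)}}$ without residue. The justification you offer (namely $\QQ(\lambda)\vert_{V\otimes W}=\Delta(\QQ(\lambda))\vert_{V\otimes W}$) only identifies $\Delta(\QQ(\lambda))\vert_{V\otimes W}$ with $\QQ_{(V\otimes W)}(\lambda)$, the operator attached to the length-\emph{one} object $(V\otimes W)$ via Proposition \ref{prop interpretation of Q}; this is a different operator from $\QQ_{(V,W)}(\lambda)$. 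Concretely, by \eqref{dynrighteval} one has $\cF^\str(\ol{\widetilde{e}_{(V,W)}})(\lambda)=\widetilde{e}_{V\otimes W}\circ j_{(V,W,W^*,V^*)}(\lambda)$, and the cocycle identity \eqref{hulp0} factorizes $j_{(V,W,W^*,V^*)}(\lambda)$ as $j_{((V\otimes W),(W^*\otimes V^*))}(\lambda)$ composed with $j_{(V,W)}(\lambda-\mh^{(2)})\otimes j_{(W^*,V^*)}(\lambda)$. So absorbing the fusion dot into the dynamical cap inevitably produces a residual antifusion factor $j_{(W^*,V^*)}(\lambda)$ on the dual strand, and the transpose of this residual factor is exactly what accounts for the term $(S\otimes S)(\mathbb{J}^{21}(\lambda)^{-1})$ on the right-hand side of the Proposition. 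Indeed, unwinding \eqref{QQ_S lambda def} for $S=(V,W)$ using the strict monoidality of $\cF^{\mr{dt}}$ and the dynamical weight-shift rule gives $\QQ_{(V,W)}(\lambda)\vert_{V[\nu]\otimes W[\nu']}=\bigl(\QQ_V(\lambda)\otimes\QQ_W(\lambda+\nu)\bigr)\vert_{V[\nu]\otimes W[\nu']}$, i.e.\ $\QQ_{(V,W)}(\lambda)$ equals the right-hand side of the Proposition \emph{stripped of} the $\mathbb{J}^{21}$-factor, not the left-hand side. Read literally, your absorption claim asserts $\Delta(\QQ(\lambda))\,\mathbb{J}(\lambda+\nu+\nu')=\QQ(\lambda)\otimes\QQ(\lambda+\nu)$ on $V[\nu]\otimes W[\nu']$, which contradicts the very statement being proven: the entire nontrivial content of the identity lives in the residual factor you discard.

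For comparison, the paper sidesteps both of your difficulties by first pairing against an arbitrary $u\in(V\otimes W)^*$, reducing the claim to
$u\bigl(\QQ_{V\otimes W}(\lambda)\,j_{(V,W)}(\lambda+\nu+\nu')(v\otimes w)\bigr)=\bigl(j^{-1}_{(W^*,V^*)}(\lambda)u\bigr)\bigl(\QQ_V(\lambda)v\otimes\QQ_W(\lambda+\nu)w\bigr)$.
Under this pairing the antipode-twisted factor $(S\otimes S)(\mathbb{J}^{21}(\lambda)^{-1})$ becomes simply the antifusion operator $j_{(W^*,V^*)}(\lambda)^{-1}$ acting on $u$, so no coupon for an antipode-twisted element is ever needed (your plan to render $(S\otimes S)(\mathbb{J}^{21}(\lambda)^{-1})$ directly as a graphical morphism has no support in the calculus as developed). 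The pairing identity is then verified by exactly two graphical inputs: the unzipping-of-a-cap identity of \cite[Figure 32]{DeClercq&Reshetikhin&Stokman-2022} pushed from $\Rep^\str$ to $\Ndyn^{\,\str}$ via Proposition \ref{pushdiagram}---this is the step that generates, and correctly accounts for, the residual fusion factor---and the $\hh$-linearity of the coupon colors, which settles the weight bookkeeping you rightly flag in your final paragraph. To repair your argument you would need to replace the ``absorption'' step by this cap-unzipping argument, or equivalently carry the residual $j_{(W^*,V^*)}(\lambda)$ explicitly through your diagrams and identify its transpose with the $\mathbb{J}^{21}$-factor, preferably working with the paired form of the statement throughout.
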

\begin{proof}
	It suffices to show that
	\[
	u\left(\QQ_{V\otimes W}(\lambda)\circ j_{(V,W)}(\lambda+\nu+\nu')(v\otimes w)\right) = (j^{-1}_{(W^\ast,V^\ast)}(\lambda)u)\left(\QQ_V(\lambda)(v)\otimes \QQ_W(\lambda+\nu)(w) \right)
	\]
	for any \(v\in V[\nu]\), \(w\in W[\nu']\) and \(u\in (V\otimes W)^\ast\), which follows from a direct computation using the graphical calculus for \(\cN_{\mr{fd}}^\str\) and Figures \ref{natural proof A}--\ref{natural proof B}.
	\begin{figure}[H]
		\begin{minipage}{0.48\textwidth}
			\centering
			\includegraphics[scale = 0.9]{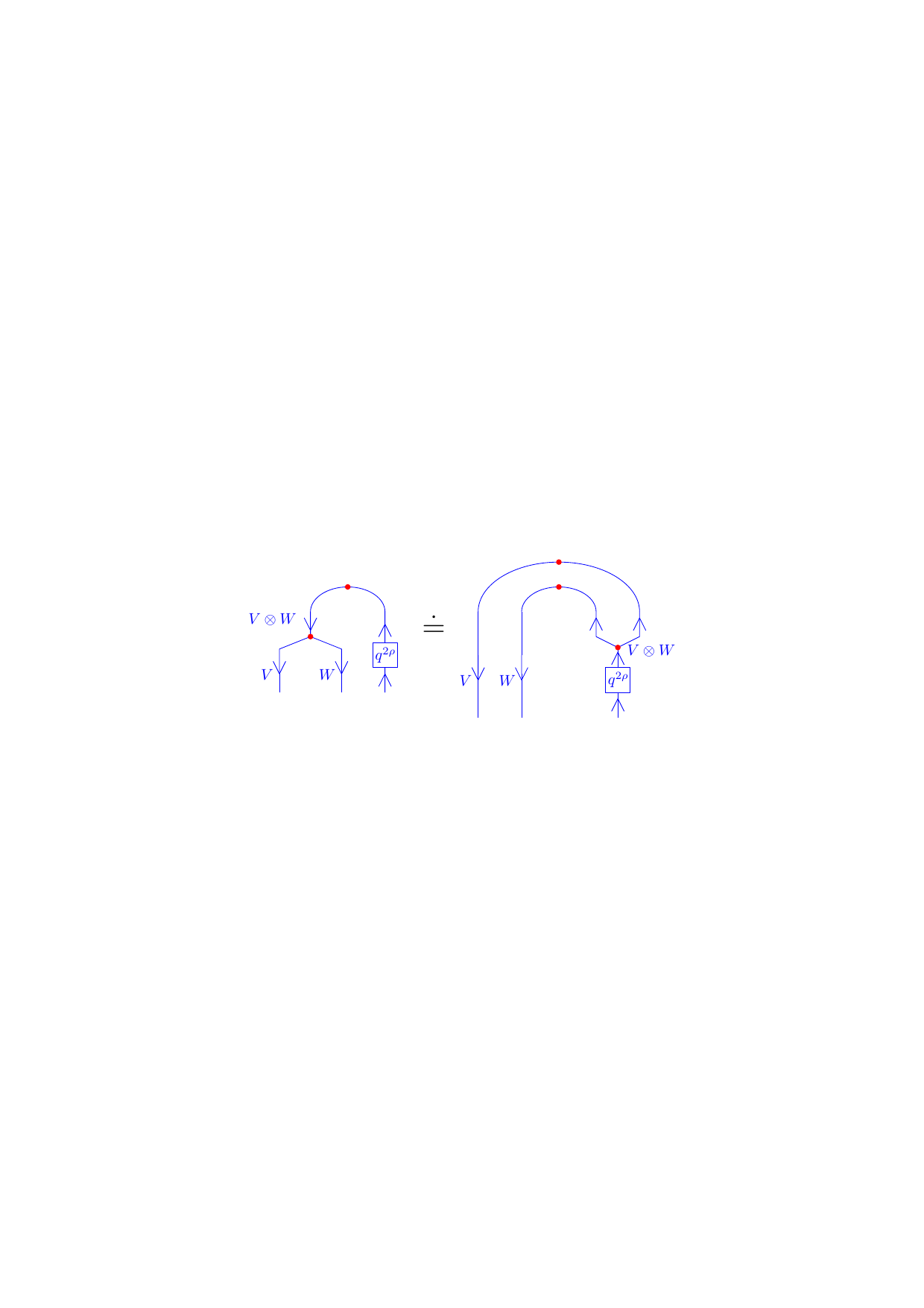}
			\captionof{figure}{}
			\label{natural proof A}
		\end{minipage}\quad
		\begin{minipage}{0.48\textwidth}
			\centering
			\includegraphics[scale = 0.9]{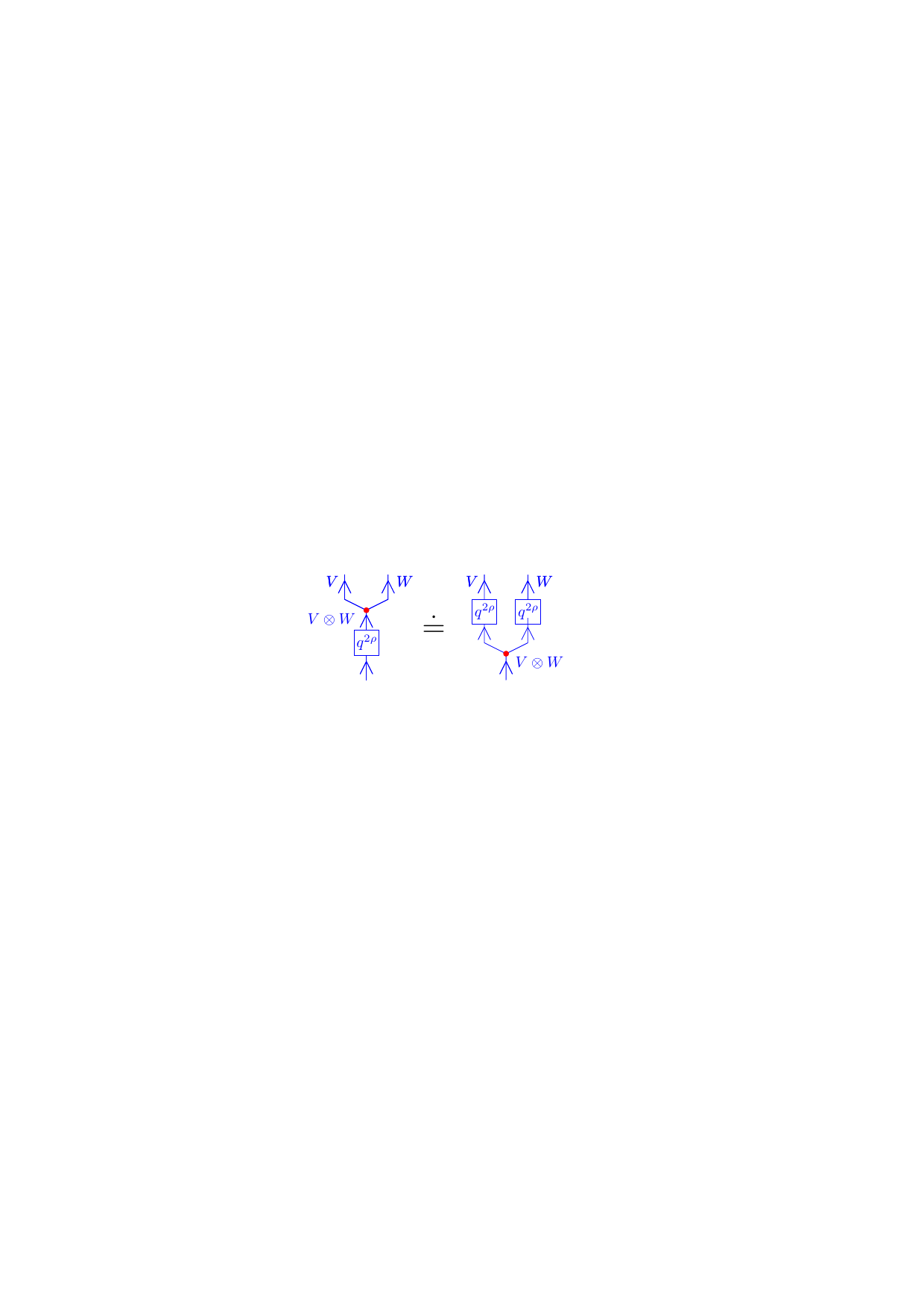}
			\captionof{figure}{}
			\label{natural proof B}
		\end{minipage}
	\end{figure}
	Figure \ref{natural proof A} follows by pushing the unzipping of a cap in the graphical calculus of $\Rep^\str$, as displayed in \cite[Figure 32]{DeClercq&Reshetikhin&Stokman-2022}, to the graphical calculus of $\Ndyn^{\,\str}$ using Proposition \ref{pushdiagram}. Figure \ref{natural proof B}
	 follows from the \(\hh\)-linearity of the colors of the occurring coupons.
\end{proof}

Figure \ref{braided evaluation B} was used in Lemma \ref{Ltodualtrace}. The closely related
Figure \ref{braided evaluation A} translates to the following expression for \(\Rdyn_{VW}(\lambda)^{-1}\).
\begin{corollary}
	For any \(\lambda \in\hh_{\mathrm{reg}}^\ast\) and \(V,W\in\Rep\), we have
	\begin{equation}
	\label{inverse R A}
	\Rdyn_{VW}(\lambda)^{-1}\big\vert_{V[\nu]\otimes W[\nu']} = 
	\left(S(\QQ^{-1}(\lambda-\mh^{(2)}))\otimes 1\right)\,(S\otimes\id)R(\lambda-\nu)\,\left(S(\QQ(\lambda))\otimes 1 \right)\big\vert_{V[\nu]\otimes W[\nu']}
	\end{equation}
	with the natural interpretation of the right hand side.
\end{corollary}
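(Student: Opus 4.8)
The plan is to derive this formula as a direct consequence of the graphical identity in Figure~\ref{braided evaluation A}, which encodes the first of the Turaev-type relations \eqref{from Turaev identities} relating the dynamical braiding to dynamical evaluation. First I would translate Figure~\ref{braided evaluation A} into an algebraic identity in $\textup{End}_{\mathcal{N}_{\mr{fd}}}(\underline{V}\otimes\underline{W})$ by applying the Reshetikhin-Turaev functor $\cF^{\mr{RT}}_{\cN_{\mr{fd}}^\str}$ and using the reformulations of the dynamical braiding and evaluation morphisms in terms of the standard graphical calculus of the ribbon category $\cN_{\mr{fd}}^\str$ (Figures~\ref{dynamical R-matrix boundary}--\ref{dyn co-eval right boundary} of Subsection~\ref{Subsection Boundary conditions}). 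The relevant occurrences of $\ol{e_T}$ and $\ol{c_{T^*,S}}$ will bring in factors of the form $e_V\circ j_{(V^*,V)}(\lambda)$ and the dynamical $R$-matrix $\Rdyn_{VW}(\lambda)$, so the resulting identity will express $\Rdyn_{VW}(\lambda)^{-1}$ against the nondynamical evaluation morphisms.

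Next I would invoke the interpretation of $\mathbb{Q}$ established in Proposition~\ref{prop interpretation of Q}, namely $\mathbb{Q}_V=\mathbb{Q}(\cdot)\vert_V$, together with the criterion \eqref{criterion Q} for $\mathbb{Q}_S$ (which relates $\ol{\widetilde{e}_S}$ to $\widetilde{e}_S$ via $\mathbb{Q}_S$) and the definition \eqref{universal dyn R-matrix def} of the universal dynamical $R$-matrix $R(\lambda)=(\mathbb{J}^{21}(\lambda))^{-1}\,\cR\,\mathbb{J}(\lambda)$, whose action on $V\otimes W$ gives $\Rdyn_{VW}(\lambda)$ by \eqref{relRr}. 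The appearance of the antipode $S$ in the statement is forced by the fact that the evaluation morphisms pass from $V$ to its dual $V^*$: precisely as in the proofs of Lemma~\ref{lemma in Section 3.3} and Proposition~\ref{prop Q inverse}, converting the dual representation $\pi_{V^*}(X)$ back to an operator on $V$ produces $S(X)$, and $\mathbb{Q}(\lambda)=m^{\mr{op}}((\id\otimes S^{-1})\mathbb{J}(\lambda))$ naturally yields $S(\mathbb{Q}(\lambda))$ and $S(\mathbb{Q}^{-1}(\lambda))$ upon transposition. The weight-shift structure $\lambda-\mh^{(2)}$ and $\lambda-\nu$ on the right-hand side will emerge from carefully tracking the specialized weights assigned to the vertical regions of the ribbon diagram as strands colored by $V[\nu]$ and $W[\nu']$ cross, following the transport rule of Section~\ref{Section strictified dynamical module category}.

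Concretely, I would pair the identity against an arbitrary test covector, reduce it to an equality of scalars (as was done for Proposition~\ref{prop EV equation 2.38}), and then match the three operator factors $S(\QQ^{-1}(\lambda-\mh^{(2)}))\otimes 1$, $(S\otimes\id)R(\lambda-\nu)$, and $S(\QQ(\lambda))\otimes 1$ on the right-hand side with the three graphical building blocks of Figure~\ref{braided evaluation A}: the dynamical cap, the dynamical crossing, and the red fusion dot. The only genuine care needed is bookkeeping for the dynamical weight shifts and the placement of the antipode on each tensor factor.

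The hard part will be the precise tracking of the dynamical weight arguments: determining exactly which shift ($\lambda$, $\lambda-\mh^{(2)}$, or $\lambda-\nu$) attaches to $\mathbb{Q}$ and to $R$ on each factor. This is entirely mechanical once the graphical identity is fixed, but it is error-prone, and the apparent asymmetry in the statement (the first $\mathbb{Q}^{-1}$ carries $\lambda-\mh^{(2)}$ while $R$ carries the \emph{specialized} shift $\lambda-\nu$ and the last $\mathbb{Q}$ carries $\lambda$) must be reproduced faithfully from the weight-transport conventions rather than guessed. I expect no conceptual obstacle beyond this, since the underlying topological identity \eqref{from Turaev identities} and the universal-element dictionary of Subsection~\ref{Subsection Q(lambda)} supply everything required.
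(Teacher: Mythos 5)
Your proposal matches the paper's proof in both strategy and substance: the paper likewise starts from Figure \ref{braided evaluation A} specialized to single strands colored by $W$ and $V$ (with the rightmost region colored $\lambda$), applies $\cF^{\mr{RT}}_{\cN_{\mr{fd}}^\str}$ to both sides using the boundary dictionary of Subsection \ref{Subsection Boundary conditions}, converts the dual-representation actions into antipode-twisted operators on $V$, and equates the two resulting expressions to obtain \eqref{inverse R A}. The only small inaccuracy is your citation of the right-evaluation results (criterion \eqref{criterion Q} and Proposition \ref{prop interpretation of Q}): the cap occurring here is the left dynamical evaluation $\ol{e_{(V)}}$, so the paper instead uses \eqref{dyn eval expr} together with the algebraic definition \eqref{Q algebraic def} of $\QQ(\lambda)$, which turns $e_V\circ j_{(V^\ast,V)}(\lambda)$ into evaluation against $S(\QQ(\lambda))$.
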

\begin{proof}
	Consider Figure \ref{braided evaluation A}, with \(S = (W)\) and \(T = (V)\), and with the rightmost vertical region on both sides 
	colored by the weight \(\lambda\). The left-hand side is mapped by $\mathcal{F}^{\textup{RT}}_{\mathcal{N}_{\textup{fd}}}$ to the morphism
	\[
	\left(\id_{\ul{W}}\otimes (e_{\ul{V}}\circ j_{(V^\ast,V)}(\lambda))\right)\circ\left((P_{\ul{V^\ast},\ul{W}}\circ \Rdyn_{V^\ast,W}(\lambda-\mh^{(3)}))\otimes\id_{\ul{V}} \right)
	\]
	in \(\Hom_{\cN_{\mr{fd}}}(V^\ast\otimes W\otimes V, W)\),
	which equals 
	\[
	(e_{\ul{V}}\otimes\id_{\ul{W}})\left(\id_{\ul{V^\ast}}\otimes\left(\mathbb{A}_{V,W}(\lambda)\circ P_{\ul{W},\ul{V}}\right) \right)
	\]
	with
	\[
	\mathbb{A}_{V,W}(\lambda) = (\pi_V\otimes\pi_W)\left((S\otimes \id)R(\lambda-\mh^{(1)})\left(S(\QQ(\lambda))\otimes 1\right)\right)
	\in\textup{End}_{\mathcal{N}_{\textup{fd}}}(\ul{V}\otimes\ul{W}).
	\]
	Here we have used (\ref{dyn eval expr}), (\ref{Q algebraic def}) and the definition of the dual representation. 
	The right-hand side is mapped by $\mathcal{F}^{\textup{RT}}_{\mathcal{N}_{\textup{fd}}}$ to the morphism
	\[
	(e_{\ul{V}}\otimes\id_{\ul{W}})\left(\id_{\ul{V^\ast}}\otimes\left(\mathbb{B}_{V,W}(\lambda)\circ P_{\ul{W},\ul{V}}\right) \right)
	\]
	with
	\[
	\mathbb{B}_{V,W}(\lambda) = (\pi_V\otimes\pi_W)\left(\left(S(\QQ(\lambda-\mh^{(2)}))\otimes 1\right)R(\lambda)^{-1}\right).
	\]
	Hence we have \(\mathbb{A}_{V,W}(\lambda) = \mathbb{B}_{V,W}(\lambda)\), leading to (\ref{inverse R A}). 
\end{proof}



\begin{thebibliography}{999}
	\bibitem{Arnaudon&Buffenoir&Ragoucy&Roche-1998}
	D.~Arnaudon, E.~Buffenoir, E.~Ragoucy, P.~Roche, \newblock{Universal solutions of quantum dynamical Yang--Baxter equations.} {\em Lett.\ Math.\ Phys.} {\bf 44}(3) (1998), 201--214.
	
	\bibitem{B} A.~Brochier, \newblock{Cyclotomic associators and finite type invariants for tangles in the solid torus}. {\em Alg. Geom. Topology} {\bf 13} (2013), 3365--3409.
	
	\bibitem{DeClercq&Reshetikhin&Stokman-2022}
	H.~De Clercq, N.~Reshetikhin, J.V.~Stokman, \newblock{Graphical calculus for vertex operators, I: The dynamical fusion operator}. {\em Comm. Math. Phys.} {\bf 405} (2024), Paper No. 149, 56 pp.
	
	\bibitem{dC} H.~De Clercq, \newblock{Graphical calculus for quantum vertex operators}. PhD thesis (Ghent university, 2023).
	
	\bibitem{Drinfeld-1990}
	V.G.~Drinfeld, \newblock{On almost cocommutative Hopf algebras.} {\em Leningrad Math. J.} {\bf 1}(2) (1990), 321--342.
	
	\bibitem{Etingof&al-2015}
	P.~Etingof, S.~Gelaki, D.~Nikshych, V.~Ostrik, \newblock{Tensor categories}. {\em Math. Surveys and Monographs} {\bf 205}, Amer. Math. Soc., 2015.
	
	\bibitem{Etingof&Kirillov-1994-B}
	P.~Etingof, A.A. Kirillov Jr., \newblock{Macdonald's polynomials and representations of quantum groups.} \emph{Math. Res. Lett.} {\bf 1} (1994), 279--296.
	
	\bibitem{Etingof&Latour-2005}
	P.~Etingof, F.~Latour, \newblock{The dynamical Yang--Baxter equation, representation theory and quantum integrable systems.} Oxford University Press, 2005.
	
	\bibitem{ES} P. Etingof, O. Schiffmann, \newblock{Twisted traces of intertwiners for Kac-Moody algebras and classical dynamical $r$-matrices
corresponding to Belavin-Drinfeld triples}. {\em Math. Res. Lett.} {\bf 6} (1999), 593--612.
	
	\bibitem{Etingof&Schiffmann-2001}
	P.~Etingof, O.~Schiffmann, \newblock{Lectures on the dynamical Yang--Baxter equations.} \emph{London Math.\ Soc.\ Lecture Notes} \textbf{290} (2001), 89--129.
	
	\bibitem{Etingof&Styrkas-1998}
	P.~Etingof, K.~Styrkas, \newblock{Algebraic integrability of Macdonald operators and representations of quantum groups.} \emph{Compositio Math.} {\bf 114} (1998), 125--152.
	
	\bibitem{Etingof&Varchenko-1999} P.~Etingof, A.~Varchenko, \newblock{Exchange dynamical quantum groups}. \emph{Comm. Math. Phys.} {\bf 205} (1999), 19--52.
	
	\bibitem{Etingof&Varchenko-2000}
	P.~Etingof, A.~Varchenko, \newblock{Traces of intertwiners for quantum groups and difference equations, I.} \emph{Duke Math. J.} {\bf 104}(3) (2000), 391--432.
	
	\bibitem{Frenkel&Reshetikhin-1992}
	I.B.~Frenkel, N.Yu.~Reshetikhin, \newblock{Quantum affine algebras and holonomic difference equations.} \emph{Comm. Math. Phys.} {\bf 146} (1992), 1--60.
	
	\bibitem{Humphreys-2008} J.E.~Humphreys, \newblock{Representations of semisimple Lie algebras in the BGG category $\cO$}. \emph{Graduate Studies in Mathematics} {\bf 94}, Amer. Math. Soc., 2008.
	
	\bibitem{Joyal&Street-1993}
	A.~Joyal, R.~Street, \newblock{Braided tensor categories.} {\em Adv. Math.} {\bf 102} (1993), 20--78.
	
	\bibitem{Kassel-1995}
	C.~Kassel, \newblock{Quantum groups.} {\em Graduate Texts in Mathematics} {\bf 155}, Springer-Verlag, New York, 1995.
	
	\bibitem{K2} S. Kolb, \newblock{Quantum symmetric Kac-Moody pairs.} {\em Adv. Math.} {\bf 267} (2014), 395--469.

\bibitem{L} G. Letzter, \newblock{Symmetric pairs for quantized enveloping algebras}. {\em J. Algebra} {\bf 220} (1999), 729--767.
	
	\bibitem{Lusztig-1994} G.~Lusztig, \newblock{Introduction to Quantum Groups. Reprint of the 1994 edition}. Modern Birkh{\"a}user Classics. Birkh{\"a}user/Springer, New York, 2010.
	
	\bibitem{MacLane-1971} S. Mac Lane, \newblock{Categories for the Working Mathematician}. {\em Graduate Texts in Mathematics} {\bf 5}, Springer-Verlag, New York, 1971.	
	
	\bibitem{Reshetikhin-1990}
	N.~Reshetikhin, \newblock{Quasitriangle Hopf algebras and invariants of tangles.} {\em Leningrad Math. J.} {\bf 1}(2) (1990), 491--513.
	
	\bibitem{Reshetikhin&Stokman-2020-B}
	N.~Reshetikhin, J.V.~Stokman, \newblock{Asymptotic boundary KZB operators and quantum Calogero--Moser spin chains.} In: Hypergeometry, Integrability and Lie Theory, {\em Contemp. Math.} {\bf 780} (2022), 205--241.
	
	\bibitem{Reshetikhin&Stokman-2023}
	N.~Reshetikhin, J.V.~Stokman, \newblock{Quantum superintegrable spin systems on graph connections.} {\em Indag. Math. \textup{(}N.S.\textup{)}}, doi.org/10.1016/j.indag.2024.03.008 (in press).
	
	\bibitem{Reshetikhin&Turaev-1990}
	N.~Reshetikhin, V.G.~Turaev, \newblock{Ribbon graphs and their invariants derived from quantum groups.} {\em Comm. Math. Phys.} {\bf 127}(1) (1990), 1--26.
	
	\bibitem{Stokman&Reshetikhin-2020-A}
	J.V.~Stokman, N.~Reshetikhin, \newblock{\(N\)-point spherical functions and asymptotic boundary KZB equations.} {\em Invent. Math.} {\bf 229} (2022), 1--86.
	
	\bibitem{Turaev-1994}
	V.G.~Turaev, \newblock{Quantum invariants of knots and 3-manifolds.} {\em Studies in Mathematics} {\bf 18}, de Gruyter, Berlin, 1994.
\end{thebibliography}
\end{document}